\begin{document}

\title[Symmetric monoidal categories and $\Gamma$-categories]{Symmetric monoidal categories and $\Gamma$-categories}
\author[A. Sharma]{Amit Sharma}

\email{asharm24@kent.edu}
\address {Department of mathematical sciences\\ Kent State university\\
  Kent, OH}

\date{November 15, 2018}
%\p%%%%%%%%%%%%%%%%%%%%%%%%%%%%%%%%%%%%%%%%%%%%%%%%%%%%%%%%%%%%%%%%%%%%%%%%%%%%%%%
% Custom definitions
%%%%%%%%%%%%%%%%%%%%%%%%%%%%%%%%%%%%%%%%%%%%%%%%%%%%%%%%%%%%%%%%%%%%%%%%%%%%%%%%
% \newcommand{\C}{{\mathrm{c}}}
% 
% \newcommand{\COS}{{\mathrm{cos}}}
% \newcommand{\SIN}{{\mathrm{sin}}}
% \newcommand{\DOF}{{\mathrm{dof}}}
% \newcommand{\MSEC}{\mu{\mathrm{s}}}
% \newcommand{\NS}{\mathrm{ns}}
% \newcommand{\PPM}{{\mathrm{ppm}}}
% \newcommand{\ENR}{{\mathrm{GeV}}}
% \newcommand{\MOM}{{\mathrm{GeV/c}}}

\newcommand{\CONT}{\noindent}
\newcommand{\FIG}{Fig.\ }
\newcommand{\FIGS}{Figs.\ }
\newcommand{\SEC}{Sec.\ }
\newcommand{\SECS}{Secs.\ }
\newcommand{\TAB}{Table }
\newcommand{\TABS}{Tables }
\newcommand{\EQ}{Eq.\ }
\newcommand{\EQS}{Eqs.\ }
\newcommand{\APP}{Appendix }
\newcommand{\APPS}{Appendices }
\newcommand{\CHP}{Chapter }
\newcommand{\CHPS}{Chapters }

\newcommand{\OFF}{\emph{G2off}~}
\newcommand{\TOO}{\emph{G2Too}~}
%%%%%%%%%%%%%%%%%%%%%%%%%%%%%%%%%%%%%%%%%%%%%%%%%%%%%%%%%%%%%%%%%%%%%%%%%%%%%%%%
\newcommand{\CatS}{Cat_{\bigS}}
\newcommand{\PCat}{\mathbf{Perm}}
\newcommand{\PCatOL}{\mathbf{Perm}_{\textit{OL}}}
\newcommand{\PCatSM}{\mathbf{Perm}_{\otimes}}
\newcommand{\mdlPCat}{\left( \PCat, \mathbf{E} \right)}
\newcommand{\PicS}{{\underline{\pic}}^{\oplus}}
%%%%%%%%%%%%%%%%%%%%%%%%%%%%%%%%%%%%%%%%%%%%%%%%%%%%%%%%%%%%%%%%%%%%%%%%%%%%%%%%
\newcommand{\HPicS}{{Hom^{\oplus}_{\pic}}}
\newcommand{\HPerm}{{Hom^{\oplus}_{\PCat}}}
\newcommand{\HopL}{{Hom^{\text{un-opLax}}_{\PCat}}}

\newtheorem{thm}{Theorem}[section]
\newtheorem{lem}[thm]{Lemma}
\newtheorem{conj}[thm]{Conjecture}
\newtheorem{coro}[thm]{Corollary}
\newtheorem{prop}[thm]{Proposition}

\theoremstyle{definition}
\newtheorem{df}[thm]{Definition}
\newtheorem{nota}[thm]{Notation}

\newtheorem{ex}[thm]{Example}
\newtheorem{exs}[thm]{Examples}

\theoremstyle{remark}
\newtheorem{rem}{Remark}
\newtheorem*{note}{Note}

\newtheorem{ack}{Acknowledgments}

\newcommand{\ChI}{{\textit{\v C}}\textit{ech}}
\newcommand{\Ch}{{\v C}ech}

\newcommand{\ChZG}{hermitian line $0$-gerbe}
\renewcommand{\theack}{$\! \! \!$}

\newcommand{\Leins}{\mathfrak{L}}
\newcommand{\ChG}{flat hermitian line $1$-gerbe}
\newcommand{\ChC}{hermitian line $1$-cocycle}
\newcommand{\ChGG}{flat hermitian line $2$-gerbe}
\newcommand{\ChCC}{hermitian line $2$-cocycle}
\newcommand{\id}{id}
\newcommand{\SigA}{\Sigma^\A}
\newcommand{\totSigInf}[1]{\mathbb{L}\Sigma^{\infty}{#1}}
\newcommand{\SigAvec}[1]{\Sigma^\A_{\length{#1}}}
\newcommand{\length}[1]{\mid \vec{#1} \mid}
\newcommand{\underMap}[1]{\mid #1 \mid}
\newcommand{\bProd}{\text{box product}}
\newcommand{\EPs}{\E^{\textit{Ps}}}
\newcommand{\ePs}{\epsilon^{\textit{Ps}}}
\newcommand{\EStr}{\E_{\textit{str}}}
\newcommand{\Pnor}{\Lbb}
\newcommand{\elR}{\textit{el}^{R}(\gn{n}){\mid}_{\N}}
\newcommand{\elRGen}[1]{\textit{el}^{R}(\gn{#1}){\mid}_{\N}}
\newcommand{\elRGeneral}[1]{\textit{el}^{R}(#1){\mid}_{\N}}
\newcommand{\elGen}[2]{\textit{el}^{#1}{#2}{\mid}_{\N}}
\newcommand{\elKbar}{\textit{el}^{\Kbar}(\gn{n}){\mid}_{\N}}
\newcommand{\unit}[1]{\mathrm{1}_{#1}}
\newcommand{\bike}{\text{bicycle}}
\newcommand{\bikes}{\text{bicycles}}
\newcommand{\psbike}{\text{pseudo bicycle}}
\newcommand{\psbikes}{\text{pseudo bicycles}}
\newcommand{\PsBikes}[2]{\textbf{Bikes}^{\textit{Ps}}(#1,#2)}
\newcommand{\strbike}{\text{strict bicycle}}
\newcommand{\strbikes}{\text{strict bicycles}}
\newcommand{\StrBikes}[2]{\textbf{Bikes}^{\textit{Str}}(#1,#2)}
\newcommand{\Bikes}[2]{\textbf{Bikes}^{\textit{Ps}}(#1,#2)}
\newcommand{\NLC}[2]{\textbf{NorLaxCones}(#1,#2)}
\newcommand{\LCns}[2]{\textbf{LaxCones}(#1,#2)}
\newcommand{\PsCns}[2]{\textbf{PsCones}(#1,#2)}
\newcommand{\LC}{\mathfrak{C}}
\newcommand{\Coker}{Coker}
\newcommand{\Com}{Com}
\newcommand{\Hom}{Hom}
\newcommand{\Mor}{Mor}
\newcommand{\Map}{Map}
\newcommand{\alg}{alg}
\newcommand{\an}{an}
\newcommand{\Ker}{Ker}
\newcommand{\Ob}{Ob}
\newcommand{\SymMon}{\mathbf{SymMon}}
\newcommand{\Kbar}{\overline{\K}}
\newcommand{\KSeg}{\K}
\newcommand{\KSegLax}{{\Kbb}}
%newcommand{\KSeg}[2]{{\K}^{\textit{Seg}}(#1)(#2)}
\newcommand{\Proj}{\mathbf{Proj}}
\newcommand{\topo}{\mathbf{Top}}
\newcommand{\inrt}{\mathbf{Inrt}}
\newcommand{\act}{\mathbf{Act}}
\newcommand{\n}{\underline{n}}
\newcommand{\pn}{\underline{n}^+}
\newcommand{\kan}{\mathcal{K}}
\newcommand{\pkan}{\mathcal{K}_\bullet}
\newcommand{\Kan}{\mathbf{Kan}}
\newcommand{\pKan}{\mathbf{Kan}_\bullet}
\newcommand{\gp}{\mathcal{A}_\infty}
\newcommand{\mdl}{\mathcal{M}\textit{odel}}
\newcommand{\sSets}{\mathbf{sSets}}
\newcommand{\sSetsQ}{(\mathbf{sSets, Q})}
\newcommand{\sSetsK}{(\mathbf{sSets, \Kan})}
\newcommand{\pSSets}{\mathbf{sSets}_\bullet}
\newcommand{\pSSetsK}{(\mathbf{sSets}_\bullet, \Kan)}
\newcommand{\pSSetsQ}{(\mathbf{sSets_\bullet, Q})}
\newcommand{\Sets}{\mathbf{Sets}}
\newcommand{\cyl}{\mathbf{Cyl}}
\newcommand{\lin}{\mathcal{L}_\infty}
\newcommand{\Vect}{\mathbf{Vect}}
\newcommand{\Aut}{Aut}
\newcommand{\Ein}{E_\infty}
\newcommand{\EinS}{E_\infty{\text{- space}}}
\newcommand{\EinSs}{E_\infty{\text{- spaces}}}
\newcommand{\EinC}{\text{coherently commutative monoidal category}}
\newcommand{\EinCs}{\text{coherently commutative monoidal categories}}
\newcommand{\EinLO}{E_\infty{\text{- local object}}}
\newcommand{\EinSLO}{\E_\infty\S{\text{- local object}}}
\newcommand{\pic}{\mathcal{P}\textit{ic}}
\newcommand{\Dlin}{\pic}
\newcommand{\Gmn}{\Gamma \left(m_n \right)}
\newcommand{\bigS}{\mathbf{S}}
\newcommand{\bigA}{\mathbf{A}}
\newcommand{\bhom}{\mathbf{hom}}
\newcommand{\bHom}[3]{\mathbf{hom}_{#3}(#1, #2)}
\newcommand{\bhomK}{\mathbf{hom}({\textit{K}}^+,\textit{-})}
\newcommand{\Bhom}{\mathbf{Hom}}
\newcommand{\bhomk}{\mathbf{hom}^{{\textit{k}}^+}}
\newcommand{\CatHom}[3]{[#1,#2]^{#3}}
\newcommand{\pCatHom}[3]{[#1,#2]_\bullet^{#3}}
\newcommand{\pHomCat}[2]{[#1,#2]_{\bullet}}
\newcommand{\Dlino}{\pic^{\textit{op}}}
\newcommand{\lino}{\mathcal{L}^{\textit{op}}_\infty}
\newcommand{\lind}{\mathcal{L}^\delta_\infty}
\newcommand{\linK}{\mathcal{L}_\infty(\kan)}
\newcommand{\linC}{\mathcal{L}_\infty\text{-category}}
\newcommand{\linCs}{\mathcal{L}_\infty\text{-categories}}
\newcommand{\ainCs}{\text{additive} \ \infty-\text{categories}}
\newcommand{\ainC}{\text{additive} \ \infty-\text{category}}
\newcommand{\inC}{\infty\text{-category}}
\newcommand{\inCs}{\infty\text{-categories}}
\newcommand{\gS}{{\Gamma}\text{-space}}
\newcommand{\ggS}{\Gamma \times \Gamma\text{-space}}
\newcommand{\gSs}{\Gamma\text{-spaces}}
\newcommand{\ggSs}{\Gamma \times \Gamma\text{-spaces}}
\newcommand{\gO}{\Gamma-\text{object}}
\newcommand{\gSCat}{{\Gamma}\text{-space category}}
\newcommand{\gCat}{{\Gamma}\text{- category}}
\newcommand{\gCats}{{\Gamma}\text{- categories}}
\newcommand{\gCAT}{{\Gamma}\Cat}
\newcommand{\gCATop}{\gCAT^{\textit{op}}}
\newcommand{\OplaxExp}[2]{\left({#2}^{#1}\right)^{\textit{Ps}}}
\newcommand{\OplaxNatExp}[2]{\left({#2}^{#1}\right)^{\textit{Ps}}}
\newcommand{\SMExp}[2]{\left(#2^{\Leins(#1)}\right)^{\textit{Ps}}}
\newcommand{\SMNatExp}[2]{\left(#2^{\Leins(#1)}\right)^{\textit{Ps}}}
\newcommand{\OplaxSec}[2]{{\Gamma}^{\textit{OL}} \left(\N, \OplaxExp{#1}{#2}\right)}
\newcommand{\PsExp}[2]{\left({#2}^{#1}\right)^{\textit{Ps}}}
\newcommand{\SMSec}[2]{{\Gamma}_{\otimes}^{\textit{str}}\left(\Leins, \OplaxExp{\Leins(#1)}{#2}\right)}
\newcommand{\OLsmSec}[2]{{\Gamma}_{\otimes}^{\textit{OL}}\left(\N, \PsExp{#1}{#2}\right)}
\newcommand{\SMHom}[2]{[#1,#2]_\otimes}
\newcommand{\SMHomNor}[2]{[#1,#2]_\bullet^\otimes}
\newcommand{\OLHom}[2]{[#1,#2]^{\textit{OL}}}
\newcommand{\OLSMHom}[2]{[#1,#2]^{\textit{OL}}_\otimes}
\newcommand{\OLHomNor}[2]{[#1,#2]^{\textit{OL}}}
\newcommand{\StrSMHom}[2]{[#1,#2]_\otimes^{\textit{str}}}
\newcommand{\parMHom}[2]{\mathbf{SBikes}(#1,#2)}
\newcommand{\StrSec}[2]{{\Gamma}^{Str}(#1,#2)}
\newcommand{\StrExp}[2]{\left(#2^{A{#1}}\right)^{\textit{Str}}}
\newcommand{\Odot}[2]{\underset{#1=1}{\overset{#2} \odot}}
\newcommand{\Prod}[2]{\underset{#1=1}{\overset{#2} \prod}}
\newcommand{\Otimes}[2]{\underset{#1=1}{\overset{#2} \otimes}}
\newcommand{\OtimesC}[3]{\underset{#1=1}{\overset{#2} \underset{#3} \otimes}}
\newcommand{\pss}{\mathbf{S}_\bullet}
\newcommand{\gSC}{{{{\Gamma}}\mathcal{S}}}
\newcommand{\ggSC}{{\Gamma\Gamma\mathcal{S}}}
\newcommand{\gSD}{\mathbf{D}(\gSC^{\textit{f}})}
\newcommand{\sCat}{\mathbf{sCat}}
\newcommand{\pSCat}{\mathbf{sCat}_\bullet}
\newcommand{\Dhom}{\mathbf{R}Hom_{\pic}}
\newcommand{\gop}{\Gamma^{\textit{op}}}
\newcommand{\gn}[1]{\Gamma^{#1}}
\newcommand{\gnk}[2]{\Gamma^{#1}({#2}^+)}
\newcommand{\gnf}[2]{\Gamma^{#1}({#2})}
\newcommand{\ggn}[1]{\Gamma\Gamma^{#1}}
\newcommand{\fU}{\mathbf{U}}
\newcommand{\cDN}{\underset{\mathbf{D}[\textit{n}^+]}{\circ}}
\newcommand{\cDK}{\underset{\mathbf{D}[\textit{k}^+]}{\circ}}
\newcommand{\cDL}{\underset{\mathbf{D}[\textit{l}^+]}{\circ}}
\newcommand{\cD}{\underset{\gSD}{\circ}}
\newcommand{\cDT}{\underset{\gSD}{\widetilde{\circ}}}
\newcommand{\ppsSets}{\sSets_{\bullet, \bullet}}
\newcommand{\gdHom}{\underline{Hom}_{\gSD}}
\newcommand{\HomU}{\underline{Hom}}
\newcommand{\ominf}{\Omega^\infty}
\newcommand{\ev}{ev}
\newcommand{\PNat}{\overline{\L}}
\newcommand{\PStr}{\L}
\newcommand{\PStrA}{\P^{\textit{str}}_\A}
\newcommand{\PNatnor}{\P^{\textit{Nat}}_{\textit{nor}}}
\newcommand{\PStrnor}{\P^{\textit{str}}_{\textit{nor}}}
\newcommand{\cu}{C(X;\mathfrak{U}_I)}
\newcommand{\Sing}{Sing}
\newcommand{\gSR}{{\Gamma}\Gamma \mathcal{S}}
\newcommand{\gSRP}{{\Gamma}\Gamma \mathcal{S}^{\textit{proj}}}
\newcommand{\tensPGSR}[2]{#1 \underset{\gSR}\wedge #2}
\newcommand{\pTensP}[3]{#1 \underset{#3}\wedge #2}
\newcommand{\MGCat}[2]{\underline{\map}_{\gCAT}({#1},{ #2})}
\newcommand{\MGBoxCat}[2]{\underline{\map}_{\gCAT}^{\Box}({#1},{ #2})}
\newcommand{\TensPFunc}[1]{- \underset{#1} \otimes -}
\newcommand{\TensP}[3]{#1 \underset{#3}\otimes #2}
\newcommand{\odotPFunc}[1]{- \underset{#1} \odot -}
\newcommand{\odotP}[3]{#1 \underset{#3}\odot #2}
\newcommand{\EgSRP}{\Ein\gSRP}
\newcommand{\Catop}{\Cat^{\textit{op}}}
\newcommand{\Catl}{\Cat_{\textit{l}}}
\newcommand{\SQCup}[2]{\underset{#1 = 1}{\overset{#2} \sqcup}}
\newcommand{\Ind}{\textit{Ind}}
\newcommand{\SFunc}[2]{\mathbf{SFunc}({#1} ; {#2})}
\newcommand{\CatFunc}[2]{\mathbf{CatFunc}({#1} ; {#2})}

\newcommand{\liminj}{\varinjlim}
\newcommand{\limproj}{\varprojlim}
\newcommand{\Lbb}{\overline{\overline{\L}}}
\newcommand{\Kbb}{\overline{\overline{\K}}}
\newcommand{\Nat}{\mathbb{N}}
\newcommand{\PLbb}{\P\overline{\overline{\L}}}
\newcommand{\ud}[1]{\underline{#1}}
\newcommand{\Gpd}{\mathbf{Gpd}}
\newcommand{\NatNum}{\mathbb{N}}

\def\ol#1{\overline{#1}}
\def\Pic{\mathbf{2}\mathcal P\textit{ic}}
\def\nc{\mathbb C}

\def\Z{\mathbb Z}
\def\P{\mathcal P}
\def\J{\mathcal J}
\def\I{\mathcal I}
\def\nC{\mathbb C}
\def\H{\mathcal H}
\def\A{\mathcal A}
\def\C{\mathcal C}
\def\D{\mathcal D}
\def\E{\mathcal E}
\def\G{\mathcal G}
\def\B{\mathcal B}
\def\L{\mathcal L}
\def\U{\mathcal U}
\def\K{\mathcal K}
\def\N{\mathcal N}
\def\M{\mathcal M}
\def\O{\mathcal O}
\def\R{\mathcal R}
\def\S{\mathcal S}

\newcommand{\undertilde}[1]{\underset{\sim}{#1}}
\newcommand{\abs}[1]{{\lvert#1\rvert}}
\newcommand{\mC}[1]{\mathfrak{C}(#1)}
\newcommand{\sigInf}[1]{\Sigma^{\infty}{#1}}
\newcommand{\x}[4]{\underset{#1, #2}{ \overset{#3, #4} \prod }}
\newcommand{\mA}[2]{\textit{Add}^n_{#1, #2}}
\newcommand{\mAK}[2]{\textit{Add}^k_{#1, #2}}
\newcommand{\mAL}[2]{\textit{Add}^l_{#1, #2}}
\newcommand{\Mdl}[2]{\L_\infty}% \left(#1, #2 \right)}
\newcommand{\inv}[1]{#1^{-1}}
\newcommand{\Lan}[2]{\mathbf{Lan}_{#1}(#2)}
\newcommand{\Bike}[3]{{#1}:{#2} \leadsto {#3}}
\newcommand{\POb}[1]{\mathbf{P}(#1)}
\newcommand{\SuppBike}[1]{{#1}_{\textit{Supp}}}
\newcommand{\Supp}[1]{{\textit{Supp}(#1)}}
\newcommand{\nBoxProd}[2]{\underset{i=1}{\overset{#1} \boxtimes} #2}
\newcommand{\BoxProd}[1]{\overset{#1} \boxtimes}
\newcommand{\nOdotProd}[2]{\underset{i=1}{\overset{#1} \odot} #2}
\newcommand{\OdotBike}[3]{\SuppBike{#1}(#2) \odot \SuppBike{#1}(#3)}
\newcommand{\partition}[2]{\delta^{#1}_{#2}}
\newcommand{\inclusion}[2]{\iota^{#1}_{#2}}

\newcommand{\del}{\partial}
\newcommand{\sCatO}{\mathcal{S}Cat_\O}
\newcommand{\FCgop}{\mathbf{F}\mC{N(\gop)}}
\newcommand{\hProd}{{\overset{h} \oplus}}
\newcommand{\hProdn}{\underset{n}{\overset{h} \oplus}}
\newcommand{\hProdk}[1]{\underset{#1}{\overset{h} \oplus}}
\newcommand{\map}{\mathcal{M}\textit{ap}}
\newcommand{\MapC}[3]{\mathcal{M}\textit{ap}_{#3}(#1, #2)}
\newcommand{\HMapC}[3]{\mathcal{M}\textit{ap}^{\textit{h}}_{#3}(#1, #2)}
\newcommand{\MGS}[2]{\underline{\map}_{\gSC}({#1},{ #2})}
\newcommand{\MGSR}[2]{\underline{\map}_{\gSR}({#1},{ #2})}
\newcommand{\MGSBox}[2]{\underline{\map}^{\Box}_{\gSC}({#1},{ #2})}
\newcommand{\Aqcat}[1]{\underline{#1}^\oplus}
\newcommand{\Cat}{\mathbf{Cat}}
\newcommand{\pCat}{\mathbf{CAT}_\bullet}
\newcommand{\SMCat}{\mathbf{SMCAT}}
\newcommand{\Sp}{\mathbf{Sp}}
\newcommand{\SpStb}{\mathbf{Sp}^{\textit{stable}}}
\newcommand{\SpStr}{\mathbf{Sp}^{\textit{strict}}}
\newcommand{\Sspec}{\mathbb{S}}
\newcommand{\pC}[1]{{#1}_\bullet}

\newcommand{\gM}{\Gamma \mathcal{M}}
\newcommand{\gMC}[1]{\Gamma \mathcal{#1}}
\newcommand{\gMP}{\gM^{\textit{proj}}}
\newcommand{\tensPGSC}[2]{#1 \underset{\gSC}\wedge #2}
\newcommand{\tensPGM}[2]{#1 \underset{\gM}\wedge #2}
\newcommand{\EgM}{\Ein\gM}
\newcommand{\EgSC}{\Ein\gSC}
\newcommand{\EgMP}{\Ein\gMP}

\begin{abstract}
In this paper we construct a symmetric monoidal closed model category of coherently commutative monoidal categories.
The main aim of this paper is to establish a Quillen equivalence
between a model category of coherently commutative monoidal categories
and a natural model category of Permutative (or strict symmetric monoidal) categories, $\PCat$, which is not a symmetric monoidal closed model category. 
 The right adjoint of this Quillen equivalence is the classical Segal's Nerve functor.

\end{abstract}

\maketitle

\tableofcontents

\section{Introduction}
In the paper \cite{BF78} Bousfield and Friedlander constructed a model category of $\gSs$ and proved that its homotopy category is equivalent to a homotopy category of \emph{connective spectra}. Their research was taken further by Schwede \cite{schwede} who constructed a Quillen equivalent model structure on $\gSs$ whose fibrant objects can be described as (pointed) \emph{spaces} having a \emph{coherently commutative group} structure. Schwede's model category is a symmetric monoidal closed model category under the \emph{smash product} defined by Lydakis in \cite{Lydakis} which is just a version of the \emph{Day convolution product} \cite{Day2} for normalized functors.
This paper is the first in a series of papers in which we study \emph{coherently commutative monoidal} objects in cartesian closed model categories. Our long term objective is to understand coherently commutative monoidal objects in suitable model categories of $(\infty, n)$-categories such as \cite{rezk2010}, \cite{ara2014}. The current paper deals with the case of ordinary categories which is an intermediate step towards achieving the aforementioned goal.
A $\gCat$ is a functor from the (skeletal) category of finite based sets $\gop$ into the category of all (small) categories $\Cat$. We denote the category of all $\gCats$ and natural transformations between them by $\gCAT$. Along the lines of the construction of the stable Q-model category in \cite{schwede} we construct a symmetric monoidal closed model category structure on $\gCAT$ which we refer to as the model category structure of \emph{coherently commutative monoidal categories}. A $\gCat$ is called a \emph{coherently commutative monoidal category} if it satifies the Segal condition, see \cite{segal} or equivalently it is a \emph{homotopy monoid} in $\Cat$ in the sense of Leinster \cite{Leinster}. These $\gCats$ are fibrant objects in our model category of coherently commutative monoidal categories. The main objective of this paper is to compare the category of all (small) symmetric monoidal categories with our model category of coherently commutative monoidal categories. There are many variants of the category of symmetric monoidal categories all of which have equivalent homotopy categories, see \cite[Theorem 3.9]{mandell}. All of these variant categories are \emph{fibration categories} but they do not have a model category structure. Due to this  shortcoming, in this paper we will work in a subcategory $\PCat$ which inherits a model category structure from $\Cat$. The objects of $\PCat$ are \emph{permutative categories} (also called strict symmetric monoidal categories) and maps are strict symmetric monoidal functors. We recall that a \emph{permutative category} is a symmetric monoidal category whose
tensor product is strictly associative and unital. It was shown by May \cite{May72} that permutative categories are algebras over the categorical Barrat-Eccles operad, in $\Cat$. We will construct a model category structure on  $\PCat$ by transferring along the functor $F$ which assigns to each category, the free permutative category generated by it. This functor is a right adjoint of an adjunction 
$U:\PCat \rightleftharpoons \Cat:F$ where $U$ is the forgetful functor. This model category structure also follows from results in \cite{BM1} and \cite{Lack}. We will refer to this model structure on $\PCat$ as the \emph{natural model category stucture} of permutative categories. The weak equivalences and fibrations in this model category structure are inherited from the natural model category structure on $\Cat$, namely they are equivalence of categories and isofibrations respectively.
The homotopy category of $\PCat$ is equivalent to the homotopy categories of all the variant categories of symmetric monoidal categories mentioned above. The model category of all (small)
 permutative categories is a $\Cat$-model category.
 However the shortcoming of the natural model category structure is that it is not a symmetric monoidal closed model category structure.
In the paper \cite{Schmitt} a tensor product of symmetric monoidal categories has been defined but this tensor product does not endow the category of symmetric monoidal categories with a symmetric monoidal closed structure.
However it would only endow a suitably defined homotopy category with a symmetric monoidal closed structure.
%The results of \cite{Lack} show that a homotopy category of $\SMCat$ obtained by inverting all symmetric monoidal functors which are equivalences of categories is equivalent to the homotopy category of $\PCat$.
%

The model category structure of coherently commutative monoidal categories on $\gCAT$ is obtained by localizing the projective (or strict) model category structure on $\gCAT$. The guiding principle of this construction is to introduce a \emph{semi-additive} structure on the homotopy category. We achieve this by inverting all canonical maps
\begin{equation*}
X \sqcup Y \to X \times Y
\end{equation*}
in the homotopy category of the projective model category structure on $\gCAT$.
% We recall that a categorical Barrat-Eccles operad is a $\Sigma$-cofibrant replacement of the commutative operad in $\Cat$, see \cite{BerMoer}.  We will construct two adjoint pairs of functors $(\PStr, \KSeg)$ and $(\PNat, \Kbar)$ between the category of all permutative categories and strict symmetric monoidal functors $\PCat$ and $\gCAT$.
%    The homotopy category of  $\SymMon$, the category of all (small) symmetric monoidal categories and (strong) symmetric monoidal functors, obtained by inverting those symmetric monoidal functors which induce equivalences between their underlying categories, is equivalent to the homotopy category of the natural model category of permutative categories. 
   %In this paper we will construct two modifications of $\P$, $\PNat$ and $\PStr$.
% We will construct another model category structure whose underlying category is $\gCAT$ and we will refer to it as the model category structure of \emph{coherently commutative monoidal categories}. 
 The fibrant objects in this model category structure are coherently commutative monoidal categories. 
% This model structure is obtained along the lines of the \emph{model structure of homotopy algebras} constructed in \cite{Badzioch}.
% A morphism of $\gCats$ $F:X \to Y$ is
% a weak equivalence in this model category if $\textit{hocolim} (\Leins{F})$ is a weak equivance of permutative categories.
 We show that $\gCAT$ is a symmetric monoidal closed model category with respect to the Day convolution product. % An adaptation of the smash product of $\gSs$ constructed in \cite{Lydakis} to the present scenario of $\gCats$
% endows $\gCAT$ with a symmetric monoidal structure.%
% It was shown by Leinster \cite{Leinster} that the category of all (small)
%$\gCats$, $\gCAT$, is isomorphic to the category of oplax monoidal functors from the (skeletal) category of finite unbased sets $\N$ into $\pCat$, where the latter is considered as a cartesian model category.
In the paper \cite{KS} the authors construct a model category of \emph{$E_\infty$-quasicategories} whose underlying category is the category of (honest) commutative monoids in a functor category. The authors go on further to describe a chain of Quillen equivalences between their model category and the model category of algebras over an $E_\infty$-operad in the Joyal model category of simplicial sets.
However they do not get a symmetric monoidal closed model category structure. Moreover in this paper we want to explicitly describe a pair of functors which give rise to a Quillen equivalence (in the case of ordinary categories).

 In the paper \cite{segal}, Segal described a functor from (small) symmetric monoidal categories to the category of infinite loop spaces, or equivalently, the category of connective spectra. This functor is often called Segal's $K$-theory functor because when applied to the symmetric monoidal category of finite rank projective modules over a ring $R$, the resulting (connective) spectrum is Quillen's algebraic K-theory of $R$.
  This functor factors into a composite of two functors, first of which takes values in the category of (small) $\gCats$ $\gCAT$, followed by a group completion  functor. In this paper we will refer to this first factor as \emph{Segal's Nerve} functor. We will construct an unnormalized version of the Segal's nerve functor and will denote it by $\KSeg$. The main result of this paper
 is that the \emph{unnormalized Segal's nerve functor} $\KSeg$ is the right Quillen functor of a Quillen equivalence between the natural model category of permutative categories and the model category of coherently commutative monoidal categories. Unfortunately, the left adjoint to $\KSeg$ does not have any simple description therefore in order to
%    The main aim of this paper is to show that Segal's Nerve functor is the right Quillen functor of a Quillen equivalence between the natural model category  of all (small) permutative category $\PCat$ and the $\Ein$-model category of $\gCats$.
 to prove our main result we will construct another Quillen equivalence, between the same two model categories, whose right adjoint is obtained by a \emph{thickening} of $\KSeg$.
 We will denote this by $\Kbar$ and refer to it as the \emph{thickened Segal's nerve} functor. The (skeletal) category of finite (unbased) sets whose objects are ordinal numbers is
 an \emph{enveloping category} of the commutative operad, see \cite{sharma3}.
In order to define a left adjoint to $\Kbar$ we will construct a \emph{symmetric monoidal completion} of  an oplax symmetric monoidal functor along the lines of Mandell \cite[Prop 4.2]{mandell}.
In order to do so we define a permutative category $\Leins$ equipped with an oplax symmetric monoidal inclusion functor $i:\N \to \Leins$, having the universal property that each oplax symmetric monoidal functor $X:\N \to \Cat$ extends uniquely to a symmetric monoidal functor $\Leins{X}:\Leins \to \Cat$ along the inclusion $i$. The category of oplax symmetric monoidal functors $\OLHom{\N}{\Cat}$
is isomorphic to $\gCAT$ therefore this symmetric monoidal extension defines a functor $\Leins:\gCAT \to \SMHom{\Leins}{\Cat}$.
Now the left adjoint to $\Kbar$, $\PNat$, can be described as the following composite
  \begin{equation*}
  \gCAT \overset{\Leins(-)} \to \SMHom{\Leins}{\Cat} \overset{hocolim} \to \PCat,
   \end{equation*}
 where $hocolim$ is a homotopy colimit functor.
 The relation between permutative categories and connective spectra has been well explored in
 \cite{Thomason}, \cite{mandell}. Thomason was the first one to show that every connective spectra is, up to equivalence, a K-theory of
 a permutative category. Mandell \cite{mandell} used a different approach to establish a similar result based on the equivalence between $\gSs$ and connective spectra established in \cite{BF78}. In the same paper Mandell proves a \emph{non-group completed} version of Thomason's theorem \cite[Theorem 1.4]{mandell} by constructing an oplax version of Segal's nerve functor. This theorem states that the oplax version of Segal's nerve functor induces an equivalence of homotopy theories between a homotopy theory of permutative categories and a homotopy theory of coherently commutative monoidal categories where the weak equivalences of both homotopy theories are based on weak equivalences in the Thomason model category structure on $\Cat$ \cite{Thomason2}.
 We have based our theory on the natural model category structure on $\Cat$ wherein the notion of weak equivalence is much stronger.
In a subsequent paper we plan to show that our main result implies the non-group completed version of Thomason's theorem \cite[Theorem 1.4]{mandell}.

 \begin{ack}
  The author gratefully acknowledges support from CIRGET which is based at the Université du Québec à Montréal, at which
  a significant part of the research for this paper was performed. The author is most thankful to Andre Joyal for his mentorship during the author's postdoctoral fellowship in Montreal. Without his vision, guidance and motivation this research would not have been possible. The author is also thankful to Nick Gurski for his useful comments on the paper.
\end{ack}

%  He constructs a normalized version of the functor $\Lbb$, which he calles the \emph{inverse K-theory functor},
%  % In the paper \cite{mandell},
%  and shows that it induces an equivalence between a \emph{homotopy category} of permutative categories, which is obtained by inverting \emph{weak equivalences}, \emph{i.e.} those strict monoidal functors which induce a weak equivalence of simplicial sets upon applying the Nerve functor, and a \emph{homotopy category} of $\gCats$ obtained by inverting \emph{pre-stable} equivalences which are  those maps of $\gCats$ that induce a degree wise weak equivance upon applying an \emph{$\Ein$-completion} functor.
%  However, Mandell's inverse K-theory functor is NOT a left adjoint to the Segal's nerve functor. Moreover Mandell does not construct a right adjoint to his inverse K-theory functor. We will define a right adjoint $\Kbb$ to our unnormalized Mandell realization functor $\Lbb$. We go on further to show that the  pair $(\Lbb, \Kbb)$ is a Quillen equivalence between the natural model category $\PCat$ and the \emph{projective} model category structure on $\gCat$.
  \section{The Setup}
In this section we will review the machinery needed for various constructions in this paper.
We will begin with a review of symmetric monoidal categories and different types of functors between them functors between them.
We will also review $\gCats$ and collect some useful results about them. Most importantlt we will be reviewing the notion of \emph{Grothendieck construction} of $\Cat$ values functors and use those to construct \emph{Leinster's category} which will play a pivotal role in our theory. We will also review the \emph{natural} model category structure on $\Cat$ and $\pCat$.
\subsection[Preliminaries]{Preliminaries}
\label{Preliminaries}
 In this subsection we will briefly review the theory of permutative categories and monoidal and oplax functors between them. The definitions reviewed here and the notation specified here will be used throughout this paper.
\begin{df}
\label{SM-Cat}
A symmetric monoidal category is consists of a 7-tuple
\[
(C, -\otimes-, \unit{C}, \alpha, \beta_l, \beta_r, \gamma)
\]
 where $C$ is a category, $-\otimes-:C \times C \to C$ is a bifunctor, $\unit{C}$ is a distinguished object of $C$, 
\[
\alpha:(-\otimes-)\otimes- \Rightarrow -\otimes(-\otimes-)
\]
is a natural isomorphism called the associativity natural transformation, $\beta_l:\unit{C} \otimes - \Rightarrow id_C$ and $\beta_r:-\otimes \unit{C} \Rightarrow id_C$ are called the left and right unit natural isomorphisms and finally
\[
\gamma:(-\otimes-) \Rightarrow -\otimes- \circ \tau
\]
is the symmetry natural isomorphism. This data is subject to some conditions which are well documented in \cite[Sec. VII.1, VII.7]{MacL}
\end{df}
\begin{df}
A symmetric monoidal category $C$ is called either a \emph{permutative category} or a \emph{strict} symmetric monoidal category if the natural isomorphisms $\alpha$, $\beta_l$ and $\beta_r$ are the identity natural transformations.
\end{df}

 \begin{df}
 \label{oplax-sym-mon-functor}
 An  \emph{oplax symmetric monoidal} functor $F$ is a triple $(F, \lambda_F, \epsilon_F)$, where $F:C \to D$ is a functor  between symmetric monoidal categories $C$ and $D$, 
  \begin{equation*}
  \label{oplax-nat-trans}
  \lambda_F:F \circ (- \underset{C} \otimes -) \Rightarrow (- \underset{D} \otimes -) \circ (F \times F)
  \end{equation*}
  is a natural transformation 
  and $\epsilon_F:F(\unit{C}) \to \unit{D}$ is a morphism in $D$, such that the following
  three conditions are satisfied
  \begin{enumerate}[label = {OL.\arabic*}, ref={OL.\arabic*}]
 \item \label{OpL-unit} For each objects $c \in Ob(C)$, the following diagram commutes
 \[
  \xymatrix@C=12mm{
 F(\unit{C} \underset{C} \otimes c) \ar[r]^{\lambda_F(\unit{C}, c) \ \ \ }  \ar[d]_{F(\beta^C_l(c))}  
 &F(\unit{C}) \underset{D} \otimes F(c) \ar[d]^{\epsilon_F \underset{D} \otimes id_{F(c)}} \\
 F(c) \ar[r]_{\inv{\beta^D_l(F(c))} \ \ \ \ } & \unit{D} \underset{D} \otimes F(c)
 }
 \]
 \item \label{OpL-symmetry} For each pair of objects $c_1, c_2 \in Ob(C)$, the following diagram commutes
 \[
  \xymatrix@C=22mm{
 F(c_1\underset{C} \otimes c_2) \ar[r]^{\lambda_F(c_1, c_2) \ \ \ }  \ar[d]_{F(\gamma_C(c_1,c_2))}  
 &F(c_1) \underset{D} \otimes F(c_2) \ar[d]^{\gamma_D(F(c_1), F(c_2)) \ } \\
 F(c_2 \underset{C} \otimes c_1) \ar[r]_{\lambda_F(c_2, c_1) } & F(c_2) \underset{D} \otimes F(c_1)
 }
 \] 
\item \label{OpL-associativity} For each triple of objects $c_1, c_2, c_3 \in Ob(C)$, the following diagram commutes
 \[
  \xymatrix@C=10mm{
  & F(c_1\underset{C} \otimes c_2) \underset{D} \otimes F(c_3) \ar[rd]^{\lambda_F(c_1, c_2) \underset{D} \otimes id_{F(c_3)}} \\
 F((c_1 \underset{C} \otimes c_2) \underset{C} \otimes c_3) \ar[ru]^{\lambda_F(c_1\underset{C} \otimes c_2, c_3) \ \ \ }  \ar[d]_{F(\alpha_C(c_1,c_2, c_3))}  
 &&(F(c_1) \underset{D} \otimes F(c_2)) \underset{D} \otimes F(c_3) \ar[d]^{\alpha_D(F(c_1), F(c_2),  F(c_3)) \ } \\
 F(c_1 \underset{C} \otimes (c_2 \underset{C} \otimes c_3)) \ar[rd]_{\lambda_F(c_1, c_2 \underset{C} \otimes c_3) \ \ \ \ } && F(c_1) \underset{D} \otimes (F(c_2) \underset{D} \otimes F(c_3)) \\
 & F(c_1) \underset{D} \otimes (F(c_2 \underset{C} \otimes c_3)) \ar[ru]_{ \ \  \ \ \ \ \ id_{F(c_1)} \underset{D} \otimes  \lambda_F(c_2, c_3)}
 }
 \]

\end{enumerate}

 \end{df}
 \begin{df}
 An \emph{oplax natural transformation} $\eta$ between two oplax symmetric monoidal functors $F:C \to D$ and $G:C \to D$ is a natural transformation $\eta:F \Rightarrow G$ such that for each pair of objects $c_1, c_2$ of the symmetric monoidal category $C$, the following two diagrams commute:
 \begin{equation*}
 \xymatrix@C=14mm{
 F(c_1 \underset{C} {\otimes} c_2) \ar[r]^{\eta(c_1 \underset{C} {\otimes} c_2)} \ar[d]_{\lambda_F(c_1, c_2)} & G(c_1 \underset{C} {\otimes} c_2) \ar[d]^{\lambda_G(c_1, c_2)}  & F(\unit{C}) \ar[rd]_{\epsilon_F} \ar[rr]^{\eta(\unit{C})} && G(\unit{C}) \ar[ld]^{\epsilon_G} \\ 
 F(c_1) \underset{D} {\otimes} F(c_2) \ar[r]_{\eta(c_1) \underset{D} \otimes \eta(c_2)}  & G(c_1) \underset{D} {\otimes} G(c_2) & & \unit{D}
 }
 \end{equation*}
 \end{df}
 \begin{nota}
 \label{unital-SM-Func}
 We will say that a functor $F:C \to D$ between two symmetric monoidal categories is \emph{unital} or \emph{normalized} if it preserves the unit of the symmetric monoidal structure \emph{i.e.} $F(\unit{C}) = \unit{D}$. In particular, 
 we will say that an oplax symmetric monoidal functor is a unital (or normalized) oplax symmetric monoidal functor if the morphism $\epsilon_F$ is the identity.
\end{nota}

\begin{prop}
\label{Ext-Fun-DegWise-iso}
Let $F:C \to D$ be a functor and 
\[
\phi = \lbrace \phi(c):F(c) \overset{\cong} \to G(c) \rbrace_{c \in Ob(C)}
\]
is a family of isomorphisms in $D$ indexed by the object set of $C$. Then there exists a unique functor $G:C \to D$ such that the family $\phi$ glues together into a natural isomorphism $\phi:F \Rightarrow G$.
\end{prop}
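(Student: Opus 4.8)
The plan is a standard \emph{transport of structure} argument: the objects $G(c)$ and the comparison isomorphisms $\phi(c)$ are already prescribed, so the only thing left to define is the action of $G$ on morphisms, and naturality of $\phi$ will pin this down completely. First I would record what naturality demands. If $\phi:F\Rightarrow G$ is to be natural, then for every morphism $f:c\to c'$ in $C$ the square
\[
\xymatrix@C=14mm{
F(c) \ar[r]^{\phi(c)} \ar[d]_{F(f)} & G(c) \ar[d]^{G(f)} \\
F(c') \ar[r]_{\phi(c')} & G(c')
}
\]
must commute, i.e. $G(f)\circ\phi(c)=\phi(c')\circ F(f)$. Since each $\phi(c)$ is an isomorphism, this equation has the unique solution
\[
G(f):=\phi(c')\circ F(f)\circ\inv{\phi(c)}.
\]
This immediately yields \emph{uniqueness}: any functor making $\phi$ natural must act on morphisms by exactly this formula.

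For \emph{existence} I would take the displayed formula as the definition of $G$ on morphisms (with $G$ on objects as given) and verify that it is functorial. Identities are preserved because $F(\id_c)=\id_{F(c)}$ forces $G(\id_c)=\phi(c)\circ\id_{F(c)}\circ\inv{\phi(c)}=\id_{G(c)}$. For composition, given $f:c\to c'$ and $g:c'\to c''$, the inner pair $\inv{\phi(c')}\circ\phi(c')$ telescopes to the identity, so
\[
G(g)\circ G(f)=\phi(c'')\circ F(g)\circ F(f)\circ\inv{\phi(c)}=\phi(c'')\circ F(g\circ f)\circ\inv{\phi(c)}=G(g\circ f),
\]
using functoriality of $F$ in the middle step. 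Hence $G$ is a genuine functor, and the defining formula is precisely the commutativity of the naturality square, so $\phi:F\Rightarrow G$ is a natural transformation; it is a natural \emph{isomorphism} because every component $\phi(c)$ is invertible by hypothesis.

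I expect no real obstacle here: the entire content is that inverting the isomorphisms $\phi(c)$ converts the naturality constraint into an explicit formula for $G(f)$, and the functoriality check is the one-line telescoping computation above. The only point worth stating carefully is that uniqueness and existence both issue from the same formula, so the proposition amounts to the observation that the naturality square can be solved for $G(f)$ in exactly one way.
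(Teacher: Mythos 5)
Your proof is correct: the paper states Proposition \ref{Ext-Fun-DegWise-iso} without any proof, and your transport-of-structure argument — solving the naturality square for $G(f):=\phi(c')\circ F(f)\circ\inv{\phi(c)}$, which gives uniqueness and existence simultaneously via the telescoping functoriality check — is exactly the standard argument the paper implicitly relies on.
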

The following lemma is a useful property of unital symmetric monoidal functors:
%\begin{lem}
%\label{unital-adj-eq}
%Let $(F, \lambda_F)$ be a strict symmetric monoidal functor between two symmetric monoidal categories $C$ and $D$ such that $F:C \to D$ is an equivalence of categories. Then there exists a unital functor $G:D \to C$ and two unital natural isomorphisms
%$\eta:id_C \cong GF$ and $\epsilon:FG \cong id_D$. 
%\end{lem}
%\begin{proof}
%
%\dots
%\end{proof}

\begin{lem}
\label{SM-Func-closed-isom}
Given a unital oplax symmetric monoidal functor $(F, \lambda_F)$ between two symmetric monoidal categories $C$ and $D$, a functor $G:C \to D$, and a unital natural isomorphism
$\alpha:F \cong G$, there is a unique natural isomorphism $\lambda_G$ which enhances $G$ to a unital oplax symmetric monoidal functor $(G, \lambda_G)$ such that $\alpha$ is an oplax symmetric monoidal natural isomorphism. If $(F, \lambda_F)$ is unital symmetric monoidal then so is $(G, \lambda_G)$.
\end{lem}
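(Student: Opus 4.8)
The plan is to define $\lambda_G$ by transporting $\lambda_F$ across the natural isomorphism $\alpha$, and then to check that every required axiom follows formally from the corresponding axiom for $F$ together with naturality. Since $\alpha$ is an oplax symmetric monoidal natural isomorphism precisely when, for each pair $c_1, c_2 \in \Ob(C)$, the square
\[
\lambda_G(c_1, c_2) \circ \alpha(c_1 \underset{C}\otimes c_2) = \left( \alpha(c_1) \underset{D}\otimes \alpha(c_2) \right) \circ \lambda_F(c_1, c_2)
\]
commutes (the left square in the definition of an oplax natural transformation) and since every component of $\alpha$ is invertible, this equation forces
\[
\lambda_G(c_1, c_2) = \left( \alpha(c_1) \underset{D}\otimes \alpha(c_2) \right) \circ \lambda_F(c_1, c_2) \circ \inv{\alpha(c_1 \underset{C}\otimes c_2)}.
\]
I would take this as the definition of $\lambda_G$, so uniqueness is immediate. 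Because $\alpha$ (hence $\inv{\alpha}$ and $\alpha \times \alpha$) is a natural transformation and $\lambda_F$ is one as well, the displayed formula exhibits $\lambda_G$ as a vertical composite of natural transformations $G \circ (- \underset{C}\otimes -) \Rightarrow (- \underset{D}\otimes -) \circ (G \times G)$, so $\lambda_G$ is automatically natural. I would set $\epsilon_G := \id_{\unit{D}}$, which is forced: since $\alpha$ is unital we have $\alpha(\unit{C}) = \id_{\unit{D}}$ and $G(\unit{C}) = \unit{D}$, and we require $(G, \lambda_G)$ to be unital.

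The heart of the argument is verifying the three oplax axioms \ref{OpL-unit}--\ref{OpL-associativity} for $(G, \lambda_G)$, and each follows by the same transport-of-structure recipe: substitute the defining formula for $\lambda_G$, push the structural isomorphism of $D$ past $\alpha \underset{D}\otimes \alpha$ using its naturality, apply the corresponding axiom for $(F, \lambda_F)$, and finally use naturality of $\alpha$ on the relevant structural morphism of $C$ to absorb the leftover $\inv{\alpha}$. For \ref{OpL-symmetry}, for instance, one computes
\[
\gamma_D(G(c_1), G(c_2)) \circ \lambda_G(c_1, c_2) = \left( \alpha(c_2) \underset{D}\otimes \alpha(c_1) \right) \circ \gamma_D(F(c_1), F(c_2)) \circ \lambda_F(c_1, c_2) \circ \inv{\alpha(c_1 \underset{C}\otimes c_2)}
\]
by naturality of $\gamma_D$, then rewrites $\gamma_D(F(c_1),F(c_2)) \circ \lambda_F(c_1,c_2) = \lambda_F(c_2,c_1) \circ F(\gamma_C(c_1,c_2))$ using \ref{OpL-symmetry} for $F$, and finally uses $F(\gamma_C(c_1,c_2)) \circ \inv{\alpha(c_1 \underset{C}\otimes c_2)} = \inv{\alpha(c_2 \underset{C}\otimes c_1)} \circ G(\gamma_C(c_1,c_2))$ (naturality of $\alpha$ on $\gamma_C(c_1,c_2)$) to recognize the result as $\lambda_G(c_2,c_1) \circ G(\gamma_C(c_1,c_2))$. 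The identical three-move recipe handles \ref{OpL-unit}, where the facts $\epsilon_F = \epsilon_G = \id$ and $\alpha(\unit{C}) = \id$ make the $\epsilon$-terms disappear, and \ref{OpL-associativity}. I expect \ref{OpL-associativity} to be the only real bookkeeping obstacle, since its hexagon has two $\lambda$-edges and thus requires inserting and cancelling $\alpha$/$\inv{\alpha}$ pairs at two intermediate objects and invoking naturality of the associator $\alpha_D$ twice; conceptually, though, it is the same transport argument.

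It remains to confirm that $\alpha$ is genuinely an oplax symmetric monoidal natural isomorphism and to treat the strong case. The naturality square holds by the very choice of $\lambda_G$, and the unit triangle relating $\epsilon_F$, $\epsilon_G$, and $\alpha(\unit{C})$ commutes trivially because all three morphisms are identities; hence $\alpha : (F, \lambda_F) \cong (G, \lambda_G)$ is an oplax symmetric monoidal natural isomorphism. Finally, if $(F, \lambda_F)$ is unital (strong) symmetric monoidal, then each $\lambda_F(c_1, c_2)$ is an isomorphism, and since $\alpha(c_1) \underset{D}\otimes \alpha(c_2)$ and $\inv{\alpha(c_1 \underset{C}\otimes c_2)}$ are isomorphisms, the defining formula presents $\lambda_G(c_1, c_2)$ as a composite of isomorphisms; thus $\lambda_G$ is invertible and $(G, \lambda_G)$ is again unital symmetric monoidal.
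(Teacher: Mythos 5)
Your proposal is correct and takes essentially the same approach as the paper's proof: both define $\lambda_G$ by conjugating $\lambda_F$ with $\alpha$ (the paper's formula \eqref{def-OL-str-G}), obtain uniqueness because the oplax-naturality square forces that formula, verify \ref{OpL-unit}--\ref{OpL-associativity} by the same transport recipe (naturality of the structural isomorphism of $D$, the corresponding axiom for $F$, naturality of $\alpha$), and deduce the strong case from the formula being a composite of isomorphisms. If anything, your write-up is slightly more explicit than the paper about why the formula is forced and about the unit constraint $\epsilon_G$.
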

\begin{proof}
We consider the following diagram:
 \begin{equation*}
 \label{unit-counit-SMM}
 \xymatrix@C=2mm{
 && C \times C \ar@/^2pc/[dd]^{F \times F} \ar@/_2pc/[dd]_{G \times G}\ar[rrrrrrrr]^{\TensPFunc{C}} &&&&&&&&C \ar@/^2pc/[dd]^{G} \ar@/_2pc/[dd]_{F } &&  \\
 &&& \ar@{=>}[ll]_{\ \alpha \times \alpha} &&&&&& \ar@{=>}[rr]^{\ \alpha} &&&& \\
 &&D \times D \ar[rrrrrrrr]_{\TensPFunc{D}} &&&&&&&& D &&
 }
 \end{equation*}
 This diagram helps us define a composite natural isomorphism $\lambda_{G}:G \circ (\TensPFunc{C}) \Rightarrow (\TensPFunc{D}) \circ G \times G$ as follows:
 \begin{equation}
 \label{def-OL-str-G}
 \lambda_{G} := (id_{\TensPFunc{D}} \circ \alpha \times \alpha) \cdot \lambda_F \cdot (\inv{\alpha} \circ id_{\TensPFunc{C}}).
 \end{equation}
 This composite natural isomorphism is the unique natural isomorphism which makes $\alpha$ a unital monoidal natural isomorphism.
 Now we have to check that $\lambda_G$ is a unital monoidal natural isomorphism with respect to the above definition. Clearly, $\lambda_G$ is unital because both $\alpha$ and $\lambda_F$ are unital natural isomorphisms. We first check
the symmetry condition \ref{OpL-symmetry}. This condition is satisfied because the following composite diagram commutes
\begin{equation*}
 \label{symm-cond-alpha}
 \xymatrix@C=12mm{
 G(\TensP{c_1}{c_2}{C}) \ar[r]^{\inv{\alpha}(\TensP{c_1}{c_2}{C})} \ar[d]_{G(\gamma^C(c_1, c_2))} & F(\TensP{c_1}{c_2}{C}) \ar[d]^{F(\gamma^C(c_1, c_2))} \ar[r]^{\lambda_F(c_1, c_2) \ \ \ }  &\TensP{F(c_1)}{F(c_2)}{D} \ar[d]^{\gamma^D(F(c_1), F(c_2))} \ar[r]^{\TensP{\alpha(c_1)}{\alpha(c_2)}{D}} & \TensP{G(c_1)}{G(c_2)}{D} \ar[d]^{\gamma^D(G(c_1), G(c_2))}  \\
 G(\TensP{c_2}{c_1}{C}) \ar[r]_{\inv{\alpha}(\TensP{c_2}{c_1}{C})} & F(\TensP{c_2}{c_1}{C}) \ar[r]_{\lambda_F(c_2, c_1) \ \ \ }  &\TensP{F(c_2)}{F(c_1)}{D} \ar[r]_{\TensP{\alpha(c_2)}{\alpha(c_1)}{D}} & \TensP{G(c_2)}{G(c_1)}{D}
 }
 \end{equation*}
The condition \ref{OpL-associativity} follows from the following equalities
\begin{multline*}
\alpha_D(G(c_1), G(c_2), G(c_3)) \circ  \TensP{\lambda_G(c_1, c_2)}{id_{G(c_3)}}{D} \circ \lambda_G(\TensP{c_1}{c_2}{C}, c_3) = \\
\TensP{(\TensP{\alpha(c_1)}{\alpha(c_2)}{D})}{\alpha(c_3)}{D}
 \circ
\alpha_D(F(c_1), F(c_2), F(c_3)) \circ 
  \TensP{\lambda_F(c_1, c_2)}{id_{F(c_3)}}{D} \circ \\\lambda_F(\TensP{c_1}{c_2}{C}, c_3) \circ
   \inv{\alpha}(\TensP{(\TensP{c_1}{c_2}{C})}{c_3}{C}) = \\
 \TensP{(\TensP{\alpha(c_1)}{\alpha(c_2)}{D})}{\alpha(c_3)}{D}
 \circ \TensP{id_{F(c_1)}}{\lambda_F(c_1, c_2)}{D} \circ \lambda_F(c_1, \TensP{c_2}{c_3}{C}) \circ F(\alpha_C(c_1, c_2, c_3)) \\
 \circ  \inv{\alpha}(\TensP{(\TensP{c_1}{c_2}{C})}{c_3}{C}) = \\
 \TensP{id_{G(c_1)}}{\lambda_G(c_1, c_2)}{D} \circ \lambda_G(c_1, \TensP{c_2}{c_3}{C}) \circ G(\alpha_C(c_1, c_2, c_3)).
\end{multline*}

If $F= (F, \lambda_F)$ is a symmetric monoidal functor then so is $G= (G, \lambda_G)$ because \eqref{def-OL-str-G} is a natural isomorphism.
 
\end{proof}

In this paper we will frequently encounter oplax (and lax) symmetric monoidal functors.
In particular we will be dealing with such functors taking values in $\Cat$.
Let $\ast$ denote the terminal category.
\begin{df}
 We define a category $\Catl$ whose objects
are pairs $(C, c)$, where $C$ is a category and $c:\ast \to C$ is a functor whose value is
$c \in C$. A morphism from $(C, c)$ to $(D, d)$ in $\Catl$ is a pair $(F, \alpha)$, where
$F:C \to D$ is a functor and $\alpha:F(c) \to d$ is a map in $D$. The category $\Catl$ is
equipped with an obvious projection functor
\begin{equation}
\label{univ-proj-cat}
p_l:\Catl \to \Cat.
\end{equation}
We will refer to the functor $p_l$ as the \emph{universal left fibration over} $\Cat$.
\end{df}
Let $(F, \alpha):(C, c) \to (D, d)$ and $(G, \beta):(D, d) \to (E, e)$ be  A pair of composable arrows in $\Catl$. Then their composite is defined as follows:
\[
(G, \beta) \circ (F, \alpha) := (G \circ F, \beta \cdot (id_G \circ \alpha)),
\]
where $\cdot$ represents vertical composition and $\circ$ represents horizontal
composition of 2-arrows in $\Cat$.
\begin{df}
The \emph{category of elements} of a $\Cat$ valued functor $F:C \to \Cat$, denoted by
$\int^{c \in C} F(c)$ or $\textit{el} F$, is a category which is defined by the following pullback square in $\Cat$:
\begin{equation*}
\xymatrix{
\int^{c \in C} F(c) \ar[r]^{p_2} \ar[d]_{p_1}& \Catl \ar[d]^{p_l} \\
C \ar[r]_{F \ \ \  } & \Cat
 }
\end{equation*}
\end{df}
The category $\int^{c \in C} F(c)$ has the following description:

The object set of $\int^{c \in C} F(c)$ consists of all pairs $(c, d)$, where $c \in Ob(C)$ and $d :\ast \to F(c)$ is a functor.
A map $\phi:(c_1, d_1) \to (c_2, d_2)$ is a pair $(f, \alpha)$, where $f:c_1 \to c_2$ is a
map in $C$ and $\alpha:F(f) \circ d_1 \Rightarrow d_2$ is a natural transformation. The category of elements of $F$
is equipped with an obvious projection functor $p:\int^{c \in C} F \to C$.
\begin{rem}
\label{simp-des-cat-el}
We observe that a functor $d:\ast \to F(c)$ is the same as an object $d \in F(c)$.
Similarly a natural transformation $\alpha:F(f) \circ d \Rightarrow b$ is the same as an
arrow $\alpha:F(f)(d) \to b$ in $F(a)$, where $f:c \to a$ is an arrow in $C$. This observation leads to
a simpler equivalent description of $\int^{c \in C} F(c)$. The objects of $\int^{c \in C} F(c)$ are
pairs $(c, d)$, where $c \in C$ and $d \in F(c)$. A map from $(c, d)$ to $(a, b)$ in $\int^{c \in C} F(c)$
is a pair $(f, \alpha)$, where $f:c \to a$ is an arrow in $C$ and $\alpha:F(f)(d) \to b$ is an arrow in $F(a)$.
\end{rem}
Next we want to define a symmetric monoidal structure on the category $\int^{c \in C} F(c)$.
In order to do so we will use two functors which we now define.
The first is the following composite
\begin{equation*}
p^{\otimes}_1:\int^{c \in C} F(c) \times \int^{c \in C} F(c) \overset{p_1 \times p_1} \to C \times C
\overset{- \underset{C} \otimes -} \to C.
\end{equation*}
The second functor 
\begin{equation*}
p^{\otimes}_2:\int^{c \in C} F(c) \times \int^{c \in C} F(c) \to \Catl
\end{equation*}
is defined on objects as follows:
\[
p^{\otimes}_2((c_1, d_1), (c_2, d_2)) := d_1 \otimes d_2,
\]
where the map on the right is defined by the following composite
\[
\ast \overset{((d_1, d_2))} \to F(c_1) \times F(c_2) \overset{\lambda_F((c_1, c_2))}
\to F(c_1 \underset{C} \otimes c_2 ).
\]
 Let $(f_1, \alpha_1): (c_1, d_1) \to (a_1, b_1)$ and $(f_2, \alpha_2):(c_2, d_2) \to (a_2, b_2)$
 be two maps in $\int^{c \in C} F(c)$. The functor is defined on arrows as follows:
\[
p^{\otimes}_2((f_1, \alpha_1), (f_2, \alpha_2)) := (F(f_1 \underset{C} \otimes f_2), \alpha_1 \otimes \alpha_2),
\]
where the second component $\alpha_1 \otimes \alpha_2$ is a natural transformation
\[
\alpha_1 \otimes \alpha_2: F(f_1 \underset{C} \otimes f_2) \circ \lambda_F((c_1, c_2)) \circ (d_1, d_2)
\Rightarrow \lambda_F((a_1, a_2)) \circ (b_1, b_2).
\]
In order to define this natural transformation, consider the following diagram:
\begin{equation*}
\xymatrix@C=16mm@R=12mm{
&& F(c_1 \underset{C} \otimes c_2 ) \ar[rdd]^{F(f_1 \underset{C} \otimes f_2)}  \\
& F(c_1) \times F(c_2) \ar[ru]^{\lambda_F((c_1, c_2)) \ \ \ \ } \ar[rd]^{ \ \ \ \ F(f_1) \times F(f_2)} 
\ar@{=>}[d]_{(\alpha_1, \alpha_2)} \\
\ast \ar[ru]^{((d_1, d_2))  \ \ \ \ }  \ar[rr]_{((b_1, b_2)) \ \ \ \ \ \ } & & F(a_1) \times F(a_2) \ar[r]_{\ \ \lambda_F((a_1, a_2))} & F(a_1 \underset{C} \otimes a_2 )
}
\end{equation*}
Now we define
\[
\alpha_1 \otimes \alpha_2 := id_{\lambda_F((a_1, a_2))} \circ (\alpha_1, \alpha_2).
\]
The arrow $\alpha_1 \otimes \alpha_2(\ast)$ has domain $\lambda_F((a_1, a_2))(F(f_1)(d_1(\ast)), F(f_2)(d_2(\ast))) \in F(a_1 \underset{C} \otimes a_2 )$. The following diagram
 \[
 \xymatrix@C=16mm{ 
 & \ast \ar[ld]_{((d_1, d_2)) \ \ } \ar[rd]^{ \ \ ((b_1, b_2))} \\
 F(c_1) \times F(c_2) \ar[d]_{\lambda_F((c_1, c_2))} \ar[rr]^{F(f_1) \times F(f_2)}
 && F(a_1) \times F(a_2) \ar[d]^{\lambda_F((a_1, a_2))}  \\
 F(c_1 \underset{C} \otimes c_2)  \ar[rr]_{F(f_1 \underset{C} \otimes f_2)}  && F(a_1 \underset{C} \otimes a_2) 
 }
 \]
 shows that 
 \[
 F(f_1 \underset{C} \otimes f_2)(\lambda_F((c_1, c_2))(d_1(\ast), d_2(\ast))) = \lambda_F(a_1, a_2)(F(f_1)(d_1(\ast)), F(f_2)(d_2(\ast))).
 \]
Now we have to verify that $p^{\otimes}_2$ is a bifunctor.
 Let $(g_1, \beta_1): (a_1, b_1) \to (x_1, z_1)$ and $(g_2, \beta_2): (a_2, b_2) \to (x_2, z_2)$ be another pair of maps in $\int^{c \in C} F(c)$. The following diagram will be useful in establishing the desired bifunctorality:
 \begin{equation*}
\xymatrix@C=14mm@R=12mm{
&& F(c_1 \underset{C} \otimes c_2 ) \ar[rdd]^{F(f_1 \underset{C} \otimes f_2)}  \\
& F(c_1) \times F(c_2)  \ar[ru]^{\lambda_F((c_1, c_2)) \ \ \ \ } \ar[rd]^{ \ \ \ \ F(f_1) \times F(f_2)} 
\ar@{=>}[d]_{(\alpha_1, \alpha_2)} \\
\ast \ar@/_2pc/[rrrd]_{(z_1, z_2)} \ar[ru]^{(d_1, d_2)  \ \ \ \ }  \ar[rr]_{(b_1, b_2) \ \ \ \ \ \ } & & F(a_1) \times F(a_2) \ar@{=>}[d]_{(\beta_1, \beta_2)} \ar[rd]^{ \ \ \ \ F(g_1) \times F(g_2)} \ar[r]_{\ \ \lambda_F((a_1, a_2))} & F(a_1 \underset{C} \otimes a_2 ) \ar[rd]^{F(g_1 \underset{C} \otimes g_2)} \\
&&& F(x_1) \times F(x_2) \ar[r]_{\ \ \lambda_F((x_1, x_2))}& F(x_1 \underset{C} \otimes x_2 )
}
\end{equation*}
  Now consider the following chain of equalities:
 \begin{multline*}
 p^{\otimes}_2((g_1, \beta_1), (g_2, \beta_2)) \circ p^{\otimes}_2((f_1, \alpha_1), (f_2, \alpha_2)) = \\
  ((F(g_1 \underset{C} \otimes g_2), id_{\lambda_F((x_1, x_2))} \circ (\beta_1, \beta_2)) \circ ((F(f_1 \underset{C} \otimes f_2), id_{\lambda_F((a_1, a_2))} \circ (\alpha_1, \alpha_2)) = \\
  (F((g_1 \underset{C} \otimes g_2) \circ (f_1 \underset{C} \otimes f_2)),
 (id_{\lambda_F((x_1, x_2))} \circ (\beta_1, \beta_2)) \cdot ( id_{F(g_1 \underset{C} \otimes g_2)} \circ (\alpha_1, \alpha_2))) =  \\
 (F((g_1 \underset{C} \otimes g_2) \circ (f_1 \underset{C} \otimes f_2)),
 (id_{\lambda_F((x_1, x_2))} \circ (\beta_1, \beta_2)) \cdot (id_{\lambda_F((x_1, x_2))} \circ (id_{F(g_1) \times F(g_2))}  \circ (\alpha_1, \alpha_2))) =  \\
 (F(g_1f_1 \underset{C} \otimes g_2f_2), id_{\lambda_F((x_1, x_2))} \circ ((\beta_1, \beta_2) \cdot (id_{F(g_1) \times F(g_2))}  \circ (\alpha_1, \alpha_2))) = \\
(F(g_1f_1 \underset{C} \otimes g_2f_2), id_{\lambda_F((x_1, x_2))} \circ ((\beta_1 \cdot (id_{F(g_1)}\circ \alpha_1), \beta_2 \cdot (id_{F(g_2)} \circ \alpha_1))) = \\
p^{\otimes}_2((g_1, \beta_1) \circ (f_1, \alpha_1)),((g_2, \beta_2) \circ (f_2, \alpha_2)).
 \end{multline*}
 The above chain of equalities prove that $p^{\otimes}_2$ is a bifunctor.
 The definitions of the functors $p^{\otimes}_1$ and $p^{\otimes}_2$ imply that
 the outer rectangle in the following diagram is commutative:
 \begin{equation*}
 \label{tens-P-def-el}
 \xymatrix{
    \int^{c \in C} F(c) \times \int^{c \in C} F(c) \ar[rr]^{p^{\otimes}_2} \ar[dd]_{p^{\otimes}_1} \ar@{-->}[rd]_{- \boxtimes -} &&  \Catl \ar[dd]^{p_l}  \\
 & \int^{c \in C} F(c) \ar[ru] \ar[ld] \\
 C \ar[rr]_F & & \Cat
 }
 \end{equation*}
 Since $\int^{c \in C} F(c)$ is apullback of $p_l$ along $F$, therefore there exists
 a bifunctor
 \begin{equation}
 \label{tensP-cat-el-laxF}
  - \boxtimes -:\int^{c \in C} F(c) \times \int^{c \in C} F(c) \to \int^{c \in C} F(c)
 \end{equation}
 which makes the entire diagram commutative. We describe this bifunctor next.
 Let $((c_1, d_1),(c_2, d_2))$ be an object in $\int^{c \in C} F \times \int^{c \in C} F(c)$. 
 \[
 (c_1, d_1) \boxtimes (c_2, d_2) := (c_1 \underset{C} \otimes c_2, \lambda_F(c_1, c_2) \circ ((d_1, d_2))).
 \]
 Let $(f_1, \alpha_1): (c_1, d_1) \to (a_1, b_1)$ and $(f_2, \alpha_2):(c_2, d_2) \to (a_2, b_2)$
 be two maps in $\int^{c \in C} F(c)$. 
 \[
 (f_1, \alpha_1) \boxtimes(f_2, \alpha_2) := (f_1 \underset{C} \otimes f_2, id_{\lambda_F(a_1, a_2)} \circ (\alpha_1, \alpha_2)).
 \]
 
\begin{thm}
\label{perm-cat-elm}
The category of elements of a $\Cat$ valued lax symmetric monoidal functor whose domain is a permutative category is a permutative category.
\end{thm}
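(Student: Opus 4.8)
The plan is to take the bifunctor $-\boxtimes-$ already constructed in \eqref{tensP-cat-el-laxF} as the candidate tensor product on $\int^{c \in C} F(c)$, and to exhibit a unit object and a symmetry making it permutative. For the unit object I would take $(\unit{C}, u)$, where $u = \epsilon_F(\ast) \in F(\unit{C})$ is the object named by the lax unit constraint $\epsilon_F \colon \ast \to F(\unit{C})$. The whole argument rests on one principle: because the domain $C$ is permutative its associator and unitors are identities, and because the monoidal structure on $\Cat$ is the cartesian product — which is \emph{strictly} associative (a triple product) and whose unitors and symmetry are literally the projection and the swap of factors — the coherence isomorphisms appearing in the lax structure of $F$ collapse to strict equalities. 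It is exactly this collapse that upgrades the symmetric monoidal category of elements to a permutative one.

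For strict associativity I would compute both coordinates of $\bigl((c_1,d_1)\boxtimes(c_2,d_2)\bigr)\boxtimes(c_3,d_3)$ and of $(c_1,d_1)\boxtimes\bigl((c_2,d_2)\boxtimes(c_3,d_3)\bigr)$. The first coordinates agree by strict associativity of $\TensPFunc{C}$. The second coordinates are
\[
\lambda_F(\TensP{c_1}{c_2}{C},c_3)\bigl(\lambda_F(c_1,c_2)(d_1,d_2),d_3\bigr)
\quad\text{and}\quad
\lambda_F(c_1,\TensP{c_2}{c_3}{C})\bigl(d_1,\lambda_F(c_2,c_3)(d_2,d_3)\bigr),
\]
and these are equal on the nose: this is the lax associativity coherence (the dual of \ref{OpL-associativity}), in which both $\alpha_C$ and the cartesian associator on $\Cat$ are identities. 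The same coherence, now applied to the naturality $2$-cells $(\alpha_1,\alpha_2,\alpha_3)$ together with the interchange law, gives associativity on morphisms. The strict unit axioms are handled analogously: the first coordinate uses the strict unit of $C$, while the lax unit coherence (the dual of \ref{OpL-unit}), with the left/right unitors of $\Cat$ taken as the factor projections, forces $\lambda_F(\unit{C},c)(u,d)=d$ and its morphism analogue.

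For the symmetry I would set $\gamma_{\boxtimes}\bigl((c_1,d_1),(c_2,d_2)\bigr) := \bigl(\gamma_C(c_1,c_2),\, \id\bigr)$. The key observation is that the lax symmetry coherence (the dual of \ref{OpL-symmetry}), in which the symmetry of the cartesian product is the swap $\tau$, yields
\[
F\bigl(\gamma_C(c_1,c_2)\bigr)\bigl(\lambda_F(c_1,c_2)(d_1,d_2)\bigr) \;=\; \lambda_F(c_2,c_1)(d_2,d_1),
\]
so the domain and codomain of the required second component are \emph{equal} and the identity is a legitimate, hence invertible, choice. Using the description of morphisms in Remark~\ref{simp-des-cat-el}, naturality of $\gamma_{\boxtimes}$ reduces to naturality of $\gamma_C$ together with naturality of $\lambda_F$. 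Because every second component is an identity and the associator of $\boxtimes$ is strict, the involution axiom $\gamma_{\boxtimes}\circ\gamma_{\boxtimes}=\id$ and the hexagon axiom reduce termwise to the corresponding symmetric monoidal axioms already holding in $C$.

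The conceptually easy but technically laborious part — and the main obstacle — is the morphism-level bookkeeping: verifying bifunctoriality of $\boxtimes$ and associativity on arrows requires manipulating whiskered $2$-cells under horizontal and vertical composition and invoking interchange, exactly in the style of the bifunctoriality computation already carried out for $p^{\otimes}_2$ above. I expect no genuine difficulty there, only care; the real content is the recognition, recorded in the previous paragraph, that strictness of $C$ and of the cartesian structure on $\Cat$ force the lax coherence cells to be identities.
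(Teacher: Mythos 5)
Your proposal is correct and follows essentially the same route as the paper's proof: the same tensor product $-\boxtimes-$ inherited from the pullback construction, the same symmetry $(\gamma_C(c_1,c_2),\mathrm{id})$ whose second component is forced to be an identity by the lax symmetry coherence, and the same use of the lax associativity coherence together with strictness of $C$ (and of the cartesian structure on $\Cat$) to obtain associativity on the nose. The only addition is that you explicitly verify the strict unit $(\unit{C},\epsilon_F(\ast))$ via the lax unit coherence, a point the paper's proof leaves implicit.
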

\begin{proof}
Let $(F, \lambda_F):C \to \Cat$ be a lax symmetric monoidal functor. 
We begin by defining the symmetry natural isomorphism $\gamma_{\int^{c \in C} F(c)}$.
Let $(c_1, d_1), (c_2, d_2)$ be a pair of objects in $\int^{c \in C} F$. We define
 \begin{equation*}
 \gamma_{\int^{c \in C} F}(((c_1, d_1), (c_2, d_2))) :=
 (\gamma_C(c_1 , c_2), id).
 \end{equation*}
 The second component is identity because the lax symmetric monoidal structure of $F$
 implies that the following diagram commutes:
 \begin{equation*}
 \label{sym-nat-iso-el}
 \xymatrix@C=16mm@R=12mm{
 \ast \ar[rd]^{ \ \ \ \ ((d_1, d_2))} \ar@/^1pc/[rrd]^{ \ \ \ \ ((d_2, d_1))} \ar@/_1pc/[rdd] \\
 & F(c_1) \times F(c_2) \ar[r]^{\tau} \ar[d]^{\lambda_F((c_1, c_2))} & F(c_2) \times F(c_1) \ar[d]^{\lambda_F((c_2, c_1))} \\
 & F(c_1 \underset{C} \otimes c_2) \ar[r]_{\gamma_{C}((c_1, c_2))} & F(c_2 \underset{C} \otimes c_1)
 }
 \end{equation*} 
  It is easy to see that this defines a natural isomorphism. We claim that the proposed symmetric monoidal structure on $\int^{c \in C} F(c)$ is strictly associative. Given a third object $(c_3, d_3)$
  in $\int^{c \in C} F(c)$, we observe that
  \begin{multline*}
  ((c_1, d_1) \boxtimes (c_2, d_2)) \boxtimes (c_3, d_3) = \\
  (c_1 \underset{C} \otimes c_2 \underset{C} \otimes c_3, (\lambda_F((c_1 \underset{C} \otimes c_2, c_3)) \circ (\lambda_{F((c_1, c_2))} \times id)  \circ((d_1, d_2), d_3)).
  \end{multline*}
  The following diagram, which is the lax version of \eqref{OpL-associativity} for $F$,
  \begin{equation*}
  \label{associat-cat-el}
  \xymatrix@C=18mm@R=12mm{
  \ast   \ar[rd]^{ \ \ \ \ ((d_1, d_2), d_3)}  \ar@/_1pc/[rddd]_{ (d_1, (d_2, d_3)) \ \ \ \ } \\
 & (F(c_1) \times F(c_2)) \times F(c_3) \ar[dd]^{\alpha_{F(c_1), F(c_2), F(c_3)}} \ar[r]^{ \ \ \ \ \lambda_{F((c_1, c_2))} \times id}  & F(c_1 \underset{C} \otimes c_2) \times F(c_3) \ar[d]^{\lambda_F((c_1 \underset{C} \otimes c_2, c_3))} \\
 && F(c_1 \underset{C} \otimes c_2 \underset{C} \otimes c_3) \\
 & F(c_1) \times (F(c_2) \times F(c_3)) \ar[r]_{ \ \ \ \ id \times \lambda_{F((c_2, c_3))}} & F(c_1) \times F(c_2 \underset{C} \otimes c_3) \ar[u]_{\lambda_F{(c_1, c_2 \underset{C} \otimes c_3)}}
  }
 \end{equation*}
tells us that
\begin{multline*}
((c_1 \underset{C} \otimes c_2 \underset{C} \otimes c_3, \lambda_F((c_1 \underset{C} \otimes c_2, c_3) \circ (\lambda_{F((c_1, c_2))} \times id)  \circ((d_1, d_2), d_3)) = \\
(c_1 \underset{C} \otimes c_2 \underset{C} \otimes c_3, \lambda_F((c_1, c_2 \underset{C} \otimes c_3) \circ (id \times \lambda_{F((c_2, c_3))} )  \circ(d_1, (d_2, d_3)) = \\
(c_1, d_1) \boxtimes ((c_2, d_2) \boxtimes (c_3, d_3).
\end{multline*}
Thus we have proved that the symmetric monoidal functor is strictly associative. It is easy to see that the symmetry isomorphism $\gamma_{\int^{c \in C} F(c)}$ satisfies the \emph{hexagon diagram} because $C$ is a permutative category by assumption. Thus we have proved that$\int^{c \in C} F(c)$ is a permutative category.
 
\end{proof}
%Let $F:C^{op} \to \Cat$ be a functor. The above definition of category of elements provides a description namely the objects of $\int^{c \in C} F(c)$ arw pairs $(c, d)$, where $d \in F(c)$. A morphism between $(c, d)$ and $(e, k)$ is a pair $(f, \alpha)$, where $f:c \to e$ is a map in $C^{op}$ and 
%$\alpha:k \to F(f)(d)$ We want to describe the \emph{category of elements of $F$}.
%\begin{df}
%The category of elements of $F$, denoted $\int^{c \in C} F(c)$, has objects all pairs
%$(c, d)$, where $c \in C^{op}$ and $d \in F(c)$. A morphism from $(c, d)$ to $(e, k)$ in $\int^{c \in C} F(c)$
%is a pair $(f, \alpha)$, where $f:e \to c$ is a map in $C$ and $\alpha:d \to F(f)(k)$ is a map in $F(c)$.
%\end{df}
%Let $(f, \alpha):(c, d) \to (e, k)$ and $(g, \beta):(e, k) \to (a, b)$ be two composable arrows
%in $\int^{c \in C} F(c)$. Their composite is defined as follows:
%\begin{equation*}
%(g, \beta) \circ (f, \alpha) := (gf, \beta \circ F(g)(\alpha)).
%\end{equation*}
% The category $\Catop$ is also a symmetric monoidal category whose symmetric monoidal
% structure is given by the coproduct of categories. Now we want to consider oplax symmetric monoidal functors from a permutative category $C$ into $\Catop$.

\subsection[Review of $\gCats$]{Review of $\gCats$}
\label{Rev-gamma-cats}
In this subsection we will briefly review the theory of $\gCats$. We begin by introducing some notations which will be used throughout the paper.
\begin{nota}
We will denote by $\ud{n}$ the finite set $\lbrace 1, 2, \dots, n \rbrace$ and by $n^+$ the based set $\lbrace 0, 1, 2, \dots, n \rbrace$ whose basepoint is the element $0$.
\end{nota}
\begin{nota}
 We will denote by $\N$ the skeletal category of finite unbased sets whose objects are $\ud{n}$ for all $n \ge 0$ and maps are functions of unbased sets. The category $\N$ is a (strict) symmetric monoidal category whose symmetric monoidal structure will be denoted by $+$. For to objects $\ud{k}, \ud{l} \in \N$ their \emph{tensor product} is defined as follows:
 \[
 \ud{k} + \ud{l} := \ud{k + l}.
 \]
\end{nota}
\begin{nota}
 We will denote by $\gop$ the skeletal category of finite based sets whose objects are $n^+$ for all $n \ge 0$ and maps are functions of based sets.
\end{nota}

\begin{nota}
 We denote by $\inrt$ the subcategory of $\gop$ having the same set of objects as $\gop$
 and intert morphisms.
\end{nota}
\begin{nota}
 We denote by $\act$ the subcategory of $\gop$ having the same set of objects as $\gop$
 and active morphisms.
\end{nota}
\begin{nota}
A map $f:\ud{n} \to \ud{m}$ in the category $\N$ uniquely determines an active map in $\gop$ which we will denote by $f^+:n^+ \to m^+$.
This map agrees with $f$ on non-zero elements of $n^+$.
\end{nota}
\begin{nota}
 Given a morphism $f:n^+ \to m^+$ in $\gop$, we denote by $\text{Supp}(f)$ the largest
 subset of $\n$ whose image under $f$ does not caontain the basepoint of $m^+$.
 The set $\text{Supp}(f)$ inherits an order from $\n$ and therefore could be regarded as
 an object of $\N$. We denote by $\text{Supp}(f)^+$ the based set $\text{Supp}(f) \sqcup \lbrace 0 \rbrace$
 regarded as an object of $\gop$ with order inherited from $\n$.
\end{nota}

\begin{prop}
 Each morphism in $\gop$ can be uniquely factored into a composite of an inert map followed
 by an active map in $\gop$.
\end{prop}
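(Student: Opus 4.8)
The plan is to produce the factorization explicitly from the support of a morphism together with the order the support inherits from $\n$ (as set up in the Notation above), and then to prove uniqueness by showing that any inert–active factorization is forced to agree with this one on each piece of data. Throughout I use the defining properties of the two classes: an active map $a$ is a based function with $a^{-1}(0)=\{0\}$ (equivalently, $a$ restricts to a genuine map $\underline{k}\to\underline{m}$ in $\N$, so that active maps are exactly those of the form $g^+$), while an inert map has a singleton preimage over each non-basepoint and is order-preserving on its support.

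\textbf{Existence.} Given $f:n^+\to m^+$, I would set $S=\text{Supp}(f)\subseteq\n$ and $k=|S|$, and regard $S$ as the object $\underline{k}$ of $\N$ via its inherited order, listing its elements as $i_1<\dots<i_k$. First I would define the inert map $p:n^+\to k^+$ to be the order-preserving collapse sending $i_j\mapsto j$ and every element of $\n\setminus S$ to the basepoint; each non-basepoint of $k^+$ then has exactly one preimage, so $p$ is inert. Next I would define $a:k^+\to m^+$ on non-basepoints by $a(j):=f(i_j)$; since $i_j\in\text{Supp}(f)$ forces $f(i_j)\neq 0$, we get $a^{-1}(0)=\{0\}$, so $a$ is active. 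A routine check on elements, splitting into the cases $i\in S$ and $i\notin S$, then gives $a\circ p=f$.

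\textbf{Uniqueness.} Suppose $f=a'\circ p'$ with $p':n^+\to l^+$ inert and $a':l^+\to m^+$ active. The key observation is that activeness of $a'$ gives $a'(x)=0\iff x=0$, so for every $i\in\n$ we have $f(i)=a'(p'(i))=0$ exactly when $p'(i)=0$; hence $(p')^{-1}(\underline{l})=S=\text{Supp}(f)$. Because $p'$ is inert it restricts to an order-preserving bijection $S\xrightarrow{\cong}\underline{l}$, which forces $l=k$ and identifies $p'$ with the order-preserving collapse $p$. Evaluating $a'$ at $j=p'(i_j)$ then yields $a'(j)=f(i_j)=a(j)$, so $a'=a$, establishing uniqueness.

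\textbf{Main obstacle.} The only delicate point is the uniqueness argument, and precisely the step where the intermediate object $k^+$ and the inert map $p$ are pinned down. This genuinely relies on inert maps being order-preserving on their support: otherwise one could postcompose $p$ with a nontrivial permutation of $\underline{k}$ and precompose $a$ with its inverse to obtain a different factorization of the same $f$. So the crux is to invoke the order that $\text{Supp}(f)$ inherits from $\n$ at exactly this point; once that is in place, the existence half and the verifications that $p$ is inert and $a$ is active are routine unwindings of the definitions.
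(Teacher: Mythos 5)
Your existence construction coincides with the paper's: the paper factors $f$ through $\text{Supp}(f)^+$ (with the order inherited from $\n$), taking $f_{\textit{inrt}}$ to be the projection onto the support and $f_{\textit{act}}$ to be the restriction of $f$, exactly as you do with your collapse $p$ and restriction $a$. Where you go beyond the paper is uniqueness: the paper's proof only exhibits the factorization and never argues that it is unique, so your second paragraph supplies something the printed proof omits. Your argument there is sound, and your closing observation is precisely the right one to flag: since the paper never spells out its definition of an inert morphism, if ``inert'' means only that each non-basepoint of the codomain has a singleton preimage (the standard convention, under which every bijection is both inert and active), then strict uniqueness fails --- your permutation trick $f=(a\circ\sigma^{-1})\circ(\sigma\circ p)$ is exactly the counterexample, already visible for $f=\mathrm{id}$ --- and the factorization is unique only up to unique isomorphism of the intermediate object. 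Under the convention you adopt (inert maps are order-preserving on their support, consistent with the paper's use of the inherited order on $\text{Supp}(f)$), your uniqueness proof is complete and correct: activeness of $a'$ pins down the support, inertness plus order-preservation pins down $p'=p$, and then $a'=a$ follows by evaluation. In short: same construction as the paper for existence, plus a correct uniqueness argument the paper lacks, together with an accurate diagnosis of the definitional hypothesis that uniqueness actually requires.
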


\begin{proof}
Any map $f:n^+ \to m^+$ in the category $\gop$ can be factored as follows:
\begin{equation}
\label{fact-in-gamma-op}
  \xymatrix@C=11mm{
 n^+  \ar[rd]_{f_{\textit{inrt}}}   \ar[rr]^{f}  &&m^+   \\
  &\text{Supp}(f)^+  \ar[ru]_{f_{\textit{act}}}
 }
\end{equation}
where $\text{Supp}(f) \subseteq n$ is the \emph{support} of the function $f$
\emph{i.e.}  $\text{Supp}(f)$ is the largest subset of $n$ whose elements are mapped
by $f$ to a non zero element of $m^+$. The map $f_{inrt}$ is the projection of
$n^+$ onto the support of $f$ and therefore $f_{inrt}$ is an inert map. The map
$f_{act}$ is the restriction of $f$ to $\text{Supp}(f) \subset \n$,
therefore it is an \emph{active} map in $\gop$.

\end{proof}

\begin{lem}
\label{oplax-rest-gcat}
 The restriction of a $\gCat$ $X$ to $\N$, namely the composite functor
 \[
  X|_{\N}:\N \to \gop \overset{X} \to \Cat 
 \]
 is an oplax symmetric monoidal functor.
 \end{lem}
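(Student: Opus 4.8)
The plan is to realize $X|_\N$ as an oplax symmetric monoidal functor with respect to the cartesian monoidal structure $(\Cat, \times, \ast)$ on the target, the unit being the terminal category $\ast$. Two pieces of structure must be produced. Since the monoidal unit $\ud{0}$ of $\N$ is sent by the embedding $\N \to \gop$ to the zero object $0^+$, I take $\epsilon_{X|_\N}:X(0^+) \to \ast$ to be the unique functor to the terminal category. For the oplax constraint I use the \emph{inert} (projection) maps supplied by the inert--active factorization: for each pair $\ud{k}, \ud{l}$ there are inert maps $\pi^{k,l}_1:(k+l)^+ \to k^+$, the identity on $\{1,\dots,k\}$ and the basepoint elsewhere, and $\pi^{k,l}_2:(k+l)^+ \to l^+$, sending $k+j \mapsto j$ and the other elements to the basepoint. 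Using the universal property of the product I then define
\begin{equation*}
\lambda_{X|_\N}(\ud{k},\ud{l}) := \langle X(\pi^{k,l}_1), X(\pi^{k,l}_2)\rangle : X((k+l)^+) \longrightarrow X(k^+) \times X(l^+).
\end{equation*}
This arrow runs out of the tensor, which is the correct variance for an \emph{oplax} (rather than lax) constraint; the lax structure used in Theorem \ref{perm-cat-elm} would instead be assembled from active maps.

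Next I would check naturality of $\lambda_{X|_\N}$. Given $f:\ud{k}\to\ud{k'}$ and $g:\ud{l}\to\ud{l'}$ in $\N$, a direct evaluation on elements shows that
\begin{equation*}
\pi^{k',l'}_1 \circ (f+g)^+ = f^+ \circ \pi^{k,l}_1, \qquad \pi^{k',l'}_2 \circ (f+g)^+ = g^+ \circ \pi^{k,l}_2
\end{equation*}
hold in $\gop$. Applying the functor $X$ and invoking the universal property of the cartesian product turns these two identities into the single naturality square for $\lambda_{X|_\N}$.

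It then remains to verify the three axioms \ref{OpL-unit}, \ref{OpL-symmetry}, \ref{OpL-associativity}. In each case the universal property of $\times$ lets me test the asserted equality one product factor at a time, and on each factor it collapses to an identity of morphisms in $\gop$ which becomes an identity in $\Cat$ upon applying $X$. Concretely: for \ref{OpL-symmetry} the block transposition $\gamma_\N(\ud{k},\ud{l})$ maps under $(-)^+$ to an active map $\gamma^+$, and one checks $\pi^{l,k}_1 \circ \gamma^+ = \pi^{k,l}_2$ and $\pi^{l,k}_2 \circ \gamma^+ = \pi^{k,l}_1$, which after applying $X$ match the swap $\tau$ of $\Cat$. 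For \ref{OpL-associativity} the composites of inert projections satisfy $\pi^{k_1,k_2}_1 \circ \pi^{k_1+k_2,k_3}_1 = \pi^{k_1,k_2+k_3}_1$ (both equal the inert projection onto the first block), with the analogous identities for the second and third factors; since $\N$ is strict its associator is the identity, and the re-bracketing isomorphism $\alpha_\Cat$ on the target is absorbed by the factor-wise comparison. For \ref{OpL-unit} one has $\pi^{0,n}_2 = \id_{n^+}$ while $\pi^{0,n}_1:n^+\to 0^+$ is the unique map, so $\epsilon_{X|_\N}\circ X(\pi^{0,n}_1)$ is the unique functor to $\ast$ and matches the left unitor of $\Cat$.

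The only genuinely nontrivial step is the first one: recognizing that the oplax constraint must be built from the inert projection maps. Once that choice is made, every remaining verification reduces to elementary bookkeeping with composites of inert and active maps in $\gop$, made into equalities in $\Cat$ by functoriality of $X$, the strictness of $\N$, and the universal property of the cartesian product. I expect the associativity axiom \ref{OpL-associativity} to demand the most care, purely because of the re-bracketing associator $\alpha_\Cat$; arguing factor-by-factor through the product is what keeps this routine.
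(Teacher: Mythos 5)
Your proposal is correct and takes essentially the same route as the paper's proof: the terminal map as $\epsilon_X$, the oplax constraint $\lambda_X(\ud{k},\ud{l}) = (X(\delta^{k+l}_k), X(\delta^{k+l}_l))$ assembled from the inert projections, and the same identities in $\gop$ among projections, sums $(f+g)^+$, and the block swap $\gamma_\N(\ud{k},\ud{l})^+$ to verify naturality and the axioms \ref{OpL-unit}, \ref{OpL-symmetry}, \ref{OpL-associativity}. If anything, you are slightly more explicit than the paper on \ref{OpL-associativity}, where the paper defers to an ``obvious (large) commutative diagram'' and you record the needed composition identities of inert projections.
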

 \begin{proof}
 	We begin by recalling that for any two categories $C$ and $D$ their product $C \times D$ is defined to be a category whose object set is $Ob(C) \times Ob(D)$ and the morphism set is $Mor(C) \times Mor(D)$. The symmetry isomorphism $\gamma^{\Cat}_{C, D}:C \times D \to D \times C$ is the obvious isomorphism which on objects is the bijection
 	\[
 	 \tau:Ob(C) \times Ob(D) \cong Ob(D) \times Ob(C).
 	 \]
 	We will define a triple $(X|_\N, \lambda_X, \epsilon_X)$ where $X|_{\N}$ is the restriction functor as defined above.
 	$\epsilon_X:X(0^+) = X|_\N(\ud{0}) \to \ast$ is the terminal map (recall that the terminal category $\ast$ the unit of the cartesian structure on $\Cat$). Now we define a natural transformation $\lambda_X$:
 	
  For any pair of objects $k, l \in Ob(\N)$ we have the following functor
  \begin{equation*}
  \label{partition-map}
  (X(\partition{k+l}{k}), X(\partition{k+l}{l})):X((k+l)^+) \to X(k^+) \times X(l^+).
  \end{equation*}
  Using this functor we will define a natural transformation
 \begin{equation*}
  \label{oplax-nat-trans-rest-gcat}
  \lambda_X:X|_{\N} \circ (- + -) \Rightarrow (- \times -) \circ (X|_{\N} \times X|_{\N}),
  \end{equation*}
 where $- + -$ and $- \times -$ are the bifunctors providing symmetric monoidal structures on
 $\N$ and $\Cat$ respectively. This natural transformation is defined as follows:
 \[
  \lambda_X(k, l) := (X(\partition{k+l}{k}), X(\partition{k+l}{l})).
 \]
 For each pair of maps $(f_1, f_2)$ in $\N$, where $f_1:k \to m$ and $f_2:l \to n$, we have
 the following two commutative diagrams
 \begin{equation*}
\label{nat-oplax-maps}
  \xymatrix@C=11mm{
 X((k+l)^+)  \ar[r]^{X(f_1 + f_2)}   \ar[d]_{X(\partition{k+l}{k})}  &X((m+n)^+) \ar[d]^{X(\partition{m+n}{m})}  & X((k+l)^+)  \ar[r]^{X(f_1 + f_2)}   \ar[d]_{X(\partition{k+l}{l})}  &X((m+n)^+) \ar[d]^{X(\partition{m+n}{n})} \\
  X(k^+)   \ar[r]_{X(f_1)} & X(m^+) & X(l^+)   \ar[r]_{X(f_2)} & X(n^+)
 }
\end{equation*}
 These diagrams together imply that $\lambda_X$ is a natural transformation.
 We observe that in the category $\gop$ we have the following equalities:
 \begin{equation}
 \partition{k+l}{k} \circ \tau(k, l) = \partition{l+k}{k} \hspace{1cm} \text{and} \hspace{1cm} \partition{k+l}{l} \circ \tau(k, l) = \partition{l+k}{l}
 \end{equation}
 where $\tau(k, l):(k+l)^+ \to (l+k)^+$ is the map $\gamma_\N(k, l)^+$ where $\gamma_\N(k, l):\ud{k+l} \to \ud{l+k}$ is the symmetry map in $\N$.
 These two equations give us the following two commutative diagrams
 \begin{equation*}
\label{nat-oplax-maps-2}
  \xymatrix@C=11mm{
 X((k+l)^+)  \ar[r]^{X(\tau(k,l))}   \ar[d]_{X(\partition{k+l}{k})}  &X((l+k)^+) \ar[d]^{X(\partition{l+k}{k})}  & X((k+l)^+)  \ar[r]^{X(\tau(k,l))}   \ar[d]_{X(\partition{k+l}{l})}  &X((l+k)^+) \ar[d]^{X(\partition{l+k}{l})} \\
  X(k^+)   \ar@{=}[r] & X(k^+) & X(l^+)   \ar@{=}[r] & X(l^+)
 }
\end{equation*}
The above two diagrams imply that the following diagram is commutative which implies that the functor $X|_\N$ satisfies the symmetry condition \ref{OpL-symmetry}.
 \begin{equation*}
\label{symmetry-oplax-cond}
  \xymatrix@C=14mm{
 X((k+l)^+)  \ar[r]^{X(\gamma^\N_{k,l})}   \ar[d]_{(X(\partition{k+l}{k}), X(\partition{k+l}{l}))}  &X((l+k)^+) \ar[d]^{(X(\partition{k+l}{l}), X(\partition{k+l}{k}))}   \\
  X(k^+) \times X(l^+)  \ar[r]_{\gamma^{\Cat}_{X(k^+), X(l^+)}} & X(l^+) \times X(k^+) &
 }
\end{equation*}
   The condition \ref{OpL-unit} follows from the commutativity of the following diagram for each $k^+ \in Ob(\gop)$:
   \begin{equation*}
   \label{unit-oplax-cond}
   \xymatrix@C=14mm{
   	& X(0^+) \times X(k^+) \ar[dd]^{(\epsilon_X, id)} \\
   	X(k^+) \ar[ru]^{(X(0_k), id)} \ar[rd]_{(t, id)} \\
   	& \ast \times X(k^+)
   }
\end{equation*}
where $t:X(k^+) \to \ast$ is the unique terminal map in $\Cat$ and $0_k:k^+ \to 0^+$ is the terminal map in $\gop$.
The condition \ref{OpL-associativity} follows from an obvious (large) commutative diagram.
 \end{proof}

\subsection[Natural model category structure on $\Cat$]{Natural model category structure on $\Cat$}
\label{Nat-Mdl-Str-CAT}
In this subsection we will review the \emph{natural model category
structure} on the category of all small categories $\Cat$.
 The weak equivalences in this model structure are \emph{equivalences of categories}. We begin by reviewing this notion:
 \begin{df}
 	\label{eq-of-cat}
 	A functor $F:C \to D$ is called an equivalence of categories if there exists another functor $G:D \to C$ and two natural isomorphisms
 	\begin{equation*}
 	\epsilon:FG \cong id_D \ \ \ \ \ \ \textit{and} \ \ \ \ \ \ \eta:id_C \cong GF
 	\end{equation*}
 	\end{df}
 The inclusion of the category of all (small) groupoids $\Gpd$ into $\Cat$ has a right adjoint which we denote by $J:\Cat \to \Gpd$. For any category $C$, $J(C)$ is the largest groupoid contained in $C$.
 The following charaterization of an equivalence of categories will be useful throughout the paper:
 \begin{lem}
 	\label{char-eq-cat}
 	A functor $F:C \to D$ is an equivalence of categories if and only if the following two induced functor are equivalences of groupoids:
 	\[
 	J(F):J(C) \to J(D) \ \ \ \ \textit{and} \ \ \ \ J([I, F]):J([I, C]) \to J([I, D])
 	\]
 \end{lem}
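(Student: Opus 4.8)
The plan is to reduce both implications to the classical characterization of an equivalence of categories as a functor that is fully faithful and essentially surjective, and to read off each of these three properties from the two hypotheses. Recall that $J$ assigns to a category its maximal subgroupoid, so the objects of $J(C)$ are the objects of $C$ and the morphisms of $J(C)$ are the isomorphisms of $C$; and that $[I,C]$ is the arrow category of $C$ (functors out of the free-living arrow $I=\{0\to1\}$), whose objects are the morphisms of $C$ and whose morphisms $u\to u'$ are the commutative squares. Consequently an object of $J([I,C])$ is a morphism of $C$, and a morphism of $J([I,C])$ is a commutative square both of whose vertical edges are isomorphisms.

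For the forward implication I would only observe that both $J(-)$ and $[I,-]$ send equivalences of categories to equivalences of categories: each preserves natural isomorphisms (the components of a natural isomorphism restrict into the maximal subgroupoid, and $[I,-]$ applies a functor componentwise), so a quasi-inverse $G$ of $F$ together with $\epsilon\colon FG\cong\mathrm{id}$ and $\eta\colon\mathrm{id}\cong GF$ is carried to a quasi-inverse of the image functor. Hence if $F$ is an equivalence then so is $[I,F]$, and therefore both $J(F)$ and $J([I,F])$ are equivalences of groupoids.

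The substance is the converse. Assuming $J(F)$ and $J([I,F])$ are equivalences of groupoids, I would verify that $F$ is essentially surjective, faithful and full. Essential surjectivity is immediate, since every object of $D$ lies in $J(D)$ and $J(F)$ is essentially surjective. For faithfulness, take $f,g\colon c\to c'$ with $F(f)=F(g)$ and regard $f,g$ as objects of $J([I,C])$; then $(\mathrm{id}_{F(c)},\mathrm{id}_{F(c')})$ is a legitimate isomorphism $F(f)\to F(g)$ in $J([I,D])$, so fullness of $J([I,F])$ yields a pair of isomorphisms $(p,q)\colon f\to g$ in $J([I,C])$ with $F(p)=\mathrm{id}_{F(c)}$, $F(q)=\mathrm{id}_{F(c')}$ and $qf=gp$; faithfulness of $J(F)$ then forces $p=\mathrm{id}_c$ and $q=\mathrm{id}_{c'}$, whence $f=g$. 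For fullness, given $v\colon F(c)\to F(c')$, essential surjectivity of $J([I,F])$ supplies a morphism $u\colon a\to a'$ of $C$ together with isomorphisms $s\colon F(a)\cong F(c)$ and $t\colon F(a')\cong F(c')$ satisfying $v=t\circ F(u)\circ s^{-1}$; full faithfulness of $J(F)$ lifts $s,t$ to isomorphisms $\sigma\colon a\cong c$ and $\tau\colon a'\cong c'$ in $C$, and then $f:=\tau\circ u\circ\sigma^{-1}$ satisfies $F(f)=v$.

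Once $F$ is known to be fully faithful and essentially surjective, I would conclude that it is an equivalence of categories by the standard argument, building a quasi-inverse by choosing for each $d\in D$ an object together with an isomorphism $F(c)\cong d$ and gluing these into a functor by Proposition \ref{Ext-Fun-DegWise-iso}. I expect the converse to be the only real obstacle: the delicate point is the bookkeeping of the commuting squares of isomorphisms, whereby faithfulness of $F$ is extracted from fullness of $J([I,F])$ together with faithfulness of $J(F)$, while fullness of $F$ is extracted from essential surjectivity of $J([I,F])$ together with full faithfulness of $J(F)$.
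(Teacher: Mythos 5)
Your proof is correct, and its skeleton matches the paper's: the forward direction by functoriality of $J(-)$ and $[I,-]$, and the converse by extracting essential surjectivity, faithfulness, and fullness of $F$ from the two hypotheses, then invoking the classical fact that a fully faithful, essentially surjective functor is an equivalence (which the paper also uses tacitly, since its own proof stops once those properties are established). The differences lie in how the converse is routed. You get essential surjectivity of $F$ directly from essential surjectivity of $J(F)$, whereas the paper extracts it from essential surjectivity of $J([I,F])$ applied to an identity arrow; your route is more economical. Your fullness argument coincides with the paper's: an arrow $v\colon F(c)\to F(c')$, viewed as an object of $J([I,D])$, is replaced up to a square of isomorphisms by some $F(u)$, and the two isomorphism legs are lifted through the fully faithful $J(F)$. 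Where you genuinely improve on the paper is faithfulness: the paper merely asserts uniqueness of the lifted arrow (``there exists a unique arrow $k^{-1}\circ g\circ h$'') without argument, while you derive it honestly by lifting the identity square witnessing $F(f)=F(g)$ through the \emph{fullness} of $J([I,F])$ and then applying faithfulness of $J(F)$ to conclude $p=\mathrm{id}_c$, $q=\mathrm{id}_{c'}$ --- notably, fullness of $J([I,F])$ is a hypothesis the paper's written proof never explicitly exploits. One small quibble: Proposition \ref{Ext-Fun-DegWise-iso} does not literally yield the quasi-inverse, since its hypotheses require an already-given functor with the same domain; the standard construction (choose $\epsilon_d\colon F(G(d))\cong d$ on objects and define $G$ on morphisms via the hom-set bijections provided by full faithfulness) is what you actually need, but this is the classical argument and does not affect correctness.
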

 \begin{proof}
 	$(\Rightarrow)$
 	It is easy to see that $J$ preserves equivalences of categories \emph{i.e.} if $F:C \to D$ is an equivalence of categories then $J(F):J(C) \to J(D)$ is also an equivalence.
 	An equivalence of categories induces an equivalence on its category of arrows, thus the functor $J([I; F])$ is also an equivalence of categories.
 	
 	$(\Leftarrow)$ Let us assume that the two conditions hold. Let $f:d \to e$ be an arrow in $D$ such that the domain and codomain objects $d$ and $e$ respectively are in the image of $F$. By assumption the functor $J([I, F])$ is essentially surjective therefore there exists an arrow $g:a \to b$ in $C$ such that the following diagram commutes:
 	\[
 	\xymatrix{
 		F(a) \ar[d]_{F(g)} \ar[r]^{\epsilon(d)} &d \ar[d]^f \\
 		F(b) \ar[r]_{\epsilon(e)} & e
 	}
 	\]
 	where the pair $(\epsilon(d), \epsilon(e))$ is an isomorphism in the arrow category $[I; D]$.
 	By the assumption that $J(F)$ is an equivalence of categories, there exist two unique (invertible) arrows $h$ and $k$ such that $F(h) = \epsilon(d)$ and $F(k) = \epsilon(e)$. This implies that there exists a unique arrow $\inv{k} \circ g \circ h$ such that
 	$f = F(k \circ g \circ \inv{h})$. Thus we have proved that $F$ is fully-faithful. Let $p$ be an object of $D$ which is NOT in the image of $F$ then the assumption of equivalence of $J([I, F])$ garuntees the existence of an isomorphism $m:x \to y$ in $C$ such that the following diagram commutes:
 	\[
 	\xymatrix{
 		F(x) \ar[d]_{F(m)} \ar[r]^{\epsilon(d)} &d \ar@{=}[d]^{id_d} \\
 		F(y) \ar[r]_{\epsilon(e)} & d
 	}
 	\]
 	Thus we have shown that $F$ is essentially surjective.
 \end{proof}
 A significant part of this section will be devouted to review properties
 of fibrations in this model structure, namely \emph{isofibrations}, which
 we now define:

\begin{df}
\label{isofibration}
 If $C$ and $D$ are categories, we shall say that a functor $F:C \to D$ is an \emph{isofibration}
 if for every object $c \in C$ and every isomorphism $v \in Mor(\D)$ with source $F(c)$,
 there exists an isomorphism $u \in C $ with source $c$ such that $F(u)=v$.
\end{df}
 \begin{nota}
 Let $J$ be the groupoid generated by one isomorphism $0 \cong 1$.
 We shall denote the inclusion $\lbrace 0 \rbrace \subset J$ as a
 map $d_1:0 \to J$ and the inclusion $\lbrace 1 \rbrace \subset J$
 by the map $d_0:1 \to J$.
 \end{nota}
 \begin{nota}
 \label{int-hom}
 Let $A$ and $B$ be two small categories, we will denote by $[A, B]$,
 the category of all functors from $A$ to $B$ and natural transformations
 between them. 
 \end{nota}
 The next proposition gives a characterization of isofibrations.
 \begin{prop}
 \label{char-of-isofib}
 A functor $F:C \to D$ is an isofibration if and only if it has the
 right lifting property with respect to the inclusion $i_0:{0}\hookrightarrow J$
 and therefore also with the inclusion $i_1:{1} \hookrightarrow J$.
 \end{prop}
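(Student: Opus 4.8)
The plan is to prove both directions at once by unwinding the right lifting property against $i_0$ into the defining clause of an isofibration; the entire statement is really a dictionary between lifting diagrams and the language of ``source'', ``isomorphism'', and ``$F(u)=v$'' used in Definition \ref{isofibration}.

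First I would record the two representability facts that make this dictionary mechanical. A functor $\{0\} \to C$ is precisely a choice of object $c$ of $C$, namely the image of the unique object $0$. And since $J$ is the free groupoid on a single generating arrow $0 \to 1$, a functor $J \to D$ is uniquely determined by the image of that generator, which is forced to be an isomorphism of $D$; hence functors $J \to D$ are in natural bijection with isomorphisms $v$ of $D$, the source of $v$ being the image of $0$ and its target the image of $1$.

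Next I would unwind a lifting problem for $F$ against $i_0$. Under the identifications above, a commuting square whose top edge is $\{0\} \to C$, left edge $i_0$, bottom edge $J \to D$ and right edge $F$ is exactly the data of an object $c \in C$ together with an isomorphism $v$ of $D$ whose source is $F(c)$, commutativity of the square being the equality of these sources. A diagonal filler $J \to C$ is an isomorphism $u$ of $C$; the upper triangle then forces the source of $u$ to be $c$, and the lower triangle forces $F(u)=v$. Consequently a solution exists for every such lifting problem if and only if, for every object $c$ and every isomorphism $v$ with source $F(c)$, there is an isomorphism $u$ with source $c$ satisfying $F(u)=v$, which is verbatim Definition \ref{isofibration}. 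This establishes the equivalence ``isofibration $\Longleftrightarrow$ RLP against $i_0$''.

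Finally, for the clause ``and therefore also with $i_1$'', I would invoke the flip automorphism $\sigma:J \to J$ that exchanges $0$ and $1$ and inverts the generating isomorphism; it satisfies $\sigma \circ i_0 = i_1$, so precomposition with $\sigma$ is a bijection between lifting problems against $i_0$ and against $i_1$, whence the two lifting properties coincide. Equivalently, and without $\sigma$, one passes to inverses: lifting an isomorphism with prescribed target reduces to lifting its inverse, which has prescribed source, and then inverting the resulting lift. I do not anticipate a genuine obstacle here; the only point needing care is the bijection between functors out of $J$ and isomorphisms of the target, after which each condition in the lifting square matches a word in the definition of isofibration.
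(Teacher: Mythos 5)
Your proposal is correct and follows essentially the same route as the paper: both unwind the lifting square against $i_0$ via the bijection between functors $J \to D$ and isomorphisms of $D$, recovering Definition \ref{isofibration} verbatim, and both handle $i_1$ by the flip automorphism $\sigma:J \to J$ with $\sigma \circ i_1$-type factorization (your inverse-passing alternative is a harmless variant of the same idea). No gaps.
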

 \begin{proof}
 Let us assume that $F:C \to D$ is an isofibration, then whenever
 we have a (outer) commutative square
 \[
\label{lift-isofibration}
  \xymatrix@C=11mm{
 0  \ar@{^{(}->}[d]_{i_0} \ar[r]  &C \ar[d]^{F}   \\
  J \ar@{-->}[ru] \ar[r] &D
 }
\]
it is easy to see that there exists a dotted arrow which makes the
entire diagram commute. Conversely, let us assume that the functor
$F$ has the right lifting property with respect to the inclusion functor
$i_0$. Let $f:F(c) \to d$ be an isomorphism in $D$, where $c \in Ob(C)$.
Now there exists a (unique) functor $A:J \to D$ such that $A(0 \cong 1) = f$
and we have the following (outer) commutative square
\[
  \xymatrix@C=11mm{
 0  \ar@{^{(}->}[d]_{i_0} \ar[r]^c  &C \ar[d]^{F}   \\
  J \ar@{-->}[ru]^L \ar[r]_A &D
 }
\]
By assumption there exists a dotted arrow $L$ making the
entire diagram commutative. This implies that
\[
F(L(0 \cong 1) ) = A(0 \cong 1) = f.
\]
Thus we have an isomorphism $L(0 \cong 1):c \to e$,
in $C$, such that $F(L(0 \cong 1) ) = f$. This proves that
$F$ is an isofibration. Thus we have proved that a functor
is an isofibration if and only if it has the right lifting property
with respect to $i_0$. Finally we will show that a functor has
the right lifting property with respect to $i_0$ if and only if it
has the right lifting property with respect to $i_1$. It would be
sufficient to observe that the right commutative square, in the
following diagram
\[
  \xymatrix{
 1 \ar@{^{(}->}[d]_{i_1}  \ar@{}[r]|*=0[@]{\cong} &0  \ar@{^{(}->}[d]_{i_0} \ar[r]^c  &C \ar[d]^{F}   \\
  J \ar[r]_{\sigma} \ar@{-->}[rru] &J \ar@{-->}[ru] \ar[r]_A &D
 }
\]

 has a lift if and only if the outer commutative diagram has
 a lift. The automorphism $\sigma:J \to J$ in the diagram above
 permutes the two objects of the groupoid $J$.
 \end{proof}
 We want to present a charaterization of \emph{acyclic isofibrations},
 \emph{i.e.} those functors of categories which are both an isofibration
 and an equivalence of categories, similar to the characterization of
 isofibrations given by proposition \ref{char-of-isofib}. The following
 property of acyclic isofibrations will be useful in achieving this goal:
 \begin{lem}
 \label{acyc-fib-surj-eq}
 An equivalence of categories is an isofibration iff it is surjective on objects.
 \end{lem}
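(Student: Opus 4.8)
The plan is to treat this as a biconditional under the standing hypothesis that $F:C \to D$ is an equivalence of categories, and to prove each implication separately using only the elementary consequences of Definition \ref{eq-of-cat}, namely that an equivalence is fully faithful and essentially surjective. No appeal to the lifting characterization of Proposition \ref{char-of-isofib} is needed; the cleanest route is to argue directly from Definition \ref{isofibration}.

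For the forward implication, I would assume $F$ is additionally an isofibration and deduce surjectivity on objects. Fix an object $d \in D$. Essential surjectivity of $F$ supplies an object $c \in C$ together with an isomorphism $v:F(c) \xrightarrow{\cong} d$ in $D$. Since $v$ is an isomorphism with source $F(c)$, the isofibration property produces an isomorphism $u:c \to e$ in $C$ with $F(u)=v$; the codomain of $F(u)$ is the codomain of $v$, so $F(e)=d$. Hence $d$ lies in the image of $F$ on objects, and $F$ is surjective on objects.

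For the reverse implication, I would assume $F$ is surjective on objects and verify Definition \ref{isofibration}. Fix $c \in C$ and an isomorphism $v:F(c) \to d$ in $D$. Surjectivity gives an object $e \in C$ with $F(e)=d$, so that $v$ becomes an isomorphism $F(c) \to F(e)$. Full faithfulness of $F$ then yields a unique morphism $u:c \to e$ with $F(u)=v$. It remains to see that $u$ is invertible: applying fullness to $\inv{v}$ produces $w:e \to c$ with $F(w)=\inv{v}$, and faithfulness forces $w \circ u = \id_c$ and $u \circ w = \id_e$ from the identities $F(w\circ u)=\id_{F(c)}$ and $F(u \circ w)=\id_{F(e)}$. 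Thus $u$ is an isomorphism with source $c$ lifting $v$, which is exactly the isofibration condition.

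There is no substantial obstacle here; the only points requiring care are bookkeeping ones. In the forward direction I must ensure the lifted isomorphism $u$ has codomain exactly the right object, i.e. that $F(e)$ equals $d$ on the nose rather than merely being isomorphic to it, which is automatic once $F(u)=v$. In the reverse direction the key structural fact is that a fully faithful functor reflects isomorphisms, and I would spell this out via the inverse-construction argument above rather than cite it, so that the proof remains self-contained.
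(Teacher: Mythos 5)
Your proof is correct and follows essentially the same route as the paper's: in the forward direction, essential surjectivity produces an isomorphism $v:F(c)\to d$ which the isofibration property lifts to an isomorphism in $C$ whose codomain then maps to $d$ on the nose, and in the reverse direction, surjectivity on objects plus full faithfulness supply the lift. The only difference is cosmetic — you spell out the inverse-construction argument that the lift $u$ is invertible, where the paper simply asserts this because $F$ is an equivalence.
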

 \begin{proof}
 Let $F:C \to D$ be an equivalence which is an isofibration.
 Then for every object $d \in Ob(D)$, there exists an object $c \in Ob(C)$
 together with an isomorphism $v: F(c)\to d$ because an equivalence is essentially
 surjective. There is then an isomorphism $u:c \to c'$ in $C$ such that $F(u)=v$ because $F$
 is an isofibration. We then have $F(c')= d$, and this shows that $F$ is surjective
 on objects. Conversely, let us show that an equivalence $F:C \to D$ surjective on
 objects is an isofibration. If $c$ is an object of $C$ and $v:F(c) \to d$ is an
 isomorphism in $D$, then there exists an object $c'\in C$ such that $F(c')=d$
 because $F$ is surjective on object by assumption. The map
 $F_{c,c'}Hom_{C}(c,c')\to Hom_D(F(c),F(c'))$ specified by the functor $F$
 is bijective because $F$ is an equivalence. Hence there exists
 a morphism $u: c \to c'$ such that $F(u)=v$. The morphism $u$ is invertible because
 $v$ is invertible and $F$ is an equivalence. This shows that $F$ is an isofibration.
 \end{proof}

 \begin{nota} 
 We will denote the category $0 \to 1$ either by $I$ or by $[1]$. We will
 denote the discrete category $\lbrace 0, 1 \rbrace$ either by $\partial I$
 or $\partial [1]$. We will denote the category $0 \overset{f_{01}} \to 1
 \overset{f_{12}} \to 2$ by $[2]$.
 \end{nota}
 Now we define a category $\partial [2]$ which has the same object set
 as the category $[2]$, namely $\lbrace 0, 1, 2 \rbrace$. The Hom sets
 of this category are defined as follows:
 \begin{equation*}
Hom_{\partial [2]}(i, j) = 
\begin{cases}
\lbrace f_{01} \rbrace, & \text{if} \ i=0 \ \ \text{and} \ \ j=1 \\
\lbrace f_{12} \rbrace, & \text{if} \ i=1 \ \ \text{and} \ \ j=2 \\
\lbrace f_{02}, f_{12} \circ f_{01} \rbrace, & \text{if} \ i=0 \ \ \text{and} \ \ j=2 \\
\lbrace id \rbrace, & \text{otherwise}.
\end{cases}
\end{equation*}
 We have the following functor
 \[
  \partial_2:\partial[2] \hookrightarrow [2]
 \]
 which is identity on objects. This functor sends the morphism
 $f_{01}(\text{resp.} \ f_{12})$ to the morphism $0 \to 1(\text{resp.} \ 1 \to 2)$ in the category $[2]$.
 Both morphisms $f_{02}, f_{12} \circ f_{01}$ are mapped to the composite morphism
 $0 \overset{f_{01}} \to 1 \overset{f_{12}} \to 2$. Similarly we have
 the map $\partial_1: \partial[1] \to [1]$ which is identity on objects.
 We have a third functor $\partial_0:\emptyset \to [0]$ which is obtained
 by the unique function $\emptyset \to \lbrace 0 \rbrace$. We will refer
 to these three functors as the \emph{boundary maps}.

 \begin{prop}
 \label{char-of-acyc-fib}
 A functor $F:C \to D$ is both isofibration and an equivalence of categories
 if and only if it has the right lifting property with respect to the three
 boundary maps $\partial_0, \partial_1$ and $\partial_2$.
 \end{prop}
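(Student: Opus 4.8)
The plan is to translate the right lifting property against each of the three boundary maps into a concrete categorical property of $F$, and then to identify the conjunction of these properties with the notion of an acyclic isofibration by means of Lemma \ref{acyc-fib-surj-eq}.

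First I would unwind the three lifting problems one at a time. A lifting problem against $\partial_0:\emptyset \to [0]$ is precisely the choice of an object $d \in Ob(D)$ (the bottom map), the top map being forced, and a lift is an object $c \in Ob(C)$ with $F(c)=d$; hence RLP against $\partial_0$ is equivalent to $F$ being surjective on objects. A lifting problem against $\partial_1:\partial[1]\to[1]$ consists of a pair of objects $c_0,c_1 \in C$ together with a morphism $g:F(c_0)\to F(c_1)$ in $D$, and a lift is a morphism $f:c_0\to c_1$ with $F(f)=g$; hence RLP against $\partial_1$ is equivalent to $F$ being full. Finally, a lifting problem against $\partial_2:\partial[2]\to[2]$ records three objects $c_0,c_1,c_2$ and three morphisms $a:c_0\to c_1$, $b:c_1\to c_2$, $e:c_0\to c_2$ of $C$ (the images of $f_{01}$, $f_{12}$, $f_{02}$) whose $F$-images satisfy $F(e)=F(b)\circ F(a)=F(b\circ a)$. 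Since any lift $[2]\to C$ is a functor and must send the unique arrow $0\to 2$ to $b\circ a$, the existence of a lift is equivalent to the equality $e=b\circ a$. Specializing to $c_1=c_0$, $a=id_{c_0}$ and $b$ arbitrary shows that RLP against $\partial_2$ forces any two parallel arrows of $C$ with equal $F$-image to coincide, that is, $F$ faithful; conversely faithfulness immediately supplies the filler. The one point needing care is this last translation: one must observe that the lift is forced to equal the composite, and that the degenerate choice $a=id_{c_0}$ already extracts faithfulness in full generality rather than some stronger condition.

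With these three translations in hand, the proposition reduces to the purely categorical statement that $F$ is simultaneously an isofibration and an equivalence of categories if and only if it is surjective on objects, full and faithful. For the forward direction I would use that an equivalence is automatically full and faithful, while Lemma \ref{acyc-fib-surj-eq} gives surjectivity on objects for an equivalence that is also an isofibration. For the converse, a functor that is full, faithful and surjective on objects is in particular essentially surjective, hence an equivalence by the standard characterization of equivalences (in the sense of Definition \ref{eq-of-cat}) as fully faithful, essentially surjective functors; Lemma \ref{acyc-fib-surj-eq} then upgrades it to an isofibration using surjectivity on objects. Combining the two reductions yields the claimed logical equivalence.

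The routine part of the argument is the bookkeeping of the three lifting squares, each of which decodes directly into one of the conditions \emph{surjective on objects}, \emph{full}, \emph{faithful}. The only genuinely delicate step is the analysis of $\partial_2$, where one must see precisely why the right lifting property there is exactly faithfulness; everything else follows from Lemma \ref{acyc-fib-surj-eq} together with the elementary observation that surjectivity on objects implies essential surjectivity.
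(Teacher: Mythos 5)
Your proof is correct, but it factors the argument differently from the paper. You decode each lifting property once and for all into an intrinsic property of $F$: RLP against $\partial_0$ is surjectivity on objects, against $\partial_1$ is fullness, and against $\partial_2$ is faithfulness (your two observations there — that any lift along $\partial_2$ is forced to send $0 \to 2$ to the composite $b \circ a$, and that the degenerate triangle with $a = id$ already extracts faithfulness in full — are exactly the needed points), and then reduce the proposition to the purely categorical statement that fully faithful plus surjective on objects coincides with equivalence plus isofibration, via Lemma \ref{acyc-fib-surj-eq} and the standard fully-faithful/essentially-surjective characterization of equivalences. The paper never isolates these three equivalences: in the forward direction it constructs the lifts by hand, and in the converse it uses $\partial_1$ to produce lifts $u, r$ of an isomorphism $v$ and its inverse, and then reads $\partial_2$ as forcing $r \circ u = id_c$ and $u \circ r = id_{c'}$ — that is, as enforcing invertibility of lifted maps rather than faithfulness. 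Your decomposition buys a cleaner converse and in fact repairs a looseness in the paper's: the paper sets $K(f_{02}) = r \circ u$, with which the lift exists trivially and forces nothing (one must take $K(f_{02}) = id_c$ for the lift to force $r \circ u = id_c$), and it then invokes Lemma \ref{acyc-fib-surj-eq} to conclude that $F$ is an equivalence even though that lemma takes an equivalence as hypothesis and faithfulness of $F$ was never verified — precisely the datum your reading of $\partial_2$ supplies. What the paper's hands-on route buys in exchange is staying entirely inside the lifting diagrams, without appeal to the (choice-dependent) characterization of equivalences as fully faithful and essentially surjective functors.
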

 \begin{proof}
 Let us first assume that $F$ is an isofibration as well as an
 equivalence of categories. Now Lemma \ref{acyc-fib-surj-eq} says
 that $F$ is surjective on objects which is equivalent to $F$ having
 the right lifting property with respect to the boundary map $\partial_0$.
 Now we observe that for any pair of objects $d, d' \in Ob(D)$, there
 exists a pair of objects $c, c' \in Ob(C)$ such that $F(c) = d$
 and $F(c') = d'$ and the morphism
 \[
  F_{c,c'}:Hom_C(c, c') \to Hom_D(d, d')
 \]
 is a bijection. This implies that $F$ has the right lifting
 property with respect to the morphism $\partial_1$. Whenever we
 have the following (outer) commutative diagram
 \[
  \xymatrix@C=11mm{
 \partial[2]  \ar[d]_{\partial_2} \ar[r]^K  &C \ar[d]^{F}   \\
  [2] \ar@{-->}[ru]^L \ar[r] &D
 }
\]
 we have the following equality
 \[
  F(K(f_{02})) = F(K(f_{12})) \circ F(K(f_{01})),
 \]
 where the maps $f_{02}$, $f_{12}$ and $f_{01}$ are defined above.
 The morphism
 \[
  F_{K(0), K(2)}:Hom_C(K(0), K(2)) \to Hom_D(F(K(0)), F(K(2)))
 \]
 is a bijection, this implies that the morphism $K(f_{02}):K(0) \to K(2)$
 is the same as the composite morphism $K(f_{12}) \circ K(f_{01}):K(0) \to K(2)$
 Now we are ready to define the lifting (dotted) arrow $L$. We define
 the object function of the functor $L$ to be the same as that of the functor $K$,
 \emph{i.e.} $L_{Ob} = K_{Ob}$. We define $L(f_{01}) = K(f_{01})$ and
 $L(f_{12}) = K(f_{12})$. Now the discussion above implies that this
 definition makes the entire diagram commute.
 
 Conversely, let us assume that the morphism $F$ has the right lifting
 property with respect to the three boundary maps. The morphism $F$
 having the right lifting property with respect to
 $\partial_0$ is equivalent to $F$ being surjective on objects.
 Now the right lifting property with respect to $\partial_1$ implies
 that for any map $g:d \to d'$ in the category $D$, there exists a
 map $w:c \to c'$ in $C$, such that $F(w) = g$, for each pair of objects $c, c' \in Ob(C)$
 such that $F(c) = d$ and $F(c') = d'$. Let $c \in Ob(C)$ and
 $v:F(c) \to d$ be an isomorphism in $D$. Now we can define a functor
 $A:[2] \to D$, on objects by $A(0) = A(2) = F(c)$, $A(1) = d$ and on morphisms by
 $A(f_{01}) = v$ and $A(f_{12}) = \inv{v}$. As mentioned earlier, the
 right lifting property with respect to $\partial_1$ implies that
 there exist two maps $u:c \to c'$ and $r:c' \to c$ such that $F(u) = v$
 and $F(r) = \inv{v}$. This allows us to define a functor
 $K:\partial[2] \to C$, on objects by $K(0)=K(2)=c$ and $K(1)=c'$
 and on morphisms by $K(f_{01}) = u$, $K(f_{12}) = r$ and $K(f_{02}) = r \circ u$.
 This definition gives us the following (outer) commutative diagram
 \[
  \xymatrix@C=11mm{
 \partial[2]  \ar[d]_{\partial_2} \ar[r]^K  &C \ar[d]^{F}   \\
  [2] \ar@{-->}[ru]^L \ar[r]_A &D
 }
\]
 Our aassumption of right lifting property with respect to $\partial_2$
 gives us a lift (dotted arrow) $L$ which makes the entire diagram commute.
 This implies the $r \circ u = id_c$. A similar argument will show
 that $u \circ r = id_{c'}$. Thus we have shown that $F$ is an isofibration
 which is surjective on objects. Lemma \ref{acyc-fib-surj-eq} says that $F$
 is both an equivalence of categories and an isofibration.

 \end{proof}

 \begin{df}
 We shall say that a functor $F:C \to D $ is \emph{monic (resp. surjective, bijective) on objects}
 if the object function of $F$, $F_{Ob}:Ob(C) \to Ob(D)$, is injective (resp. surjective, bijective).
 \end{df}
 \begin{thm} [\cite{AJ1}]
 \label{nat-model-cat-str}
 There is a combinatorial model category structure on the category of all small
 categories $\Cat$ in which
 \begin{enumerate}
 \item A cofibration is a functor which is monic on objects.
 \item A fibration is an isofibration and
 \item A weak-equivalence is an equivalence of categories.
  \end{enumerate}
  Further, this model category structure is cartesian closed and proper.
 We will call this model category structure as the \emph{natural model category structure on $\Cat$}.
 \end{thm}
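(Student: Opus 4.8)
The plan is to realise the three distinguished classes as a cofibrantly generated model structure by applying a standard recognition theorem for cofibrantly generated model categories: given a complete and cocomplete category with a class $W$ satisfying two-out-of-three and closed under retracts, together with two sets of maps $I$ and $J$ with small domains, one obtains such a model structure provided $J\text{-cell}\subseteq W\cap I\text{-cof}$, $I\text{-inj}\subseteq W\cap J\text{-inj}$, and $W\cap J\text{-inj}\subseteq I\text{-inj}$. I would take as generating cofibrations the three boundary maps, $I=\{\partial_0,\partial_1,\partial_2\}$, and as the single generating trivial cofibration $J=\{i_0\}$, the inclusion $\{0\}\hookrightarrow J$ of an object into the interval groupoid. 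The whole role of the preceding propositions is to compute the two injective classes: Proposition~\ref{char-of-acyc-fib} identifies $I\text{-inj}$ with the acyclic isofibrations (isofibrations that are equivalences), and Proposition~\ref{char-of-isofib} identifies $J\text{-inj}$ with the isofibrations.

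With these identifications most hypotheses become formal. Since $\Cat$ is locally finitely presentable it is complete and cocomplete, which also supplies combinatoriality; the domains $\emptyset,\partial[1],\partial[2],\{0\}$ are finite, hence small, so the smallness hypotheses are automatic. The class $W$ of equivalences of categories obviously satisfies two-out-of-three and is closed under retracts, and Lemma~\ref{char-eq-cat}, which characterises equivalences through $J(F)$ and $J([I,F])$, furnishes the accessibility of $W$ needed for combinatoriality. The inclusion $I\text{-inj}\subseteq W\cap J\text{-inj}$ is immediate: an acyclic isofibration is an equivalence and, being an isofibration, lies in $J\text{-inj}$ by Proposition~\ref{char-of-isofib}. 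The reverse inclusion $W\cap J\text{-inj}\subseteq I\text{-inj}$ is precisely Proposition~\ref{char-of-acyc-fib}, since $W\cap J\text{-inj}$ is the class of equivalences that are isofibrations. Thus the only condition requiring genuine work is $J\text{-cell}\subseteq W\cap I\text{-cof}$.

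This acyclicity condition is the main obstacle, and I would split it in two. For $J\text{-cell}\subseteq I\text{-cof}$ it suffices that $i_0$ lift against $I\text{-inj}$, i.e. against acyclic isofibrations; given a square whose bottom edge selects an isomorphism in the codomain, surjectivity on objects and full faithfulness of an acyclic isofibration (Lemma~\ref{acyc-fib-surj-eq}) produce the lift, and closure of left-lifting classes under pushout and transfinite composition finishes this part. For $J\text{-cell}\subseteq W$ I would first analyse one pushout of $i_0$ along a functor $\{0\}\to C$ selecting an object $c$: the pushout $C\cup_{\{0\}}J$ is $C$ with one new object freely made isomorphic to $c$, and the inclusion $C\to C\cup_{\{0\}}J$ is fully faithful on old objects and essentially surjective, hence an equivalence. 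The subtle step is passing to transfinite composites: a long composite of such ``adjoin an isomorphic object'' inclusions must again be an equivalence, which holds because hom-sets are computed in the filtered colimit (each transition being a bijection on the relevant homs) and every adjoined object remains isomorphic, by transfinite descent, to an object of the original category. Closure of $W$ under these filtered colimits is where I expect the real care to be needed.

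Finally I would identify the classes and record the extra properties. The fibrations are $J\text{-inj}$, the isofibrations (Proposition~\ref{char-of-isofib}); the trivial fibrations are $I\text{-inj}$, the acyclic isofibrations (Proposition~\ref{char-of-acyc-fib}); and a retract plus small-object argument identifies $I\text{-cof}$ with the functors monic on objects, using that $\partial_0,\partial_1,\partial_2$ are all monic on objects and that this class is closed under pushout, transfinite composition, and retract. For the cartesian closed structure one verifies the pushout--product axiom: on object sets the cartesian product of categories is the product of object sets, so the pushout--product of two monic-on-objects functors is again monic on objects, yielding the cofibration part, while the trivial case is controlled by the interval groupoid $J$ and the internal hom $[A,B]$. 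Properness is then automatic, since every object of $\Cat$ is cofibrant (each $\emptyset\to C$ is vacuously monic on objects) and fibrant (each $C\to\ast$ is trivially an isofibration), and a model category all of whose objects are bifibrant is both left and right proper.
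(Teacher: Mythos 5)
Your plan is sound, but note that the paper does not actually prove this theorem: it is imported wholesale from \cite{AJ1}, and what the paper supplies around the citation is exactly the scaffolding you use — Proposition \ref{char-of-isofib} computes the $J$-injectives (isofibrations, with $J=\{i_0\}$ sufficing because of the swap automorphism $\sigma$), Proposition \ref{char-of-acyc-fib} computes the $I$-injectives (acyclic isofibrations), and Lemma \ref{acyc-fib-surj-eq} gives the surjective-on-objects characterization you invoke. So your recognition-theorem argument is the standard proof (Joyal--Tierney style) that the citation points to, and it is essentially correct: the verification of $W\cap J\text{-inj}\subseteq I\text{-inj}$ and $I\text{-inj}\subseteq W\cap J\text{-inj}$ is indeed purely formal given those two propositions, and your analysis of the acyclicity condition is right — a pushout of $i_0$ freely adjoins one object together with an isomorphism to a chosen object, hence is fully faithful and essentially surjective, and the transfinite-composite step goes through by the induction you sketch (every adjoined object is isomorphic to an object of the base, and homs stabilize along the fully faithful transition functors). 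Two spots deserve more than a gesture, though neither is a genuine gap. First, the inclusion $\{\text{monic on objects}\}\subseteq I\text{-cof}$ is not delivered by the retract argument alone; you need the direct lifting construction of a monic-on-objects functor against a surjective-on-objects equivalence (choose object preimages compatibly using injectivity on objects, then define the lift on morphisms via the hom-set bijections) — this is precisely the argument the paper later writes out in its permutative and $\Gamma$-category analogues (the characterizations of cofibrations in $\PCat$ and Lemma \ref{char-cof}). Second, for the acyclic half of the pushout--product axiom, "controlled by $J$ and $[A,B]$" should be made precise: reduce to the generators and check that $i_0\Box\partial_k$ is a monic-on-objects equivalence, or dually verify via Lemma \ref{Q-bifunctor-char} that $\langle i,p\rangle$ is an acyclic isofibration when $i$ or $p$ is acyclic. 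One harmless overstatement: accessibility of $W$ (which you attribute to Lemma \ref{char-eq-cat}) is not needed for the Kan-style recognition theorem with explicit $I$ and $J$; local presentability of $\Cat$ plus cofibrant generation already yields combinatoriality.
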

\begin{nota}
We will denote by $0$ the terminal category having one object 
$0$ and just the identity map. 
\end{nota}
\begin{df}
\label{pointed-cat}
A small \emph{pointed} category is a pair $(C, \phi)$ consisting of a small category $C$ and a functor $0 \to C$. A \emph{basepoint preserving}
functor between two pointed categories $(C, \phi)$ and $(D, \psi)$ is a
functor $F:C \to D$ such that the following diagram commutes
\[
\label{pointed-functor}
  \xymatrix@C=11mm{
 & 0  \ar[rd]^{\psi} \ar[ld]_{\phi}   \\
  C \ar[rr]_{F} &&D
 }
\]
\end{df}
Every model category uniquely determines a model
category structure on that category of its pointed objects, see
\cite[Proposition 4.1.1]{JT1}. Thus we have the following theorem:
 \begin{thm} 
 \label{nat-model-str-pointed-cat}
 There is a model category structure on the category of all pointed small
 categories and basepoint preserving functors $\pCat$ in which
 \begin{enumerate}
 \item A cofibration is a basepoint preserving functor which is monic on objects.
 \item A fibration is a basepoint preserving functor which is also an isofibration of (unbased) categories and
 \item A weak-equivalence is a basepoint preserving functor which is also equivalence of (unbased) categories.
  \end{enumerate}
%  Further, this model category structure is cartesian closed and proper.
 We will call this model category structure as the \emph{natural model
 category structure on $\pCat$}.
 \end{thm}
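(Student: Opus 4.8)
The plan is to obtain this model structure as a formal consequence of the natural model category structure on $\Cat$ established in Theorem~\ref{nat-model-cat-str}, rather than by verifying the model category axioms directly. The key observation is that, since $0$ denotes the terminal object of $\Cat$, the category $\pCat$ of pointed small categories from Definition~\ref{pointed-cat} is precisely the coslice category $0/\Cat$ of objects under the terminal object --- that is, the category of \emph{pointed objects} of $\Cat$ in the usual sense. A basepoint preserving functor is exactly a morphism in this coslice, and the forgetful functor $U:\pCat \to \Cat$ discarding the basepoint is the canonical projection.

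First I would invoke the general principle that every model category determines a model category structure on its category of pointed objects, recorded in \cite[Proposition 4.1.1]{JT1}; more generally the coslice $A/\mathcal{M}$ under any object $A$ of a model category $\mathcal{M}$ inherits a model structure in which a morphism is a weak equivalence, fibration, or cofibration if and only if its image under the forgetful functor to $\mathcal{M}$ is. Applying this with $\mathcal{M} = \Cat$ equipped with the natural model structure and $A = 0$ produces a model category structure on $\pCat$ whose three distinguished classes are created by $U$.

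It then remains only to match these induced classes against the descriptions in the statement. A morphism $F:(C,\phi) \to (D,\psi)$ of $\pCat$ is by construction a weak equivalence (resp.\ fibration, cofibration) exactly when the underlying functor $U(F) = F$ is one in $\Cat$; by Theorem~\ref{nat-model-cat-str} this means $F$ is an equivalence of categories (resp.\ an isofibration, a functor monic on objects). Since every morphism of $\pCat$ is by definition basepoint preserving, this reproduces verbatim the three conditions listed in the theorem.

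The proof carries essentially no genuine obstacle; the only points requiring care are bookkeeping ones --- confirming that $0$ really is the terminal object of $\Cat$, so that $\pCat$ is genuinely a category of pointed objects and not a more general coslice, and checking that the forgetful functor indeed \emph{creates} (not merely reflects) the three classes, which is exactly the content of the cited proposition. No separate argument is needed concerning properness or a cartesian structure, since the theorem asserts only the existence of the model structure together with the identification of its cofibrations, fibrations, and weak equivalences.
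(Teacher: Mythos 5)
Your proposal matches the paper's own argument: the paper likewise obtains the theorem by citing \cite[Proposition 4.1.1]{JT1}, which transfers the natural model structure on $\Cat$ (Theorem \ref{nat-model-cat-str}) to its category of pointed objects, with the three classes created by the forgetful functor. Your extra bookkeeping --- identifying $\pCat$ as the coslice $0/\Cat$ and matching the induced classes to the stated descriptions --- is exactly the implicit content of the paper's one-line justification, so the approaches coincide.
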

 %\end{df}
 Let $J^+$ denote the category $J \coprod \ast$ \emph{i.e.} the category
 having two connected components $J$ and the terminal $\ast$.
 We will consider $J^+$ as a pointed category having basepoint $\ast$.
 Let $0^+$ and $1^+$ denote the discrete pointed categories $0 \coprod \ast$
 and $1 \coprod \ast$ respectively, both having basepoints $\ast$.
 Let $I$ denote the category $0 \to 1$. As above we denote by $I^+$
 the category $I \coprod \ast$. We will use the following result later in this paper
\begin{thm}
 \label{comb-mdl-cat-pointed-cat}
 The natural model category structure on $\pCat$ is a
 combinatorial model category structure.
\end{thm}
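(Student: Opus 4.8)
The plan is to verify the two defining properties of a combinatorial model category: that the underlying category $\pCat$ is locally presentable, and that the natural model structure of Theorem~\ref{nat-model-str-pointed-cat} is cofibrantly generated. Throughout I would use the forgetful functor $U:\pCat \to \Cat$ sending $(C, \phi)$ to $C$, together with its left adjoint $(-)_+:\Cat \to \pCat$ given by $A_+ := A \coprod \ast$ pointed at the adjoined object $\ast$. The adjunction $\Hom_{\pCat}(A_+, (C, \phi)) \cong \Hom_{\Cat}(A, U(C, \phi))$ is immediate, since a basepoint-preserving functor out of $A \coprod \ast$ is forced on $\ast$ and is otherwise an arbitrary functor on $A$. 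Note that this left adjoint produces precisely the objects $0^+$, $1^+$, $I^+$ and $J^+$ introduced above.

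For local presentability, I would invoke the well-known fact that $\Cat$ is locally finitely presentable. Since $\pCat$ is by definition the coslice category $0 \downarrow \Cat$ of objects under the terminal category $0$, and the coslice of a locally presentable category under any object is again locally presentable, it follows that $\pCat$ is locally presentable. In particular every object of $\pCat$ is small, so the smallness hypotheses required by the small object argument hold automatically.

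For cofibrant generation, I propose the sets
\[
I := \{(\partial_0)_+, (\partial_1)_+, (\partial_2)_+\} \qquad \text{and} \qquad J := \{(i_0)_+\},
\]
the images under $(-)_+$ of the generating (acyclic) cofibrations of $\Cat$; concretely $(i_0)_+$ is the inclusion $0^+ \hookrightarrow J^+$, while $I$ consists of $\ast \to 0^+$, the inclusion $(\partial[1])_+ \hookrightarrow I^+$, and $(\partial[2])_+ \hookrightarrow [2]_+$. The key step is the adjunction correspondence for lifting properties: a map $g$ in $\pCat$ has the right lifting property against $(f)_+$ if and only if $U(g)$ has the right lifting property against $f$ in $\Cat$. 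Applying this with $f = i_0$ and invoking Proposition~\ref{char-of-isofib}, a map $g$ lifts against $J$ exactly when $U(g)$ is an isofibration, which by Theorem~\ref{nat-model-str-pointed-cat} is precisely the condition that $g$ be a fibration in $\pCat$. Applying it with $f \in \{\partial_0, \partial_1, \partial_2\}$ and invoking Proposition~\ref{char-of-acyc-fib}, $g$ lifts against $I$ exactly when $U(g)$ is an acyclic isofibration, i.e. when $g$ is an acyclic fibration. Thus the class $J\text{-inj}$ is exactly the class of fibrations and $I\text{-inj}$ is exactly the class of acyclic fibrations.

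These two identifications, together with the smallness guaranteed by local presentability, are precisely the axioms of a cofibrantly generated model category, and combined with local presentability they give that $\pCat$ is combinatorial. I expect the only genuinely delicate point to be the adjunction transfer of lifting properties and the verification that the pointed (acyclic) fibrations are detected by the underlying (acyclic) isofibrations; but the latter is built into the definition of the pointed model structure in Theorem~\ref{nat-model-str-pointed-cat}, so the argument is largely a matter of assembling Propositions~\ref{char-of-isofib} and~\ref{char-of-acyc-fib} through the adjunction $(-)_+ \dashv U$. The local presentability is the soft ingredient and presents no real obstacle.
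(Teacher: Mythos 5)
Your proof is correct and follows essentially the same route as the paper: local presentability of $\pCat$ via pointedness over the locally presentable category $\Cat$ (your coslice formulation $0 \downarrow \Cat$ is the same fact the paper cites from \cite{AR94}), and cofibrant generation by the disjoint-basepoint images of the generating sets of $\Cat$, detected through Propositions \ref{char-of-isofib} and \ref{char-of-acyc-fib}. If anything your version is slightly cleaner: you make the transfer of lifting properties along the adjunction $(-)_+ \dashv U$ explicit where the paper leaves it implicit, and your generating cofibrations $\lbrace (\partial_0)_+, (\partial_1)_+, (\partial_2)_+ \rbrace$ are the ones consistent with Proposition \ref{char-of-acyc-fib}, whereas the paper's stated set $R = \lbrace in_0^+, in_1^+ \rbrace$ of inclusions into $I^+$ appears to be a misprint.
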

\begin{proof}
 The category $\pCat$ is locally presentable because $\Cat$ is
 a locally presentable category and the category of pointed objects
 of a locally presentable category is also locally presentable \cite{AR94}.
 Now it remains to show that
 the natural model category $\pCat$ is cofibrantly generated.
 Proposition \ref{char-of-acyc-fib} implies that a morphism in $\pCat$
 is an acyclic fibration if and only if it has the right lifting
 property with respect to the three boundary maps $\partial_0^+, \partial_1^+$
 and $\partial_2^+$.
 
 The set of
 generating acyclic cofibrations is $Q = \lbrace i_0^+, i_1^+ \rbrace$ where
 $i_0^+:0^+ \hookrightarrow J^+$ and $i_1^+:1^+ \hookrightarrow J^+$ are
 the two basepoint preserving inclusion functors. This follows from proposition \ref{char-of-isofib}.
 The set of generating cofibrations is $R = \lbrace in_0^+, in_1^+ \rbrace$ where
 $in_0^+:0^+ \hookrightarrow I^+$ and $in_1^+:1^+ \hookrightarrow I^+$ are
 the two basepoint preserving inclusion functors.

\end{proof}

 Let $(C, \phi)$ and $(D, \psi)$ be two pointed categories, we define
 another category, which is denoted by $C \vee D$, by the following
 pushout square:
 \[
\label{sum-pointed-cat}
  \xymatrix@C=15mm{
 0 \ar[r]^{\phi} \ar[d]_{\psi} & C   \ar[d]   \\
 D \ar[r] &C \vee D
 }
\]
$C \vee D$ is a pointed category with the obvious basepoint and
 we will refer to it as the \emph{sum of $(C, \phi)$ and $(D, \psi)$}. 
% denote the full subcategory of the product category $C \times D$ whose objects
% consist of those tuples $(a,b) \in Ob(C) \times Ob(D)$
% in which at least one of the objects $\lbrace a, b \rbrace$ is a basepoint.
 We define another (small) pointed category $C \wedge D$ by the
 following pushout square:
 \[
\label{tens-pointed-cat}
  \xymatrix@C=15mm{
 C \vee D \ar@{^{(}->}[r] \ar[d] & C \times D  \ar[d]   \\
 \ast \ar[r] &C \wedge D
 }
\]
We will refer to the pointed category $C \wedge D$ as the
\emph{tensor product} of $C$ and $D$.
It is easy to check that this tensor product construction is
functorial \emph{i.e.} there is a bifunctor $-\wedge-:\Cat \times \Cat \to \Cat$
which is defined on objects by $(C,D) \mapsto C \wedge D$.

\subsection[Leinster construction]{Leinster construction}
\label{real-funct}
 In this section we will construct a permutative category which would help us in constructing the
 desired left adjoint to the Segal's Nerve functor. We will refer to this category
 as the \emph{Leinster category} and we will denote it by $\Leins$. The defining property of this permutative category is that for each permutative category $P$ we get a following bijection of mapping sets:
 \[
 \PCat(\Leins, P) \cong \mathbf{OLSM}(\N, P)
 \]
 where the mapping set $\mathbf{OLSM}(\N, P)$ is the set of all oplax symmetric monoidal functors from $\N$ to $P$. The existence of this category is predicted in the paper \cite[Theorem 2.8]{GJO}
 An object in $\Leins$ is an order preserving morphism of the category $\N$
 namely an order preserving map of (finite) unbased sets $\vec{k}:\underline{k} \to \underline{r}$. For another object $\vec{m}:\underline{m} \to \underline{s}$ in $\Leins$, a morphism between $\vec{k}$ and $\vec{m}$ is a pair $(h, \phi)$, where $h:\underline{s} \to \underline{r}$ and $\phi:\underline{k} \to \underline{m}$ are morphisms in $\N$
 such that the following diagram commutes:
 \begin{equation*}
 \xymatrix{
 \underline{k} \ar[d]_{\vec{k}} \ar[r]^\phi & \underline{m} \ar[d]^{\vec{m}} \\
 \underline{r} & \underline{s} \ar[l]^{h}
 }
 \end{equation*}
 
 \begin{nota}
 For an object $\vec{m}:\underline{m} \to \underline{s}$ in $\Leins$ we will
 refer to the natural number $s$ as the \emph{length of $\vec{m}$}.
 \end{nota}
 \begin{rem}
An object of $\Leins$, $\vec{m}:\underline{m} \to \underline{s}$, should be viewed as a finite sequence of objects of $\N$ namely $(m_1, m_2, \dots, m_s)$ for $s > 0$, with $s = 0$ corresponding to the empty sequence $()$, where $m_i = \inv{\vec{m}}(i)$, for $1 \le i \le s$.
 \end{rem}
 \begin{rem}
 An object $\vec{m}:\underline{m} \to \underline{s}$ does not have to be a surjective map. In other words the corresponding sequence $\vec{m} = (m_1, \dots, m_s)$ can have components which are empty sets.
 \end{rem}
 \begin{rem}
 Let $\vec{n}$ and $\vec{m}$ be two objects in $\Leins$. A morphism $(h, \phi):\vec{n} \to \vec{m}$, in $\Leins$, should be viewed as a family of morphisms
 \[
 \phi(i) = n_i \to \underset{h(j) = i} + m_j
 \]
 for $1 \le i \le s$,
 where $+$ represents the symmetric monoidal structure on $\N$.
 \end{rem}
% 
% Let $G=(g, \psi):\vec{m} \to \vec{q}$
%be another morphism in $\Leins$ then their composite is defined as follows:
%\begin{equation*}
%\label{}
%G \circ F := (f \circ g, \psi\phi),
%\end{equation*}
%where $\psi\phi$ is the family of morphisms $\lbrace \psi\phi(i) = (\underset{f(j) = i} + \psi(j)) \circ \phi(i) \rbrace_{i \in \length{n}}$. It is easy to see that this composition is strictly associative in light of the fact that $\N$ is a permutative category.

%The category $\Leins$ is isomorphic the category $EM(\N)$, where $E$ and $M$
%are the left and right adjoints of an adjunction
%\begin{equation}
%\label{SMCAT-Perm-adj}
%E:\SMCat \rightleftharpoons \PCat:M
%\end{equation}
%where $\SMCat$ is the category of all \emph{symmetric multicategories}.
%See \cite{sharma3} for a detailed description of this adjunction.
We want to recall from \cite{sharma3} or appendix \ref{OLtoSMFunc} how each $\gCat$ $X$ can be extended to a symmetric monoidal functor
$\Leins(X):\Leins \to \Cat$. This functor is defined on objects as follows:
\begin{equation*}
\Leins(X)(\vec{m}) := X(m_1^+) \times X(m_2^+) \times \cdots \times X(m_r^+)
\end{equation*}
where $() \ne \vec{m} = (m_1, m_2, \dots, m_r)$ is an object of $\Leins$. $\Leins(X)(()) = \ast$.
For each map $F=(f, \phi): \vec{m} \to \vec{n}$ in $\Leins$ we want to define a functor
\[
\Leins(X)(F):\Leins(X)(\vec{m}) \to \Leins(X)(\vec{n}).
\]
Each map $\phi(i)$ in the family $\phi$ provides us with a composite functor
\[
X(m_i^+) \overset{X(\phi(i))} \to X(\underset{f(j) = i} + n_j) \overset{K_i} \to \underset{f(j) = i} \prod X(n_j),
\]
where $K_i = (X(\partition{\underset{f(j) = i} + n_j}{n_{j_1}}), \dots, X(\partition{\underset{f(j) = i} + n_j}{n_{j_r}}))$. For each pair $n$-fold product functor in $\Cat$,
there is a canonical natural isomorphism between them which we denote by $\textit{can}$.
This gives us the following composite functor
\begin{equation*}
\underset{i=1}{\overset{\length{m}} \prod} X(m_i^+) \overset{\underset{i=1}{\overset{\length{m}} \prod} X(\phi(i))} \to X(\underset{f(j) = i} + n_j) \overset{\underset{i=1}{\overset{\length{m}} \prod} K_i} \to \underset{i=1}{\overset{\length{m}} \prod} \underset{f(j) = i} \prod X(n_j) \overset{\textit{can}}\to \underset{k=1}{\overset{\length{n}} \prod} n_k
\end{equation*}
which is the definition of $\Leins(X)(F)$. In other words
\begin{equation*}
\label{SMExt-def-mor}
\Leins(X)(F) := can \circ \underset{i=1}{\overset{\length{m}} \prod} K_i \circ \underset{i=1}{\overset{\length{m}} \prod} X(\phi(i))
\end{equation*}
\begin{prop}
\label{Ext-gCat}
Let $X$ be a $\gCat$, there exists an extension of $X$ to $\Leins$, $\Leins(X):\Leins \to \Cat$ which is a symmetric monoidal functor.
\end{prop}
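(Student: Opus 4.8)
The plan is to verify directly that the assignment $\Leins(X)$ recalled above is a functor, and then to equip it with the structure of a (strong) symmetric monoidal functor; this realizes the universal property of $\Leins$ advertised in the introduction. First I would record the structure of source and target relevant to the argument: $\Leins$ is permutative under concatenation of sequences, $\vec m \oplus \vec n := (m_1,\dots,m_r,n_1,\dots,n_s)$, with the empty sequence $()$ as strict unit, while $\Cat$ is symmetric monoidal under $\times$ with the canonical reindexing isomorphisms $\textit{can}$ for iterated products and unit $\ast$. On objects the defining formula gives $\Leins(X)(\vec m \oplus \vec n)=X(m_1^+)\times\cdots\times X(m_r^+)\times X(n_1^+)\times\cdots\times X(n_s^+)$, which differs from $\Leins(X)(\vec m)\times\Leins(X)(\vec n)$ only by the flattening isomorphism $\textit{can}$; this will be the monoidal structure map, and on the unit $\Leins(X)(())=\ast=\unit{\Cat}$ strictly, so the unit comparison is an identity. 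The structure maps are then isomorphisms by construction, and the whole content of the proposition is (i) functoriality and (ii) the three coherence axioms.

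\emph{Functoriality.} Preservation of identities is immediate: for $(\id,\id)\colon\vec m\to\vec m$ each $\phi(i)$ is an identity and each $K_i$ together with $\textit{can}$ collapses to the identity functor on $\prod_i X(m_i^+)$. For composition I would take composable maps $F=(f,\phi)\colon\vec m\to\vec n$ and $G=(g,\psi)\colon\vec n\to\vec p$, unwind the composition law of $\Leins$ (the length maps compose contravariantly while the families of $\N$-morphisms add up over the fibers of the length maps, as in the remark preceding the statement), and assemble one large diagram. The verification that $\Leins(X)(G\circ F)=\Leins(X)(G)\circ\Leins(X)(F)$ then reduces to three inputs: the functoriality of $X$ on $\gop$ applied to the factored composites of the $\psi$'s and $\phi$'s; the compatibility of the partition maps $\delta$ under iterated factorization in $\gop$; and the coherence of the reindexing isomorphisms $\textit{can}$ for iterated products in $\Cat$. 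The first is automatic, the third is Mac Lane coherence, and the middle one is exactly the associativity coherence \ref{OpL-associativity} for the oplax functor $X|_{\N}$ recorded in Lemma \ref{oplax-rest-gcat}.

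\emph{Symmetric monoidal coherence.} With $\mu(\vec m,\vec n):=\textit{can}$ as structure isomorphism and $\epsilon=\id_\ast$, naturality in $(\vec m,\vec n)$ follows by the same bookkeeping as in the composition check, since both sides are built from the $X(\phi(i))$, the $K_i$, and $\textit{can}$. The associativity and unit axioms hold because $\oplus$ is strictly associative and unital on $\Leins$ while $\textit{can}$ is coherent on $\Cat$, so both legs of the relevant diagrams are equal canonical reassociations. For the symmetry axiom I would use that the symmetry of $\Leins$ is the block transposition of two sequences; applying $\Leins(X)$ to it produces the block permutation of the corresponding cartesian factors, which is precisely the symmetry $\gamma^{\Cat}$ reorganized by $\textit{can}$. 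The needed identity between partition maps under the block swap is the symmetry coherence \ref{OpL-symmetry} for $X|_{\N}$, which is the very computation used to establish \ref{OpL-symmetry} in Lemma \ref{oplax-rest-gcat}. Since all structure maps are isomorphisms, $\Leins(X)$ is a (strong) symmetric monoidal functor.

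The main obstacle is the composition step of functoriality, and its difficulty is entirely bookkeeping: the composite $G\circ F$ in $\Leins$ sums the $\N$-morphisms of $F$ and $G$ over the fibers of the length maps, so one must show that $X$ applied to these summed maps, interleaved with the oplax partition maps $K_i$ and the reassociations $\textit{can}$, reproduces the two individually defined functors applied in turn. Isolating the diagram so that this reduces cleanly to condition \ref{OpL-associativity} for $X|_{\N}$, rather than to an ad hoc chase, is where the care is required; once that reduction is made, the remaining coherence axioms follow by the same mechanism together with Mac Lane coherence for the cartesian product.
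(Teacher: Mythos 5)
Your proposal is correct in substance, but it takes a genuinely different route from the paper. The paper gives no direct verification at all: Proposition \ref{Ext-gCat} is obtained by combining Lemma \ref{oplax-rest-gcat} (the restriction $X|_{\N}$ is a unital oplax symmetric monoidal functor) with the free construction of Appendix \ref{OL-to-SM}, where for a permutative $P$ a category $P_e \cong ((P^{op})^e)^{op}$ is built by an explicit coend/convolution description of hom-sets, shown to carry a universal property (Corollary \ref{extension-to-strmon}: every oplax symmetric monoidal functor $P \to D$ extends uniquely to a strict symmetric monoidal functor $P_e \to D$, via the composite $\lambda_D \circ F^e$), and $\Leins$ is identified with $\N_e$; the displayed formula $\textit{can} \circ \prod K_i \circ \prod X(\phi(i))$ is just the unwinding of that abstract extension. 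Your direct diagrammatic check of functoriality and the coherence axioms is more elementary and self-contained, and your reduction of the composition step to functoriality of $X$ on $\gop$, equalities of partition maps in $\gop$, and Mac Lane coherence for $\textit{can}$ is exactly the right decomposition. What it does not buy you is what the paper's route gives for free and uses later: uniqueness of the extension, its functoriality in $X$ (Remark \ref{SM-Ext-OL}), and the universal property itself, which is invoked again in Theorem \ref{oplax-SM-cones-isom} and in the construction of the left adjoint $\PNat$. One small misattribution: in your symmetry check, the symmetry of $\Leins$ is a block transposition whose $\N$-components are identities, so the symmetry axiom for $\Leins(X)$ follows from product reindexing alone; the identities underlying \ref{OpL-symmetry} (such as $\partition{k+l}{k} \circ \tau(k,l) = \partition{l+k}{k}$) are instead consumed in the functoriality check for morphisms of $\Leins$ whose bijection component is a nontrivial permutation --- the content is needed, just at a different step than you place it.
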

%\begin{proof}
%We begin by showing that the extension $\Leins(X)$ as defined above is a functor.
%Let $F=(f, \phi)$ and $G= (g, \psi)$ be two composable arrows in $\Leins$. We want to show that $\Leins(X)(G \circ F) = \Leins(X)(G) \circ \Leins(X)(F)$. In order to do so we observe the following commutative diagram
%\begin{equation*}
%\xymatrix@C=8mm{
%\underset{i=1}{\overset{\length{m}} \prod} X(m_i^+) \ar[r]^{\underset{i=1}{\overset{\length{m}} \prod} X(\phi(i)) \ \ } \ar[rd]_{\underset{i=1}{\overset{\length{m}} \prod} X(\psi\phi(i)) \ \ \ } &\underset{i=1}{\overset{\length{m}} \prod} X(\underset{f(j) = i} + n_j) \ar[r]^{\underset{i=1}{\overset{\length{m}} \prod} K_i} \ar[d]^{\underset{i=1}{\overset{\length{m}} \prod} X(\underset{f(j) = i} + \psi(j))} &\underset{i=1}{\overset{\length{m}} \prod} \underset{f(j) = i} \prod X(n_j) \ar[r]^{\textit{can}} \ar[d]^{\underset{i=1}{\overset{\length{m}} \prod} \underset{f(j) = i} \prod X(\psi(j))} &\underset{k=1}{\overset{\length{n}} \prod} X(n_k) \\
%& \underset{i=1}{\overset{\length{m}} \prod} X(\underset{f(j) = i} + \underset{g(k) = j} + p_k) \ar[r]_{\underset{i=1}{\overset{\length{m}} \prod} K_i}  & \underset{i=1}{\overset{\length{m}} \prod} \underset{f(j) = i} \prod X(\underset{g(k) = j} + p_k) \ar[r]_{\underset{i=1}{\overset{\length{m}} \prod} K_j} &  \underset{i=1}{\overset{\length{m}} \prod} \underset{f(j) = i} \prod \underset{g(k) = j} \prod X(p_k) \ar[u]_{\textit{can}}
%}
%\end{equation*}
%The top row of this 
%\dots
%\end{proof}
% Now we are ready to define the functor $\Lbb$.
\begin{rem}
The symmetric monoidal extension described above is functorial in $X$. In other words we get a functor
\begin{equation}
\label{SM-Ext-OL}
\Leins(-):\gCAT \to \SMHom{\Leins}{\Cat}
\end{equation}
\end{rem}
 \begin{df}
 \label{Groth-Cons-gCat}
 For a $\gCat$ $X$ we define
 \begin{equation*}
  \Lbb(X) := \int^{ \vec{n} \in \Leins} \Leins(X)(\vec{n}).
 \end{equation*}
 \emph{i.e.} the \emph{Grothendieck construction} of $\Leins(X)$.
 \end{df}
 More concretely, an object in the category $\Lbb(X)$ is a pair
 $(\vec{m}, \vec{x})$ where $\vec{m}:\underline{m} \to \underline{s} \in Ob(\Leins)$ and
 \[
  \vec{x} = (x_1, x_2, \dots, x_s) \in Ob(X(m_1^+) \times X(m_2^+) \times \dots \times X(m_s^+) ).
  \]
  A morphism from $(\vec{m}, \vec{x})$ to $(\vec{n}, \vec{y})$ in $\Lbb(X)$
  is a pair $((h, \phi), F)$ where $(h, \phi):\vec{m} \to \vec{n}$ is a map in $\Leins$
  and $F:\Leins(X)((h, \phi))(\vec{x}) \to \vec{y}$ is a map in the product category
  $X(n_1^+) \times X(n_2^+) \times \dots \times X(n_r^+)$.
  
  Now we define a \emph{tensor product}  on the category $\Lbb(X)$. Let $(\vec{n}, \vec{x})$ and $(\vec{m}, \vec{y})$ be two objects of $\Lbb(X)$, we define another object $(\vec{n}, \vec{x}) \underset{\Lbb(X)} \otimes (\vec{m}, \vec{y})$ as follows:
  \begin{equation}
  \label{tens-prod-LbbX-ob}
  (\vec{n}, \vec{x}) \underset{\Lbb(X)} \otimes (\vec{m}, \vec{y}) :=
  (\vec{n} \Box \vec{m}, \inv{\lambda_{\Leins(X)}(\vec{n}, \vec{m})}((\vec{x}, \vec{y}))).
  \end{equation}
  For a pair of morphisms $((h_1, \alpha), a):(\vec{n}, \vec{x}) \to (\vec{k}, \vec{s})$ and $((h_2, \beta), b):(\vec{m}, \vec{y}) \to (\vec{l}, \vec{t})$, in $\Lbb(X)$, we define
 another morphism in $\Lbb(X)$ as follows:
 \begin{equation}
  \label{tens-prod-PX-mor}
  ((h_1, \alpha), a) \underset{\Lbb(X)} \otimes ((h_2, \beta), b) :=
  ((h_1, \alpha) \Box (h_2, \beta), \inv{\lambda_{AX}((h_1, \alpha), (h_2, \beta))}((a, b))),
  \end{equation}
 where $\lambda_{\Leins(X)}((h_1, \alpha), (h_2, \beta))$ is the composite functor
 \[
 (\Leins(X)((h_1, \alpha)) \times \Leins(X)((h_2, \beta))) \circ \lambda_{\Leins(X)}(\vec{n}, \vec{m}) =
 \lambda_{\Leins(X)}(\vec{k}, \vec{l}) \circ \Leins(X)((h_1, \alpha) \Box (h_2, \beta)).
 \]
 \begin{prop}
 \label{LbbX-perm}
 The category $\Lbb(X)$ is a permutative category with respect to the tensor
 product defined above.
 \end{prop}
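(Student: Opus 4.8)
The plan is to identify $\Lbb(X)$ with the category of elements of the symmetric monoidal functor $\Leins(X)\colon \Leins \to \Cat$ and then to invoke Theorem~\ref{perm-cat-elm}. By Definition~\ref{Groth-Cons-gCat} we have $\Lbb(X) = \int^{\vec{n} \in \Leins} \Leins(X)(\vec{n})$, so $\Lbb(X)$ is by construction the category of elements of $\Leins(X)$. By Proposition~\ref{Ext-gCat} the functor $\Leins(X)$ is a strong symmetric monoidal functor, and its domain $\Leins$ is a permutative category. Thus, once the variance of the structure isomorphism is settled, all the hypotheses of Theorem~\ref{perm-cat-elm} are in force and the theorem produces a permutative structure on $\Lbb(X)$.

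The one point requiring care is that Theorem~\ref{perm-cat-elm} is phrased for a \emph{lax} symmetric monoidal functor, whereas the structure isomorphism $\lambda_{\Leins(X)}$ carried by $\Leins(X)$ is the oplax one, $\lambda_{\Leins(X)}(\vec{n}, \vec{m})\colon \Leins(X)(\vec{n} \Box \vec{m}) \to \Leins(X)(\vec{n}) \times \Leins(X)(\vec{m})$. Since $\Leins(X)$ is strong, this natural transformation is invertible, so $\inv{\lambda_{\Leins(X)}}$ is defined and furnishes a \emph{lax} symmetric monoidal structure on $\Leins(X)$, the inverse now pointing $\Leins(X)(\vec{n}) \times \Leins(X)(\vec{m}) \to \Leins(X)(\vec{n} \Box \vec{m})$. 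The lax analogues of the coherence conditions \ref{OpL-unit}, \ref{OpL-symmetry} and \ref{OpL-associativity} for $\inv{\lambda_{\Leins(X)}}$ are obtained simply by inverting the corresponding commuting diagrams satisfied by $\lambda_{\Leins(X)}$. Hence $(\Leins(X), \inv{\lambda_{\Leins(X)}})$ is a lax symmetric monoidal functor out of the permutative category $\Leins$, and Theorem~\ref{perm-cat-elm} applies to it.

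It then remains to check that the permutative structure produced by Theorem~\ref{perm-cat-elm} coincides with the tensor product defined in \eqref{tens-prod-LbbX-ob} and \eqref{tens-prod-PX-mor}. The box product \eqref{tensP-cat-el-laxF} constructed there, applied to the lax functor $(\Leins(X), \inv{\lambda_{\Leins(X)}})$ so that $\inv{\lambda_{\Leins(X)}}$ plays the role of the lax structure map, sends a pair of objects $((\vec{n}, \vec{x}), (\vec{m}, \vec{y}))$ to $(\vec{n} \Box \vec{m}, \inv{\lambda_{\Leins(X)}(\vec{n}, \vec{m})}(\vec{x}, \vec{y}))$ and a pair of morphisms to the morphism recorded in \eqref{tens-prod-PX-mor}; these agree on the nose with the formulas of the present statement. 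Consequently the tensor product of the statement is the one delivered by Theorem~\ref{perm-cat-elm}: it is strictly associative and strictly unital, with unit object $((), \ast)$ since $\Leins(X)(()) = \ast$, and its symmetry isomorphism $\gamma_{\Lbb(X)}$ is the one built in that proof, inheriting the hexagon identity from the permutativity of $\Leins$.

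The main obstacle is the bookkeeping of the second paragraph: one must track the variance of $\lambda_{\Leins(X)}$ carefully and confirm that inverting it genuinely converts the oplax coherence data into lax coherence data matching the conventions used in Theorem~\ref{perm-cat-elm}. Once this reconciliation is in place, no further computation is needed beyond matching the two descriptions of the tensor product, and the permutativity of $\Lbb(X)$ follows at once.
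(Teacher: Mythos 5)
Your proposal is correct and follows essentially the same route as the paper, whose entire proof is the observation that $\Leins$ is permutative together with an appeal to Theorem~\ref{perm-cat-elm}. Your second paragraph reconciling the oplax/lax variance of $\lambda_{\Leins(X)}$ via invertibility is a detail the paper leaves implicit (its tensor product formulas already use $\inv{\lambda_{\Leins(X)}}$), so you have simply made the paper's one-line argument explicit.
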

 \begin{proof}
 The category $\Leins$ is a permutative category. Now the proposition follows from theorem \ref{perm-cat-elm}.
 \end{proof}

\subsection[Gabriel Factorization]{Gabriel Factorization}
\label{Gab-Fact}
In analogy with the way a functor can be factored as a fully faithful
functor followed by an essentially surjective one, every strict symmetric monoidal $\Phi:E \to F$ admits a
factorization of the form
\begin{equation*}
 \xymatrix{
 E \ar[rr]^\Phi \ar[rd]_\Gamma && F \\
 & G \ar[ru]_\Delta
 } 
\end{equation*}
where $\Gamma$ is essentially surjective and $\Delta$ is fully faithful. In fact we may suppose that
$\Gamma$ is identity on objects in which case we get the \emph{Gabriel factorization} of $\Phi$.
In order to obtain a Gabriel factorization we define the symmetric monoidal category $G$ as having
the same objects as $E$ and letting, for $c, d \in Ob(G)$,
\[
G(c, d) := F(\Phi(c), \Phi(d)). 
\]
The composition in $G$ is defined via the composition in $F$ in the obvious way. The symmetric monoidal structure on $G$ is defined on objects as follows:
\begin{equation*}
e_1 \underset{G} \otimes e_2 := e_1 \underset{E} \otimes e_2
\end{equation*}
where $e_1, e_2 \in Ob(G) = Ob(E)$. For a pair of morphisms $f_1:e_1 \to h_1$ and $f_2:e_2 \to h_2$
we define
\begin{equation*}
f_1 \underset{G} \otimes f_2 := f_1 \underset{F} \otimes f_2.
\end{equation*}

We recall that $\Pi_1:\Cat \to \Gpd$ is the functor which assigns to each category $C$ the groupoid obtained by inverting all maps in $C$.
\begin{lem}
\label{adj-on-pi1}
Let $F:C \to D$ be a functor which is either a left or a right adjoint, then the induced functor
$\Pi_1(F):\Pi_1(C) \to \Pi_1(D)$ is an equivalence of categories.
\end{lem}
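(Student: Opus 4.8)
The plan is to isolate one general fact about $\Pi_1$ and then feed the unit and counit of the adjunction into it. The key fact is that $\Pi_1$ carries every natural transformation to a natural \emph{isomorphism}: given functors $H, K : C \to D$ and a natural transformation $\alpha : H \Rightarrow K$, I would observe that each component $\alpha_c : H(c) \to K(c)$ becomes an isomorphism $\Pi_1(\alpha_c)$ in $\Pi_1(D)$, since $\Pi_1$ inverts every morphism of $D$ by construction. The family $\lbrace \Pi_1(\alpha_c) \rbrace_{c \in Ob(C)}$ is then natural with respect to all of $\Pi_1(C)$: the naturality square of $\alpha$ for a morphism $f$ of $C$ commutes in $D$, hence in $\Pi_1(D)$, and every morphism of $\Pi_1(C)$ is a composite of images of morphisms of $C$ together with their formal inverses, for which the square again commutes. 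Thus $\Pi_1(H) \cong \Pi_1(K)$ as functors $\Pi_1(C) \to \Pi_1(D)$.

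Second, I would combine this with the functoriality of $\Pi_1$, namely $\Pi_1(K \circ H) = \Pi_1(K) \circ \Pi_1(H)$ and $\Pi_1(\id) = \id$. Suppose first that $F : C \to D$ is a left adjoint with right adjoint $G : D \to C$, unit $\eta : \id_C \Rightarrow G \circ F$ and counit $\epsilon : F \circ G \Rightarrow \id_D$. Applying $\Pi_1$ to $\eta$ yields a natural isomorphism $\id_{\Pi_1(C)} \cong \Pi_1(G) \circ \Pi_1(F)$, and applying it to $\epsilon$ yields $\Pi_1(F) \circ \Pi_1(G) \cong \id_{\Pi_1(D)}$. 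These are exactly the two natural isomorphisms required by Definition \ref{eq-of-cat} to exhibit $\Pi_1(G)$ as a quasi-inverse of $\Pi_1(F)$, so $\Pi_1(F)$ is an equivalence of categories. When $F$ is instead a right adjoint, with left adjoint $G$, the same argument applies after exchanging the roles of the unit and counit; alternatively it follows from the left-adjoint case applied to $G$ and the fact that $\Pi_1(G)$ then admits $\Pi_1(F)$ as quasi-inverse.

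The only genuinely delicate point, and the step I expect to require the most care, is the claim that $\lbrace \Pi_1(\alpha_c) \rbrace$ is natural on all of $\Pi_1(C)$ rather than merely on the image of $C$; this reduces to checking that naturality passes to formal inverses, which holds because inverting a commuting square of isomorphisms again produces a commuting square. Everything else is a formal manipulation, and it is worth noting that the proof never invokes the triangle identities of the adjunction: it uses only the bare existence of the unit and counit natural transformations, each of which $\Pi_1$ promotes to an isomorphism.
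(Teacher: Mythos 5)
Your proof is correct and follows essentially the same route as the paper: the paper likewise transports the unit and counit through $\Pi_1$, setting $\eta_{\Pi_1}(c) := \eta(c)$, noting each component becomes invertible, and verifying naturality on the generating morphisms of $\Pi_1(C)$ --- including the formal inverses, by inverting the commuting squares --- exactly as in your key lemma. Your only departure is cosmetic: you package that verification as a general statement that $\Pi_1$ sends every natural transformation to a natural isomorphism and then apply it to $\eta$ and $\epsilon$, whereas the paper performs the same check inline (and, like you, never invokes the triangle identities).
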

\begin{proof}
We begin the proof by observing that the category of all (small) groupoids $\Gpd$ is enriched over itself. We claim the pair $(\Pi_1 (F), \Pi_1(H))$ is an adjoint equivalence. For each $c \in Ob(C)$, the unit $\eta$ of the adjunction $(F, H)$ provides a map $\eta(c):c \to GF(c)$ in $C$. We observe that this map has an inverse in $\Pi_1(C)$. We define the unit of the adjunction $\eta_{\Pi_1}:id_{\Pi_1(C)} \Rightarrow \Pi_1(H)\Pi(F)$ as follows:
\[
\eta_{\Pi_1}(c) := \eta(c)
\]
for all $c \in Ob(C) = Ob(\Pi_1(C))$. We recall that maps in $\Pi_1(C)$ are composites of maps in $C$ and their formal inverses. In order to check that the family $\lbrace \eta(c) \rbrace_{c \in C}$ defines a natural isomorphism it would be enough to show that it does so on the generating morphisms of $\Pi_1(C)$.  This is obvious for maps in $C$. Let $\inv{f}$ be a (formal) inverse of a map $f:d \to c$ in $C$. By definition of the functor $\Pi_1$, $\Pi_1(F)(\inv{f}) = \inv{F(f)}$. Now we observe that the following diagram commutes in the category $\Pi_1(C)$
\begin{equation*}
\xymatrix{
c \ar[r]^{\eta(c) \ \ \ \ \ \ \  } \ar[d]_{\inv{f}} & \Pi_1(HF)(c) \ar[d]^{\inv{((HF) (f))}} \\
d \ar[r]_{\eta(d) \ \ \ \ \ \ \  } & \Pi_1(HF)(d)
}
\end{equation*}
because the following diagram commutes in $C$
\begin{equation*}
\xymatrix{
c \ar[r]^{\eta(c) \ \ \ }  & HF(c)  \\
d \ar[u]^{f} \ar[r]_{\eta(d) \ \ \ } & HF(d) \ar[u]_{HF(f) }
}
\end{equation*}
and we have the following equalities
\[
\Pi_1(H)\Pi_1(F)(\inv{f})= \Pi_1(HF)(\inv{f}) = \inv{(\Pi(HF)(f))} =  \inv{((HF) (f))} = \inv{(HF(f))}.
 \]
 Thus we have shown that the family of isomorphisms $\eta_{\Pi_1} = \lbrace \eta(c) \rbrace_{c \in C}$ glue together to define a natural isomorphism $\eta_{\Pi_1}:id_{\Pi_1(C)} \Rightarrow \Pi_1(H)\Pi(F)$. A similar argument gives us a counit natural isomorphism $\epsilon_{\Pi_1}:\Pi_1(F)\Pi_(H) \Rightarrow id_{\Pi_1(D)}$.

\end{proof}

\begin{prop}
 \label{Gab-Fact-adjoints}
 Let $E$ be a symmetric monoidal category, $F$ be a symmetric monoidal groupoid and $\Phi:E \to F$ be a strict symmetric monoidal which is a composite of $n$ strict
 symmetric monoidal functors \emph{i.e.} $\Phi = \phi_n \circ \cdots \circ \phi_1$ such that each $\phi_i$
 has either a left or a right adjoint for $1 \le i \le n$. Then the Gabriel category of $\Phi$, $G$,
 is isomorphisc to $\Pi_1(E)$. 
 \begin{proof}
  The functor $\Phi$ has a Gabriel factorization
  \begin{equation*}
 \xymatrix{
 E \ar[rr]^\Phi \ar[rd]_\Gamma && F \\
 & G \ar[ru]_\Delta
 } 
\end{equation*}
see \cite[Sec. 1.1]{GA1}. The above lemma \ref{adj-on-pi1} tells us that the functor $\Pi_1(\Phi):\Pi_1(E) \to \Pi_1(F) = F$ is an equivalence of groupoids.
In the above situation the Gabriel category $G$ is a groupoid therefore $\Pi_1(G) = G$. We recall that $Ob(G) = Ob(E)$ and since $\Pi_1(\Phi)$ is an equivalence of categories therefore for each pair of objects $e_1, e_2 \in E$ we have the following 
\begin{equation*}
E(e_1, e_2) \cong F(\Phi(e_1),\Phi( e_2)) = G(e_1, e_2).
\end{equation*}
Thus the functor $\Gamma$ is an isomorphism of categories.

 \end{proof}

\end{prop}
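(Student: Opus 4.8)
The plan is to exploit the Gabriel factorization $\Phi = \Delta \circ \Gamma$, where $\Gamma : E \to G$ is the identity on objects and $\Delta : G \to F$ is fully faithful by the very definition $G(c,d) := F(\Phi(c), \Phi(d))$. The crucial first observation is that $G$ is itself a \emph{groupoid}: since its hom-sets coincide with those of the groupoid $F$, every morphism of $G$ is invertible. Consequently $\Pi_1(G) = G$, and because $\Gamma$ is a functor from $E$ into a groupoid, it factors uniquely through the localization $E \to \Pi_1(E)$, yielding a functor $\bar\Gamma : \Pi_1(E) \to G$ which, like $\Gamma$, is the identity on objects. Proving the proposition then amounts to showing that $\bar\Gamma$ is an \emph{isomorphism} of categories.

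Next I would show that $\Pi_1(\Phi)$ is an equivalence of categories. Since $\Pi_1 : \Cat \to \Gpd$ is a functor and $\Phi = \phi_n \circ \cdots \circ \phi_1$, we have $\Pi_1(\Phi) = \Pi_1(\phi_n) \circ \cdots \circ \Pi_1(\phi_1)$. Each $\phi_i$ is, by hypothesis, a left or a right adjoint, so Lemma \ref{adj-on-pi1} applies to show that every $\Pi_1(\phi_i)$ is an equivalence; a composite of equivalences is an equivalence, hence $\Pi_1(\Phi) : \Pi_1(E) \to \Pi_1(F)$ is an equivalence. As $F$ is already a groupoid, $\Pi_1(F) = F$, so $\Pi_1(\Phi) : \Pi_1(E) \to F$ is a fully faithful (and essentially surjective) functor.

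Finally I would assemble these facts. Applying $\Pi_1$ to $\Phi = \Delta \circ \Gamma$ and using $\Pi_1(G) = G$ and $\Pi_1(F) = F$ gives the factorization $\Pi_1(\Phi) = \Delta \circ \bar\Gamma$ in $\Gpd$. On a fixed pair of objects $c, d$ the map $\Delta_{c,d} : G(c,d) \to F(\Phi(c), \Phi(d))$ is a bijection (indeed an equality, by the definition of $G$), while the composite $(\Delta \circ \bar\Gamma)_{c,d} = \Pi_1(\Phi)_{c,d}$ is a bijection because $\Pi_1(\Phi)$ is fully faithful. Therefore $\bar\Gamma_{c,d}$ is a bijection for every $c,d$, so $\bar\Gamma$ is fully faithful; since it is also the identity on objects, it is an isomorphism of categories, i.e. $\Pi_1(E) \cong G$.

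The main obstacle, such as it is, lies not in any single computation but in recognizing where $\Pi_1$ must be inserted: $\Phi$ itself is only assumed to be a composite of adjoints (and need neither be an equivalence nor have $E$ be a groupoid), so the correct conclusion is $G \cong \Pi_1(E)$ rather than $G \cong E$. The technical point to handle with care is deducing that $\bar\Gamma$ is fully faithful from the fully faithfulness of both $\Delta$ and the composite $\Pi_1(\Phi) = \Delta \circ \bar\Gamma$; this is precisely where the identity-on-objects property of $\bar\Gamma$ and the definitional equality $G(c,d) = F(\Phi(c),\Phi(d))$ do the work.
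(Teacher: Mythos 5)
Your proof is correct and follows essentially the same route as the paper's: Gabriel factorization $\Phi = \Delta\circ\Gamma$, Lemma \ref{adj-on-pi1} to obtain that $\Pi_1(\Phi):\Pi_1(E)\to\Pi_1(F)=F$ is an equivalence, the observation that $G$ is a groupoid so $\Pi_1(G)=G$, and a hom-set comparison using the identity-on-objects property of the Gabriel factorization. In fact your write-up is more careful than the paper's at two points: you explicitly compose the equivalences $\Pi_1(\phi_i)$ (the lemma applies to a single adjoint, not directly to the composite $\Phi$), and you correctly phrase the conclusion in terms of the induced functor $\bar\Gamma:\Pi_1(E)\to G$, whereas the paper's closing lines --- asserting $E(e_1,e_2)\cong G(e_1,e_2)$ and that ``$\Gamma$ is an isomorphism'' --- are slips, since $E$ need not be a groupoid and the correct statement is that $\bar\Gamma$ (equivalently $\Pi_1(\Gamma)$) is the isomorphism $\Pi_1(E)\cong G$.
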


\section[The model category of Permutative categories]{The model category of Permutative categories}
\label{Nat-Mdl-Str-Perm}
In this section we will describe a model category structure on
the category of all (small) \emph{permutative categories} $\PCat$
and two model category structures on the category of all $\gCats$ $\gCAT$.
The three desired Quillen adjunctions will be amongst the model categories described
here. We begin with the category $\PCat$. The desired model category structure on $\PCat$
is a restriction of the natural model category structure on $\pCat$ which leads us to call it
the \emph{natural model category structure on} $\PCat$.
\subsection[The natural model category $\PCat$]{The natural model category $\PCat$}
We begin this subsection by reviewing permutative categories. A \emph{permutative category}
is a symmetric monoidal category in which the associativity and unit natural isomorphisms
are the identity natural transformations.  A map in $\PCat$ is a \emph{strict monoidal functor} \emph{i.e.} a functor which strictly preserves the
tensor product, the unit object and also the associativity, unit and symmetry isomorphisms.
A permutative category can be equivalently described as an
algebra over a categorical version of the Barratt-Eccles operad, see \cite[Proposition 2.8]{Dunn}. The objective of this subsection is to define a model category structure on $\PCat$. We will obtain this model category structure, in appendix \ref{Mdl-CAT-Perm}, by regarding $\PCat$ as a reflective subcategory of $\Cat$ and transferring the natural model category structure to $\PCat$.

 \begin{thm} 
 \label{nat-model-str-Perm}
 There is a model category structure on the category of all small
 permutative categories and strict symmetric monoidal functors $\PCat$ in which
 \begin{enumerate}
 %\item A cofibration is a strict symmetric monoidal functor which is monic on objects.
 \item A fibration is a strict symmetric monoidal functor which is also an isofibration of (unbased) categories and
 \item A weak-equivalence is a strict symmetric monoidal functor which is also an equivalence of (unbased) categories.
 \item A cofibration is a strict symmetric monoidal functor having the left lifting property with respect to all maps which are both fibrations and weak equivalences.
  \end{enumerate}
 Further, this model category structure is combinatorial and proper.
 \end{thm}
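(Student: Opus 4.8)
The plan is to obtain the model structure by \emph{right transfer} (lifting) of the natural model structure on $\Cat$ (Theorem \ref{nat-model-cat-str}) along the free–forgetful adjunction $F:\Cat \rightleftharpoons \PCat:U$, where $U$ is the forgetful functor and $F$ sends a category to the free permutative category it generates. Since a permutative category is an algebra over the categorical Barratt–Eccles operad in $\Cat$ (\cite{Dunn}, \cite{May72}), this adjunction is monadic for the finitary monad $T = UF$, and $\PCat$ is realized as the reflective (algebra) category it claims to be. The definition of fibrations and weak equivalences in the statement is exactly the one \emph{created by} $U$, while cofibrations are forced to be the maps with the left lifting property against trivial fibrations; this is precisely the output of the transfer theorem for cofibrantly generated model categories. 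Concretely, I would take the generating cofibrations of $\PCat$ to be $F$ applied to the generating cofibrations of $\Cat$, namely the boundary maps $\partial_0,\partial_1,\partial_2$ (which detect trivial fibrations by Proposition \ref{char-of-acyc-fib}), and the generating acyclic cofibrations to be $F$ applied to $i_0:0\hookrightarrow J$ (which detects isofibrations by Proposition \ref{char-of-isofib}).

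First I would dispatch the formal hypotheses of the transfer theorem. The category $\PCat$ is locally presentable, being the category of algebras of an accessible monad on the locally presentable category $\Cat$ (\cite{AR94}); in particular it is complete and cocomplete, and all smallness requirements of the small object argument are met because $T$ is finitary and hence preserves filtered colimits, so the domains of $F(\partial_i)$ and $F(i_0)$ remain small relative to the respective cell complexes. The classes defined by $U$ automatically inherit the two-out-of-three property, closure under retracts, and the relevant lifting characterizations from the corresponding classes in $\Cat$, because $U$ is a right adjoint and preserves all limits.

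The crux, and the one step that is not formal, is the \emph{acyclicity condition}: that $U$ carries every relative $F\{i_0\}$-cell complex to an equivalence of categories. I would verify this by the path-object (interval) argument of \cite{BM1}. Because $\Cat$ is cartesian closed (Theorem \ref{nat-model-cat-str}) and $J$ is an interval, the cotensor $[J,A]$, computed as a functor category in $\Cat$, carries a pointwise permutative structure whenever $A$ is permutative, and the canonical factorization $A \xrightarrow{\ \simeq\ } [J,A] \twoheadrightarrow A\times A$ of the diagonal is, on underlying categories, an equivalence followed by an isofibration (the latter by the pullback–cotensor axiom applied to $\{0,1\}\hookrightarrow J$). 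This produces a functorial path object in $\PCat$ whose underlying factorization witnesses a weak equivalence followed by a fibration, and by the standard criterion this forces $U$ to send the generating acyclic cofibrations $F\{i_0\}$ and all their relative cell complexes to equivalences, supplying the acyclicity condition. With this in hand the transfer theorem yields the model structure; combinatoriality is then immediate, since the generating sets are finite and $\PCat$ is locally presentable.

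Finally, properness. Right properness transfers cheaply: since $U$ creates fibrations and weak equivalences and preserves pullbacks, a pullback of a weak equivalence along a fibration in $\PCat$ has underlying square a pullback of an equivalence along an isofibration in $\Cat$, which is an equivalence by right properness of $\Cat$ (Theorem \ref{nat-model-cat-str}). The main obstacle I anticipate is \emph{left} properness, since pushouts in $\PCat$ are not computed in $\Cat$; I expect to handle it by a cellular analysis of pushouts of the generating cofibrations $F(\partial_i)$, reducing to the behaviour of $U$ on such pushouts, or alternatively to invoke the corresponding properness statements of \cite{BM1} and \cite{Lack}, where the transferred structure on algebras over such a $2$-monad is shown to be proper.
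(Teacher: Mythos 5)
Your proposal is correct and follows essentially the same route as the paper: the paper likewise transfers the natural model structure on $\Cat$ along the adjunction $F \dashv U$ via the transfer theorem \ref{mdl-str-transfer-tool}, uses local presentability of $\Cat$ and $\PCat$ for the smallness hypotheses, and verifies the acyclicity condition with exactly the interval you describe --- its mapping path object $\mathbf{P}(F)$ is built from the pointwise permutative cotensor $[J,C]$, with $(\partial_0,\partial_1):[J,C]\to C\times C$ shown to be an isofibration and $\sigma:C\to[J,C]$ an equivalence, followed by a lifting/retract and two-out-of-six argument (Lemma \ref{fact-acy-cof-perm}). The only divergences are cosmetic --- the paper phrases acyclicity as condition $(2)$ of the transfer theorem applied to an arbitrary map with the left lifting property against fibrations, rather than via relative $F\lbrace i_0\rbrace$-cell complexes --- and on properness you are if anything more careful than the paper, whose proof does not address it: your right-properness argument via $U$ creating fibrations and weak equivalences and preserving pullbacks is sound, and deferring left properness to \cite{BM1} and \cite{Lack} matches what the paper itself does in its introduction.
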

 A functor $F:C \to D$ is an equivalence of categories if and only if there exists another
 functor $G:D \to C$ and two natural isomorphisms $FG \cong id_C$ and $id_D \cong GF$. We would
 like to have a similar characterization for a weak equivalence in $\PCat$ but unfortunately this is only possible by relaxing the strictness condition on the functor $G$.
 \begin{thm}
 \label{eq-enhanc-perm}
 Let $F:C \to D$ be a strict symmetric monoidal functor in $\PCat$. Any adjunction $(F, G, \eta, \epsilon)$ consisting of a unital right adjoint functor $G:D \to C$ and a pair of unital natural isomorphisms $\epsilon:FG \cong id_D$ and $\eta:id_C \cong GF$, enhances uniquely to a unital symmetric monoidal adjunction i.e.
 there exists a unique natural isomorphism
 \begin{equation}
 \label{SM-enhanc-R-adj}
  \lambda_G := id \circ (\epsilon \times \epsilon) \cdot (\lambda_{GF} \circ id_{G \times G}) \cdot id \circ (\inv{\epsilon} \times \inv{\epsilon}).
 \end{equation}
   enhancing $G = (G, \lambda_G)$, into a unital symmetric monoidal functor such that $\eta$ and $\epsilon$ are unital monoidal natural isomorphims.
 \end{thm}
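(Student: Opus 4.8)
The plan is to produce $\lambda_G$ by transporting a symmetric monoidal structure across the two natural isomorphisms $\eta$ and $\epsilon$, using the strictness of $F$ to absorb the change of domain; this is the elementary incarnation of the ``mate'' construction of doctrinal adjunction. First I would observe that $id_C$ is trivially a unital symmetric monoidal endofunctor of $C$, all of whose structure maps are identities. Applying Lemma \ref{SM-Func-closed-isom} to the unital natural isomorphism $\eta:id_C \cong GF$ produces a unique natural isomorphism $\lambda_{GF}$ enhancing $GF$ to a unital oplax symmetric monoidal functor for which $\eta$ is an oplax symmetric monoidal isomorphism; since $id_C$ is in fact symmetric monoidal, the last clause of that lemma guarantees that $(GF,\lambda_{GF})$ is symmetric monoidal. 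This is exactly the $\lambda_{GF}$ appearing in \eqref{SM-enhanc-R-adj}.

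Next I would read \eqref{SM-enhanc-R-adj} as a vertical composite of whiskered natural isomorphisms and check that it typechecks as an oplax structure $\lambda_G:G \circ (\TensPFunc{D}) \Rightarrow (\TensPFunc{C}) \circ (G \times G)$. Evaluated at a pair $(d_1,d_2)$ it is the composite
\[
\begin{aligned}
G(d_1 \underset{D}\otimes d_2) &\xrightarrow{G(\inv{\epsilon}(d_1) \underset{D}\otimes \inv{\epsilon}(d_2))} GF(G(d_1) \underset{C}\otimes G(d_2)) \\
&\xrightarrow{\lambda_{GF}(G(d_1), G(d_2))} GFG(d_1) \underset{C}\otimes GFG(d_2) \\
&\xrightarrow{G(\epsilon(d_1)) \underset{C}\otimes G(\epsilon(d_2))} G(d_1) \underset{C}\otimes G(d_2),
\end{aligned}
\]
where the identification $GF(G(d_1) \underset{C}\otimes G(d_2)) = G(FG(d_1) \underset{D}\otimes FG(d_2))$ uses that $F$ strictly preserves the tensor. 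Being a horizontal/vertical composite of natural isomorphisms, $\lambda_G$ is a natural isomorphism, and it is unital because $\lambda_{GF}$, $\epsilon$, and $\inv{\epsilon}$ are all unital.

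Then I would verify the three conditions of Definition \ref{oplax-sym-mon-functor} for $(G,\lambda_G)$, in each case reducing to the condition already known for $(GF,\lambda_{GF})$. The unit condition \ref{OpL-unit} uses unitality of $\lambda_{GF}$ together with $\epsilon$ being unital. The symmetry condition \ref{OpL-symmetry} follows because $F$ strictly preserves $\gamma$ and $\epsilon$ is natural, so the symmetry square for $\lambda_G$ is obtained by conjugating the symmetry square for $\lambda_{GF}$ by the isomorphisms $G(\epsilon(d_i))$. The associativity condition \ref{OpL-associativity} is handled the same way, by pasting the known associativity diagram for $\lambda_{GF}$ together with naturality of $\epsilon$ and the strict preservation of $\alpha$ by $F$. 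I expect this associativity check to be the main obstacle, since it requires carefully pasting a large diagram while tracking the strictness identifications $F(c_1 \underset{C}\otimes c_2) = F(c_1)\underset{D}\otimes F(c_2)$ at every stage.

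Finally I would record that $\eta$ is oplax symmetric monoidal by the very construction of $\lambda_{GF}$, and check that $\epsilon:FG \cong id_D$ is oplax symmetric monoidal; the latter reduces, via strictness of $F$ and the definition of $\lambda_G$, to the triangle identity $(G\circ\epsilon) \cdot (\eta\circ G) = id_G$. For uniqueness, suppose $(G,\mu)$ is any unital symmetric monoidal enhancement making $\eta$ and $\epsilon$ monoidal. Monoidality of $\eta$ together with strictness of $F$ (so that the composite oplax structure of $GF$ at $(c_1,c_2)$ is just $\mu(F(c_1),F(c_2))$) forces, by the uniqueness clause of Lemma \ref{SM-Func-closed-isom}, the equality $\mu(F(c_1),F(c_2)) = \lambda_{GF}(c_1,c_2)$; thus $\mu$ is pinned down on objects in the image of $F$. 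Naturality of $\mu$ along the isomorphisms $\epsilon(d_i):FG(d_i) \cong d_i$ then forces $\mu(d_1,d_2)$ to agree with the displayed composite at every pair $(d_1,d_2)$, whence $\mu = \lambda_G$. This yields the unique unital symmetric monoidal adjunction, and since $F$ is symmetric monoidal and $\lambda_G$ is invertible, $(G,\lambda_G)$ is symmetric monoidal as claimed.
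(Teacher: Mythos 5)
Your proposal follows essentially the same route as the paper's proof: both first transport the trivial symmetric monoidal structure on $id_C$ along $\eta$ via Lemma \ref{SM-Func-closed-isom} to obtain $\lambda_{GF}$, then define $\lambda_G$ by conjugating $\lambda_{GF}$ with $\epsilon$ exactly as in \eqref{SM-enhanc-R-adj} (the paper's displayed composite also inserts $id \circ \lambda_F \circ id$, which is the identity since $F$ is strict), and verify the conditions \ref{OpL-symmetry} and \ref{OpL-associativity} compositionally. The only differences are matters of explicitness in your favor: where the paper asserts ``there is only one way to define $\lambda_G$'' from its forced commuting square and handles $\epsilon$ by separately transporting structure to $FG$, you pin down uniqueness in two clean steps (the uniqueness clause of Lemma \ref{SM-Func-closed-isom} determines $\mu$ on the image of $F$, then naturality along the isomorphisms $\epsilon(d_i)$ determines it everywhere) and reduce monoidality of $\epsilon$ to a triangle identity, which is available since the hypotheses give an adjoint equivalence.
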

 \begin{proof}
  We will show that $G$ is a symmetric monoidal functor and $\eta$ and $\epsilon$ are symmetric monoidal natural isomorphisms.
 We will first show the later part. In order to do so we first have to show that the composite functors $GF$ and $FG$ are symmetric monoidal. We consider the following diagram:
 \begin{equation*}
 \label{unit-counit-SM}
 \xymatrix@C=1mm{
 && C \times C \ar@/^2pc/[dd]^{id} \ar@/_2pc/[dd]_{GF \times GF}\ar[rrrrrrrr]^{\TensPFunc{C}} &&&&&&&&C \ar@/^2pc/[dd]^{GF} \ar@/_2pc/[dd]_{id} &&  \\
 &&& \ar@{=>}[ll]_{\ \eta \times \eta} &&&&&& \ar@{=>}[rr]^{\ \eta} &&&& \\
 &&C \times C \ar[rrrrrrrr]_{\TensPFunc{C}} &&&&&&&& C &&
 }
 \end{equation*}
 Even though the definition follows from lemma \ref{SM-Func-closed-isom}, still the above
 diagram is helpful in defining a composite natural isomorphism $\lambda_{GF}:GF \circ (\TensPFunc{C}) \Rightarrow (\TensPFunc{C}) \circ GF \times GF$ as follows:
 \begin{equation*}
 \lambda_{GF} := (id_{\TensPFunc{C}} \circ \eta \times \eta) \cdot (\inv{\eta} \circ id_{\TensPFunc{C}})
 \end{equation*}
 It follows from lemma \ref{SM-Func-closed-isom} that $GF = (GF, \lambda_{GF})$ is a symmetric monoidal functor and $\eta$ is a unital monoidal natural isomorphism. A similar argument shows that the composite functor $FG$ is a symmetric monoidal functor and $\epsilon$ is a unital monoidal natural isomorphism.

 Now it remains to show that $G$ is a symmetric monoidal functor.
 The given adjunction is a unital symmetric monoidal adjunction if and only
 if there exists a unital monoidal isomorphism $\lambda_G$ such that the following digram commutes
\begin{equation*}
 \xymatrix@C=20mm{
 (\TensPFunc{C}) \circ (GF \times GF) \circ (G \times G) \ar@{=>}[r]^{ \ \ \ \ \ \ \ \ id \circ (\inv{\epsilon} \times \inv{\epsilon})} \ar@{=>}[d]_{\lambda_{GF} \circ id_{G \times G}}  & (\TensPFunc{C})  \circ (G \times G) \ar@{=>}[d]^{\lambda_G}   \\
GF \circ (\TensPFunc{C})  \circ (G \times G) \ar@{=>}[r] & G \circ (\TensPFunc{D})
 }
 \end{equation*}
 There is only one way to define $\lambda_G$ which is the following composite natural isomorphism:
  \begin{multline*}
 (\TensPFunc{C}) \circ (G \times G) \overset{id \circ (\inv{\epsilon} \times \inv{\epsilon})} \Rightarrow (\TensPFunc{C}) \circ (G \times G) \circ (FG \times FG) = (\TensPFunc{C}) \circ (GF \times GF) \circ (G \times G) \\
 \overset{\lambda_{GF} \circ id} \Rightarrow  GF \circ (\TensPFunc{C}) \circ (G \times G) \overset{id \circ \lambda_{F} \circ id }\Rightarrow G \circ (\TensPFunc{D}) \circ (FG \times FG) \overset{id \circ (\epsilon \times \epsilon)} \Rightarrow G \circ (\TensPFunc{D}).
 \end{multline*}
 $\lambda_G$ satisfies the symmetry and associativity condition \ref{OpL-symmetry} and \ref{OpL-associativity} because it is a composite of a symmetric monoidal natural isomorphism $\epsilon$, some identity natural isomorphisms and $\lambda_F$ and all of these satisfy \ref{OpL-symmetry} and \ref{OpL-associativity}. Thus we have proved that $(G, \lambda_G)$ is a symmetric monoidal functor.
 \end{proof}
 \begin{coro}
 \label{char-eq-perm}
 A strict symmetric monoidal functor $F:C \to D$ is a weak equivalence in $\PCat$
 if and only if there exists a symmetric monoidal functor $G:D \to C$ and a pair of symmetric monoidal natural isomorphisms $\epsilon:FG \cong id_D$ and $\eta:id_C \cong GF$.
 \end{coro}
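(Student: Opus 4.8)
The plan is to reduce the statement to Theorem~\ref{eq-enhanc-perm} together with the characterization of weak equivalences in Theorem~\ref{nat-model-str-Perm}. The ($\Leftarrow$) direction is immediate: given a symmetric monoidal functor $G$ together with symmetric monoidal natural isomorphisms $\epsilon:FG\cong id_D$ and $\eta:id_C\cong GF$, one forgets the monoidal structure to see that $F$ is an equivalence of categories with quasi-inverse $G$; since $F$ is by hypothesis a strict symmetric monoidal functor, it is then a weak equivalence in $\PCat$ by Theorem~\ref{nat-model-str-Perm}.

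For the ($\Rightarrow$) direction the task is to manufacture the data required by Theorem~\ref{eq-enhanc-perm}, namely a \emph{unital} adjunction $(F,G,\eta,\epsilon)$ in which $G$ is unital and $\eta,\epsilon$ are unital natural isomorphisms; the theorem then automatically enhances $G$ to a unital symmetric monoidal functor and promotes $\eta$ and $\epsilon$ to symmetric monoidal natural isomorphisms, which is exactly the desired output. Since a weak equivalence in $\PCat$ is in particular an equivalence of categories, I would first choose an adjoint equivalence $(F,G_0,\eta_0,\epsilon_0)$ by the standard refinement of an equivalence to an adjoint equivalence.

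The remaining work is to rectify this adjoint equivalence so that it becomes unital. Because $F$ is strict we have $F(\unit{C})=\unit{D}$, and the counit component $\epsilon_0(\unit{D}):FG_0(\unit{D})\to\unit{D}=F(\unit{C})$ lifts, by full faithfulness of $F$, to a unique isomorphism $\theta:G_0(\unit{D})\to\unit{C}$ with $F(\theta)=\epsilon_0(\unit{D})$. Using Proposition~\ref{Ext-Fun-DegWise-iso} I would replace $G_0$ by the functor $G$ that agrees with $G_0$ away from $\unit{D}$ and satisfies $G(\unit{D})=\unit{C}$, equipped with a natural isomorphism $\psi:G_0\cong G$ whose only nonidentity component is $\psi(\unit{D})=\theta$. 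Transporting the adjunction along $\psi$ — setting $\eta:=(\psi\circ id_F)\cdot\eta_0$ and $\epsilon:=\epsilon_0\cdot(id_F\circ\inv{\psi})$ — yields a new adjoint equivalence $(F,G,\eta,\epsilon)$ in which $G$ is unital.

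It then remains to verify unitality of $\eta$ and $\epsilon$ at the unit objects. The triangle identity for the original adjunction gives $F(\eta_0(\unit{C}))=\inv{\epsilon_0(\unit{D})}=F(\inv{\theta})$, so faithfulness of $F$ forces $\eta_0(\unit{C})=\inv{\theta}$ and hence $\eta(\unit{C})=\theta\circ\eta_0(\unit{C})=id_{\unit{C}}$; the triangle identity for the new adjunction then forces $\epsilon(\unit{D})=id_{\unit{D}}$. With all three unitality conditions in hand, Theorem~\ref{eq-enhanc-perm} completes the proof. The main obstacle is precisely this rectification step: seeing that one can arrange $\eta(\unit{C})=id$ and $\epsilon(\unit{D})=id$ simultaneously, which rests on the interplay between full faithfulness of $F$ and the triangle identities rather than on any freedom to choose the two isomorphisms independently.
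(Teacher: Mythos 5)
Your proof is correct, but it reaches the unital data by a genuinely different route than the paper. The paper's proof is abstract: it regards the unit objects of $C$ and $D$ as basepoints, views $F$ as a weak equivalence in the natural model category $\pCat$ of Theorem \ref{nat-model-str-pointed-cat}, extracts from the pointed setting a unital functor $G$ with unital natural isomorphisms $\eta$ and $\epsilon$, and then cites Theorem \ref{eq-enhanc-perm}. You instead rectify by hand: choose an adjoint equivalence $(F,G_0,\eta_0,\epsilon_0)$, lift $\epsilon_0(\unit{D})$ through the fully faithful $F$ to $\theta:G_0(\unit{D})\to\unit{C}$, modify $G_0$ at the single object $\unit{D}$ via Proposition \ref{Ext-Fun-DegWise-iso}, and transport the adjunction along the resulting isomorphism $\psi$; the triangle identities and faithfulness of $F$ then force $\eta(\unit{C})=id_{\unit{C}}$ and $\epsilon(\unit{D})=id_{\unit{D}}$, as you compute. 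Your elementary argument buys something the paper's proof leaves implicit: Theorem \ref{eq-enhanc-perm} is stated for an \emph{adjunction} $(F,G,\eta,\epsilon)$, and your construction delivers the triangle identities explicitly, whereas the paper's proof only asserts the existence of $G$ and the two unital isomorphisms without verifying adjointness. Conversely, the paper's appeal to $\pCat$ is shorter and reuses established machinery rather than pointwise rectification. One minor remark: $\epsilon(\unit{D})=id_{\unit{D}}$ also follows directly, since $\epsilon(\unit{D})=\epsilon_0(\unit{D})\circ F(\inv{\theta})=\epsilon_0(\unit{D})\circ\inv{\epsilon_0(\unit{D})}$, so you need not invoke the new triangle identity there — though your derivation is equally valid.
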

 \begin{proof}
 The only if part of the statement of the corollary is obvious. Let us assume that $F$ is a weak equivalence in $\PCat$. By regarding the unit objects of $C$ and $D$ as basepoints, we may view $F$ as a (pointed) functor in $\pCat$ which is a weak equivalence in the natural model category of pointed categories $\pCat$. Then, by definition, there exists a unital functor $G:D \to C$ and two unital natural isomorphisms $\eta:id_C \Rightarrow GF$ and $\epsilon:FG \Rightarrow id_D$. Now the result follows from the theorem.
 \end{proof}

 Next we want to give a characterization of acyclic fibrations in $\PCat$.
 Recall that a functor is an acyclic fibration in $\Cat$ if and only of it is
 an equivalence which is surjective on objects.
 The following corollary provides equivalent characterizations of acyclic fibrations in $\PCat$
 \begin{coro}
 \label{char-acy-fib}
 Given a strict symmetric monoidal functor $F:C \to D$ between permutative categories, the following statements about $F$ are equivalent:
 \begin{enumerate}
 \item $F$ is an acyclic fibration in $\PCat$.
 \item $F$ is an equivalence of categories and surjective on objects.
 \item There exist a unital symmetric monoidal
 functor $G:D \to C$ such that $FG = id_D$ and a unital monoidal natural isomorphism $\eta:id_C \cong GF$.
 
 \end{enumerate}
  \end{coro}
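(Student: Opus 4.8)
The plan is to prove $(1) \Leftrightarrow (2)$ and $(2) \Leftrightarrow (3)$, with the substantive work concentrated in the implication $(2) \Rightarrow (3)$, where one must upgrade an abstract equivalence into a strict, unital, symmetric monoidal section.

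First I would dispose of $(1) \Leftrightarrow (2)$ directly from the definitions. By Theorem \ref{nat-model-str-Perm} a map $F$ is an acyclic fibration in $\PCat$ precisely when it is a strict symmetric monoidal functor that is simultaneously an isofibration and an equivalence of the underlying categories. Lemma \ref{acyc-fib-surj-eq} asserts that an equivalence of categories is an isofibration if and only if it is surjective on objects, so $F$ is an acyclic fibration iff it is a strict symmetric monoidal equivalence that is surjective on objects, which is exactly $(2)$.

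The heart of the argument is $(2) \Rightarrow (3)$. Assuming $F$ is an equivalence surjective on objects, I would build a strict section $G$ as follows. Since $F$ is surjective on objects, choose for each $d \in \Ob(D)$ an object $G(d) \in \Ob(C)$ with $F(G(d)) = d$; crucially, since $F$ is strict symmetric monoidal we have $F(\unit{C}) = \unit{D}$, so I set $G(\unit{D}) := \unit{C}$, making $G$ unital on the unit object. Because $F$ is an equivalence it is fully faithful, so for every morphism $g:d \to d'$ in $D$ there is a unique morphism $G(g):G(d) \to G(d')$ in $C$ with $F(G(g)) = g$; uniqueness makes $G$ a functor and forces $FG = \id_D$ on the nose. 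Full faithfulness then lets me lift each identity $\id_{F(c)}$ uniquely to a morphism $\eta(c):c \to GF(c)$ with $F(\eta(c)) = \id_{F(c)}$; since fully faithful functors reflect isomorphisms each $\eta(c)$ is invertible, and a short diagram chase (applying the faithful $F$ to both squares) shows $\eta$ is natural. Taking $c = \unit{C}$ and using faithfulness shows $\eta(\unit{C}) = \id_{\unit{C}}$, so $\eta$ is a unital natural isomorphism.

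It then remains to promote $G$ to a unital symmetric monoidal functor, which is exactly what Theorem \ref{eq-enhanc-perm} provides once I exhibit $(F,G,\eta,\epsilon)$ as an adjunction with unital natural isomorphisms. Here I would take the counit $\epsilon$ to be the identity natural transformation on $\id_D$, legitimate since $FG = \id_D$ strictly. The two triangle identities reduce, via $\epsilon = \id$ and $F(\eta(c)) = \id_{F(c)}$, to the equalities $\eta(G(d)) = \id_{G(d)}$, which hold because $F$ is faithful and sends both sides to $\id_d$. Theorem \ref{eq-enhanc-perm} now supplies a unique $\lambda_G$ making $G$ unital symmetric monoidal with $\eta$ and $\epsilon$ monoidal, completing $(3)$. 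The converse $(3) \Rightarrow (2)$ is immediate: $FG = \id_D$ gives surjectivity on objects, while $FG = \id_D$ together with $\eta:\id_C \cong GF$ exhibits $F$ as an equivalence of categories. I expect the main obstacle to be the insistence on the strict equality $FG = \id_D$ rather than a mere natural isomorphism: obtaining it requires the object choices afforded by surjectivity and the rigidity afforded by full faithfulness to pin down $G$ on morphisms, and it is precisely this strictness that feeds the adjunction hypotheses of Theorem \ref{eq-enhanc-perm}.
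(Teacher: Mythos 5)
Your proof is correct, but the route through the substantive implication $(2) \Rightarrow (3)$ is genuinely different from the paper's. The paper argues model-categorically: regarding $F$ as a map in $\pCat$ with the units as basepoints, it invokes the fact that every object of $\pCat$ is cofibrant (Theorem \ref{nat-model-str-pointed-cat}) to obtain a unital section $G$ with $FG = id_D$ and a unital natural isomorphism $\eta: id_C \cong GF$ by lifting against the acyclic fibration, and then applies Theorem \ref{eq-enhanc-perm}. You instead construct $G$ and $\eta$ by hand — choosing object preimages (with $G(\unit{D}) := \unit{C}$), lifting morphisms uniquely via full faithfulness to force $FG = id_D$ strictly, and defining $\eta(c)$ as the unique lift of $id_{F(c)}$ — before feeding the result into the same Theorem \ref{eq-enhanc-perm}. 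Your approach buys something the paper's write-up glosses over: Theorem \ref{eq-enhanc-perm} is stated for an \emph{adjunction} $(F, G, \eta, \epsilon)$, and an arbitrary section $G$ with an arbitrary natural isomorphism $\eta$ need not satisfy the triangle identities; your specific choice of $\eta$ as the unique lift of identities makes both triangle identities verifiable by faithfulness (as you check, $\eta(G(d)) = id_{G(d)}$ since $F$ sends both sides to $id_d$), so the hypotheses of the theorem are honestly met. What the paper's route buys is brevity and uniformity with the surrounding model-categorical machinery (the same cofibrancy fact is reused elsewhere), at the cost of leaving the adjunction condition implicit. Your handling of $(1) \Leftrightarrow (2)$ via Lemma \ref{acyc-fib-surj-eq} and $(3) \Rightarrow (2)$ matches the paper's content, differing only in that the paper organizes the proof as a cycle $(1) \Rightarrow (2) \Rightarrow (3) \Rightarrow (1)$.
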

 \begin{proof}
 $(1) \Rightarrow (2)$
 An acyclic fibration $F:C \to D$ in $\PCat$ is an acyclic fibration in $\pCat$, when the unit objects are regarded as basepoints of $C$ and $D$. Every acyclic fibration in $\pCat$ is an equivalence of categories and surjective on objects, see \ref{acyc-fib-surj-eq}.
 
 $(2) \Rightarrow (3)$
  Since every object is cofibrant in the natural model category structure on $\pCat$,
 therefore there exist a unital functor $G:D \to C$ such that $FG = id_D$. There also exists a unital natural isomorphism $\eta:id_C \cong GF$, see \ref{nat-model-str-pointed-cat}. Now the theorem tells us that there is a unique enhancement of $G$ to a unital symmetric monoidal functor such that $\eta$ 
 is a unital monoidal natural isomorphisms.
 
 $(3) \Rightarrow (4)$
 Conversely, if there exists a unital symmetric monoidal functor $(G, \lambda_G)$ and a unital monoidal natural isomorphisms $\eta:id_C \cong GF$ such that $FG = id_D$ then $F$ is an acyclic fibration in $\Cat$ and therefore it is an acyclic fibration in $\PCat$.
 \end{proof}
  
 Every object in $\Cat$ is cofibrant in the natural model structure but this is not the case in $\PCat$. The cofibrant objects satisfy a freeness condition, for example every free permutative category generated by a category is a cofibrant object in $\PCat$. In general,
the notion of cofibrations in $\PCat$ is stronger than that in $\Cat$ as the following lemma suggests:
\begin{lem}
\label{cof-cat-perm}
A cofibration in $\PCat$ is monic on objects.
\end{lem}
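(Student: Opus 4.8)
The plan is to argue directly from the defining lifting property of cofibrations in $\PCat$ recorded in Theorem~\ref{nat-model-str-Perm}(3), rather than from a generating set. The naive strategy --- writing a cofibration as a retract of transfinite compositions of pushouts of the (free) generating cofibrations and invoking that the object-set functor preserves these colimits --- is unavailable here, because colimits in $\PCat$ do not agree with those in $\Cat$ (the forgetful functor is a right adjoint), so $\Ob(-)$ need not preserve the relevant pushouts. Instead I would detect injectivity on objects through a single, explicitly constructed lifting problem. Concretely, suppose $F:C\to D$ is a cofibration; it suffices to produce an acyclic fibration $p:E\to D$ together with a strict symmetric monoidal functor $G:C\to E$ that is \emph{injective on objects} and satisfies $p\circ G=F$. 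Indeed, the commutative square with top edge $G$, left edge $F$, right edge $p$ and bottom edge $\id_D$ then admits a lift $s:D\to E$ with $s\circ F=G$ and $p\circ s=\id_D$; if $F(c_0)=F(c_1)$ then $G(c_0)=s(F(c_0))=s(F(c_1))=G(c_1)$, whence $c_0=c_1$ by injectivity of $G$, so $F$ is monic on objects.

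To build such a triple $(E,p,G)$ I would set $E := D \times K$, where $K$ is the chaotic (indiscrete) category on the object set $\Ob(C)$: its objects are those of $C$, there is a unique morphism between any two objects, and it is made into a permutative category by transporting the monoid structure of $(\Ob(C), \otimes, \unit{C})$ onto its objects and taking the (necessarily unique) symmetry. Since $C$ is permutative, $\Ob(C)$ is a monoid, so this is well defined, and $K$ is a contractible groupoid, hence equivalent to the terminal permutative category. The projection $p := \mathrm{pr}_1 : D \times K \to D$ is then a strict symmetric monoidal functor which is surjective on objects and an equivalence of categories (it projects away a contractible factor), so by Corollary~\ref{char-acy-fib} it is an acyclic fibration in $\PCat$. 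For the lift I would take $G := \langle F, \iota\rangle$, where $\iota:C\to K$ is the identity on objects and sends every morphism to the unique available one; $\iota$ is strict symmetric monoidal because on objects it is the identity monoid map and all coherence morphisms in $K$ are unique, and it is injective on objects. Hence $G(c)=(F(c),c)$ is injective on objects and $p\circ G = F$, as required.

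With these choices the lifting argument of the first paragraph applies verbatim and yields the claim. The one genuinely delicate point --- and the reason the argument is phrased through a lift rather than through closure properties of monomorphisms --- is the choice of $K$: one must enlarge $D$ by a factor that separates the objects of $C$ \emph{without} altering hom-sets, i.e.\ without changing the homotopy type. Using the discrete category on $\Ob(C)$ would keep $G$ injective on objects but would destroy the property that $p$ is an equivalence; the chaotic category repairs exactly this, since an indiscrete groupoid on a nonempty set is contractible (and $\Ob(C)$ is nonempty, as it contains $\unit{C}$). Everything else --- that $D \times K$ is permutative, that $p$ and $\iota$ are strict symmetric monoidal, and that $K$ is contractible --- is routine and I would not belabor it.
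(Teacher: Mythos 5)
Your proof is correct and is essentially the paper's own argument: your chaotic permutative category $K$ on $Ob(C)$ is exactly the category $EC$ that the paper constructs, and your lifting problem against $\mathrm{pr}_1:D\times K\to D$ is equivalent to the paper's lifting problem against the acyclic fibration $EC\to\ast$, since a lift $s:D\to D\times K$ over $id_D$ is the same data as a functor $D\to EC$ restricting to $\iota_C$ along $F$. The only cosmetic difference is that you argue directly via an explicit section where the paper argues by contradiction; the key construction and the detection mechanism are identical.
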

\begin{proof}
We begin the proof by defining a permutative category $EC$ whose set of objects is the same as that of $C$.
The category $EC$ has exactly one arrow between any pair of objects. This category gets a
unique permutative category structure which agrees with the permutative category structure of $C$
on objects. The category $EC$ is equipped with a unique strict symmetric monoidal functor
$\iota_C:C \to EC$ which is identity on objects. It is easy to see that the category $EC$
is a groupoid and the terminal map $EC \to \ast$ is an acyclic fibration in $\PCat$.

Let $i:C \to D$ be a cofibration in $\PCat$. Let us assume that the object function
$Ob(i):Ob(C) \to Ob(D)$ is NOT a monomorphism. Now we have the following commutative diagram
\begin{equation*}
 \xymatrix{
 C \ar[r]^{\iota_C} \ar[d]_i & EC \ar[d] \\
 D \ar[r] & \ast
 }
 \end{equation*}
 The above diagram has NO lift because $Ob(i)$ is NOT a monomorphism. Since the
 terminal map $EC \to \ast$ is an acyclic fibration, we have a contradiction to our
 assumption that $i$ is a cofibration in $\PCat$.
 Thus a cofibration in $\PCat$ is always monic on objects.
\end{proof}

Frequently in this paper we would require a characterization of cofibrations in $\PCat$. The object function of a strict symmetric monoidal functor, which is a homomorphisms of monoids, determines whether the functor is a cofibration in $\PCat$. We now recall that the category of monoids has a (weak) factorization system: 
%\begin{AndreNotes}
%This is a test!
%\end{AndreNotes}
\begin{lem}
There is a weak factorization system $(L, R)$ on the category of monoids, where $R$ is the class of surjective homomorphisms of monoids.
\end{lem}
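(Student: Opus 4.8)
The plan is to realise the surjective homomorphisms as the right class of a cofibrantly generated weak factorization system on the category $\mathbf{Mon}$ of monoids; the left class $L$ is then forced upon us, and the factorization itself comes for free from the small object argument.

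First I would pin down a single generating map whose right lifting property cuts out exactly the surjections. The natural candidate is the unique homomorphism $j:\mathrm{1} \to \mathbb{N}$ from the trivial (initial) monoid $\mathrm{1}$ to the free monoid on one generator, which is $\mathbb{N}$ under addition. The key step is to verify that a homomorphism $f:M \to N$ has the right lifting property with respect to $j$ if and only if $f$ is surjective. Indeed, a commutative square with left edge $j$ and right edge $f$ amounts to the choice of an element $n \in N$ (the image of the generator under the bottom map $\mathbb{N} \to N$), the top map out of $\mathrm{1}$ being forced; a diagonal filler $\mathbb{N} \to M$ is exactly the choice of an element $m \in M$ with $f(m) = n$, the upper triangle commuting automatically. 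Hence fillers exist for every such square precisely when every element of $N$ has a preimage, i.e. precisely when $f$ is surjective.

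Next I would use that $\mathbf{Mon}$ is the category of algebras for a finitary algebraic theory and is therefore locally finitely presentable, so that every object is small and the small object argument applies to the one-element set $\mathcal{J} = \{j\}$. This produces a weak factorization system $(L, R)$ cofibrantly generated by $\mathcal{J}$, in which $R$ is exactly the class of maps having the right lifting property with respect to $\mathcal{J}$, while $L$ is the class of maps having the left lifting property with respect to $R$ (equivalently, the retracts of transfinite composites of pushouts of $j$). Combined with the previous paragraph, $R$ is precisely the class of surjective homomorphisms of monoids, which is what we wanted.

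The only real obstacle is the verification in the second step, and in particular the observation that a single generator suffices: no auxiliary maps are needed to enforce compatibility with the multiplication, since any filler is automatically a monoid homomorphism. Once this identification is secured, the existence of the factorization and the closure of $L$ and $R$ under retracts are formal consequences of the small object argument in a locally presentable category, so no further computation remains.
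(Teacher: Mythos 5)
Your proof is correct, but it takes a genuinely different route from the paper. You characterize the surjections as the class $\{j\}^{\boxslash}$ for the single map $j:1 \to \mathbb{N}$ from the initial monoid to the free monoid on one generator (your verification of this is right: since $1$ is initial, a square against $f:M \to N$ is just an element $n \in N$, and a filler is just a preimage $m \in M$), and then you invoke local finite presentability of the category of monoids together with the small object argument to manufacture the factorization and the weak factorization system $(L,R)$ formally. The paper instead constructs the factorization explicitly and in one step: given $f:X \to Y$, it sets $E = X \sqcup F(Y)$ (coproduct with the free monoid on the underlying set of $Y$), takes $u:X \to E$ the inclusion and $p:E \to Y$ induced by $f$ and the counit $q:F(Y) \to Y$; surjectivity of $p$ is immediate, and membership of $u$ in $L$ is checked directly by reducing, via the free--forgetful adjunction, a lifting problem against a surjection of monoids to a lifting problem against a surjection of sets, which is solved by choice. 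The trade-offs are what you would expect: the paper's argument is elementary and avoids transfinite machinery entirely (the one-step factorization works because free monoids are projective against surjections), while your argument additionally establishes that the weak factorization system is cofibrantly generated by a single finitely presentable map and identifies $L$ concretely as retracts of relative $\{j\}$-cell complexes --- extra structure the paper's proof does not provide, at the cost of appealing to the small object argument. Both constructions produce the same system, since in each case $R$ is the class of surjections and $L$ is forced to be ${}^{\boxslash}R$.
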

\begin{proof}
We have to show that each homomorphism of monoids $f:X \to Y$ admits a 
factorisation $f:X  \overset{u} \to E \overset{p} \to Y$ with $u$ lies in the class $L$ and $p$ lies in the class $R$. 
For this, let $q:F(Y) \to Y$ be the homomorphism adjunct to the identity map on $Y$ in the category of sets, where $F(Y)$ is the free monoid generated by the underlying set of $Y$. The homomorphism $q$ is surjective. Let $E = X \coprod F(Y)$ be the coproduct of $X$ and $F(Y)$ in the category of monoids,
and let $u:X \to E$ and $v:F(Y) \to E$ be the inclusions.  
Then there is  a unique map $p:E \to Y$ such that $pu=f$ and $pv=q$.
We claim that $u$ is in $L$ and $p$ is in $R$. The homomorphism $p$ is surjective because $q$ is surjective. In order to show that $u$ is in $L$ we have to show that whenever we have a (outer) commutative diagram in the category of monoids, where $s$ is in $R$
 \begin{equation*}
 \xymatrix{
 X \ar[d]_{u} \ar[r]^g & C \ar@{->>}[d]^{s} \\
E = X \coprod F(Y) \ar[r]_{ \ \ \ \ \ \ \ f} \ar@{-->}[ru]^{L}  & D
 }
  \end{equation*}
  there exists a diagonal filler $L$ which makes the entire diagram commutative. The lower horizontal map $f$ can be viewed as a pair of homomorphisms $f_1:X \to D$ and $f_2:F(Y) \to D$. Now, it would be sufficient to show that there exists a homomorphism $L_2:F(Y) \to C$ such that the following diagram commutes
  \begin{equation*}
 \xymatrix{
 & C \ar@{->>}[d]^{s} \\
 F(Y) \ar[r]_{ \ \ f_2} \ar@{-->}[ru]^{L_2}  & D
 }
  \end{equation*}
  By adjointness, the existence of the homomorphism $L_2$ is equivalent to the existence of a morphism of sets, $T:Y \to U(C)$, such that the following diagram commutes in the category of sets
   \begin{equation*}
 \xymatrix{
 & U(C) \ar@{->>}[d]^{U(s)} \\
 Y \ar[r]_{U(f_2) \ \ } \ar@{-->}[ru]^{T}  & U(D)
 }
  \end{equation*}
where $U$ is the forgetful functor from the category of monoids to the category of sets which is right adjoint to the free monoid functor $F$. Such a map $T$ exists because $s$ is a surjective map of sets. Thus we have shown that the homomorphism $u$ lies in the class $L$.
\end{proof}

 The next lemma provides the desired characterization of cofibrations. 
 \begin{lem}
 A strict symmetric monoidal functor $F:C \to D$ in $\PCat$ is a cofibration if and only if the object function of $F$ lies in the class $L$ \emph{i.e.} it has the left lifting property with respect to surjective homomorphisms of monoids. 
%A permutative category $C$ is cofibrant if and only if its monoid of objects $Ob(C)$ is a free monoid.
\end{lem}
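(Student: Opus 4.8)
The plan is to exploit the fact that the object function defines a functor $Ob$ from $\PCat$ to the category of monoids: in a permutative category the tensor product makes the object set into a (not necessarily commutative) monoid, strictly associative and unital, and a strict symmetric monoidal functor induces a monoid homomorphism on objects. By Theorem \ref{nat-model-str-Perm} a cofibration is precisely a strict symmetric monoidal functor with the left lifting property against acyclic fibrations, and by Corollary \ref{char-acy-fib} these acyclic fibrations are exactly the strict symmetric monoidal functors which are equivalences of categories and surjective on objects. Thus the statement compares lifting in $\PCat$ against such functors with lifting of monoid homomorphisms against surjections, and the two directions will be handled separately.

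For the forward direction, suppose $F$ is a cofibration and consider a square of monoid homomorphisms presenting a lifting problem of $Ob(F)$ against a surjection $s:M \to N$. I would promote this to a lifting problem in $\PCat$ using the construction from the proof of Lemma \ref{cof-cat-perm}: for a monoid $M$ let $EM$ be the permutative category whose object monoid is $M$ and which has a unique morphism between any two objects, so that all coherence isomorphisms are forced and the permutative axioms hold automatically. This assignment is functorial, and for each permutative category $C$ there is a natural strict symmetric monoidal functor $\iota_C:C \to E(Ob\,C)$ which is the identity on objects. The key observation is that $Es:EM \to EN$ is an acyclic fibration in $\PCat$: it is surjective on objects because $s$ is surjective, and it is an equivalence because $EM$ and $EN$ are each equivalent to the terminal category while $Es$ is fully faithful, all hom-sets being singletons. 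Composing the given monoid maps with the appropriate $\iota$'s produces a commuting square in $\PCat$ with $F$ on the left and $Es$ on the right; a diagonal lift exists because $F$ is a cofibration, and its object function is the desired lift of $Ob(F)$ against $s$.

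For the converse, suppose $Ob(F)$ lies in $L$ and let $p:X \to Y$ be an acyclic fibration in $\PCat$ fitting into a commuting square with $F$ on the left and $p$ on the right. Since $p$ is surjective on objects, $Ob(p)$ is a surjective monoid homomorphism, so the induced square of object monoids admits a lift $\lambda_0:Ob(D) \to Ob(X)$. I would then extend $\lambda_0$ to a strict symmetric monoidal functor $\Lambda:D \to X$ using that $p$, being an equivalence, is fully faithful: for each morphism $f$ of $D$ there is a unique morphism $\Lambda(f)$ in $X$ lying over the image of $f$ in $Y$ and having source and target prescribed by $\lambda_0$. Faithfulness of $p$ then forces functoriality of $\Lambda$, the identities making it a genuine lift, and its strict symmetric monoidality, since each strictness equation for $\Lambda$ holds once its image under the faithful strict symmetric monoidal functor $p$ does. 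This yields the required diagonal filler, so $F$ is a cofibration.

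The routine parts are the verifications that $E$ is well defined and functorial and that strictness of $\Lambda$ descends through the faithful functor $p$. The step requiring the most care is the first direction: one must check that $Es$ is genuinely an acyclic fibration and that the two composite squares assembled from $\iota_C$, $\iota_D$ and the images under $E$ of the given monoid maps actually commute in $\PCat$, which rests on the naturality relation $E(Ob\,F)\circ \iota_C = \iota_D \circ F$ together with the commutativity of the original square of monoids.
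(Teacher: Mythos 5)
Your proof is correct and follows essentially the same route as the paper: one direction lifts the object function against the surjection $Ob(p)$ and then extends it to a strict symmetric monoidal lift using the hom-set bijections of the acyclic fibration $p$, and the other direction translates monoid lifting problems into $\PCat$ lifting problems via the indiscrete permutative categories $EM$ and the acyclic fibrations $E(s)$, exactly as in the paper's proof (your $\iota_C$-naturality bookkeeping just makes explicit what the paper leaves implicit).
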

\begin{proof}
  Let us assume that $Ob(F)$ lies in the class $L$.
% Let $C$ be a permutative category such that $Ob(C)$ is a free monoid on a subset $I \subseteq Ob(C)$.
  Let $p:X \to Y$ be an acyclic fibration in $\PCat$ then the object function $Ob(p):Ob(X) \to Ob(Y)$
 is a surjective homomorphism. The assumption that $Ob(F)$ lies in $L$ implies that whenever we have the following (outer) commutative diagram there exists a (dotted) diagonal filler $Ob(L)$  which makes the entire diagram commutative in the category of monoids
%  \begin{AmitNotes}
%  This is a test!
%   \end{AmitNotes}
   
    \begin{equation}
 \label{free-mon-ob-ext}
 \xymatrix{
 Ob(C) \ar[d]_{Ob(F)} \ar[r] & Ob(X) \ar@{->>}[d]^{Ob(p)} \\
 Ob(D) \ar[r] \ar@{-->}[ru]^{Ob(L)}  & Ob(Y)
 }
  \end{equation}
 
 Now we want to show that whenever we have a (outer) commutative diagram, there exists a lift $L$ which makes the following diagram commutative in $\PCat$
  \begin{equation*}
  \label{lift-acy-cof}
 \xymatrix{
 C \ar[d]_F \ar[r] & X \ar@{->>}[d]^{p} \\
 D \ar[r]_G \ar@{-->}[ru]^{L}  & Y
 }
  \end{equation*}
  We will present a construction of the strict symmetric monoidal functor $L$ in the above diagram.  We choose a lift $Ob(L)$ in the diagram \eqref{free-mon-ob-ext} to be the object function of the functor $L$. Since $p$ is an acyclic fibration therefore for each pair of objects $y, z \in Ob(X)$, each function
  \[
  p_{y, z}:X(y, z) \to Y(p(y), p(z))
  \]
  is a bijection.
   For each pair of objects $d_1, d_2$ in $D$, we define a function
  $L_{d_1, d_2}:D(a, b) \to X(L(d_1), L(d_2))$ by the following composite diagram
  \begin{equation*}
  D(d_1, d_2) \overset{G_{d_1, d_2}} \to Y(G(d_1), G(d_2))\overset{\inv{p_{L(d_1), L(d_1)}}}  \to X(L(d_1), L(d_2)).
  \end{equation*}
  In order to check that our definition respects composition, it would be sufficient to check that for another object $d_3 \in Ob(D)$, the following diagram commutes:
   \begin{equation*}
 \xymatrix@C=28mm{
  Y(F(d_1), F(d_2)) \times Y(F(d_2), F(d_3)) \ar[d]_{- \circ -} \ar[r]^{\inv{p_{L(d_1), L(d_2)}} \times \inv{p_{L(d_2), L(d_3)}}} & X(L(d_1), L(d_2)) \times X(L(d_2), L(d_3)) \ar[d]^{- \circ -} \\
 Y(F(d_1), F(d_2)) \ar[r]_{\inv{p_{L(d_1), L(d_3)}}} \ar[r]  & X(L(d_1), L(d_3))
 }
\end{equation*}
The commutativity of the above diagram follows from the commutativity of the following diagram which is the result of the assumption that $p$ is a functor
\begin{equation*}
 \xymatrix@C=24mm{
  Y(F(d_1), F(d_2)) \times Y(F(d_2), F(d_3)) \ar[d]_{- \circ -}  & X(L(d_1), L(d_2)) \times X(L(d_2), L(d_3)) \ar[d]^{- \circ -} \ar[l]_{{p_{L(d_1), L(d_2)}} \times {p_{L(d_2), L(d_3)}}} \\
 Y(F(d_1), F(d_2))   & X(L(d_1), L(d_3)) \ar[l]^{{p_{L(d_1), L(d_3)}}}
 }
\end{equation*}
  Thus we have shown that the family of functions $\lbrace L_{d_1,d_2} \rbrace_{d_1,d_2 \in Ob(D)}$ together with the object function $Ob(L)$ defines a functor $L$. Now we have to check that $L$ is a strict symmetric monoidal functor. Clearly $L(d_1 \otimes d_2) = L(d_1) \otimes L(d_2)$ for each pair of objects $d_1, d_2 \in Ob(D)$ because the object function of $L$ is a homomorphism of monoids. The same equality holds for each pair of maps in $D$. Finally we will show that $L$ strictly preserves the symmetry isomorphism. By definition, $L(\gamma^D_{d_1, d_2}) = \inv{p}_{G(d_1 \otimes d_2), G(d_2 \otimes d_1)} \circ G(\gamma^D_{d_1, d_2})$. Since $G$ is a strict symmetric monoidal functor, therefore $G(\gamma^D_{d_1, d_2}) = \gamma^Y_{G(d_1), G(d_2)} = \gamma^Y_{pL(d_1), pL(d_2)}$. Since $p$
  is also a strict symmetric monoidal functor therefore
  \[
   \inv{p}_{G(d_1 \otimes d_2), G(d_2 \otimes d_1)}(\gamma^Y_{G(d_1), G(d_2)}) = \inv{p}_{G(d_1 \otimes d_2), G(d_2 \otimes d_1)}(\gamma^Y_{pL(d_1), pL(d_2)}) = \gamma^X_{L(d_1), L(d_2)}.
 \]
This means that $L(\gamma^D_{d_1, d_2}) = \gamma^X_{L(d_1), L(d_2)}$ for each pair of objects $d_1, d_2 \in Ob(D)$. Thus we have shown that $L$ is a strict symmetric monoidal functor which makes the entire diagram \eqref{lift-acy-cof} commutative \emph{i.e.} $F$ is an acyclic cofibration in $\PCat$.

Conversely let us assume that $F$ is an acyclic cofibration in $\PCat$. We want to show that the object function of $F$ lies in the class $L$. Let $f:M \to N$ be a surjective homomorphism of monoids. The homomorphism $f$
induces an acyclic fibration $E(f):EM \to EN$, where $EM$ and $EN$ are permutative categories whose monoid of objects are $M$ and $N$ respectively and there is exactly one map between each pair of objects.
By assumption the functor $F$ has the left lifting property with respect to all strict symmetric monoidal functors in the set
\[
\lbrace E(f):EM \to EN : f \in R \rbrace
\]
because every element of this set is an acyclic fibration in $\PCat$.
This implies that the object function of $F$ has the left lifting property with respect to all maps in $R$.

\end{proof}
%
%\begin{coro}
%A permutative category $C$ whose monoid of objects $Ob(C)$ is a free monoid is cofibrant in $\PCat$
%\end{coro}
The next proposition provides three equivalent characterizations of acyclic cofibrations in $\PCat$:
 \begin{prop}
 \label{char-acy-cof}
 Let $F:C \to D$ be a strict symmetric monoidal functor  between permutative categories $C$ and $D$, the following conditions on $F$ are equivalent
 \begin{enumerate}
 \item The strict symmetric monoidal functor $F$ is an acyclic cofibration in $\PCat$.
 \item There exist a strict symmetric monoidal
 functor $G:D \to C$ such that $GF = id_C$ and a unital monoidal natural isomorphism $\eta:id_D \cong FG$ which is the unit of an adjunction
 $(G, F, \eta, id):D \rightharpoonup C$.
%   \item The functor $F$ is fully faithful, essentially surjective, monic on objects and it has a strict symmetric monoidal left adjoint.
 \item There is a (permutative) subcategory $S$ of $D$, an isomorphism $H:C \cong S$, in $\PCat$, a strict symmetric monoidal functor $T:D \to S$ and a unital monoidal natural isomorphism $\iota_S \circ T \cong id_D$, where $\iota_S:S \hookrightarrow D$ is the inclusion functor such that $T \circ \iota_S = id_S$ and $F = \iota_S \circ H$
 \end{enumerate}
 \end{prop}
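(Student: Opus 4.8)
The plan is to prove the three statements equivalent cyclically, $(1) \Rightarrow (2) \Rightarrow (3) \Rightarrow (1)$, using the symmetric monoidal enhancement of adjunctions (Theorem \ref{eq-enhanc-perm}) to pass between strict functors and the monoidal structure on their adjoints. For $(1) \Rightarrow (2)$, I would first produce a strict retraction. Since the terminal functor $C \to \ast$ is a fibration in $\PCat$, the square with top edge $\id_C$, left edge $F$, right edge $C \to \ast$, and bottom edge the terminal map of $D$ commutes, and because $F$ is an acyclic cofibration it admits a diagonal filler $G:D \to C$, which is a strict symmetric monoidal functor with $GF = \id_C$. By two-out-of-three $G$ is then a weak equivalence, and since $F$ is an equivalence of categories it is fully faithful; combined with $GF = \id_C$ this exhibits $F$ as a (co)reflective embedding, so there is a natural isomorphism $\eta:\id_D \cong FG$ which, after normalizing $G$ within its isomorphism class, becomes the unit of an adjunction $(G, F, \eta, \id)$. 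Applying Theorem \ref{eq-enhanc-perm} to this adjunction upgrades it to a unital symmetric monoidal adjunction, so that $\eta$ is a unital monoidal natural isomorphism, which is exactly (2).

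For $(2) \Rightarrow (3)$, I would let $S$ be the image of $F$, i.e. the subcategory of $D$ whose objects are $F(\Ob(C))$ and whose morphisms are the $F(f)$. This is a permutative subcategory because $F$ is strict symmetric monoidal (so the image is closed under the tensor and contains the unit), and I write $\iota_S:S \hookrightarrow D$ for the inclusion. From $GF = \id_C$ the functor $F$ is faithful, while by construction it is full and bijective onto $S$ on objects, so $F$ corestricts to an isomorphism $H:C \cong S$ in $\PCat$. Setting $T := FG$ corestricted to $S$ (its image lies in $S$), one checks $T\iota_S = \id_S$ directly from $GF = \id_C$, whereas $\iota_S T = FG \cong \id_D$ via the unital monoidal isomorphism $\eta$, and $F = \iota_S \circ H$ by construction. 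All functors involved are strict symmetric monoidal, so this assembles precisely into the data of (3).

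For $(3) \Rightarrow (1)$, since $H$ is an isomorphism, hence an acyclic cofibration, and acyclic cofibrations are closed under composition, it suffices to treat $\iota_S$. That $\iota_S$ is a weak equivalence is immediate from $T\iota_S = \id_S$ and the unital monoidal isomorphism $\iota_S T \cong \id_D$ together with Corollary \ref{char-eq-perm}. The remaining, and in my view hardest, point is to show that $\iota_S$ is a cofibration, i.e. (by the characterization of cofibrations in $\PCat$ via the object function) that $\Ob(\iota_S):\Ob(S)\to\Ob(D)$ lies in the class $L$. The retraction $\Ob(T)$ makes $\Ob(\iota_S)$ a split monomorphism of monoids, but a split monomorphism need not by itself have the left lifting property against surjective homomorphisms, so the deformation-retract data must be used more carefully. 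The natural alternative is to solve a lifting problem of $\iota_S$ against an arbitrary fibration $p:X\to Y$ directly, building a candidate lift from $a \circ T$ and transporting it along $\theta := \iota_S T \cong \id_D$ using that $p$ is an isofibration (Proposition \ref{char-of-isofib} and its analogue for the induced map on functor categories).

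I expect the genuine obstacle to sit exactly here: the isofibration transport produces an \emph{ordinary} functor, and one must arrange both that the transported lift restricts to $a$ on $S$ (which needs $\theta$ to behave like the identity along $S$) and, more seriously, that it is again a \emph{strict symmetric monoidal} functor. Controlling the monoidal structure of a lift obtained by transporting isomorphisms is where the strictness of the tensor product on objects has to be brought in, and I would devote the bulk of the argument to showing that the monoidal comparison data of the lift can be chosen to be the identity; absent that, the object-monoid level lifting against surjections is not automatic, so this strictness bookkeeping is the crux of the implication $(3) \Rightarrow (1)$.
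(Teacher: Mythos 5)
Your treatment of $(1)\Rightarrow(2)$ and $(2)\Rightarrow(3)$ follows the paper's route: the same lifting square against the acyclic fibration $C \to \ast$ produces the strict retraction $G$ with $GF = id_C$, and your $S$ coincides with the paper's (since $GF = id_C$ makes $F$ injective on objects and faithful, and $F$ is full as an equivalence, your image subcategory equals the paper's full subcategory on the image objects), with the same $T = FG$. The one divergence in $(1)\Rightarrow(2)$: the paper does \emph{not} invoke Theorem \ref{eq-enhanc-perm} there. It constructs $\eta$ by hand, choosing for each $d$ outside the image an isomorphism $i_d:F(c) \cong d$ subject to the multiplicative constraint $i_{d_1 \otimes d_2} = i_{d_1} \otimes i_{d_2}$; this explicit choice is what makes $\eta$ monoidal with respect to the \emph{strict} $G$ already in hand, and it tacitly uses cofibrancy of $F$ (relative freeness of $Ob(D)$ under $Ob(C)$) to make the choice coherently. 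Your alternative — ``normalizing $G$ within its isomorphism class'' and then applying Theorem \ref{eq-enhanc-perm} — is vague at exactly this point: that theorem enhances a unital $G$ with a possibly nontrivial comparison $\lambda_G$, whereas condition $(2)$ demands that $\eta$ be monoidal for the strict symmetric monoidal $G$ supplied by the lifting, which the theorem does not directly deliver.

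For $(3)\Rightarrow(1)$ you do not give a proof, and you identify the strict-monoidality of the transported lift as the crux. You were right to stop there, and in fact more is true: the paper's own one-line argument is broken at precisely that spot. Its candidate filler $K = Q \circ \inv{H} \circ T$ satisfies $K \circ \iota_S = Q \circ \inv{H}$, but $p \circ K = R \circ \iota_S \circ T$, which agrees with $R$ only up to the natural isomorphism $\iota_S T \cong id_D$ — the asserted commutativity of the whole diagram simply fails. Worse, the obstruction you isolated (a split monomorphism of object monoids, even with deformation-retract data in the category, need not lie in the class $L$) is not repairable: $(2)$ and $(3)$ do \emph{not} imply $(1)$ as stated. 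Take $C = \ast$ and $D = EM$, the chaotic permutative category on the monoid $M = \lbrace e, m \rbrace$ with $m \otimes m = m$; conditions $(2)$ and $(3)$ hold (in a chaotic category the unique natural isomorphism $id_D \cong FG$ is automatically unital, monoidal, and satisfies the triangle identities). But $F:\ast \to EM$ fails the left lifting property against the acyclic fibration $E(q):E\mathbb{N} \to EB$, where $B = \lbrace 0, \bar{m} \rbrace$ with $\bar{m} + \bar{m} = \bar{m}$ and $q(k) = \bar{m}$ for $k \geq 1$ (this is an acyclic fibration by Lemma \ref{acyc-fib-surj-eq}, and is exactly the paper's own device in its characterization of cofibrations): a lift would be a monoid homomorphism $l:M \to \mathbb{N}$ with $l(m) = 2\,l(m)$ and $q(l(m)) = \bar{m}$, which is impossible since no positive idempotent exists in $\mathbb{N}$. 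So $F$ is not even a cofibration. The implication becomes correct only after adding the hypothesis that $F$ is a cofibration (equivalently, that $Ob(F)$ lies in the class $L$), at which point the weak-equivalence half — which you handled correctly via Corollary \ref{char-eq-perm} — finishes the argument. In short: your two constructive implications essentially reproduce the paper modulo the $\eta$-monoidality caveat above, and your refusal to paper over $(3)\Rightarrow(1)$ identified a genuine error in the source rather than a gap you failed to close.
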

\begin{proof}
$(1) \Rightarrow (2)$ Since $F$ is an acyclic cofibration in $\PCat$ therefore the (outer) commutative diagram has a diagonal filler $G$ such that the entire diagram is commutative in $\PCat$
\begin{equation*}
 \xymatrix{
 C \ar@{=}[r] \ar[d]_{F} & C \ar@{->>}[d]^p \\
 D \ar@{-->}[ru]^G \ar[r] & \ast
 }
  \end{equation*}
Now we construct the monoidal natural isomorphism $\eta:id_D \to FG$. For each object $d \in Ob(D)$ which is in the image of $F$, there exists a unique $c \in Ob(C)$ such that $d = F(c)$. In this case we define $\eta(d) = id_d$. Let $d \in Ob(D)$ lie outside the image of $F$. Since $F$ is an equivalence of categories, we may choose an object $c \in Ob(C)$ and an isomorphism $i_d^c:F(c) \cong d$ such that for each arrow $f:d \to e$ in $D$, there exists a unique arrow $g:c \to a$ in $C$ which makes the following diagram commutative in $D$:
\begin{equation}
\label{nat-isom-eq}
 \xymatrix{
 F(c) \ar[r]^{i_d} \ar[d]_{F(g)} & d \ar[d]^f \\
 F(a) \ar[r]_{i_e} & e
 }
  \end{equation}
  Whenever $d = d_1 \otimes d_2$, we may choose $c = c_1 \otimes c_2$ and $i_d = i_{d_1} \otimes i_{d_2}$.
 This gives us a composite isomorphism
 \[
  \eta(d):= d \overset{\inv{(i_d)}} \to FG(F(c)) \overset{FG(i_d)}       \to FG(d).
 \]
 In light of the commutative diagram \eqref{nat-isom-eq}, it is easy to see that this isomorphism is natural. Thus we have defined a (unital) natural isomorphism $\eta:id_D \Rightarrow FG$.
 Our choice for each pair of objects $d_1, d_2 \in Ob(D)$
 for $i_{d_1 \otimes d_2} = i_{d_1} \otimes i_{d_2}$ garuntees that $\eta$ is a monoidal natural isomorphism \emph{i.e.} $\eta(d_1 \otimes d_2) = \eta(d_1) \otimes \eta(d_2)$.

$(2) \Rightarrow (3)$
The permutative subcategory $S \subseteq D$ is the full subcategory of $D$
whose objects lie in the image of $F$ \emph{i.e.} the object set of $S$ is defined as follows:
\[
Ob(S) := \lbrace F(c) : c \in Ob(C) \rbrace
\]
If $F(c_1)$ and $F(c_2)$ lie in $S$ then $F(c_1) \underset{D} \otimes F(c_2) = F(c_1 \underset{C} \otimes c_2)$ also lies in $S$. Thus $S$ is a permutative subcategory. The isomorphism $H$ is obtained by restricting the codomain of $F$ to $S$.
%Let $G$ be a symmetric monoidal left adjoint of $F$ with counit $\epsilon:GF \Rightarrow id_C$
%Under the hypothesis one can construct another symmetric monoidal functor $G':D \to C$ such that $G'F = id_C$ along with a monoisal natural isomorphism $\alpha:G \cong G'$. This implies that $G'$ is also a left adjoint of $F$.
The left adjoint of $\iota_S$ is the composite functor $FG$. The counit of the adjunction $(FG, \iota_S)$ is the identity natural isomorphism. The unit (monoidal) natural isomorphism is just $\eta$. Thus $S$ is reflective. 

$(3) \Rightarrow (1)$
If we assume $(4)$ then any (outer) commutative square
\begin{equation*}
 \xymatrix{
 C \ar[r]^{Q} \ar[d]_{F} & X \ar@{->>}[d]^p \\
 D \ar@{-->}[ru]^L \ar[r]_{R} & Y
 }
  \end{equation*}
 where $p$ is a fibration in $\PCat$, would have a diagonal filler $L$
 if and only if the lower square in the following (solid arrow) commutative diagram has a diagonal filler $K$
 \begin{equation*}
 \xymatrix{
 C \ar[d]_H \ar[rd]^Q \\
 S \ar[r] \ar@{_{(}->}[d]_{\iota_S} & X \ar@{->>}[d]^p \\
 D \ar@{-->}[ru]_K \ar[r]_{R} & Y
 }
  \end{equation*}
 Since $\iota_S$ has a strict symmetric monoidal left adjoint $T$ with an identity counit, therefore the composite $K = T \circ Q \circ \inv{H}$ is a diagonal filler of the lower square such that entire diagram commutes.
This implies that $F$ has the left lifting property with respect to fibrations in $\PCat$. Thus $F$ is an acyclic cofibration in $\PCat$.
\end{proof}

 \section[The model category of coherently commutatve monoidal categories]{The model category of coherently commutatve monoidal categories}
A $\gCat$ is a functor from $\gop$ to $\Cat$.
The category of 
functors from $\gop$ to $\Cat$ and natural transformations between them $\CatHom{\gop}{\Cat}{}$ will be denoted by $\gCAT$. We begin by describing a model category structure on
$\gCAT$ which is often referred to either as the \emph{projective model category structure}
or the \emph{strict model category structure}. Following \cite{schwede} we will use the latter terminology.
\begin{df}
 A morphism $F:X \to Y$ of $\gCats$ is called
 \begin{enumerate}
 \item a \emph{strict equivalence} of $\gCats$  if it is degreewise weak equivalence in the natural model category structure on $\Cat$ \emph{i.e.} $F(n^+):X(n^+) \to Y(n^+)$ is an equivalence of categories.
 
 \item a \emph{strict fibration}  of $\gCats$ if it is degreewise a fibration in the natural model category structure on $\Cat$ \emph{i.e.} $F(n^+):X(n^+) \to Y(n^+)$ is an isofibration.
 
 \item a \emph{Q-cofibration}  of $\gCats$ if it has the left lifting property with respect to
 all morphisms which are both strict weak equivalence and strict fibrations of $\gCats$.

  \end{enumerate}
 \end{df}
 In light of proposition \ref{char-of-acyc-fib} we observe that a map of $\gCats$ $F:X \to Y$ is a strict acyclic fibration of $\gCats$ if and only if it has the right lifting property with respect to all maps in the set
 \begin{equation}
 \label{gen-cof}
 \I = \lbrace \gn{n} \times \partial_0,
 \gn{n} \times \partial_1,  \gn{n} \times \partial_2 \mid \forall n \in Ob(\N) \rbrace.
 \end{equation}
 We further observe, in light of proposition \ref{char-of-isofib}, that $F$ is a strict fibration if and only it has the right lifting property with respect to all maps in the set
 \begin{equation}
 \label{gen-acyc-cof}
 \J = \lbrace \gn{n} \times i_0, \gn{n} \times i_1 \mid \forall n \in Ob(\N) \rbrace_{}.
 \end{equation}
 
 \begin{thm}
 \label{str-mdl-cat-gCat}
 Strict equivalences, strict fibrations and Q-cofibrations of $\gCats$ provide the category $\gCAT$ with a combinatorial model category structure.
 \end{thm}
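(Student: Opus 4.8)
The plan is to exhibit this as the \emph{projective} (strict) model structure on the functor category $\gCAT = [\gop, \Cat]$ and to establish its existence by the recognition theorem for cofibrantly generated model categories, in the spirit of Schwede's construction of the strict model structure on $\gSs$ \cite{schwede}. First I would record that $\gCAT$ is locally presentable: $\Cat$ is locally presentable and a category of functors from a small category into a locally presentable category is again locally presentable \cite{AR94}. Consequently every object is small, so both of the sets $\I$ and $\J$ of \eqref{gen-cof} and \eqref{gen-acyc-cof} permit the small object argument. I would take the strict equivalences as the class $\mathcal{W}$ of weak equivalences; since these are defined degreewise by equivalences of categories, the two-out-of-three property and closure under retracts are inherited objectwise from the natural model structure on $\Cat$ (Theorem \ref{nat-model-cat-str}).

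The heart of the identification is already contained in the two observations preceding the statement: a map $F$ has the right lifting property with respect to $\I$ precisely when it is a strict acyclic fibration, and with respect to $\J$ precisely when it is a strict fibration. These follow from Propositions \ref{char-of-acyc-fib} and \ref{char-of-isofib} applied degreewise, together with the fact that $\gn{n} \times (-)$ is left adjoint to the evaluation functor $\ev_{n^+}:\gCAT \to \Cat$, so that lifting against $\gn{n} \times f$ is adjoint to lifting $F(n^+)$ against $f$ in $\Cat$. Thus I obtain the single equality $\mathrm{inj}(\I) = \mathcal{W} \cap \mathrm{inj}(\J)$, where $\mathrm{inj}(\J)$ is the class of strict fibrations. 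This supplies the two compatibility hypotheses of the recognition theorem, namely $\mathrm{inj}(\I) \subseteq \mathcal{W} \cap \mathrm{inj}(\J)$ and $\mathcal{W} \cap \mathrm{inj}(\J) \subseteq \mathrm{inj}(\I)$; passing to left lifting classes it also yields $\mathrm{cof}(\J) \subseteq \mathrm{cof}(\I)$.

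The main obstacle, and the only genuinely homotopical step, is the \emph{acyclicity} condition: every relative $\J$-cell complex must be a strict equivalence lying in $\mathrm{cof}(\I)$. I would prove this by evaluating at an arbitrary object $m^+ \in \gop$. Because colimits in $\gCAT$ are computed objectwise, $\ev_{m^+}$ preserves all colimits, and it sends each generator $\gn{n} \times i_j$ to $\gop(n^+, m^+) \times i_j$, a coproduct of copies of the acyclic cofibration $i_j$ of $\Cat$. Hence the image under $\ev_{m^+}$ of any pushout of a generator, and of any transfinite composite of such, is a relative cell complex built from coproducts of acyclic cofibrations in $\Cat$; since acyclic cofibrations in the cofibrantly generated natural model category $\Cat$ are closed under coproduct, pushout and transfinite composition, the image is an acyclic cofibration, in particular an equivalence of categories. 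Therefore every relative $\J$-cell complex is a degreewise acyclic cofibration, whence a strict equivalence, and it lies in $\mathrm{cof}(\I)$ by the inclusion $\mathrm{cof}(\J) \subseteq \mathrm{cof}(\I)$ noted above. With all hypotheses verified, the recognition theorem yields the cofibrantly generated model structure with generating cofibrations $\I$, generating acyclic cofibrations $\J$, weak equivalences the strict equivalences and fibrations the strict fibrations; combinatoriality is then immediate, as the underlying category is locally presentable.
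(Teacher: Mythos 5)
Your proof is correct, but it takes a different route from the paper: the paper does not prove Theorem \ref{str-mdl-cat-gCat} directly at all, instead citing \cite[Proposition A.3.3.2]{JL}, i.e.\ the general existence of the projective model structure on a diagram category valued in a combinatorial model category. What you have written is, in effect, a self-contained instantiation of that general result via Kan's recognition theorem, and all the steps check out: local presentability of $\gCAT$ gives smallness; the adjunction $\gn{n} \times (-) \dashv \ev_{n^+}$ together with Propositions \ref{char-of-isofib} and \ref{char-of-acyc-fib} yields exactly the identifications $\mathrm{inj}(\J) = \lbrace \text{strict fibrations} \rbrace$ and $\mathrm{inj}(\I) = \mathcal{W} \cap \mathrm{inj}(\J)$ that the paper asserts without proof in the paragraph defining the sets \eqref{gen-cof} and \eqref{gen-acyc-cof}; and your acyclicity argument is the standard one, valid because $\ev_{m^+}$ preserves colimits and sends each generator $\gn{n} \times i_j$ to the coproduct $\gop(n^+, m^+) \times i_j$ of acyclic cofibrations of $\Cat$, whose pushouts and transfinite composites remain acyclic cofibrations since that class is a left lifting class. (One cosmetic point: the recognition theorem asks that $\mathrm{cof}(\J) \subseteq \mathcal{W} \cap \mathrm{cof}(\I)$, and you verify this only for relative $\J$-cell complexes; this suffices because $\mathrm{cof}(\J)$ consists of retracts of such and both $\mathcal{W}$ and $\mathrm{cof}(\I)$ are closed under retracts, but it is worth saying explicitly.) The trade-off is the expected one: the paper's citation buys brevity and situates the result inside a general framework that also handles enriched and injective variants, while your argument is elementary, avoids the machinery of \cite{JL}, and has the additional merit of actually justifying the lifting characterizations via $\I$ and $\J$ that the paper uses repeatedly afterwards (e.g.\ in Theorems \ref{enrich-GamCAT-CAT} and \ref{SM-closed-mdl-str-GCat}).
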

 A proof of this proposition is given in \cite[Proposition A.3.3.2]{JL}.
 
 To each pair of objects $(X, C) \in Ob(\gCAT) \times Ob(\Cat)$ we can assign a $\gCat$ 
 $\TensP{X}{C}{}$
 which is defined in degree $n$ as follows:
 \[
 (\TensP{X}{C}{})(n^+) :=  X(n^+) \times C,
 \]
  This assignment
 is functorial in both variables and therefore we have a bifunctor
 \[
 \TensP{-}{-}{}:\gS\times \Cat \to \gCAT.
 \]
 Now we will define a couple of function objects for the category $\gCAT$.
 The first function object enriches the category $\gCAT$ over
 $\Cat$ \emph{i.e.} there is a bifunctor
 \[
 \MapC{-}{-}{\gCAT}:\gCAT^{op} \times \gS\to \Cat
 \]
 which assigns to any pair of objects $(X, Y) \in Ob(\gCAT) \times Ob(\gCAT)$, a category
 $\MapC{X}{Y}{\gCAT}$ whose set of objects is the following
 \[
 Ob(\MapC{X}{Y}{\gCAT}) := Hom_{\gCAT}(X, Y)
 \]
 and the morphism set of this category are defined as follows:
 \[
 Mor(\MapC{X}{Y}{\gCAT}) := Hom_{\gCAT}(X \times I, Y)
 \]
 For any $\gCat$ $X$, the functor $\TensP{X}{-}{}:\Cat \to \gCAT$ is
 left adjoint to the functor $\MapC{X}{-}{\gCAT}:\gS\to \Cat$. The counit of this adjunction
 is the evaluation map $ev:\TensP{X}{\MapC{X}{Y}{\gCAT}}{} \to Y$
 and the unit is the obvious functor $C \to \MapC{X}{\TensP{X}{C}{}}{\gCAT}$.
 To any pair of objects $(C, X) \in Ob(\Cat) \times Ob(\gCAT)$ we can assign a $\gCat$ $\bHom{C}{X}{\gCAT}$
 which is defined in degree $n$ as follows:
 \[
 (\bHom{C}{X}{\gCAT})(n^+) := \CatHom{C}{X(n^+)} \ .
 \]
 This assignment
 is functorial in both variable and therefore we have a bifunctor
 \[
 \bHom{-}{-}{\gCAT}:\Cat^{op} \times \gS\to \gCAT.
 \]
 For any $\gCat$ $X$, the functor $\bHom{-}{X}{\gCAT}:\Cat \to \gCAT^{op}$ is
 left adjoint to the functor $\MapC{-}{X}{\gCAT}:\gCAT^{op} \to \Cat$. 
 The following proposition summarizes the above discussion.
\begin{prop}
\label{two-var-adj-cat-gcat}
There is an adjunction of two variables
\begin{multline}
\label{two-var-adj-gcat}
(\TensP{-}{-}{}, \bHom{-}{-}{\gCAT}, \MapC{-}{-}{\gCAT}) : \gS\times \Cat
\\  \to \gCAT.
\end{multline}

\end{prop}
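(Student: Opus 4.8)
The statement asserts that the three functors assemble into a two-variable adjunction, i.e. that there are isomorphisms
\[
\Hom_{\gCAT}(\TensP{X}{C}{}, Y) \;\cong\; \Hom_{\Cat}(C, \MapC{X}{Y}{\gCAT}) \;\cong\; \Hom_{\gCAT}(X, \bHom{C}{Y}{\gCAT}),
\]
natural in $X, Y \in \gCAT$ and $C \in \Cat$. The leftmost isomorphism is precisely the adjunction $\TensP{X}{-}{} \dashv \MapC{X}{-}{\gCAT}$ recorded above (for each fixed $X$), whose unit and counit have already been exhibited, and the middle-to-right isomorphism is the adjunction $\bHom{-}{Y}{\gCAT} \dashv \MapC{-}{Y}{\gCAT}$ recorded above (for each fixed $Y$). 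So the plan is to verify that these two families fit together coherently and are jointly natural. The guiding observation is that $\gCAT = [\gop,\Cat]$ is a category of $\Cat$-valued functors and every functor in sight is defined pointwise on $\gop$ from the cartesian closed structure on $\Cat$ furnished by Theorem \ref{nat-model-cat-str}; the whole statement therefore descends, levelwise, to the exponential adjunction $\Hom_{\Cat}(A \times C, B) \cong \Hom_{\Cat}(A, [C,B])$ in $\Cat$.

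The cleanest way to tie the two given adjunctions together is to establish the tensor–power isomorphism $\Hom_{\gCAT}(\TensP{X}{C}{}, Y) \cong \Hom_{\gCAT}(X, \bHom{C}{Y}{\gCAT})$ directly. A morphism $\phi\colon \TensP{X}{C}{} \to Y$ is a natural transformation with components $\phi_{n^+}\colon X(n^+) \times C \to Y(n^+)$. Applying the cartesian closed transpose of $\Cat$ in each degree yields functors $\widehat{\phi}_{n^+}\colon X(n^+) \to [C, Y(n^+)] = (\bHom{C}{Y}{\gCAT})(n^+)$. The key point to check is that $\{\widehat{\phi}_{n^+}\}_{n^+}$ is again natural in $n^+$, hence a morphism of $\gCats$: for each $\alpha\colon n^+ \to m^+$ in $\gop$ the naturality square of $\phi$ (built from $X(\alpha)\times \id_C$ and $Y(\alpha)$) is carried by the $\Cat$-transpose to the naturality square of $\widehat{\phi}$ (built from $X(\alpha)$ and $[C, Y(\alpha)]$), precisely because the $\Cat$-exponential adjunction is natural in the target variable. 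Since the degreewise transpose is a bijection whose inverse likewise transports $\gop$-naturality back, $\phi \mapsto \{\widehat{\phi}_{n^+}\}$ is a bijection.

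Concatenating this isomorphism with the already-established leftmost adjunction produces the full three-fold chain, and the middle-to-right leg obtained this way agrees with the adjunction $\bHom{-}{Y}{\gCAT} \dashv \MapC{-}{Y}{\gCAT}$ recorded above. It then remains to upgrade each leg from naturality in the variables in which it was first stated to joint naturality in all of $X$, $C$, $Y$. Because the hom-functors $\Hom_{\gCAT}$ and $\Hom_{\Cat}$ are separately functorial in each slot, it suffices to check naturality one variable at a time, and in every case this again reduces to the naturality of the $\Cat$-exponential adjunction in the corresponding argument; separate naturality in each of the three variables then gives joint naturality, which is exactly the data required for a two-variable adjunction.

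The only genuine obstacle is bookkeeping in the middle step: one must confirm that transposing a $\gCAT$-morphism degree by degree never destroys naturality over $\gop$ — equivalently, that the pointwise exponential transpose is itself natural in the $\gop$-variable — and, in the final step, that the two separately-natural families are simultaneously natural. Everything else is a formal consequence of the cartesian closedness of $\Cat$ together with the pointwise definitions of $\TensP{-}{-}{}$, $\bHom{-}{-}{\gCAT}$, and $\MapC{-}{-}{\gCAT}$; no new coherence conditions intervene, since the tensor is computed as the strict levelwise cartesian product $(\TensP{X}{C}{})(n^+) = X(n^+)\times C$.
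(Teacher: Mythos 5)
Your proof is correct and follows essentially the same route as the paper, which states this proposition without proof as a summary of the preceding degreewise constructions: everything reduces, level by level over $\gop$, to the cartesian closed structure on $\Cat$. Your degreewise exponential-transpose argument, with the checks that transposition preserves naturality in $n^+$ and that separate naturality in each variable yields joint naturality, is exactly the routine verification the paper leaves implicit.
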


\begin{df}
  \label{Q-adj-2-var}
  Given model categories $\C$, $\D$ and $\E$, an adjunction
  of two variables, $\left(\otimes, \bhom_\C, \map_\C, \phi, \psi \right):
  \C \times \D \to \E$, is called a \emph{Quillen adjunction of two variables}, if, given a
  cofibration $f:U \to V$ in $\C$ and a cofibration $g:W \to X$ in $\D$,
  the induced map
  \[
   f \Box g:(V \otimes W) \underset{U \otimes W} \coprod (U \otimes X) \to V \otimes X
  \]
  is a cofibration in $\E$ that is trivial if either $f$ or $g$ is.
  We will refer to the left adjoint of a Quillen adjunction of two
  variables as a \emph{Quillen bifunctor}.

 \end{df}
 The following lemma provides three equivalent characterizations
 of the notion of a Quillen bifunctor. These will be useful in this paper
 in establishing enriched model category structures.
 \begin{lem}\cite[Lemma 4.2.2]{Hovey}
  \label{Q-bifunctor-char}
   Given model categories $\C$, $\D$ and $\E$, an adjunction
  of two variables, $\left(\otimes, \bhom_\C, \map_\C, \phi, \psi \right):
  \C \times \D \to \E$. Then the following conditions are equivalent:
  \begin{enumerate}
   \item [(1)] $\otimes:\C \times \D \to \E$ is a Quillen bifunctor.
   
   \item[(2)] Given a cofibration $g:W \to X$ in $\D$ and a fibration
   $p:Y \to Z$ in $\E$, the induced map
   \[
    \bhom_\C^{\Box}(g, p):\bhom_\C(X, Y) \to \bhom_\C(X, Z)
    \underset{\bhom_\C(W, Z)}\times \bhom_\C(W, Y)
   \]
   is a fibration in $\C$ that is trivial if either $g$ or $p$ is a
   weak equivalence in their respective model categories.
   
   \item[(3)] Given a cofibration $f:U \to V$ in $\C$ and a fibration
   $p:Y \to Z$ in $\E$, the induced map
   \[
    \map_\C^{\Box}(f, p):\map_\C(V, Y) \to \map_\C(V, Z) \underset{\map_\C(W, Z)}\times \map_\C(W, Y)
   \]
   is a fibration in $\C$ that is trivial if either $f$ or $p$ is a
   weak equivalence in their respective model categories.
  \end{enumerate}
 \end{lem}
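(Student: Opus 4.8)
The plan is to reduce all three conditions to a single adjunction between lifting problems and then invoke the standard lifting-property characterizations of the four distinguished classes of maps in a model category. The key observation is that for any maps $f:U \to V$ in $\C$, $g:W \to X$ in $\D$, and $p:Y \to Z$ in $\E$, the two-variable adjunction $(\otimes, \bhom_\C, \map_\C, \phi, \psi)$ induces a natural correspondence among the three lifting problems: $f \Box g$ against $p$; $f$ against $\bhom_\C^{\Box}(g,p)$; and $g$ against $\map_\C^{\Box}(f,p)$.

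First I would establish this correspondence precisely, \emph{for arbitrary} $f,g,p$ (not assuming any of them is a (co)fibration). Unwinding the definition of the pushout-product through the universal property of the pushout $(V \otimes W) \coprod_{U \otimes W} (U \otimes X)$, a commutative square from $f \Box g$ to $p$ is, by $\phi$ and $\psi$ applied in each variable, the same datum as a commutative square from $f$ to the pullback-hom $\bhom_\C^{\Box}(g,p)$, and likewise the same datum as a square from $g$ to $\map_\C^{\Box}(f,p)$; moreover under these bijections a diagonal filler of one problem transports to a diagonal filler of each of the others. In symbols, writing ``$\pitchfork$'' for the lifting relation,
\[
(f \Box g)\ \pitchfork\ p \iff f\ \pitchfork\ \bhom_\C^{\Box}(g,p) \iff g\ \pitchfork\ \map_\C^{\Box}(f,p).
\]
This is purely a diagram chase through the two universal properties and the naturality of the adjunction isomorphisms.

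Second I would run the deductions, recalling that in any model category a map is a cofibration iff it has the left lifting property against every trivial fibration, a trivial cofibration iff it lifts against every fibration, and dually for (trivial) fibrations. For $(1)\Rightarrow(2)$: assuming $\otimes$ is a Quillen bifunctor, to show $\bhom_\C^{\Box}(g,p)$ is a fibration it suffices to show it has the right lifting property against every trivial cofibration $f$; by the correspondence this is equivalent to $f \Box g$ lifting against $p$, which holds because $f \Box g$ is then a trivial cofibration and $p$ a fibration. If moreover $g$ or $p$ is trivial, one tests $\bhom_\C^{\Box}(g,p)$ against all cofibrations $f$: the correspondence reduces this to $f \Box g$ lifting against $p$, and either $f \Box g$ is a trivial cofibration (when $g$ is trivial) against the fibration $p$, or $f \Box g$ is a cofibration against the trivial fibration $p$ (when $p$ is trivial). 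The converse $(2)\Rightarrow(1)$ runs the same correspondence backwards, testing $f \Box g$ against an arbitrary fibration $p$ to conclude it is a (trivial) cofibration. The equivalence $(1)\Leftrightarrow(3)$ is identical with the roles of $\C$ and $\D$ interchanged, using $\map_\C^{\Box}$ and the other half of the two-variable adjunction.

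The hard part will be Step 1: verifying carefully that the bijections $\phi,\psi$ carry the \emph{pushout} defining $f \Box g$ to the \emph{pullback} defining $\bhom_\C^{\Box}(g,p)$ (respectively $\map_\C^{\Box}(f,p)$) in a way compatible with diagonal fillers. Once this compatibility is in place, every remaining step is a mechanical application of the lifting-property characterizations of the model-category classes, so no further model-theoretic input is needed.
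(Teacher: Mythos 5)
Your proposal is correct, and it is essentially the argument of the cited source: the paper itself offers no proof of this lemma (it defers entirely to \cite[Lemma 4.2.2]{Hovey}), and Hovey's proof is exactly your Step 1 correspondence of lifting problems $(f \Box g) \pitchfork p \iff f \pitchfork \bhom_\C^{\Box}(g,p) \iff g \pitchfork \map_\C^{\Box}(f,p)$ obtained from the two-variable adjunction, followed by the lifting-property characterizations of (trivial) cofibrations and fibrations. Your deductions in Step 2, including the two triviality cases and the symmetry giving $(1)\Leftrightarrow(3)$, match that standard treatment.
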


\begin{df}
  \label{enrich-model-cat}
  Let $\bigS$ be a monoidal model category. An \emph{$\bigS$-enriched
  model category} or simply an $\bigS$-model category is an $\bigS$ enriched category $\bigA$ equipped with
  a model category structure (on its underlying category) such that
  there is a Quillen adjunction of two variables, see definition
   \ref{Q-adj-2-var}, $\left(\otimes, \bhom_{\bigA}, \map_{\bigA}, \phi, \psi \right):
  \bigA \times \bigS \to \bigA$.
  
 \end{df}
 \begin{thm}
  \label{enrich-GamCAT-CAT}
  The strict model category of $\gCats$, $\gCAT$, is a $\Cat$-enriched model category.
 \end{thm}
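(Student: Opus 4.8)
The plan is to verify the single remaining hypothesis of Definition \ref{enrich-model-cat}: that the two-variable adjunction of Proposition \ref{two-var-adj-cat-gcat},
\[
(\TensP{-}{-}{}, \bHom{-}{-}{\gCAT}, \MapC{-}{-}{\gCAT}): \gCAT \times \Cat \to \gCAT,
\]
is a Quillen adjunction of two variables in the sense of Definition \ref{Q-adj-2-var}. Since the strict model structure on $\gCAT$ (Theorem \ref{str-mdl-cat-gCat}) is cofibrantly generated, with $\I$ a set of generating cofibrations and $\J$ a set of generating acyclic cofibrations by \eqref{gen-cof} and \eqref{gen-acyc-cof}, and since $\Cat$ is cofibrantly generated with generating cofibrations $\lbrace \partial_0, \partial_1, \partial_2 \rbrace$ and generating acyclic cofibrations $\lbrace i_0, i_1 \rbrace$, it suffices by the standard reduction for Quillen bifunctors between cofibrantly generated model categories to check the pushout--product axiom on generators: that $f \Box g$ is a cofibration for $f \in \I$ and $g \in \lbrace \partial_0, \partial_1, \partial_2 \rbrace$, and that it is moreover an acyclic cofibration whenever in addition $f \in \J$ or $g \in \lbrace i_0, i_1 \rbrace$.

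The key structural observation I would exploit is that the tensoring is associative over the cartesian product of $\Cat$: directly from the defining formula $(\TensP{X}{C}{})(n^+) = X(n^+) \times C$ one reads off a natural isomorphism $\TensP{(\TensP{X}{C}{})}{D}{} \cong \TensP{X}{(C \times D)}{}$. Moreover every generator in $\I$ is of the form $\TensP{\gn{n}}{c}{}$ with $c \in \lbrace \partial_0, \partial_1, \partial_2 \rbrace$, and every generator in $\J$ is of the form $\TensP{\gn{n}}{c}{}$ with $c \in \lbrace i_0, i_1 \rbrace$. Taking $f = \TensP{\gn{n}}{c}{}$ and an arbitrary cofibration $g$ of $\Cat$, and using that $\TensP{\gn{n}}{-}{}: \Cat \to \gCAT$ is a left adjoint (Proposition \ref{two-var-adj-cat-gcat}) and hence preserves the pushout defining the pushout--product, I would obtain the identification
\[
f \Box g \;\cong\; \TensP{\gn{n}}{(c \Box g)}{},
\]
where $c \Box g$ is the pushout--product formed in $\Cat$ with respect to the cartesian product.

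Two facts then close the argument. First, by Theorem \ref{nat-model-cat-str} the natural model structure on $\Cat$ is cartesian closed, so the cartesian product is a Quillen bifunctor on $\Cat$; hence $c \Box g$ is a cofibration of $\Cat$, and it is an acyclic cofibration as soon as $c$ or $g$ is. Second, the functor $\TensP{\gn{n}}{-}{}$ carries cofibrations (resp. acyclic cofibrations) of $\Cat$ to cofibrations (resp. acyclic cofibrations) of $\gCAT$: being a left adjoint it preserves transfinite compositions, pushouts, and retracts, and it sends each $\partial_i$ to the generator $\TensP{\gn{n}}{\partial_i}{} \in \I$ and each $i_j$ to the generator $\TensP{\gn{n}}{i_j}{} \in \J$, so it maps relative $\lbrace \partial_i \rbrace$-cell complexes (resp. $\lbrace i_j \rbrace$-cell complexes) and their retracts into the cofibrations (resp. acyclic cofibrations) of $\gCAT$. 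Combining these, $f \Box g = \TensP{\gn{n}}{(c \Box g)}{}$ is a cofibration of $\gCAT$, trivial whenever $c$ or $g$ is, which is exactly the three generator-level conditions required.

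I do not anticipate a genuine obstacle here, since everything reduces through the associativity isomorphism to the already-established fact that $\Cat$ is a cartesian closed model category together with the observation that $\I$ and $\J$ are obtained by tensoring the representables $\gn{n}$ with the generators of $\Cat$. The only point requiring mild care is the bookkeeping in the reduction to generators and the confirmation that the pushout defining $f \Box g$ is genuinely preserved by $\TensP{\gn{n}}{-}{}$; this holds because that functor is a left adjoint and hence preserves all colimits. (Should one prefer not to invoke the generator reduction directly, the same computation feeds into the fibration characterization (3) of Lemma \ref{Q-bifunctor-char}, checking the right lifting property of $\map^{\Box}$ against the generators of $\Cat$.)
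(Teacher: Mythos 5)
Your proof is correct, but it takes a genuinely different route from the paper's. The paper verifies condition (2) of Lemma \ref{Q-bifunctor-char} on the cotensor side: for a cofibration $g:C \to D$ in $\Cat$ and a strict fibration $p:Y \to Z$ of $\gCats$, it checks that the gap map $\bhom^{\Box}_{\gCAT}(g,p)$ into the pullback is a (acyclic) fibration \emph{degreewise}, which at each $n^+$ reduces immediately to the fact that $[-,-]$ makes the natural model structure on $\Cat$ a $\Cat$-model category (the cartesian closedness of Theorem \ref{nat-model-cat-str}) applied to the fibration $p(n^+)$. You instead work on the tensor side via condition (1) and the generator reduction for cofibrantly generated model categories, exploiting two facts the paper never makes explicit: the associativity isomorphism $\TensP{(\TensP{X}{C}{})}{D}{} \cong \TensP{X}{(C \times D)}{}$, and the observation that the sets $\I$ and $\J$ of \eqref{gen-cof} and \eqref{gen-acyc-cof} are exactly the images of the generators of $\Cat$ under the colimit-preserving functor $\TensP{\gn{n}}{-}{}$, giving the identification $f \Box g \cong \TensP{\gn{n}}{(c \Box g)}{}$. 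Both arguments bottom out in the same input, the cartesian closed model structure on $\Cat$, but the paper's is shorter because fibrations and weak equivalences in the strict structure on $\gCAT$ are defined degreewise, while yours is more structural: it explicitly computes pushout-products of generators, establishes along the way that each $\TensP{\gn{n}}{-}{}$ is left Quillen, and would transfer verbatim to any projective model structure on a diagram category of $\Cat$-valued functors. The only cosmetic slip is the phrase ``moreover an acyclic cofibration whenever in addition $f \in \J$'' --- $\J$ is not a subset of $\I$, so ``in addition'' is loose --- but the three generator-level checks you actually perform (pushout-products of two generating cofibrations are cofibrations; those involving one generating acyclic cofibration are acyclic) are precisely the hypotheses of the reduction, so there is no gap.
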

 \begin{proof}
  We will show that the adjunction of two variables \eqref{two-var-adj-gcat}
  is a Quillen adjunction for the strict model category structure
 on $\gCAT$ and the natural model category structure on $\Cat$.
  In order to do so, we will verify condition
 (2) of Lemma \ref{Q-bifunctor-char}. Let $g:C \to D$ be a cofibration
 in $\Cat$ and let $p:Y \to Z$ be a strict fibration of $\gCats$,
 we have to show that the induced map
 \[
  \bhom^{\Box}_{\gCAT}(g, p):\bHom{X}{Y}{\gCAT} \to \bHom{D}{Z}{\gCAT}
  \underset{\bHom{C}{Z}{\gCAT}} \times \bHom{C}{Y}{\gCAT}
 \]
 is a fibration in $\Cat$ which is acyclic if either of $g$ or $p$ is
 acyclic. It would be sufficient to check that the above morphism is degreewise
 a fibration in $\Cat$, i.e. for all $n^+ \in \gop$, the morphism
 \begin{equation*}
  \bhom^{\Box}_{\gCAT}(g, p)(n^+): [D, Y(n^+)]  \to
  [D, Z(n^+)] \underset{[C, Z(n^+)]} \times  [C, Y(n^+)] ,
 \end{equation*}
 is a fibration in $\Cat$. This follows from the observations that the functor $p(n^+):Y(n^+) \to Z(n^+)$
 is a fibration in $\Cat$ and the natural model category 
 $\Cat$ is a $\Cat$-enriched model category whose enrichment is provided by the bifunctor $[-, -]$.
 \end{proof}
 
 Let $X$ and $Y$ be two $\gCats$, the \emph{Day convolution product} of $X$ and $Y$ denoted by $X \ast Y$ is defined as follows:
 \begin{equation}
 \label{Day-Con-prod}
 X \ast Y(n^+) := \int^{(k^+, l^+) \in \gop} \gop(k^+\wedge l^+, n^+) \times X(k^+) \times Y(l^+).
 \end{equation}
 Equivalently, one may define the Day convolution product of $X$ and $Y$ as the left Kan extension of their \emph{external tensor product} $X \overline \times Y$ along the smash product functor
 \[
 - \wedge - :\gop \times \gop \to \gop.
 \]
 we recall that the external tensor product $X \overline \times Y$ is a bifunctor
 \begin{equation*}
 X \overline \times Y:\gop \times \gop \to \Cat
 \end{equation*}
 which is defined on objects by 
 \[
 X \overline \times Y(m^+, n^+) = X(m^+) \times Y(n^+).
 \]
\begin{prop}
\label{GCAT-SM}
The category of all $\gCats$ $\gCAT$ is a symmetric monoidal category under the Day convolution product \eqref{Day-Con-prod}.
The unit of the symmetric monoidal structure is the representable $\gCat$ $\gn{1}$.
\end{prop}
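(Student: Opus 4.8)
The plan is to recognize this statement as an instance of the general theory of Day convolution \cite{Day2}. The two inputs the theory requires are present: the indexing category $\gop$ is a small symmetric monoidal category under the smash product $\wedge$, whose monoidal unit is $1^+$; and the target $\Cat$ is complete, cocomplete and cartesian closed by Theorem \ref{nat-model-cat-str}. Since $\Cat$ is moreover tensored over $\Set$ (a set $S$ acts by $S \times (-)$, with $S$ viewed as a discrete category), the coend defining \eqref{Day-Con-prod} exists. I would therefore either invoke Day's theorem directly or, in keeping with the explicit style of this paper, exhibit the associativity, symmetry and unit isomorphisms from the coend formula and check the coherence axioms.

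First I would produce the associativity isomorphism. Writing both $(X \ast Y) \ast Z$ and $X \ast (Y \ast Z)$ as iterated coends and applying the Fubini theorem for coends, both sides become naturally isomorphic to the single coend
\begin{equation*}
\int^{(k^+, l^+, m^+) \in \gop} \gop(k^+ \wedge l^+ \wedge m^+, n^+) \times X(k^+) \times Y(l^+) \times Z(m^+),
\end{equation*}
where the identification uses the associator of $\wedge$ on $\gop$ and of $\times$ on $\Cat$. The symmetry isomorphism $X \ast Y \cong Y \ast X$ is obtained similarly, by reindexing the coend along the symmetry of $\wedge$ together with the symmetry of $\times$ in $\Cat$.

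Next I would verify the unit axiom, identifying the unit as the representable $\gCat$ $\gn{1} = \gop(1^+, -)$ (a set-valued, hence discrete-category-valued, functor). Expanding and applying Fubini gives
\begin{equation*}
(\gn{1} \ast X)(n^+) = \int^{l^+ \in \gop} \left( \int^{k^+ \in \gop} \gop(1^+, k^+) \times \gop(k^+ \wedge l^+, n^+) \right) \times X(l^+).
\end{equation*}
The inner coend is a density (co-Yoneda) integral in the variable $k^+$, so it collapses to $\gop(1^+ \wedge l^+, n^+) = \gop(l^+, n^+)$, using that $1^+$ is the unit of $\wedge$. A second application of co-Yoneda in $l^+$ then yields $\int^{l^+} \gop(l^+, n^+) \times X(l^+) \cong X(n^+)$, naturally in $n^+$, which is the required unit isomorphism; the isomorphism on the other side is symmetric.

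The main obstacle is not any single structural isomorphism but the verification of the coherence conditions relating the associator, the symmetry and the unitors (the pentagon, the two hexagons and the triangle). Each of these reduces, after unwinding the coend descriptions and the Fubini/co-Yoneda identifications above, to the corresponding coherence diagram for $\wedge$ in $\gop$ together with that for $\times$ in $\Cat$, both of which hold. This bookkeeping is precisely what Day's theorem packages abstractly, so the cleanest route is to check the hypotheses recorded in the first paragraph and cite \cite{Day2}, while recording the unit computation explicitly since it is what pins down $\gn{1}$ as the monoidal unit.
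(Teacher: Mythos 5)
Your proposal is correct: the paper gives no proof of this proposition at all, treating it as a standard instance of Day's convolution theory \cite{Day2}, which is exactly the route you take. Your explicit verifications --- associativity and symmetry by Fubini and reindexing of the triple coend, the unit isomorphism $\gn{1} \ast X \cong X$ via two applications of co-Yoneda using that $1^+$ is the $\wedge$-unit of $\gop$, and the reduction of the coherence axioms to those of $\wedge$ on $\gop$ and $\times$ on $\Cat$ --- correctly fill in the details the paper leaves implicit.
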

Next we define an internal function object of the category $\gCat$
which we will denote by
\begin{equation}
\label{Int-Map-GCAT}
 \MGCat{-}{-}:\gCAT^{op} \times \gS\to \gCAT.
 \end{equation}
 Let $X$ and $Y$ be two $\gCats$, we define the $\gCat$ $\MGCat{X}{Y}$ as follows:
 \begin{equation*}
 \MGCat{X}{Y}(n^+) := \MapC{X \ast \gn{n}}{Y}{\gCAT}.
 \end{equation*}
 \begin{prop}
 \label{closed-SM-cat-GCat}
 The category $\gCAT$ is a closed symmetric monoidal category under the Day convolution product. The internal Hom is given by the bifunctor \eqref{Int-Map-GCAT} defined above.
 \end{prop}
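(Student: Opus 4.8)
The goal is to produce, for each fixed $\gCat$ $Y$, a right adjoint to the functor $-\ast Y\colon\gCAT\to\gCAT$ and to identify it with $\MGCat{Y}{-}$; since $(\gCAT,\ast,\gn{1})$ is already symmetric monoidal by Proposition~\ref{GCAT-SM}, such an adjunction is exactly the closed structure asserted in \ref{closed-SM-cat-GCat}. Thus the plan is to exhibit a bijection
\[
\Hom_{\gCAT}(X\ast Y,Z)\;\cong\;\Hom_{\gCAT}(X,\MGCat{Y}{Z}),
\]
natural in $X$ and $Z$.

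I would first record two computations. By the Yoneda lemma together with the two-variable adjunction of Proposition~\ref{two-var-adj-cat-gcat}, the copowered representables $\TensP{\gn{m}}{A}{}$ (for $m^{+}\in\gop$ and $A\in\Cat$) satisfy $\MapC{\gn{m}}{W}{\gCAT}\cong W(m^{+})$ and hence $\Hom_{\gCAT}(\TensP{\gn{m}}{A}{},W)\cong\Cat(A,W(m^{+}))$, naturally in $W$; moreover every $\gCat$ is the canonical colimit $X\cong\int^{m^{+}\in\gop}\TensP{\gn{m}}{X(m^{+})}{}$ of such objects. Second, the functor $-\ast Y$ is cocontinuous: colimits in $\gCAT$ are computed pointwise, each functor $-\times C$ on $\Cat$ preserves colimits because $\Cat$ is cartesian closed, and the coend \eqref{Day-Con-prod} commutes with colimits; the same computation shows that the copower slides past the Day product, $\TensP{X}{A}{}\ast Y\cong\TensP{(X\ast Y)}{A}{}$, naturally in $X$ and $A$.

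With these in hand I would verify the bijection on a generator $X=\TensP{\gn{m}}{A}{}$. On the right-hand side, the first computation and the definition $\MGCat{Y}{Z}(m^{+})=\MapC{Y\ast\gn{m}}{Z}{\gCAT}$ give
\[
\Hom_{\gCAT}(\TensP{\gn{m}}{A}{},\MGCat{Y}{Z})\cong\Cat\!\big(A,\MapC{Y\ast\gn{m}}{Z}{\gCAT}\big)\cong\Hom_{\gCAT}(\TensP{(Y\ast\gn{m})}{A}{},Z),
\]
the last step again by Proposition~\ref{two-var-adj-cat-gcat}. On the left-hand side, the copower-slide identity and the symmetry isomorphism of Proposition~\ref{GCAT-SM} give $\TensP{\gn{m}}{A}{}\ast Y\cong\TensP{(\gn{m}\ast Y)}{A}{}\cong\TensP{(Y\ast\gn{m})}{A}{}$, so both sides agree with $\Hom_{\gCAT}(\TensP{(Y\ast\gn{m})}{A}{},Z)$; naturality in $A$, $m^{+}$ and $Z$ is inherited from the naturality of each isomorphism used.

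Finally I would assemble the generator-wise bijections over the coend presentation of $X$. Since $-\ast Y$ is cocontinuous and $\Hom_{\gCAT}(-,Z)$ and $\Hom_{\gCAT}(-,\MGCat{Y}{Z})$ both turn colimits into limits of sets, applying them to $X\cong\int^{m^{+}}\TensP{\gn{m}}{X(m^{+})}{}$ expresses each side as the end $\int_{m^{+}}\Hom_{\gCAT}(\TensP{(\gn{m}\ast Y)}{X(m^{+})}{},Z)$, and the generator-wise isomorphisms identify these ends. I expect the main obstacle to be precisely this assembly: one must check that the isomorphisms constructed on generators are natural and dinatural enough to glue into a single transformation, equivalently that the evident counit $\ev\colon\MGCat{Y}{Z}\ast Y\to Z$ and the corresponding unit satisfy the triangle identities. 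The formal inputs that make this work are the cocontinuity of $-\ast Y$, the copower-slide isomorphism, and the pointwise computation of colimits in $\gCAT$; everything else is bookkeeping.
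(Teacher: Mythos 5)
Your argument is correct, but there is nothing in the paper to compare it with: Proposition \ref{closed-SM-cat-GCat} is stated without proof, implicitly deferring to Day's general theory of convolution products on functor categories (\cite{Day2}). What you have written out is exactly the canonical proof of Day's theorem specialized to $\gCAT = \CatHom{\gop}{\Cat}{}$: present $X$ by the density coend $X \cong \int^{m^{+} \in \gop} \TensP{\gn{m}}{X(m^{+})}{}$, verify the bijection on the copowered representables via Proposition \ref{two-var-adj-cat-gcat} and the definition $\MGCat{Y}{Z}(m^{+}) = \MapC{Y \ast \gn{m}}{Z}{\gCAT}$, and glue using cocontinuity of $- \ast Y$. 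The obstacle you flag in your last paragraph is not a real one: every isomorphism you compose --- the two-variable adjunction, the Yoneda identification $\MapC{\gn{m}}{W}{\gCAT} \cong W(m^{+})$, the copower-slide $\TensP{X}{A}{} \ast Y \cong \TensP{(X \ast Y)}{A}{}$, and the symmetry of $\ast$ --- is natural in all variables, so the composite is automatically dinatural in $m^{+}$ and natural in $A$ and $Z$, and passing to the end over $\gop$ yields a single bijection $\Hom_{\gCAT}(X \ast Y, Z) \cong \Hom_{\gCAT}(X, \MGCat{Y}{Z})$ natural in $X$ and $Z$; since a natural hom-set bijection \emph{is} an adjunction, no separate check of triangle identities is required. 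One small point worth making explicit in the Yoneda step: the morphism set of the mapping category is $\Mor(\MapC{\gn{m}}{W}{\gCAT}) = \Hom_{\gCAT}(\TensP{\gn{m}}{I}{}, W)$, so identifying $\MapC{\gn{m}}{W}{\gCAT}$ with $W(m^{+})$ as a \emph{category} uses the copower by $I$ as well as the object-level Yoneda lemma. Alternatively, since $\gCAT$ is locally presentable and you have shown $- \ast Y$ is cocontinuous, existence of a right adjoint is immediate from the adjoint functor theorem, and your generator computation is then precisely what identifies that adjoint with $\MGCat{Y}{-}$; this identification is the form in which the paper actually uses the proposition, e.g.\ in Proposition \ref{rt-adjs-DayCP}.
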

 The above proposition implies that for each $n \in \Nat$
 the functor $- \ast \gn{n}:\gS\to \gCAT$ has a right adjoint $\MGCat{\gn{n}}{-}:\gS\to \gCAT$. It follows from
 \cite[Thm.]{} that the functor $- \ast \gn{n}$ has another right adjoint which we denote by $-(n^+ \wedge -):\gS\to \gCAT$. We will denote $-(n^+ \wedge -)(X)$ by $X(n^+ \wedge -)$, where $X$ is a $\gCat$. The $\gCat$ $X(n^+ \wedge -)$ is defined by the following composite:
 \begin{equation}
 \label{defn-X-n-wedge}
  \gop \overset{n^+ \wedge -} \to \gop \overset{X} \to \Cat.
 \end{equation}
 The following proposition sums up this observation:
 \begin{prop}
  \label{rt-adjs-DayCP}
  There is a natural isomorphism
  \[
  \phi: -(n^+ \wedge -) \cong \MGCat{\gn{n}}{-}.
  \]
  In particular,
  for each $\gCat$ $X$ there is an isomorphism of $\gCats$
  \[
  \phi(X):X(n^+ \wedge -) \cong \MGCat{\gn{n}}{X}.
  \]

 \end{prop}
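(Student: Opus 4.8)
The plan is to deduce the isomorphism from the \emph{uniqueness of right adjoints}. By Proposition \ref{closed-SM-cat-GCat} the Day convolution functor $-\ast\gn{n}:\gCAT\to\gCAT$ has right adjoint $\MGCat{\gn{n}}{-}$. If I can show that $-(n^+\wedge-)$ is \emph{also} right adjoint to $-\ast\gn{n}$, then the two right adjoints are canonically naturally isomorphic, and this comparison isomorphism is exactly the $\phi$ asserted in the statement; evaluating it at a fixed $X$ then yields $\phi(X):X(n^+\wedge-)\cong\MGCat{\gn{n}}{X}$. So the whole proof reduces to exhibiting the second adjunction $(-\ast\gn{n})\dashv(-(n^+\wedge-))$.

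To do this I would compute the hom-set $\Hom_{\gCAT}(X\ast\gn{n},Y)$ directly. First I would unfold the Day convolution coend \eqref{Day-Con-prod}: since $\gn{n}=\gop(n^+,-)$ is representable, the co-Yoneda lemma collapses the $\gop$-variable carrying $\gn{n}$ and gives
\[
(X\ast\gn{n})(m^+)\;\cong\;\int^{k^+\in\gop}\gop(k^+\wedge n^+,\,m^+)\times X(k^+).
\]
Writing a natural transformation as the end $\int_{m^+}\Cat\!\left((X\ast\gn{n})(m^+),\,Y(m^+)\right)$, I would use that $\Cat(-,Y(m^+))$ carries the coend (a colimit) to an end and carries the copower by the discrete set $\gop(k^+\wedge n^+,m^+)$ to the corresponding power; after a Fubini exchange of ends and the continuity of $\Cat(X(k^+),-)$ this reduces to
\[
\int_{k^+}\Cat\!\left(X(k^+),\,\int_{m^+}Y(m^+)^{\gop(k^+\wedge n^+,\,m^+)}\right).
\]
The enriched Yoneda lemma in end form collapses the inner end over $m^+$ to $Y(k^+\wedge n^+)$, and the symmetry of the smash product, $k^+\wedge n^+\cong n^+\wedge k^+$, rewrites this as $(Y(n^+\wedge-))(k^+)$ as in \eqref{defn-X-n-wedge}. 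Folding the remaining end back up produces $\Hom_{\gCAT}(X,\,Y(n^+\wedge-))$, naturally in both $X$ and $Y$, which is precisely the desired adjunction.

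The only genuinely delicate part is the bookkeeping in this chain of (co)end manipulations: keeping track of which variable each (co-)Yoneda reduction eliminates, and checking naturality in $X$ and $Y$ so that the bijections assemble into an adjunction rather than a mere unindexed family of isomorphisms. I expect the symmetry step to be the conceptual crux, since it is the commutativity of the smash product (part of the symmetric monoidal structure on $\gop$ that underlies the Day convolution) that forces the right adjoint to be precomposition with $n^+\wedge-$ rather than with $-\wedge n^+$. With the second adjunction in hand, uniqueness of adjoints furnishes the natural isomorphism $\phi$ and, upon evaluation at $X$, the particular isomorphism $\phi(X)$, completing the proof.
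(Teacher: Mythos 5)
Your proposal is correct and follows essentially the same route as the paper: the paper likewise obtains $\phi$ from uniqueness of right adjoints, citing Proposition \ref{closed-SM-cat-GCat} for the adjunction $(-\ast\gn{n})\dashv\MGCat{\gn{n}}{-}$ and an external theorem (the citation is in fact left empty in the source) for the second adjunction $(-\ast\gn{n})\dashv(-(n^+\wedge-))$. Your coend/end computation --- co-Yoneda collapse of the representable $\gn{n}$ in the Day convolution, Fubini, the end-form Yoneda lemma, and the symmetry $k^+\wedge n^+\cong n^+\wedge k^+$ --- correctly supplies exactly the detail the paper delegates to that citation, so it is a valid filling-in rather than a different argument.
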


 The next theorem shows that the strict model category $\gCAT$ is compatible with the Day convolution product.
 \begin{thm}
 \label{SM-closed-mdl-str-GCat}
 The strict Q-model category $\gCAT$ is a symmetric monoidal closed model category under the Day convolution product.
 \end{thm}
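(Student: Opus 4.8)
The plan is to verify the two axioms of a symmetric monoidal model category in the sense of \cite[Def. 4.2.6]{Hovey}: the \emph{pushout-product axiom}, asserting that the Day convolution $- \ast -$ is a Quillen bifunctor, together with the \emph{unit axiom}. The underlying closed symmetric monoidal structure is already supplied by Proposition \ref{closed-SM-cat-GCat}, so only compatibility with the strict model structure remains to be checked.

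For the pushout-product axiom I would invoke the standard reduction that, since the strict model structure on $\gCAT$ is cofibrantly generated (Theorem \ref{str-mdl-cat-gCat}) with generating cofibrations $\I$ and generating acyclic cofibrations $\J$ from \eqref{gen-cof} and \eqref{gen-acyc-cof}, it suffices to verify the condition of Definition \ref{Q-adj-2-var} on these generators; cf. \cite[Cor. 4.2.5]{Hovey}. Thus I must show that $f \Box g$ is a Q-cofibration, acyclic if either $f$ or $g$ is, whenever $f, g$ each lie in $\I \cup \J$. Every such generator has the form $\TensP{\gn{n}}{a}{}$ for a representable $\gn{n}$ and a map $a$ ranging over $\partial_0, \partial_1, \partial_2, i_0, i_1$ — precisely the generating cofibrations and generating acyclic cofibrations of the natural model structure on $\Cat$.

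The crux is the computation that Day convolution intertwines with the tensoring over $\Cat$ and with the monoidal structure of $\gop$. Concretely, I would establish the natural isomorphism
\begin{equation*}
\TensP{X}{C}{} \ast \TensP{Y}{D}{} \cong \TensP{(X \ast Y)}{(C \times D)}{},
\end{equation*}
which follows from the description of $- \ast -$ as a left Kan extension (equivalently, from its defining coend \eqref{Day-Con-prod}), since both sides are cocontinuous in each variable and agree on representables. Combined with the identification $\gn{n} \ast \gn{m} \cong \gn{nm}$ — a consequence of $n^+ \wedge m^+ \cong (nm)^+$ and the fact that Day convolution sends representables to the representable at the monoidal product — this lets me compute, for $f = \TensP{\gn{n}}{a}{}$ and $g = \TensP{\gn{m}}{b}{}$, that each corner of the pushout-product square is $\TensP{\gn{nm}}{(-)}{}$ applied to the corresponding corner of $a \Box b$. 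Since $\TensP{\gn{nm}}{-}{}$ is a left adjoint (Proposition \ref{two-var-adj-cat-gcat}) it preserves the defining pushout, whence
\begin{equation*}
f \Box g \cong \TensP{\gn{nm}}{(a \Box b)}{}.
\end{equation*}
Now $\Cat$ is a cartesian closed model category (Theorem \ref{nat-model-cat-str}), so the pushout-product $a \Box b$ of two cofibrations in $\Cat$ is again a cofibration, acyclic if either factor is. Finally $\gn{nm}$ is cofibrant (it is the codomain $\TensP{\gn{nm}}{[0]}{} \cong \gn{nm}$ of the generator $\gn{nm} \times \partial_0$, whose domain is initial), so by Theorem \ref{enrich-GamCAT-CAT} and Lemma \ref{Q-bifunctor-char} the functor $\TensP{\gn{nm}}{-}{}$ is left Quillen; it therefore carries $a \Box b$ to a Q-cofibration of $\gCats$ that is acyclic whenever $a \Box b$ is. This yields the pushout-product axiom on generators, hence in general.

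For the unit axiom it suffices to observe that the monoidal unit $\gn{1}$ is cofibrant, by the same argument ($\gn{1} \cong \TensP{\gn{1}}{[0]}{}$ is the codomain of the generator $\gn{1} \times \partial_0$ with initial domain); then for cofibrant $X$ the canonical map $\gn{1} \ast X \to X$ is an isomorphism and the axiom is automatic. The main obstacle I anticipate is the first displayed isomorphism: making the interchange of Day convolution with the $\Cat$-tensoring precise and checking it on representables, after which the model-categorical content reduces cleanly to the cartesian closedness of $\Cat$.
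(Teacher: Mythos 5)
Your proof is correct, but it takes a genuinely different route from the paper's. The paper verifies condition $(3)$ of Lemma \ref{Q-bifunctor-char}: it first notes, by the adjointness coming from Proposition \ref{closed-SM-cat-GCat} (the right adjoint of $- \ast \gn{n}$ being $\MGCat{\gn{n}}{-} \cong (-)(n^+ \wedge -)$, which visibly preserves strict (acyclic) fibrations), that $f \ast \gn{n}$ is a (acyclic) Q-cofibration whenever $f$ is; it then checks that $\MGBoxCat{f}{p}$ is a (acyclic) strict fibration degreewise, using the formula $\MGCat{V}{Y}(n^+) = \MapC{V \ast \gn{n}}{Y}{\gCAT}$ to reduce everything to the already-established $\Cat$-enrichment of the strict model structure (Theorem \ref{enrich-GamCAT-CAT}), with no analysis of generators and no coend computations. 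You instead verify condition $(1)$ directly on the generating sets $\I$ and $\J$, and the substance of your argument is the interchange isomorphism $\TensP{X}{C}{} \ast \TensP{Y}{D}{} \cong \TensP{(X \ast Y)}{(C \times D)}{}$ together with $\gn{n} \ast \gn{m} \cong \gn{nm}$, which collapses the pushout-product of generators to $\TensP{\gn{nm}}{(a \Box b)}{}$; both identifications are sound (the first by pulling the constant factors out of the defining coend \eqref{Day-Con-prod}, or by cocontinuity plus co-Yoneda; the second from $n^+ \wedge m^+ \cong (nm)^+$), the cofibrancy of representables follows as you say from the generator $\gn{n} \times \partial_0$ having empty domain, and the final step again rests on Theorem \ref{enrich-GamCAT-CAT} via the left Quillen functor $\TensP{\gn{nm}}{-}{}$ and the cartesian closedness of the natural model structure on $\Cat$. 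What your route buys is an explicit, reusable formula for the pushout-product of generators and an explicit verification of the unit axiom (which the paper leaves implicit, though it is immediate since the unit $\gn{1}$ is cofibrant); what the paper's route buys is brevity, since after Theorem \ref{enrich-GamCAT-CAT} the dual lifting criterion can be checked degreewise with no computation of Day convolutions at all.
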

 \begin{proof}
 Using the adjointness which follows from proposition \ref{closed-SM-cat-GCat} one can show that if a map $f:U \to V$ is a (acyclic) cofibration in the strict model category $\gCAT$ then the induced map
 $f \ast \gn{n}:U \ast \gn{n} \to V \ast \gn{n}$ is also a (acyclic) cofibration in the strict model category for all $n \in \Nat$. By $(3)$ of Lemma \ref{Q-bifunctor-char} it is sufficient to show that whenever
 $f$ is a cofibration and $p:Y \to Z$ is a fibration then the map
 \[
    \MGBoxCat{f}{p}:\MGCat{V}{Y} \to \MGCat{V}{Z} \underset{\MGCat{U}{Z}}\times \MGCat{U}{Y}.
 \]
   is a fibration in $\gCAT$ which is acyclic if either $f$ or $p$ is a
   weak equivalence. The above map is a (acyclic) fibration if and only if the map
\begin{multline*}
 \MGBoxCat{f \ast \gn{n}}{p}(n^+): \MapC{V \ast \gn{n}}{Y}{\gCAT} \to \\
  \MapC{V \ast \gn{n}}{Z}{\gCAT}
  \underset{\MapC{U \ast \gn{n}}{Z}{\gCAT}} \times \MapC{U \ast \gn{n}}{Y}{\gCAT}
\end{multline*}

    is a (acyclic) fibration in $\Cat$ for all $n \in \Nat$. Since $f \ast \gn{n}$ is a cofibration as abserved above, the result follows from theorem \ref{enrich-GamCAT-CAT}.
 \end{proof}
\subsection[The model category of coherently commutative monoidal categories]{The model category of coherently commutative monoidal categories}
\label{EInf-Cat}
  The objective of this subsection is to construct a new model
  category structure on the category $\gCAT$. This new model
  category is obtained by localizing the strict model category
 defined above and we call it the \emph{The model category of coherently commutative monoidal categories}. We will refer to this new model category structure
 as the \emph{model category structure of coherently commutative monoidal categories} on $\gCAT$. The aim of this new model structure is to endow its homotopy category with a semi-additive structure. In other words we want this new
 model category structure to have finite \emph{homotopy biproducts}.  We go on further to show that this new model category is symmetric monoidal with respect to
 the \emph{Day convolution product}, see \cite{Day2}.    We begin by recalling the notion of
 a \emph{left Bousfield localization}:
 
 \begin{df}
  Let $\M$ be a model category and let $\S$ be a class of maps in $\M$.
  The left Bousfield localization of $\M$ with respect to $\S$
  is a model category structure $L_\S\M$ on the underlying category of $\M$
  such that
  \begin{enumerate}
   \item The class of cofibrations of $L_\S\M$ is the same as the
   class of cofibrations of $\M$.
   
   \item A map $f:A \to B$ is a weak equivalence in $L_\S\M$ if it is an $\S$-local equivalence,
   namely, for every fibrant $\S$-local object $X$, the induced map on homotopy
   function complexes
   \[
    f^\ast:Map_{\M}^h(B, X) \to Map_{\M}^h(A, X)
   \]
   is a homotopy equivalence of simplicial sets. Recall
   that an object $X$ is called fibrant $\S$-local if $X$ is fibrant
    in $\M$ and for every element
   $g:K \to L$ of the set $\S$, the induced map on
   homotopy function complexes
   \[
    g^\ast:Map_{\M}^h(L, X) \to Map_{\M}^h(K, X)
   \]
   is a weak homotopy equivalence of simplicial sets.
    
  \end{enumerate}
where $\HMapC{-}{-}{\M}$ is the simplicial function object associated with the strict model category $\M$, see \cite{DK80}, \cite{DK1980} and \cite{DK3}.
 \end{df}
 \begin{rem}
 \label{simp-enrichment-str-mdl-cat}
 The strict model category of all $\gCats$ is a $\Cat$-enriched model category
 by theorem \ref{enrich-GamCAT-CAT}, this enrichment is equivalent to having a Quillen adjunction
 $- \otimes \gn{1} : \Cat \rightleftharpoons \gCAT:\MapC{\gn{1}}{-}{\gCAT}$ whose left adjoint preserves the tensor product,
 see \cite[Lemma 3.6]{CB1}. Further the adjunction $\tau_1:\sSets \rightleftharpoons \Cat:N$,
 see \cite{AJ1}, is a Quillen adjunction with respect to the Joyal model category
 structure on $\sSets$ and natural model category structure on $\Cat$ whose
 left adjoint $\tau_1$ preserves finite products (and thus the tensor product in the cartesian closed
 Joyal model category of simplicial sets).
 Again lemma \cite[Lemma 3.6]{CB1} implies that the strict model category of $\gCats$ is a $\sSets$-enriched model category
 with respect to the Joyal model category structure on $\sSets$. The right Hom bifunctor
 of this enrichment
 \[
 \Map({-}{-}):\gCAT^{op} \times \gS\to \sSets
 \]
 assigns to a pair of objects $(X, C)$, a simplicial sets $\Map({X}{C})$
 which is defined as follows:
 \[
 \Map({X},{C}) := N(\MapC{X}{C}{\gCAT}).
 \]
  \end{rem}
 
 We want to construct a left Bousfield localization of
 the strict model category of $\gCats$. For each pair $k^+, l^+ \in \gop$,
 we have the obvious \emph{projection maps} in $\gSC$
 \[
  \delta^{k+l}_k:(k+l)^+ \to k^+ \ \ \ \ and \ \ \ \ \delta^{k+l}_l:(k+l)^+ \to l^+.
 \]
 The maps
 \[
 \gop(\delta^{k+l}_k,-):\Gamma^{k} \to \Gamma^{k+l} \ \ \ \ and \ \ \ \ 
 \gop(\delta^{k+l}_l,-):\Gamma^{l} \to \Gamma^{k+l} 
 \]
 induce a map of $\gSs$ on the coproduct which we denote as follows:
 \[
  h_k^l:\Gamma^l \sqcup \Gamma^l \to \Gamma^{l+k}.
 \]
 
 We now define a class of
 maps $\E_\infty\S$ in $\gCAT$:
 \begin{equation*}
  \E_\infty\S := \lbrace h_k^l:\Gamma^l \sqcup \Gamma^l \to \Gamma^{l+k}:
  l, k \in \mathbb{Z}^+ \rbrace
 \end{equation*}
 We recall that $I$ is the category with two objects and one non-identity arrow between them. We define another class of maps in $\gCAT$:
 \begin{equation*}
 I \times \E_\infty\S := \lbrace I \times h_k^l: h_k^l \in \E_\infty\S \rbrace
 \end{equation*}
 \begin{df}
  We call a $\gCat$ $X$ a $(I \times \E_\infty\S)$-\emph{local object}
  if, for each map $h_k^l \in \E_\infty\S$, the induced simplicial map
  \begin{multline*}
  \HMapC{\Delta[n] \times h_k^l}{X}{\gCAT}: \HMapC{\Delta[n] \times \Gamma^{k+l}}{X}{\gCAT} \to \\
  \HMapC{\Delta[n] \times (\Gamma^l \sqcup \Gamma^l)}{X}{\gCAT},
  \end{multline*}
 is a homotopy equivalence of simplicial sets for all $n \ge 0$ where $\HMapC{-}{-}{\gCAT}$ is the simplicial function object associated with the strict model category $\gCAT$, see \cite{DK80}, \cite{DK1980} and \cite{DK3}.
 \end{df}
  Remark \eqref{simp-enrichment-str-mdl-cat} above and appendix \ref{Cat-Local} tell us that a model for $\HMapC{X}{Y}{\gCAT}$ is the Kan complex
 $J(N(\MapC{X}{Y}{\gCAT}))$ which is the maximal kan complex contained in the quasicategory $N(\MapC{X}{Y}{\gCAT})$.

  The following proposition gives a characterization of
 $\E_\infty\S$-local objects
  \begin{prop}
  \label{char-CCMC}
 \begin{sloppypar}
 A $\gCat$ $X$ is a $(I \times \E_\infty\S)$-local object in $\gCAT$ if and only if it satisfies the Segal condition namely the functor
  \end{sloppypar}
 \begin{equation*}
 (X(\partition{k+l}{k}), X(\partition{k+l}{l})):X(k+l^+) \to X(k^+) \times X(l^+)
 \end{equation*}
 is an equivalence of categories for all $k^+, l^+ \in \Ob(\gop)$.
 \end{prop}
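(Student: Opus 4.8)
The plan is to evaluate every homotopy function complex occurring in the definition of an $(I \times \E_\infty\S)$-local object on (simplicially tensored) representables, so as to turn the locality condition into a statement about the Segal functors, and then to feed this into the characterization of equivalences of categories furnished by Lemma~\ref{char-eq-cat}.

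First I would identify the mapping categories out of representables. By the enriched Yoneda lemma the category $\MapC{\gn{m}}{X}{\gCAT}$ is isomorphic to $X(m^+)$: its objects are $\Hom_{\gCAT}(\gn{m}, X) \cong \Ob(X(m^+))$, and its morphisms are $\Hom_{\gCAT}(\gn{m} \times I, X) \cong \Hom_{\Cat}(I, X(m^+))$, the morphisms of $X(m^+)$. Since $\MapC{-}{X}{\gCAT}$ is contravariant and sends coproducts to products, one gets $\MapC{\gn{k} \sqcup \gn{l}}{X}{\gCAT} \cong X(k^+) \times X(l^+)$, and under these identifications the functor $\MapC{h_k^l}{X}{\gCAT}$ is exactly the Segal functor $F_{k,l} := (X(\partition{k+l}{k}), X(\partition{k+l}{l}))$. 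Passing to the simplicial tensor, the same bookkeeping (together with $\tau_1 \dashv N$ and the fact that the nerve carries a functor category to the internal hom of quasicategories), combined with the chosen model $\HMapC{A}{X}{\gCAT} = J(N(\MapC{A}{X}{\gCAT}))$ and the identity $J(N(C)) = N(J(C))$ (the maximal Kan complex of a nerve is the nerve of the core), yields
\[
\HMapC{\Delta[n] \times \gn{m}}{X}{\gCAT} \cong N(J([[n], X(m^+)])),
\]
where $[n]$ is the poset $0 \to \cdots \to n$; in particular the case $n=0$ is $N(J(X(m^+)))$ and the case $n=1$ is $N(J([I, X(m^+)]))$.

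With these computations in hand the map $\HMapC{\Delta[n] \times h_k^l}{X}{\gCAT}$ becomes $N(J([[n], F_{k,l}]))$. I would then use the elementary fact that a functor between groupoids is an equivalence if and only if its nerve is a weak homotopy equivalence; hence the $n$-th locality condition is equivalent to $J([[n], F_{k,l}])$ being an equivalence of groupoids. For the direction $(\Rightarrow)$, if $X$ is local then the conditions for $n=0$ and $n=1$ say precisely that $J(F_{k,l})$ and $J([I, F_{k,l}])$ are equivalences of groupoids, so Lemma~\ref{char-eq-cat} forces $F_{k,l}$ to be an equivalence of categories, i.e. $X$ satisfies the Segal condition. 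For $(\Leftarrow)$, if every $F_{k,l}$ is an equivalence of categories then so is $[[n], F_{k,l}]$ for each $n$ (functor categories preserve equivalences), whence each $J([[n], F_{k,l}])$ is an equivalence of groupoids and all of the locality conditions hold.

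The main obstacle is the enrichment bookkeeping of the first step: one must correctly match the simplicial tensorings $\Delta[n] \times \gn{m}$ with the functor categories $[[n], X(m^+)]$, and then recognize that testing a homotopy equivalence of the Kan complexes $N(J([[n], F_{k,l}]))$ for $n=0,1$ is exactly the data required by Lemma~\ref{char-eq-cat} to conclude that $F_{k,l}$ is an equivalence of \emph{categories}. The higher $n$ carry no new information, but the case $n=0$ alone (which only detects $J(F_{k,l})$) would be insufficient, so the presence of the interval $I$ in the localizing class $I \times \E_\infty\S$ is essential; this is also the precise point where the natural, rather than Thomason, weak equivalences on $\Cat$ enter.
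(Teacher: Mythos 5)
Your proof is correct and takes essentially the same route as the paper: the paper identifies $\MapC{h^k_l}{X}{\gCAT}$ with the Segal functor via the same commutative square and then cites Lemma \ref{char-lo-QCat-en}, whose proof in Appendix \ref{Cat-Local} performs exactly your unwinding through $J$, $N$, the identity $NJ \cong JN$, the adjunction $\tau_1 \dashv N$, and Lemma \ref{char-eq-cat}. The only difference is packaging: you inline that appendix lemma, checking all tensorings by $\Delta[n]$ directly and observing that $n=0,1$ already suffice, which is precisely the content of localizing at the pair $\lbrace h^k_l,\ I \times h^k_l \rbrace$.
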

 \begin{proof}
 We begin the proof by observing that each element of the set $\E_\infty\S$ is a map of $\gCats$ between cofibrant $\gCats$. Lemma \ref{char-lo-QCat-en} implies that $X$ is a $(I \times \E_\infty\S)$-local object if and only if the following functor
 \begin{equation*}
 \MapC{h^k_l}{X}{\gCAT}:\MapC{\gn{k+l}}{X}{\gCAT} \to \MapC{\gn{k} \sqcup \gn{l}}{X}{\gCAT}
 \end{equation*}
 is an equivalence of (ordinary) categories.
 We observe that we have the following commutative square in $\Cat$
 \begin{equation*}
 \xymatrix@C=24mm{
 	\MapC{\gn{k+l}}{X}{\gCAT}
  \ar[d]_{\cong}\ar[r]^{ \MapC{h^k_l}{X}{\gCAT}} &  \MapC{\gn{k} \sqcup \gn{l}}{X}{\gCAT}  \ar[d]^{\cong} \\
 X((k+l)^+) \ar[r]_{(X(\partition{k+l}{k}), X(\partition{k+l}{l}))} & X(k^+) \times X(l^+)
 }
 \end{equation*}
 This implies that the functor $(X(\partition{k+l}{k}), X(\partition{k+l}{l}))$ is an equivalence of categories if and only if the functor $ \MapC{h^k_l}{X}{\gCAT}$ is an equivalence of categories.
  \end{proof}

\begin{df}
 \label{CCMC}
 We will refer to a $(I \times \E_\infty\S)$-local object as a \emph{coherently commutative monoidal category}.
 \end{df}
 
 \begin{df}
 A morphism of $\gCats$ $F:X \to Y$ is a $(I \times \E_\infty\S)$-local equivalence if for each coherently commutative monoidal category $Z$
 the following simplicial map
 \[
 \HMapC{F}{Z}{\gCAT}:\HMapC{Y}{Z}{\gCAT} \to \HMapC{X}{Z}{\gCAT}
 \]
 is a homotopy equivalence of simplicial sets.
 \end{df}
 \begin{prop}
  \label{char-CCME}
 \begin{sloppypar}
 A morphism between two cofibrant $\gCats$ $F:X \to Y$ is an $(I \times \E_\infty\S)$-local equivalence if and only if the functor  \end{sloppypar}
 \begin{equation*}
 \MapC{F}{Z}{\gCAT}:\MapC{Y}{Z}{\gCAT} \to \MapC{X}{Z}{\gCAT}
  \end{equation*}
 is an equivalence of categories for each coherently commutative monoidal category $Z$.
 \end{prop}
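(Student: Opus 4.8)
The plan is to reduce everything to the identification, recorded just after the definition of the $(I\times\E_\infty\S)$-local object, of the homotopy function complex $\HMapC{X}{Y}{\gCAT}$ with the maximal Kan complex $J(N(\MapC{X}{Y}{\gCAT}))$ sitting inside the nerve of the enriched mapping category. Since $X$ and $Y$ are cofibrant and every $\gCat$ is fibrant in the strict model structure (fibrancy in $\Cat$ is automatic), this model is legitimate. For a category $C$ the complex $J(N(C))$ is the nerve of the maximal subgroupoid $J(C)$, and for functors \emph{between groupoids} the nerve is a homotopy equivalence exactly when the functor is an equivalence of groupoids. Consequently $\HMapC{F}{Z}{\gCAT}$ is a homotopy equivalence if and only if $J(\MapC{F}{Z}{\gCAT})$ is an equivalence of groupoids. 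The easy direction ($\Leftarrow$) is then immediate: if $\MapC{F}{Z}{\gCAT}$ is an equivalence of categories then its image under $J$ is an equivalence of groupoids, so $\HMapC{F}{Z}{\gCAT}$ is a homotopy equivalence for every coherently commutative monoidal $Z$, i.e.\ $F$ is an $(I\times\E_\infty\S)$-local equivalence.

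For the substantive direction ($\Rightarrow$) I would invoke Lemma \ref{char-eq-cat}: $\MapC{F}{Z}{\gCAT}$ is an equivalence of categories precisely when both $J(\MapC{F}{Z}{\gCAT})$ and $J([I,\MapC{F}{Z}{\gCAT}])$ are equivalences of groupoids. The first is exactly the local-equivalence hypothesis at $Z$, translated through the first paragraph. To obtain the second I would exhibit another coherently commutative object detecting the arrow-category comparison. Using the two-variable adjunction of Proposition \ref{two-var-adj-cat-gcat}, a Yoneda argument in $\Cat$ gives the cotensor identity
\[
[I,\MapC{X}{Z}{\gCAT}] \cong \MapC{X}{\bHom{I}{Z}{\gCAT}}{\gCAT},
\]
both sides representing the functor $A\mapsto \gCAT(\TensP{X}{A\times I}{},Z)$, where $\bHom{I}{Z}{\gCAT}$ is the levelwise arrow category $(\bHom{I}{Z}{\gCAT})(n^+)=[I,Z(n^+)]$.

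The key remaining point is that $\bHom{I}{Z}{\gCAT}$ is again coherently commutative whenever $Z$ is: its Segal map at $(k^+,l^+)$ is obtained by applying $[I,-]$ to the Segal map of $Z$, and $[I,-]$ preserves equivalences of categories, so the Segal condition of Proposition \ref{char-CCMC} is inherited. Feeding $\bHom{I}{Z}{\gCAT}$ into the local-equivalence hypothesis then shows $J(\MapC{F}{\bHom{I}{Z}{\gCAT}}{\gCAT})\cong J([I,\MapC{F}{Z}{\gCAT}])$ is an equivalence of groupoids, which is the second clause of Lemma \ref{char-eq-cat}. Together the two clauses force $\MapC{F}{Z}{\gCAT}$ to be an equivalence of categories.

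The main obstacle is precisely this gap: a homotopy equivalence of homotopy function complexes only sees the maximal subgroupoids, so testing against a single $Z$ recovers only the $J$-level (groupoid) comparison, never a genuine equivalence of the non-groupoidal mapping categories. The device that bridges the gap is to re-feed the cotensored object $\bHom{I}{Z}{\gCAT}$ back into the hypothesis, exploiting that coherent commutativity is stable under the levelwise arrow-category construction; combined with the arrow-category criterion of Lemma \ref{char-eq-cat}, this upgrades the two groupoid equivalences to a full equivalence of categories.
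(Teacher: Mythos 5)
Your proposal is correct, and it is worth noting at the outset that the paper itself states Proposition \ref{char-CCME} with no proof at all, so there is no written argument to compare against; the honest comparison is with the proof of the appendix Lemma \ref{char-lo-QCat-en}, which handles the parallel statement for local \emph{objects}, and your argument is precisely that proof transposed to local \emph{equivalences}. In Lemma \ref{char-lo-QCat-en} the arrow-category clause of Lemma \ref{char-eq-cat} is reached by enlarging the set of test maps to include $I \otimes u$; in the present proposition the test maps are fixed and the test objects range over all coherently commutative monoidal categories, so you instead cotensor the target, and the one genuinely new ingredient --- which you correctly isolate as the crux --- is that the levelwise arrow category $\bHom{I}{Z}{\gCAT}$ is again a coherently commutative monoidal category. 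Your verification of this is sound: its Segal functor at $(k^+,l^+)$ is $[I,-]$ applied to the Segal functor of $Z$ (using that $[I,-]$ preserves products), $[I,-]$ preserves equivalences of categories, and Proposition \ref{char-CCMC} converts the Segal condition back into $(I \times \E_\infty\S)$-locality, fibrancy being automatic since every object of the strict model structure is fibrant. The supporting steps all check out: the identification of $\HMapC{X}{Z}{\gCAT}$ with $J(N(\MapC{X}{Z}{\gCAT})) \cong N(J(\MapC{X}{Z}{\gCAT}))$ (Lemma \ref{J-Nerve-comm}) is legitimate for $X$ cofibrant and $Z$ fibrant; a functor between groupoids is an equivalence iff its nerve is a (weak, hence here simplicial) homotopy equivalence, so the local-equivalence hypothesis at $Z$ yields exactly the first clause of Lemma \ref{char-eq-cat}; and the cotensor isomorphism $[I,\MapC{X}{Z}{\gCAT}] \cong \MapC{X}{\bHom{I}{Z}{\gCAT}}{\gCAT}$ follows from the two-variable adjunction of Proposition \ref{two-var-adj-cat-gcat} together with the Yoneda argument you indicate, both sides representing $A \mapsto \gCAT(\TensP{X}{(A \times I)}{}, Z)$. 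Your closing diagnosis of the obstacle is also the right one: a single test object only detects the maximal-subgroupoid comparison (the inclusion $\ast \to B\mathbb{N}$ shows $J$-level equivalence does not imply equivalence), and re-feeding $\bHom{I}{Z}{\gCAT}$ into the hypothesis is exactly what upgrades the two groupoid equivalences to a genuine equivalence of the mapping categories. In short: a complete and correct proof of a statement the paper leaves unproved, built exactly from the paper's own toolkit.
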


 \begin{df}
  We will refer to a $(I \times \E_\infty \S)$-local equivalence as an \emph{equivalence of coheretly commutative monoidal categories}.
 \end{df}
 The main result of this section is about constructing a
 new model category structure on the category $\gCAT$,
 by localizing the strict model category of $\gCats$ with respect to
 morphisms in the set $\E_\infty\S$. We recall the following theorem
 which will be the main tool in the construction of the
 desired model category. This theorem first appeared in an unpublished work \cite{smith}
 but a proof was later provided by Barwick in \cite{CB1}.
 \begin{thm} \cite[Theorem 2.11]{CB1}
 \label{local-tool}
 If $\M$ is a combinatorial model category and $\S$ is a small
set of homotopy classes of morphisms of $\M$, the left Bousfield localization $L_\S\M$ of
$\M$ along any set representing $\S$ exists and satisfies the following conditions.
\begin{enumerate}
\item The model category $L_\S\M$ is left proper and combinatorial.
\item As a category, $L_\S\M$ is simply $\M$.
\item The cofibrations of $L_\S\M$ are exactly those of $\M$.
\item The fibrant objects of $L_\S\M$ are the fibrant $\S$-local objects $Z$ of $\M$.
\item The weak equivalences of $L_\S\M$ are the $\S$-local equivalences.
\end{enumerate}
\end{thm}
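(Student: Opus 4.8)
Since the statement is a general fact about combinatorial model categories and makes no reference to the $\gCat$ setting, the plan is to deduce it from Jeff Smith's recognition theorem for combinatorial model structures, supplemented by an accessibility argument for the class of $\S$-local equivalences. First I would fix the data of the proposed structure: declare the cofibrations of $L_\S\M$ to be exactly the cofibrations of $\M$, so that a set $I$ of generating cofibrations for $\M$ can serve as generating cofibrations for $L_\S\M$, and declare the weak equivalences to be the $\S$-local equivalences, which I will call $W_\S$. The fibrations are then forced to be the maps with the right lifting property against those cofibrations that lie in $W_\S$. This immediately gives items (2), (3), and (5), so the real work is verifying that these classes assemble into a model structure and then identifying the fibrant objects.

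The crucial, and hardest, step is to show that $W_\S$ is an accessible and accessibly embedded full subcategory of the arrow category $\M^{[1]}$. Here I would construct a \emph{localization functor}: applying the small object argument to $I$ together with cofibrant and horn-type representatives of the maps of $\S$, one produces an endofunctor $L\colon \M \to \M$ and a natural transformation $\eta\colon \mathrm{id} \Rightarrow L$ such that each $LX$ is fibrant and $\S$-local and each $\eta_X$ is an $\S$-local equivalence. Because $L$ is an accessible functor and the weak equivalences of the combinatorial model category $\M$ themselves form an accessible subcategory of $\M^{[1]}$, the characterization ``$f$ is an $\S$-local equivalence if and only if $Lf$ is a weak equivalence of $\M$'' exhibits $W_\S$ as the preimage of an accessible class under an accessible functor, hence accessible. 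The two-out-of-three property and closure under retracts for $W_\S$ then follow formally, by transporting the corresponding properties in $\M$ through this characterization, using that $\eta$ is a natural $\S$-local equivalence and that $\HMapC{-}{-}{\M}$ detects $\S$-local equivalences on $\S$-local targets.

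Next I would verify the remaining hypotheses of Smith's theorem. Maps with the right lifting property against $I$ are precisely the acyclic fibrations of $\M$; these are weak equivalences of $\M$, and since every weak equivalence of $\M$ is an $\S$-local equivalence they lie in $W_\S$. It then remains to check that $\mathrm{cof}\cap W_\S$ is closed under pushout and transfinite composition; the accessibility established above is exactly what lets one extract a \emph{set} $J_\S$ generating this class, after which the closure properties are automatic from those of cell complexes. Smith's recognition theorem now produces a combinatorial model structure $L_\S\M$ realizing (1)--(3) as a category-preserving, cofibration-preserving localization.

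Finally I would identify the fibrant objects and address left properness. An object $Z$ is fibrant in $L_\S\M$ if and only if it has the right lifting property against $J_\S$; unwinding the construction of $J_\S$ shows this is equivalent to $Z$ being fibrant in $\M$ and $\S$-local, which is (4). The genuinely delicate concluding point is left properness, which I would \emph{not} try to obtain from a soft ``weak equivalences only grow'' argument, since $\M$ itself need not be left proper; instead it is deduced from the fact that in a combinatorial model category $\S$-local equivalences are detected by derived mapping spaces and that cobase change along a cofibration behaves correctly with respect to these, which is where combinatoriality of $\M$ is used in an essential way. The single main obstacle in the whole argument is the accessibility of $W_\S$: once that is in hand, everything else is the routine bookkeeping that Smith's theorem is designed to consume.
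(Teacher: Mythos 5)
First, a point of comparison: the paper does not prove this theorem at all --- it is imported from Smith's unpublished work via Barwick \cite{CB1} --- so there is no internal argument to measure you against. Your skeleton (Smith's recognition theorem, with the heart of the matter being accessibility of the class $W_{\S}$ of $\S$-local equivalences, established by building an accessible localization functor $L$ from the small object argument and characterizing $W_{\S}$ as the preimage of the weak equivalences of $\M$ under $L$) is exactly the architecture of Barwick's actual proof, and that part of your sketch is sound.

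There are, however, two linked genuine gaps, both traceable to left properness. (a) In Smith's theorem, the closure of $\mathrm{cof}(I)\cap W_{\S}$ under pushout and transfinite composition is a \emph{hypothesis} that must be verified before the solution-set argument extracts a generating set $J_{\S}$; your order of argument is circular, since declaring closure ``automatic from cell complexes'' after extracting $J_{\S}$ presumes $\mathrm{cof}(I)\cap W_{\S}$ coincides with the retracts of relative $J_{\S}$-cell complexes, which is precisely what is at stake. The standard verification shows that a pushout along a cofibration is a \emph{homotopy} pushout --- this is where left properness of $\M$ enters --- and that $\S$-local equivalences are stable under homotopy pushout because derived mapping spaces into a fibrant $\S$-local object carry homotopy pushouts to homotopy pullbacks. (b) Your concluding claim, that in an arbitrary combinatorial model category cobase change along a cofibration ``behaves correctly'' with respect to derived mapping spaces, is false: combinatoriality confers no homotopy invariance on pushouts. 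In fact the statement as quoted in the paper is itself too strong, because taking $\S=\emptyset$ gives $L_{\S}\M=\M$, so conclusion (1) would assert that every combinatorial model category is left proper; this fails, e.g., for commutative differential graded algebras over a field of positive characteristic, which form a combinatorial but not left proper model category. Barwick's Theorem 2.11 assumes $\M$ is \emph{left proper} and combinatorial; the paper's quotation silently drops that hypothesis (harmlessly for its application, since the strict model structure on $\gCAT$ is left proper). Once left properness of $\M$ is restored, the ``soft'' argument you rejected is the correct one: cofibrations are unchanged, pushouts along them are homotopy pushouts, and $W_{\S}$ is closed under homotopy pushout, which yields both the missing Smith hypothesis and conclusion (1) simultaneously.
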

\begin{thm}
 \label{loc-semi-add}
 There is a closed, left proper, combinatorial model category structure on
 the category of $\gCats$, $\gCAT$, in which
 \begin{enumerate}
 \item The class of cofibrations is the same as the class of
 Q-cofibrations of $\gCats$.
 \item The weak equivalences are $\E_\infty$-equivalences.
 \end{enumerate}
 \begin{sloppypar}
 An object is fibrant in this model category if and only if it is a
  coherently commutative monoidal category. 
   \end{sloppypar}
 \end{thm}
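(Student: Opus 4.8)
The plan is to obtain the desired model structure as the left Bousfield localization of the strict model category of $\gCats$ along the set of maps $I \times \E_\infty\S$, and then to read off each asserted property directly from Barwick's localization theorem (Theorem \ref{local-tool}). The substantive analysis has already been carried out in the preceding propositions, so the argument is essentially a verification of hypotheses followed by a clause-by-clause transcription.

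First I would verify the hypotheses of Theorem \ref{local-tool}. The strict model category structure on $\gCAT$ is combinatorial by Theorem \ref{str-mdl-cat-gCat}. The localizing collection $I \times \E_\infty\S$ is a genuinely small set: the maps $h_k^l$ are indexed by pairs $(k,l) \in \mathbb{Z}^+ \times \mathbb{Z}^+$, so both $\E_\infty\S$ and $I \times \E_\infty\S$ are countable. Moreover each $h_k^l:\Gamma^l \sqcup \Gamma^l \to \Gamma^{l+k}$ is a map between cofibrant objects, since representable $\gCats$ are cofibrant and a coproduct of cofibrant objects is cofibrant; hence $I \times \E_\infty\S$ already represents its own set of homotopy classes and no cofibrant replacement is needed. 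These observations let me invoke Theorem \ref{local-tool} with $\M$ the strict model category of $\gCats$ and localizing set $I \times \E_\infty\S$, producing a left Bousfield localization which I denote $L_{I \times \E_\infty\S}\gCAT$.

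It then remains to match each clause of Theorem \ref{local-tool} with the assertions of the present theorem. Clause (1) gives that the localization is left proper and combinatorial; being a Quillen model category it is in particular closed. Clause (3) identifies its cofibrations with those of the strict structure, that is with the Q-cofibrations, which is item (1). Clause (5) identifies its weak equivalences with the $(I \times \E_\infty\S)$-local equivalences; these are by definition the $\E_\infty$-equivalences, giving item (2). Finally, clause (4) identifies the fibrant objects with the fibrant local objects; by Definition \ref{CCMC} a fibrant local object is exactly a coherently commutative monoidal category, and by Proposition \ref{char-CCMC} this is equivalent to satisfying the Segal condition. This establishes the fibrancy statement.

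The only genuinely delicate point is the reconciliation of the two descriptions of locality entering the last step. Barwick's theorem produces objects local with respect to $I \times \E_\infty\S$ as measured by the homotopy function complexes $\Map^h_{\M}$, whereas coherently commutative monoidal categories were defined via the family of simplicial maps $\HMapC{\Delta[n] \times h_k^l}{X}{\gCAT}$ for all $n \ge 0$. The insertion of the simplices $\Delta[n]$, made sense of through the $\sSets$-enrichment of Remark \ref{simp-enrichment-str-mdl-cat}, is exactly what guarantees that testing each $h_k^l$ against all $\Delta[n]$ detects the same local objects as testing against the full homotopy function complex. Once this identification is in hand, the theorem is a direct transcription of Theorem \ref{local-tool}.
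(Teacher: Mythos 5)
Your proposal is correct and follows exactly the route the paper takes: the paper's own proof consists of the single observation that the strict model structure on $\gCAT$ is combinatorial (Theorem \ref{str-mdl-cat-gCat}), so Theorem \ref{local-tool} applies, with the clauses of that theorem read off just as you do. Your write-up is in fact more careful than the paper's one-line argument --- in particular your verification that $I \times \E_\infty\S$ is a small set of maps between cofibrant objects, and your reconciliation of the two descriptions of locality via the $\sSets$-enrichment of Remark \ref{simp-enrichment-str-mdl-cat}, are details the paper leaves implicit.
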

 \begin{proof}
 The strict model category of $\gCats$ is a combinatorial
 model category therefore the existence of the model structure
 follows from theorem \ref{local-tool} stated above. \end{proof}
 \begin{nota}
 The model category constructed in theorem \ref{loc-semi-add} will
 be called the model category of $\EinCs$.
 \end{nota}
  \begin{sloppypar}
 The rest of this section is devoted to proving that the model
 category of $\EinCs$  is a symmetric monoidal closed model category.
 In order to do so we will need some general results which we
 state and prove now.
 \end{sloppypar}
 
\begin{prop}
 \label{criterion-acy-cof}
 A cofibration, $f:A \to B$, between cofibrant objects in a model category $\C$ is
 a weak equivalence in $\C$ if and only if it has the right
 lifting property with respect to all fibrations between fibrant
 objects in $\C$.
 \end{prop}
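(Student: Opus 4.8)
The plan is to treat the two implications separately: the forward one is immediate from the axioms, and the reverse one is a factorization-and-retract argument. For the easy direction, suppose $f$ is a weak equivalence. Since it is already assumed to be a cofibration, it is an acyclic cofibration, and one of the defining axioms of a model category guarantees that acyclic cofibrations admit a lift in every square against a fibration — in particular against every fibration between fibrant objects. So no work is needed there.

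For the converse, suppose $f:A \to B$ is a cofibration admitting a diagonal lift in every commutative square whose left edge is $f$ and whose right edge is a fibration between fibrant objects; the goal is to deduce that $f$ is a weak equivalence. First I would reduce to the case where $B$ is itself fibrant. Choose a fibrant replacement $s:B \to B'$, i.e. an acyclic cofibration with $B'$ fibrant, by factoring $B \to \ast$. Then $s \circ f$ is again a cofibration between cofibrant objects ($B'$ remains cofibrant because $\emptyset \to B \to B'$ is a composite of cofibrations), and by two-out-of-three $f$ is a weak equivalence iff $s\circ f$ is. The point is that $s \circ f$ inherits the lifting hypothesis: given a square with $s\circ f$ on the left and a fibration $p$ between fibrant objects on the right, I first lift the square obtained by precomposing the bottom edge with $s$, using the hypothesis on $f$, to produce a map $\ell:B \to X$; then I extend $\ell$ over the acyclic cofibration $s$ against the fibration $p$, now using only the ordinary axiom that acyclic cofibrations lift against fibrations. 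The resulting map $B' \to X$ is the required diagonal. Thus we may assume $B$ is fibrant.

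With $B$ fibrant, I would factor $f$ as $A \xrightarrow{j} Z \xrightarrow{p} B$ with $j$ an acyclic cofibration and $p$ a fibration. Since $B$ is fibrant and fibrations are closed under composition, $Z$ is fibrant, so $p$ is a fibration between fibrant objects. Applying the lifting hypothesis to the square with top edge $j$, bottom edge $\mathrm{id}_B$, left edge $f$ and right edge $p$ produces $\ell:B \to Z$ with $\ell f = j$ and $p\ell = \mathrm{id}_B$. Together with the factorization identity $pj = f$, these two equations exhibit $f$ as a retract of $j$ in the arrow category of $\C$: the relevant horizontal composites are $\mathrm{id}_A$ on top and $p\ell = \mathrm{id}_B$ on the bottom. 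Since $j$ is a weak equivalence and weak equivalences are closed under retracts, $f$ is a weak equivalence, which finishes the argument.

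The hard part will be the reduction to the case of fibrant $B$, and specifically the verification that $s\circ f$ retains the lifting property via the two-stage lifting described above; everything after that is the standard ``factor, lift a section, recognize a retract'' pattern and uses only that fibrations compose, that acyclic cofibrations lift against fibrations, and that weak equivalences satisfy two-out-of-three and are closed under retracts.
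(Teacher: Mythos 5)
Your proof is correct, and it takes a genuinely different route from the paper's. The paper handles only the nontrivial direction, and in one shot: it factors the composite $B \to \ast$ to get a fibrant replacement $\eta_B : B \to R(B)$, factors $\eta_B \circ f$ as an acyclic cofibration $\eta_A : A \to R(A)$ followed by a fibration $R(f): R(A) \to R(B)$ (so $R(A)$ is fibrant and $R(f)$ is a fibration between fibrant objects), lifts once in the square with $f$ on the left and $R(f)$ on the right to get $L : B \to R(A)$, and then concludes from $Lf = \eta_A$ and $R(f)L = \eta_B$ by invoking the \emph{two-out-of-six} property of weak equivalences. Note that the paper's diagram cannot be closed with two-out-of-three alone, since $R(f)L = \eta_B$ is not an identity; two-out-of-six (valid in any model category because weak equivalences are exactly the maps inverted in the homotopy category, but not one of the basic axioms) is genuinely needed there. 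You instead first reduce to fibrant codomain via the two-stage lifting argument showing $s \circ f$ inherits the lifting hypothesis — this step is sound: the first lift uses the hypothesis on $f$ against $p$, the second uses only that the acyclic cofibration $s$ lifts against the fibration $p$ — and then run the standard ``factor $f = p \circ j$, lift a section $\ell$ with $\ell f = j$ and $p\ell = \mathrm{id}_B$, exhibit $f$ as a retract of $j$'' pattern, concluding by retract-closure of weak equivalences. What your version buys is elementarity: it uses only the lifting and factorization axioms, two-out-of-three, and the retract axiom, avoiding two-out-of-six entirely; what it costs is the extra reduction step, which the paper's single factorization of $\eta_B \circ f$ makes unnecessary. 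Two small remarks: like the paper's own proof, you correctly read the statement's ``right lifting property'' as the intended \emph{left} lifting property of the cofibration $f$ (the statement has a typo); and you also supply the easy converse direction, which the paper omits. Your observation that cofibrancy of $B'$ is preserved is careful but, as in the paper's proof, the cofibrancy hypothesis is never actually used.
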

 \begin{proof}
 The unique terminal map $B \to \ast$ can be factored into
 an acyclic cofibration $\eta_B:B \to R(B)$ followed by a fibration
 $R(B) \to \ast$. The composite map $\eta_B \circ f$ can again be
 factored as an acyclic cofibration followed by a fibration $R(f)$ as shown
 in the following diagram:
 \begin{equation*}
  \label{fact-comp-cof}
 \xymatrix{
 A \ar[d]_f \ar[r]^{\eta_A} & R(A) \ar[d]^{R(f)} \\
 B \ar@{.>}[ru] \ar[r]_{\eta_B} &R(B)
 }
 \end{equation*}
 Since $B$ is fibrant and $R(f)$ is a fibration, therefore $R(A)$
 is a fibrant object in $\C$. Thus $R(f)$ is a fibration between
 fibrant objects in $\C$ and now by assumption, the 
 dotted arrow exists which makes the whole diagram commutative.
 Since both $\eta_A$ and $\eta_B$ are acyclic cofibrations, therefore
 the two out of six property of model categories implies that
 the map $F$ is a weak-equivalence in the model category $\C$.
 
 \end{proof}

 \begin{prop}
 \label{expn-ho-prod}
 \begin{sloppypar}
 Let $X$ be a $\EinC$, then for each $n \in Ob(\N)$,
 the $\gCat$ $X(n^+ \wedge -)$ is also a $\EinC$.
 \end{sloppypar}
\end{prop}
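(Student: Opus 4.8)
The plan is to prove this by checking the Segal condition directly, invoking the characterization of coherently commutative monoidal categories in Proposition \ref{char-CCMC}. By \eqref{defn-X-n-wedge} the $\gCat$ $X(n^+ \wedge -)$ is the composite $\gop \overset{n^+ \wedge -}\to \gop \overset{X}\to \Cat$, so that $X(n^+ \wedge -)(m^+) = X(n^+ \wedge m^+)$ for every $m^+$. It therefore suffices to show that $X(n^+ \wedge -)$ satisfies the Segal condition of Proposition \ref{char-CCMC} whenever $X$ does.

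First I would record the elementary identification of based sets $n^+ \wedge m^+ \cong (nm)^+$: the non-basepoint elements of $n^+ \wedge m^+$ are the pairs $(a, i)$ with $1 \le a \le n$ and $1 \le i \le m$, which I order lexicographically via $(a, i) \mapsto (i-1)n + a$ to recover the skeletal object $(nm)^+$ of $\gop$. Under this bookkeeping I would compute the effect of smashing the two projections $\partition{k+l}{k}$ and $\partition{k+l}{l}$ with $n^+$. A direct check on non-basepoint elements shows that $n^+ \wedge \partition{k+l}{k}$ is exactly the projection $\partition{nk+nl}{nk}$ and that $n^+ \wedge \partition{k+l}{l}$ is exactly $\partition{nk+nl}{nl}$ (for a different choice of ordering they would agree only up to an isomorphism of $\gop$ permuting labels, which is harmless since $X$ sends isomorphisms to isomorphisms of categories).

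Consequently, under the induced isomorphisms $X(n^+ \wedge (k+l)^+) \cong X((nk+nl)^+)$, $X(n^+ \wedge k^+) \cong X((nk)^+)$ and $X(n^+ \wedge l^+) \cong X((nl)^+)$, the Segal functor
\[
(X(n^+ \wedge \partition{k+l}{k}),\, X(n^+ \wedge \partition{k+l}{l})) : X(n^+ \wedge (k+l)^+) \to X(n^+ \wedge k^+) \times X(n^+ \wedge l^+)
\]
for $X(n^+ \wedge -)$ is identified with the Segal functor
\[
(X(\partition{nk+nl}{nk}),\, X(\partition{nk+nl}{nl})) : X((nk+nl)^+) \to X((nk)^+) \times X((nl)^+)
\]
for $X$ at the pair $(nk, nl)$. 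Since $X$ is a $\EinC$, Proposition \ref{char-CCMC} guarantees that the second functor is an equivalence of categories; as the first functor is obtained from it by pre- and post-composition with the isomorphisms above, it is an equivalence too. Applying Proposition \ref{char-CCMC} in the converse direction to $X(n^+ \wedge -)$ then shows that it is a $\EinC$.

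The only delicate points are bookkeeping ones: pinning down the identification $n^+ \wedge m^+ \cong (nm)^+$ compatibly enough that the two smashed projections land on the correct complementary blocks of $(nk+nl)^+$, and treating the degenerate case $n = 0$, where $n^+ \wedge -$ is constant at $X(0^+)$. In the latter case the Segal condition for $X$ at the pair $(0,0)$ forces the diagonal $X(0^+) \to X(0^+) \times X(0^+)$ to be an equivalence, whence $X(0^+) \simeq \ast$ and the constant $\gCat$ $X(0^+ \wedge -)$ trivially satisfies the Segal condition. I expect the correspondence of the projection maps under the smash identification to be the main, though still routine, obstacle; once it is established the conclusion is immediate from Proposition \ref{char-CCMC}.
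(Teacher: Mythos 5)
Your proof is correct and is essentially the paper's own argument: both reduce the Segal condition for $X(n^+ \wedge -)$ at a pair $(k,l)$ to the Segal condition for $X$ at the pair $(nk, nl)$, via the identification $n^+ \wedge (k+l)^+ \cong (nk+nl)^+$ under which the smashed projections $n^+ \wedge \partition{k+l}{k}$ and $n^+ \wedge \partition{k+l}{l}$ become the block projections $\partition{nk+nl}{nk}$ and $\partition{nk+nl}{nl}$ (the paper phrases the same identification as $n^+ \wedge (k+l)^+ \cong \overset{n}{\underset{1}\vee}(k+l)^+ \cong (\overset{n}{\underset{1}\vee} k^+) + (\overset{n}{\underset{1}\vee} l^+)$, you via a lexicographic bijection). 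Your explicit bookkeeping of the ordering and your treatment of the degenerate case $n=0$ make precise points the paper leaves implicit, but they do not change the route.
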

\begin{proof}
 We begin by observing that
 $X(n^+ \wedge -)(1^+) = X(n^+)$ and since $X$ is fibrant,
 the pointed category $X(n^+)$ is equivalent to $\overset{n}
 {\underset{1} \prod} X(1^+)$. Notice that the isomorphisms $(n^+ \wedge (k+l)^+) \cong 
 \overset{n} {\underset{1} \vee} (k+l)^+ \cong (\overset{n} {\underset{1}
 \vee} k^+) \vee (\overset{n} {\underset{1} \vee} l^+) \cong ((\overset{n}
 {\underset{1}  \vee} k^+) + (\overset{n} {\underset{1} \vee} l^+))$.
 The two projection maps $\delta^{k+l}_k:(k+l)^+ \to k^+$ and $\delta^{k+l}_l:(k+l)^+ \to l^+$
 induce an equivalence of categories $X((\overset{n} {\underset{1}
 \vee} k^+) + (\overset{n} {\underset{1} \vee} l^+)) \to
 X(\overset{n} {\underset{1} \vee} k^+) \times X(\overset{n} {\underset{1}
 \vee} l^+)$. Composing with the isomorphisms above, we get
 the following equivalence of pointed simplicial sets
 $X(n^+ \wedge -)((k + l)^+) \to X(n^+ \wedge -)(k^+) \times
 X(n^+ \wedge -)(l^+)$.
 \end{proof}
 
 \begin{coro}
  For each coherently commutative monoidal category $X$,
  the mapping object $\MGCat{\gn{n}}{X}$ is also a coherently commutative monoidal category for each $n \in \Nat$.
 \end{coro}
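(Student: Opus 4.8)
The plan is to obtain this corollary directly by combining Proposition~\ref{rt-adjs-DayCP} with Proposition~\ref{expn-ho-prod}. First I would invoke Proposition~\ref{rt-adjs-DayCP}, which supplies an isomorphism of $\gCats$
\[
\phi(X):X(n^+ \wedge -) \cong \MGCat{\gn{n}}{X}
\]
for each $n \in \Nat$. Thus the mapping object $\MGCat{\gn{n}}{X}$ is, up to isomorphism in $\gCAT$, nothing other than the $\gCat$ $X(n^+ \wedge -)$ whose homotopical behaviour has already been analysed in the preceding proposition.

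Next I would apply Proposition~\ref{expn-ho-prod}: since $X$ is assumed to be a coherently commutative monoidal category, that proposition tells us that $X(n^+ \wedge -)$ is again a coherently commutative monoidal category. It then remains only to transport this property across the isomorphism $\phi(X)$. This step is immediate, because by Proposition~\ref{char-CCMC} being a coherently commutative monoidal category is equivalent to satisfying the Segal condition, namely that each comparison functor $(X(\partition{k+l}{k}), X(\partition{k+l}{l}))$ is an equivalence of categories; and this condition is manifestly invariant under isomorphism of $\gCats$, since the levelwise isomorphism $\phi(X)$ identifies the Segal comparison functors on the two sides.

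There is essentially no obstacle here, as all the real work was carried out in Proposition~\ref{expn-ho-prod}, where the decompositions $n^+ \wedge (k+l)^+ \cong (\overset{n}{\underset{1}\vee} k^+) + (\overset{n}{\underset{1}\vee} l^+)$ were used to verify the Segal condition for $X(n^+ \wedge -)$. The only point that the present statement adds is the observation that the class of coherently commutative monoidal categories is closed under isomorphism in $\gCAT$, which is clear from its characterization as a class of local objects via the Segal condition. Combining these remarks, $\MGCat{\gn{n}}{X}$ satisfies the Segal condition and is therefore a coherently commutative monoidal category for every $n \in \Nat$.
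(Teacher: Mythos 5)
Your proposal is correct and follows exactly the paper's route: the paper's own proof is a one-line appeal to Proposition~\ref{rt-adjs-DayCP}, implicitly combined with the immediately preceding Proposition~\ref{expn-ho-prod}, which is precisely what you do. Your only addition is spelling out that the Segal condition of Proposition~\ref{char-CCMC} is invariant under isomorphism of $\gCats$, a point the paper leaves tacit.
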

 \begin{proof}
  The corollary follows from proposition \ref{rt-adjs-DayCP}.
 \end{proof}

 The category $\gop$ is a symmetric monoidal category with respect
 to the smash product of pointed sets. In other words the smash product of
 pointed sets defines a
  bi-functor $- \wedge -: \Gamma^{op} \times \Gamma^{op} \to
 \Gamma^{op}$. For each pair $k^+, l^+ \in Ob(\gop)$, there are two
 natural transformations
 
 \[
 \delta^{k+l}_k \wedge -: (k+l)^+ \wedge - \Rightarrow k^+ \wedge - \ \ \ \ \text{and} \ \ \ \
 \delta^{k+l}_l \wedge -: (k+l)^+ \wedge - \Rightarrow l^+ \wedge -.
 \]
  Horizontal composition of either of these two natural transformations
  with a $\gCat$ $X$
 determines a morphism of $\gCats$
 \[
  id_X \circ (\delta^{k+l}_k \wedge -) =:X(\delta^{k+l}_k \wedge -):X((k+l)^+ \wedge -) \to X(k^+ \wedge -).
 \]

 \begin{prop}
 \label{Hom-prod-fib-gCat}
 Let $X$ be an $\EinC$, then for each pair $(k,l) \in Ob(\N) \times Ob(\N)$,
 the following morphism
 \[
 (X(\delta^{k+l}_k \wedge -),X(\delta^{k+l}_l \wedge -)):X((k+l)^+ \wedge -) \to
 X(k^+ \wedge -) \times X(l^+ \wedge -)
 \]
 is a strict equivalence of $\gCats$.
 \end{prop}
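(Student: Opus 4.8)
The plan is to unwind the definitions so that the statement becomes a direct instance of the Segal condition. Recall that a strict equivalence of $\gCats$ is by definition a degreewise equivalence of categories, and that products of $\gCats$ are formed degreewise. Hence it suffices to fix $m^+ \in \Ob(\gop)$ and to show that the functor obtained by evaluating at $m^+$,
\[
(X(\delta^{k+l}_k \wedge m^+), X(\delta^{k+l}_l \wedge m^+)):X((k+l)^+ \wedge m^+) \to X(k^+ \wedge m^+) \times X(l^+ \wedge m^+),
\]
is an equivalence of categories for every $m$.

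The key computation I would carry out is the one already used in the proof of Proposition \ref{expn-ho-prod}. Since the smash product of pointed sets distributes over the wedge and $(k+l)^+ \cong k^+ \vee l^+$, there are isomorphisms
\[
(k+l)^+ \wedge m^+ \cong (k^+ \wedge m^+) \vee (l^+ \wedge m^+) \cong (km)^+ \vee (lm)^+ \cong (km + lm)^+ .
\]
Under these isomorphisms I expect the two maps $\delta^{k+l}_k \wedge m^+$ and $\delta^{k+l}_l \wedge m^+$ to be carried precisely to the Segal projections $\delta^{km+lm}_{km}$ and $\delta^{km+lm}_{lm}$ of $(km+lm)^+$: indeed $\delta^{k+l}_k \wedge m^+$ sends a non-basepoint pair $(i,j)$ to the basepoint exactly when $i>k$, so it records the $km$ coordinates coming from the first wedge summand, and symmetrically for the $l$ side.

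Granting this identification, the evaluated functor becomes isomorphic, as an object of the arrow category of $\Cat$, to the Segal map $X((km+lm)^+) \to X((km)^+) \times X((lm)^+)$. Because $X$ is a coherently commutative monoidal category, Proposition \ref{char-CCMC} tells us that $X$ satisfies the Segal condition, so this map is an equivalence of categories; hence the evaluated functor is an equivalence for every $m$, and the map of $\gCats$ in the statement is a strict equivalence. The hard part will be the combinatorial bookkeeping of the second step: one must check not merely that the pointed sets above are isomorphic, but that the chosen isomorphism intertwines $\delta^{k+l}_k \wedge m^+$ with $\delta^{km+lm}_{km}$ (and the $l$-labelled map likewise), so that the resulting comparison square in $\Cat$ commutes on the nose for each $m$. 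Once that square is in hand, the conclusion is an immediate appeal to the Segal characterization of Proposition \ref{char-CCMC}.
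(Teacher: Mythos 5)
Your proof is correct and is essentially the argument the paper intends: the paper states this proposition without an explicit proof, but its proof of the neighbouring Proposition \ref{expn-ho-prod} performs exactly your computation, namely using the distributivity of the smash product over the wedge to identify $(k+l)^+ \wedge m^+$ with $(km+lm)^+$ so that the two smashed projections become the Segal projections, and then invoking the Segal condition of Proposition \ref{char-CCMC} degreewise. The bookkeeping you flag is harmless, since a strict equivalence only requires a degreewise equivalence of categories (no naturality in $m^+$ is needed), and any discrepancy between your chosen ordering and the standard one is absorbed by conjugating with $X$ applied to a permutation isomorphism in $\gop$.
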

 
 Using the previous two propositions, we now show
 that the mapping space functor $\MGCat{-}{-}$ provides
 the homotopically correct function object when the
 domain is cofibrant and codomain is fibrant.
 
 \begin{lem}
  \label{Ein-map-obj-Cat}
 Let $W$ be a Q-cofibrant $\gCat$ and let $X$ be a coherently commutative monoidal category. Then the mapping object $\MGCat{W}{X}$ is also a $\EinC$.
 \end{lem}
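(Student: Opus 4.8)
The plan is to show that $\MGCat{W}{X}$ satisfies the Segal condition and then to invoke Proposition~\ref{char-CCMC}, which identifies coherently commutative monoidal categories with exactly the $\gCats$ satisfying it. First I would rewrite the mapping object in a form to which Proposition~\ref{Hom-prod-fib-gCat} applies. Unwinding $\MGCat{W}{X}(n^+) = \MapC{W \ast \gn{n}}{X}{\gCAT}$, the Day convolution adjunction (Proposition~\ref{closed-SM-cat-GCat}) gives $\MapC{W \ast \gn{n}}{X}{\gCAT} \cong \MapC{W}{\MGCat{\gn{n}}{X}}{\gCAT}$, and the natural isomorphism $\MGCat{\gn{n}}{-} \cong -(n^+ \wedge -)$ of Proposition~\ref{rt-adjs-DayCP} yields a natural isomorphism $\MGCat{W}{X}(n^+) \cong \MapC{W}{X(n^+ \wedge -)}{\gCAT}$, natural in $n^+ \in \gop$.

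Next I would check that, under this identification, the Segal map of $\MGCat{W}{X}$ is the functor obtained by applying $\MapC{W}{-}{\gCAT}$ to the morphism of $\gCats$
\[
\left(X(\partition{k+l}{k} \wedge -), X(\partition{k+l}{l} \wedge -)\right):X((k+l)^+ \wedge -) \to X(k^+ \wedge -) \times X(l^+ \wedge -).
\]
The point is that the projection $\partition{k+l}{k}:(k+l)^+ \to k^+$ induces the map $\gn{k} \to \gn{k+l}$ of representables, and transporting this along the two adjunctions above reproduces exactly the structure map $X(\partition{k+l}{k} \wedge -)$, and similarly for the other projection. Since $X$ is a $\EinC$, Proposition~\ref{Hom-prod-fib-gCat} tells us that the displayed morphism is a strict equivalence of $\gCats$.

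It remains to argue that $\MapC{W}{-}{\gCAT}$ sends this strict equivalence to an equivalence of categories and commutes with the product on the target. Because $W$ is Q-cofibrant, Theorem~\ref{enrich-GamCAT-CAT} shows that $\TensP{W}{-}{}$ is a left Quillen functor for the strict model structure, so its right adjoint $\MapC{W}{-}{\gCAT}:\gCAT \to \Cat$ is right Quillen. Every $\gCat$ is strict-fibrant, since any functor into the terminal category is an isofibration, so by Ken Brown's lemma $\MapC{W}{-}{\gCAT}$ preserves all strict equivalences. Being a right adjoint it also preserves products, whence $\MapC{W}{X(k^+ \wedge -) \times X(l^+ \wedge -)}{\gCAT} \cong \MGCat{W}{X}(k^+) \times \MGCat{W}{X}(l^+)$. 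Hence the Segal map of $\MGCat{W}{X}$ is an equivalence of categories for all $k, l$, and Proposition~\ref{char-CCMC} gives that $\MGCat{W}{X}$ is a $\EinC$. The main obstacle is the bookkeeping in the second step: one must verify that the chain of natural isomorphisms is genuinely compatible with the $\gop$-functoriality of $\MGCat{W}{X}$, so that its Segal map really is the image of the morphism of Proposition~\ref{Hom-prod-fib-gCat}; once this identification is in hand, everything else follows formally from the preservation properties of a right Quillen functor acting between strict-fibrant objects.
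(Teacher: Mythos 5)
Your proposal is correct and follows essentially the same route as the paper's proof: identify $\MGCat{W}{X}(n^+) \cong \MapC{W}{X(n^+ \wedge -)}{\gCAT}$ via Proposition \ref{rt-adjs-DayCP}, apply Proposition \ref{Hom-prod-fib-gCat} to get a strict equivalence, and use Q-cofibrancy of $W$ together with the enriched/monoidal Quillen structure to push it through $\MapC{W}{-}{\gCAT}$. The only cosmetic difference is that you justify the last step via Theorem \ref{enrich-GamCAT-CAT}, Ken Brown's lemma, and the observation that all objects are strict-fibrant, where the paper cites Theorem \ref{SM-closed-mdl-str-GCat} directly; your explicit bookkeeping identifying the Segal map of $\MGCat{W}{X}$ with the image of the map in Proposition \ref{Hom-prod-fib-gCat} is left implicit in the paper.
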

 \begin{proof}
 We begin by recalling that
 \[
 \MGCat{W}{X}((k+l)^+) = \MapC{W}{X((k+l)^+ \wedge -)}{\gCAT}.
 \]
 Since $X$ is a $\EinC$, therefore
 $X((k+l)^+ \wedge -)$ is also a $\EinC$, for all $k, l \ge 0$ according to proposition \ref{expn-ho-prod}. The proposition \ref{Hom-prod-fib-gCat}
 tells us that the map $(X(\delta^{k+l}_k \wedge -), X(\delta^{k+l}_l \wedge -))$ is a strict equivalence of $\gCats$. Now Theorem \ref{SM-closed-mdl-str-GCat}
 implies that the following induced functor on the mapping (pointed) categories
 \begin{multline*}
 (\MapC{W}{X(\delta^{k+l}_k \wedge -)}{\gCAT}, \MapC{W}{X(\delta^{k+l}_l \wedge -)}{\gCAT}):\MapC{W}{X((k+l)^+ \wedge -)}{\gCAT} \\
 \to \MapC{W}{X((k)^+ \wedge -)}{\gCAT} \times \MapC{W}{X((l)^+ \wedge -)}{\gCAT}
 \end{multline*}
 is an equivalence of categories.
 \end{proof}

Finally we get to the main result of this section. All the lemmas proved above will be useful in proving the following theorem:
\begin{thm}
\label{SM-closed-CCMC}
The model category of coherently commutative monoidal categories is a symmetric monoidal closed model category under the Day convolution product.
\end{thm}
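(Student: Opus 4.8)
The plan is to show that the Day convolution product makes the localized model category $L := L_{\E_\infty\S}\gCAT$ of $\EinCs$ into a symmetric monoidal model category; its closedness as a monoidal category is already recorded in Proposition \ref{closed-SM-cat-GCat}, so the real task is the compatibility of $\ast$ with the model structure. First I would dispose of the cheap parts. Since $L$ is a left Bousfield localization of the strict model category (Theorem \ref{loc-semi-add}, Theorem \ref{local-tool}), its cofibrations coincide with the Q-cofibrations. The monoidal unit $\gn{1}$ is representable, hence Q-cofibrant, so the unit axiom holds on the nose (one may take the identity as a cofibrant replacement of the unit). Everything therefore reduces to the pushout-product axiom for $L$: by Theorem \ref{SM-closed-mdl-str-GCat} the pushout-product of two Q-cofibrations is already a Q-cofibration, so what remains is to see that $f \Box g$ is moreover an $\E_\infty\S$-local equivalence whenever $f$ is a Q-cofibration and $g$ is a local acyclic cofibration.

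The crux is the following intermediate statement, which I would isolate as a lemma: \emph{Day convolution with a Q-cofibrant object sends local equivalences between cofibrant objects to local equivalences.} This is exactly where Lemma \ref{Ein-map-obj-Cat} enters. Given a local equivalence $f$ between cofibrant objects and a Q-cofibrant $W$, I would test $f \ast \mathrm{id}_W$ against an arbitrary coherently commutative monoidal category $X$: using the Day adjunction (Proposition \ref{closed-SM-cat-GCat}) the induced map on homotopy function complexes $\HMapC{V \ast W}{X}{\gCAT} \to \HMapC{U \ast W}{X}{\gCAT}$ is identified with the map induced by $f$ into $\MGCat{W}{X}$, and Lemma \ref{Ein-map-obj-Cat} guarantees that $\MGCat{W}{X}$ is again coherently commutative monoidal, i.e. $L$-fibrant. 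Since $f$ is a local equivalence, testing it against this $L$-fibrant object yields a homotopy equivalence, so $f \ast \mathrm{id}_W$ is a local equivalence as claimed.

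With this in hand I would finish the acyclic half of the pushout-product axiom by the standard pushout-and-two-out-of-three argument. For $f : U \to V$ a Q-cofibration and $g : W \to X$ a local acyclic cofibration between cofibrant objects, both $U \ast g$ and $V \ast g$ are local acyclic cofibrations by the lemma above; writing $P := (V \ast W) \underset{U \ast W}\coprod (U \ast X)$ for the domain of $f \Box g$, the map $V \ast W \to P$ is a pushout of $U \ast g$, hence again a local acyclic cofibration, while $V \ast g$ factors as $(f \Box g) \circ (V \ast W \to P)$. Two-out-of-three then forces $f \Box g$ to be a local equivalence, and since it is already a Q-cofibration it is a local acyclic cofibration. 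The reduction to cofibrant domains and codomains is handled by passing to the generating cofibrations $\I$ and to a set of generating local acyclic cofibrations, together with Proposition \ref{criterion-acy-cof}, which detects local acyclicity of a cofibration between cofibrant objects by lifting against fibrations between $L$-fibrant objects. Finally, Lemma \ref{Q-bifunctor-char} repackages the verified pushout-product axiom as the Quillen bifunctor property of $\ast$, completing the proof.

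The step I expect to be the main obstacle is the intermediate lemma that Day convolution with a cofibrant object preserves local equivalences: it is the only place where the local (as opposed to strict) structure genuinely interacts with the tensor product, and it rests entirely on keeping the internal hom inside the fibrant objects, which is precisely the force of Lemma \ref{Ein-map-obj-Cat}. Once that is secured, the remaining manipulations are formal.
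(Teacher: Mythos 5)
Your proposal is correct in substance but takes a genuinely different route from the paper's own proof. The paper argues by lifting: fixing an acyclic Q-cofibration $j$, it invokes Proposition \ref{criterion-acy-cof} to reduce acyclicity of $i \Box j$ to the left lifting property against strict fibrations $p:W \to X$ between coherently commutative monoidal categories, transposes that lifting problem through the two-variable adjunction to a lifting of $j$ against the induced map $\MGCat{V}{W} \to \MGCat{V}{X} \underset{\MGCat{U}{X}}\times \MGCat{U}{W}$, and uses Lemma \ref{Ein-map-obj-Cat} (with $U,V$ cofibrant) to see this map is a strict fibration between local objects, hence a local fibration against which $j$ lifts; it then saturates only in the variable $i$ (generating cofibrations, closure under pushouts, transfinite composition, retracts) and appeals to symmetry in the second variable. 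You instead isolate a reusable intermediate lemma — $- \ast W$ preserves local equivalences between cofibrant objects for $W$ Q-cofibrant — proved by transposing homotopy function complexes through the strict Quillen adjunction $(- \ast W, \MGCat{W}{-})$ and using Lemma \ref{Ein-map-obj-Cat} to keep $\MGCat{W}{X}$ among the local objects, and then deduce the acyclic half of the pushout-product axiom from the factorization $V \ast g = (f \Box g) \circ (V \ast W \to P)$ together with pushout-stability and two-out-of-three. Both arguments pivot on exactly Lemma \ref{Ein-map-obj-Cat}; yours is the standard ``monoidal left Bousfield localization'' criterion made explicit, and it buys modularity (the statement that derived Day convolution descends to the localization), while the paper's lifting computation buys something else: it handles an \emph{arbitrary} acyclic Q-cofibration $j$ directly, with no cofibrancy hypothesis on its ends. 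That is precisely where your reduction step needs more care than you give it: you require a set of generating local acyclic cofibrations with cofibrant domains and codomains, which is not automatic for a combinatorial localization. It does hold here, because the strict generating cofibrations \eqref{gen-cof} have cofibrant domains, so the model category is tractable in the sense of \cite{CB1}, a property preserved by left Bousfield localization since the cofibrations are unchanged — but you should say this explicitly, since your appeal to Proposition \ref{criterion-acy-cof} does not actually accomplish the reduction (that proposition detects acyclicity of a cofibration by lifting; it says nothing about replacing an acyclic cofibration by one with cofibrant ends). With that one citation supplied, your argument is complete.
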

\begin{proof}
Let $i:U \to V$ be a Q-cofibration and $j:Y \to Z$ be another Q-cofibration. We will prove the theorem by showing that the following \emph{pushout product} morphism
\begin{equation*}
i \Box j:U \ast Z \underset{U \ast Y} \coprod V \ast Y \to V \ast Z 
\end{equation*}

is a Q-cofibration which is also an equivalence of coherently commutative monoidal categories whenever either $i$ or $j$ is an equivalence of coherently commutative monoidal categories.
We first deal with the case of $i$ being a generating Q-cofibraion. The closed symmetric monoidal model structure on the strict Q-model category, see theorem \ref{SM-closed-mdl-str-GCat}, implies that $i \Box j$ is a Q-cofibration. Let us assume that $j$ is an acyclic Q-cofibration \emph{i.e.} the Q-cofibration $j$ is also an equivalence of coherently commutative monoidal categories. According to proposition \ref{criterion-acy-cof} the Q-cofibration $i \Box j$ is an equivalence of coherently commutative monoidal categories if and only if it has the left lifting property with respect to all strict fibrations of $\gCats$ between coherently commutative monoidal categories. Let $p:W \to X$ be a strict fibration between two coherently commutative monoidal categories. A (dotted) lifting arrow would exists in the following diagram
\begin{equation*}
\xymatrix{
U \ast Z \underset{U \ast Y} \coprod V \ast Y \ar[r] \ar[d] & W \ar[d]^p \\
V \ast Z \ar@{..>}[ru] \ar[r] & Y
}
\end{equation*}
if and only if a (dotted) lifting arrow exists in the following adjoint commutative diagram
\begin{equation*}
\xymatrix{
C \ar[r] \ar[d]_j & \MGCat{V}{W} \ar[d]^{(j^*, p^*)} \\
D \ar@{..>}[ru] \ar[r] & \MGCat{U}{X} \underset{\MGCat{U}{Y}} \times \MGCat{V}{Y}
}
\end{equation*}
The map $(j^*, p^*)$ is a strict fibration of $\gCats$ by lemma \ref{Q-bifunctor-char} and theorem \ref{SM-closed-mdl-str-GCat}. Further the observation that both $V$ and $U$ are Q-cofibrant and the above lemma \ref{Ein-map-obj-Cat} together imply that $(j^*, p^*)$ is a strict fibration between coherently commutative monoidal categories and therefore a fibration in the model category of coherently commutative monoidal categories. Since $j$ is an acyclic cofibration by assumption therefore the (dotted) lifting arrow exists in the above diagram. Thus we have shown that if $i$ is a Q-cofibration and $j$ is a Q-cofibration which is also a weak equivalence in the model category of coherently commutative monoidal categories then $i \Box j$ is an acyclic cofibration in the model category of coherently commutative monoidal categories.
Now we deal with the general case of $i$ being an arbitrary Q-cofibration. Consider the following set:
\begin{equation*}
S = \lbrace i:U \to V | \ i \Box j \textit{ \ is an acyclic cofibration in }\rbrace
\end{equation*}
where $\gCAT$ is endowed with the model structure of coherently commutative monoidal categories. We have proved above that the set $S$ contains all generating Q-cofibrations.
We observe that the set $S$ is closed under pushouts, transfinite compositions and retracts. Thus $S$ contains all Q-cofibrations.
Thus we have proved that $i \Box j$ is a cofibration which is acyclic if $j$ is acyclic. The same argument as above when applied to the second argument of the Box product (\emph{i.e.} in the variable j)
shows that $i \Box j$ is an acyclic cofibration whenever $i$ is an acyclic cofibration in the model category of coherently commutative monoids.

\end{proof}

Finally we will provide a characterization of cofibrations in the model category of coherently commutative monoidal categories.
\begin{nota}
	For each morphism $F:X \to Y$ in $\gCAT$ we get a collection of object functions  $\lbrace Ob(F(k^+)):Ob(X(k^+) )\to Ob(Y(k^+)): k^+ \in Ob(\gop) \rbrace$. These functions glue together into a $\Gamma$-set, which we denote by $Ob(F)$, whose structure maps are just the object functions of the structure maps (functors) of $F$, \emph{i.e.}
	\[
	Ob(F)(f) := Ob(F(f)),
	\]
	for each $f \in Mor(\gop)$.
	\end{nota}

\begin{lem}
	\label{char-cof}
	A map $i:A \to B$ in $\gCAT$ is a cofibration in the model category of coherently commutative monoidal categories if and only if the $\Gamma$-set $Ob(i)$ has the left lifting property with respect to every surjective morphism of $\Gamma$-sets.
	\end{lem}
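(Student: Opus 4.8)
The plan is to reduce the statement to an ordinary lifting criterion in the strict model structure and then transport it across the object functor. First I would recall that the model category of coherently commutative monoidal categories is the left Bousfield localization of the strict model category of $\gCats$ (Theorem \ref{loc-semi-add}), so by part (1) of that theorem its cofibrations are exactly the Q-cofibrations. Hence $i:A \to B$ is a cofibration if and only if it has the left lifting property with respect to every strict acyclic fibration of $\gCats$. By Lemma \ref{acyc-fib-surj-eq}, a strict acyclic fibration is precisely a morphism $p:W \to X$ which in each degree $n^+$ is an equivalence of categories that is surjective on objects; in particular $Ob(p)$ is then a (degreewise) surjective morphism of $\Gamma$-sets and each $p(n^+)$ is fully faithful.

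Next I would introduce the functor $E:[\gop, \Sets] \to \gCAT$ sending a $\Gamma$-set $M$ to the $\gCat$ $n^+ \mapsto E(M(n^+))$, where $E(S)$ is the indiscrete category on the set $S$ (a unique morphism between every ordered pair of objects). Computing degreewise the classical adjunction between the object functor and the indiscrete-category functor yields an adjunction $Ob:\gCAT \rightleftharpoons [\gop,\Sets]:E$ with $Ob$ the left adjoint. The key observation is that for any surjective morphism of $\Gamma$-sets $f:M \to N$ the morphism $E(f):EM \to EN$ is a strict acyclic fibration: in each degree it is surjective on objects, and it is an equivalence of categories because all hom-sets of an indiscrete category are singletons. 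By the standard transpose of lifting problems along the adjunction $Ob \dashv E$, the map $i$ has the left lifting property with respect to $E(f)$ if and only if $Ob(i)$ has the left lifting property with respect to $f$.

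The forward implication is then immediate: if $i$ is a cofibration it lifts against every strict acyclic fibration, in particular against each $E(f)$, so $Ob(i)$ lifts against every surjective morphism of $\Gamma$-sets. For the converse I would show that the hypothesis on $Ob(i)$ forces $i$ to lift against an arbitrary strict acyclic fibration $p:W \to X$. Given a commutative square with $i$ on the left and $p$ on the right, I would first pass to object $\Gamma$-sets: since $Ob(p)$ is a surjective morphism of $\Gamma$-sets, the assumption produces a morphism of $\Gamma$-sets $Ob(L):Ob(B) \to Ob(W)$ making the object square commute. I would then promote this to a morphism of $\gCats$ $L:B \to W$ degree by degree, using that each $p(n^+)$ is fully faithful: every morphism $g$ of $B(n^+)$ admits a unique lift $L(g)$ in $W(n^+)$ whose source and target are prescribed by $Ob(L)$ and whose image under $p(n^+)$ is the image of $g$ in $X(n^+)$. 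Functoriality of each $L(n^+)$ and compatibility of $L$ with the square follow from this uniqueness.

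Finally I would verify that the family $\{L(n^+)\}$ is natural in $n^+$: for a structure map $\theta:n^+\to m^+$ of $\gop$ the two functors $L(m^+)\circ B(\theta)$ and $W(\theta)\circ L(n^+)$ agree on objects, because $Ob(L)$ is a morphism of $\Gamma$-sets, and they have equal image under the faithful functor $p(m^+)$, because both become $X(\theta)$ composed with the image of $L(n^+)$ after using naturality of the given square; hence they coincide. This produces the required lift $L$ and shows $i$ is a Q-cofibration, completing the converse. The main obstacle is exactly this last step---upgrading the purely object-level lift into an honest natural transformation of $\gCats$ and checking compatibility with all structure maps of $\gop$---which is the $\gCat$-valued analogue of the extension argument already used to characterize cofibrations in $\PCat$ earlier in the paper.
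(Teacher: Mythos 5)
Your proposal is correct, and for the substantive direction (left lifting property of $Ob(i)$ implies $i$ is a cofibration) it follows the paper's proof of Lemma \ref{char-cof} essentially verbatim: pass to the object-level square, solve it by hypothesis, define $L(k^+)$ on hom-sets via the bijections $\inv{p(k^+)_{v,w}} \circ G(k^+)_{a,b}$ supplied by the degreewise acyclic fibration $p(k^+)$, and verify that the resulting functors assemble into a morphism of $\gCats$ by a naturality check that exploits faithfulness of $p(m^+)$ — exactly the diagram chase in the paper. Where you genuinely add something is the forward implication: the paper's written proof of Lemma \ref{char-cof} only establishes the "if" direction and never argues that a cofibration has the stated object-level lifting property. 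You close this gap by introducing the degreewise indiscrete-category functor $E$, observing the adjunction $Ob \dashv E$ between $\gCAT$ and $\Gamma$-sets, checking via Lemma \ref{acyc-fib-surj-eq} that $E(f)$ is a strict acyclic fibration whenever $f$ is surjective, and transposing lifting problems across the adjunction. This is the $\Gamma$-set analogue of the $EC$/$E(f)$ device the paper itself deploys in the $\PCat$ setting (Lemma \ref{cof-cat-perm} and the characterization of cofibrations there), so your argument is very much in the spirit of the paper, but packaged more cleanly through the adjunction and, unlike the paper's proof, actually complete as a proof of the biconditional. Your opening reduction is also sound: by Theorem \ref{loc-semi-add} the cofibrations of the localized structure are the Q-cofibrations, and since left Bousfield localization leaves cofibrations unchanged it leaves acyclic fibrations unchanged as well, which justifies testing against strict acyclic fibrations throughout.
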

\begin{proof}
	Let $p:X \to Y$ be an acyclic fibration in the model category of coherently commutative monoidal categories. For any (outer) commutative diagram in $\gCAT$ 
	\begin{equation*}
	\xymatrix{
		A \ar[r]^H \ar[d]_i & X \ar[d]^p \\
		B \ar[r]_G \ar@{-->}[ru]^{L} & Y
	}
\end{equation*}
     we will construct a dotted arrow $L$ which will make the whole diagram commutative.
	The morphism $Ob(p)$ is a surjective map of $\Gamma$-sets. By assumption the (outer) commutative diagram
	\begin{equation*}
	\xymatrix{
	Ob(A) \ar[r]^{Ob(H)} \ar[d]_{Ob(i)} & Ob(X) \ar[d]^{Ob(p)} \\
	Ob(B) \ar[r]_{Ob(G)} \ar@{-->}[ru]^{Ob(L)} & Ob(Y)
   }
	\end{equation*}
	has a (dotted) lifting arrow (of $\Gamma$-sets) $Ob(L)$ which makes the whole diagram commutative. For each $k^+ \in Ob(\gop)$ and each pair of objects $a, b \in Ob(B(k^+))$ we want to define a function
	\[
	L(k^+)_{a, b}:Mor_{B(k^+)}(a, b) \to Mor_{X(k^+)}(Ob(L)(k^+)(a), Ob(L)(k^+)(b)).
	\]
	By assumption, the map $p$ is an acyclic fibration therefore for each $k^+ \in Ob(\gop)$, the functor $p(k^+)$ is an acyclic fibration in the natural model category structure on $\Cat$. This implies that the function
	\[
	p(k^+)_{v, w}:Mor_{X(k^+)}(Ob(L)(k^+)(a), Ob(L)(k^+)(b)) \to Mor_{Y(k^+)}(G(k^+)(a), G(k^+)(b))
	\]
	is a bijection.
	Now we define the function $L(k^+)_{a, b}$ to be the following composite
	\begin{multline*}
	Mor_{B(k^+)}(a, b) \overset{G(k^+)_{a, b}} \to Mor_{Y(k^+)}(G(k^+)(a), G(k^+)(b)) \\
	\overset{\inv{p(k^+)_{v, w}}} \to Mor_{X(k^+)}(Ob(L)(k^+)(a), Ob(L)(k^+)(b)),
	\end{multline*}
   where $(v, w) = (Ob(L)(k^+)(a), Ob(L)(k^+)(b))$
  An argument similar to the one in the proof of Lemma \ref{char-str-sym-mon-cof} shows that the above collection of maps $\lbrace L(k^+)_{a, b} : a, b \in Ob(B(k^+)) \rbrace$
  defines a functor $L(k^+)$ whose object function is the same as $Ob(L)(k^+)$. Now we want to check whether this collection of functors $\lbrace L(k^+): k^+ \in Ob(\gop) \rbrace$ glues together into a morphism of $\gCats$. It would be sufficient to show that for any $f:k^+ \to m^+ \in Mor(\gop)$, the following diagram commutes:
  \begin{equation}
 \label{mor-of-gCat}
  \xymatrix@C=24mm{
  Mor_{B(k^+)}(a, b) \ar[r]^{L(k^+)_{a, b}  } \ar[d]_{B(f)_{a, b}} & Mor_{X(k^+)}(v, w) \ar[d]^{X(f)_{v, w}} \\
Mor_{B(m^+)}(B(f)(a), B(f)(b)) \ar[r]_{ \ \ \ \ \ \ \ \ L(m^+)_{B(f)(a), B(f)(b)}  }  & Mor_{X(m^+)} (x, y)
}  
  \end{equation}
 where $(x, y) = (Ob(L)(m^+)(B(f)(a)), Ob(L)(m^+)(B(f)(b))) $. Since $p$ and $G$ are maps of $\gCats$ therefore we have following (solid arrow) commutative diagram of mapping sets:
 \begin{equation*}
 \xymatrix@C=16mm{
 	& Mor_{B(m^+)}(B(f)(a), B(f)(b)) \ar[r]^ { \ \ \ \ G(m^+) } & Mor_{Y(m^+)}(q, r) \ar@/^2pc/@{-->}[dd]^{\inv{p(m^+)_{x, y}}} \\
 Mor_{B(k^+)}(a, b) \ar[r]_{G(k^+)_{a, b} \ \ \ \ \ \ \ \ \ \ \ } \ar[ru]^{B(f)_{a, b}}  & Mor_{Y(k^+)}(G(k^+)(a), G(k^+)(b)) \ar[ru]_{Y(f)} \ar@/^2pc/@{-->}[d]^{\inv{p(k^+)_{v, w}}}  \\
 &  Mor_{X(k^+)}(v, w) \ar[u]^{p(k^+)_{v, w}}  \ar[r]_{X(f)_{v, w}} & Mor_{X(m^+)}(x, y) \ar[uu]^{p(m^+)_{x, y}}
}
 \end{equation*}
 where
 \[
  (q, r) = (G(m^+)(B(f)(a)), G(m^+)(B(f)(b))) = (Y(f)(G(k^+)(a)), Y(f)(G(k^+)(b))).
  \]
  Since the dotted arrows are the inverses to the associated solid arrows therefore the entire diagram is commutative. This commutativity implies that the diagram \ref{mor-of-gCat} is commutative.

	\end{proof}

  \section[Segal's Nerve functor]{Segal's Nerve functor}
\label{SegNerve}
In the paper \cite{segal}, Segal described a construction of a $\gCat$ from a (small) symmetric monoidal category which we call the \emph{Segal's nerve} of the symmetric monoidal category. His construction defined a functor which we call \emph{Segal's nerve functor}. This functor was further studied in \cite{SK}, \cite{May4}. Oplax and lax variations of Segal's Nerve functor were defined in \cite{mandell}, \cite{mandell2}. In this section we will review Segal's Nerve functor and describe a new representation of Segal's nerve functor. The Segal's nerve functor is built on a family of discrete categories which carry a partial symmetric monoidal structure namely
$\lbrace \P(n) \rbrace_{n \in \Nat}$, where $\P(n)$ denotes the \emph{power set} of the finite set $\underline{n}$. The partial symmetric monoidal structure is just the union of disjoint subsets of $\underline{n}$. Segal's nerve of a symmetric monoidal category consists of, in degree $n$, the category of all functors which preserve this partial symmetric monoidal structure upto isomorphism. One of our goals in this section is to further clarify the situation by firstly defining an unnormalized version of Segal's nerve and secondly by \emph{completing} the partial symmetric monoidal structures and thereby present a construction of Segal's nerve using (strict) symmetric monoidal functors. For each $n \in \Nat$ we construct a permutative category $\PStr(n)$ which is equipped with an inclusion functor $i:\P(n) \to \PStr(n)$ and which satisfies the following universal property:
\begin{equation*}
\xymatrix{
\P(n) \ar[d]_i \ar[r]^F &  C  \\
\PStr(n) \ar@{-->}[ru]_{\exists!}
}
\end{equation*}
where the functor $F$ satisfies $F(S \sqcup T) \cong F(S) \underset{C} \otimes F(T)$ for all $S, T \in \P(n)$
and $S \cap T = \emptyset$. This allows us to define our \emph{unnormalized  Segal's nerve}, in degree $n$, as follows:
\[
\K(C)(n^+) := \StrSMHom{\PStr(n)}{C}.
\]
We will show the existence of a functor $\PStr:\gCAT \to \PCat$ which is a left adjoint to the unnormalized Segal's Nerve functor $\K$. The main objective of this section is to show that the adjoint pair of functors $(\PStr, \KSeg)$ induces a Quillen equivalence between the natural model category $\PCat$ and the model category of coherently commutative monoidal categories $\gCAT$.
 We begin by reviewing Segal's construction.
%  Let $\P(n)$ denote the set of all subsets of $n = \lbrace 1, 2, \dots, n \rbrace$ considered as a discrete category.
 \begin{df}
 \label{n-Seg-bike}
 An $nth$ \emph{Segal bicycle} into a symmetric monoidal category
 $C$ is a triple $ \Psi = (\Psi, \sigma_{\Psi}, u_\Psi)$, where $\Psi$ 
 is a family of objects of $C$
 \[
 \Psi = \lbrace \Psi(S): \Psi(S) \in Ob(C) \rbrace_{S \in \P(n)}
 \]
 and $\sigma_\Psi$
 is a family of morphisms of $C$
 \begin{equation*}
  \sigma_\Psi = \lbrace \sigma_\Psi((S, T)):\Psi(S \sqcup T) \overset{\cong} \to \Psi(S) \otimes \Psi(T) : f_{(S, T)} \in Mor(C) \rbrace_{ (S, T) \in \Lambda },
  \end{equation*} 
  where the indexing set $\Lambda := \lbrace (S, T) : S, T \subseteq n, S \cap T = \emptyset \rbrace$. Finally $u_\Psi:\Psi(\emptyset) \overset{\cong} \to \unit{C}$ is an isomorphism in $C$. This triple is subject to the following conditions:
 \begin{enumerate}[label = {SB.\arabic*}, ref={SB.\arabic*}]
 %\item \label{SB-normal} $\Psi(\emptyset) = \unit{C}$.
 \item \label{SB-unit} For each $S \in \P(n)$, the following diagram commutes:
 \begin{equation*}
\xymatrix@C=14mm{
 \Psi(\emptyset) \otimes \Psi(S) \ar[d]_{u_\psi} & \Psi(S) \ar@{=}[d] \ar[l]_{ \ \ \ \ \sigma_\Psi((\emptyset, S))} \ar[r]^{ \sigma_\Psi((S, \emptyset)) \ \ \ }
  & \Psi(S) \otimes \Psi(\emptyset) \ar[d]^{u_\Psi} \\
 \unit{C} \otimes \Psi(S) \ar[r]_{\beta_l} &\Psi(S)   & \Psi(S) \otimes \unit{C} \ar[l]^{\beta_r}
}
\end{equation*}
\item \label{SB-assoc} For each triple $S, T, U \in \P(n)$ of mutually disjoint subsets of $\underline{n}$, the following diagram commutes:
\begin{equation*}
\xymatrix@C=14mm{
 \Psi(S \sqcup T \sqcup U) \ar[d]_{\sigma_\Psi((S \sqcup T, U))} \ar[rr]^{\sigma_\Psi((S, T \coprod U))} && \Psi(S) \otimes \Psi(T \sqcup U) \ar[d]^{id \otimes \sigma_\Psi((T,U))  } \\
 \Psi(S \sqcup T) \otimes \Psi(U) \ar[rd]_{\sigma_\Psi((S,T))  \otimes id \ \ \ \ } &&\Psi(S) \otimes (\Psi(T) \otimes \Psi(U)) \ar[ld]^{ \ \ \ \ \alpha_C(\Psi(S), \Psi(T), \Psi(U))} \\
 & (\Psi(S) \otimes \Psi(T)) \otimes \Psi(U))
}
\end{equation*}
\item \label{SB-symm} For each pair $S, T \in \P(n)$ of disjoint subsets of $\underline{n}$, the following diagram commutes:
\begin{equation*}
\xymatrix@C=18mm{
 \Psi(S \sqcup T) \ar[r]^{\sigma_\Psi((S, T))} \ar[rd]_{\sigma_\Psi((T, S))} & \Psi(S) \otimes \Psi(T) \ar[d]^{\gamma^C((S, T))} \\
 &\Psi(T) \otimes \Psi(S)
}
\end{equation*}
 \end{enumerate}
 \end{df}
 Next we define morphisms of Segal bicycles
 \begin{df}
 \label{Mor-n-SB}
 A morphism of $nth$ Segal bicycles $\tau:(\Psi, \sigma_{\Psi}) \to (\Omega, \sigma_{\Omega})$ in a symmetric monoidal category $C$ is a family of maps of $C$
 \[
 \tau = \lbrace \tau(S):\Psi(S) \to \Omega(S) \rbrace_{S \in \P(\ud{n})}
 \]
 such that the following two diagram commutes
 \begin{equation*}
\xymatrix@C=14mm{
 \Psi(S \coprod T) \ar[r]^{\tau(S \coprod T)} \ar[d]_{\sigma_\Psi((S, T))} & \Omega(S \coprod T) \ar[d]^{\sigma_\Omega((S, T))}
  & \Psi(\emptyset) \ar[rd]_{u_\Psi} \ar[rr]^{\tau(\emptyset)} && \Omega(\emptyset) \ar[ld]^{u_\Omega} \\
 \Psi(S) \otimes \Psi(T) \ar[r]_{\tau(S) \otimes \tau(T)} & \Omega(S) \otimes \Omega(T)
 &  & \unit{C}
}
\end{equation*}
 \end{df}
 Given a permutative category $C$, $n$th Segal bicycles
 and morphisms of $nth$ Segal bicycles define a category
 which we denote by $\K(C)(n^+)$. 
 Next we want to compare the notion of an $nth$ Segal bicycle to that of a strict bicycle in the permutative category $C$:
% This construction defines a bifunctor
% \begin{equation*}
% \parMHom{\P(-)}{-}:\gop \times \PCat \to \pCat
% \end{equation*}
%  The original Segal's nerve functor $\KSeg:\PCat \to \gCAT$, \cite{segal}, \cite{}, is defined on objects as follows:
%  \begin{equation*}
% \KSeg(C) := \parMHom{\P(-)}{C}
% \end{equation*}
 \begin{lem}
 	\label{strict-bikes-SB-eq}
 Let $C$ be a permutative category. For each $\underline{n}$, the category $\KSeg(C)(n)$ is isomorphic to the category of all strict bicycles from $\gn{n}$ to $C$,
  $\StrBikes{\gn{n}}{C}$.
 \end{lem}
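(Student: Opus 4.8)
The plan is to produce a pair of mutually inverse functors between $\KSeg(C)(n)$ and $\StrBikes{\gn{n}}{C}$ by setting up a precise dictionary between the power-set indexing used in Definition \ref{n-Seg-bike} and the indexing intrinsic to the representable $\gCat$ $\gn{n}$. The crucial observation is that the objects of the discrete category $\gn{n}(1^+) = \gop(n^+, 1^+)$ are in natural bijection with the power set $\P(\ud{n})$: a based map $f \colon n^+ \to 1^+$ is completely determined by the subset $f^{-1}(1) \subseteq \ud{n}$, with the constant map at the basepoint corresponding to $\emptyset$. More generally, morphisms of $\gop$ out of $n^+$ record exactly the disjoint-union data $(S, T) \mapsto S \sqcup T$ that the structure maps $\sigma_\Psi$ of a Segal bicycle encode. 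First I would make this dictionary precise, so that the family $\{\Psi(S)\}_{S \in \P(\ud{n})}$ of an $n$th Segal bicycle is reindexed as the data attached to the relevant morphisms of $\gop$, and so that the whole correspondence becomes an instance of representability for $\gn{n}$.

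With the dictionary in hand, I would define a functor $\Phi \colon \KSeg(C)(n) \to \StrBikes{\gn{n}}{C}$ on objects by sending a Segal bicycle $(\Psi, \sigma_\Psi, u_\Psi)$ to the strict bicycle whose value on the subset $S$ (equivalently, the corresponding map $n^+ \to 1^+$) is $\Psi(S)$, and whose structural comparison isomorphisms are assembled from the $\sigma_\Psi((S,T))$ for disjoint $S, T$ together with $u_\Psi$. The point is that a strict bicycle out of the representable $\gn{n}$ into a permutative category is determined by its behaviour on the generating disjoint unions $S \sqcup T$, and conditions \ref{SB-unit}, \ref{SB-assoc}, \ref{SB-symm} are designed to match exactly the unit, associativity, and symmetry axioms required of such a bicycle. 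On morphisms, a morphism of Segal bicycles $\tau = \{\tau(S)\}$ (Definition \ref{Mor-n-SB}) is sent component-wise to the evident morphism of strict bicycles; the two compatibility diagrams of \ref{Mor-n-SB} translate directly into the naturality constraints defining a morphism of strict bicycles.

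Conversely I would define the inverse by restricting a strict bicycle along the bijection $\P(\ud{n}) \cong \gn{n}(1^+)$, reading off the objects $\Psi(S)$, the isomorphisms $\sigma_\Psi((S,T))$ from the structural comparison maps, and the unit isomorphism $u_\Psi$ from the value at $\emptyset$. Checking that this lands in $\KSeg(C)(n)$ amounts to verifying that the strict-bicycle axioms restrict to conditions \ref{SB-unit}--\ref{SB-symm}, which is routine once the dictionary is fixed and once one uses that $C$ is permutative (so the associator and unitor comparisons in a strict bicycle collapse to exactly the data carried by $\sigma_\Psi$ and $u_\Psi$). Finally I would check that the two constructions are strictly inverse, on objects and on morphisms; since both are honest reindexings involving no choices, the composites are literally the identity, yielding an isomorphism of categories rather than a mere equivalence.

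The main obstacle I anticipate is bookkeeping rather than conceptual. One must verify carefully that the finite list of coherence diagrams \ref{SB-unit}, \ref{SB-assoc}, \ref{SB-symm} is equivalent, under the bijection $\P(\ud{n}) \cong \gop(n^+, 1^+)$ and its iterates for multiple disjoint subsets, to the complete set of axioms imposed on a strict bicycle out of $\gn{n}$, with no condition lost or double-counted, and that the strictness of $C$ is what allows the two lists to coincide exactly. Confirming that this matching is strictly functorial, so that the inverse constructions compose to identities on the nose, is where the care is required.
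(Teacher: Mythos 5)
Your proposal is correct and takes essentially the same route as the paper: both construct explicit mutually inverse functors via the dictionary $f \mapsto \mathrm{Supp}(f)$ between based maps out of $n^+$ and subsets of $\underline{n}$, matching $\sigma_\Psi$ and $u_\Psi$ with the structural isomorphisms of a strict bicycle and observing that the composites are identities on the nose. The only cosmetic difference is that the paper writes the formula $\phi(k)(f) := \Psi(\mathrm{Supp}(f))$ directly in every degree $k$, whereas you anchor the bijection at degree $1$ and let the strict-cone identities propagate it to all degrees (note it is the strictness of the cone, rather than the permutativity of $C$ alone, that makes this propagation and the on-the-nose inverses work) --- the same construction in the end.
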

 \begin{proof}
 We will prove the lemma by constructing a pair of inverse functors. We begin by defining a functor $F:\KSeg(C)(n) \to \StrBikes{\gn{n}}{C}$. Let $(\Psi, \sigma_\Psi, u_\Psi) \in Ob(\KSeg(C)(n))$, then for each $k \in Ob(\N)$ we define a functor $\phi(k):\gnk{n}{k} \to C$ as follows:
 \[
 \phi(k)(f) := \Psi(Supp(f)),
 \]
 where $Supp(f) \subset n$ is the \emph{support of} $f:n^+ \to k^+$. The collection
 $\lbrace \phi(k) \rbrace_{k \in Ob(\N)}$ defines a (strict) cone $\L_\Phi$ because for any $h:k \to l$ in $Mor(\N)$, $Supp(f) = Supp(h \circ f)$. Similarly for each pair $(k,l) \in Ob(\N) \times Ob(\N)$, we will define a natural isomorphism $\sigma_\Phi(k,l):\phi(k+l) \Rightarrow \phi(k) \odot \phi(l)$.
To each $g \in \gnk{n}{(k+l)}$ we can associate a pair of functions $(g_k, g_l)$ whose components are defined by the following two diagrams
\[
 \xymatrix{
 n^+ \ar[r]^{g \ \ } \ar[rd]_{g_k} & (k+l)^+  \ar[d]^{\partition{k+l}{k}}  &&  n^+ \ar[r]^{g \ \ } \ar[rd]_{g_l} & (k+l)^+  \ar[d]^{\partition{k+l}{l}} \\
 & k^+ && & l^+
 }
\]
we define
\[
 \sigma_\Phi(k,l)(g) := \sigma_\Psi(Supp(g_k), Supp(g_l))
\]
and $u_\Phi(0) := u_\Psi$. We define the object function of $F$ as follows:
\[
 F((\Psi, \sigma_\Psi, u_\Psi)) := (\Phi, \sigma_\Phi, u_\Phi).
\]

For each morphism $\tau:\Psi \to \Upsilon$ in $\K(C)(n^+)$, we define a map of bicycles $F(\tau)$
as follows:
\[
 F(\tau) = \lbrace F(\tau)(f) := \tau(Supp(f)):\phi(k)(f) \to \upsilon(k)(f)  :  f \in \underset{k \in \Nat}{\sqcup} \gn{n}(k^+) \rbrace
\]
It follows from definition \ref{Mor-n-SB} that $F(\tau)$ is a morphism of strict bicycles. 

 Now we define the inverse functor $G$ as follows:
 Let $(\L, \sigma, \tau) = \Bike{\Phi}{\gn{n}}{C}$ be a strict bicycle where $\L = (\phi, \sigma)$ is its underlying strict cone, see appendix \ref{NotionBike}, then we define an $n$-$th$ unnormalized Segal bicycle $(G(\Phi), \sigma_{G(\Phi)}, u_{G(\Phi)})$ as follows:
 \[
 G(\Phi) = \lbrace G(\Phi)(S): G(\Phi)(S) = \phi(f_S) \rbrace_{S \in \P(\ud{n})}
 \]
 where $f_S:n^+ \to S^+$ is the projection map whose support is $S \subseteq \ud{n}$. Next we define the family of isomorphism $\sigma_{G(\Phi)}$. For each pair $(S, T)$ of disjoint subsets of $\ud{n}$ we get a map $f_{S + T}:n^+ \to (S + T)^+$ in the category $\gn{n}((S + T)^+)$. We now define
 \[
 \sigma_{G(\Phi)}((S, T)) := \sigma((S, T))(f_{S + T}).
 \]
 Finally $u_{G(\Phi)} := \tau(id_{0^+})$.
 
 A map of strict bicycles $F: \Phi \to \Psi$
 determines a collection of maps of $C$  
 \[
 G(F) := \lbrace G(F)(S):G(\Phi)(S) \to G(\Psi)(S) \rbrace_{S \in \P(\ud{n})} 
 \]
This collection glues together to define a map of $n$-$th$ unnormalized Segal bicycles.
It is easy to see that the functors $F$ and $G$ are inverse of each other.
 \end{proof}

\begin{df}
\label{P-Str-n}
 For each $n \in \Nat$ we will now define a permutative groupoid $\PStr(n)$. The objects of this
 groupoid are finite sequences of subsets of $\ud{n}$. We will denote an object of this groupoid by $(S_1, S_2, \dots, S_r)$, where $S_1, \dots S_r$ are subsets of $\ud{n}$. A morphism $(S_1, S_2, \dots, S_r) \to (T_1, T_2, \dots, T_k)$ is an
 isomorphism of finite sets $F:S_1 \sqcup S_2 \sqcup \dots \sqcup S_r \overset{\cong} \to T_1 \sqcup T_2 \sqcup \dots \sqcup T_k$ such that the following
 diagram commutes
 \begin{equation*}
\xymatrix{
 S_1 \sqcup S_2 \sqcup \dots \sqcup S_r \ar[rr]^{F} \ar[rd] && T_1 \sqcup T_2 \sqcup \dots \sqcup T_k \ar[ld] \\
 &\ud{n}
}
\end{equation*}
 where the diagonal maps are the unique inclusions of the coproducts into $\ud{n}$.
 \end{df}

We define a subcategory $\PLbb(n)$ of $\Lbb(\gn{n})$ which will turn out to be a \emph{coreflective} subcategory. 
 An object of $\PLbb(n)$ is a finite sequence $S = (S_1, S_2, \dots, S_r)$,
where $S_i$ is a subset of $\underline{n}$ for $1 \le i \le r$.
\begin{nota}
An object $S = (S_1, S_2, \dots, S_r) \in Ob(\PLbb(n))$ uniquely determines a morphism (of unbased sets)
$\sigma(S):\underset{i=1} {\overset{r} \sqcup} S_i \to \underline{n}$.
We will refer to the map $\sigma(S)$ as the \emph{canonical inclusion of $S$ in} $\underline{n}$.
\end{nota}
\begin{nota}
An object $S = (S_1, S_2, \dots, S_r) \in Ob(\PLbb(n))$ uniquely determines a morphism (of unbased sets)
$\textit{Ind}(S):\underset{i=1} {\overset{r} \sqcup} S_i \to \underline{r}$.
We will refer to the map $\textit{Ind}(S)$ as the \emph{canonical index of $S$}.
\end{nota}
Given another object $T = (T_1, T_2, \dots, T_s)$ in $\PLbb(n)$, where
$T_j$ is a subset of $\underline{n}$ for $1 \le j \le r$, a morphism $F: S \to T$ in $\PLbb(n)$
is a pair $(h, p)$, where $h:\underline{s} \to \underline{r}$ is a map of finite unbased sets
and $p:\underset{i=1} {\overset{r} \sqcup} S_i \to \underset{j=1} {\overset{s} \sqcup} T_j$
is a bijection. The pair is subject to the following condition:
\begin{enumerate}
%\item The map $h$ is surjective.
%\item For each $i \in \underline{r}$, $S_i = \underset{h(j) = i} \sqcup T_j$.
\item The following diagram commutes:
\begin{equation*}
 \xymatrix{
 & \underline{n} & \\
 \underset{i=1} {\overset{r} \sqcup} S_i \ar[ru]^{\sigma(S)}   \ar[d]_{\textit{Ind}(S)} \ar[rr]^{p}
 &&  \underset{j=1} {\overset{s} \sqcup} T_j \ar[d]^{\textit{Ind}(T)} \ar[lu]_{\sigma(T)}  \\ 
\underline{r} && \underline{s} \ar[ll]_{h}
 }
 \end{equation*}

\end{enumerate}
\begin{rem}
\label{cont-fun-PLbb}
The construction above defines a contravariant functor $\PLbb(-):\gop \to \PCat$. A map $f:n^+ \to m^+$ in $\gop$ defines a strict symmetric monoidal functor
$\PLbb(f):\PLbb(m) \to \PLbb(n)$. An object $(S_1, S_2, \dots, S_r) \in \PLbb(m)$ is mapped by this functor to $(\inv{f}(S_1), \inv{f}(S_2), \dots, \inv{f}(S_r)) \in \PLbb(n)$.
\end{rem}
%and
%\[
%F:k_1 \coprod k_2 \coprod \dots \coprod k_r \to l_1 \coprod l_2 \coprod \dots \coprod l_s
%\]
%such that the following diagram commutes for some $K \in Mor(\P^s(n))$.

%The top horizontal arrow is the canonical isomorphisms.
\begin{df}
\label{P-Lax-n}
For each $n \in \Nat$ we define a permutative category $\Lbb(\gn{n})$ as follows:
\[
\Lbb(\gn{n}) := \int^{\vec{k} \in \Leins} \Leins(\gn{n})(\vec{k}),
\]
see \ref{Groth-Cons-gCat}.
This construction defines a functor $\Lbb(\gn{-})$ which is the following composite
\begin{equation*}
\gop \overset{y} \to \gCAT \overset{\Leins(-)} \to \SMHom{\Leins}{\Cat} \to \PCat
\end{equation*}
where $y$ is the Yoneda functor. $\Leins(-)$ is the functor defined \ref{SM-Ext-OL}.
\end{df}
\begin{prop}
The category $\PLbb(n)$ is isomorphic to the full subcategory of $\Lbb(\gn{n})$ whose
objects are finite sequences of projection maps in $\gop$ having domain $n^+$.
\end{prop}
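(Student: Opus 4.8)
The plan is to produce an isomorphism of categories $\Theta\colon\PLbb(n)\xrightarrow{\cong}\mathcal F$, where $\mathcal F\subseteq\Lbb(\gn{n})$ is the full subcategory spanned by the objects all of whose components are projection maps out of $n^+$; the inclusion of $\PLbb(n)$ into $\Lbb(\gn{n})$ then factors through $\mathcal F$ as this isomorphism. I would organise the argument into three stages: the object level, a reduction using discreteness of fibres, and the matching of morphisms.

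First I would settle the objects. Recalling Definition \ref{Groth-Cons-gCat}, an object of $\Lbb(\gn{n})$ is a pair $(\vec m,\vec x)$ with $\vec m=(m_1,\dots,m_r)\in\Ob(\Leins)$ and $x_i\in\gn{n}(m_i^+)=\gop(n^+,m_i^+)$. A projection map out of $n^+$ is completely determined by its support, a subset $S\subseteq\ud{n}$: its codomain is $|S|^+$ and it carries $S$ order-isomorphically onto the non-basepoint elements, so conversely each subset $S\subseteq\ud{n}$ gives such a projection $f_S\colon n^+\to|S|^+$. Hence setting
\[
\Theta(S_1,\dots,S_r):=\bigl((|S_1|,\dots,|S_r|),(f_{S_1},\dots,f_{S_r})\bigr)
\]
defines a bijection from $\Ob(\PLbb(n))$ onto $\Ob(\mathcal F)$.

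Next I would reduce the morphisms. Since $\gn{n}=\gop(n^+,-)$ is valued in discrete categories, each category $\Leins(\gn{n})(\vec m)$ is discrete. By the description of the category of elements in Remark \ref{simp-des-cat-el}, a morphism $(\vec m,\vec x)\to(\vec p,\vec y)$ of $\Lbb(\gn{n})$ is a pair $((h,\phi),\alpha)$ whose second component $\alpha$ lives in the discrete category $\Leins(\gn{n})(\vec p)$ and is therefore an identity. Thus such a morphism is precisely a morphism $(h,\phi)\colon\vec m\to\vec p$ of $\Leins$ subject to the single equation $\Leins(\gn{n})(h,\phi)(\vec x)=\vec y$.

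The heart of the matter, which I expect to be the main obstacle, is to identify this one equation with the two commuting conditions defining a morphism of $\PLbb(n)$. Under the order identifications $\underline m\cong\bigsqcup_i S_i$ and $\underline p\cong\bigsqcup_j T_j$, the function $\phi$ becomes a map $p\colon\bigsqcup_i S_i\to\bigsqcup_j T_j$ and the structure maps become $\vec m=\Ind(S)$ and $\vec p=\Ind(T)$, so the defining relation of a $\Leins$-morphism, $h\circ\vec p\circ\phi=\vec m$, turns into $h\circ\Ind(T)\circ p=\Ind(S)$, which is the lower condition in the definition of $\PLbb(n)$. Using the explicit formula \eqref{SMExt-def-mor} for $\Leins(\gn{n})(h,\phi)$, the $j$-th component of $\Leins(\gn{n})(h,\phi)(\vec x)$ is the composite $\delta\circ\phi(i)^+\circ f_{S_i}$ of $f_{S_i}$ with the active map $\phi(i)^+$ and the projection $\delta$ onto the $j$-th block, where $i=h(j)$. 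A short computation with supports shows that demanding this composite equal $f_{T_j}$ for every $j$ is equivalent to the relations $\sigma(T)\circ p=\sigma(S)$ together with $S_i=\bigsqcup_{h(j)=i}T_j$ inside $\ud{n}$; the former is the upper condition in the definition of $\PLbb(n)$, while the latter forces $p$ to be a bijection. Running the computation in reverse shows $\Theta$ sends each morphism $(h,p)\colon S\to T$ of $\PLbb(n)$ to a legitimate morphism of $\mathcal F$, and that every morphism of $\mathcal F$ arises in this way from a unique one of $\PLbb(n)$. Hence $\Theta$ is bijective on objects and fully faithful, so it is an isomorphism of categories onto $\mathcal F$, as claimed.
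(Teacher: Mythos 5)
Your proof is correct and takes essentially the same route as the paper's: the paper likewise defines the functor on objects by sending $(S_1,\dots,S_r)$ to the sequence of projection maps and treats a morphism $(h,p)$ of $\PLbb(n)$ as a morphism in $\Leins$ satisfying $\Leins(\gn{n})((h,p))(S)=T$, which is precisely your discreteness reduction. The extra details you supply — that $\Leins(\gn{n})(\vec m)$ is discrete so the Grothendieck-construction morphisms collapse to a single equation, and the support computation yielding $S_i=\bigsqcup_{h(j)=i}T_j$ — are exactly what the paper leaves implicit here (and spells out later in the proof of Lemma \ref{isom-PStr-gn}).
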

\begin{proof}
We will define a functor $G:\PLbb(n) \to \Lbb(\gn{n})$. This functor is defined on objects as follows:
\[
G((S_1, S_2, \dots, S_r)) := (f_1, f_2, \dots, f_r),
\]
where $S = (S_1, S_2, \dots, S_r)$ is an object in $\PLbb(n)$ and each $f_i:n^+ \to S_i^+$ is a projection map onto $S_i$.
Let $T = (T_1, T_2, \dots, T_s)$ be another object in $\PLbb(n)$. A map
$(h, p):S \to T$ in $\PLbb(n)$ is also a map in $\mathfrak{L}$ such that
$\mathfrak{L}(\gn{n})((h, p))(S) = T$. This defines the functor $G$ which is fully faithful.
\end{proof}
\begin{rem}
	\label{coref-sub-cat-PLbb}
	The category $\PLbb(n)$ is a coreflective subcategory of $\Lbb(\gn{n})$ as a result of \ref{isom-PStr-gn} and \ref{PL-cref-SC-L}.
	\end{rem}
\begin{rem}
\label{inc-str-lax-Pn}
The functor $G:\PLbb(n) \to \Lbb(\gn{n})$ defined in the proof above is a strict symmetric monoidal functor. This functor is a component of a natural transformation between two contravariant functors
\[
i:\PLbb(-) \Rightarrow \Lbb(\gn{-}),
\]
where $i(n^+) := G$ and $\Lbb(\gn{-}):\gop \to \PCat$ is the functor that maps $n^+$ to $\Lbb(\gn{n})$.
\end{rem}
\begin{rem}
\label{inc-str-lax-PN-pi1}
Composing the natural transformation $i$ in the above remark with the functor $\Pi_1$
gives us a natural equivalence
\begin{equation*}
id_{\Pi_1} \circ i :\Pi_1\circ \PLbb(-) \Rightarrow \Pi_1\circ \Lbb(\gn{-})
\end{equation*}
\emph{i.e.} for each $n^+ \in \gop$ the functor
\begin{equation*}
id_{\Pi_1} \circ i (n^+) :\Pi_1(\PLbb(n)) \to \Pi_1(\Lbb(\gn{n}))
\end{equation*}
is an equivalence of categories.
\end{rem}
%\begin{nota}
%We denote the image of the fully faithful functor $G$ defined in the above proof by $\Lbb(\gn{n})^{\textit{proj}}$.
%\end{nota}

\begin{nota}
 Let $C$ be a strict symmetric monoidal category. Let us denote by $\underline{C}$ the underlying
groupoid of $C$ \emph{i.e.} the groupoid obtained by discarding all non-invertible maps in $C$.
We recall that $\underline{C}$ retains the strict symmetric monoidal structure of $C$.
 \end{nota}
 We recall that a strict bicycle $\Bike{\Psi = (\psi, \sigma_\Psi, u_\Psi)}{\gn{n} }{C}$ defines an oplax symmetric monoidal functor
 $\underline{\Psi}:\N \to \OplaxExp{\gn{n}}{\underline{C}}$, see appendix \ref{bikes-as-oplax-sections}. For each $\underline{k} \in \N$,
 there is a functor $\underline{\Psi}(k):\gn{n}(k) \to \underline{C}$ which is defined as follows:
 \[
 \underline{\Psi}(k)(f) := \psi(k)(f)
 \]
 for each $f \in \gn{n}(k)$. For each morphism $h:k \to l$ in $\N$,
 $\underline{\Psi}(h) := id$, \emph{i.e.} the identity natural transformation.
 The family of natural isomorphisms $\sigma_\Phi$ and the unit natural isomorphism $u_\Psi$ provide an oplax symmetric monoidal structure on $\underline{\Psi}$.
 The oplax symmetric monoidal inclusion functor $i:\OplaxExp{\gn{n}}{\underline{C}} \to \OplaxExp{\mathfrak{L}(\gn{n})}{\underline{C}}$ provides the following composite oplax symmetric monoidal functor
 \[
 \N \overset{\Phi} \to \OplaxExp{\gn{n}}{\underline{C}} \overset{i} \to \OplaxExp{\mathfrak{L}(\gn{n})}{\underline{C}}.
 \]
 This composite oplax symmetric monoidal functor extends uniquely, along the inclusion $\N \hookrightarrow \mathfrak{L}$, into a
 strict symmetric monoidal functor $\mathfrak{L}(\underline{\Psi}):\mathfrak{L} \to \OplaxExp{\mathfrak{L}(\gn{n})}{\underline{C}}$, see appendix \ref{OL-to-SM}. This functor uniquely determines another strict symmetric monoidal functor
 \begin{equation}
 \label{SSM-ext-str-bike}
 \widetilde{\Phi}:\Lbb(\gn{n}) \to \underline{C}.
 \end{equation}
 
%\begin{nota}
%\label{}
% Each subset $S$ of $n$ can be uniquely arranged into an ascending order which provides a unique (order preserving) isomorphism of unbased sets between $S$ and an object $|S| \in Ob(\N)$ having the cardinality of $S$. We denote this isomorphism by $\iota_\N(S):S \to |S|$.
%\end{nota}

Let $\phi:\PLbb(n) \to \underline{C}$ be a strict symmetric monoidal functor. The functor $\phi$
determines a strict Segal bicycle $(F(\phi), \sigma_{F(\phi)}, u_{F(\phi)})$ which we now define.
For each $S \subseteq \underline{n}$, we define $F(\phi)(S) = \phi((S))$.
The collection of isomorphism $\sigma_{F(\phi)}$ is defined as follows:
\[
\sigma_{F(\phi)}((S, T)) := \phi((m, id)),
\]
where $(m, id):(S \sqcup T) \to (S, T)$ is a map in $\PLbb(n)$ whose first component is given by the multiplication map $m:\underline{2} \to \underline{1}$.
Finally, the isomorphism $u_{F(\phi)}$ is defined as follows:
\[
u_{F(\phi)} := \phi((id, i)),
\]
where $(id, i):(\emptyset) \to ()$ is the following map in $\PLbb(n)$:
\begin{equation*}
\xymatrix{
\emptyset \ar@{=}[r] \ar[d] & \emptyset \ar[d] \\
\underline{1} & \underline{0} \ar[l]^{i}
}
\end{equation*}
 The conditions $SB1, SB2$ and $SB3$
follow from the strict symmetric monoidal functor structure of $\phi$. The above construction defines a functor
\[
F:\StrSMHom{\PLbb(n)}{\underline{C}}  \to \KSeg(C)(n^+).
\]
\begin{lem}
\label{SMFunc-PLn-n-bikes}
The functor $F$ is an isomorphism of categories.
\end{lem}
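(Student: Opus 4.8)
The plan is to prove that $F$ is an isomorphism of categories by exhibiting an explicit inverse $G:\KSeg(C)(n^+) \to \StrSMHom{\PLbb(n)}{\underline{C}}$ and verifying that $FG$ and $GF$ are identities. Everything rests on the observation that $\PLbb(n)$ is generated, as a permutative category, by the length-one sequences $(S)$ for $S \subseteq \underline{n}$ together with three distinguished families of morphisms: the merge maps $(m,\id):(S\sqcup T)\to(S,T)$, the unit map $(\id,i):(\emptyset)\to()$, and the symmetry isomorphisms of the permutative structure. By strictness of the tensor every object factors as $(S_1,\dots,S_r)=(S_1)\otimes\cdots\otimes(S_r)$, and a general morphism $(h,p):(S_1,\dots,S_r)\to(T_1,\dots,T_s)$ is built from these generators: the set map $h:\underline{s}\to\underline{r}$ records which target blocks $T_j$ are merged into a given source block $S_i$ (those with $h(j)=i$, so that $S_i=\bigsqcup_{h(j)=i}T_j$), while the bijection $p$ is then forced. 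Consequently a strict symmetric monoidal functor out of $\PLbb(n)$ is completely determined by its values on the objects $(S)$ and on the three families of generating morphisms, which is exactly the data recorded by a Segal bicycle.

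First I would define $G$ on objects. Given a Segal bicycle $\Psi=(\Psi,\sigma_\Psi,u_\Psi)$, set $G(\Psi)((S_1,\dots,S_r)):=\Psi(S_1)\otimes\cdots\otimes\Psi(S_r)$, which is forced by strict monoidality, and on generators send the merge map $(m,\id):(S\sqcup T)\to(S,T)$ to $\sigma_\Psi((S,T))$, the unit map $(\id,i):(\emptyset)\to()$ to $u_\Psi$, and each symmetry isomorphism to the corresponding symmetry $\gamma^C$ of $C$. The substance of the verification is that this assignment respects all relations among composites of generators, so that $G(\Psi)$ is a well-defined functor; here the three Segal-bicycle axioms do precisely the required work. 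Axiom \ref{SB-assoc} guarantees that the two ways of merging a triple $S\sqcup T\sqcup U$ agree, axiom \ref{SB-unit} handles the relations involving the empty subset, and axiom \ref{SB-symm} together with the coherence of the symmetry in the permutative category $C$ handles the relations built from symmetries. Since $\underline{C}$ is permutative, the hexagon and the remaining permutative coherences hold automatically, so no conditions beyond SB.1--SB.3 are needed, and strict symmetric monoidality of $G(\Psi)$ is immediate from the object formula.

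On morphisms, a morphism of Segal bicycles $\tau:\Psi\to\Omega$ is a family $\{\tau(S):\Psi(S)\to\Omega(S)\}$, and I would set $G(\tau)((S_1,\dots,S_r)):=\tau(S_1)\otimes\cdots\otimes\tau(S_r)$. Naturality with respect to the merge and unit generators is exactly the commutativity of the two squares in Definition \ref{Mor-n-SB}, naturality with respect to the symmetries is automatic because $C$ is permutative, and the product formula makes $G(\tau)$ a monoidal natural transformation. It then remains to check $FG=\id$ and $GF=\id$: the former is immediate since $F$ reads off $G(\Psi)$ precisely on the objects $(S)$ and the distinguished generators, recovering $(\Psi,\sigma_\Psi,u_\Psi)$; the latter follows because a strict symmetric monoidal functor $\phi$ is determined by its restriction to the generators, so $G(F(\phi))$ and $\phi$ agree on all objects and on a generating set of morphisms and hence coincide.

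The main obstacle I anticipate is the structural claim that the merges, the unit map, and the symmetries generate all of $\PLbb(n)$, together with the identification of the relations among them with SB.1--SB.3; this requires unwinding the description of $(h,p)$ and exhibiting a canonical factorization of $h$ into a permutation, fold maps, and unit insertions (for blocks outside the image of $h$). An alternative and perhaps cleaner route, which sidesteps the explicit generators-and-relations analysis, is to use Lemma \ref{strict-bikes-SB-eq} to identify $\KSeg(C)(n^+)$ with the category of strict bicycles $\StrBikes{\gn{n}}{C}$, then pass each strict bicycle to its extension $\widetilde{\Phi}:\Lbb(\gn{n})\to\underline{C}$ via \eqref{SSM-ext-str-bike}, and finally restrict along the inclusion $\PLbb(n)\hookrightarrow\Lbb(\gn{n})$, which is coreflective by Remark \ref{coref-sub-cat-PLbb}. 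Tracing definitions shows that this restriction is inverse to $F$, giving the isomorphism of categories without computing the relations in $\PLbb(n)$ by hand.
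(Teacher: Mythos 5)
Your proposal is correct, and your fallback ``alternative route'' is in fact precisely the paper's own proof: the paper defines $\inv{F}(\Phi)$ as the composite $\PLbb(n) \cong \Lbb(\gn{n})^{\textit{proj}} \hookrightarrow \Lbb(\gn{n}) \overset{\widetilde{\Phi}}{\to} \underline{C}$, where $\widetilde{\Phi}$ is the strict symmetric monoidal extension \eqref{SSM-ext-str-bike} of the strict bicycle corresponding to the Segal bicycle $\Phi$ under Lemma \ref{strict-bikes-SB-eq} (note that only the inclusion is used there, not the coreflectivity of Remark \ref{coref-sub-cat-PLbb}, which you cite but do not need). Your primary argument --- a generators-and-relations analysis of $\PLbb(n)$ --- is genuinely different from the paper's and, as far as I can check, sound: in a morphism $(h,p)$ the compatibility with $\sigma(S)$, $\sigma(T)$ and the index maps does force the blocks $T_j$ with $h(j)=i$ to be pairwise disjoint with union $S_i$ (two overlapping blocks in a single fiber of $h$ would contradict injectivity of $p$), it forces $p$ itself, and blocks $S_i$ outside the image of $h$ are necessarily empty and are removed by the unit generator $(\id,i):(\emptyset)\to()$; hence every morphism factors as a tensor product of iterated merges and unit deletions followed by a block permutation, and SB.1--SB.3 together with permutative coherence in $C$ are exactly what is needed to make $G(\Psi)$ independent of the chosen factorization. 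What each approach buys: your direct route is self-contained and makes transparent that the Segal-bicycle axioms constitute a presentation of $\PLbb(n)$ by merges, units and symmetries, but it carries the full coherence burden (canonical factorizations, independence of choices, compatibility with composition), which is real work you correctly flag as the main obstacle; the paper instead pays that cost once, in the oplax-to-strict extension machinery behind \eqref{SSM-ext-str-bike} and the universal property of Leinster's category, so that the proof of this lemma collapses to a one-line restriction along an inclusion --- terser, but only as rigorous as the appendix it leans on, whereas your hands-on argument would double as an independent verification of that machinery in this special case.
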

\begin{proof}
We will define a functor $\inv{F}:K(C)(n^+) \to \StrSMHom{\PLbb(n)}{\underline{C}}$ which is the inverse of $F$. An object $\Phi \in K(C)(n^+)$ is an $n$th strict Segal bicycle. An
$n$th strict Segal bicycle uniquely determines a strict symmetric monoidal functor $\widetilde{\Phi}:\Lbb(\gn{n}) \to \underline{C}$, see \eqref{SSM-ext-str-bike}. Now we define the strict symmetric monoidal functor $\inv{F}(\Phi)$ to be the following composite:
\[
\PLbb(n) \cong \Lbb(\gn{n})^{\textit{proj}} \hookrightarrow \Lbb(\gn{n}) \overset{\widetilde{\Phi}} \to \underline{C}.
\]
\end{proof}
\begin{rem}
\label{Pi-1-ind-equiv}
In the statement of the above lemma the functor category $\StrSMHom{\PLbb(n)}{\underline{C}}$ could be replaced by the isomorphic category $\StrSMHom{\Pi_1(\PLbb(n))}{C}$ where $\Pi_1(\PLbb(n))$ is the groupoid obtained by inverting all morphisms in $\PLbb(n)$.
\end{rem}
\begin{nota}
\begin{sloppypar}
Let $f = (f_1, f_2, \dots, f_r)$ be a finite sequence, where $f_i:n^+ \to k^+_i$ is
a map of based sets for $1 \le i \le r$. We denote the finite sequence $(Supp(f_1), Supp(f_2), \dots, Supp(f_r))$ by $Supp(f)$.
\end{sloppypar}
\end{nota}
\begin{nota}
Let $f = (f_1, f_2, \dots, f_r)$ be a finite sequence, where $f_i:n^+ \to k^+_i$ is
a map of based sets for $1 \le i \le r$. We denote the sum $\underset{i=1}{\overset{r} \sqcup} f_i|_{Supp(f_i)}$ by $tot(f)$, where the map $f_i|_{Supp(f_i)}:Supp(f_i) \to k_i$ is the restriction of $f_i$.
\end{nota}

We now define another category $\Lbb(n)$ which is equipped with an inclusion functor
\begin{equation}
\label{inc-func-PL-L}
\iota:\PLbb(n) \hookrightarrow \Lbb(n).
\end{equation}

\begin{df}
\label{L-2-n}
An object in $\Lbb(n)$ is a finite sequence $f = (f_1, f_2, \dots, f_r)$, where $f_i:n^+ \to k^+_i$ is
a map of based sets for $1 \le i \le r$. To each finite sequence $f$ one can (uniquely) associate the following zig-zag
\begin{equation*}
\label{assoc-zig-zag-L}
\xymatrix@C=16mm{
\underset{i=1}{\overset{r} \sqcup} Supp(f_i) \ar[d]_{\sigma(f)} \ar[r]^{tot(f)} & \underset{i=1}{\overset{r} \sqcup \ k_i} \ar[r]^{Ind(f)} & \underline{r} \\
\underline{n}
}
\end{equation*}
where $\sigma(f) := \sigma(Supp(f))$.
A map from $f$ to $g = (g_1, g_2, \dots, g_s)$ in $\Lbb(n)$ is a triple is a triple $(h,q,p)$, where $h:\underline{s}  \to \underline{r}$ is a map in $\N$, $q$ is a map in $\N$ and $p$ is a bijection in $\N$ such that the following diagram commutes
\begin{equation*}
\xymatrix{
& \underline{n} \\
\underset{i=1}{\overset{r} \sqcup} Supp(f_i) \ar[rr]^p \ar[ru]^{\sigma(f)} \ar[d]_{tot(f)} && \underset{i=1}{\overset{s} \sqcup} Supp(g_i) \ar[lu]_{\sigma(g)} \ar[d]^{tot(g)} \\
\underset{i=1}{\overset{r} \sqcup} k_i \ar[rr]^{q} \ar[d]_{\textit{Ind}(f)} && \underset{i=1}{\overset{r} \sqcup} l_i  \ar[d]^{\textit{Ind}(g)} \\
\underline{r}  && \underline{s} \ar[ll]_h
}
\end{equation*}
\end{df}
Each subset $S \subseteq \underline{n}$ uniquely determines a projection map
$f_S:n^+ \to S^+$. The inclusion functor \eqref{inc-func-PL-L} is defined on objects as follows:
\[
\iota((S_1, S_2, \dots, S_r)) := f_S = (f_{S_1}, f_{S_2}, \dots, f_{S_r}).
\]
The functor is defined on morphisms as follows:
\[
\iota((h, p)) := (h, p, p),
\]
where $(h, p):S = (S_1, S_2, \dots, S_r) \to T = (T_1, T_2, \dots, T_s)$ is a map in $\PLbb(n)$
and $(h, p, p): f_S \to f_T = (f_{T_1}, f_{T_2}, \dots, f_{T_s})$
is a map in $\Lbb(n)$ which is described by the following diagram:
\begin{equation*}
\xymatrix{
& \underline{n} \\
\underset{i=1}{\overset{r} \sqcup} Supp(f_{S_i}) = \underset{i=1}{\overset{r} \sqcup} S_i \ar[rr]^p \ar[ru]^{\sigma(f_S)} \ar[d]_{id} && \underset{j=1}{\overset{s} \sqcup} T_j = \underset{j=1}{\overset{s} \sqcup} Supp(f_{T_j}) \ar[lu]_{\sigma(f_T)} \ar[d]^{id} \\
\underset{i=1}{\overset{r} \sqcup} S_i \ar[rr]^{p} \ar[d]_{\textit{Ind}(f)} && \underset{j=1}{\overset{s} \sqcup} T_j  \ar[d]^{\textit{Ind}(g)} \\
\underline{r}  && \underline{s} \ar[ll]_h
}
\end{equation*}
\begin{lem}
\label{isom-PStr-gn}
For each $n \in \Nat$ the permutative category $\Lbb(n)$ is isomorphic to the permutative category $\Lbb(\gn{n})$.
\end{lem}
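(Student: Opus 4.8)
The plan is to exhibit an isomorphism of permutative categories $\Xi \colon \Lbb(n) \to \Lbb(\gn{n})$ which is the identity on the underlying data. The starting observation is that both categories have the same objects: by Definition \ref{Groth-Cons-gCat} an object of $\Lbb(\gn{n}) = \int^{\vec{k} \in \Leins} \Leins(\gn{n})(\vec{k})$ is a pair $(\vec{k}, \vec{x})$ with $\vec{k} = (k_1, \dots, k_r) \in Ob(\Leins)$ and $\vec{x} = (x_1, \dots, x_r)$, where $x_i \in \gn{n}(k_i^+) = \gop(n^+, k_i^+)$; this is precisely a finite sequence of based maps $x_i \colon n^+ \to k_i^+$, that is, exactly an object of $\Lbb(n)$ (Definition \ref{L-2-n}). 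So I set $\Xi(f) = (\vec{k}, f)$ on objects, which is a bijection.

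The essential simplification on morphisms is that $\gn{n}$ takes values in discrete categories. Hence by Remark \ref{simp-des-cat-el} a morphism $(\vec{k}, \vec{x}) \to (\vec{l}, \vec{y})$ in $\Lbb(\gn{n})$ is simply a morphism $(h, \phi) \colon \vec{k} \to \vec{l}$ of $\Leins$ (with $h \colon \underline{s} \to \underline{r}$ and $\phi \colon \bigsqcup_i k_i \to \bigsqcup_j l_j$) subject to the single equation $\Leins(\gn{n})((h,\phi))(\vec{x}) = \vec{y}$, the fibre map being forced to be an identity. I will define $\Xi$ on a morphism $(h, q, p) \colon f \to g$ of $\Lbb(n)$ by $\Xi(h,q,p) = (h, q)$, and the content of the proof is to check that this is a bijection on hom-sets. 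First, the condition $Ind(f) = h \circ Ind(g) \circ q$ from Definition \ref{L-2-n} is exactly the relation $\vec{k} = h \circ \vec{l} \circ q$ defining a morphism $(h,q) \colon \vec{k} \to \vec{l}$ of $\Leins$, so $(h,q)$ is legitimate. It remains to match the fibre equation with the remaining data.

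For this I will unwind the definition of $\Leins(\gn{n})$ on morphisms: since $\gn{n}$ acts on a based map by post-composition, the $j$-th component of $\Leins(\gn{n})((h,q))(f)$ is the based map $\tilde q_j \circ f_{h(j)} \colon n^+ \to l_j^+$, where $\tilde q_j \colon k_{h(j)}^+ \to l_j^+$ restricts $q$ to the block $h(j)$ and projects onto the block $j$. Thus the fibre equation reads $g_j = \tilde q_j \circ f_{h(j)}$ for all $j$. The key combinatorial point is that the pairing $(\sigma(f), tot(f)) \colon \bigsqcup_i Supp(f_i) \to \underline{n} \times \bigsqcup_i k_i$ is injective, because $tot(f)$ already records the block index $i$ while $\sigma(f)$ records the underlying element of $\underline{n}$; the same holds for $g$. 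Using this, I will show that the fibre equation holds if and only if there is a (necessarily unique) bijection $p \colon \bigsqcup_i Supp(f_i) \to \bigsqcup_j Supp(g_j)$ making the defining diagram of Definition \ref{L-2-n} commute, i.e. satisfying $\sigma(g) \circ p = \sigma(f)$ and $q \circ tot(f) = tot(g) \circ p$. Concretely, $p$ must send $(x, i)$ to $(x, j)$, where $j$ is the unique index with $h(j) = i$ and $q(f_i(x), i)$ lying in the block $l_j$; injectivity of $(\sigma(g), tot(g))$ forces this formula and hence the uniqueness of $p$, while the fibre equation guarantees $x \in Supp(g_j)$, so that $p$ is a well-defined bijection with inverse $(x,j) \mapsto (x, h(j))$.

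Granting this equivalence, $\Xi$ is bijective on objects and fully faithful, hence an isomorphism of underlying categories; functoriality is immediate, since composition in $\Lbb(\gn{n})$ forgets the now-redundant fibre maps and the $\Leins$-component of a composite of triples is the composite of the $(h,q)$'s. Finally, both tensor products are concatenation of sequences — on $\Lbb(\gn{n})$ this is the tensor product of Proposition \ref{LbbX-perm} (concatenation of the underlying $\Leins$-sequences), and on $\Lbb(n)$ it is concatenation of finite sequences with symmetry permuting blocks — so $\Xi$ is strict symmetric monoidal by inspection and is therefore an isomorphism of permutative categories. The main obstacle is the middle step: correctly identifying the action of $\Leins(\gn{n})((h,q))$ on based maps as ``post-compose and project'' and translating the single fibre equation into the commuting diagram of Definition \ref{L-2-n}, for which the injectivity of $(\sigma, tot)$ and the block-preserving nature of $q$ are exactly what is needed.
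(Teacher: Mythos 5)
Your proposal is correct and is essentially the paper's own argument run in the opposite direction: where the paper defines $J(n):\Lbb(\gn{n}) \to \Lbb(n)$ by extracting the canonical bijection $\sigma_F$ forced by the fibre equation (via the decomposition $Supp(f_i) = \underset{h(j)=i}{\sqcup} Supp(g_j)$), you define the inverse functor that forgets $p$ and then recover $p$ uniquely from that same support decomposition together with the injectivity of the pair $(\sigma, tot)$. The combinatorial core — discreteness of the fibres of $\Leins(\gn{n})$ plus the uniquely determined bijection on supports — is identical in both arguments, so this is the same proof (with the bonus that you spell out the strict symmetric monoidal compatibility, which the paper leaves implicit).
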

\begin{proof}
	The objects of both caregories are the same.
	We will show that each morphism in $\Lbb(\gn{n})$ uniquely defines a morphism in $ \Lbb(n)$ and  
therefore there is an isomorphism of permutative categories $J(n):\Lbb(\gn{n}) \to \Lbb(n)$ which is identity on objects.

Let $\vec{f} = (f_1, \dots, f_r)$ and $\vec{g} = (g_1, \dots, g_s)$ be two objects in $\Lbb(\gn{n})$ where $f_i:n^+ \to k_i^+$ for $1 \le i \le r$ and $g_i:n^+ \to l_i^+$ for $1 \le i \le s$.
We recall that a map $F:\vec{f} \to \vec{g}$ in $\Lbb(\gn{n})$ is a map $F = (h, p):\vec{k} = (k_1, \dots, k_r) \to \vec{l} = (l_1, \dots, l_s)$ in the category $\Leins$ such that
\begin{equation}
\label{map-in-Lnn-gn}
\Leins(\gn{n})((h, p))((f_1, \dots, f_r)) = (g_1, \dots, g_s).
\end{equation}
We recall that the map $F = (h, p)$ in $\Leins$ is a commutative square
\[
\xymatrix{
\ud{k} \ar[r]^p \ar[d]_{\vec{k}} & \ud{l} \ar[d]^{\vec{l}} \\
\ud{r}  & \ud{s} \ar[l]^h
}
\]
This map $(h, p)$ uniquely determines a (finite) sequence $(p_1, \dots, p_r)$ of maps in $\N$ where $p_i:k_i \to \underset{h(j) = i} + l_j$ for $1 \le i \le r$. The condition \eqref{map-in-Lnn-gn} implies that the following diagram commutes for each $i \in \ud{r}$ and $q \in \ud{s}$ such that $h(q) = i$:
\begin{equation*}
\xymatrix{
n^+ \ar[r]^{f_i} \ar[r] \ar[rd]_{g_q} & k_i^+ \ar[r]^{p_i^+ \ \ \ \ } & (\underset{h(j) = i} + l_j)^+ \ar[ld]^{u_q} \\
& l_q^+
}
\end{equation*}
where $u_q:(\underset{h(j) = i} + l_j)^+ \to l_q$ is the projection map onto $l_q$ where $h(q) = i$. Since each $p_i$ is a map in $\N$ and the supports of $u_q$ are distinct non-intersecting sets for $1 \le q \le s$ therefore we have the following equality:
\[
Supp(f_i) = Supp(p_i^+ \circ f_i) = \underset{h(j) = i} \sqcup Supp(g_j).
\]
%\begin{nota}
%We denote by $h_i$ the following composite:
%\[
%h_i:\underset{j \in \inv{h}(\lbrace i \rbrace)} \sqcup \lbrace j \rbrace \subseteq \ud{s} \overset{h} \to \ud{r}
%\]
%and by $can(h)$ the following canonical induced map on the coproduct:
%\[
%can(h):\underset{i \in \ud{r}} \sqcup \underset{j \in \inv{h}(\lbrace i \rbrace)} \sqcup \lbrace j \rbrace \to \ud{r}
%\]
%\end{nota}
The $r$ inclusion maps $\underset{h(j) = i} \sqcup Supp(g_j) \subseteq \underset{i \in \ud{s}} \sqcup Supp(g_i)$ for $1 \le i \le r$ determine a canonical bijection
%\begin{equation}
%\label{perm-assc-part}
%\sigma_F:\underset{i \in \ud{r}} \sqcup \underset{j \in \inv{h}(\lbrace i \rbrace)} \sqcup \lbrace j \rbrace \to \ud{s}.
%\end{equation}
%which makes the following diagram commute:
%\begin{equation*}
%\xymatrix{
%\underset{i \in \ud{r}} \sqcup \underset{j \in \inv{h}(\lbrace i \rbrace)} \sqcup \lbrace j \rbrace  \ar[rr]^{\ \ \ \ \ \ \ \ \sigma_F} \ar[rd]_{can(h)} && \ud{s} \ar[ld]^h \\
%& \ud{r}
%}
%\end{equation*}
%This bijection \eqref{perm-assc-part} determines another bijection:
\[
\sigma_F:\underset{i \in \ud{r}} \sqcup \underset{h(j) = i} \sqcup Supp(g_j) \to \underset{i \in \ud{s}} \sqcup Supp(g_i)
\]
such that the following diagram commmutes:
\begin{equation*}
\xymatrix{
& \ud{n} \\
\underset{i \in \ud{r}} \sqcup Supp(f_i) = \underset{i \in \ud{r}} \sqcup \underset{j \in \inv{h}(\lbrace i \rbrace)} \sqcup Supp(g_j) \ar[rr]^{\ \ \ \ \ \ \ \ \ \ \sigma_F} \ar[ru] \ar[d]_{tot(f)} &&
 \underset{i \in \ud{s}} \sqcup Supp(g_i) \ar[lu] \ar[d]^{tot(g)} \\
 \underset{i \in \ud{r}} \sqcup k_i \ar[rr]^p \ar[d]_{Ind(f)} && \underset{i \in \ud{s}} \sqcup l_i \ar[d]^{Ind(g)} \\
 \ud{r}  && \ud{s} \ar[ll]^h
}
\end{equation*}
Thus we have uniquely associated to an arrow $F = (h, p)$ in $\Lbb(\gn{n})$, an arrow in the category $\Lbb(n)$. We define the morphism function of $J(n)$ as follows:
\[
J(n)(F = (h, p)) := (h, p, \sigma_F).
\]
As mentioned above the object function of $J(n)$ is the identity.
This defines a functor which is an isomorphism of categories.
\end{proof}
\begin{rem}
\label{nat-transf-ext}
By proposition \ref{Ext-Fun-DegWise-iso} there exists a unique functor $\Lbb(-):\gop \to \PCat$ and the family isomorphisms
$J(n)$ in the lemma above glue together to define a natural isomorphism $J:\Lbb(\gn{-}) \Rightarrow \Lbb(-)$. This implies that we have a composite natural transformation
\[
\PLbb(-) \overset{i} \Rightarrow \Lbb(\gn{-}) \overset{J} \Rightarrow \Lbb(-)
\]
where $i$ is the natural transformation obtained in remark \ref{inc-str-lax-Pn}.
Further the natural equivalence of remark \ref{inc-str-lax-PN-pi1} extends to a natural equivalence $\Pi \circ(J \circ i)$
\begin{equation*}
id_{\Pi_1} \circ (J \circ i) :\Pi_1\circ \PLbb(-) \Rightarrow \Pi_1\circ \Lbb(\gn{-}) \Rightarrow \Pi_1\circ \Lbb(-).
\end{equation*}
\end{rem}

We define another functor $G:\Lbb(n) \to \PLbb(n)$. This functor is defined on objects as follows:
\[
G((f_1, f_2, \dots, f_r)) := Supp(f) = (Supp(f_1), Supp(f_2), \dots, Supp(f_r)).
\]
This functor is defined on morphisms as follows:
\[
G((h, q, p)) := (h, p).
\]
\begin{thm}
\label{PL-cref-SC-L}
The category $\PLbb(n)$ is a coreflective subcategory of $\Lbb(n)$.
\end{thm}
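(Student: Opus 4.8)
The plan is to exhibit the functor $G:\Lbb(n) \to \PLbb(n)$ as a right adjoint to the inclusion $\iota$, which is exactly what it means for $\PLbb(n)$ to be coreflective in $\Lbb(n)$. First I would record two elementary facts that drive the whole argument. The inclusion $\iota$ is fully faithful and satisfies $G \circ \iota = \mathrm{id}_{\PLbb(n)}$, since $G(\iota(S)) = (Supp(f_{S_1}), \dots, Supp(f_{S_r})) = (S_1, \dots, S_r)$ and $G(\iota((h,p))) = (h,p)$. The second, and conceptually decisive, fact is that for $S = (S_1, \dots, S_r)$ the sequence $\iota(S) = (f_{S_1}, \dots, f_{S_r})$ consists of projections, so $Supp(f_{S_i}) = S_i$ and each restriction $f_{S_i}|_{Supp(f_{S_i})}$ is the identity of $S_i$; hence $tot(\iota(S)) = \mathrm{id}$. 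This is the reason a morphism out of an object in the image of $\iota$ carries no information in its $q$-component beyond its $p$-component.

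Next I would construct, for each object $f = (f_1, \dots, f_s) \in \Lbb(n)$, a counit morphism $\epsilon_f : \iota(G(f)) \to f$. Since $\iota(G(f))$ and $f$ both have length $s$ and $Supp(f_{Supp(f_i)}) = Supp(f_i)$, I set $\epsilon_f := (\mathrm{id}_{\underline{s}},\, tot(f),\, \mathrm{id})$; one checks against Definition~\ref{L-2-n} that the three squares commute, the only nontrivial one being $\Ind(Supp(f)) = \Ind(f) \circ tot(f)$, which holds because $tot(f)$ sends the $i$-th block into $k_i$ while $\Ind(f)$ sends $k_i$ to $i$. I would then verify the universal property: given any $S \in \PLbb(n)$ and any $\phi = (h, q, p) : \iota(S) \to f$ in $\Lbb(n)$, the pair $\tilde{\phi} := (h, p)$ is a morphism $S \to G(f)$ in $\PLbb(n)$, and it is the unique one with $\epsilon_f \circ \iota(\tilde\phi) = \phi$.

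The key computation is that the defining square of $\phi$ forces $q = tot(f) \circ p$: indeed the upper middle square of Definition~\ref{L-2-n} gives $q \circ tot(\iota(S)) = tot(f) \circ p$, and $tot(\iota(S)) = \mathrm{id}$ by the second fact above. Granting this, composing $\iota(\tilde\phi) = (h, p, p)$ with $\epsilon_f = (\mathrm{id}, tot(f), \mathrm{id})$ reproduces $(h,\, tot(f)\circ p,\, p) = (h, q, p) = \phi$ on all three components, while uniqueness is immediate because $\epsilon_f$ has identity $h$- and $p$-components and $\iota$ is faithful. That $\tilde\phi = (h,p)$ is a legitimate morphism of $\PLbb(n)$ follows from the $\Lbb(n)$-conditions on $\phi$ together with the identities $\sigma(Supp(f)) = \sigma(f)$ and $\Ind(Supp(f)) = \Ind(f)\circ tot(f)$, exactly as in the verification that $G$ itself is well defined on morphisms.

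The routine remaining checks — that $G$ is a functor (it retains $h$ and $p$, which compose in $\Lbb(n)$ precisely as in $\PLbb(n)$, so functoriality is automatic) and that $\epsilon$ is natural in $f$ — I would dispatch quickly. The main obstacle is purely bookkeeping: unwinding the layered commutative squares defining morphisms in $\Lbb(n)$ and in $\PLbb(n)$ to confirm that the proposed morphisms satisfy all compatibility conditions, the crucial and recurring simplification throughout being the collapse $tot(\iota(S)) = \mathrm{id}$ for objects in the image of $\iota$.
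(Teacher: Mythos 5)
Your proof is correct and takes essentially the same route as the paper: both exhibit $G$ as right adjoint to the inclusion $\iota$ via the counit $\epsilon(f) = (id, tot(f), id)$, with the collapse $tot(\iota(S)) = id$ doing the key work. The only difference is presentational — the paper verifies naturality of $\epsilon$ (the triangle identities being trivial since $G\iota = id$ and $\epsilon \circ \iota = id$), whereas you check the equivalent universal property of the counit directly, which is if anything slightly more complete.
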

\begin{proof}
We will show that the functor $G$ defined above is a right adjoint to the inclusion functor
$\iota$. It is easy to see that $id_{\PLbb(n)} = G\iota$. We define a natural transformation
$\epsilon:\iota G \Rightarrow id_{L''(n)}$. Let $f = (f_1, f_2, \dots, f_r)$ be an object in $\Lbb(n)$.
We define
\[
\epsilon(f) := (id, tot(f), id):(f_{Supp(f_1)}, f_{Supp(f_2)}, \dots, f_{Supp(f_r)}) = f_{Supp(f)} = \iota G(f) \to f
\]
The following commutative diagram verifies that the triple on the right is a map in $\Lbb(n)$:
\begin{equation*}
\xymatrix{
& \underline{n} \\
\underset{i=1}{\overset{r} \sqcup} Supp(f_{Supp(f_i)})  \ar@{=}[rr]^{id} \ar[ru]^{\sigma(f_{Supp(f)}) \ \ \ \ } \ar@{=}[d]_{id} &&  \underset{i=1}{\overset{r} \sqcup} Supp(f_i) \ar[lu]_{\sigma(f)} \ar[d]^{tot(f)} \\
\underset{i=1}{\overset{r} \sqcup} Supp(f_{Supp(f_i)})  \ar[rr]^{tot(f)} \ar[d]_{\textit{Ind}(f_S)} && \underset{i=1}{\overset{r} \sqcup} k_i  \ar[d]^{\textit{Ind}(f)} \\
\underline{r}  && \underline{r} \ar@{=}[ll]_{id}
}
\end{equation*}
The following chain of equalities verifies that $\epsilon$ is a natural transformation:
\begin{multline*}
(h, q, p) \circ \epsilon(f) = (h, q, p) \circ (id, tot(f), id) = (h, q \circ tot(f), id) = \\
 (h, tot(g) \circ p, p) = (id, tot(g), id) \circ (h, p, p)
= \epsilon(g)  \circ \iota G((h, q, p)).
\end{multline*}

\end{proof}

Now we want to define another category $QL''(n)$ which is isomorphic to the full subcategory of $\PLbb(n)$ whose objects are finite sequences of, not necessarily distinct, singleton subsets of $\underline{n}$.
We will denote an object of $QL''(n)$ by $s = (s_1, s_2, \dots, s_r)$.
Equivalently we may describe this object $S$ by a map
\[
s: \underline{r} \to \underline{n}
\]
%where $\underline{S}$ is a finite set and the right arrow in the zig-zag is a bijection.
A map $p:(s_1, s_2, \dots, s_r) \to (t_1, t_2, \dots, t_r) = t$ in $QL''(n)$
is a bijection $p:\underline{r} \to
\underline{r}$ such that the following diagram commutes:
\begin{equation*}
\xymatrix{
\underline{r} \ar[rr]^p \ar[rd]_s && \underline{r} \ar[ld]^t \\
& \underline{n}
}
\end{equation*}

We observe that the category $QL''(n)$ is in fact a groupoid.
We define a functor
$H:\PLbb(n) \to QL''(n)$. Let $S = (S_1, S_2, \dots, S_r)$ be an object of $\PLbb(n)$, we define
$H(S)$ to be the following composite where the first map is the canonical bijection
\begin{equation*}
H(S): \underset{i=1}{\overset{r} + } S_i \overset{\inv{can}} \to \SQCup{i}{r} S_i \overset{\sigma(S)} \to \underline{n},
\end{equation*}
where $+$ denotes the tensor product in $\N$.
Let $(h, p):S \to T = (T_1, T_2, \dots, T_s)$ be a map in $\PLbb(n)$.
We define the morphism function of the functor $H$ as follows:
\begin{equation*}
H((h, p)) := \N(p),
\end{equation*}
where $\N(p):\underset{j=1}{\overset{s} + }  T_j  \to \underset{i=1}{\overset{r} + S_i }  $ is the
bijection in $\N$ which makes the following diagram commutative
\begin{equation*}
\xymatrix{
\SQCup{i}{r} S_i \ar[d]_{can} \ar[r]^p & \SQCup{j}{s} T_j \ar[d]^{can} \\
\underset{i=1}{\overset{r} + }  S_i  & \underset{j=1}{\overset{s} + }  T_j \ar[l]^{\N(p)}
}
\end{equation*}

It is easy to check that $H:\PLbb(n) \to QL''(n)$ is a functor.
 The following commutative diagram indicates the naturality in our definition of the functor $H$:
 \begin{equation}
 \label{Nat-in-H}
 \xymatrix{
 && \underline{n}  \\ 
  &\SQCup{i}{r} S_i \ar[ld]^p_\cong \ar[dd]^{\textit{Ind}(S)}  \ar[ru]^{\sigma(S)} \ar@/^1pc/@{=}[rr] && \SQCup{i}{r} S_i  \ar[lu]_{\sigma(S)} \ar[dd]^{can} \ar[ld]^p_\cong \\
  \SQCup{j}{s} T_j \ar@/^1pc/@{=}[rr] \ar@/^3pc/[rruu]^{\sigma(T)} \ar[dd]_{\textit{Ind}(T)} && \SQCup{j}{s} T_j  \ar@/_1pc/[uu]_{\sigma(T)} \ar[dd]^{can} \\
  & \underline{r} && \underset{i=1}{\overset{r} + }  S_i \ar@/^1pc/[ll]^{\textit{Ind}(S) \circ \inv{can}}  \\   \underline{s} \ar[ru]_h && \underset{j=1}{\overset{s} + }  T_j \ar@/^1pc/[ll]^{\textit{Ind}(T) \circ \inv{can}} \ar[ru]_{\N(p)}
 }
 \end{equation}
The above diagram will be useful in proving that $H$ is a left-adjoint-inverse. Now we define another functor
$\iota:QL''(n) \to \PLbb(n)$. Let $s:\underline{r} \to \underline{n}$ be an object in $QL''(n)$. The canonical inclusion of $s$ in $\underline{n}$ can be factored as follows:
\begin{equation}
\label{decom-ob-QL}
\xymatrix{
& \SQCup{i}{r} s(i) \ar[rd]^{\sigma(s)} \ar[ld]_{\Ind(s)} \\
\underline{r}  \ar[rr]_s && \underline{n}
}
\end{equation}
where $\Ind(s)$ is the bijection $s(i) \mapsto i$ and $\sigma(s)$ is the canonical inclusion map.
The functor $\iota$ is defined on objects as follows:
\[
\iota(s) := (s(1), s(2), \dots, s(r)).
\]
Let $p:s \to t$ be a map in $QL''(n)$, the functor $\iota$
is defined on morphisms as follows:
\[
\iota(p) := (\inv{p}, p'),
\]
where $p'$ is the unique bijection which makes the following diagram commute:
\begin{equation*}
\xymatrix{
\SQCup{i}{r} s(i) \ar[d]_{\Ind(s)} \ar[r]^{p'} & \SQCup{i}{r} t(i) \ar[d]^{\Ind(t)} \\
\underline{r} \ar[d]_{s}  & \underline{r} \ar[l]^{\inv{p}} \ar[ld]^{t} \\
\underline{n}
}
\end{equation*}
By the above commutative diagram and factorization \eqref{decom-ob-QL} we get the following commutative diagram which shows that $\iota(p) = (\inv{p}, p')$ is indeed a morphism in $\PLbb(n)$:
\begin{equation*}
 \xymatrix{
 & \underline{n} \\
 \SQCup{i}{r} s(i) \ar[ru]^{\sigma(s)} \ar[d]^{\Ind(s)} \ar[rr]^{p'} && \SQCup{i}{r} t(i) \ar[d]_{\Ind(t)} \ar[lu]_{\sigma(t)} \\
\underline{r} \ar@/^4pc/[ruu]^s  && \underline{r} \ar[ll]^{\inv{p}} \ar@/_4pc/[luu]_t
 }
\end{equation*}

\begin{thm} 
\label{refl-sub-cat-QPL}
The category $QL''(n)$ is isomorphic to a reflective subcategory of $\PLbb(n)$.
\end{thm}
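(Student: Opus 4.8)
The plan is to exhibit $QL''(n)$ as isomorphic onto a \emph{full} subcategory of $\PLbb(n)$ through $\iota$, and then to show that this inclusion admits a left adjoint given by $H$. Reflectivity follows at once, since a left adjoint to a fully faithful functor automatically has invertible counit, and here the counit will in fact be the identity. Concretely, I would establish three things: (i) the functor $\iota:QL''(n)\to\PLbb(n)$ is fully faithful and injective on objects, so it is an isomorphism onto the full subcategory of $\PLbb(n)$ spanned by finite sequences of singleton subsets; (ii) $H\iota=\id_{QL''(n)}$, so that $H$ is a strict left inverse to $\iota$; and (iii) there is a unit natural transformation $\eta:\id_{\PLbb(n)}\Rightarrow\iota H$ which, together with the identity counit supplied by (ii), satisfies the triangle identities, so that $H\dashv\iota$.

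For (i), given objects $s,t$ of $QL''(n)$, a morphism $(h,q):\iota(s)\to\iota(t)$ in $\PLbb(n)$ has $q$ a bijection between coproducts of singletons, hence is determined by a permutation $\pi$ of $\underline{r}$ sending block $i$ to block $\pi(i)$. The triangle over $\underline{n}$ then forces $t\circ\pi=s$ and the lower square forces $h=\inv{\pi}$, so $(h,q)=\iota(\pi)$ with $\pi:s\to t$ a unique morphism of $QL''(n)$; this yields the bijection $QL''(n)(s,t)\cong\PLbb(n)(\iota(s),\iota(t))$. Injectivity on objects is immediate from $\iota(s)=(s(1),\dots,s(r))$. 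For (ii) I would unwind the definitions: for a singleton sequence the coproduct $\SQCup{i}{r}S_i$ and the tensor sum $\underset{i}{+}S_i$ are both $\underline{r}$, the map $can$ is the identity, and $\sigma(\iota(s))$ is exactly $s$, so $H\iota(s)=s$ on objects; on morphisms the identity $H\iota(p)=p$ follows by tracking the induced bijection $p'$ of the factorization \eqref{decom-ob-QL} through the defining rule $H((h,p))=\N(p)$.

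The key construction is the unit of (iii). For $S=(S_1,\dots,S_r)\in\PLbb(n)$ with $m=\sum_i\abs{S_i}$, the object $\iota H(S)$ is the sequence of $m$ singletons obtained by listing every element of $\SQCup{i}{r}S_i$ in order, and I would set $\eta_S:=(h_S,p_S)$, where $p_S:\SQCup{i}{r}S_i\to\SQCup{k}{m}(\iota H(S))_k$ is the canonical bijection and $h_S:=\Ind(S)\circ\inv{can}:\underline{m}\to\underline{r}$ records the block from which each element came. The two conditions required for $\eta_S$ to be a morphism of $\PLbb(n)$, namely $\sigma(\iota H(S))\circ p_S=\sigma(S)$ and $h_S\circ\Ind(\iota H(S))\circ p_S=\Ind(S)$, both read off the trivial factorization $\sigma(S)=\sigma(S)\circ\inv{can}\circ can$. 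Naturality of $\eta$ in $S$, together with the triangle identities $H\eta=\id$ and $\eta_{\iota(t)}=\id_{\iota(t)}$, then reduces to the large commutative diagram \eqref{Nat-in-H}; note $\eta_{\iota(t)}$ is literally an identity because $\iota(t)$ is already a singleton sequence.

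I expect the main obstacle to be purely bookkeeping: keeping the three auxiliary maps $can$, $\Ind(-)$ and $\sigma(-)$ coherent through the various composites, and verifying that the bijection $p_S$ and the block-indexing map $h_S$ are genuinely natural in $S$ (the one place where the permutation data could fail to respect composition of $\PLbb(n)$-morphisms). Once $\eta_S$ is confirmed to be a $\PLbb(n)$-morphism and the triangle identities are checked against the identity counit from (ii), the full faithfulness from (i) upgrades the adjunction $H\dashv\iota$ to an identification of $QL''(n)$ with a reflective subcategory of $\PLbb(n)$, completing the proof.
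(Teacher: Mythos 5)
Your proposal is correct and follows essentially the same route as the paper: the same reflector $H$, the same unit $\eta(S)=(\Ind(S)\circ\inv{can},\,id)$ (your ``canonical bijection'' $p_S$ is the identity under the evident identification), and the same triangle-identity checks against the identity counit, with naturality reduced to diagram \eqref{Nat-in-H}. The only difference is that you verify full faithfulness of $\iota$ directly in step (i), whereas the paper obtains it implicitly from the fact that the counit of $H\dashv\iota$ is the identity.
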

\begin{proof}
We will show that the functor $H$
defined above is a left-adjoint-inverse to $\iota$ which is also defined above. Clearly $H\iota = id_{QL''(n)}$. We now construct a natural transformation $\eta:id \rightarrow  \iota H$. Let $S = (S_1, S_2, \dots, S_r)$ be an object of $\PLbb(n)$.
We define
\begin{equation*}
\eta(S) := (\textit{Ind}(S) \circ \inv{can}, id).
\end{equation*}
The following commutative diagram verifies that the pair on the right is a map in $\PLbb(n)$:
\begin{equation*}
\xymatrix{
& \underline{n}  \\ 
\SQCup{i}{r} S_i \ar[d]_{\textit{Ind}(S)}  \ar[ru]^{\sigma(S)} \ar@{=}[rr] && \SQCup{i}{r} S_i \ar[d]^{can}  \ar[lu]_{\sigma(S)} \\
\underline{r} && \underset{i=1}{\overset{r} + }  S_i \ar[ll]^{\textit{Ind}(S) \circ \inv{can}}
}
\end{equation*}
We claim that $\eta$ as defined above is a natural transformation.
Let $(h, p):S \to T = (T_1, T_2, \dots, T_j)$ be a map in $\PLbb(n)$.
In order to prove our claim we would like to show that the following diagram commutes in $\PLbb(n)$:
\begin{equation*}
\xymatrix{
S \ar[r]^{\eta(S) \ \ } \ar[d]_{(h, p)} &  \iota H(S) \ar[d]^{\iota H((h, p))} \\
T \ar[r]_{\eta(T) \ \ }  &  \iota H(T)
}
\end{equation*}
 The following chain of equalities verifies that $\eta$ is a natural transformation:
\begin{multline*}
\eta(T) \circ (h, p) = (\textit{Ind}(T) \circ \inv{can}, id) \circ (h, p) = (h \circ (\textit{Ind}(T) \circ \inv{can}), p) = \\
( (\textit{Ind}(S) \circ \inv{can}) \circ H(p), p) = (H(p), p) \circ (\textit{Ind}(S) \circ \inv{can}, id)
= \iota H((h, p)) \circ \eta(S).
\end{multline*}
we refer the reader to the commutative diagram \eqref{Nat-in-H}
for an explaination of the middle equalites.
The composite natural transformation
\[
id_{\PLbb(n)} \circ \iota \Rightarrow \iota H \iota \Rightarrow \iota \circ id_{QL''(n)}
\]
is the identity, this follows from the observation that
\[
\eta(\iota(s)) = (Ind(\iota(s)) \circ \inv{can}, id) = (can \circ \inv{can}, id) = \iota((id, id)) = id_{\iota(s)}.
\]
Similarly we claim that the following composite natural transformation
\[
H \circ id_{\PLbb(n)} \Rightarrow H \iota H \Rightarrow id_{QL''(n)} \circ H 
\]
is the identity. Our claim follows from the observation that
\[
H(\eta(S)) = H((Ind(S) \circ \inv{can}, id)) = (id, id) = id_{S}.
\]
\end{proof}

The above discussion can be summarized by the following diagram in which both pairs of functors are adjunctions
\begin{equation}
\label{reflect-coreflect}
\xymatrix{
QL''(n) \ar@{^{(}->}[r]  & \PLbb(n) \ar@/_1pc/[l]_{H} \ar@/_/[r] & \Lbb(n) \ar@/_/[l]_G
}
\end{equation}
We observe that the groupoid $\PStr(n)$, see definition \ref{P-Str-n}, is just the Gabriel factorization of the functor $H$. Since the functor $H$ has a right adjoint, proposition \ref{Gab-Fact-adjoints} implies that the groupoid $\PStr(n)$ is isomorphic to $\Pi_1(\PLbb(n))$. Thus we have the following lemma:
 \begin{lem}
 \label{rep-as-SM-func}
 For a permutative category $C$,  the category $K(C)(n^+)$ is isomorphic to the category of strict symmetric monoidal functors $\StrSMHom{\PStr(n)}{C}$. 
 \end{lem}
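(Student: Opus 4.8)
The plan is to assemble the isomorphisms already in place and to pin down the one identification on which the statement genuinely depends. First I would invoke Lemma~\ref{SMFunc-PLn-n-bikes}, which supplies an isomorphism $\KSeg(C)(n^+) \cong \StrSMHom{\PLbb(n)}{\underline{C}}$ (the functor $F$, with inverse $\inv{F}$). Since $\underline{C}$ is a groupoid, any strict symmetric monoidal functor $\PLbb(n) \to \underline{C}$ sends every morphism to an isomorphism, hence factors uniquely through the localization $\Pi_1(\PLbb(n))$; this is exactly the content of Remark~\ref{Pi-1-ind-equiv}, giving $\StrSMHom{\PLbb(n)}{\underline{C}} \cong \StrSMHom{\Pi_1(\PLbb(n))}{C}$. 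Thus everything reduces to producing a strict symmetric monoidal isomorphism $\PStr(n) \cong \Pi_1(\PLbb(n))$.

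For that isomorphism I would feed the functor $H:\PLbb(n) \to QL''(n)$ into the Gabriel-factorization machinery. Theorem~\ref{refl-sub-cat-QPL} shows that $H$ is left adjoint to $\iota$, so in particular $H$ possesses a right adjoint, and $QL''(n)$ is a groupoid. Hence Proposition~\ref{Gab-Fact-adjoints}, applied with $E = \PLbb(n)$, $F = QL''(n)$ and $\Phi = H$, identifies the Gabriel category of $H$ with $\Pi_1(\PLbb(n))$ as permutative categories.

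The hard part, and really the only non-formal step, will be checking that the Gabriel category of $H$ is precisely the permutative groupoid $\PStr(n)$ of Definition~\ref{P-Str-n}. By construction the Gabriel category has the same objects as $\PLbb(n)$, namely finite sequences $(S_1,\dots,S_r)$ of subsets of $\ud{n}$, and its hom-object from $(S_1,\dots,S_r)$ to $(T_1,\dots,T_k)$ is $QL''(n)\bigl(H(S),H(T)\bigr)$. Unwinding the definitions of $H$ and of $QL''(n)$, such a morphism is a bijection $\SQCup{i}{r} S_i \to \SQCup{j}{k} T_j$ compatible with the canonical inclusions of both coproducts into $\ud{n}$, which is exactly a morphism $(S_1,\dots,S_r) \to (T_1,\dots,T_k)$ in $\PStr(n)$. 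I would then verify that this bijection of hom-sets respects composition and carries the symmetric monoidal structure (concatenation of sequences) on the one side to that on the other, so that the identification is one of permutative groupoids.

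Combining these steps, the strict symmetric monoidal isomorphism $\PStr(n) \cong \Pi_1(\PLbb(n))$ induces an isomorphism of functor categories $\StrSMHom{\PStr(n)}{C} \cong \StrSMHom{\Pi_1(\PLbb(n))}{C}$, and chaining this with the first paragraph gives $\KSeg(C)(n^+) \cong \StrSMHom{\PStr(n)}{C}$, which is the assertion.
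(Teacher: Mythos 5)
Your proposal is correct and follows essentially the same route as the paper: the paper's proof is exactly the chain $\KSeg(C)(n^+) \cong \StrSMHom{\Pi_1(\PLbb(n))}{C} \cong \StrSMHom{\PStr(n)}{C}$, obtained from Lemma~\ref{SMFunc-PLn-n-bikes} together with Remark~\ref{Pi-1-ind-equiv}, and from the observation that $\PStr(n)$ is the Gabriel factorization of $H$ combined with Proposition~\ref{Gab-Fact-adjoints} (using that $H$ has the right adjoint $\iota$ from Theorem~\ref{refl-sub-cat-QPL} and that $QL''(n)$ is a groupoid). Your only divergence is that you explicitly unwind the hom-sets $QL''(n)(H(S),H(T))$ to verify the identification of the Gabriel category with $\PStr(n)$, a step the paper merely asserts as an observation, so your write-up is if anything slightly more complete.
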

 \begin{proof}
 The above discussion and lemma \ref{SMFunc-PLn-n-bikes} give us the following chain of isomorphisms:
 \begin{equation}
 \KSeg(C)(n^+) \cong \StrSMHom{\Pi_1(\PLbb(n))}{C} \cong \StrSMHom{\PStr(n)}{C}
 \end{equation}
 \end{proof}
 \begin{rem}
 \label{nat-isom-PStr-n}
 There is a functor $\PLbb(-):\gop \to \PCat^{op}$, see \ref{P-Lax-n}, which gives us a composite functor $\Pi_1 \circ \PLbb(-)$. For each $n \in \Nat$ we have an isomorphism of categories $I(n):\Pi_1(\PLbb(n)) \cong \PStr(n)$. Now proposition \ref{Ext-Fun-DegWise-iso} implies that we have a functor
 $\PStr(-):\gop \to \PCat^{op}$ and a natural isomorphism $I:\Pi_1 \circ \PLbb(-) \Rightarrow \PStr(-)$.
 \end{rem}
 \begin{rem}
 There is bifunctor defined by the following composite:
 \begin{equation*}
 \StrSMHom{\PStr(-)}{-}: \gop \times \PCat \overset{\PStr(-) \times id} \to \PCat^{op} \times \PCat \overset{\StrSMHom{-}{-}} \to \Cat
 \end{equation*}
 where $\PStr(-)$ is the functor defined above and $\StrSMHom{-}{-}$ is the function object defined in appendix \ref{Mdl-CAT-Perm}.
 \end{rem}
 \begin{rem}
 The above lemma \ref{rep-as-SM-func} and the above remark together imply that for each pair $(n^+, C) \in \gop \times \PCat$ there is an isomorphism of categories $\eta(n):\StrSMHom{\PStr(n)}{C} \cong \K(C)(n^+)$.
 Now proposition \ref{Ext-Fun-DegWise-iso} implies that there is a bifunctor
 \begin{equation*}
 \K(-, -):\gop \times \PCat \to \Cat
 \end{equation*}
 defined by $\K(n^+, C) = \K(C)(n^+)$ which is equipped with a natural isomorphism
 $\eta:\StrSMHom{\PStr(-)}{-} \cong \K(-,-)$.
 This also implies that there is a functor
 \begin{equation*}
 \K:\PCat \to \gCAT
 \end{equation*}
 defined by $\K(C) := \K(-, C)$ for each permutative category $C$.
 \end{rem}
 \begin{rem}
 The above lemma \ref{rep-as-SM-func} implies that for each permutative category $C$, there is a $\gCat$
 \[
  \StrSMHom{\PStr(-)}{C}:\gop \to \Cat
 \]
   and it is isomorphic to $\K(C)$.
 \end{rem}
 \begin{rem}
 \label{nat-transf-ext-left}
 The natural equivalence from remark \ref{nat-transf-ext} extends to the following composite natural equivalence:
 \begin{equation*}
id_{\Pi_1} \circ (J \circ i) \circ \inv{I} :\PStr(n) \overset{\inv{I}} \Rightarrow \Pi_1\circ \PLbb(-) \Rightarrow \Pi_1\circ \Lbb(\gn{-}) \Rightarrow \Pi_1\circ \Lbb(-).
\end{equation*}
where $I$ is the natural isomorphism from remark \ref{nat-isom-PStr-n}
 \end{rem}
 
 The above lemma \ref{rep-as-SM-func} implies that the functor $\KSeg$ preserves limits in $\PCat$ because degreewise it is isomorphic to a functor which preserves limits. The category $\gCAT$ is complete and cocomplete. Now the \emph{formal criterion for existence of an adjoint} \cite[Thm. 2, Ch. X.7]{MacL} implies that $\KSeg$ has a left adjoint which we denote
 \begin{equation*}
 \label{left-adj-Seg-func}
 \PStr:\gCAT \to \PCat.
 \end{equation*}
 Each $n \in Ob(\N)$ uniquely defines $n$ \emph{projection} maps of based sets $\delta^n_k:n^+ \to 1^+$, $1 \le k \le n$. Each of these projection maps induce a strict symmetric monoidal functor $\PStr(\delta^n_k):\PStr(1) \to \PStr(n)$
 which maps the object $1 \in Ob(\PStr(1))$ to the inverse image of $1$ under the map $\delta^n_k$ \emph{i.e.} $\PStr(\delta^n_k)(1) = \inv{(\delta^n_k)}(\lbrace 1 \rbrace) = \lbrace k \rbrace \subset n$ in the category $\PStr(n)$. These inclusion maps together induce a strict symmetric monoidal functor in $\PCat$
 \begin{equation*}
 \underset{k=1} {\overset{n} \vee} \PStr(\delta_n^k): \underset{k=1} {\overset{n} \vee} \PStr(1) \to \PStr(n),
 \end{equation*}
 where $\underset{k=1} {\overset{n} \vee} \PStr(1)$ is the coproduct of $n$ copies
 of $\PStr(1)$ in $\PCat$. We will now present a concrete construction of
 the coproduct $\underset{k=1} {\overset{n} \vee} \PStr(1)$ and also construct
 a strict symmetric monoidal functor $\underset{k=1} {\overset{n} \vee} \PStr(\iota_n^k)$.
 An (non-unit) object $S$ of $\underset{k=1} {\overset{n} \vee} \PStr(1)$ is a (finite) sequence $(s_1, s_2, \dots, s_r)$ in which $s_i$ is either the empty set or a singleton subset of $\ud{n}$ for $1 \le i \le r$. We observe that the object $S$ is equipped with a (unique) morphism
 $\underset{i=1}{\overset{r} \sqcup} s_i \to \ud{n}$.  A morphism $f: S \to T = (t_1, t_2, \dots, t_q)$ is an isomorphism $f:\underset{i=1}{\overset{r} \sqcup} s_i  \to \underset{i=1}{\overset{q} \sqcup} t_i $ such that the following diagram commutes:
 \begin{equation*}
 \xymatrix{
 \underset{i=1}{\overset{r} \sqcup} s_i \ar[rr]^f_\cong \ar[rd] && \underset{i=1}{\overset{q} \sqcup} t_i \ar[ld] \\
 & \ud{n}
}
 \end{equation*}
 
 The  functor $ \underset{k=1} {\overset{n} \vee}
 \PStr(\delta^n_k)$ is now the obvious inclusion functor.
% where we are viewing a sequence $(1, 1, \dots, 1)$ as an object of
% $\PStr(n)$. The morphism function of the functor $\underset{k=1} {\overset{n} \vee}
% \PStr(\iota_n^k)$ is defined similarly by viewing each $f_i:s_i \to t_i$ as a
% map in $\PStr(n)$ \emph{i.e.}
% \begin{equation*}
% \underset{k=1} {\overset{n} \vee} \PStr(\delta^n_k) ((f_1, f_2, \dots, f_r)) := f_1 \underset{\PStr(n)}
% \otimes f_2 \underset{\PStr(n)} \otimes \cdots  \underset{\PStr(n)} \otimes f_r.
% \end{equation*}
%% The inclusion strict symmetric monoidal functor induces the following isomorphism:
% \[
% \StrSMHom{\underset{k=1} {\overset{n} \vee}
% \PStr(\iota_n^k)}{C}: \StrSMHom{\underset{k=1} {\overset{n} \vee}
% \PStr(1)}{C} \to 
% \]
 
% can be written as a tensor product of maps in $\PStr(1)$ \emph{i.e.} $f_i = \underset{j=1} {\overset{k} \otimes}  f_i^j$, where $f_i^j:s_i^j \to t_i^j$ is a map in $\PStr(1)$ for all $1 \le i \le r$ and $1 \le j \le k$. The permutative structure on this category is given by concatenation with the unit object being the empty sequence.
 \begin{lem}
 \label{acy-cof-inc}
 The strict symmetric monoidal functor $\underset{k=1} {\overset{n} \vee} \PStr(\delta^n_k)$
 is an acyclic cofibration in $\PCat$.
 \end{lem}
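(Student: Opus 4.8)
The plan is to verify condition (3) of Proposition \ref{char-acy-cof}, which characterizes acyclic cofibrations in $\PCat$: producing a permutative subcategory of $\PStr(n)$ together with a strict symmetric monoidal retraction onto it, and a suitable unital monoidal natural isomorphism, shows directly that $F := \underset{k=1}{\overset{n}\vee}\PStr(\delta^n_k)$ is an acyclic cofibration.

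First I would observe that $F$ is the inclusion of a full permutative subcategory of $\PStr(n)$. By the explicit description of the coproduct recalled just before the statement, the objects of $\underset{k=1}{\overset{n}\vee}\PStr(1)$ are exactly the finite sequences $(s_1,\dots,s_r)$ all of whose entries are either empty or singleton subsets of $\ud{n}$, and $F$ sends such a sequence to the same sequence regarded as an object of $\PStr(n)$, see Definition \ref{P-Str-n}. In particular $F$ is injective on objects. Moreover, a morphism in either category is nothing but a bijection of the associated disjoint unions commuting with the canonical maps into $\ud{n}$, and this datum is independent of how the disjoint union is partitioned into a sequence; hence $F$ is fully faithful. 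Thus $F$ is an isomorphism onto the full subcategory $S \subseteq \PStr(n)$ of singleton-or-empty sequences, which is closed under concatenation and contains the unit and so is a permutative subcategory. Writing $\iota_S:S \hookrightarrow \PStr(n)$ for the inclusion and $H$ for the corestriction of $F$, we get $F = \iota_S \circ H$ with $H$ an isomorphism in $\PCat$.

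Next I would construct the retraction $T:\PStr(n) \to S$ by \emph{atomizing} each subset into its singletons. On objects $T$ replaces a component $S_i = \{a_1 < \dots < a_p\}$ by the block $(\{a_1\},\dots,\{a_p\})$ when $p \ge 1$ and by the single component $\emptyset$ when $S_i = \emptyset$, then concatenates the blocks; on morphisms $T$ acts by the canonical identification of the disjoint union of $(S_1,\dots,S_r)$ with that of its atomization, both taken over $\ud{n}$. Since $T$ operates componentwise and the tensor product is concatenation, $T$ is strict monoidal, and the block-swap description of $\gamma$ shows it strictly preserves the symmetry, so $T$ is strict symmetric monoidal. The empty-set convention is chosen precisely so that atomizing a singleton or an empty component returns it unchanged, giving $T \circ \iota_S = id_S$ on the nose. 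Finally I would exhibit the unital monoidal natural isomorphism $\iota_S \circ T \cong id_{\PStr(n)}$: for $X = (S_1,\dots,S_r)$ the disjoint union of $X$ and that of its atomization $\iota_S T(X)$ are the \emph{same} set over $\ud{n}$, so the identity bijection gives an isomorphism $\eta(X):X \overset{\cong}\to \iota_S T(X)$, which is natural (a morphism of $\PStr(n)$ is determined by its underlying disjoint-union bijection over $\ud{n}$, preserved by atomization), unital (it is the identity on the empty sequence), and monoidal (both $T$ and the canonical identification respect concatenation).

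With $S$, $H$, $T$ and $\eta$ in hand, condition (3) of Proposition \ref{char-acy-cof} is satisfied and the lemma follows. The point demanding care—and the main obstacle—is the bookkeeping around empty components: they must be retained rather than discarded by $T$, for otherwise $T \circ \iota_S$ would fail to be the strict identity required by condition (3); once this convention is fixed, what remains is the routine but nontrivial check that all the strict symmetric monoidal axioms for $T$, and the naturality, unitality and monoidality of $\eta$, survive it.
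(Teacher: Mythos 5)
Your proof is correct, but it takes a genuinely different route from the paper's. The paper discharges the two halves of ``acyclic cofibration'' separately and very briefly: the functor is a cofibration because its object function is a monomorphism of free monoids (so that the lemma characterizing cofibrations in $\PCat$ via the object function's lifting property against surjective monoid homomorphisms applies), and it is a weak equivalence because the inclusion is fully faithful and every object $(S_1,\dots,S_r)$ of $\PStr(n)$ is isomorphic, via the identity bijection on underlying disjoint unions over $\ud{n}$, to the corresponding sequence of singletons in the image. You instead verify condition (3) of Proposition \ref{char-acy-cof} directly, building the atomization retraction $T$ and the unital monoidal natural isomorphism $\eta$. The two arguments share their combinatorial core — your $\eta(X)$ is precisely the isomorphism implicit in the paper's essential-surjectivity claim — but they package it differently. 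Your version is fully constructive and bypasses the free-monoid characterization of cofibrations altogether, since condition (3) yields the left lifting property against all fibrations in one stroke; the price is the bookkeeping you correctly flag around empty components, which is exactly what makes $T \circ \iota_S$ the strict identity rather than merely naturally isomorphic to it. The paper's version buys brevity: the cofibration half is one line once the monoid-theoretic lemma is in place, and no retraction data needs to be exhibited.
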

\begin{proof}
	The functor $\underset{k=1} {\overset{n} \vee} \PStr(\delta^n_k)$ is a cofibration because its object function is a monomorphism of free monoids.
	The inclusion functor $\underset{k=1} {\overset{n} \vee}
	\PStr(\delta^n_k)$ is fully-faithful. Each object of $\PStr(n)$ is isomorphic to an object of $\underset{k=1} {\overset{n} \vee} \PStr(1)$.
	\end{proof}
% \begin{proof}
% Clearly the induced functor $\underset{k=1} {\overset{n} \vee} \PStr(\delta^n_k)$
% is monic on objects. A map in $\underset{k=1} {\overset{n} \vee} \PStr(1)$ is a
% (finite) collection of isomorphisms in $\N$. From the description above
% it is easy to see that this induced functor is fully faithful.
% Now the result follows from $()$ of lemma \ref{}.
% \dots
% \end{proof}
 \begin{coro}
 \label{Seg-func-Ein}
 For each permutative category $C$, $\KSeg(C)$ is a coherently commutative monoidal category.
 \end{coro}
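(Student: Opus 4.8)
The plan is to verify that $\KSeg(C)$ satisfies the Segal condition of Proposition \ref{char-CCMC}; once this is established the conclusion is immediate, since that proposition characterizes coherently commutative monoidal categories as exactly the $\gCats$ satisfying it. The starting point is Lemma \ref{rep-as-SM-func}, which furnishes a natural isomorphism $\KSeg(C)(n^+) \cong \StrSMHom{\PStr(n)}{C}$, the naturality being recorded in Remark \ref{nat-isom-PStr-n}. Under this identification the structure map $\KSeg(C)(\delta^n_k)$ induced by the $k$-th projection $\delta^n_k : n^+ \to 1^+$ becomes precomposition with the strict symmetric monoidal functor $\PStr(\delta^n_k) : \PStr(1) \to \PStr(n)$, that is, the $k$-th leg of the wedge inclusion $\iota_n := \underset{k=1}{\overset{n}{\vee}} \PStr(\delta^n_k)$.

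First I would feed the acyclic cofibration $\iota_n$ of Lemma \ref{acy-cof-inc} and the fibration $C \to \ast$ (every permutative category is fibrant, since $\ast$ has no nontrivial isomorphisms to lift) into the Quillen bifunctor characterization, Lemma \ref{Q-bifunctor-char}(2), applied to $\PCat$ as a $\Cat$-enriched model category with enriched hom $\StrSMHom{-}{-}$. Because $\StrSMHom{\PStr(n)}{\ast}$ and $\StrSMHom{\underset{k=1}{\overset{n}{\vee}}\PStr(1)}{\ast}$ are both terminal in $\Cat$, condition (2) degenerates to the assertion that
\[
\StrSMHom{\iota_n}{C} : \StrSMHom{\PStr(n)}{C} \longrightarrow \StrSMHom{\underset{k=1}{\overset{n}{\vee}}\PStr(1)}{C}
\]
is an acyclic fibration in $\Cat$, hence an equivalence of categories. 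Since $\underset{k=1}{\overset{n}{\vee}}\PStr(1)$ is a coproduct in $\PCat$ and $\StrSMHom{-}{C}$ carries coproducts to products, the target is canonically $\KSeg(C)(1^+)^n$, and by the identification of the previous paragraph $\StrSMHom{\iota_n}{C}$ is precisely the $n$-fold Segal map $(\KSeg(C)(\delta^n_1), \dots, \KSeg(C)(\delta^n_n))$. Thus this map is an equivalence for every $n$.

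It then remains to deduce the two-factor Segal condition of Proposition \ref{char-CCMC} from the $n$-fold one just obtained. For this I would factor the comparison functor $(\KSeg(C)(\partition{k+l}{k}), \KSeg(C)(\partition{k+l}{l}))$ through $\KSeg(C)(1^+)^{k+l} \cong \KSeg(C)(1^+)^k \times \KSeg(C)(1^+)^l$: the $(k+l)$-fold Segal map is an equivalence, the $k$-fold and $l$-fold Segal maps on the two factors of the target are equivalences, and the two routes agree because $\delta^k_j \circ \partition{k+l}{k} = \delta^{k+l}_j$ for $j \le k$ (and symmetrically on the second block). The two-out-of-three property then forces $(\KSeg(C)(\partition{k+l}{k}), \KSeg(C)(\partition{k+l}{l}))$ to be an equivalence for all $k, l$, which is exactly the Segal condition; hence $\KSeg(C)$ is a coherently commutative monoidal category.

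I expect the main obstacle to be the bookkeeping in the first two steps rather than any genuine difficulty. One must record that $\PCat$ is a $\Cat$-model category with enriched hom $\StrSMHom{-}{-}$ so that Lemma \ref{Q-bifunctor-char} is applicable, and, more delicately, that the equivalence $\StrSMHom{\iota_n}{C}$ is genuinely identified with the $n$-fold Segal map under the natural isomorphism of Lemma \ref{rep-as-SM-func}; this is where the compatibility of the natural isomorphism $\eta$ of Remark \ref{nat-isom-PStr-n} with the projections $\delta^n_k$ is used. The final passage from the $n$-fold to the two-factor condition is routine.
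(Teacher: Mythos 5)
Your proposal is correct and follows essentially the same route as the paper: the identification $\KSeg(C)(n^+) \cong \StrSMHom{\PStr(n)}{C}$ from Lemma \ref{rep-as-SM-func}, the acyclic cofibration $\underset{k=1}{\overset{n}{\vee}} \PStr(\delta^n_k)$ from Lemma \ref{acy-cof-inc}, and the $\Cat$-enriched model structure on $\PCat$ --- your direct application of Lemma \ref{Q-bifunctor-char}(2) with $p\colon C \to \ast$ is exactly the content the paper packages as Corollary \ref{Hom-Cof-Fib}. The only difference is that you spell out the passage from the $n$-fold Segal maps to the binary Segal condition of Proposition \ref{char-CCMC}, a step the paper leaves implicit.
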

 \begin{proof}
 By Lemma \ref{rep-as-SM-func}, $\KSeg(C)(n^+) \cong \StrSMHom{\PStr(n)}{C}$. Now we have the following commutative diagram in $\Cat$
 \begin{equation*}
\xymatrix{
 \KSeg(C)(n^+) \ar[r]^{} \ar[d]_{(\KSeg(C)(\delta^n_1), \dots, \KSeg(C)(\delta^n_n) )} & \StrSMHom{\PStr(n)}{C}  \ar[d]^{(\StrSMHom{\PStr(\delta^n_1)}{C}, \dots, \StrSMHom{\PStr(\delta^n_n)}{C})} 
 \\
 \underset{i=1}{\overset{n} \prod} \KSeg(C)(1^+) \ar[r] 
 & \underset{i=1}{\overset{n} \prod} \StrSMHom{\PStr(1)}{C}
}
\end{equation*}
According to the lemma \ref{acy-cof-inc}, the strict symmetric monoidal functor $\underset{k=1} {\overset{n} \vee} \PStr(\delta^n_k)$
is an acyclic cofibration therefore
the right vertical functor is an acyclic fibration in $\Cat$, see corollary \ref{Hom-Cof-Fib}. The two horizontal functors in this diagram are isomorphisms, therefore $(\KSeg(C)(\delta^n_1), \dots, \KSeg(C)(\delta^n_n) )$ is also an acyclic fibration in $\Cat$. Thus we have proved that $\KSeg(C)$ is a coherently commutative monoidal category for every $C \in Ob(\PCat)$.
 \end{proof}
 The above corollary will be extremely useful in proving that the adjunction $\left(\PStr, \KSeg \right)$ is a Quillen adjunction. We recall that a map in $\gCAT$ between two coherently commutative monoidal categories is a weak equivalence (resp. fibration) if and only if it is degreewise a weak equivalence (resp. fibration) in $\Cat$.
 \begin{lem}
 The adjunction $\left(\PStr, \KSeg \right)$ is a Quillen adjunction between the natural model category $\PCat$ and the model category of coherently commutative monoidal categories  $\gCAT$.
 \end{lem}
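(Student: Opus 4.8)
The plan is to verify that the right adjoint $\KSeg$ is a right Quillen functor, that is, that it preserves fibrations and acyclic fibrations; by the standard characterization of Quillen adjunctions this is equivalent to the assertion of the lemma. The decisive simplification is Corollary \ref{Seg-func-Ein}: for every permutative category $C$ the $\gCat$ $\KSeg(C)$ is a coherently commutative monoidal category, hence fibrant in the model category of coherently commutative monoidal categories $L_{\E_\infty\S}\gCAT$. Consequently, for any morphism $p:C \to D$ in $\PCat$ the map $\KSeg(p)$ has fibrant source and fibrant target, which lets me test the relevant conditions degreewise.

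First I would treat acyclic fibrations. Since the cofibrations of the model category of coherently commutative monoidal categories coincide with the Q-cofibrations (Theorem \ref{loc-semi-add}), a map is an acyclic fibration in $L_{\E_\infty\S}\gCAT$ if and only if it has the right lifting property with respect to all Q-cofibrations, i.e. if and only if it is a strict acyclic fibration (degreewise an acyclic isofibration); no appeal to fibrancy is needed here. For fibrations I would invoke the fact recalled immediately before the statement: a map between coherently commutative monoidal categories is a fibration in $L_{\E_\infty\S}\gCAT$ if and only if it is a strict fibration, that is, degreewise an isofibration. Thus in both cases it suffices to analyze $\KSeg(p)$ one degree at a time.

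The degreewise analysis rests on Lemma \ref{rep-as-SM-func}, which gives a natural isomorphism $\KSeg(p)(n^+)\cong \StrSMHom{\PStr(n)}{p}$, the map induced on categories of strict symmetric monoidal functors. I claim $\PStr(n)$ is cofibrant in $\PCat$: by Definition \ref{P-Str-n} its monoid of objects is the free monoid on the set $\P(\ud n)$ of subsets of $\ud n$, the tensor being concatenation of sequences, so the unique functor from the initial permutative category $\emptyset_{\PCat}$ has object function the inclusion of the trivial monoid into a free monoid, which lies in the class $L$; by the characterization of cofibrations in $\PCat$ this functor is a cofibration. Now I would apply the $\Cat$-enriched structure of $\PCat$ established in appendix \ref{Mdl-CAT-Perm} in its pullback-hom form, of which Corollary \ref{Hom-Cof-Fib} is an instance, to the cofibration $\emptyset_{\PCat}\hookrightarrow \PStr(n)$ and the fibration $p$. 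Since $\StrSMHom{\emptyset_{\PCat}}{-}$ is terminal, the relevant pullback collapses and the pullback-hom map is exactly $\StrSMHom{\PStr(n)}{p}$; the bifunctor property then yields that $\StrSMHom{\PStr(n)}{p}$ is an isofibration, acyclic whenever $p$ is an acyclic fibration.

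Putting these together: if $p$ is an acyclic fibration in $\PCat$ then $\KSeg(p)$ is degreewise an acyclic isofibration, hence a strict acyclic fibration, hence an acyclic fibration in $L_{\E_\infty\S}\gCAT$; and if $p$ is a fibration then $\KSeg(p)$ is degreewise an isofibration, hence a strict fibration between the fibrant objects $\KSeg(C)$ and $\KSeg(D)$, hence a fibration in $L_{\E_\infty\S}\gCAT$. Therefore $\KSeg$ preserves fibrations and acyclic fibrations, and $(\PStr,\KSeg)$ is a Quillen adjunction. The main obstacle I anticipate is not a single hard estimate but the careful bookkeeping at the pullback-hom step: one must confirm that the self-enriched $\Cat$-model structure of $\PCat$ from the appendix genuinely delivers the degreewise isofibration statement for $\StrSMHom{\PStr(n)}{-}$, and that $\PStr(n)$ is truly cofibrant, i.e. that the cofibration characterization applies to the functor out of the initial permutative category exactly as claimed rather than merely asserting that $\PStr(n)$ has a free monoid of objects.
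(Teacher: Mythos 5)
Your proof is correct and takes essentially the same route as the paper's: a degreewise verification that $\KSeg$ preserves fibrations and acyclic fibrations, using the identification $\KSeg(-)(n^+)\cong\StrSMHom{\PStr(n)}{-}$ of Lemma \ref{rep-as-SM-func}, the cofibrancy of $\PStr(n)$, and the $\Cat$-enriched model structure on $\PCat$ (the paper invokes the cotensor via Corollary \ref{Hom-Q-Cof}, whereas you run the pullback-hom argument directly with the zero object of $\PCat$ — the same computation). Your extra care, namely justifying degreewise sufficiency through Corollary \ref{Seg-func-Ein} and the standard Bousfield-localization facts, and deducing cofibrancy of $\PStr(n)$ from the free-monoid-of-objects characterization, only makes explicit what the paper asserts without proof.
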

 \begin{proof}
 We will prove the lemma by showing that the right adjoint functor $\KSeg$ preserves fibrations and acyclic fibrations. Let $F:C \to D$ be a fibration in $\PCat$. In order to show that $\KSeg(F)$ is a fibration in the model category of coherently commutative monoidal categories $\gCAT$, it would be sufficient to show that $\KSeg(F)(n^+)$ is a fibration in $\Cat$, for all $n^+ \in Ob(\gop)$. For each $n \in \Nat$ the groupoid $\PStr(n)$ is a cofibrant object in $\PCat$. The natural model categort $\PCat$ is a $\Cat$-model category whose cotensor is given by the functor $\StrSMHom{-}{-}$. This implies that the functor
 \[
 \StrSMHom{\PStr(n)}{F}:\StrSMHom{\PStr(n)}{C} \to \StrSMHom{\PStr(n)}{D}
 \]
 is a fibration in $\Cat$ and it is an acyclic fibration in $\Cat$ whenever $F$ is an acyclic fibration.
 \end{proof}

 \section[The Thickened Nerve]{The Thickened Nerve}
\label{thick-Seg-nerve}
In this section we will describe a \emph{thickened} version of Segal's nerve functor which we will denote by $\Kbar$ and show that $\Kbar$ is the right Quillen functor of a Quillen equivalence. Unlike the left Quillen functor of the Quillen adjunction $(\PStr, \K)$ described in the previous section, whose mere existence was shown, we will explicitly describe a functor $\PNat:\gCAT \to \PCat$ and show that it is the left Quillen adjoint of $\Kbar$. The explicit description will play a vital role in proving that the Quillen pair $(\PNat, \Kbar)$
is a Quillen equivalence. In this section we will also present the main result of this paper which proves that the Quillen pair of functors $(\PStr, \K)$ is a Quillen equivalence. The Quillen equivalence $(\PNat, \Kbar)$ will be used to prove the main result.

We begin by defining a functor $\PNat:\gCAT \to \PCat$ as follows:
\begin{equation*}
\gCAT \overset{\Leins(-)} \to \SMHom{\Leins}{\Cat}{} \overset{L_H\int^{\vec{n} \in \Leins} - } \to \PCat 
\end{equation*}
where $\Leins(-)$ is the symmetric monoidal extention functor described in section \ref{real-funct}.  The second functor $L_H\int^{\vec{n} \in \Leins} - $ first performs the Grothendieck construction on a functor $F \in \SMHom{\Leins}{\Cat}{}$ to obtain a permutative category $\int^{\vec{n} \in \Leins} F$, see theorem \ref{perm-cat-elm}. and then it localizes (or formally inverts) the \emph{horizontal arrows} of the permutative category
$\int^{\vec{n} \in \Leins} F$. We recall that an arrow in the category $\int^{\vec{n} \in \Leins} F$ is a pair $(f, \phi)$ where $f$ is a map in $\Leins$ and $\phi$ is an arrow in the category $F(codom(f))$. An arrow $(f, \phi)$ is called horizontal if $\phi$ is the identity morphism. Thus for a $\gCat$ $X$,
$\PNat(X) = L_H\int^{\vec{n} \in \Leins} \Leins(X) $ is the permutative category obtained by \emph{localizing} with respect to the set of all
\emph{horizontal} morphisms
in the (permutative) category $\int^{\vec{n} \in \Leins}\Leins(X)$, see \cite[Ch. 1]{GZ} for a procedure of localization.
The results of \cite{Day-local} imply that the category $\int^{\vec{n} \in \Leins}\Leins(X)$ has the universal property that
any strict symmetric monoidal functor $F: \int^{\vec{n} \in \Leins}\Leins(X) \to C$ which maps every horizontal morphism in $\int^{\vec{n} \in \Leins}\Leins(X)$ to an isomorphism in $C$ extends uniquely to a strict symmetric monoidal functor $F_{Nat}:\PNat(X) \to C$ along the projection map $p: \int^{\vec{n} \in \Leins}\Leins(X) \to \PNat X$, \emph{i.e.} the functor $F_{Nat}$ makes the following diagram commute
\begin{equation}
\label{univ-prop}
\xymatrix{
 \int^{\vec{n} \in \Leins}\Leins(X) \ar[r]^F \ar[d]_{p} & C \\
 \PNat X \ar@{-->}[ru]_{F_{Nat}}
}
\end{equation}
 The localization construction is functorial in $X$ and therefore we get a functor $\PNat(-): \gCAT \to \PCat$. 

Now we define the thickened nerve functor $\Kbar$. We will first define this functor in the spirit of the papers \cite{May4}, \cite{SK}, \cite{mandell} and \cite{mandell2} and later we will provide a couple of new interpretation of this functor based on pseudo bicycles, see appendix \ref{NotionBike} and strict symmetric monoidal functors.
\begin{df}
An \emph{$n$th pseudo Segal bicycle} in a symmetric monoidal category $C$ is a quadruple $(\Phi, \alpha_\Phi, \sigma_\phi, u_\Phi)$ of families of objects or morphisms of the symmetric monoidal category $C$, where
\begin{enumerate}
\item $\Phi = \lbrace c_f \rbrace _{f \in A_n}$ is a family of objects of $C$, where the indexing set
\[
 A_n := \lbrace f \in \gop : domain(f) = n^+ \rbrace.
 \]
\item $\alpha_\Phi = \lbrace \alpha(h, f): c_f \to c_{h^+ \circ f} \rbrace_{(h, f) \in D}$ is a family of isomorphisms in $C$, where the indexing set 
\[
D := \lbrace (h, f) \in Mor(\N) \times A_n : dom(h)^+ = codom(f) \rbrace
\]
\item $\sigma_\Phi = \lbrace \sigma(k,l, f): c_f \to c_{f_k} \otimes c_{f_l} \rbrace_{(k,l,f) \in B}$ is a family of isomorphisms in $C$, where $f_k = \delta^{k+l}_k \circ f$ and $f_k = \delta^{k+l}_l \circ f$ and the indexing set

\[
B := \lbrace (k, l, f) \in \Nat \times \Nat \times A_n : codom(f) = (k + l)^+ \rbrace.
\]

\item $u_\phi = \lbrace u(f):c_f \to \unit{C} \rbrace_{f \in A_n(0)}$
is a family of isomorphisms in $C$, where the indexing set is the following subset of $A_n$
\[
A_n(0) := \lbrace f \in A_n : codom(f) = 0^+ \rbrace.
\]
\end{enumerate}
The quadruple $(\Phi, \alpha_\Phi, \sigma_\phi, u_\Phi)$ is subject to the following conditions:
\begin{enumerate}[label = {PSB.\arabic*}, ref={PSB.\arabic*}]
\item  
For any (pointed) function $f:n^+ \to m^+$ in the indexing set $A_n$, the map
 \[
 c_f \overset{\sigma(m, 0, f)} \to c_{f_m} \underset{C}\otimes c_{f_0} \overset{id \underset{C} \otimes u(f_0)}\to c_{f_m} \underset{C}\otimes \unit{C}
 \]
 is the inverse of the (right) unit isomorphism in $C$. Similarly the map
 \[
 c_f \overset{\sigma(0,m, f)} \to c_{f_0} \underset{C}\otimes c_{f_m} \overset{u(f_0) \underset{C} \otimes id}\to  \unit{C} \underset{C} \otimes  c_{f_m} 
 \]
 is the inverse of the (left) unit isomorphism in $C$. 
 
 \item  For each triple $(k,l, f) \in B$, where the indexing set $B$ is defined above,
 the following diagram commutes in the category $C$
 \begin{equation*}
 \xymatrix@C=12mm{
 c_f \ar[r]^{\alpha(\gamma^\N_{k, l})_f} \ar[d]_{\sigma(k,l, f)} &c_f \ar[d]^{\sigma(l,k, f)} \\
 c_{f_k} \underset{C} \otimes c_{f_l} \ar[r]_{\gamma^C_{c_{f_k}, c_{f_l}}} &c_{f_l} \underset{C} \otimes c_{f_k}
 }
 \end{equation*}
\item
For any triple $k,l,m \in \Nat$, and each $f:n ^+ \to (k+l+m)^+$ in the set $A_n$, the following diagram
commutes
\begin{equation*}
  \xymatrix@C=18mm{
 c_f \ar[r]^{\sigma(k+l,m, f)}  \ar[d]_{\sigma(k,l+m, f)}  &c_{k+l} \underset{C} \otimes c _m
 \ar[dd]^{\sigma(k,l, f) \underset{C} \otimes id_{c_{f_m}}}\\
 c_{f_k} \underset{C} \otimes c_{f_{l+m}} \ar[d]_{id_{c_{f_k}} \underset{C} \otimes \sigma(l,m, f)} \\
 c_{f_k} \underset{C} \otimes (c_{f_l} \underset{C} \otimes c_{f_m}) \ar@{<-}[r]_{\alpha^C_{c_{f_k},c_{f_l},c_{f_m}}} & (c_k \underset{C} \otimes c_l) \underset{C} \otimes c_m
 }
 \end{equation*}
 \item For each triple $(k, l , h) \in B$, where the indexing set $B$ is defined above,
 and each pair of active maps $f:k^+ \to p^+$, $g:l^+ \to q^+$ in $\gop$,
 the following diagram commutes in the category $C$
 \begin{equation*}
  \xymatrix@C=18mm{
 c_h \ar[r]^{\sigma(k,l, h)} \ar[d]_{\alpha(f+g, h)} &c_{h_k} \underset{C} \otimes c_{h_l}
 \ar[d]^{\alpha(f+g, h_k) \underset{C} \otimes \alpha(f+g, h_l)} \\
 c_{(f + g) \circ h} \ar[r]_{\sigma(k, l, (f + g) \circ h) \ \ \ \ \ \ \ \ \ \   } & c_{((f + g) \circ h)_k} \underset{C} \otimes c_{((f + g) \circ h)_l}
  }
 \end{equation*}
\end{enumerate}

\end{df}
Next we define the notion of a morphism of pseudo Segal bicycles:

\begin{df}

A morphism of nth unnormalized pseudo Segal bicycles
\[
 F:(\Phi, \alpha_\Phi, \sigma_\phi, u_\Phi) \to (\Psi, \alpha_\Psi, \sigma_\psi, u_\Psi)
 \]
  is a family $F = \lbrace F(f):c^\Phi_f \to c^\Psi_f \rbrace_{f \in A_n}$ of morphisms in $C$ which satisfies the following conditions:

 \begin{enumerate}
 
 \item For each $f \in A_n(0)$ $(codom(f) = 0^+)$, the following diagram commutes:
 \begin{equation*}
 \xymatrix{
c_f^\Phi \ar[rr]^{F(f)} \ar[rd]_{u_\Phi(f)} && c_f^\Psi \ar[ld]^{u_\Psi(f)} \\
& \unit{C}
 }
 \end{equation*} 
 
 \item For each pair $(f, h) \in A_n \times Mor(\N)$ such that the domain of $h^+$, namely $dom(h)^+$, is the same as the codomain of $f$ , the following diagram commutes
 \begin{equation*}
  \xymatrix@C=24mm{
 c_f^\Phi \ar[r]^{F(f)} \ar[d]_{\alpha_\Phi(h, f)} &c_f^\Psi
 \ar[d]^{\alpha_\Psi(h, f)} \\
 c_{h \circ f}^\Phi \ar[r]_{F( h \circ f)} & c_{h \circ f}^\Psi
  }
 \end{equation*}
 
  \item For each triple $(k, l, f) \in B$, where the index set $B$ is defined above, the following diagram commutes
 \begin{equation*}
  \xymatrix@C=24mm{
 c_f^\Phi \ar[r]^{F( f)} \ar[d]_{\sigma^\Phi(k, l, f)} &c_f^\Psi
 \ar[d]^{\sigma^\Psi(k, l, f)} \\
 c_{f_k}^\Phi \underset{C} \otimes c_{ f_l}^\Phi  \ar[r]_{F( f_k) \underset{C} \otimes F(f_l)} &  c_{f_k}^\Psi \underset{C} \otimes c_{ f_l}^\Psi
  }
 \end{equation*}
 \end{enumerate}

\end{df}

All $nth$ unnormalized pseudo Segal bicycles in a symmetric monoidal category $C$ and all morphisms of $nth$ unnormalized pseudo Segal bicycles in $C$ form a category which we denote by $\Kbar(C)(n^+)$.
\begin{lem}
\label{Bikes-nSegPsBikes-eq}
 Let $C$ be a permutative category. For each $\underline{n}$, the category $\Kbar(C)(n^+)$ is isomorphic to the category of all pseudo bicycles from $\gn{n}$ to $C$ namely $\Bikes{\gn{n}}{C}$.
 \end{lem}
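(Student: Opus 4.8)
The plan is to mirror the proof of Lemma~\ref{strict-bikes-SB-eq} by exhibiting a pair of mutually inverse functors between $\Kbar(C)(n^+)$ and $\Bikes{\gn{n}}{C}$. The observation that makes this more direct than the strict case is that an $n$th pseudo Segal bicycle is already indexed by the whole set $A_n$ of maps out of $n^+$, rather than by the power set $\P(n)$; consequently no passage through supports is needed, and the quadruple $(\Phi, \alpha_\Phi, \sigma_\Phi, u_\Phi)$ is essentially a repackaging of the data of a pseudo bicycle. First I would define a functor $F \colon \Kbar(C)(n^+) \to \Bikes{\gn{n}}{C}$ whose underlying pseudo cone $\L = (\phi, \alpha)$ is given on objects by $\phi(k)(f) := c_f$ for each $f \in \gn{n}(k^+) = \gop(n^+, k^+)$, whose pseudo-naturality isomorphisms are supplied by the family $\alpha_\Phi$ (the component along the active map $h^+$ being $\alpha(h,f) \colon c_f \to c_{h^+ \circ f}$), whose Segal isomorphisms are assembled from $\sigma_\Phi$, and whose unit isomorphism is $u_\Phi$.

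Second, I would construct the inverse functor $G$ by reading off, from a pseudo bicycle $\Bike{\Phi}{\gn{n}}{C}$ with underlying pseudo cone $(\phi,\alpha)$, the objects $c_f := \phi(k)(f)$ for $f\in\gn{n}(k^+)$ together with the coherence isomorphisms $\alpha$, $\sigma$ and $u$ extracted from the cone and Segal structure of the bicycle. The bulk of the work is then to verify that the four axioms PSB.1--PSB.4 of a pseudo Segal bicycle correspond, under this dictionary, exactly to the coherence axioms of a pseudo bicycle from appendix~\ref{NotionBike}: PSB.1 matches the left and right unit coherence, PSB.2 the symmetry coherence of the Segal maps, PSB.3 their associativity coherence, and PSB.4 the compatibility of the Segal isomorphisms with the pseudo-naturality of the cone. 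Likewise the three conditions defining a morphism of pseudo Segal bicycles must be matched with the defining conditions of a morphism of pseudo bicycles, after which $FG = \id$ and $GF = \id$ are immediate since the two formulations share the same underlying families.

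I expect the main obstacle to be the bookkeeping in PSB.4 and its interaction with $\alpha_\Phi$. In the strict case this compatibility was automatic because $\phi(k)(f)$ depended only on $\text{Supp}(f)$ and $\sigma$ was defined through supports; in the pseudo case both the cone isomorphisms $\alpha$ and the Segal isomorphisms $\sigma$ carry genuine content, and one must check that the square in PSB.4 is precisely the naturality square demanded of $\sigma$ in a pseudo bicycle, with the active maps $f \colon k^+ \to p^+$ and $g \colon l^+ \to q^+$ playing the role of a morphism in the indexing diagram. Once the dictionary is fixed, the remaining verifications are diagram chases entirely analogous to those used to show that $F$ and $G$ are well defined functors in Lemma~\ref{strict-bikes-SB-eq}.
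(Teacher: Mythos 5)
Your proposal is correct and takes essentially the same approach as the paper: the paper's entire proof of this lemma is the remark that one adapts the inverse-functor argument of Lemma~\ref{strict-bikes-SB-eq} to the pseudo setting, which is precisely what you carry out, with the intended dictionary $\phi(k)(f) = c_f$ and with $\alpha_\Phi$, $\sigma_\Phi$, $u_\Phi$ supplying the cone, Segal and unit isomorphisms so that PSB.1--PSB.4 match the conditions \ref{unit}--\ref{naturality} of a pseudo bicycle. Your observation that the support bookkeeping of the strict case disappears here, because pseudo Segal bicycles are already indexed by all of $A_n$ rather than by $\P(n)$, is also accurate.
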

The proof of this lemma is just the adaptation of the argument of the proof lemma \ref{strict-bikes-SB-eq}, which deals with the case of strict bicycles, to the present scenario of pseudo bicycles.
% \begin{proof}
% We will prove the lemma by constructing a pair of inverse functors. We begin by defining a functor $F(n^+):\KSegLax(C)(n^+) \to \Bikes{\gn{n}}{C}$. Let $(\Psi, \sigma_\Psi) \in Ob(\KSegLax(C)(n))$, then for each $k \in Ob(\N)$ we define a functor $\phi(k):\gnk{n}{k} \to C$ as follows:
% \[
% \phi(k)(f) := \Psi(Supp(f)),
% \]
% where $Supp(f) \subset n$ is the \emph{support of} $f:n^+ \to k^+$. The collection
% $\lbrace \phi(k) \rbrace_{k \in Ob(\N)}$ defines a (strict) normalized lax cone because for any $h:k \to l$ in $Mor(\N)$, $Supp(f) = Supp(h \circ f)$. Similarly for each pair $(k,l) \in Ob(\N) \times Ob(\N)$, we define a natural isomorphism $\sigma_\Phi(k,l):\phi(k+l) \Rightarrow \phi(k) \odot \phi(l)$.
%For each $g \in \gnk{n}{(k+l)}$, we define
%\[
% \sigma_\Phi(k,l)(g) := \sigma_\Psi(Supp(g)).
%\]
% \dots
% \end{proof}

%Let $C$ be a small permutative category, we define a
%$\gCat$ $\Kbar(C)$ which is defined as follows:
%\[
%\Kbar(C)(n^+) := \StrSMHom{\PNat(n)}{C}. 
%\]
%In light of the bifunctorality involved in the construction of
%$\PsBikes{X}{C}$, see \ref{functorial-pseudo-bike}, it is easy to see that this defines a $\gCat$
%$\Kbar(C)$.
\begin{df}
For each $n \in Ob(\N)$ we will now define a permutative groupoid $\PNat(n)$. The objects of this
 groupoid are finite collections of morphisms in $\gop$ having domain $n^+$, in other words the object monoid of the category $\PNat(n)$ is the free monoid generated by the following set
 \[
  Ob(\PNat(n)) := \underset{k \in Ob(\N)} \sqcup \ \gn{n}(k^+).
 \]
 \begin{sloppypar}
We will denote an object of this groupoid by $(f_1, f_2, \dots, f_r)$. A morphism $(f_1, f_2, \dots, f_r) \to (g_1, g_2, \dots, g_k)$ is an
 isomorphism of finite sets
 \[
  F:\Supp{f_1} \sqcup \Supp{f_2} \sqcup \dots \sqcup \Supp{f_r} \overset{\cong} \to \Supp{g_1} \sqcup \Supp{g_2} \sqcup \dots \sqcup \Supp{g_k}
 \]
 such that the following diagram commutes
 \end{sloppypar}
 \begin{equation*}
\xymatrix{
 \Supp{f_1} \sqcup \dots \sqcup \Supp{f_r} \ar[rr]^{F} \ar[rd] && \Supp{g_1} \sqcup \dots \sqcup \Supp{g_k} \ar[ld] \\
 &n
}
\end{equation*}
 where the diagonal maps are the unique inclusions of the coproducts into $n$.
 \end{df}
\begin{rem}
\label{cont-fun-Lbar}
The construction above defines a contravariant functor $\PNat(-):\gop \to \PCat$. A map $f:n^+ \to m^+$ in $\gop$ defines a strict symmetric monoidal functor
$\PNat(f):\PNat(m) \to \PNat(n)$. An object $(f_1, f_2, \dots, f_r) \in \PNat(m)$ is mapped by this functor to $(f_1 \circ f, f_2 \circ f, \dots, f_r \circ f) \in \PNat(n)$.
\end{rem}
\begin{rem}
\label{isom-PNat-n-piPgn}
We observe that the category $\PNat(n)$ defined above is a Gabriel factorization of the composite functor $G \circ H$, see equation \eqref{reflect-coreflect}, and therefore by proposition \ref{Gab-Fact-adjoints} it is isomorphic to $\Pi_1 \Lbb(n)$, for each $n \in \Nat$. Further by proposition \ref{Ext-Fun-DegWise-iso} these isomorphisms glue together to define a natural isomorphism
$T:\Pi_1\circ \Lbb(-) \Rightarrow \PNat(-)$.
\end{rem}
\begin{rem}
 \label{nat-transf-ext-right}
 The natural equivalence from remark \ref{isom-PNat-n-piPgn} extends to the following composite natural equivalence:
 \begin{equation*}
T \circ (id_{\Pi_1} \circ (J \circ i) \circ \inv{I}) :\PStr(n) \overset{\inv{I}} \Rightarrow \Pi_1\circ \PLbb(-) \Rightarrow \Pi_1\circ \Lbb(\gn{-}) \Rightarrow \Pi_1\circ \Lbb(-) \overset{T} \Rightarrow \PNat(-).
\end{equation*}
where $T$ is the natural isomorphism from remark \ref{isom-PNat-n-piPgn}.
 \end{rem}

 \begin{prop}
 \label{equiv-PsBikes-StrSMFunc-LBar}
 For each $n \in Ob(\N)$, the permutative category $\PNat(n)$ represents the functor $\PsBikes{\gn{n}}{-}$. In other words there is a natural isomorphism
 \[
 \psi^n:\StrSMHom{\PNat(n)}{-} \cong \PsBikes{\gn{n}}{-}.
 \]
 
 \end{prop}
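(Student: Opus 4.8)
The plan is to transport the representability of $\PsBikes{\gn{n}}{-}$ through the identifications already assembled in this section, following as closely as possible the strict-category argument that culminated in Lemma \ref{rep-as-SM-func}. First I would record, from Remark \ref{isom-PNat-n-piPgn} and Lemma \ref{isom-PStr-gn}, the isomorphisms of permutative categories $\PNat(n) \cong \Pi_1\Lbb(n) \cong \Pi_1\Lbb(\gn{n})$. Since $\Pi_1$ inverts every morphism, a strict symmetric monoidal functor $\PNat(n) \to C$ is the same datum as a strict symmetric monoidal functor $\Lbb(\gn{n}) \to C$ sending every arrow to an isomorphism, equivalently one factoring through the underlying groupoid $\underline{C}$. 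This yields a natural isomorphism
\[
\StrSMHom{\PNat(n)}{C} \cong \StrSMHom{\Lbb(\gn{n})}{\underline{C}},
\]
exactly as in Remark \ref{Pi-1-ind-equiv}, and reduces the proposition to producing a natural isomorphism $\StrSMHom{\Lbb(\gn{n})}{\underline{C}} \cong \PsBikes{\gn{n}}{C}$.

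For the latter I would establish the pseudo analogue of Lemma \ref{SMFunc-PLn-n-bikes}, the essential difference being that pseudo bicycles are matched against the \emph{full} Grothendieck construction $\Lbb(\gn{n})$ rather than its projective coreflection $\PLbb(n)$. In one direction, a pseudo bicycle $\Psi\colon \gn{n} \leadsto C$ determines, via appendix \ref{bikes-as-oplax-sections}, an oplax symmetric monoidal functor $\underline{\Psi}\colon \N \to \OplaxExp{\gn{n}}{\underline{C}}$; in contrast with the strict case the structure maps induced by the isomorphisms $\alpha_\Psi(h,f)\colon c_f \to c_{h^+\circ f}$ are genuinely nontrivial, recording the dependence of $\Psi$ on the full maps rather than merely on their supports. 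Composing with the oplax inclusion and extending uniquely along $\N \hookrightarrow \Leins$ (appendix \ref{OL-to-SM}) gives a strict symmetric monoidal functor $\Leins(\underline{\Psi})\colon \Leins \to \OplaxExp{\Leins(\gn{n})}{\underline{C}}$, hence a strict symmetric monoidal functor $\widetilde{\Psi}\colon \Lbb(\gn{n}) \to \underline{C}$ exactly as in \eqref{SSM-ext-str-bike}.

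Conversely, from a strict symmetric monoidal functor $\Theta\colon \Lbb(\gn{n}) \to \underline{C}$ I would read off a pseudo bicycle by evaluating on the length-one objects $(f)$, setting $c_f := \Theta((f))$; since the tensor product of $\Lbb(\gn{n})$ is concatenation of sequences and $\Theta$ is strict, every object goes to the corresponding tensor $c_{f_1} \otimes \cdots \otimes c_{f_r}$, while the three distinguished families of morphisms of $\Lbb(\gn{n})$ --- those post-composing a single map with an arrow of $\N$, those amalgamating $(f)$ into $(f_k, f_l)$, and those collapsing empty entries to the unit --- supply the families $\alpha_\Psi$, $\sigma_\Psi$ and $u_\Psi$. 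The principal obstacle, and the heart of the verification, is checking that the pseudo-bicycle axioms \textup{PSB.1--PSB.4} are \emph{precisely} the content of the functoriality and strict monoidality of $\Theta$ (whose target groupoid $\underline{C}$ makes the structure morphisms automatically invertible); this is where one must confirm that $\Lbb(\gn{n})$ remembers exactly the full-map information carried by a pseudo bicycle, which is what separates this computation from the support-only bookkeeping of the strict case and from the coreflection $\PLbb(n)$ used there.

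Finally I would check that both displayed correspondences are natural in $C$ with respect to strict symmetric monoidal functors $C \to D$ --- the first because it is induced by the universal property of $\Pi_1$ and passage to underlying groupoids, the second because it is defined objectwise from the bicycle data by post-composition --- and assemble them into the asserted natural isomorphism $\psi^n\colon \StrSMHom{\PNat(n)}{-} \cong \PsBikes{\gn{n}}{-}$. As a cross-check, the whole correspondence may instead be routed through Lemma \ref{Bikes-nSegPsBikes-eq}, which identifies $\PsBikes{\gn{n}}{C}$ with $\Kbar(C)(n^+)$, matching the construction verbatim with the pseudo analogue of Lemma \ref{SMFunc-PLn-n-bikes}.
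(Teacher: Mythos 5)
Your outline is correct, but it is organized differently from the way the paper actually justifies this proposition, and the comparison is worth spelling out. The paper gives no standalone proof here: the statement is the specialization at $X = \gn{n}$ of the appendix chain \eqref{isom-univ-bike}, $\PsBikes{X}{C} \cong \OplaxSec{X}{C} \cong \SMSec{X}{C} \cong \StrSMHom{\PNat(X)}{C}$, assembled from Proposition \ref{Bikes-as-sections}, Theorem \ref{oplax-SM-cones-isom}, and the universal strict symmetric monoidal section $i$ of Lemma \ref{univ-bike} and Corollary \ref{bike-SSM-isom-cat}; the concrete groupoid $\PNat(n)$ then enters only through Remark \ref{isom-PNat-n-piPgn}. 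Note that for this specialization one needs $\PNat(\gn{n}) = L_H\Lbb(\gn{n})$ to agree with $\Pi_1\Lbb(\gn{n})$, which holds because $\gn{n}$ is degreewise discrete, so every arrow of the Grothendieck construction is horizontal --- a point your reduction uses implicitly and should record. You instead fix the representable object from the start, groupoidify, and redo the strict-case computation of Lemmas \ref{SMFunc-PLn-n-bikes} and \ref{rep-as-SM-func} with $\Lbb(\gn{n})$ in place of the coreflection $\PLbb(n)$, matching the generating morphisms of $\Lbb(\gn{n})$ against the axioms PSB.1--PSB.4 by hand. What the paper's route buys: naturality in $X$ as well as $C$ (which is what the adjunction $\PNat \dashv \Kbar$ in appendix \ref{Sec:AdjunctionPR} actually requires), and no axiom-by-axiom verification, since the bicycle conditions are packaged once and for all in Proposition \ref{Bikes-as-sections} and the extension \eqref{SSM-ext-str-bike}. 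What yours buys: a concrete, self-contained argument for the representable case running exactly parallel to the strict nerve --- at the cost of the generators-and-relations verification that you correctly identify as the heart of the matter and then defer.

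One wrinkle you should repair before the reductions compose: $\StrSMHom{\Pi_1\Lbb(\gn{n})}{C}$ and $\StrSMHom{\Lbb(\gn{n})}{\underline{C}}$ agree on objects but not literally on morphisms. A unital monoidal natural transformation between strict symmetric monoidal functors out of the groupoid $\Pi_1\Lbb(\gn{n})$ into $C$ may have non-invertible components, whereas one valued in $\underline{C}$ has all components invertible --- and morphisms of pseudo bicycles are likewise not required to be isomorphisms, so the intermediate category you use is too small on the morphism level. The paper commits the same abuse in Remark \ref{Pi-1-ind-equiv}, but since the proposition asserts an isomorphism of Hom-\emph{categories}, you should replace $\StrSMHom{\Lbb(\gn{n})}{\underline{C}}$ by the category of strict symmetric monoidal functors $\Lbb(\gn{n}) \to C$ inverting every arrow, with \emph{all} unital monoidal natural transformations between them; naturality at the formal inverses is then automatic, and both of your displayed identifications go through as stated.
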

% \begin{proof}
% Let $C$ be a permutative category and Let $F:\PNat(n) \to C$ be a
% strict symmetric monoidal functor. For each $k \in Ob(\N)$, we define a functor $\psi^n(C)(F)(k):\gnk{n}{k} \to C$ as follows:
% \[
% \psi^n(C)(F)(k)(f) := F((f)).
% \]
% The family of functors $\lbrace \psi^n(C)(F)(k) \rbrace_{k \in Ob(\N)}$ glues together into a pseudo bicycle $\psi^n(C)(F)$. The functor $\psi^n$ is defined on morphisms similarly. We define another functor 
% 
% 
% 
% We will prove the lemma by constructing a pair of inverse functors. We begin by defining a functor $F:\KSeg(C)(n) \to \StrBikes{\gn{n}}{C}$. Let $(\Psi, \sigma_\Psi) \in Ob(\KSeg(C)(n))$, then for each $k \in Ob(\N)$ we define a functor $\phi(k):\gnk{n}{k} \to C$ as follows:
% \[
% \phi(k)(f) := \Psi(Supp(f)),
% \]
% where $Supp(f) \subset n$ is the \emph{support of} $f:n^+ \to k^+$. The collection
% $\lbrace \phi(k) \rbrace_{k \in Ob(\N)}$ defines a (strict) normalized lax cone because for any $h:k \to l$ in $Mor(\N)$, $Supp(f) = Supp(h \circ f)$. Similarly for each pair $(k,l) \in Ob(\N) \times Ob(\N)$, we define a natural isomorphism $\sigma_\Phi(k,l):\phi(k+l) \Rightarrow \phi(k) \odot \phi(l)$.
%For each $g \in \gnk{n}{(k+l)}$, we define
%\[
% \sigma_\Phi(k,l)(g) := \sigma_\Psi(Supp(g)).
%\]
% \dots
% \end{proof}
 
 It was proved in \cite[sec. 6]{SGA4} that, for any $\gCat$ $X$, the permutative
 category $\PNat(X)$ is a \emph{pseudo-colimt} of the functor $\Leins(X)$. In other words $\PNat(X)$ represents the category of \emph{pseudo-cones} \emph{i.e.}
 for any category $C$
 \[
 Ps[\Leins{X}, \Delta{C}] \cong [\PNat(X), C].
 \]
 This characterization provides the functor $\PNat$ with some very desirable homotopical properties.
 \begin{lem}
 \label{PNat-hocolim}
 The functor $\PNat$ preserves degreewise equivalences of $\Gamma$-categories.
 \end{lem}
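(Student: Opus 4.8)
The plan is to exploit the characterization of $\PNat(X)$ as a pseudo-colimit of the diagram $\Leins(X):\Leins \to \Cat$ recorded just above, namely the natural isomorphism $Ps[\Leins(X), \Delta C] \cong [\PNat(X), C]$, together with the general fact that pseudo-colimits (bicolimits) are invariant under objectwise equivalences of diagrams. By the definition of weak equivalences in $\PCat$ (Theorem \ref{nat-model-str-Perm}), it suffices to show that for a degreewise equivalence $f:X \to Y$ in $\gCAT$ the strict symmetric monoidal functor $\PNat(f):\PNat(X) \to \PNat(Y)$ is an equivalence of the underlying categories.

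First I would observe that the functor $\Leins(-):\gCAT \to \SMHom{\Leins}{\Cat}$ carries $f$ to a symmetric monoidal natural transformation $\Leins(f):\Leins(X) \Rightarrow \Leins(Y)$ which is \emph{objectwise} an equivalence of categories. Indeed, for every object $\vec{m} = (m_1, \dots, m_r)$ of $\Leins$ the component
\[
\Leins(f)(\vec{m}) = f(m_1^+) \times \cdots \times f(m_r^+) : \Leins(X)(\vec{m}) \to \Leins(Y)(\vec{m})
\]
is a finite product of the equivalences $f(m_i^+)$, hence itself an equivalence of categories (with $\Leins(f)(())$ the identity of the terminal category on the empty sequence).

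Next I would upgrade this objectwise equivalence to a \emph{pseudo-natural equivalence}: a strictly natural transformation of $\Cat$-valued pseudofunctors all of whose components are equivalences admits a pseudo-inverse, by the standard $2$-categorical argument that assembles chosen pseudo-inverses $\Leins(f)(\vec{m})^{-1}$ into a pseudo-natural transformation $\Leins(Y) \Rightarrow \Leins(X)$ whose composites with $\Leins(f)$ are invertible modifications. Whiskering pseudo-cones with such a pseudo-natural equivalence then induces an equivalence of the pseudo-cone categories
\[
Ps[\Leins(f), \Delta C] : Ps[\Leins(Y), \Delta C] \to Ps[\Leins(X), \Delta C]
\]
for every category $C$, a pseudo-inverse being given by whiskering with the pseudo-inverse of $\Leins(f)$. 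Under the natural isomorphism $Ps[\Leins(-), \Delta C] \cong [\PNat(-), C]$ this functor is identified with precomposition $[\PNat(f), C]:[\PNat(Y), C] \to [\PNat(X), C]$. Since $[\PNat(f), C]$ is an equivalence for every $C$, a routine two-object Yoneda argument (testing on $C = \PNat(X)$ and $C = \PNat(Y)$, so that $\PNat(f)$ admits a quasi-inverse with the requisite natural isomorphisms) shows that $\PNat(f)$ is itself an equivalence of categories, as desired.

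The main obstacle I anticipate is the bookkeeping needed to make the pseudo-colimit invariance precise. One must verify that the isomorphism $Ps[\Leins(X), \Delta C] \cong [\PNat(X), C]$ is natural in $X$ as well as in $C$, so that $[\PNat(f), C]$ genuinely corresponds to whiskering pseudo-cones with $\Leins(f)$; and one must spell out the $2$-categorical fact that a componentwise equivalence between pseudofunctors into $\Cat$ is a pseudo-natural equivalence, which carries the coherence data (the invertible modifications witnessing the triangle identities) that should be stated carefully rather than glossed over. Both ingredients are standard, but it is precisely the interaction of the localization $L_H$ with these coherences — already encoded in the universal property of the pseudo-colimit quoted above — that does the real work.
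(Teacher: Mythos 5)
Your proposal is correct, and its skeleton is the same as the paper's: both arguments factor $\PNat$ as the symmetric monoidal extension $\Leins(-)$ (which preserves degreewise equivalences, since each component $\Leins(f)(\vec{m})$ is a finite product of the equivalences $f(m_i^+)$) followed by a pseudo-colimit, and both rest on the invariance of the pseudo-colimit under objectwise equivalences of diagrams. The difference is in how that invariance is justified. The paper disposes of it in one line by citing \cite{NG} for the fact that the pseudo-colimit functor is a homotopy colimit functor and hence preserves objectwise equivalences; you instead prove the invariance directly from the representing property $Ps[\Leins(X), \Delta C] \cong [\PNat(X), C]$: upgrade the strictly natural objectwise equivalence $\Leins(f)$ to a pseudo-natural equivalence by assembling adjoint quasi-inverses with mate $2$-cells, deduce that whiskering induces equivalences of pseudo-cone categories, transport this along the natural isomorphism to conclude that $[\PNat(f), C]$ is an equivalence for every $C$, and finish with the standard Yoneda-style test at $C = \PNat(X)$ and $C = \PNat(Y)$ (essential surjectivity gives $v$ with $v \circ \PNat(f) \cong id$, and fully faithfulness of precomposition, which reflects isomorphisms, gives the other composite). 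Your route buys a self-contained elementary proof whose only inputs are the universal property already quoted in the paper and standard $2$-category theory, at the cost of the coherence bookkeeping you rightly flag (naturality of the representing isomorphism in $X$, and the modification axioms for the assembled pseudo-inverse); the paper's citation buys brevity and the stronger contextual fact that pseudo-colimits compute homotopy colimits, which it does not otherwise need here.
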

 \begin{proof}
 The functor $\PNat$ is a composite of the functor $\Leins$
 followed by a pseudo-colimit functor.
 The functor $\Leins:\gCAT \to \StrSMHom{\Leins}{\Cat}$ preserves
 degreewise equivalences.
 The results of \cite{NG} show that a pseudo colimit functor is a
 homotopy colimt functor and it preserves degreewise equivalences.
 Hence the functor $\PNat$ preserves degreewise equivalences of $\gCats$.

 \end{proof}
 Before moving on we would like to observe that for any object $\vec{n} \in Ob(\L)$
  there exists the following zig-zag of maps in $\L$
  \begin{equation}
  \label{mult-partition}
  (1) \overset{(id, m_n)} \leftarrow (n) \overset{(m_r, id)} \to \vec{n} = (n_1, n_2, \dots, n_r)
  \end{equation}
  where $n = n_1 + n_2 + \dots + n_r$ and $m_n:n^+ \to 1^+$ is the unique
 \emph{multiplication} map from $n^+$ to $1^+$ in $\gop$. To be more precise, the left map is given by the following commutative diagram
 \begin{equation}
 \label{multiplication-map}
 \xymatrix{
 \underline{n} \ar[r]^{m_n} \ar[d] & \underline{1} \ar[d] \\
 \underline{1} \ar@{=}[r] & \underline{1}
 }
 \end{equation}
 and the right map is given by the following commutative diagram
 \begin{equation}
 \label{part-map}
 \xymatrix{
 \underline{n} \ar@{=}[r] \ar[d] & \underline{n} \ar[d]^{Ind(\vec{n})} \\
 \underline{1} & \underline{r} \ar[l]^{m_r}
 }
 \end{equation}
The following corollary provides a useful insight into the structure of the
localization of the category of elements of a coherently commutative monoidal category $X$, with respect to horizontal maps. It turns out that
this localized category is a thickening of $X(1^+)$. This thickening is indicative of the fact that the homotopy colimit of a diagonal functor $\Delta(c)$ is equivalent to $c$. The category $\PNat(X)$
is a further thickening of this localized category.
\begin{coro}
\label{Thicken-CCM-Cat}
For each coherently commutative monoidal category $X$ the inclusion functor $i:X(1^+) \to \PNat(X)$ is an equivalence of categories.
\end{coro}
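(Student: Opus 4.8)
The plan is to exhibit a quasi-inverse to $i$ by descending a suitable functor through the localization $\PNat(X) = L_H\int^{\vec n \in \Leins} \Leins(X)$. Recall that $i$ is the composite $X(1^+) = \Leins(X)((1)) \hookrightarrow \int^{\vec n \in \Leins}\Leins(X) \overset{p}{\to} \PNat(X)$, which sends $x$ to the class of $((1), x)$, and that $p$ inverts exactly the horizontal morphisms. First I would establish essential surjectivity using the zig-zag \eqref{mult-partition}. Given an object $(\vec n, \vec x)$ with $\vec n = (n_1, \dots, n_r)$, $\vec x = (x_1, \dots, x_r)$ and $n = n_1 + \cdots + n_r$, the right leg $(m_r, \id)\colon (n) \to \vec n$ induces the functor $\Leins(X)((m_r, \id))\colon X(n^+) \to \prod_i X(n_i^+)$, which is the iterated Segal projection and hence an equivalence of categories by Proposition \ref{char-CCMC}. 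Choosing a preimage $\vec x'$ of $\vec x$ up to isomorphism produces, after composing an inverted horizontal map with a vertical isomorphism, an isomorphism $[((n), \vec x')] \cong [(\vec n, \vec x)]$ in $\PNat(X)$. The left leg $(\id, m_n)\colon (n) \to (1)$ is itself horizontal, so $p$ sends it to an isomorphism $[((n), \vec x')] \cong [((1), X(m_n^+)(\vec x'))] = i(X(m_n^+)(\vec x'))$. Hence every object lies in the essential image of $i$.

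The core of the argument is the construction of the retraction. Using the universal property of the localization recorded in \eqref{univ-prop}, I would define a functor $\tilde r\colon \int^{\vec n \in \Leins}\Leins(X) \to X(1^+)$ on objects by $\tilde r(\vec n, \vec x) := X(m_n^+)(s_{\vec n}^{-1}(\vec x))$, where $s_{\vec n} = \Leins(X)((m_r, \id))$ is the Segal projection (an equivalence by Proposition \ref{char-CCMC}), $s_{\vec n}^{-1}$ a chosen pseudo-inverse, and $X(m_n^+)\colon X(n^+) \to X(1^+)$ the multiplication functor. For a length-one object $(k)$ one has $s_{(k)} = \id$, so $\tilde r((k), x) = X(m_k^+)(x)$ and in particular $\tilde r((1), x) = x$. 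The essential verification is that $\tilde r$ sends every horizontal morphism to an isomorphism: along a horizontal map the base moves in $\Leins$ while the pair (Segal reconstruction, multiplication) stays compatible, because $X$ is a functor on $\gop$ and because the symmetric monoidal structure that the Segal condition induces on $X(1^+)$ is coherent. Granting this, $\tilde r$ descends to $r\colon \PNat(X) \to X(1^+)$, and by the computation above $r \circ i = \id_{X(1^+)}$ on the nose.

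Finally I would assemble the natural isomorphism $i \circ r \cong \id_{\PNat(X)}$: for $(\vec n, \vec x)$ the two inverted horizontal maps coming from \eqref{mult-partition}, together with the vertical isomorphism produced by $s_{\vec n}^{-1}$, give an isomorphism $[(\vec n, \vec x)] \cong i(r([(\vec n, \vec x)]))$ in $\PNat(X)$, and these are natural in $(\vec n, \vec x)$. Combined with $r \circ i = \id$, this shows that $i$ is an equivalence of categories with quasi-inverse $r$ (in particular full faithfulness comes for free, rather than via a direct calculus-of-fractions computation in the localization). The main obstacle is the middle step: checking that $\tilde r$ inverts horizontal morphisms, and that the resulting isomorphism $i r \cong \id$ is natural, requires careful coherence bookkeeping of the multiplication functors $X(m_n^+)$ against the chosen Segal pseudo-inverses. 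This is precisely the point at which the Segal condition (coherent commutativity of $X$) is indispensable, since the analogous statement fails for general $\gCats$. An alternative, more conceptual route would recognize $\PNat(X)$ as the homotopy colimit of $\Leins(X)$ via Lemma \ref{PNat-hocolim} and argue that the Segal condition makes the one-object subcategory $\{(1)\} \hookrightarrow \Leins$ homotopy cofinal relative to this particular diagram; the bookkeeping burden there is comparable.
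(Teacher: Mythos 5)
Your essential-surjectivity argument coincides with the paper's: the same zig-zag \eqref{mult-partition}, the same appeal to the Segal condition via Proposition \ref{char-CCMC} to choose $x$ and the vertical isomorphism $j$, and the same observation that the horizontal legs become invertible in the localization. Where you genuinely diverge is on full faithfulness. The paper's proof simply asserts ``clearly the functor $i$ is fully faithful'' and then concludes from essential surjectivity; the strict retraction is only recorded afterwards, without proof, in Remark \ref{str-inv-inc}. You instead build the quasi-inverse $r$ by descending $\tilde r$ through the localization and deduce full faithfulness from $r \circ i = \id$ and $i \circ r \cong \id$. This is the more robust route: hom-sets in a Gabriel--Zisman localization need not agree with those computed in the original category, so the paper's ``clearly'' is exactly the point your construction substantiates --- in effect your proof supplies the missing content of Remark \ref{str-inv-inc}, at the cost of the coherence bookkeeping you flag. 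Two remarks on that bookkeeping. First, \eqref{univ-prop} is stated in the paper for \emph{strict symmetric monoidal} functors into a permutative category, and your $\tilde r$ into $X(1^+)$ is not obviously monoidal; but the plain universal property of localization from \cite{GZ} suffices to descend a mere functor, so this is harmless provided you invoke that rather than \eqref{univ-prop} verbatim. Second, your deferred verification does go through: for a map $((h,\phi),F)$ in the category of elements one has $m_m \circ \phi = m_n$ in $\N$, hence $X(m_m^+)\circ X(\phi^+) = X(m_n^+)$ strictly, and $(h,\phi)\circ(m_r,\id) = (m_s,\id)\circ(\id_{\underline{1}},\phi)$ in $\Leins$, so $\Leins(X)((h,\phi))\circ s_{\vec{n}} = s_{\vec{m}}\circ X(\phi^+)$; choosing adjoint equivalences $(s_{\vec{n}}, s_{\vec{n}}^{-1},\eta,\epsilon)$ with $s_{(1)}^{-1}=\id$ then makes $\tilde r$ functorial, with value on any morphism a composite of units, counits and $s_{\vec{m}}^{-1}(F)$ --- an isomorphism whenever $F$ is, in particular on every horizontal map, and giving $r \circ i = \id$ on the nose as you claim.
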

\begin{proof}
The functor $i:X(1^+) \to \PNat(X)$ is an inclusion functor, it is defined on objects as follows:
\[
i(x) := (id_{\underline{1}}, x)
\]
and for a morphism $f:x \to y$ in $X(1^+)$ it is defined as follows:
\[
i(f) := ((id_{\underline{1}}, id_{\underline{1}}), f).
\]
Clearly the functor $i$ is fully faithful. Now we will show that $i$ is also
essentially surjective. In order to do se we will use the maps \eqref{multiplication-map} and \eqref{part-map} defined above.
For each object $(\vec{n}, \vec{x}) \in \PNat(X)$, the map \eqref{part-map} provides a functor
\[
\Leins(X)((m_r, id_{\underline{n}})):X(m^+) \to \underset{i=1}{\overset{r} \prod} X(m_i).
\]
Since $X$ is a coherently commutative monoidal category therefore the above functor is an equivalence of categories. Thus we may choose an object $x \in X(m^+)$ and an isomorphism $j:\Leins(X)((m_r, id_{\underline{n}}))(x) \to \vec{x}$ in $\underset{i=1}{\overset{r} \prod} X(m_i)$. We observe that the map
\[
((m_r, id_{\underline{n}}) ,j):((n), x) \to (\vec{n}, \vec{x})
\]
 is an isomorphism in $\PNat(X)$ because $\PNat(X)$ is obtained by inverting all horizontal maps in the category of elements of $\Leins(X)$.
 The map \eqref{multiplication-map} provides us with the following isomorphisms:
\begin{equation*}
((id_{\underline{1}}, m_n), id_{\Leins(X)((id_{\underline{1}}, m_n))((x))}):((n), (x)) \to 
 ((1), \Leins(X)((id_{\underline{1}}, m_n))((x)))
\end{equation*}
\begin{sloppypar}
The above two isomorphisms show that each object $(\vec{n}, \vec{x}) \in \PNat(X)$ is isomorphic to an object in the image of the functor $i$ namely
$((1), \Leins(X)((id_{\underline{1}}, m_n))((x)))$. The isomorphism is given by the composite
\[
 ((m_r, id_{\underline{n}}) ,j)  \circ \inv{((id_{\underline{1}}, m_n), id_{\Leins(X)((id_{\underline{1}}, m_n))((x))})} .
 \]
  Thus we have proved that $i$ is essentially surjective and therefore an equivalence.
\end{sloppypar}
\end{proof}
\begin{rem}
\label{str-inv-inc}
There exists an inverse functor $\inv{i}:\PNat(X) \to X(1^+)$ such that $\inv{i} \circ i = id_{X(1^+)}$.
\end{rem}
 
 Momentarily we will switch to the language of bicycles for the purpose of
 proving lemma \ref{unit-map-equiv}. Each object of a permutative category $C$ defines a \emph{trivial} bicycle from $\gn{1}$ to $C$ which
we denote by $\Phi_c = (\L_c, \sigma_c)$. We define this bicycle 
$\Bike{\Phi_c}{\gn{1}}{C}$ next. We begin by defining the underlying lax cone $\L_c = (\phi_c, \alpha_c)$. For each $k \in Ob(\N)$, we define the
functor $\phi_c(k):\gnk{1}{k} \to C$ as follows:
\begin{equation}
\phi_c(k)(f) = 
\begin{cases}
c, & \text{if} \ f \ne 0 \\
\unit{C}, & \text{otherwise}.
\end{cases}
\end{equation}
For a map $h:k \to l$ in the category $\N$, we define the map $\alpha_c(h)(f):\phi_c(f) \to \phi_c(h \circ f)$
as follows:
\begin{equation}
\alpha_c(h)(f) = 
\begin{cases}
id_c, & \text{if} \ f \ne 0 \\
id_{\unit{C}}, & \text{otherwise}.
\end{cases}
\end{equation}
It is easy to see that with the above definition, $\L_c = (\phi_c, \alpha_c)$ is
a lax cone. This lax cone is given a bicycle structure by
defining $\sigma_c(k,l): \phi_c(k+l) \Rightarrow \phi_c(k) \odot \phi_c(l)$
to be the identity natural transformation. Thus we have defined a
strict bicycle $(\L_c, \sigma_c)=\Bike{\Phi_c}{\gn{1}}{C} $. This construction
defines a functor
\begin{equation}
\label{trivial-bike-functor}
\Phi_{-}:C \to \PsBikes{\gn{1}}{C}
\end{equation}
\begin{lem}
 \label{isom-to-trivial-bike}
 Every bicycle $\Bike{(\L, \sigma)=\Phi}{\gn{1}}{C}$ is isomorphic to the trivial bicycle determined by the object $\phi(1)(id_{1^+}) \in Ob(C)$,
 namely $\Phi_{\phi(1)(id_{1^+})}$, where $\L = (\phi, \alpha)$ is the
 underlying lax symmetric monoidal cone of $\Phi$. 
\end{lem}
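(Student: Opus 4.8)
The plan is to exhibit an explicit isomorphism of pseudo bicycles $\tau\colon \Phi \to \Phi_{c}$, where $c := \phi(1)(\id_{1^+})$, and then to observe that it is an isomorphism because every one of its components is assembled from the structure isomorphisms of $\Phi$. The starting point is the elementary description of $\gn{1}(k^+) = \gop(1^+, k^+)$: a based map $1^+ \to k^+$ is either the zero map $0$, whose support is empty, or one of the $k$ maps $f_j$ sending $1 \mapsto j$ for $1 \le j \le k$, each with support $\underline{1}$. The key arithmetic facts are that the unique map $m_k\colon \underline{k} \to \underline{1}$ in $\N$ satisfies $m_k^+ \circ f_j = \id_{1^+}$, while the unique map $\iota_k\colon \underline{0} \to \underline{k}$ satisfies $\iota_k^+ \circ 0_{1,0} = 0$, where $0_{1,0}\colon 1^+ \to 0^+$ is the unique (zero) map onto the terminal based set. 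Recall also that, under the correspondence of Lemma \ref{Bikes-nSegPsBikes-eq}, the families $\alpha$ and $u$ of a pseudo bicycle consist of isomorphisms.

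Using these facts I would define $\tau$ as follows. On a nonzero object $f$ of $\gn{1}(k^+)$, set $\tau(k)(f) := \alpha(m_k)(f)\colon \phi(k)(f) \to \phi(1)(\id_{1^+}) = c = \phi_c(k)(f)$, which is an isomorphism. On the zero object, set $\tau(k)(0) := u(0_{1,0}) \circ \alpha(\iota_k)(0_{1,0})^{-1}\colon \phi(k)(0) \to \phi(0)(0_{1,0}) \to \unit{C} = \phi_c(k)(0)$, again a composite of isomorphisms. It then remains to check the three axioms for a morphism of pseudo bicycles.

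Compatibility with the $\alpha$-structure reduces, via the functoriality (cocycle) identity for $\alpha$ together with the uniqueness relations $m_l \circ h = m_k$ and $h \circ \iota_k = \iota_l$ in $\N$, to the identity $\alpha(m_l)(h^+ \circ f) \circ \alpha(h)(f) = \alpha(m_k)(f)$ and its evident analogue on the zero object; since the trivial bicycle has $\alpha_{\Phi_c} = \id$, this is exactly what the axiom demands. Compatibility with the unit is immediate: $\alpha(\iota_0) = \alpha(\id) = \id$ forces $\tau(0)(0_{1,0}) = u(0_{1,0})$, which matches $u_{\Phi_c} = \id$.

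The main obstacle is compatibility with the multiplication isomorphisms $\sigma$, i.e. showing that $\tau$ carries $\sigma_\Phi(k,l)$ to the identity natural transformation $\sigma_{\Phi_c}(k,l)$. Here one first uses that $C$ is permutative: because the unit is strict, $c \otimes \unit{C} = c = \unit{C} \otimes c$ and $\unit{C} \otimes \unit{C} = \unit{C}$, so that the trivial bicycle's $\sigma_c = \id$ is legitimate and the axiom collapses to $\tau(k+l)(f) = (\tau(k)(f_k) \otimes \tau(l)(f_l)) \circ \sigma_\Phi(k,l,f)$, with $f_k = \partition{k+l}{k} \circ f$ and $f_l = \partition{k+l}{l} \circ f$. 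For $\gn{1}$ a nonzero $f$ lands in a single block, so exactly one of $f_k, f_l$ is nonzero (and both are zero when $f$ is); the delicate part is to combine the pseudo bicycle unit axiom — relating $\sigma(m,0,-)$ and $\sigma(0,m,-)$ to the inverse unit isomorphisms — with the functoriality of $\alpha$ so as to identify the surviving nonzero factor's $\tau$ with $\tau(k+l)(f)$ across this case split. Once this bookkeeping is carried out, every component $\tau(k)(f)$ is a composite of isomorphisms, whence $\tau$ is an isomorphism of pseudo bicycles and $\Phi \cong \Phi_{c}$ as claimed.
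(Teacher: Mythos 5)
Your proposal is correct and follows essentially the same route as the paper: the paper's proof also defines the isomorphism componentwise by $\eta(\Phi)(k)(f) := \alpha(m_k)(f)$ using the multiplication maps $m_k:\ud{k} \to \ud{1}$, and leaves the verification that these glue into a morphism of bicycles to the reader, just as you sketch it. If anything, you are more careful than the paper, which applies $\alpha(m_k)$ uniformly to all $f \in \gn{1}(k^+)$ even though for the zero map the codomain is $\phi(1)(0)$ rather than $\unit{C}$; your separate definition $\tau(k)(0) = u(0_{1,0}) \circ \alpha(\iota_k)(0_{1,0})^{-1}$ on the zero object patches exactly this point.
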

\begin{proof}
 We will construct an isomorphism of bicycles
 \[
  \eta(\Phi):\Phi \to \Phi_{\phi(1)(id_{1^+})}.
 \]
 In order to do so, we will use the natural isomorphism
 \[
 \alpha(m_k):\phi(k^+) \Rightarrow \phi(1^+) \circ \gn{1}(m_k)
 \]
 provided by the bicycle $\Phi$, where $m_k:k^+ \to 1^+$ is the
 \emph{multiplication} map. For each $k \in Ob(\N)$ we define
 a natural isomorphism $\eta(\Phi)(k)$ as follows:
 \[
 \eta(\Phi)(k)(f) := \alpha(m_k)(f):\phi(k)(f) \to \phi(1)(id_{1^+}),
 \]
 where $f \in \gnk{1}{k}$. 
 One can check that the
 natural isomorphisms in the  collection $\lbrace \eta(\Phi)(k) \rbrace_{k \in Ob(\N)}$
 glue together into an isomorphism of bicycles $\eta(\Phi):\Phi \to \Phi_{\phi(1)(id_{1^+})}$.
\end{proof}

\begin{coro}
\label{Kbar(C)(1)-equiv-C}
For any symmetric monoidal category $C$, the category $\Kbar C(1^+)$ is
equivalent to $C$.
\end{coro}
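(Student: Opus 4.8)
The plan is to reduce the statement to a property of pseudo bicycles and then read off the equivalence from Lemma~\ref{isom-to-trivial-bike}. By Lemma~\ref{Bikes-nSegPsBikes-eq} there is an isomorphism of categories $\Kbar(C)(1^+) \cong \PsBikes{\gn{1}}{C}$, so it suffices to prove $\PsBikes{\gn{1}}{C} \simeq C$. I would take the trivial bicycle functor $\Phi_{-}\colon C \to \PsBikes{\gn{1}}{C}$ of \eqref{trivial-bike-functor} as the candidate equivalence and verify that it is essentially surjective and fully faithful.

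Essential surjectivity is exactly the content of Lemma~\ref{isom-to-trivial-bike}: every bicycle $\Bike{\Phi}{\gn{1}}{C}$ is isomorphic to the trivial bicycle $\Phi_{\phi(1)(id_{1^+})}$, which lies in the image of $\Phi_{-}$. For full faithfulness I would show that the assignment $C(c, c') \to \PsBikes{\gn{1}}{C}(\Phi_c, \Phi_{c'})$, $g \mapsto \Phi_{-}(g)$, is a bijection. Faithfulness is immediate, since the component of $\Phi_{-}(g)$ at $id_{1^+}$ is $g$ itself. For fullness, given a morphism of bicycles $F\colon \Phi_c \to \Phi_{c'}$, I set $g := F(1)(id_{1^+})\colon c \to c'$ and claim $F = \Phi_{-}(g)$. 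The key observation is that for every $f \in \gn{1}(k^+)$ with $f \neq 0$ the multiplication map $m_k\colon k^+ \to 1^+$ satisfies $m_k \circ f = id_{1^+}$, so compatibility of $F$ with the structural isomorphisms $\alpha$, together with the fact that all the $\alpha$ of a trivial bicycle are identities, forces $F(k)(f) = g$; the remaining components, indexed by the zero maps, are forced to be $id_{\unit{C}}$ by compatibility with the unit isomorphisms $u$. Hence $F$ is completely determined by $g$ and equals $\Phi_{-}(g)$.

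The routine but slightly delicate part is the book-keeping in the fullness argument: one must check that transporting each component $F(k)(f)$ along the canonical isomorphism $\alpha(m_k, f)$ indeed lands on the single map $F(1)(id_{1^+})$, which amounts to applying the $\alpha$-compatibility axiom for a morphism of pseudo Segal bicycles to every pair $(f, m_k)$ and using that the trivial bicycle has identity structure maps. Once this is done, $\Phi_{-}$ is both fully faithful and essentially surjective, hence an equivalence of categories; an explicit quasi-inverse is the evaluation functor $ev\colon \Phi \mapsto \phi(1)(id_{1^+})$, for which $ev \circ \Phi_{-} = id_C$ and $\Phi_{-} \circ ev \cong id$ via the family $\eta(\Phi)$ of Lemma~\ref{isom-to-trivial-bike}. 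Composing with the isomorphism of Lemma~\ref{Bikes-nSegPsBikes-eq} then yields $\Kbar(C)(1^+) \simeq C$.
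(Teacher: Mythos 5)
Your proposal is correct and follows essentially the same route as the paper: the paper's proof also passes through $\PsBikes{\gn{1}}{C}$, takes the evaluation $\Phi \mapsto \phi(1)(id_{1^+})$ (your $ev$, the paper's $I(C)$) as quasi-inverse to the trivial bicycle functor $\Phi_{-}$ of \eqref{trivial-bike-functor}, and relies on Lemma~\ref{isom-to-trivial-bike} for the natural isomorphism $\Phi_{-} \circ I(C) \cong id$. The only difference is that you spell out the full-faithfulness bookkeeping (propagating $F(k)(f)$ along $\alpha(m_k)$ and handling the zero maps via the unit condition, where one also needs $\alpha$-compatibility with the unique map $\ud{0} \to \ud{k}$ to reach zero maps of arbitrary codomain), which the paper compresses into the single assertion that $I(C)$ is inverse to $\Phi_{-}$.
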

\begin{proof}
For each permutative category $C$ we define a functor 
$I(C):\PsBikes{\gn{1}}{C}  \to C$. On objects this functor is defined as follows:
\[
 I(C)(\Phi) = \phi(1)(id_{1^+}),
\]
where $\L = (\phi, \alpha)$ is the underlying lax cone of $\Phi$.
 For a morphism of (pseudo) bicycles $F:\Phi \to \Psi$ we define
 \[
  I(F) = F(1)(id_{1^+}).
 \]
 This functor is inverse of the functor $\Phi_{-}$.

\end{proof}

 \begin{sloppypar}
 It was shown by Leinster in \cite{Leinster} that the degree one category of a
 $\EinC$ has a symmetric monoidal structure. We want to explore the homotopy properties of the unit natural transformation $\eta$ of the adjunction
 $(\PNat, \Kbar)$.
 \end{sloppypar}
 \begin{lem}
 \label{unit-map-equiv}
 For each $\EinC$ $X$ the unit map
 %see appendix \ref{}
 \[
 \eta(X):X \to \Kbar(\PNat(X))
 \]
 is a strict equivalence of $\gCats$.
 \end{lem}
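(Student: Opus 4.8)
The plan is to show that $\eta(X)$ is a degreewise equivalence of categories, reducing everything to a single computation in degree $1$. First I would record that both the source and the target of $\eta(X)$ are \EinCs{}. The source $X$ is one by hypothesis. For the target, by Lemma \ref{Bikes-nSegPsBikes-eq} and Proposition \ref{equiv-PsBikes-StrSMFunc-LBar} we have $\Kbar(C)(n^+) \cong \PsBikes{\gn{n}}{C} \cong \StrSMHom{\PNat(n)}{C}$ naturally in the permutative category $C$, and the projection maps $\delta^n_k:n^+ \to 1^+$ induce an inclusion $\bigvee_{k=1}^n \PNat(1) \to \PNat(n)$ which is an acyclic cofibration in $\PCat$ by exactly the argument of Lemma \ref{acy-cof-inc} (its object function is a monomorphism of free monoids, and it is fully faithful and essentially surjective). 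Cotensoring this acyclic cofibration into $C = \PNat(X)$ makes the Segal map $\Kbar(\PNat(X))(n^+) \to \prod_{k=1}^n \Kbar(\PNat(X))(1^+)$ an acyclic fibration, exactly as in Corollary \ref{Seg-func-Ein}; hence $\Kbar(\PNat(X))$ satisfies the Segal condition of Proposition \ref{char-CCMC}.

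Granting that both $\gCats$ satisfy the Segal condition, I would reduce to degree $1$. Since $\eta(X)$ is a morphism of $\gCats$ it commutes with the projections $X(\delta^{k+l}_k)$, so for each $k,l$ there is a commuting square whose top arrow is $\eta(X)((k+l)^+)$, whose bottom arrow is $\eta(X)(k^+) \times \eta(X)(l^+)$, and whose two vertical legs are the Segal maps of $X$ and of $\Kbar(\PNat(X))$; both vertical maps are equivalences by Proposition \ref{char-CCMC}. By the two-out-of-three property, $\eta(X)((k+l)^+)$ is an equivalence iff $\eta(X)(k^+) \times \eta(X)(l^+)$ is, and a product functor is an equivalence iff each factor is. An induction on $n$ then reduces the whole statement to the cases $n=0$, where both categories are equivalent to the terminal category $\ast$, and $n=1$.

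The degree-$1$ computation is where the two key corollaries enter. By Corollary \ref{Kbar(C)(1)-equiv-C} the functor $I(\PNat(X)):\Kbar(\PNat(X))(1^+) = \PsBikes{\gn{1}}{\PNat(X)} \to \PNat(X)$, sending a pseudo bicycle to $\phi(1)(\id_{1^+})$, is an equivalence of categories, and by Corollary \ref{Thicken-CCM-Cat} the inclusion $i:X(1^+) \to \PNat(X)$ is an equivalence. I would then verify that the composite $I(\PNat(X)) \circ \eta(X)(1^+):X(1^+) \to \PNat(X)$ is isomorphic to the inclusion $i$, by unwinding the unit of the adjunction $(\PNat, \Kbar)$ on an object $x \in X(1^+)$: the underlying object $\phi(1)(\id_{1^+})$ of the pseudo bicycle $\eta(X)(1^+)(x)$ should be exactly $(\id_{\underline{1}}, x) = i(x)$ in $\PNat(X)$. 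Granting this identification, two-out-of-three applied to the equivalence $I(\PNat(X))$ and the equivalence $i$ forces $\eta(X)(1^+)$ to be an equivalence, which completes the argument.

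The main obstacle I anticipate is precisely the identification $I(\PNat(X)) \circ \eta(X)(1^+) \cong i$. The unit of $(\PNat, \Kbar)$ is defined only through a universal property together with the representability of $\PsBikes{\gn{n}}{-}$ by $\PNat(n)$, so pinning it down concretely requires tracing it through the bicycle description of Lemma \ref{Bikes-nSegPsBikes-eq} and the trivialization of degree-$1$ bicycles underlying Corollary \ref{Kbar(C)(1)-equiv-C}. The explicit inverse $\inv{i}$ of Remark \ref{str-inv-inc} should be the right tool for producing the comparison isomorphism cleanly.
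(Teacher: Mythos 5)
Your proposal is correct and follows essentially the same route as the paper: reduce to degree one via the Segal condition (the paper asserts this reduction directly, since both source and target are coherently commutative monoidal categories), then factor $\eta(X)(1^+)$ through the commuting triangle with the equivalences $i$ of Corollary \ref{Thicken-CCM-Cat} and $I$ of Corollary \ref{Kbar(C)(1)-equiv-C}, and conclude by two-out-of-three. The ``main obstacle'' you anticipate is in fact immediate in the paper: the unit is given explicitly by $\phi(n)(f) = ((n), X(f)(x))$, so $I \circ \eta(X)(1^+)(x) = ((1), x) = i(x)$ strictly, and no comparison isomorphism via $\inv{i}$ is needed.
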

 \begin{proof}
 The $\gCat$ $\Kbar(\PNat(X))$ is a coherently commutative monoidal category, therefore
  $\eta(X)$ is a morphism between two $\EinCs$.
 Now in light of Lemma \ref{Bikes-nSegPsBikes-eq} it would be sufficient to show that the degree one functor
 \[
 \eta(X)(1^+):X(1^+) \to \PsBikes{\gn{1}}{\PNat(X)}
 \]
 is an equivalence of categories. We recall the definition of the functor $\eta(X)(1^+)$.
 For each $x \in X(1^+)$, the strict symmetric monoidal functor $\eta(X)(1^+)(x) = \Phi = (\L, \sigma)\Bike{}{\gn{1}}{\PNat(X)}$ is defined as follows:
 \[
 \phi(n)(f) := ((n), X(f)(x)),
 \]
 where $\L = (\phi, \alpha)$ is the underlying lax cone of $\Phi$. In light of corollaries \ref{Thicken-CCM-Cat} and \ref{Kbar(C)(1)-equiv-C} we have the following commutative diagram in $\Cat$:
 \begin{equation*}
 \xymatrix@C=18mm{
 X(1^+) \ar[r]^{\eta(X)(1^+) \ \ \ \ \ \ } \ar[rd]_{i} & \PsBikes{\gn{1}}{\PNat(X)} \ar[d]^{I} \\
 & \PNat(X)
 }
 \end{equation*}
 where the vertical functor $I$ in the diagram above is the functor from corollary \ref{Kbar(C)(1)-equiv-C} and the digonal functor $i$ in the above diagram is the functor from corollary \ref{Thicken-CCM-Cat}. The two corollaries mentioned above say that $i$ and $I$ are equivalences of categories therefore by the two out of three property of the natural model category $\Cat$ the unit map in degree one $\eta(X)(1^+)$ is an equivalence of categories.
 Hence we have proved that the map of coherently commutative monoidal categories $\eta(X)$ is a (strict) equivalence of $\gCats$. 
 \end{proof}
 The adjoint functors $\PNat$ and $\Kbar$ have sufficiently good properties which ensure that the above lemma implies that for each $\gCat$ $X$ the counit map $\eta(X)$ is a coherently commutative monoidal equivalence.
 \begin{coro}
  \label{unit-eqiv.}
  For a $\gCat$ $X$, the unit map $\eta(X):X \to \Kbar(\PNat(X))$ is a cohently commutative monoidal equivalence. 
 \end{coro}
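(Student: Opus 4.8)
The plan is to reduce the general statement to the fibrant case already settled in Lemma \ref{unit-map-equiv} by a fibrant-replacement and two-out-of-three argument. For an arbitrary $\gCat$ $X$, I would first choose a fibrant replacement $r \colon X \to X^f$ in the model category of coherently commutative monoidal categories of Theorem \ref{loc-semi-add}, taken to be an acyclic cofibration with $X^f$ fibrant; this exists by the factorization axiom, and $X^f$ is then a coherently commutative monoidal category. Naturality of the unit $\eta$ of the adjunction $(\PNat,\Kbar)$ yields the commutative square
\[
\xymatrix{
X \ar[r]^{\eta(X)} \ar[d]_{r} & \Kbar(\PNat(X)) \ar[d]^{\Kbar(\PNat(r))} \\
X^f \ar[r]_{\eta(X^f)} & \Kbar(\PNat(X^f))
}
\]
so it suffices to show that $r$, $\eta(X^f)$ and $\Kbar(\PNat(r))$ are coherently commutative monoidal equivalences; then $\eta(X)$ is one by two-out-of-three.

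The edge $r$ is such an equivalence by construction. The edge $\eta(X^f)$ is a strict equivalence of $\gCats$ by Lemma \ref{unit-map-equiv}, since $X^f$ is fibrant; and every strict equivalence is among the weak equivalences of the localized structure (the left Bousfield localization retains all strict equivalences), so $\eta(X^f)$ is a coherently commutative monoidal equivalence. The remaining, and principal, point is the right-hand edge $\Kbar(\PNat(r))$, which I would treat in two steps. First, since $\PNat$ is the left adjoint of the Quillen pair $(\PNat,\Kbar)$, it carries the acyclic cofibration $r$ to an acyclic cofibration in $\PCat$; in particular $\PNat(r)\colon\PNat(X)\to\PNat(X^f)$ is a weak equivalence in $\PCat$. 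Second, I would show that $\Kbar$ sends weak equivalences of $\PCat$ to strict equivalences of $\gCats$: by Proposition \ref{equiv-PsBikes-StrSMFunc-LBar} one has $\Kbar(C)(n^+)\cong\StrSMHom{\PNat(n)}{C}$ degreewise, each $\PNat(n)$ is a cofibrant object of $\PCat$ (its monoid of objects is free), and every object of $\PCat$ is fibrant, so cotensoring the cofibrant $\PNat(n)$ against the equivalence $\PNat(r)$ yields an equivalence of categories in every degree — the same mechanism used for $\KSeg$ in Corollary \ref{Seg-func-Ein}. Hence $\Kbar(\PNat(r))$ is a degreewise, i.e.\ strict, equivalence of $\gCats$, and therefore a coherently commutative monoidal equivalence.

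The hard part is precisely this right-hand edge, and within it the crucial input is that $\PNat$ preserves the acyclic cofibration $r$. This is where the left-Quillen property of $\PNat$ is indispensable: the map $r$ is only a \emph{local} (coherently commutative monoidal) equivalence, whereas Lemma \ref{PNat-hocolim} on its own guarantees preservation of \emph{strict} equivalences only, which would not apply to $r$. Once the three edges $r$, $\eta(X^f)$ and $\Kbar(\PNat(r))$ are identified as coherently commutative monoidal equivalences, the factorization $\eta(X^f)\circ r=\Kbar(\PNat(r))\circ\eta(X)$ together with two-out-of-three forces $\eta(X)$ to be a coherently commutative monoidal equivalence, completing the argument.
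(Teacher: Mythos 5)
Your proof is correct, and it follows the same skeleton as the paper's own proof: fibrant replacement $r:X \to X^f$ by an acyclic cofibration, the naturality square $\Kbar(\PNat(r)) \circ \eta(X) = \eta(X^f) \circ r$, identification of $r$ and $\eta(X^f)$ (the latter by Lemma \ref{unit-map-equiv}) as coherently commutative monoidal equivalences, and two-out-of-three. Where you genuinely diverge is the treatment of the edge $\Kbar(\PNat(r))$. The paper disposes of it by citing Theorem \ref{char-CC-Mon-eq} (a map of $\gCats$ is a coherently commutative monoidal equivalence iff $\PNat$ of it is an equivalence of categories) together with Lemma \ref{pres-reflect-Eq-cat} ($\Kbar$ preserves and reflects equivalences of permutative categories); note this is a forward reference, since Theorem \ref{char-CC-Mon-eq} appears after the corollary in the text. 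You instead exploit that $r$ is not merely a local equivalence but an acyclic cofibration, so the left Quillen functor $\PNat$ preserves it outright; this shortcuts the cofibrant-replacement argument hidden inside the forward direction of Theorem \ref{char-CC-Mon-eq} and keeps the logical ordering cleaner, at the cost of invoking the Quillen-adjunction property of $(\PNat,\Kbar)$, which the paper itself uses inside Theorem \ref{char-CC-Mon-eq} but only formally records in Theorem \ref{main-res} --- so you incur no circularity the paper does not already incur. For the second step you re-derive the preservation of equivalences by $\Kbar$ from the degreewise representation $\Kbar(C)(n^+) \cong \StrSMHom{\PNat(n)}{C}$, cofibrancy of $\PNat(n)$, fibrancy of every object of $\PCat$, and (implicitly) Ken Brown's lemma; the precise citation you want here is Corollary \ref{Hom-Q-Cof} rather than Corollary \ref{Seg-func-Ein}, which concerns $\KSeg(C)$ satisfying the Segal condition via Corollary \ref{Hom-Cof-Fib} --- a minor mis-attribution, not a gap, and the paper's Lemma \ref{pres-reflect-Eq-cat} proves the same statement by the more hands-on route through the natural equivalence $\Kbar(C)(1^+) \simeq C$. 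Your explicit observation that Lemma \ref{PNat-hocolim} alone would not suffice, because it governs only strict equivalences while $r$ is merely a local one, is exactly right and pinpoints why the left-Quillen input is indispensable.
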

 \begin{proof}
 Let $r:X \to X^f$ denote a fibrant replacement of $X$ in the model category of coherently commutative monoidal categories.
 In other words $X^f$ is a coherently commutative monoidal category and $r$ is an acyclic cofibration in the model category of coherently commutative monoidal categories.
 Since $\eta$ is a natural transformation therefore
 we have the following commutative diagram in $\gCAT$
 \begin{equation*}
  \xymatrix{
  X^f  \ar[r]^{\eta(X^f) \ \ \ \ } & \Kbar(\PNat(X^f))  \\
  X \ar[u]^{r} \ar[r]_{\eta(X) \ \ \ } & \Kbar(\PNat(X)) \ar[u]_{\Kbar(\PNat(r))}
  }
 \end{equation*}
 The above lemma tells us that the morphism $\eta(X^f)$ is a coherently commutative monoidal equivalence and so is $r$ by assumption. Theorem \ref{char-CC-Mon-eq} and lemma \ref{pres-reflect-Eq-cat}
 together imply that $\Kbar(\PNat(r))$ is a coherently commutative monoidal equivalence. Now the 2 out of 3 property of model categories implies that $\eta(X)$ is a coherently commutative monoidal equivalence.
 \end{proof}

 Finally we have developed enough machinery to provide a characterization of
 a coherently commutative monoidal equivalence.
 
 \begin{thm}
 \label{char-CC-Mon-eq}
 A morphism of $\gCats$ $F:X \to Y$ is a coherently commutative monoidal equivalence if and only if the strict symmetric monoidal functor
 $\PNat(F):\PNat(X) \to \PNat(Y)$ is an equivalence of (permutative) categories.
 \end{thm}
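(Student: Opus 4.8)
The plan is to reduce the statement to the case where $X$ and $Y$ are fibrant (that is, coherently commutative monoidal categories), where it can be read off from the thickening results of the preceding subsection, and then to propagate it to arbitrary $\gCats$ by fibrant replacement. The two structural inputs I would isolate at the outset are: (i) $\PNat$ carries strict equivalences of $\gCats$ to equivalences of permutative categories, which is exactly Lemma~\ref{PNat-hocolim}; and (ii) $\PNat$ carries acyclic cofibrations of the model category of $\EinCs$ to equivalences of permutative categories. Granting (i) and (ii), the theorem follows by a purely formal two-out-of-three argument, so the genuine content is concentrated in (ii) and in the fibrant case.

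First I would treat the case where $X$ and $Y$ are themselves coherently commutative monoidal categories. Since the model category of $\EinCs$ is a left Bousfield localization of the strict model category (Theorem~\ref{loc-semi-add}), a map between fibrant objects is a coherently commutative monoidal equivalence if and only if it is a strict equivalence; thus for such an $F$ the forward implication is immediate from (i). For the converse, suppose $\PNat(F)$ is an equivalence. The inclusion $i\colon Z(1^+)\to\PNat(Z)$ is defined for every $\gCat$, is natural in $Z$, and is an equivalence of categories when $Z$ is fibrant (Corollary~\ref{Thicken-CCM-Cat}); hence from the naturality relation $\PNat(F)\circ i_X=i_Y\circ F(1^+)$ and two-out-of-three I conclude that $F(1^+)$ is an equivalence. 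Because $X$ and $Y$ satisfy the Segal condition (Proposition~\ref{char-CCMC}), each projection $Z(n^+)\to Z(1^+)^{\times n}$ is an equivalence and is natural in $Z$, so $F(1^+)$ being an equivalence forces $F(n^+)$ to be an equivalence for every $n$; therefore $F$ is a strict, hence coherently commutative monoidal, equivalence.

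To pass to the general case, I would choose functorial fibrant replacements $r_X\colon X\to X^f$ and $r_Y\colon Y\to Y^f$ in the model category of $\EinCs$, producing $F^f\colon X^f\to Y^f$ with $r_Y\circ F=F^f\circ r_X$. The maps $r_X,r_Y$ are acyclic cofibrations, so by (ii) both $\PNat(r_X)$ and $\PNat(r_Y)$ are equivalences, while $F^f$ is a map between fibrant objects to which the previous paragraph applies. Applying two-out-of-three to the square $\PNat(r_Y)\circ\PNat(F)=\PNat(F^f)\circ\PNat(r_X)$ and to the coherently commutative monoidal equivalences $r_X,r_Y$ then yields both implications at once: if $F$ is a coherently commutative monoidal equivalence then so is $F^f$, whence $\PNat(F^f)$ and therefore $\PNat(F)$ are equivalences; conversely, if $\PNat(F)$ is an equivalence then so is $\PNat(F^f)$, whence $F^f$ and therefore $F$ are coherently commutative monoidal equivalences. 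I note that this route does not invoke Corollary~\ref{unit-eqiv.}, so there is no circularity with the proof of that corollary, which relies on the present theorem.

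The hard part will be establishing (ii). I would deduce it from the assertion that $(\PNat,\Kbar)$ is a Quillen adjunction for the model category of $\EinCs$, which makes $\PNat$ a left Quillen functor and hence a preserver of acyclic cofibrations. Concretely, the degreewise identification $\Kbar(C)(n^+)\cong\StrSMHom{\PNat(n)}{C}$ (Lemma~\ref{Bikes-nSegPsBikes-eq} and Proposition~\ref{equiv-PsBikes-StrSMFunc-LBar}) exhibits $\Kbar$ as a degreewise cotensor by the cofibrant groupoids $\PNat(n)$; since $\PCat$ is a $\Cat$-model category, this shows that $\Kbar$ preserves strict fibrations and strict acyclic fibrations. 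The analogue of Corollary~\ref{Seg-func-Ein}, obtained from the analogue of Lemma~\ref{acy-cof-inc} (that $\bigvee_{k=1}^{n}\PNat(1)\to\PNat(n)$ is an acyclic cofibration) together with Corollary~\ref{Kbar(C)(1)-equiv-C}, shows that every $\Kbar(C)$ is a coherently commutative monoidal category. As the acyclic fibrations of the localized structure coincide with those of the strict structure, and fibrations between fibrant objects likewise coincide, it follows that $\Kbar$ is right Quillen into the model category of $\EinCs$, so $\PNat$ preserves acyclic cofibrations as required; this verification of the Quillen property is the step I expect to demand the most care.
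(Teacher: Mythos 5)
Your proposal is correct, and its overall skeleton matches the paper's: both proofs reduce the theorem to the case of fibrant objects via functorial (co)fibrant replacement, invoke Lemma~\ref{PNat-hocolim} to handle strict equivalences, and use that $\PNat$ preserves acyclic cofibrations of the localized structure to transport the statement along the replacement maps by two-out-of-three. The genuine divergence is in the fibrant-case converse. The paper routes it through the right adjoint: $\Kbar$ preserves equivalences of permutative categories (Lemma~\ref{pres-reflect-Eq-cat}), so $\Kbar(\PNat(F))$ is a strict equivalence, and the unit $\eta(X):X \to \Kbar(\PNat(X))$ is a strict equivalence for fibrant $X$ (Lemma~\ref{unit-map-equiv}); two-out-of-three then recovers $F$. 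You instead use the naturality of the inclusion $i:Z(1^+) \to \PNat(Z)$ from Corollary~\ref{Thicken-CCM-Cat} to conclude that $F(1^+)$ is an equivalence, and then propagate degreewise via the Segal condition (Proposition~\ref{char-CCMC}); since $\PNat(F)$ sends $((1),x)$ to $((1),F(1^+)(x))$, the needed naturality square does commute. Your route bypasses $\Kbar$, Lemma~\ref{unit-map-equiv}, and the bicycle machinery entirely, at the modest cost of the degreewise propagation argument; it is also worth noting that Corollary~\ref{Thicken-CCM-Cat} is the engine behind Lemma~\ref{unit-map-equiv} anyway, so you are cutting out a middleman rather than finding an unrelated proof. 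A second difference is one of rigor rather than route: both your proof and the paper's lean on $(\PNat,\Kbar)$ being a Quillen adjunction for the model category of $\EinCs$ (the paper says ``$\PNat$ is a left Quillen functor'' twice in this proof but only asserts the Quillen property later, in the proof of Theorem~\ref{main-res}), whereas you flag this as the hard input and sketch an actual verification --- the cotensor description $\Kbar(C)(n^+)\cong\StrSMHom{\PNat(n)}{C}$ with $\PNat(n)$ cofibrant, the acyclic-cofibration analogue of Lemma~\ref{acy-cof-inc}, locality of every $\Kbar(C)$ as in Corollary~\ref{Seg-func-Ein}, and the standard coincidence of acyclic fibrations, and of fibrations between fibrant objects, in a left Bousfield localization (the latter using left properness, which holds here). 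Making that dependency explicit is an improvement on the paper's presentation, and your remark about avoiding Corollary~\ref{unit-eqiv.} correctly dispels the only plausible circularity worry.
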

 \begin{proof}
 Let us first assume that the morphism of $\gCats$ $F$ is a
 coherently commutative monoidal equivalence.
 Any choice of a cofibrant replacement functor $Q$ for $\gCAT$ provides a commutative
 diagram
 \begin{equation*}
 \xymatrix{
 Q(X) \ar[r]^{Q(F)} \ar[d] &Q(Y) \ar[d] \\
 X \ar[r]_F & Y
 }
 \end{equation*}
 The vertical maps in this diagram are acyclic fibrations in the model
 category of coherently commutative monoidal categories which are
 strict equivalences of $\gCats$.
 Applying the functor $\PNat$ to this commutative diagram we get the following
 commutative diagram in $\PCat$
 \begin{equation*}
 \xymatrix{
 \PNat(Q(X)) \ar[r]^{\PNat(Q(F))} \ar[d] & \PNat(Q(Y)) \ar[d] \\
 \PNat(X) \ar[r]_{\PNat(F)} & \PNat(Y)
 }
 \end{equation*}
 The functor $\PNat$ is a left Quillen functor therefore it preserves
 weak equivalences between cofibrant objects. This implies that the top
 horizontal arrow in the above diagram is a weak equivalence in $\PCat$.
 The above lemma \ref{PNat-hocolim} implies that the vertical maps in the
 above diagram are weak equivalences in $\PCat$. Now the two out of three
 property of weak equivalences in model categories implies that
 $\PNat(F)$ is a weak equivalence in $\PCat$.
 
 Conversely, let us first assume that $\PNat(F):\PNat(X) \to \PNat(Y)$ is a weak equivalence between $\EinCs$ in $\PCat$.
 The functor $\Kbar$ preserves equivalences in $\PCat$, therefore the morphism
 $\Kbar(\PNat(F)):\Kbar(\PNat(X)) \to \Kbar(\PNat(Y))$ is a strict equivalence of $\gCats$. Now we have the following commutative diagram
 \begin{equation*}
 \xymatrix{
 \Kbar(\PNat(X)) \ar[r]^{\Kbar(\PNat(F))} & \Kbar(\PNat(Y)) \\
 X \ar[u]^{\eta(X)} \ar[r]_F & Y \ar[u]_{\eta(Y)}
 }
 \end{equation*}
 Lemma \ref{PNat-hocolim} implies that the two vertical arrows in the above
 diagram are strict equivalences of $\gCats$, therefore
 by the two-out-of-three property of model categories, $F$ is also a
 coherently commutative monoidal equivalence. Now we tackle the general case. Let $F:X \to Y$ be a morphism of $\gCats$ such that $\PNat(F)$ is
 an equivalence of categories. By a choice of a functorial factorization functor
 we get the following commutative diagram whose vertical arrows are acyclic
 cofibrations in the model category of coherently commutative monoidal categories and $R(X)$ and $R(Y)$ are coherently commutative monoidal categories:
 \begin{equation*}
 \xymatrix{
 R(X) \ar[r]^{R(F)} & R(Y) \\
 X \ar[u]^{\zeta(X)} \ar[r]_F & Y \ar[u]_{\zeta(Y)}
 }
 \end{equation*}
 Applying the functor $\PNat$ to the above diagram we get the following commutative diagram in $\PCat$:
 \begin{equation*}
 \xymatrix{
 \PNat(R(X)) \ar[r]^{\PNat(R(F))} & \PNat(R(Y)) \\
 \PNat(X) \ar[u]^{\PNat(\zeta(X))} \ar[r]_{\PNat(F)} & \PNat(Y) \ar[u]_{\PNat(\zeta(Y))}
 }
 \end{equation*}
 Since $\PNat$ is a left Quillen functor therefore it preserves
 acyclic cofibrations. This implies that the two vertical morphisms in the above
 diagram are equivalences of categories. By assumption $\PNat(F)$
 is an equivalence of categories therefore the two out of three property implies that $\PNat(R(F))$ is an equivalence of categories. The discussion earlier in this proof regarding strict equivalence between coherently commutative monoidal categories implies that $R(F)$ is a strict equivalence of $\gCats$.
 \end{proof}

 The lemmas proved in this section and the results of appendix \ref{Mdl-CAT-Perm} together imply the main result of this paper which is the following:
 \begin{thm}
 \label{main-res}
 The adjunction $(\PNat, \Kbar)$ is a Quillen equivalence.
 \end{thm}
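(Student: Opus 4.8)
The plan is to verify the two conditions of the standard recognition criterion for Quillen equivalences (\cite[Corollary 1.3.16]{Hovey}). Since $\PNat$ is the left adjoint and $\Kbar$ the right adjoint, and the Quillen adjunction itself is supplied by appendix \ref{Mdl-CAT-Perm} (with $\PNat$ a left Quillen functor, as already used in the proof of Theorem \ref{char-CC-Mon-eq}), it suffices to show (i) that for every cofibrant $\gCat$ $X$ the derived unit $X \to \Kbar\!\left((\PNat X)^{\mathrm{fib}}\right)$ is a weak equivalence, and (ii) that $\Kbar$ reflects weak equivalences between fibrant objects of $\PCat$.

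For (i) I would first observe that every object of $\PCat$ is fibrant, since the terminal functor $C \to \ast$ is automatically an isofibration (the only isomorphism in $\ast$ is the identity). Hence $\PNat X$ needs no fibrant replacement, and the derived unit is literally the unit $\eta(X)\colon X \to \Kbar(\PNat X)$. Corollary \ref{unit-eqiv.} already asserts that $\eta(X)$ is a coherently commutative monoidal equivalence for \emph{every} $\gCat$ $X$, which is precisely a weak equivalence in the source model category; so (i) holds, even without the cofibrancy hypothesis.

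For (ii) let $F\colon C \to D$ be a strict symmetric monoidal functor with $\Kbar(F)$ a coherently commutative monoidal equivalence. Since $\Kbar C$ and $\Kbar D$ are coherently commutative monoidal categories (the analogue of Corollary \ref{Seg-func-Ein} for $\Kbar$, proved the same way via Proposition \ref{equiv-PsBikes-StrSMFunc-LBar} and an acyclic cofibration $\bigvee_k \PNat(\delta^n_k)$), they are fibrant in $\gCAT$; and by the defining property of a left Bousfield localization (Theorem \ref{local-tool}) a local equivalence between fibrant local objects is a strict equivalence of $\gCats$. In particular the degree-one functor $\Kbar(F)(1^+)\colon \Kbar C(1^+) \to \Kbar D(1^+)$ is an equivalence of categories. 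Now Lemma \ref{Bikes-nSegPsBikes-eq} identifies $\Kbar(-)(1^+)$ with $\PsBikes{\gn{1}}{-}$, and Corollary \ref{Kbar(C)(1)-equiv-C} produces equivalences $I(C)\colon \Kbar C(1^+) \to C$, $\Phi \mapsto \phi(1)(\id_{1^+})$; these assemble, as $C$ varies, into a natural transformation $\Kbar(-)(1^+) \Rightarrow \id_{\PCat}$ whose components are equivalences, since for $F\colon C \to D$ one has $I(D)\circ \PsBikes{\gn{1}}{F} = F \circ I(C)$. Naturality then yields a commuting square relating $\Kbar(F)(1^+)$ to $F$ through the equivalences $I(C), I(D)$, and two-out-of-three in $\Cat$ forces $F$ to be an equivalence of categories; being strict symmetric monoidal, $F$ is a weak equivalence in $\PCat$ by Theorem \ref{nat-model-str-Perm}.

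The hard part will be condition (ii): its substantive inputs are the reduction of a coherently commutative monoidal equivalence between fibrant objects to its degree-one part (which rests on $\Kbar C$ being fibrant), and the naturality in $C$ of the equivalence $\Kbar C(1^+)\simeq C$ coming from Corollary \ref{Kbar(C)(1)-equiv-C}. An alternative would be to show directly that the counit $\PNat\Kbar C \to C$ is an equivalence, combining Corollaries \ref{Thicken-CCM-Cat} and \ref{Kbar(C)(1)-equiv-C} with Theorem \ref{char-CC-Mon-eq}; but that route requires identifying the counit concretely and is more delicate, so I would prefer the degree-one argument above, which uses only results already recorded.
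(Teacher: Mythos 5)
Your proposal is correct and takes essentially the same route as the paper: its two substantive inputs --- the unit equivalence of Corollary \ref{unit-eqiv.} and the degree-one reduction via the naturality square for $I(C)$, which is verbatim the argument of the paper's Lemma \ref{pres-reflect-Eq-cat} --- are exactly the ingredients the paper's proof of Theorem \ref{main-res} uses. The only difference is packaging: you invoke the derived-unit-plus-reflection form of the Quillen-equivalence criterion (legitimate here since, as you correctly note, every object of $\PCat$ is fibrant, and your auxiliary claims that $\Kbar(C)$ is always a coherently commutative monoidal category and that a local equivalence between fibrant local objects is a strict equivalence are both sound), whereas the paper verifies the equivalent adjunct-map criterion directly, citing Lemma \ref{pres-reflect-Eq-cat} rather than re-deriving it.
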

 \begin{proof}
 We observe that $\PNat(n)$ is a cofibrant permutative category for all 
 $n \in \Nat$. Since the permutative category $\PNat(n)$ is cofibrant for all $n \ge 0$ therefore it is easy to check that the right adjoint functor $\Kbar$ preserves fibrations and trivial fibrations in $\PCat$ and therefore $(\PNat, \Kbar)$ is a Quillen adjunction.
 Let $X$ be a cofibrant object in the model category of coherently commutative monoidal categories and let $C$ be a permutative category.
  We will show that a map $F:\PNat(X) \to C$ is a coherently commutative monoidal equivalence if and only if its adjunct map $\phi(F):X \to \Kbar{C}$
   is an equivalence of categories. Let us first assume that $F$ is an equivalence in $\PCat$. The adjunct map $\phi(F)$ is defined by the following commutative diagram:
 \begin{equation*}
  \xymatrix{
  \Kbar(\PNat(X)) \ar[r]^{\Kbar(F)} & \Kbar(C) \\
   X \ar[u]^\eta \ar[ru]_{\phi(F)}
  }
  \end{equation*}
  The right adjoint functor $\Kbar$ preserves weak equivalences therefore the top horizontal arrow is a strict equivalence of $\gCats$.
  The unit map $\eta$ is a coherently commutative monoidal equivalence by corollary \ref{unit-eqiv.}. Now the 2 out of 3 property of model categories implies that $\phi(F)$ is also a coherently commutative monoidal equivalence. 
  
Conversely, let us assume that $\phi(F)$ is a coherently commutative monoidal equivalence. The 2 out of 3 property of model categories implies that top horizontal arrow in the above commutative diagram, namely $\Kbar(F)$ is a coherently commutative monoidal equivalence and therefore a strict equivalence of $\gCats$. Now Lemma \ref{pres-reflect-Eq-cat} implies that the strict symmetric monoidal functor $F$
is an equivalence of categories.
 \end{proof}
Now we are ready to state the main result of this paper which is a corollary of the above theorem:
\begin{coro}
 \label{main-res-paper}
 The adjunction $(\PStr, \K)$ is a Quillen equivalence.
 \end{coro}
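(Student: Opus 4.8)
The plan is to deduce the statement from Theorem~\ref{main-res} by comparing the two right adjoints $\K$ and $\Kbar$ directly, rather than re-running the argument of Theorem~\ref{main-res} for $(\PStr,\K)$. Recall that $(\PStr,\K)$ has already been shown to be a Quillen adjunction in Section~\ref{SegNerve}, so it remains only to upgrade it to a Quillen equivalence. The mechanism is the standard principle that a Quillen equivalence transfers across a natural weak equivalence of right Quillen functors: if $\Kbar$ and $\K$ are connected by a natural transformation which is a weak equivalence on every (fibrant) object, then their total right derived functors are naturally isomorphic, and a Quillen adjunction is a Quillen equivalence precisely when its derived right adjoint is an equivalence of homotopy categories, see \cite[Prop.~1.3.13]{Hovey}.

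First I would build the comparison natural transformation. Remark~\ref{nat-transf-ext-right} supplies a natural equivalence $\Theta:\PStr(-)\Rightarrow\PNat(-)$ of contravariant functors $\gop\to\PCat$ whose component $\Theta(n):\PStr(n)\to\PNat(n)$ is an equivalence of permutative categories for every $n$; moreover both $\PStr(n)$ and $\PNat(n)$ are cofibrant in $\PCat$ (this is recorded for $\PStr(n)$ in Section~\ref{SegNerve} and for $\PNat(n)$ in the proof of Theorem~\ref{main-res}). Applying the cotensor $\StrSMHom{-}{C}$ and using the natural isomorphisms $\K(C)(n^+)\cong\StrSMHom{\PStr(n)}{C}$ of Lemma~\ref{rep-as-SM-func} together with $\Kbar(C)(n^+)\cong\StrSMHom{\PNat(n)}{C}$ from Proposition~\ref{equiv-PsBikes-StrSMFunc-LBar} and Lemma~\ref{Bikes-nSegPsBikes-eq}, I obtain a natural transformation $\kappa_C:\Kbar(C)\to\K(C)$, natural also in $C$. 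The key point is that each $\kappa_C(n^+)=\StrSMHom{\Theta(n)}{C}$ is an equivalence of categories: since every object of $\PCat$ is fibrant (the terminal map lifts isomorphisms trivially, so it is an isofibration) and the $\Cat$-enrichment makes $\StrSMHom{-}{-}$ a Quillen bifunctor, the functor $\StrSMHom{-}{C}$ carries trivial cofibrations to trivial fibrations of $\Cat$, so by Ken Brown's lemma it carries the weak equivalence $\Theta(n)$ between cofibrant objects to an equivalence of categories, see Corollary~\ref{Hom-Cof-Fib}. Hence $\kappa_C$ is a degreewise, i.e. strict, equivalence of $\gCats$, and in particular a weak equivalence in the model category of coherently commutative monoidal categories.

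With $\kappa$ in hand the conclusion is formal. Because every object of $\PCat$ is fibrant, both $\K$ and $\Kbar$ are right Quillen functors that preserve all weak equivalences (Ken Brown again), so they descend to functors on homotopy categories computing their total right derived functors $\mathbf{R}\K,\mathbf{R}\Kbar:\mathrm{Ho}(\PCat)\to\mathrm{Ho}(\gCAT)$; the natural weak equivalence $\kappa$ then induces a natural isomorphism $\mathbf{R}\Kbar\cong\mathbf{R}\K$. Theorem~\ref{main-res} says $(\PNat,\Kbar)$ is a Quillen equivalence, so $\mathbf{R}\Kbar$ is an equivalence of categories, whence $\mathbf{R}\K$ is too, and therefore the Quillen adjunction $(\PStr,\K)$ is a Quillen equivalence by \cite[Prop.~1.3.13]{Hovey}.

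I expect the main obstacle to be the middle step: verifying carefully that $\Theta$ is genuinely degreewise an equivalence between cofibrant objects and that the cotensor $\StrSMHom{-}{C}$ preserves it (the Ken Brown argument, with the correct handling of variance in $\PCat^{op}$), together with checking that $\kappa$ is truly a map of $\gCats$ rather than merely a pointwise collection of functors; this last compatibility with the $\gop$-structure maps follows from the naturality of $\Theta$ as a transformation of functors out of $\gop$. The transfer of the Quillen equivalence in the final paragraph is routine once the natural weak equivalence of right adjoints is established.
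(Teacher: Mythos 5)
Your proposal is correct and is essentially the paper's own argument: the paper likewise takes the natural weak equivalence $\PStr(-)\Rightarrow\PNat(-)$ of Remark~\ref{nat-transf-ext-right} between cofibrant permutative categories, applies the cotensor $\StrSMHom{-}{C}$ of the $\Cat$-model structure on $\PCat$ to obtain a natural strict equivalence of $\gCats$ between $\K$ and $\Kbar$, and concludes via the induced natural isomorphism of derived right adjoints together with Theorem~\ref{main-res}. Your explicit invocations of Ken Brown's lemma and \cite[Prop.~1.3.13]{Hovey} merely spell out steps the paper leaves implicit.
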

 \begin{proof}
 Remark \ref{nat-transf-ext-right} gives a natural equivalence of permutative categories
 \begin{equation*}
T \circ (id_{\Pi_1} \circ (J \circ i) \circ \inv{I}) :\PStr(n) \overset{\inv{I}} \Rightarrow \Pi_1\circ \PLbb(-) \Rightarrow \Pi_1\circ \Lbb(\gn{-}) \Rightarrow \Pi_1\circ \Lbb(-) \overset{T} \Rightarrow \PNat(-).
\end{equation*}
We observe that for all $n \in \Nat$ 
$T \circ (id_{\Pi_1} \circ (J \circ i) \circ \inv{I})(n)$ is a weak equivalence in $\PCat$ between cofibrant (and fibrant) permutative categories.

We recall from appendix \ref{Mdl-CAT-Perm} that the bifunctor $\StrSMHom{-}{-}$ is the Hom functor of the $\Cat$-model category $\PCat$. This implies that for each $n \in \Nat$ and each permutative category $C$, the functor
\begin{equation*}
\StrSMHom{T \circ (id_{\Pi_1} \circ (J \circ i) \circ \inv{I})(n)}{C}:\StrSMHom{\PStr(n)}{C} \to \StrSMHom{\PNat(n)}{C}
\end{equation*}
is an equivalence of categories. In other words the natural transformation
\begin{equation*}
\StrSMHom{T \circ (id_{\Pi_1} \circ (J \circ i) \circ \inv{I})(-)}{C}:\StrSMHom{\PStr(-)}{C} \to \StrSMHom{\PNat(-)}{C}
\end{equation*}
is a strict equivalence if $\gCats$.
 This morphism of $\gCats$ uniquely determines a strict equivalence of $\gCats$ $\eta(C):\K(C)\Rightarrow \Kbar(C)$ for each permutative category $C$. The family $\lbrace \eta(C) \rbrace_{C \in Ob(\PCat)}$ glues together to define a natural equivalence $\eta:\K \Rightarrow \Kbar$.
 The natural equivalence $\eta$ induces a natural isomorphism between the derived functors of $\K$ and $\Kbar$. Now the corollary follows from the above theorem \ref{main-res}.
 \end{proof}

%  
%  
%  
%  
%  
%  %\include{SymmMonHCat}
%  
% 
 \appendix
\section[Model category structure on $\PCat$]{Model category structure on $\PCat$}
\label{Mdl-CAT-Perm}
In this appendix we provide a proof of the natural model category structure on the category of all (small)
permutative categories $\PCat$ which was defined in Theorem \ref{nat-model-str-Perm}. $\PCat$ is a reflective subcategory of $\Cat$ \emph{i.e.} the forgetful functor 
 $U:\PCat \to \Cat$ has a left adjoint $F:\Cat \to \PCat$ which assigns to a category $B$,
 the free permutative category generated by $B$.
 We will construct the model category structure
by \emph{transfer} along the adjunction $(F, i)$. The main tool used here
will be the following theorem
\begin{thm} \cite[Theorem 3.6]{goer-sch}
\label{mdl-str-transfer-tool}
Let $F : C \rightleftharpoons D : G$ be an adjoint pair and suppose $C$ is a
cofibrantly generated model category. Let $I$ and $J$ be chosen sets of generating
cofibrations and acyclic cofibrations, respectively. Define a morphism $f : X \to Y$
in $D$ to be a weak equivalence or a fibration if $G(f)$ is a weak equivalence or fibration
in $C$. Suppose further that
\begin{enumerate}
\item The right adjoint $G : D \to C$ commutes with sequential colimits; and
\item Every cofibration in $D$ with the LLP with respect to all fibrations is a weak
equivalence.
\end{enumerate}
Then $D$ becomes a cofibrantly generated model category. Furthermore the collections
$\lbrace F(i) | i \in I \rbrace$ and $\lbrace F(j) | j \in J \rbrace$ generate the cofibrations and the acyclic cofibrations
of D respectively.
\end{thm}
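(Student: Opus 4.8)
The statement is an instance of the standard \emph{Kan transfer (lifting) principle} for cofibrantly generated model structures, so the plan is to verify the hypotheses of Kan's recognition theorem for the candidate classes induced by $G$. Assuming, as is standard for transfer arguments, that $D$ is bicomplete, I would take the weak equivalences $W$ and the fibrations of $D$ to be those $f$ with $G(f)$ a weak equivalence, respectively a fibration, in $C$, and declare the cofibrations to be the maps with the left lifting property against every map that is both a fibration and a weak equivalence. The proposed generating sets are $F(I)$ and $F(J)$, and the whole argument rests on the adjunction correspondence: for $i$ in $C$ and $p$ in $D$, the map $F(i)$ has the left lifting property against $p$ if and only if $i$ has the left lifting property against $G(p)$.

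First I would translate the lifting classes through this correspondence. A map $p$ lies in $\mathrm{RLP}(F(I))$ if and only if $G(p)\in\mathrm{RLP}(I)$, i.e. $G(p)$ is an acyclic fibration in $C$; and $p\in\mathrm{RLP}(F(J))$ if and only if $G(p)$ is a fibration, i.e. $p$ is a fibration in $D$ by definition. Since $C$ is a model category, $G(p)$ is an acyclic fibration exactly when it is both a fibration and a weak equivalence, so $\mathrm{RLP}(F(I))$ is precisely the class of maps that are both fibrations and weak equivalences in $D$. This identifies the acyclic fibrations with $\mathrm{RLP}(F(I))$ and supplies the two ``injectivity'' conditions of the recognition theorem at once. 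That $W$ satisfies two-out-of-three and is closed under retracts is inherited from $C$ through the functor $G$.

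Next I would secure the small object argument for both $F(I)$ and $F(J)$, where hypothesis (1) is essential. For a domain $A$ of a map in $I$, which is small in $C$, I would compute that $F(A)$ is small relative to $F(I)$-cell along the relevant cellular towers using the adjunction isomorphism $\mathrm{Hom}_D(F(A),-)\cong\mathrm{Hom}_C(A,G(-))$, the fact that $G$ commutes with sequential colimits, and the smallness of $A$ in $C$; the same reasoning applies to $F(J)$. The two functorial factorizations, one through $F(I)$-cell followed by $\mathrm{RLP}(F(I))$ and one through $F(J)$-cell followed by $\mathrm{RLP}(F(J))$, then exist.

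Finally the factorizations must be matched to the model structure, and this is where hypothesis (2) does the real work. The first factorization is already a cofibration followed by an acyclic fibration, by the identification above. For the second, I would note that $F(J)\subseteq\mathrm{cof}(F(I))$ — again by the adjunction, since each $j\in J$ is a cofibration in $C$ — so every relative $F(J)$-cell complex is a cofibration in $D$; and since any map in $\mathrm{cof}(F(J))=\mathrm{LLP}(\mathrm{Fib})$ is a cofibration with the left lifting property against all fibrations, hypothesis (2) forces it to be a weak equivalence. Thus the second factorization is an acyclic cofibration followed by a fibration. The one remaining lifting axiom, that acyclic cofibrations lift against fibrations, follows by the usual retract argument: factor an acyclic cofibration $f$ as a relative $F(J)$-cell complex followed by a fibration, conclude via two-out-of-three that this fibration is acyclic, lift $f$ against it, and exhibit $f$ as a retract of the $F(J)$-cellular map, so that $f$ itself lies in $\mathrm{LLP}(\mathrm{Fib})$. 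The concluding claim that $F(I)$ and $F(J)$ generate the cofibrations and the acyclic cofibrations is exactly the content of these two factorizations. The main obstacle is the acyclicity step — showing that $\mathrm{cof}(F(J))$-maps are weak equivalences — which is precisely why hypothesis (2) is assumed rather than derived; together with the smallness transfer granted by hypothesis (1), it is the crux of the argument.
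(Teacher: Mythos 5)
Your proof is correct, and it takes essentially the same route as the source: the paper itself gives no proof of this statement but quotes it from \cite[Theorem 3.6]{goer-sch}, whose proof is exactly your argument — identify $\mathrm{RLP}(F(I))$ and $\mathrm{RLP}(F(J))$ with the acyclic fibrations and fibrations via the adjunction, run the small object argument using hypothesis (1) for smallness of $F$-images of domains, and use hypothesis (2) together with the retract argument to show maps in $\mathrm{LLP}(\mathrm{Fib})$ are acyclic cofibrations. The only point you gloss (as does the cited source) is that the smallness transfer needs the domains of $I$ and $J$ to be small relative to sequential colimits of arbitrary maps, since $G$ of an $F(I)$-cell tower need not be an $I$-cell tower — this is automatic in the locally presentable setting in which the paper invokes the theorem.
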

The categories $\Cat$ and $\PCat$ are locally presentable so the first condition is satisfied by the
adjunction $(F, i)$, see \cite{AR94}. The first half of this section is devoted to verifying the second condition
of the above theorem. In the second half of this appendix we will show that the transferred
model category structure on $\PCat$ is the same as the one claimed in Theorem \ref{nat-model-str-Perm}.
We will verify the second condition in Theorem \ref{mdl-str-transfer-tool} by using a factorization
of maps in $\PCat$ as acyclic cofibrations followed by fibrations. We begin by constrcting this factorization.

  Let $C$ be a small category, the functor $\partial_1=[d_1,C]:[J,C]\to C$ is the \emph{source functor} which which takes an isomorphism $a:A_0\to A_1$ to its source $A_0$, and $\partial_0=[d_0,C] $ is the \emph{target functor} which takes an isomorphism $a:A_0\to A_1$ to its target $A_1$. If $s$ denotes the functor $J \to 0$, then the functor $\sigma=[s,C]:C\to [J,C]$ is the \emph{unit functor} which takes an object $A\in C$ to the unit isomorphism $id_A:A\to A$. The relation $s d_1 = id_1=s d_0$  implies that we have $ \partial_1 \sigma =id_{C}=\partial_0 \sigma$. The functors $\partial_1,\partial_0$ and $\sigma$ are equivalences of categories, since the functors $d_1,d_0$ and $s$ are equivalences. We observe that the following functor
\[
[J, (pr_1,pr_2)]:[J, C \times C] \to [J, C] \times [J, C]
\]
is an isomprphism of categories, where $pr_1:C \times C \to C$ and
$pr_2:C \times C \to C$ are the obvious projection functors.
 Let us further assume that $C$ is a permutative category, then the
 arrow category $[J, C]$ inherits the structure of a permutative category by
 the following bifunctor
 \begin{equation*}
\label{inh-tens-prod-isom-cat}
- \underset{[J,C]} \otimes -:[J, C] \times [J, C] \overset{\inv{[J, (pr_1,pr_2)]}} \to [J, C \times C] \overset{[J, - \underset{C} \otimes -]} \to [J, C].
\end{equation*}
and the following symmetry natural isomorphism
\begin{equation*}
\label{inh-sym-nat-isom-cat}
\gamma_{[J, C]}:[J, C] \times [J, C] \overset{\inv{[J, (pr_1,pr_2)]}} \to [J, C \times C] \overset{[J, \gamma_C]} \to[J, [J, C]].
\end{equation*}
We observe that the following two diagrams commute
\[
  \xymatrix{
 [J, C] \times [J, C]  \ar[d]_{\partial_0 \times \partial_0} \ar[r]^{\ \ \ \ \ - \underset{[J,C]} \otimes -}  &[J, C] \ar[d]^{\partial_0}  && [J, C] \times [J, C]  \ar[d]_{\partial_1 \times \partial_1} \ar[r]^{\ \ \ \ \ - \underset{[J,C]} \otimes -}  &[J, C] \ar[d]^{\partial_1} \\
 C \times C \ar[r]_{- \underset{C} \otimes -} &C && C \times C \ar[r]_{- \underset{C} \otimes -} &C
 }
\]
 which implies that the functors $\partial_0$ and $\partial_1$ preserve
 the symmetric monoidal structure. We further observe that the following
 two diagrams commute
 \[
  \xymatrix{
 [J, C] \times [J, C]  \ar[d]_{\partial_0 \times \partial_0} \ar[r]^{ \gamma_{[J,C]}}  &[J,[J, C]] \ar[d]^{\partial_0}  && [J, C] \times [J, C]  \ar[d]_{\partial_1 \times \partial_1} \ar[r]^{\ \ \ \ \ \gamma_{[J,C]}}  &[J,[J, C]] \ar[d]^{\partial_1} \\
 C \times C \ar[r]_{\gamma_C} &C && C \times C \ar[r]_{\gamma_C} &C
 }
\]
which implies that the functors $\partial_0$ and $\partial_1$ preserve
 the symmetry natural isomorphisms. Thus the functors
 $\partial_0$ and $\partial_1$ are strict symmetric monoidal functors.
 Similarly we see that the following two diagrams commute
 \[
  \xymatrix{
 [J, C] \times [J, C]  \ar[r]^{\ \ \ \ \ - \underset{[J,C]} \otimes -}  &[J, C]  && [J, C] \times [J, C]   \ar[r]^{\ \ \ \gamma_{[J,C]}}  &[J,[J, C]] \\
 C \times C  \ar[u]^{\sigma \times \sigma} \ar[r]_{\ \ \ \ \ - \underset{C} \otimes -} &C  \ar[u]_{\sigma} && C \times C  \ar[u]^{\sigma \times \sigma} \ar[r]_{\gamma_C} &C \ar[u]_{\sigma}
 }
\]
 This implies that the functor $\sigma$ is a strict symmetric monoidal functor.
 The cartesian product $C \times C$ is a product of two copies of $C$
 in the category $\PCat$, therefore the functor
 \[
 (\partial_0, \partial_1):[J, C] \to C \times C
 \]
 is a strict symmetric monoidal functor.
 \begin{prop}
 The functor
 \[
 (\partial_0, \partial_1):[J, C] \to C \times C
 \]
 is an isofibration and the functor $\sigma:C\to [J,C]$ is an equivalence of categories. Moreover, the functors $\partial_1$ and $\partial_0$ are equivalences surjective on objects.
 \end{prop}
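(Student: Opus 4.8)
The three equivalence assertions in the statement have, in effect, already been recorded in the preceding discussion: since the groupoid $J$ is contractible, the functors $d_0, d_1 : 0 \to J$ and $s : J \to 0$ are equivalences of categories, and applying $[-, C]$ therefore shows that $\partial_0$, $\partial_1$ and $\sigma$ are equivalences. Thus the plan is to supply only the two genuinely new claims: that $\partial_0$ and $\partial_1$ are surjective on objects, and that $(\partial_0, \partial_1)$ is an isofibration. The first is immediate from the relations $\partial_1 \sigma = id_C = \partial_0 \sigma$ recorded above; concretely, for any $A \in Ob(C)$ the unit isomorphism $id_A : A \to A$ satisfies $\partial_0(id_A) = A = \partial_1(id_A)$, so both $\partial_0$ and $\partial_1$ are (split) surjective on objects. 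Being equivalences as well, they are in fact acyclic isofibrations by Lemma \ref{acyc-fib-surj-eq}.

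The main step is to verify the isofibration condition for $(\partial_0, \partial_1)$ directly from Definition \ref{isofibration}. I would regard an object of $[J, C]$ as an isomorphism $a : A_0 \to A_1$ of $C$ (equivalently a functor $J \to C$ with $a$ the image of the generating arrow), so that, reading $\partial_1$ as evaluation at the object $0$ and $\partial_0$ as evaluation at the object $1$, one has $(\partial_0, \partial_1)(a) = (A_1, A_0)$. Given an isomorphism in $C \times C$ with source $(A_1, A_0)$, that is, a pair of isomorphisms $v : A_1 \to B_1$ and $w : A_0 \to B_0$, I would produce a lift by setting $b := v \circ a \circ \inv{w} : B_0 \to B_1$ and letting $u := (w, v) : a \to b$ be the evident square (components $w$ at the object $0$ and $v$ at the object $1$). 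By construction $b \circ w = v \circ a$, so $u$ is a natural morphism in $[J, C]$; it is invertible because $v$ and $w$ are; its source is $a$; and $(\partial_0, \partial_1)(u) = (v, w)$. This is precisely the required lift, establishing that $(\partial_0, \partial_1)$ is an isofibration.

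I do not anticipate a serious obstacle. The only point demanding care is the bookkeeping of the source/target conventions, namely that $\partial_1$ is evaluation at the object $0$ and $\partial_0$ at the object $1$; an error there would interchange the roles of $v$ and $w$ (and of $A_0, A_1$) in the defining square. Once the square $b := v \circ a \circ \inv{w}$ is written down correctly, its commutativity and invertibility are immediate, so the argument reduces to this single explicit construction together with the equivalence statements already in hand.
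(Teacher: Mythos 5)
Your proof is correct and follows essentially the same route as the paper: the equivalence and surjectivity claims are read off from the earlier observations that $d_0$, $d_1$, $s$ are equivalences and that $\partial_1\sigma = id_C = \partial_0\sigma$, while the isofibration lift is produced by conjugation, $b := v \circ a \circ \inv{w}$, with the component pair serving as the lifting isomorphism, exactly matching the paper's construction $b = u_1 \circ a \circ \inv{u_0}$ with lift $u = (u_0, u_1)$. If anything you are slightly more careful than the paper, whose proof silently works with the ordering $(\partial_1, \partial_0)$ while the statement concerns $(\partial_0, \partial_1)$; as you note, this only permutes the roles of the two given isomorphisms and does not affect the argument.
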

 \begin{proof}
 Let us show that the functor $(\partial_1,\partial_0)$ is an isofibration. Let $ a:A_0\to A_1$ be an object of $[J, C]$ and let $(u_0,u_1):(A_0,A_1)\to (B_0,B_1)$ be an isomorphism in $C \times C$. There is then a unique isomorphism $b:B_0\to B_1$ such that the square
 \[
  \xymatrix@C=11mm{
 A_0  \ar[d]_{u_0} \ar[r]^a  &A_1 \ar[d]^{u_1}   \\
 B_0 \ar[r]_{b} &B_1
 }
\]
 commutes, namely $b = u_1 \circ a \circ \inv{u_0}$. The pair $u=(u_0,u_1)$ defines an isomorphism $a\to b$ in the category $[J, C]$, and we have $(\partial_1,\partial_0)(u)=(u_0,u_1)$. This proves that $(\partial_1,\partial_0)$ is an isofibration. We saw above that the functor  $\partial_1,\partial_0$ and $\sigma$ are equivalences of categories. The functor $\partial_1$ is surjective on objects, since $\partial_1\sigma=id_{C}$. Similarly, the functor $\partial_0$ is surjective on objects. 
 \end{proof}
 \begin{df}
 The mapping path object of a strict symmetric monoidal functor $F:X \to Y$ is the category $\POb{F}$ defined by the following pullback square.
 \begin{equation*}
 \label{mapping-path-object}
  \xymatrix@C=11mm{
  X \ar@{-->}[rd]_{i_X} \ar@/^/[rrd]^{\sigma F}  \ar@/_/[rdd]_{(id_X, F)} \\
 &\POb{F} \ar[d]^{(P_X, P_Y)} \ar[r]^P  &[J, Y] \ar[d]^{ (\partial_0, \partial_1)}   \\
 &X \times Y \ar[r]_{F \times id_Y} &Y \times Y
 }
\end{equation*}
 \end{df}
 There is a (unique) functor $ i_{X}:{X} \to \mathbf{P}(F) $ such that $Pi_X= \sigma F$,  $P i_{X} = \sigma F$ and $P_{{X}} i_{{X}}=id_{{X}}$ since square \eqref{mapping-path-object} is cartesian and we have $ \partial_1\sigma F=id_{{Y}} F =F id_{{X}}$. Let us put $P_{{Y}}=\partial_0 P$. Then we have
$F=P_{{Y}} i_{{X}}:{X}\to \mathbf{P}(F)\to {Y}$
since $P_{{Y}} i_{{X}}=\partial_0 P i_{{X}}=\partial_0 \sigma F=id_{{Y}} F =F$. This is the \emph{mapping path factorisation} of the functor $F$ in the category $\PCat$. We now present a concrete construction of the pullback above. An object of $\POb{F}$ is a triple $ (y,A,B)$, where $A$ is an object of $X$, $B$ is an object of $Y$ and $y:F(A)\to B$ is an isomorphism in $Y$. We have $P(y,A,B)=y$, $P_{X}(y,A,B)=A$ and $ P_{Y}(y,A,B)=B$. A morphism $(y,A,B)\to (y',A',B')$ in the category $ \POb{F}$ is a pair of maps $u:A\to A'$ and $v:B\to B'$ such that the  following diagram commutes:
\begin{equation*}
 \label{mor-in-mapping-path-object}
  \xymatrix@C=11mm{
 F(A) \ar[r]^y \ar[d]_{F(u)} &B \ar[d]^{v}   \\
 F(A') \ar[r]_{y'} & B'
 }
\end{equation*}
 Our construction defines a permutative category with the obvious tensor product namely, $(y,A,B) \otimes (y',A',B') = (y \otimes y', A \otimes A', B \otimes B')$ and $(u, v) \otimes (u_1, v_1) = (u \otimes u_1, v \otimes v_1)$.
% Let us describe the category $\mathbf{P}(F)$ explicitly. Let us first show that we have a pullback square,
\begin{lem}
\label{fact-acy-cof-perm}
The functor $P_{{Y}}$ in the mapping path factorisation
\[
F = P_Y i_X:X \to \mathbf{P}(F) \to Y
\]
is an isofibration and the functor $i_{{X}}$ is an equivalence of categories.
\end{lem}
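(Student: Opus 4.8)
The plan is to verify the two assertions separately, working throughout with the explicit description of $\POb{F}$ whose objects are triples $(y, A, B)$ with $y : F(A) \to B$ an isomorphism in $Y$ and whose morphisms $(y,A,B) \to (y',A',B')$ are pairs $(u,v)$ with $v \circ y = y' \circ F(u)$. Under this description $P_Y$ sends $(y,A,B)$ to $B$ and $(u,v)$ to $v$, while $i_X$ sends $A$ to $(id_{F(A)}, A, F(A))$ and $u : A \to A'$ to $(u, F(u))$. Neither claim uses the permutative structure; both are statements about the underlying categories, so I would argue purely at the level of $\Cat$.

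First I would show that $P_Y$ is an isofibration directly from Definition \ref{isofibration}. Given an object $(y, A, B)$ of $\POb{F}$ and an isomorphism $v : B \to B'$ of $Y$, the triple $(v \circ y, A, B')$ is again an object of $\POb{F}$ since $v \circ y : F(A) \to B'$ is invertible, and the pair $(id_A, v)$ is an isomorphism $(y, A, B) \to (v \circ y, A, B')$ in $\POb{F}$ with $P_Y(id_A, v) = v$. This produces the required lift. Equivalently, one may observe that $P_Y$ factors as the composite of the pullback $(P_X, P_Y)$ of the isofibration $(\partial_0, \partial_1)$ with the projection $X \times Y \to Y$, and invoke stability of isofibrations under pullback and composition.

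For the second assertion I would first prove that $P_X : \POb{F} \to X$ is an equivalence surjective on objects and then deduce that its section $i_X$ is an equivalence. The functor $P_X$ is surjective on objects since $P_X i_X = id_X$, and it is fully faithful because, for fixed $(y,A,B)$, $(y',A',B')$ and any $u : A \to A'$, there is a unique $v$, namely $v = y' \circ F(u) \circ \inv{y}$, making $(u,v)$ a morphism of $\POb{F}$; thus $u \mapsto (u,v)$ inverts the hom-set map induced by $P_X$. Since $P_X i_X = id_X$ with $P_X$ an equivalence, a standard argument then forces $i_X$ to be an equivalence: fully faithfulness of $P_X$ forces that of $i_X$, and essential surjectivity of $i_X$ follows by lifting the equality $P_X(i_X P_X Z) = P_X(Z)$ through the fully faithful $P_X$. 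Alternatively one checks $i_X$ directly: it is fully faithful because $(u, F(u)) \mapsto u$ identifies its hom-sets with those of $X$, and it is essentially surjective because $(id_A, y) : (id_{F(A)}, A, F(A)) \to (y, A, B)$ is an isomorphism for every object $(y,A,B)$.

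The computations are entirely routine; the only point carrying real content, which I would highlight, is that invertibility of the structure isomorphism $y$ is exactly what builds the lift for the isofibration claim and exhibits the isomorphism $(id_A, y)$ witnessing essential surjectivity of $i_X$. I expect no genuine obstacle: the statement is the expected compatibility of the mapping path factorisation with the natural model structure, and together with the strict symmetric monoidal nature of $P_Y$ and $i_X$ established earlier it supplies the acyclic cofibration--fibration factorisation needed to verify the transfer hypothesis of Theorem \ref{mdl-str-transfer-tool}.
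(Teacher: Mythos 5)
Your proof is correct, and in substance it contains the paper's argument: the paper establishes that $P_Y$ is an isofibration exactly by the route you list as your ``equivalent'' observation, namely that $(P_X,P_Y)$ is a pullback of the isofibration $(\partial_0,\partial_1)$ and $P_Y=\pi_2\circ(P_X,P_Y)$, and it establishes that $i_X$ is an equivalence by directly exhibiting the natural isomorphism $\alpha\colon i_XP_X\cong id_{\POb{F}}$ with component $\alpha((y,A,B))=(id_A,y)\colon(id_{F(A)},A,F(A))\to(y,A,B)$ --- the very isomorphism you produce in your ``alternatively'' paragraph. Where you differ is only in which argument you foreground: your primary route verifies the isofibration by an explicit lift $(id_A,v)$ onto $(v\circ y,A,B')$, and deduces that $i_X$ is an equivalence by first showing $P_X$ is fully faithful (via the unique filler $v=y'\circ F(u)\circ\inv{y}$) and surjective on objects, then invoking the section argument; this is slightly longer but makes the role of invertibility of the structure map $y$ more transparent and, as a by-product, records that $P_X$ is itself an acyclic isofibration, which the paper does not state. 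Both for the lift and for essential surjectivity the key witnesses coincide with the paper's, and your closing remark about the role of this lemma in verifying the transfer hypothesis of Theorem \ref{mdl-str-transfer-tool} matches its use in the paper. One cosmetic point: the paper's proof writes $\alpha\colon i_XP_X\simeq id_X$, which is a typo for $id_{\POb{F}}$; your statement of the target is the correct one.
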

\begin{proof}
 The map $(P_X, P_Y)$ is a pullback (in $\Cat$) of the isofibration $(\partial_0, \partial_1)$ therefore it is an isofibration. Clearly the projection map $\pi_2:X \times Y \to Y$
 is an isofibration. We observe that $P_Y = \pi_2 \circ (P_X, P_Y)$ and therefore it is also an isofibration. Now we show that the functor $i_{X}$ is an equivalence. For this it suffices to exibit a natural isomorphism $\alpha: i_{X} P_{X} \simeq id_X$  since we already have $P_{X}i_{X}= id_{X}$. But $ i_{X} P_{X}(y,A,B)=(id_{F A},A,F A)$ and the pair $(id_A, y)$ defines a natural isomorphism $\alpha((y,A,B)):(id_{F A}, A, F A) \to (y,A,B)$.

\end{proof}
The above results lead us to the main result of this section which was stated in Theorem \ref{nat-model-str-Perm}
\begin{proof} (of Theorem \ref{nat-model-str-Perm})
	As indicated at the beginning of this section, the main tool for proving this theorem will be \ref{mdl-str-transfer-tool}. Since both $\Cat$ and $\PCat$ are locally presentable categories therefore
	Theorem \ref{mdl-str-transfer-tool} $(1)$ follows from \cite[Prop. 2.23]{AR94}. Now we verify Theorem \ref{mdl-str-transfer-tool} $(2)$. Let $i:C \to D$ be a map in $\PCat$ which has the right lifting property with respect to all fibrations. By Lemma \ref{fact-acy-cof-perm} we have the following (outer) commutative diagram in $\PCat$ in which $i_C$ is an acyclic cofibration and $P_D$ is a fibration in $\PCat$:
	\begin{equation*}
	\xymatrix{
	C \ar[r]^{i_C} \ar[d]_i & P(i) \ar[d]^{P_D} \\
	D \ar@{=}[r] \ar@{-->}[ru] & D
	 }
	\end{equation*}
	By assumption the strict symmetric monoidal functor $i$ has the left lifting property with respect to all fibrations in $\PCat$ therefore there exists the (dotted) lifting arrow in the above commutative square. By two-out-of-six property of model categories $i$, $P_D$ and the lifting arrows are all weak equivalences because $i_C$ is a weak equivalence. Thus we have verified Theorem \ref{mdl-str-transfer-tool} $(2)$ and hence proved the Theorem \ref{nat-model-str-Perm}.
\end{proof}

 Now we have verified all conditions of Theorem \ref{mdl-str-transfer-tool} and thus we have
 transferred a model category structure on the category $\PCat$.
Finally, we show that the thickened version of the Segal Nerve functor
preserves weak equivalences.

\begin{thm}
\label{transfer-cond.}
Let $F: C \to D$ be a strict symmetric monoidal functor in $\PCat$ such that
$F$ has the left lifting property with respect to all maps $p:A \to B$ in $\PCat$
having the property that $\Kbar(p)$ is a fibration in the strict model category structure on
$\gCAT$. Then $F$ is an equivalence of categories.
\end{thm}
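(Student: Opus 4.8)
The plan is to verify the claim by the classical mapping--path argument, but reading the lifting hypothesis through the functor $\Kbar$. First I would factor $F$ by Lemma \ref{fact-acy-cof-perm} as
\[
C \overset{i_C} \to \POb{F} \overset{P_D} \to D,
\]
where $i_C$ is an equivalence of categories and $P_D$ is an isofibration. Since $P_D$ is a strict symmetric monoidal functor which is an isofibration of underlying categories, it is a fibration in $\PCat$ by Theorem \ref{nat-model-str-Perm}. The proof then reduces to recognising $P_D$ as one of the maps against which $F$ is assumed to lift, producing a lift $L$, and finishing with a two-out-of-six argument.

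The decisive step is to show that $\Kbar(P_D)$ is a strict fibration in $\gCAT$, so that the lifting hypothesis applies with $p = P_D$. For this I would use Proposition \ref{equiv-PsBikes-StrSMFunc-LBar} to identify, for each $n \in \Nat$,
\[
\Kbar(P_D)(n^+) \cong \StrSMHom{\PNat(n)}{P_D},
\]
and then invoke that $\PNat(n)$ is cofibrant in $\PCat$ (its object monoid is free) together with the fact that $\PCat$ is a $\Cat$-model category with cotensor $\StrSMHom{-}{-}$. Cotensoring the cofibrant object $\PNat(n)$ against the fibration $P_D$ yields an isofibration in $\Cat$ in every degree $n$, which is precisely the assertion that $\Kbar(P_D)$ is degreewise an isofibration, i.e. a strict fibration of $\gCats$. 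I expect this to be the main obstacle: it is the one place where the formal factorization machinery must be matched to the exact shape of the hypothesis, and the only place using the enriched structure of $\PCat$ and the cofibrancy of the representing objects $\PNat(n)$.

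With $\Kbar(P_D)$ a strict fibration established, the square
\[
\xymatrix{
C \ar[r]^{i_C} \ar[d]_F & \POb{F} \ar[d]^{P_D} \\
D \ar@{=}[r] \ar@{-->}[ru]^{L} & D
}
\]
commutes because $F = P_D \circ i_C$, so the hypothesis furnishes a lift $L:D \to \POb{F}$ with $L \circ F = i_C$ and $P_D \circ L = id_D$. Finally I would note that both composites $L \circ F = i_C$ and $P_D \circ L = id_D$ are equivalences of categories and apply the two-out-of-six property of equivalences of categories to the chain $C \overset{F} \to D \overset{L} \to \POb{F} \overset{P_D} \to D$, concluding that $F$ (and likewise $L$ and $P_D$) is an equivalence of categories. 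Beyond the immediate commutativity of the lifting square, no further computation is required.
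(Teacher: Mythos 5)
Your proposal is correct and follows essentially the same route as the paper's own proof: the identical mapping-path factorization $F = P_D \circ i_C$ from Lemma \ref{fact-acy-cof-perm}, the identical lifting square against $P_D$ furnished by the hypothesis, and the same conclusion that $F$ is an equivalence. The only minor differences are that you spell out why $\Kbar(P_D)$ is a strict fibration (degreewise it is $\StrSMHom{\PNat(n)}{P_D}$ with $\PNat(n)$ cofibrant, using the $\Cat$-enrichment of $\PCat$), a point the paper asserts without detail, and that you close by applying two-out-of-six to $L \circ F = i_C$ and $P_D \circ L = \mathrm{id}_D$, whereas the paper instead exhibits $F$ as a retract of $i_C$ and invokes closure of equivalences under retracts --- two standard and interchangeable finishes.
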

\begin{proof}
Using the above factorization, the functor $F$ can be factored as $P_C i_C$,
where $P_C$ is an isofibration and therefore has the property that $\Kbar(P_C)$
is a fibration in the strict model category structure on $\gCAT$. This means that
there is a dashed lifting arrow $r$ in the following diagram:
\[
  \xymatrix@C=11mm{
 C  \ar[d]_{F} \ar[r]^{i_X} &\mathbf{P}(F) \ar[d]^{P_C}   \\
 D \ar@{=}[r]  \ar@{-->}[ru]_r &D
 }
\]
We can view the above diagram in $\Cat$ by forgetting the symmetric monoidal structures.
The existence of the lifting arrow $r$ in the above diagram implies that the map $F$ is a retract of $i_X$ \emph{i.e.} we have the
following commutative diagram
\[
  \xymatrix@C=11mm{
 C  \ar[d]_{F} \ar@{=}[r] & C \ar[d]_{id_X} \ar@{=}[r] & C \ar[d]_{F}   \\
 D \ar[r]_r  &\mathbf{P}(F) \ar[r]_{P_C} &D
 }
\]
Lemma \ref{fact-acy-cof-perm} says that the map $i_X$ is a weak equivalence in the
natural model category structure on $\Cat$ therefore $F$ is also a weak equivalence in the
same model category. Since weak equivalences in the natural model category structure on $\Cat$
are equivalences of categories, we have proved that the map $F$ is an equivalence of categories.
\end{proof}

Now we will like to show that the notion of weak equivalence in the
transferred model category structure on $\PCat$ is the same as that in
the natural model category structure on $\PCat$, see \ref{nat-model-str-Perm}.
\begin{lem}
\label{pres-reflect-Eq-cat}
 A strict symmetric monoidal functor in $\PCat$ is an equivalence of categories
 if and only if its image under the right adjoint functor $\Kbar$ is a strict
 equivalence in $\gCAT$.
 \end{lem}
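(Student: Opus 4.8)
The lemma asserts that a strict symmetric monoidal functor $F:C \to D$ in $\PCat$ is an equivalence of categories if and only if $\Kbar(F)$ is a strict equivalence of $\gCats$.

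Let me think about what I have available. $\Kbar$ is right adjoint in the Quillen adjunction $(\PNat, \Kbar)$. I know that $\Kbar(C)(n^+) \cong \PsBikes{\gn{n}}{C}$ and that $\Kbar(C)(1^+)$ is equivalent to $C$ (Corollary \ref{Kbar(C)(1)-equiv-C}). A strict equivalence of $\gCats$ is a degreewise equivalence of categories. I also have Theorem \ref{transfer-cond.}, which says a functor with the LLP against all $p$ with $\Kbar(p)$ a strict fibration is an equivalence of categories.

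The forward direction should be straightforward. If $F$ is an equivalence of categories, I want to show $\Kbar(F)(n^+)$ is an equivalence for every $n$. Since $\PNat(n)$ is a cofibrant permutative category (used in the proof of Theorem \ref{main-res}) and $\PCat$ is a $\Cat$-model category with cotensor $\StrSMHom{-}{-}$, the functor $\StrSMHom{\PNat(n)}{F}$ is an equivalence of categories whenever $F$ is, because in a $\Cat$-model category cotensoring a weak equivalence into a cofibrant object yields a weak equivalence. Using the natural isomorphism $\StrSMHom{\PNat(n)}{-} \cong \PsBikes{\gn{n}}{-} \cong \Kbar(-)(n^+)$ of Proposition \ref{equiv-PsBikes-StrSMFunc-LBar}, this shows $\Kbar(F)(n^+)$ is an equivalence for all $n$, hence $\Kbar(F)$ is a strict equivalence of $\gCats$.

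**The reverse direction and the main obstacle.** Suppose $\Kbar(F)$ is a strict equivalence of $\gCats$. The cleanest route is to evaluate in degree one: I have the natural equivalence $\Kbar(C)(1^+) \simeq C$ from Corollary \ref{Kbar(C)(1)-equiv-C}, natural in $C$ via the inverse-pair functors $I(C)$ and $\Phi_{-}$. This gives a commutative (up to natural isomorphism) square relating $F$ to $\Kbar(F)(1^+)$ through these equivalences. Since $\Kbar(F)(1^+)$ is an equivalence of categories (being a degreewise component of a strict equivalence) and the vertical maps $I(C)$, $I(D)$ are equivalences, the two-out-of-three property in the natural model category $\Cat$ forces $F$ to be an equivalence of categories. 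The plan is therefore to first establish that the equivalences $\Kbar(C)(1^+)\simeq C$ assemble into a natural transformation, then chase this naturality square. The main obstacle is verifying the naturality of the degree-one equivalence: I must check that $I(-)$ (equivalently $\Phi_{-}$) commutes appropriately with an arbitrary strict symmetric monoidal functor $F$, so that the square
\begin{equation*}
\xymatrix@C=18mm{
\Kbar(C)(1^+) \ar[r]^{\Kbar(F)(1^+)} \ar[d]_{I(C)} & \Kbar(D)(1^+) \ar[d]^{I(D)} \\
C \ar[r]_{F} & D
}
\end{equation*}
commutes up to natural isomorphism; this reduces to unwinding the definition of $I(C)(\Phi) = \phi(1)(id_{1^+})$ and checking it is compatible with post-composition by $F$, which is a direct but careful diagram check rather than a deep difficulty.
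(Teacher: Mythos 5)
Your proposal is correct, but only its converse direction coincides with the paper's argument; the forward direction takes a genuinely different route. The paper proves both directions from a single commutative square in degree one: the equivalences $I(C)$, $I(D)$ of Corollary \ref{Kbar(C)(1)-equiv-C} together with two-out-of-three show that $F$ is an equivalence iff $\Kbar(F)(1^+)$ is, and then the paper reduces the full strict equivalence of $\Kbar(F)$ to its degree-one part by invoking that $\Kbar(F)$ is a map between coherently commutative monoidal categories, for which the Segal condition lets one test a strict equivalence in degree one alone. You instead prove the degreewise statement directly for every $n$: cofibrancy of $\PNat(n)$, the cotensor structure $\StrSMHom{-}{-}$, and the representability $\Kbar(-)(n^+) \cong \PsBikes{\gn{n}}{-} \cong \StrSMHom{\PNat(n)}{-}$ from Proposition \ref{equiv-PsBikes-StrSMFunc-LBar} and Lemma \ref{Bikes-nSegPsBikes-eq}. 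What your route buys is independence from the $E_\infty$/Segal-condition reduction (and hence from knowing in advance that $\Kbar$ lands in coherently commutative monoidal categories); what the paper's route buys is brevity, since one square serves both implications. One point in your forward direction should be made precise: ``cotensoring a weak equivalence into a cofibrant object yields a weak equivalence'' is not true in an arbitrary enriched model category --- Corollary \ref{Hom-Q-Cof} only gives that $\StrSMHom{\PNat(n)}{-}$ preserves fibrations and acyclic fibrations, so you need Ken Brown's lemma plus the observation that every object of $\PCat$ is fibrant (the terminal map is always an isofibration); both are available here, so this is a missing citation rather than a gap. Finally, your caution about the naturality of the degree-one equivalence is well placed --- the paper asserts the square \eqref{natural-equiv-Kbar(C)(1)-C} commutes without verification --- and in fact the check is even easier than you anticipate: since $\Kbar(F) = \PsBikes{\gn{1}}{F}$ acts by postcomposition, $I(D)\bigl(\Kbar(F)(1^+)(\Phi)\bigr) = F\bigl(\phi(1)(id_{1^+})\bigr) = F\bigl(I(C)(\Phi)\bigr)$, so the square commutes strictly, not merely up to natural isomorphism.
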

 \begin{proof}
  Let $F:C \to D$ be a strict symmetric monoidal functor in $\PCat$ which
  is an equivalence of categories. We consider the following diagram
  \begin{equation}
  \label{natural-equiv-Kbar(C)(1)-C}
  \xymatrix@C=20mm{
 \Kbar(C)(1^+)= \PsBikes{\gn{1}}{C}  \ar[d]_{I(C)}
 \ar[r]^{\PsBikes{\gn{1}}{F}} & \PsBikes{\gn{1}}{D} = \Kbar(D)(1^+) \ar[d]^{I(D)}   \\
 C \ar[r]_{F} &D
 }
 \end{equation}
 where $I(C)$ and $I(D)$ are equivalences defined in the proof of corollary \ref{Kbar(C)(1)-equiv-C}.
 If the functor $F$ is an equivalence then the two out of three property of
 weak equivalences says that the map $\Kbar(F)(1^+) = \PsBikes{\gn{1}}{F}$ is
 an equivalence of categories. Since $\Kbar(F)$ is a map between two
 $\Ein$ $\gCats$, therefore it is a strict weak equivalence if and only if
 $\Kbar(F)(1^+)$ is an equivalence of categories.
 
 Conversely, if the morphism of $\gCats$ $\Kbar(F)$ is a (strict) weak equivalence
 then $\Kbar(F)(1^+)$ is an equivalence of categories. Another application of
 the two out of three property of weak equivalences to the commutative diagram
 \eqref{natural-equiv-Kbar(C)(1)-C} tells us that the functor $F$ is an equivalence of categories.
  
 \end{proof}
 
 Finally we would like to show that the natural model category of permutative categories $\PCat$ is enriched over $\Cat$. The enrichment of $\PCat$ is given by the following bifunctor which assigns to each pair of permutative categories $(C, D)$ the category $\StrSMHom{C}{D}$ whose objects are strict symmetric monoidal functors from $C$ to $D$ and unital monoidal natural transformations between them:
 \begin{equation}
 \StrSMHom{-}{-}:\PCat^{op} \times \PCat \to \Cat
 \end{equation}
  The category $\PCat$ is tensored over $\Cat$ and the \emph{tensor product} is given by a bifunctor
 \begin{equation}
 \label{tensP-bifunc}
 - \boxtimes -:\Cat \times \PCat \to \PCat
 \end{equation}
 which maps a pair $(A, C) \in Ob(\Cat \times \PCat)$ to the (product) permutative category $A \boxtimes C$, where $F:\Cat \to \PCat$ is the free permutative category functor. 
 
% 
% This tensoring is established by a natural transformation
% \[
% \eta_l:id_\Cat \Rightarrow \StrSMHom{C}{F(-) \times C}
% \]
% which assigns to each category $S$ a functor $\eta_l(A):A \to \StrSMHom{C}{F(A) \times C}$ whose value on an object $a \in \Ob(A)$ is a strict symmetric monoidal functor $\eta_l(A)(a):C \to F(A) \times C$

 We recall that the category of functors $[A, C]$ gets a permutative category structure. We claim that this category of functors is the \emph{cotensor} of $A$ with $C$. For each permutative category $D$ we consider the following two functors $\StrSMHom{-}{D}:\PCat^{op} \to \Cat$ and $[-, D]:\Cat \to \PCat^{op}$. We will define a natural transformation
 \[
 \eta_r:id_{\Cat} \Rightarrow \StrSMHom{[-, D]}{D}.
 \]
 For each category $A$ we will define a functor $\phi(A):A \to \StrSMHom{[A, D]}{D}$. For each object $a \in Ob(A)$, we define
 \[
 \phi(A)(a) := ev_a:[A, D] \to D,
 \]
 where $ev_a$ is the evaluation functor at $a$. Clearly this is a strict symmetric monoidal functor. To each map $f:a \to b$ in $A$ we have a natural transformation $\phi(A)(f) := ev_f:ev_a \Rightarrow ev_b$ which is defined at each $F \in Ob([A, D])$ by $\phi(A)(f)(F) := F(f):F(a) \to F(b)$. In order to prove our claim that $[A, D]$ is a cotensor, we will establish the universality of $\eta_r$ \emph{i.e.} for each functor $G:A \to \StrSMHom{C}{D}$, we will show that there exists a unique strict symmetric monoidal functor $g:C \to [A, D]$ such that the following diagram commutes
 \begin{equation*}
 \xymatrix{
 A \ar[r]^{\eta_r(A) \ \ \ \ \ \ \ } \ar[rd]_G &\StrSMHom{[A, D]}{D} \ar[d]^{\StrSMHom{g}{D}} \\
 & \StrSMHom{C}{D}
 }
 \end{equation*}
 For each $c \in Ob(C)$, the functor $g(c):A \to D$ is defined on objects as follows:
 \[
 g(c)(a) := G(a)(c).
 \]
 The functor $g(c)$ is defined similarly on morphims \emph{i.e.} for any map $f \in Mor(C)$
 \[
 g(c)(f) := G(f)(c).
 \]
 This defines a functor because $G$ is a functor and $g$ is strict symmetric monoidal because $G(a)$ is strict symmetric monoidal for each $a \in Ob(A)$. Now the following equality is a consequence of the definition of $g$:
 \[
  G = \StrSMHom{g}{D}  \circ \eta_r(A)
  \]
 The uniqueness of $g$ is obvious. This implies that we have the following isomorphism
 \[
 \PCat(C, [A, D]) \cong \Cat(A, \StrSMHom{C}{D}).
 \]
 The universality of $\eta_r$ implies that the above isomorphism is natural in $A$ and $C$. One can check that it is also natural in $D$.
 Thus we have proved our claim that $\PCat$ is cotensored over $\Cat$. A consequence of our definition of cotensor is that it defines a bifunctor which we denote as follows:
 \begin{equation}
 [-,-]:\Cat^{op} \times \PCat \to \PCat.
 \end{equation}
 
 \begin{prop}
 \label{tens-Quillen-bifunc}
 The tensor product bifunctor \eqref{tensP-bifunc} is a Quillen bifunctor.
 \end{prop}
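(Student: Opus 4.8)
The plan is to exhibit the two-variable adjunction
\[
(\boxtimes,\ [-,-],\ \StrSMHom{-}{-}):\Cat \times \PCat \to \PCat,
\]
whose tensor is \eqref{tensP-bifunc}, whose right internal hom is the cotensor $[-,-]:\Cat^{op} \times \PCat \to \PCat$, and whose left internal hom is the $\Cat$-valued enrichment $\StrSMHom{-}{-}$ constructed above, and then to read off the conclusion from Lemma \ref{Q-bifunctor-char}. (In Hovey's indexing this is the case $\C=\Cat$, $\D=\PCat$, $\E=\PCat$; the two adjunction isomorphisms $\PCat(A \boxtimes C, D) \cong \PCat(C, [A, D]) \cong \Cat(A, \StrSMHom{C}{D})$ are exactly the tensor and cotensor adjunctions established above.) By that lemma it suffices to verify one of the three equivalent conditions, and I would verify the one phrased through the cotensor: given a cofibration $f:A \to B$ in $\Cat$ and a fibration $p:Y \to Z$ in $\PCat$, the induced pullback-cotensor map
\[
q:[B, Y] \to [B, Z] \underset{[A, Z]}{\times} [A, Y]
\]
is a fibration in $\PCat$, trivial whenever $f$ or $p$ is a weak equivalence.

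The organizing principle is that \emph{everything is detected on underlying categories}. First, $q$ is a strict symmetric monoidal functor: the cotensor $[A, D]$ carries its pointwise permutative structure, the precomposition functor $[f, Y]$ and the postcomposition functor $[B, p]$ are strict symmetric monoidal (the latter because $p$ is), and the pullback defining the target of $q$ is computed pointwise, so $q$ preserves tensor, unit and symmetry strictly. Second, by Theorem \ref{nat-model-str-Perm} a strict symmetric monoidal functor is a fibration (respectively an acyclic fibration) in $\PCat$ \emph{precisely} when its underlying functor is an isofibration (respectively an acyclic isofibration) in $\Cat$. Hence it remains only to analyze $U(q)$, where $U:\PCat \to \Cat$ is the forgetful functor.

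For this last step I would use that $U$ is a right adjoint (to the free permutative category functor $F$), so it preserves limits, together with the identity $U([A, D]) = [A, U(D)]$ coming from the pointwise construction of the cotensor. Consequently $U(q)$ is exactly the $\Cat$-level pullback-hom map
\[
[B, U(Y)] \to [B, U(Z)] \underset{[A, U(Z)]}{\times} [A, U(Y)]
\]
associated to the cofibration $f$ and the isofibration $U(p)$. Since the natural model category $\Cat$ is cartesian closed (Theorem \ref{nat-model-cat-str}, as already exploited in the proof of Theorem \ref{enrich-GamCAT-CAT}), its pushout-product axiom forces $U(q)$ to be a fibration in $\Cat$, acyclic as soon as $f$ or $U(p)$ is a weak equivalence; and $U$ detects weak equivalences, so the triviality clauses match up. Combined with the previous paragraph, $q$ is a (trivial) fibration in $\PCat$, which is what the chosen condition of Lemma \ref{Q-bifunctor-char} demands.

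The main obstacle here is not analytic but organizational: one must confirm that the two-variable adjunction really holds with the stated variances (left adjoint $\boxtimes$, right internal homs $[-,-]$ and $\StrSMHom{-}{-}$), and—crucially—that $U$ commutes with cotensors and limits, so that the underlying functor of the $\PCat$-level pullback-cotensor map coincides with the $\Cat$-level pullback-hom map. Once this compatibility is secured, the genuinely homotopical content (being an isofibration, plus the two triviality cases) is inherited verbatim from the cartesian closed model structure on $\Cat$, and no further lifting or factorization arguments inside $\PCat$ are needed.
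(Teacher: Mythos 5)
Your proposal is correct and takes essentially the same route as the paper's proof: both exhibit the two-variable adjunction $(-\boxtimes -, [-,-], \StrSMHom{-}{-})$, invoke Lemma \ref{Q-bifunctor-char} to reduce to the pullback-cotensor condition for a cofibration in $\Cat$ and a fibration in $\PCat$, and then detect the resulting (acyclic) fibration on underlying categories via the forgetful functor $U$, appealing to the cartesian closedness of the natural model structure on $\Cat$. The only difference is cosmetic: you make explicit the strict symmetric monoidality of the comparison map and the compatibility of $U$ with limits and cotensors, which the paper records in a single displayed identity.
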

 \begin{proof}
 The pentuple $(- \boxtimes -, [-,-], \StrSMHom{-}{-}, \inv{\eta_r}, \eta_l)$ defines an adjunction of two variables. In light of \cite[lemma 4.2.2]{Hovey} it would be sufficient to show that whenever we have a functor $i:W \to X$ which is monic on objects and a strict symmetric monoidal functor $p:Y \to Z$ which is a fibration in $\PCat$, the following induced functor is a fibration in $\PCat$ which acyclic whenever $i$ or $p$ is acyclic:
 \begin{equation*}
 i \Box p:[X, Y] \to [X, Z] \underset{[W, Z]} \times [W, Y]
 \end{equation*}
 The strict symmetric monoidal functor $i \Box p$ is a (acyclic) fibration if and only if the (ordinary) functor 
 \[
 U(i \Box p):U([X, Y]) \to U([X, Z] \underset{[W, Z]} \times [W, Y]) = U([X, Z]) \underset{U([W, Z])} \times U([W, Y])
 \]
  is a (acyclic) fibration in $\Cat$. Since the natural model category $\Cat$ is enriched over itself, therefore $U(i \Box p)$ is a fibration in $\Cat$ which is acyclic whenever $i$ or $p$ are acyclic because $i$ is a cofibration in $\Cat$ and $p$ is a fibration in $\Cat$.
 \end{proof}
 \begin{coro}
 \label{Hom-Q-Cof}
 The functor
 \[
  \StrSMHom{C}{-}:\PCat \to \Cat
  \]
   preserves fibrations and acyclic fibrations whenever the permutative category $C$ is cofibrant.
 \end{coro}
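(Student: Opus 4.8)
The plan is to obtain this as a formal specialization of Proposition \ref{tens-Quillen-bifunc}, which has just established that $-\boxtimes-:\Cat \times \PCat \to \PCat$ is a Quillen bifunctor, realized through the two-variable adjunction $(-\boxtimes-, [-,-], \StrSMHom{-}{-}, \inv{\eta_r}, \eta_l)$. By Lemma \ref{Q-bifunctor-char}, being a Quillen bifunctor is equivalent to the condition phrased in terms of the mapping bifunctor $\StrSMHom{-}{-}$: for every cofibration $f:U \to V$ in $\PCat$ and every fibration $p:Y \to Z$ in $\PCat$, the induced functor
\[
\StrSMHom{V}{Y} \to \StrSMHom{V}{Z} \underset{\StrSMHom{U}{Z}}{\times} \StrSMHom{U}{Y}
\]
is a fibration in $\Cat$ which is moreover acyclic whenever either $f$ or $p$ is a weak equivalence. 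Since Proposition \ref{tens-Quillen-bifunc} guarantees that $-\boxtimes-$ is a Quillen bifunctor, I am entitled to invoke this $\StrSMHom{-}{-}$-form of the characterization.

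First I would specialize $f$ to the initial map $\iota_C:F(\emptyset) \to C$, where $F(\emptyset)$ is the initial object of $\PCat$ (the free permutative category on the empty category, whose underlying category is the terminal one). By definition $C$ is cofibrant precisely when $\iota_C$ is a cofibration, so the hypothesis of the corollary is exactly the hypothesis needed to apply the characterization with $U = F(\emptyset)$ and $V = C$. Next I would compute the mapping categories with domain $F(\emptyset)$: the free–forgetful adjunction gives $\PCat(F(\emptyset), W) \cong \Cat(\emptyset, U(W))$, a one-point set, so there is a unique strict symmetric monoidal functor $F(\emptyset) \to W$, and the only unital monoidal natural transformation on it is the identity; hence $\StrSMHom{F(\emptyset)}{W} \cong \ast$ for every $W$. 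Consequently both $\StrSMHom{F(\emptyset)}{Y}$ and $\StrSMHom{F(\emptyset)}{Z}$ are terminal, the fibre product $\StrSMHom{C}{Z}\times_{\ast}\ast$ collapses to $\StrSMHom{C}{Z}$, and the induced functor above becomes exactly $\StrSMHom{C}{p}:\StrSMHom{C}{Y} \to \StrSMHom{C}{Z}$. The characterization then says $\StrSMHom{C}{p}$ is a fibration in $\Cat$, and that it is acyclic as soon as $p$ is a weak equivalence, i.e. whenever $p$ is an acyclic fibration in $\PCat$; this is precisely the content of the corollary.

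The only point requiring genuine verification is the identification $\StrSMHom{F(\emptyset)}{-} \cong \ast$, which is what forces the pullback-corner functor to degenerate into the plain evaluation functor $\StrSMHom{C}{-}$; once this collapse is in hand the rest is a mechanical specialization of the already-proved two-variable Quillen adjunction, so I expect no real obstacle. I would also note, as a remark rather than part of the argument, that running the identical collapse with condition $(2)$ of Lemma \ref{Q-bifunctor-char} in place of condition $(3)$ yields the companion statement for the cotensor $[-,-]$, but for the present corollary only the $\StrSMHom{-}{-}$ leg of the adjunction is needed.
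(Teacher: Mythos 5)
Your proof is correct and follows essentially the same route as the paper: the paper likewise combines Proposition \ref{tens-Quillen-bifunc} with the characterization of Quillen bifunctors (Lemma \ref{Q-bifunctor-char}, i.e.\ \cite[Lemma 4.2.2]{Hovey}) and then specializes the cofibration to the map out of the terminal permutative category $\ast$, which coincides with your $F(\emptyset)$ since $\ast$ is a zero object in $\PCat$. The only difference is that you spell out the collapse $\StrSMHom{F(\emptyset)}{W} \cong \ast$ and the resulting degeneration of the pullback corner map, which the paper leaves implicit.
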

 \begin{proof}
 The above proposition and \cite[lemma 4.2.2]{Hovey} together imply that
 whenever we have a strict symmetric monoidal functor $i:W \to C$ which is a cofibration in $\PCat$ and another strict symmetric monoidal functor $p:Y \to Z$ which is a fibration in $\PCat$, 
 the (ordinary) functor
 \[
 i \Box p:\StrSMHom{C}{Y} \to \StrSMHom{C}{Z} \underset{\StrSMHom{W}{Z}} \times \StrSMHom{W}{Y}
 \]
 is a fibration in $\Cat$ which is acyclic if either $i$ or $p$ are acyclic.
 Now the corollary follows by choosing $W$ to be the terminal permutative category $\ast$.

 \end{proof}
 \begin{coro}
 \label{Hom-Cof-Fib}
 For each permutative category $C$, the functor
 \[
  \StrSMHom{-}{C}:\PCat \to \Cat
  \]
   maps cofibrations and acyclic cofibrations in $\PCat$ to fibrations and acyclic fibrations respectively in $\pCat$.
 \end{coro}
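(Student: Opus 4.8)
The plan is to derive this corollary from the Quillen bifunctor structure already established in Proposition \ref{tens-Quillen-bifunc}, by specialising the pullback--hom in exactly the manner dual to the preceding Corollary \ref{Hom-Q-Cof}. By that proposition the pentuple $(- \boxtimes -, [-,-], \StrSMHom{-}{-}, \inv{\eta_r}, \eta_l)$ is a Quillen adjunction of two variables from $\Cat \times \PCat$ to $\PCat$, so that $\StrSMHom{-}{-}\colon \PCat^{op} \times \PCat \to \Cat$ is the functor $\map_{\C}$ of this adjunction. Applying Lemma \ref{Q-bifunctor-char}, for every cofibration $i\colon W \to X$ in $\PCat$ and every fibration $p\colon Y \to Z$ in $\PCat$ the induced functor
\[
i \Box p\colon \StrSMHom{X}{Y} \to \StrSMHom{X}{Z} \underset{\StrSMHom{W}{Z}}\times \StrSMHom{W}{Y}
\]
is a fibration in $\Cat$ which is acyclic whenever either $i$ or $p$ is a weak equivalence.

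To read off the statement about $\StrSMHom{-}{C}$ I would specialise the second variable. Take $Y = C$ and let $Z = \ast$ be the terminal permutative category, with $p = t_C\colon C \to \ast$ the terminal functor. First I would note that $t_C$ is a fibration in $\PCat$: it is strict symmetric monoidal, and since $\ast$ carries only its identity isomorphism it is vacuously an isofibration, so every object of $\PCat$ is fibrant. Next, since the unique strict symmetric monoidal functor into $\ast$ is the constant one, both $\StrSMHom{X}{\ast}$ and $\StrSMHom{W}{\ast}$ are themselves terminal; hence the codomain pullback collapses,
\[
\StrSMHom{X}{\ast} \underset{\StrSMHom{W}{\ast}}\times \StrSMHom{W}{C} \cong \StrSMHom{W}{C},
\]
and under this identification $i \Box p$ becomes the precomposition functor $\StrSMHom{i}{C}\colon \StrSMHom{X}{C} \to \StrSMHom{W}{C}$.

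The conclusion is then immediate: when $i$ is a cofibration in $\PCat$ the functor $\StrSMHom{i}{C}$ is a fibration in $\Cat$, and when $i$ is an acyclic cofibration it is a trivial fibration, because in that case the weak equivalence $i$ already forces $i \Box p$ to be acyclic (no hypothesis on $p$ is needed). Finally, regarding the target as living in $\pCat$ via the basepoint given by the constant functor at the unit object of each hom category costs nothing: by Theorem \ref{nat-model-str-pointed-cat} fibrations and weak equivalences of pointed categories are detected on underlying categories, so the $\Cat$-level statement just established yields the claim verbatim.

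I do not anticipate a genuine obstacle here, as every ingredient is already in place. The only point requiring care is the bookkeeping that collapses the fibre product and identifies the abstract pullback--hom $i \Box p$ with the concrete precomposition functor $\StrSMHom{i}{C}$, together with the routine verification that the terminal map $C \to \ast$ is a fibration.
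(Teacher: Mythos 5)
Your proposal is correct and matches the paper's intended argument: the paper proves this corollary as an ``easy consequence'' of Proposition \ref{tens-Quillen-bifunc}, and your specialisation of the pullback--hom with $p = t_C \colon C \to \ast$ is exactly the dual of the specialisation $W = \ast$ used in the proof of Corollary \ref{Hom-Q-Cof}, with the collapse of the fibre product, the fibrancy of every object of $\PCat$, and the passage to $\pCat$ via Theorem \ref{nat-model-str-pointed-cat} all handled correctly. You have merely written out the routine details the paper leaves implicit.
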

 The proof is an easy consequence of proposition \ref{tens-Quillen-bifunc}.

 \section[The notion of a Bicycle]{The notion of a Bicycle}
\label{NotionBike}
In the paper \cite{segal}, Segal described a functor from the category
of all (small) symmetric monoidal categories to the category of $\gCat$.
The $\gCat$ assigned by this functor to a symmetric monoidal category was
described by constructing a sequence of (pointed) categories
whose objects are a pair of families of objects and maps in the symmetric monoidal category satisfying some coherence conditions, see
\cite{mandell}, \cite{SK} for a complete definition. The
objective of this section is to present a \emph{thicker} version of Segal's pair of families as \emph{pseudo cones} which satisfy some additional coherence conditions which are usually associated to oplax symmetric monoidal functors. We begin by providing a definition of a pseudo cone in the spirit of Segal's families:

\begin{df}
A \emph{pseudo cone} from $X$ to $C$ is a pair $(\phi, \alpha)$, 
where $\phi = \lbrace \phi(\ud{n}) \rbrace_{\ud{n} \in Ob(\N)}$
is a family of functors $\phi(\ud{n}):X(n^+) \to C$
and $\alpha = \lbrace \alpha(f) \rbrace_{f \in Mor{\N}}$ is a family of natural isomorphisms
\[
\alpha(f):\phi(\ud{n}) \Rightarrow \phi(\ud{m}) \circ X(f),
\]
where $f:\ud{n} \to \ud{m}$ is a map in $\N$ which can also be regarded as an active map
$f:n^+ \to m^+$ in $\gop$. The pair $(\phi, \alpha)$ is subject to the
following conditions
\begin{enumerate}
%  
%  \item The functor $\phi(0)$ is the constant functor $\Delta_{\unit{C}}$,
%  \emph{i.e.} $$

\item In the family $\alpha$, the natural isomorphism indexed by the
 identity morphism of an object in $\N$ should be the identity natural transformation
 \emph{i.e.}
\[
\alpha(id_{\ud{n}}) = id_{\phi(\ud{n})},
\]
 for all $\ud{n} \in Ob(\N)$.
\item For every pair $\left(f:\ud{n} \to \ud{m}, g:\ud{m} \to \ud{l} \right)$ of composable arrows in $\N$, the following diagram commutes:
\[
\xymatrix@C=12mm{
 & \phi(\ud{n})   \ar@{=>}[rd]^{\alpha(g \circ f)}  \ar@{=>}[ld]_{\alpha(f)}\\
 \phi(\ud{m}) \circ X(f) \ar@{=>}[rr]_{\alpha(g) \circ id_{X(f)}} &&\phi(\ud{m}) \circ X(g \circ f)
 }
\]
\end{enumerate}
\end{df}
%\begin{df}
%\label{pseudo-cone}
%A \emph{pseudo cone}  $(\phi, \alpha)$ from $X$ to $C$ is a lax cone 
%such that all natural transformations in the family
%$\alpha$ are natural isomorphisms.
%\end{df}
\begin{df}
\label{strict-cone}
A a \emph{strict cone} $(\phi, \alpha)$ from $X$ to $C$ is a pseudo cone such that all natural isomorphisms in the family
$\alpha$ are identity natural transformations.
\end{df}
Now we define a morphism between two pseudo cones
from $X$ to $C$, $\L_1 = (\phi, \alpha)$
and $\L_2 = (\psi, \beta)$.
\begin{df}
A morphism of pseudo cones $F:\L_1 \to \L_2$ consists of a family of natural transformations
$F = \lbrace F(\ud{n}) \rbrace_{\ud{n} \in Ob(\N)}$, having domain $\phi(\ud{n})$ and codomain $\psi(\ud{n})$, which is compatible with the families $\alpha$ and $\beta$ \emph{i.e.} the following diagram commutes
 \[
  \xymatrix@C=16mm{
 \phi(\ud{n}) \ar@{=>}[r]^{F(\ud{n})}  \ar@{=>}[d]_{\alpha(f)}  &\psi(\ud{n})
 \ar@{=>}[d]^{\beta(f)}\\
 \phi(\ud{m}) \circ X(f) \ar@{=>}[r]_{F(\ud{m}) \circ id_{X(f)}} &\psi(\ud{m}) \circ X(f)
 }
 \] 

\end{df}
Let $G: \L_2 \to \L_3 = (\upsilon, \delta)$ be another morphism of pseudo cones
from $X$ to $C$, then their composition is defined degreewise \emph{i.e.}
$G \circ F$ consists of the collection $\lbrace (G(\ud{n}) \cdot F(\ud{n})) \rbrace_{\ud{n} \in Ob(\N)} $. We observe that using the interchance law one gets the following equalities
\begin{multline*}
(G(\ud{m}) \circ id_{X(f)}) \cdot (F(\ud{m}) \circ id_{X(f)}) = (G(\ud{m}) \cdot F(\ud{m})) \circ (id_{X(f)} \cdot id_{X(f)}) \\
 = (G(\ud{m}) \cdot F(\ud{m})) \circ id_{X(f)}.
\end{multline*}
In other words the following diagram commutes
 \[
  \xymatrix@C=24mm{
 \phi(\ud{n}) \ar@{=>}[r]^{G(\ud{n}) \cdot F(\ud{n})}  \ar@{=>}[d]_{\alpha(f)}  &\upsilon(n)
 \ar@{=>}[d]^{\delta(f)}\\
 \phi(m) \circ X(f) \ar@{=>}[r]_{(G(\ud{m}) \cdot F(\ud{m})) \circ id_{X(f)}}   &\upsilon(\ud{m}) \circ X(f)
 }
 \] 
Thus the composite of two morphisms of pseudo cones, as
defined above, is a morphism of pseudo cones.
The associativity of vertical composition of natural transformations and the interchange law of natural transformations one can prove the following proposition:
\begin{prop}
The composition of morphisms of pseudo cones as defined above is strictly associative.
\end{prop}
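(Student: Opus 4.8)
The plan is to reduce the strict associativity of composition of morphisms of pseudo cones to the strict associativity of vertical composition of natural transformations in $\Cat$, which is a standard fact. Since the composition of pseudo cone morphisms has been defined purely degreewise, the whole statement should follow one degree at a time, with no genuine two-dimensional bookkeeping involved.

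First I would fix three composable morphisms of pseudo cones $F:\L_1 \to \L_2$, $G:\L_2 \to \L_3$ and $H:\L_3 \to \L_4$, all from $X$ to $C$, and recall that the preceding discussion already guarantees that the composite of two morphisms of pseudo cones is again a morphism of pseudo cones; hence both $(H \circ G) \circ F$ and $H \circ (G \circ F)$ are well-defined morphisms of pseudo cones. By the definition of composition, for each $\ud{n} \in Ob(\N)$ the $\ud{n}$-component of $(H \circ G) \circ F$ is $\bigl(H(\ud{n}) \cdot G(\ud{n})\bigr) \cdot F(\ud{n})$ and that of $H \circ (G \circ F)$ is $H(\ud{n}) \cdot \bigl(G(\ud{n}) \cdot F(\ud{n})\bigr)$, where $\cdot$ denotes vertical composition of natural transformations.

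Next I would observe that two morphisms of pseudo cones are equal precisely when they agree in every degree $\ud{n} \in Ob(\N)$, so it is enough to verify the equality of the two families componentwise. I would then appeal to the associativity of vertical composition of natural transformations in $\Cat$: for each $\ud{n}$ we have the equality
\[
\bigl(H(\ud{n}) \cdot G(\ud{n})\bigr) \cdot F(\ud{n}) = H(\ud{n}) \cdot \bigl(G(\ud{n}) \cdot F(\ud{n})\bigr).
\]
Since this holds for every $\ud{n}$, the two families coincide, proving $(H \circ G) \circ F = H \circ (G \circ F)$.

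The only thing requiring any care — and it is mere bookkeeping rather than a genuine obstacle — is to confirm that the definition of composition of pseudo cone morphisms involves no data beyond the degreewise vertical composites, so that the compatibility conditions with the structure isomorphisms $\alpha$, $\beta$, $\delta$ need not be re-examined here; these are already subsumed by the earlier verification that the degreewise composite is a morphism of pseudo cones. Once this is noted, strict associativity is immediate from the associativity of vertical composition.
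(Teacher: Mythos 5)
Your proof is correct and follows essentially the same route as the paper: reduce strict associativity to the degreewise associativity of vertical composition of natural transformations, with the interchange law needed only in the prior verification that composites of pseudo cone morphisms are again pseudo cone morphisms. Your explicit remark that equality of pseudo cone morphisms is checked componentwise, and that the compatibility squares need not be re-examined, makes precise what the paper leaves implicit in its one-line justification.
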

Thus we have defined a category.
We denote this category of pseudo cones from $X$ to $C$
by $\PsCns{X}{C}$. In this paper we will be mainly concerned with 
pseudo cones having some additional structure which we describe next.
\begin{df}
A \emph{ Pseudo Bicycle} $\Phi$ from $X$ to $C$, denoted $\Phi:X \leadsto C$, consists of a triple
$\Phi = (\L, \sigma, \tau)$, where $\L = (\phi_\L, \alpha_\L)$ is the underlying pseudo
cone, $\tau:\phi_\L(\ud{0}) \Rightarrow \Delta(\unit{C})$ is a natural transformation to the constant functor on the category $X(0^+)$ taking value $\unit{C}$, and  $\sigma = \lbrace \sigma(\ud{k},\ud{l}) \rbrace_{(\ud{k}, \ud{l}) \in Ob(\N) \times Ob(\N)}$ is a family of natural transformations
\[
\sigma(\ud{k}, \ud{l}):\phi(\ud{k+l}) \Rightarrow \phi(\ud{k}) \odot \phi(\ud{l}).
\]
The functor $\phi(\ud{k}) \odot \phi(\ud{l}):X((k+l)^+) \to C$ on the right is defined by the following composite
\[
X((k+l)^+) \overset{X(\delta^{k+l}_k) \times (\delta^{k+l}_l)}\to X(k^+) \times X(l^+)
 \overset{\phi(\ud{k}) \times \phi(\ud{l})}\to C \times C \overset{-\otimes-}  \to C.
\]
This triple is subject to the following conditions:
\begin{enumerate}[label = {C.\arabic*}, ref={C.\arabic*}]
\item \label{unit} 
For any object $x \in X(m^+)$, the map
 \begin{multline*}
 \sigma(\ud{m}, \ud{0})(x):\phi(\ud{m + 0})(x) \to \phi(\ud{m})(x) \underset{C}\otimes \phi(0)(X(\delta^m_0)(x)) = (\phi(\ud{m}) \odot \phi(\ud{0}))(x) \\
 \overset{id \underset{C} \otimes \tau(X(\delta^m_0)(x))} \to \phi(\ud{m})(x) \underset{C} \otimes \unit{C}
 \end{multline*}
  is required to be the inverse of the (right) unit isomorphism in $C$. The map $\delta^m_0:m^+ \to 0^+$ in the arrow above is the unique map in $\gop$ from $m^+$ to the terminal object. Similarly the map
 \begin{multline*}
 \phi(\ud{0+m})(x) \overset{\sigma(\ud{0}, \ud{m})(x)} \to  \phi(\ud{0})(X(\delta^m_0)(x)) \underset{C} \otimes \phi(\ud{m})(x) = (\phi(\ud{0}) \odot \phi(\ud{m}))(x) \\
 \overset{\tau(X(\delta^m_0)(x)) \underset{C} \otimes id} \to    \unit{C}  \underset{C} \otimes  \phi(\ud{m})(x)
 \end{multline*}
 is the inverse of the (left) unit isomorphism in $C$. \item \label{symmetry} For each pair of objects $\ud{k}, \ud{l} \in Ob(\N)$,
we define a natural transformation
$\gamma_{\phi(\ud{k}),\phi(\ud{l})}:\phi(\ud{k}) \odot \phi(\ud{l}) \Rightarrow \phi(\ud{l}) \odot \phi(\ud{k})$
as follows:
\[
\gamma_{\phi(\ud{k}),\phi(\ud{l})} := \gamma^C \circ id_{\phi(\ud{k}) \times \phi(\ud{l})} \circ
 id_{(X(\delta^{k+l}_k), X(\delta^{k+l}_l))},
\]
where $\gamma^C$ is the symmetry natural isomorphism of $C$.
We require that each natural transformation $\sigma(\ud{k}, \ud{l})$
 in the collection $\sigma$ to satisfy the following symmetry condition
\begin{equation*}
 \xymatrix@C=12mm{
\phi(\ud{k+l}) \ar@{=>}[r]^{\phi(\gamma^{\N}_{\ud{k}, \ud{l}})} \ar@{=>}[d]_{\sigma(\ud{k}, \ud{l})} &\phi(\ud{l+k}) \ar@{=>}[d]^{\sigma(\ud{l},\ud{k})} \\
\phi(\ud{k}) \odot \phi(\ud{l}) \ar@{=>}[r]_{\gamma_{\phi(\ud{k}),\phi(\ud{l})}} & \phi(\ud{l}) \odot \phi(\ud{k})
}
\end{equation*}
%We further require that the natural isomorphisms in the family  $\sigma$ glues together into a natural isomorphism of bifunctors. In other words we require that for each pair of maps $v:\ud{k} \to \ud{p}$ and $w:\ud{l} \to \ud{q}$ in $\N$, the following two diagrams should commute:
%\begin{equation*}
% \xymatrix@C=12mm{
%\phi(\ud{k} + \ud{l}) \ar@{=>}[r]^{\phi(v + \ud{l})} \ar@{=>}[d]_{\sigma(\ud{k}, \ud{l})} &\phi(\ud{p} + \ud{l}) \ar@{=>}[d]^{\sigma(\ud{p},\ud{l})}  && \phi(\ud{k} + \ud{l}) \ar@{=>}[r]^{\phi(\ud{k} + w)} \ar@{=>}[d]_{\sigma(\ud{k}, \ud{l})} &\phi(\ud{k} + \ud{q}) \ar@{=>}[d]^{\sigma(\ud{k},\ud{q})}  \\
%\phi(\ud{k}) \odot \phi(\ud{l}) \ar@{=>}[r]_{\phi(v) \odot \phi(\ud{l})} & \phi(\ud{p}) \odot \phi(\ud{l}) && \phi(\ud{k}) \odot \phi(\ud{l}) \ar@{=>}[r]_{\phi(\ud{k}) \odot \phi(w)} & \phi(\ud{k}) \odot \phi(\ud{q})
%}
%\end{equation*}

\item \label{associativity}
For any triple of objects $\ud{k}, \ud{l}, \ud{m}$ in $\N$, the following diagram
commutes
\begin{equation*}
  \xymatrix@C=18mm{
 \phi((\ud{k+ l + m})) \ar@{=>}[r]^{\sigma(\ud{k+l},\ud{m})}  \ar@{=>}[d]_{\sigma(\ud{k}, \ud{l+m})}  &\phi(\ud{k+l}) \odot \phi(\ud{m})
 \ar@{=>}[dd]^{\sigma(\ud{k}, \ud{l}) \odot id_{\phi(\ud{m})}}\\
 \phi(\ud{k}) \odot \phi(\ud{l+m}) \ar@{=>}[d]_{id_{\phi(\ud{k})} \odot \sigma(\ud{l}, \ud{m})} \\
 \phi(\ud{k}) \odot (\phi(l)\odot \phi(m)) \ar@{<=}[r]_{\alpha_{\phi(\ud{k}),\phi(l),\phi(\ud{m})}} & (\phi(\ud{k}) \odot \phi(\ud{l})) \odot \phi(\ud{m})
 }
 \end{equation*}
 where the natural isomorphism $\alpha_{\phi(\ud{k}),\phi(\ud{l}),\phi(\ud{m})}$ is defined by the following diagram which, other than the bottom rectangle, is commutative:
 \begin{equation*}
\xymatrix{
X((k+l+m)^+) \ar[rd]_{F_2} \ar[r]^{F_1} & X((k+l)^+) \times X(m^+) \ar[r]^{F_3} & (X(k^+) \times X(l^+)) \times X(m^+) \ar@{=}[dd] \\
& X(k^+) \times X((l+m)^+) \ar[d]_{id \times (X(\partition{l+m}{l}), X(\partition{l+m}{m}))} \\
& X(k^+) \times (X(l^+) \times X(m^+)) \ar[d]_{\phi(\ud{k}) \times (\phi(\ud{l}) \times \phi(\ud{m}))} \ar[r]^\alpha & (X(k^+) \times X(l^+)) \times X(m^+) \ar[d]^{(\phi(\ud{k}) \times \phi(\ud{l}))\times \phi(\ud{m})} \\
& C \times (C \times C) \ar[d]_{-\otimes(-\otimes-)} \ar[r]^\alpha & (C \times C) \times C \ar[d]^{(-\otimes-) \otimes-)} \ar@{=>}[ld]^{\alpha^C} \\
& {C \ } \ar@{=}[r] & C
}
\end{equation*}
 where $\alpha^C$ is the associator of the symmetric monoidal category $C$, the arrow $F_3 = (X(\partition{k+l}{k}), X(\partition{k+l}{l})) \times id$, the arrow
 $F_1 = (X(\partition{k+l + m}{k+l}), X(\partition{k+l+m}{m}))$ and the arrow $F_2 = (X(\partition{k+l + m}{k}), X(\partition{k+l+m}{l+m}))$. We observe that the to and right vertical composite arrows ate just the functor
 $(\phi(\ud{k}) \odot \phi(\ud{l})) \odot \phi(\ud{m})$
 and the diagonal and left vertical arrow are just the functor $\phi(\ud{k}) \odot (\phi(\ud{l}) \odot \phi(\ud{m}))$.

\item \label{naturality}
 For each pair of maps $f:\ud{k} \to \ud{p}$, $g:\ud{l} \to \ud{q}$ in $\N$, the following diagram should commute
 \begin{equation*}
  \xymatrix@C=20mm{
 \phi(\ud{k+l}) \ar@{=>}[r]^{\sigma(\ud{k}, \ud{l})}  \ar@{=>}[d]_{\alpha(f+g)}  &\phi(\ud{k}) \odot \phi(\ud{l})
 \ar@{=>}[d]^{\alpha(f) \odot \alpha(g)}\\
  \phi(\ud{p + q}) \circ X(f+g) \ar@{=>}[r]_{\sigma(\ud{p}, \ud{q}) \circ id_{X(f+g)} \ \ \ \ }   &(\phi(\ud{p}) \circ X(f)) \odot  (\phi(\ud{q}) \circ X(g))
 }
 \end{equation*}
 where $\alpha(f) \odot \alpha(g) = id_{- \otimes -} \circ (\alpha(f) \times \alpha(g)) \circ id_{X(f+g)}$. We observe that $(\phi(\ud{p}) \odot \phi(\ud{q})) \circ X(f+g) = (\phi(\ud{p}) \circ X(f)) \odot  (\phi(\ud{q}) \circ X(g))$.

\end{enumerate}

\end{df}
%\begin{rem}
%A bicycle is a lax cone from $X|_{\N}:\N \to \pCat$ to $C$ which
%satisfies some coherence conditions mentioned in the definition above.
%\end{rem}
Let $\Bike{\Psi}{X}{C}$ be another bicycle which is composed of the pair
$(\K, \delta)$
\begin{df}
A morphism of bicycles $F:\Phi \to \Psi$ is a morphism of pseudo cones $F:\L \to \K$ which is compatible with the additional structure of the two bicycles, \emph{i.e.} for all pairs $(\ud{k}, \ud{l}) \in Ob(\N) \times Ob(\N)$, the following diagram commutes
 \[
  \xymatrix@C=16mm{
 \phi(\ud{k+l}) \ar@{=>}[r]^{F(\ud{k+l})}  \ar@{=>}[d]_{\sigma(\ud{k}, \ud{l})}  &\psi(\ud{k+l})
 \ar@{=>}[d]^{\delta(\ud{k}, \ud{l})}\\
 \phi(k) \odot \phi(l) \ar@{=>}[r]_{F(\ud{k}) \odot F(\ud{l})}   &\psi(\ud{k}) \odot \psi(\ud{l})
 }
 \]
\end{df}
For any pair $(\ud{k}, \ud{l}) \in Ob(\N) \times Ob(\N)$, the natural transformations
$F(\ud{k})$ and $F(\ud{l})$ determine another natural tranformation
\[
F(\ud{k}) \times F(\ud{l}): \phi(\ud{k}) \times \phi(\ud{l}) \Rightarrow \psi(\ud{k}) \times \psi(\ud{l})
\]
which is defined on objects as follows:
\[
(F(\ud{k}) \times F(\ud{l}))(x, y) := (F(k)(x), F(k)(y)),
\]
where $(x, y) \in Ob(X(k^+)) \times Ob(X(l^+))$. It is defined similarly
on morphisms of the product category $X(k^+) \times X(l^+)$.
The natural transformation $F(\ud{k}) \odot F(\ud{l})$ in the diagram above is
defined by the following composite
\begin{equation}
  \xymatrix@C=12mm@R=3mm{
 X((k+l)^+) \ar[r]^{L}  &X(k^+) \times X(l^+) \ar[rr]^{ \ \ \ \ \ \phi(\ud{k}) \times \phi(\ud{l})} 
  \ar@/_{8mm}/[rr]_{\psi(\ud{k}) \times \psi(\ud{l})}& \ar@{=>}[d]_{F(\ud{k}) \times F(\ud{l})} &C \times C  \ar[r]^{- \underset{C}\otimes -}  &C \\
&  &&
 }
\end{equation}
where $L = (X(\delta^{k+l}_k), X(\delta^{k+l}_l))$. Composition of morphisms
of bicycles is done by treating them as morphisms of  pseudo cones.
We will use the following lemma to show that the composition of two
composable morphisms of bicycles is always a morphism of bicycles.
\begin{lem}
\label{odot-comp-comm}
Let $F:\Phi \to \Psi$ and $G:\Psi \to \Upsilon$ be two morphisms
of bicycles, then for all pairs $(\ud{k}, \ud{l}) \in Ob(\N) \times Ob(\N)$
\[
 (G(\ud{k}) \odot G(\ud{l})) \cdot (F(\ud{k}) \odot F(l)) = (G(\ud{k}) \cdot F(\ud{k})) \odot (G(\ud{l}) \cdot F(\ud{l})).
\]
\end{lem}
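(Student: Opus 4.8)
The plan is to reduce the claimed equality to the interchange law for natural transformations together with the fact that the Cartesian product of natural transformations is computed componentwise. First I would unwind the definition of the operation $\odot$ at the level of natural transformations: for morphisms of bicycles $F$ and $G$ the natural transformation $F(\ud{k}) \odot F(\ud{l})$ is, by definition, the horizontal composite $id_{-\otimes-} \circ (F(\ud{k}) \times F(\ud{l})) \circ id_L$, where $L = (X(\delta^{k+l}_k), X(\delta^{k+l}_l))$ is the functor $X((k+l)^+) \to X(k^+) \times X(l^+)$ and $-\otimes-$ is the tensor product of $C$. Thus $\odot$ is obtained from the product operation $\times$ by whiskering on the right with the fixed functor $L$ and on the left with the fixed functor $-\otimes-$.

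Next I would establish the auxiliary identity at the level of the product operation $\times$, namely that for vertically composable pairs of natural transformations
\[
(G(\ud{k}) \times G(\ud{l})) \cdot (F(\ud{k}) \times F(\ud{l})) = (G(\ud{k}) \cdot F(\ud{k})) \times (G(\ud{l}) \cdot F(\ud{l})).
\]
This is immediate from the componentwise description of the product: evaluated at an object $(x,y) \in Ob(X(k^+)) \times Ob(X(l^+))$, both sides return the pair $\left( (G(\ud{k})(x)) \cdot (F(\ud{k})(x)),\ (G(\ud{l})(y)) \cdot (F(\ud{l})(y)) \right)$, since composition in a product category is performed coordinatewise.

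Finally I would combine these observations. Whiskering by a fixed functor is a special case of horizontal composition with an identity natural transformation, and horizontal composition is functorial with respect to vertical composition (the interchange law); hence whiskering on the right by $id_L$ and on the left by $id_{-\otimes-}$ each commute with vertical composition. Therefore
\[
(G(\ud{k}) \odot G(\ud{l})) \cdot (F(\ud{k}) \odot F(\ud{l})) = id_{-\otimes-} \circ \big( (G(\ud{k}) \times G(\ud{l})) \cdot (F(\ud{k}) \times F(\ud{l})) \big) \circ id_L,
\]
and substituting the auxiliary identity into the middle term yields $id_{-\otimes-} \circ \big( (G(\ud{k}) \cdot F(\ud{k})) \times (G(\ud{l}) \cdot F(\ud{l})) \big) \circ id_L$, which is precisely $(G(\ud{k}) \cdot F(\ud{k})) \odot (G(\ud{l}) \cdot F(\ud{l}))$, as claimed. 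There is no genuine obstacle here; the only care needed is the bookkeeping of which composition is horizontal and which is vertical, so that the interchange law is invoked correctly. I expect the one step worth writing out explicitly to be the verification that $\times$ respects vertical composition, which is the componentwise computation above.
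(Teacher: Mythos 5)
Your proof is correct and takes essentially the same approach as the paper, whose entire proof is the one-line assertion that the lemma ``follows from the interchange law of compositions of natural transformations.'' You have merely made explicit what the paper leaves implicit: the decomposition of $\odot$ as whiskering of $\times$ by $id_L$ and $id_{-\otimes-}$, and the coordinatewise verification that $\times$ respects vertical composition.
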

\begin{proof}
The proof of the above lemma follows from the interchange law of
compositions of natural transformations.
\end{proof}
\begin{coro}
\label{comp-mor-bicycles}
Let $F:\Phi \to \Psi$ and $G:\Psi \to \Upsilon$ be two morphisms
of bicycles, then their composite $G \circ F$ is a morphism of bicycles.
\end{coro}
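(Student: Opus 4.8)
The plan is to reduce the statement to two facts already in hand: that the degreewise vertical composite $G\circ F = \{G(\ud{n})\cdot F(\ud{n})\}_{\ud{n}\in Ob(\N)}$ is a morphism of pseudo cones (established in the discussion preceding the proposition that composition of morphisms of pseudo cones is strictly associative), and the interchange identity of Lemma \ref{odot-comp-comm}. So first I would observe that, since $F$ and $G$ are in particular morphisms of the underlying pseudo cones $\L \to \K \to \Upsilon$, their degreewise composite is again a morphism of pseudo cones; this already guarantees compatibility of $G\circ F$ with the families of natural isomorphisms $\alpha$ indexed by $Mor(\N)$. What remains is precisely the single extra axiom in the definition of a morphism of bicycles: for every pair $(\ud{k},\ud{l})\in Ob(\N)\times Ob(\N)$ the square relating $\sigma(\ud{k},\ud{l})$, the structure map of $\Upsilon$, and $(G\circ F)(\ud{k})\odot (G\circ F)(\ud{l})$ must commute.

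Next I would paste the two defining squares of $F$ and $G$ on top of one another along their common edge. Because $F$ is a morphism of bicycles, the square with top edge $F(\ud{k+l})$, left edge $\sigma(\ud{k},\ud{l})$, right edge $\delta(\ud{k},\ud{l})$, and bottom edge $F(\ud{k})\odot F(\ud{l})$ commutes; because $G$ is a morphism of bicycles, the analogous square with top edge $G(\ud{k+l})$, left edge $\delta(\ud{k},\ud{l})$, and bottom edge $G(\ud{k})\odot G(\ud{l})$ commutes. Stacking these along the shared edge $\delta(\ud{k},\ud{l})$ yields a commuting rectangle whose top edge is the vertical composite $G(\ud{k+l})\cdot F(\ud{k+l}) = (G\circ F)(\ud{k+l})$ and whose bottom edge is $(G(\ud{k})\odot G(\ud{l}))\cdot(F(\ud{k})\odot F(\ud{l}))$.

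Finally I would invoke Lemma \ref{odot-comp-comm} to rewrite this bottom edge as $(G(\ud{k})\cdot F(\ud{k}))\odot(G(\ud{l})\cdot F(\ud{l}))$, which is exactly $(G\circ F)(\ud{k})\odot(G\circ F)(\ud{l})$. The resulting outer rectangle is then literally the commuting square demanded by the definition of a morphism of bicycles applied to $G\circ F$, so the composite is a morphism of bicycles. There is no genuinely hard step here: the entire argument is a single diagram paste together with one rewriting. The only point needing care is the bookkeeping that the vertical composite of the two $\odot$-labelled bottom edges agrees with the $\odot$ of the vertical composites, and this is precisely the content of Lemma \ref{odot-comp-comm}, itself a consequence of the interchange law for natural transformations.
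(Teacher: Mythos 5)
Your proposal is correct and follows essentially the same route as the paper's proof: note that $G \circ F$ is already a morphism of pseudo cones, stack the two defining squares of $F$ and $G$ along the shared edge $\delta(\ud{k},\ud{l})$, and then apply Lemma \ref{odot-comp-comm} (the interchange law) to identify $(G(\ud{k}) \odot G(\ud{l})) \cdot (F(\ud{k}) \odot F(\ud{l}))$ with $(G(\ud{k}) \cdot F(\ud{k})) \odot (G(\ud{l}) \cdot F(\ud{l}))$. The only cosmetic difference is that the paper records the pasted rectangle as a single equation of composite natural transformations rather than as a diagram paste, which is the same argument.
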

\begin{proof}
We know that $G \circ F$ is a morphism of pseudo cones. All that we have to verify that the following diagram commutes
\[
  \xymatrix@C=22mm{
 \phi(k+l) \ar@{=>}[r]^{G \cdot F(k+l)}  \ar@{=>}[d]_{\sigma(k,l)}  &\upsilon(k+l)
 \ar@{=>}[d]^{\delta(k,l)}\\
 \phi(k) \odot \phi(l) \ar@{=>}[r]_{(G \cdot F(k)) \odot (G \cdot F(l))}   &\upsilon(k) \odot \upsilon(l)
 }
 \]
 Since $F$ and $G$ are morphisms of bicycles, therefore the following
 equality always holds
 \[
 \delta(k,l) \cdot (G \cdot F(k+l)) = ((G(k) \odot G(l)) \cdot (F(k) \odot F(l))) \cdot \sigma(k,l).
 \]
 The lemma \ref{odot-comp-comm} tells us that
 \[
 (G(k) \odot G(l)) \cdot (F(k) \odot F(l)) = (G(k) \cdot F(k)) \odot (G(l) \cdot F(l)).
\]
Thus the above diagram commutes.
\end{proof}
\begin{prop}
\label{functorial-pseudo-bike}
The definition of the category of all pseudo $\bikes$ from $X$ to $C$ determines a bifunctor
\[
\PsBikes{-}{-}:\gCAT^{op} \times \PCat \to \Cat.
\]
\end{prop}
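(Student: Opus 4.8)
The plan is to build the bifunctor one variable at a time and then check that the two partial actions commute. Fix first a permutative category $C$. A morphism in $\gCAT^{op}$ is a morphism $g:Y \Rightarrow X$ of $\gCats$, and I would send it to a functor $\PsBikes{g}{C}:\PsBikes{X}{C} \to \PsBikes{Y}{C}$ by precomposition. Given a bicycle $\Phi = (\L, \sigma, \tau)$ with underlying pseudo cone $\L = (\phi, \alpha)$, I set $\phi'(\ud{n}) := \phi(\ud{n}) \circ g(n^+):Y(n^+) \to C$, and whisker each structure transformation with $g$: for an active map $f:\ud{n} \to \ud{m}$ the naturality square $g(m^+) \circ Y(f) = X(f) \circ g(n^+)$ lets me define $\alpha'(f) := \alpha(f) \circ id_{g(n^+)}$, and similarly I precompose $\sigma(\ud{k},\ud{l})$ and $\tau$ with $g((k+l)^+)$ and $g(0^+)$, using that $g$ commutes with the projections $\partition{k+l}{k}$ and $\partition{k+l}{l}$ that enter the definition of $\odot$.

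Dually, for a strict symmetric monoidal functor $H:C \to D$ in $\PCat$ I would define $\PsBikes{X}{H}$ by postcomposition, setting $(H\phi)(\ud{n}) := H \circ \phi(\ud{n})$ and whiskering $\alpha,\sigma,\tau$ on the left by $H$. The verification step is to confirm that $g^{\ast}\Phi$ and $H_{\ast}\Phi$ are again bicycles, i.e.\ that the cone conditions and \ref{unit}--\ref{naturality} persist. For precomposition these follow simply by whiskering the original diagrams with $g$; the only content is that $g$, being a morphism of $\gCats$, commutes with every structure map of $X$, hence with the projection maps appearing in the $\odot$-functors. For postcomposition the content is that $H$ is strict symmetric monoidal: it carries $\phi(\ud{k})\odot\phi(\ud{l})$ to $(H\phi(\ud{k}))\odot(H\phi(\ud{l}))$ on the nose because $H(-\otimes-)=(H-)\otimes(H-)$, $H(\unit{C})=\unit{D}$, and $H$ preserves the symmetry $\gamma$; and since $C$ and $D$ are permutative the associator $\alpha^C$ entering \ref{associativity} is the identity, so the large diagram defining $\alpha_{\phi(\ud{k}),\phi(\ud{l}),\phi(\ud{m})}$ is transported by $H$ without obstruction.

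Next I would check that both actions send morphisms of bicycles to morphisms of bicycles, preserving the defining square, by the interchange law exactly as in Lemma \ref{odot-comp-comm} and Corollary \ref{comp-mor-bicycles}. Functoriality in each variable is then immediate: preservation of identities gives $id_X^{\ast}=id$ and $(id_C)_{\ast}=id$, while preservation of composites reduces to associativity of horizontal and vertical composition of natural transformations. Finally I would observe that precomposition by $g$ and postcomposition by $H$ commute strictly, so $\PsBikes{g}{D}\circ\PsBikes{X}{H} = \PsBikes{Y}{H}\circ\PsBikes{g}{C}$; this upgrades the two partial functors into a single functor on the product category $\gCAT^{op}\times\PCat$, which is the asserted bifunctor.

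The hard part will be the bookkeeping in the postcomposition step: confirming that each of the four coherence conditions, and in particular the associativity condition \ref{associativity} whose associator is assembled from the large commuting diagram in the definition of a pseudo bicycle, is genuinely preserved by a strict symmetric monoidal $H$. I expect this to reduce to the strict preservation of $\otimes$, $\unit{}$ and $\gamma$ together with the triviality of the associator in a permutative category, but it is precisely where the strictness hypothesis on the morphisms of $\PCat$ is indispensable.
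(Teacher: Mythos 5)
Your proof is correct. The paper actually states Proposition \ref{functorial-pseudo-bike} without proof, so there is no argument of record to compare against: your construction — precomposition (whiskering $\alpha$, $\sigma$, $\tau$ on the right with the components of a map of $\gCats$) and postcomposition (whiskering on the left with a strict symmetric monoidal functor), followed by the observation that the two partial functors commute strictly and hence assemble into a bifunctor — is exactly the verification the paper leaves implicit, and it matches the surrounding material (Lemma \ref{odot-comp-comm} and Corollary \ref{comp-mor-bicycles} handle the morphism-level interchange you invoke). The two hypotheses you isolate are indeed the only ones used: naturality of a morphism of $\gCats$ against the projections $\partition{k+l}{k}$, $\partition{k+l}{l}$ entering the $\odot$-functors makes $g^{\ast}\Phi$ a bicycle, while strict preservation of $\otimes$, the unit and $\gamma$ (with the associator trivial since objects of $\PCat$ are permutative) makes $H \circ (\phi(\ud{k}) \odot \phi(\ud{l})) = (H\phi(\ud{k})) \odot (H\phi(\ud{l}))$ hold on the nose, so conditions \ref{unit}--\ref{naturality} transport as you claim.
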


\begin{df}
\label{strict-bicycle}
A \emph{strict bicycle} $(\L, \sigma)$ is a bicycle such that $\L$ is
a strict cone and all natural transformations in the collection
$\sigma$ are natural isomorphisms.
\end{df}
Strict $\bikes$ from $X$ to $C$ constitute a full subcategory of the category of pseudo bicycles
$\Bikes{X}{C}$, which we denote by $\StrBikes{X}{C}$.
The definition of the category of all strict bicycles from $X$ to $C$ is functorial
in both variables.
\begin{prop}
\label{functorial-strict-bike}
The definition of the category of all pseudo $\bikes$ from $X$ to $C$ determines a bifunctor
\[
\StrBikes{-}{-}:\gCAT^{op} \times \PCat \to \Cat.
\]
\end{prop}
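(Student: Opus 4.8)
The plan is to obtain the desired bifunctor by restricting the bifunctor $\PsBikes{-}{-}$ of Proposition \ref{functorial-pseudo-bike} to the full subcategories of strict bicycles. Recall that for each pair $(X, C)$ the category $\StrBikes{X}{C}$ is, by Definition \ref{strict-bicycle}, a \emph{full} subcategory of $\PsBikes{X}{C}$. Consequently, once we know that the object assignment $(X, C) \mapsto \StrBikes{X}{C}$ is the restriction of $\PsBikes{-}{-}$ to objects, it suffices to prove that for every morphism $(\psi, g) \colon (X, C) \to (X', C')$ in $\gCAT^{op} \times \PCat$ --- that is, a morphism of $\gCats$ $\psi \colon X' \to X$ together with a strict symmetric monoidal functor $g \colon C \to C'$ --- the functor $\PsBikes{\psi}{g}$ carries strict bicycles to strict bicycles. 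Fullness then guarantees that morphisms of strict bicycles are carried to morphisms of strict bicycles, and functoriality (preservation of identities and composites) is inherited verbatim from $\PsBikes{-}{-}$.

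First I would spell out the image of a strict bicycle $\Phi = (\L, \sigma, \tau)$, with underlying strict cone $\L = (\phi, \alpha)$, under $\PsBikes{\psi}{g}$. Its underlying cone has functors $g \circ \phi(\ud{n}) \circ \psi(n^+) \colon X'(n^+) \to C'$, where $\psi(n^+)$ denotes the component of $\psi$ at $n^+$, and its structure isomorphisms and gluing morphisms are obtained by whiskering those of $\Phi$ with $\psi$ and $g$. The two things to check are that the resulting cone is again strict and that the resulting family $\sigma'$ consists of natural \emph{isomorphisms}. For strictness, fix $f \colon \ud{n} \to \ud{m}$ in $\N$. Since $\psi$ is a morphism of $\gCats$ we have the naturality identity $\psi(m^+) \circ X'(f) = X(f) \circ \psi(n^+)$, and since $\L$ is strict we have $\phi(\ud{n}) = \phi(\ud{m}) \circ X(f)$; composing with $g$ on the left and $\psi(n^+)$ on the right yields $g \circ \phi(\ud{n}) \circ \psi(n^+) = g \circ \phi(\ud{m}) \circ \psi(m^+) \circ X'(f)$, so the corresponding structure isomorphism $\alpha'(f)$ is forced to be the identity. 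For the gluing family, each $\sigma'(\ud{k}, \ud{l})$ is a whiskering of the natural isomorphism $\sigma(\ud{k}, \ud{l})$; as $g$ is a functor it preserves isomorphisms, so $\sigma'(\ud{k}, \ud{l})$ is again a natural isomorphism. Hence $\PsBikes{\psi}{g}(\Phi)$ satisfies Definition \ref{strict-bicycle}.

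The one point demanding care --- and the main obstacle --- is verifying that the codomain of each whiskered $\sigma'(\ud{k}, \ud{l})$ is genuinely $(g \circ \phi(\ud{k}) \circ \psi(k^+)) \odot (g \circ \phi(\ud{l}) \circ \psi(l^+))$, so that $\sigma'$ has the shape required of a bicycle. This is where the two structural hypotheses on $(\psi, g)$ enter. Because $g$ is \emph{strict} symmetric monoidal, $g \circ (- \otimes_C -) = (- \otimes_{C'} -) \circ (g \times g)$, so $g$ commutes with the $\odot$-construction. Because $\psi$ is a morphism of $\gCats$, its naturality gives $(X(\partition{k+l}{k}), X(\partition{k+l}{l})) \circ \psi((k+l)^+) = (\psi(k^+) \times \psi(l^+)) \circ (X'(\partition{k+l}{k}), X'(\partition{k+l}{l}))$, so precomposition by $\psi$ intertwines the $\odot$-constructions built from the projection maps of $X$ and of $X'$; compare Lemma \ref{oplax-rest-gcat}. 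Combining these two identities shows that the $\odot$-operation is preserved on the nose, which is precisely what is needed for $\sigma'$ to land in the correct codomain. With this bookkeeping in place, all of the defining diagrams of a strict bicycle for $\PsBikes{\psi}{g}(\Phi)$ follow by whiskering the corresponding diagrams for $\Phi$, and the proof is complete.
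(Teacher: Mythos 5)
Your proof is correct, and it supplies precisely the verification the paper leaves implicit: the paper states both Proposition \ref{functorial-pseudo-bike} and Proposition \ref{functorial-strict-bike} without proof, tacitly regarding the strict case as the restriction of the pseudo bifunctor to the full subcategories $\StrBikes{X}{C} \subseteq \PsBikes{X}{C}$. Your two checks --- that whiskering a strict cone with $g$ and $\psi$ stays strict (via naturality of $\psi$ applied to $\phi(\ud{n}) = \phi(\ud{m}) \circ X(f)$) and that the $\odot$-codomain is preserved on the nose (via strictness of $g$ and naturality of $\psi$ with respect to the projections $\partition{k+l}{k}$, $\partition{k+l}{l}$), with fullness handling morphisms --- are exactly the intended argument.
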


\section[Bicycles as oplax sections]{Bicycles as oplax sections}
\label{bikes-as-oplax-sections}
%In principle a \emph{$\bike$} $\phi$ is a "map" between a $\gCat$ $X$
%and a symmetric monoidal category $C$. This "map" should contain
%enough information to uniquely determine two morphisms namely
%a unital op-lax functor $\overline{\phi}:\P X \to C$ and a morphism of
%$\gCats$ $\underline{\phi}:X \to \R X$, where $\P$ is the inverse $K$-theory
%functor defined by Mandell in \cite{mandell} and $\R$ is the $K$-theory functor
%defined in section \ref{}. We will denote a bicycle from $X$ to $C$ by
%$\phi:X \leadsto C$. In order to rigorously define a bicycle $\phi:X \leadsto C$,
In this appendix we want to describe a (pseudo) bicycle as an oplax symmetric monoidal functor from the category $\N$.
 We will construct a symmetric monoidal category $\OplaxExp{X}{C}$. The objects
 of this category are all pairs $(\underline{n}, \phi)$ where $n \in Ob(\N)$
 and $\phi:X(n^+) \to C$ is a functor. 
% There is exactly one object in $\OplaxExp{X}{C}$ whose left component is $0$ namely a pair $(0, \phi(0))$,
% where $\phi(0):X(0^+) \to C$ is the unique functor which maps
% the point $X(0^+)$ to $\unit{C}$.
  A map from $(\underline{n}, \phi)$ to $(\underline{m}, \psi)$
 in $\OplaxExp{X}{C}$ is a pair $(f, \eta)$ where $f:n \to m$ is a map in the category
 $\N$ and $\eta:\phi(\ud{n}) \Rightarrow \psi(\ud{m}) \circ X(f)$ is a
 natural isomorphism. Let $(g,\beta):(\underline{m}, \psi) \to (\ud{k}, \alpha)$
 be another map in $\OplaxExp{X}{C}$, then we define their composition
 as follows:
 \[
 (g,\beta) \circ (f, \eta) := (g \circ f, \beta \ast \eta),
 \]
 where $\beta \ast \eta$ is the composite natural transformation $ (\beta \circ X(f)) \cdot \eta$ in which $\phi \circ X(f)$ is the \emph{horizontal} composition of
 the natural transformations $\phi(g)$ and $id_{X(f)}$ and
 $(\beta \circ X(f)) \cdot \eta$ is the \emph{vertical} composition
 of the two natural transformations. Using the \emph{interchange law}
 and the associativity of compositions we will now show that the
 composition defined above is associative.
 \begin{prop}
 \label{assoc-comp-B}
 The composition law for the category $\OplaxExp{X}{C}$, as defined above, is
 strictly associative.
 \end{prop}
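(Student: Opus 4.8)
The plan is to verify associativity one component at a time, isolating the only nontrivial part and reducing it to the interchange law in the strict $2$-category $\Cat$. Fix three composable arrows
\[
(f,\eta)\colon(\ud{n},\phi)\to(\ud{m},\psi),\quad (g,\beta)\colon(\ud{m},\psi)\to(\ud{k},\alpha),\quad (h,\delta)\colon(\ud{k},\alpha)\to(\ud{l},\zeta)
\]
in $\OplaxExp{X}{C}$. On first components the composition is just composition of the underlying maps in $\N$, which is strictly associative, so $h\circ(g\circ f)=(h\circ g)\circ f$ requires no argument. All of the content lies in the second component, a natural isomorphism built from the given $2$-cells by whiskering and vertical composition.

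First I would expand the two triple composites using the definition $(g,\beta)\circ(f,\eta)=(g\circ f,(\beta\circ X(f))\cdot\eta)$. The left-normalized composite is
\[
(h,\delta)\circ\bigl((g,\beta)\circ(f,\eta)\bigr)=\bigl(h\circ(g\circ f),\ (\delta\circ X(g\circ f))\cdot\bigl((\beta\circ X(f))\cdot\eta\bigr)\bigr),
\]
while the right-normalized composite is
\[
\bigl((h,\delta)\circ(g,\beta)\bigr)\circ(f,\eta)=\bigl((h\circ g)\circ f,\ \bigl(((\delta\circ X(g))\cdot\beta)\circ X(f)\bigr)\cdot\eta\bigr).
\]
Since $X$ is a functor we have $X(g\circ f)=X(g)\circ X(f)$, so it only remains to identify the two second components.

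The key step is then to establish the identity
\[
\bigl(\delta\circ(X(g)\circ X(f))\bigr)\cdot(\beta\circ X(f))=\bigl((\delta\circ X(g))\cdot\beta\bigr)\circ X(f).
\]
Here I would use two instances of the interchange law for $\Cat$: associativity of horizontal composition rewrites the left factor as $\delta\circ(X(g)\circ X(f))=(\delta\circ X(g))\circ X(f)$, and the distributivity of whiskering by the functor $X(f)$ over vertical composition, $(\mu\cdot\nu)\circ X(f)=(\mu\circ X(f))\cdot(\nu\circ X(f))$, converts $\bigl((\delta\circ X(g))\circ X(f)\bigr)\cdot(\beta\circ X(f))$ into $\bigl((\delta\circ X(g))\cdot\beta\bigr)\circ X(f)$. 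Finally, reassociating the outer vertical composite with $\eta$ by associativity of vertical composition shows the two second components coincide, completing the proof.

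I do not expect a genuine obstacle: the entire argument is functoriality of $X$ together with the interchange law, exactly as in the analogous associativity verification for the composition of morphisms of pseudo cones. The one point needing care is purely bookkeeping — recognizing $\beta\circ X(f)$ and $\delta\circ X(g\circ f)$ as right whiskerings and applying the distributivity of whiskering over vertical composition at precisely the right place, so that the reassociation lines up the two expressions term by term.
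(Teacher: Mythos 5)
Your proof is correct and follows essentially the same route as the paper's: both reduce the claim to the second components and verify the identity $\bigl(\delta\circ X(g\circ f)\bigr)\cdot\bigl((\beta\circ X(f))\cdot\eta\bigr)=\bigl(((\delta\circ X(g))\cdot\beta)\circ X(f)\bigr)\cdot\eta$ using functoriality of $X$, associativity of vertical composition, and the interchange law in $\Cat$ (your whiskering-distributivity step is exactly the paper's instance of interchange with $id_{X(f)}\cdot id_{X(f)}=id_{X(f)}$). No gaps.
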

 \begin{proof}
 Let $(f, \eta(f)):(\underline{n}, \phi)$ to $(\underline{m}, \psi)$, $(g,\beta(g)):(\underline{m}, \psi) \to (\ud{k}, \alpha)$ and $(h,\theta(h)):(\ud{k}, \alpha) \to (\ud{j}, \delta)$ be three composable
 morphisms in $\OplaxExp{X}{C}$. We want to show that
 \[
 ((h,\theta) \circ (g,\beta) \circ (f, \eta = (h,\theta) \circ ((g,\beta \circ (f, \eta)).
 \]
 In order to do so it would be sufficient to verify the associativity of the operation $\ast$, \emph{i.e}, to verify $(\theta \ast \beta) \ast \eta)  = \theta \ast (\beta \ast \eta)$. The situation is depicted in the following diagram
 \[
\label{comp-B}
  \xymatrix{
 X(n^+) \ar[rrrr]^{\phi} \ar@{=}[d]   && \ar@{}[d]|{\Downarrow \eta} && C \ar@{=}[d] \\
 X(n^+) \ar[r]_{X(f)} \ar@{=}[d] \ar@{}[d]|{ \ \ \ \ \ \ \ \ \ \ \ \ \ \ \ \ \ \ \ \ \ \ \ \ \Downarrow id_{X(f)}} &X(m^+) 
\ar[rrr]_{\psi}  \ar@{=}[d]   & \ar@{}[d]|{\Downarrow \beta} && C \ar@{=}[d] \\
 X(n^+) \ar[r]_{X(f)} \ar@{=}[d] \ar@{}[d]|{ \ \ \ \ \ \ \ \ \ \ \ \ \ \ \ \ \ \ \ \ \ \ \ \ \Downarrow id_{X(f)}}  &X(m^+)
 \ar[r]_{X(g)} \ar@{=}[d] \ar@{}[d]|{ \ \ \ \ \ \ \ \ \ \ \ \ \ \ \ \ \ \ \ \ \ \ \ \ \Downarrow id_{X(g)}}  &X(k^+) \ar[rr]_{\alpha} \ar@{=}[d]   &
 \ar@{}[d]|{\Downarrow \theta} & C \ar@{=}[d] \\
 X(n^+) \ar[r]_{X(f)}  &X(m^+) \ar[r]_{X(g)}  &X(k^+) \ar[r]_{X(h)} &X(j^+) \ar[r]_{\delta}   & C
 }
\]
 We begin by considering the left hand side, namely
\[
\theta \ast (\beta \ast \eta) = (\theta \circ id_{X(g)} \circ id_{X(f)}) \cdot ((\beta \circ id_{X(f)}) \cdot \eta),
\]
where $\cdot$ represents vertical composition of natural transformations which
is an associative operation. Therefore by rearranging we get
\begin{multline*}
\theta \ast (\beta \ast \eta) = (\theta \circ id_{X(g)} \circ id_{X(f)}) \cdot ((\beta \circ id_{X(f)}) \cdot \eta) \\
((\theta \circ id_{X(g)} \circ id_{X(f)}) \cdot (\beta \circ id_{X(f)})) \cdot \eta.
\end{multline*}
Now the interchange law says that the vertical composite
$((\theta \circ id_{X(g)} \circ id_{X(f)}) \cdot (\beta \circ id_{X(f)}))$
is the same as $((\theta \circ id_{X(g)}) \cdot \beta) \circ (id_{X(f)} \cdot id_{X(f)})) \cdot \eta$
which is the same as $(\theta \ast \beta) \ast \eta$.
 \end{proof}
 
  Thus we have defined
 the category $\OplaxExp{X}{C}$. Next we want to define a symmetric monoidal
 structure on the category $\OplaxExp{X}{C}$. Let $(\underline{n}, \phi)$ and $(\underline{m}, \psi)$
 be two objects of $\OplaxExp{X}{C}$, we define
 \[
 (\underline{n}, \phi) \otimes (\underline{m}, \psi) := (\ud{n} + \ud{m}, \phi \odot \psi),
 \]
 where the second component on the right is defined as
 the following composite
 \[
 X((n+m)^+) \overset{X(\delta^{n+m}_n) \times X(\delta^{n+m}_m)} \to X(n^+) \times X(m^+) \overset{\phi \times \psi} \to C \times C \overset{-\underset{C}\otimes-} \to C
 \]
 Let $(f,\eta):(\underline{n}, \phi) \to (\ud{k}, \delta)$ and $(g,\beta):(\underline{m}, \psi) \to (l, \alpha)$ be two maps in $\OplaxExp{X}{C}$, then we define
 \[
 (f,\eta) \otimes (g,\beta) := (f+g, \eta \odot \beta),
 \]
 where $f+g:\ud{n}+\ud{m} \to \ud{k} + \ud{l}$ is the map determined by the symmetric
 monoidal structure on $\N$ and the natural transformation
 $\eta \odot \beta$ is defined to be the following composite:
 \begin{equation*}
 \label{tens-prod-arrows-elem-exp}
 id_{- \underset{C} \otimes -} \circ (\eta \times \beta) \circ id_{X(\partition{n+m}{n}) \times
 X(\partition{n+m}{m})}.
 \end{equation*}
 In other words for any $x \in X((n+m)^+)$
 \[
 (\eta \odot \beta)(x) := \eta(X(\delta^{n+m}_n)(x)) \underset{C}\otimes
 \beta(X(\delta^{n+m}_m)(x)),
 \]
 where $x \in X((n+m)^+)$. It is easy to see that this defines
 a natural transformation between the functors
 \[
 \phi \odot \psi:X((n+m)^+) \to C
 \]
 and the following composite functor
 \begin{eqnarray*}
 X((n+m)^+) \overset{X(\delta^{n+m}_n) \times X(\delta^{n+m}_m)}   \to X(n^+) \times X(m^+) \overset{X(f) \times X(g)} \to X(k^+) \times X(l^+) \\
 \overset{\phi \times \psi} \to C \times C \overset{-\underset{C}\otimes-} \to C.
\end{eqnarray*}
We observe that for any two maps $f:\ud{n} \to \ud{k}$ and $g:\ud{m} \to \ud{l}$
in the category $\N$, the following diagram
 \[
\label{splitting-sum-maps-gCat}
  \xymatrix@C=28mm{
 X((n+m)^+)  \ar[r]^{(X(\delta^{n+m}_n), X(\delta^{n+m}_m)) } \ar[d]_{X(f+g)}  & X(n^+) \times X(m^+) \ar[d]^{X(f) \times X(g)}\\
  X((k+l)^+) \ar[r]_{(X(\delta^{k+l}_k), X(\delta^{k+l}_l))} &X(k^+) \times X(l^+)
 }
\]
This shows that $\eta \odot \beta$ is a natural transformation between
the functors $\phi \odot \psi$ and $(\phi \odot \psi) \circ X(f+g)$.
 \begin{prop}
 \label{sym-mon-B(X;C)}
 The category $\OplaxExp{X}{C}$ is a symmetric monoidal category.
 \end{prop}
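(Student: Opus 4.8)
The plan is to verify directly that the binary operation $-\otimes-$ defined above on $\OplaxExp{X}{C}$ is a symmetric monoidal structure, by checking that it is a well-defined bifunctor and then exhibiting the associativity, unit, and symmetry natural isomorphisms together with the coherence diagrams of Definition \ref{SM-Cat}. First I would confirm bifunctoriality: the assignment on objects sends $((\underline{n}, \phi), (\underline{m}, \psi))$ to $(\underline{n}+\underline{m}, \phi \odot \psi)$ and on arrows uses $\eta \odot \beta = id_{-\underset{C}\otimes-} \circ (\eta \times \beta) \circ id$. The commutative square displayed just above the statement shows that $\eta \odot \beta$ really is a natural transformation $\phi \odot \psi \Rightarrow (\phi \odot \psi)\circ X(f+g)$, so the operation lands in the correct hom-set. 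Preservation of identities is immediate, and preservation of composition follows from the interchange law for natural transformations applied componentwise, exactly as in Proposition \ref{assoc-comp-B}.

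Next I would build the structural natural isomorphisms. For associativity, given three objects the two bracketings both have underlying object $\underline{n}+\underline{m}+\underline{k}$ in $\N$ (the monoidal structure on $\N$ being strictly associative), and the two composite functors into $C$ differ only by reassociation of the iterated projections $X(\delta)$ and the associator $\alpha^C$ of $C$; the required isomorphism is induced by $\alpha^C$ together with the canonical comparison of the projection functors. The unit object is $(\underline{0}, \phi_0)$ where $\phi_0 : X(0^+) \to C$ is the constant functor at $\unit{C}$, and the left/right unit isomorphisms $\beta_l, \beta_r$ are inherited from those of $C$ composed with the canonical identification $X(\delta^{n}_n) \cong id$. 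For the symmetry, the map $\gamma_\N(n,m)^+$ in $\N$ combined with the symmetry $\gamma^C$ of $C$ and the swap of projections $X(\delta^{n+m}_n), X(\delta^{n+m}_m)$ gives the natural isomorphism $\gamma_{\OplaxExp{X}{C}}$, precisely mirroring the construction of $\lambda_X$ and the symmetry condition verified in Lemma \ref{oplax-rest-gcat}.

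The main work, and the step I expect to be the genuine obstacle, is verifying the coherence axioms — the pentagon for $\alpha$, the triangle relating $\alpha$ with the units, and the hexagon for $\gamma$. Each of these reduces to a large diagram that splits into two regions: one region governed entirely by the symmetric monoidal coherence of the target $C$ (which holds by hypothesis), and one region governed by compatibility relations among the projection maps $\delta^{k+l}_k$ in $\gop$ and the symmetry $\gamma_\N$ in $\N$. The latter relations are exactly the identities $\partition{k+l}{k}\circ \tau(k,l) = \partition{l+k}{k}$ (and its companion) already recorded in Lemma \ref{oplax-rest-gcat}, so the strategy is to paste the $C$-coherence diagram onto the $\N$/$\gop$-compatibility diagram and read off commutativity of the outer boundary. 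Since these are routine but lengthy diagram chases, I would present the argument by pointing to the relevant coherence of $C$ and the structure maps of $\N$ rather than drawing every intermediate square, and conclude that $\OplaxExp{X}{C}$ satisfies all the axioms of Definition \ref{SM-Cat}.
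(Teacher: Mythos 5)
Your proposal is correct and follows essentially the same route as the paper: the unit object $(\underline{0},\phi_0)$ constant at $\unit{C}$, bifunctoriality checked via the interchange law together with the displayed commutative square relating $(X(\partition{k+l}{k}),X(\partition{k+l}{l}))$ to $X(f+g)$, and structural isomorphisms assembled from those of $C$ and the projection identities such as $\partition{k+l}{k}\circ\tau(k,l)=\partition{l+k}{k}$. In fact the paper's printed proof breaks off (with an ellipsis) right after the bifunctoriality computation, so your sketch of the associativity, unit, and symmetry isomorphisms and their coherence fills in exactly what the paper leaves implicit; the only remark worth adding is that in all of the paper's applications $C$ is permutative, so the monoidal structure on $\OplaxExp{X}{C}$ is strictly associative and unital and your associator built from $\alpha^C$ degenerates to the identity.
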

 \begin{proof}
 
 The unit object of $\OplaxExp{X}{C}$ is the pair $(0, \phi(0))$, where
 $\phi(0):X(0^+) \to C$ is the constant functor assigning the value
 $\unit{C}$.
 We begin by verifying that the tensor product defined
 above defines a bifunctor
 \[
  - \otimes -: \OplaxExp{X}{C} \times \OplaxExp{X}{C} \to \OplaxExp{X}{C}.
 \]
 \begin{sloppypar}
 Let $((f, \eta), (g, \beta)):((\ud{k}, \phi),(\ud{l}, \phi)) \to ((\ud{m}, \phi), (\underline{n}, \phi))$ and $((p, \delta) (q, \theta)):((\ud{m}, \phi(m)), (\underline{n}, \phi)) \to ((a, \phi(a)),(b, \phi(b)))$ be a pair of
 composable arrows in the product category $\OplaxExp{X}{C} \times \OplaxExp{X}{C}$.
 We will show that
 \end{sloppypar}
 \[
 (p \circ f, \eta(f) \ast \delta(p)) \otimes (q \circ g, \beta(f) \ast \theta(p)) =
 ((p, \delta(p)) \otimes (q, \theta(q))) \circ ((f, \eta(f)) \otimes (g, \beta(g))).
 \]
 Throughout this proof we will refer to the following commutative diagram
 \begin{equation*}
 \xymatrix@C=23mm@R=12mm{
 X((k+l)^+) \ar[r]^{(X(\partition{k+l}{k}), X(\partition{k+l}{l})) \ \ \ } \ar[d]_{X(f+g)} & X(k^+) \times X(l^+) \ar[r]^{\phi(k) \times \phi(l)} \ar[d]_{X(f) \times X(g)} & C \times C \ar@{=>}[ld]^{\ \ \ \ \eta(f) \times \beta(g)}  \ar@{=}[d]\\
 X((m+n)^+) \ar[r]^{(X(\partition{m+n}{m}), X(\partition{m+n}{n})) \ \ } \ar[d]_{X(p+q)} & X(m^+) \times X(n^+) \ar[r]_{\phi(m) \times \phi(n)} \ar[d]_{X(p) \times X(q)} & C \times C \ar@{=>}[ld]^{\ \ \ \ \delta(p) \times \theta(q)} \ar@{=}[d]  \\
 X((a+b)^+) \ar[r]_{(X(\partition{a+b}{a}), X(\partition{a+b}{b})) \ \ \ } & X(a^+) \times X(b^+) \ar[r]_{\phi(a) \times \phi(b)}  & C \times C  \\
 }
 \end{equation*}
 Since the addition operation, $+$, is the symmetric monoidal structure on $\N$, therefore
 $p \circ f + q \circ g = (p + q) \circ (f + g)$.
 We recall that
 \[
 (p \circ f, \eta(f) \ast \delta(p)) \otimes (q \circ g, \beta(f) \ast \theta(p)) =
 (p \circ f + q \circ g, (\eta \ast \delta) \odot (\beta \ast \theta)(p \circ f + q \circ g)).
 \]
 By definition, the natural transformation $(\eta \ast \delta) \odot (\beta \ast \theta)(p \circ f + q \circ g)$ is the following composite:
 \begin{multline*}
 id_{- \otimes -} \circ (((\delta(p) \circ id_{X(f)}) \cdot \eta(f)) \times (((\theta(q) \circ id_{X(g)}) \cdot \beta(f))) \circ id_{X(\partition{k+l}{k}) \times X(\partition{k+l}{l})}.
 \end{multline*}
 We observe that the above composite is the same as the following composite:
 \begin{multline*}
 id_{- \otimes -} \circ ((\delta(p) \times \theta(q)) \circ (id_{X(f)} \times id_{X(g)}) \cdot (\eta(f) \times \beta(g))) \circ id_{(X(\partition{k+l}{k}), X(\partition{k+l}{l}))}.
 \end{multline*}
 The composite natural transformation $((p, \delta) \otimes (q, \theta)) \circ ((f, \eta) \otimes (g, \beta(g)))$ is, by definition, the same as $(\theta \odot \delta \circ id_{f+g}) \cdot (\eta \odot \beta)$. Unwinding definitions gives us the following equality
 \begin{multline*}
 (\theta \odot \delta \circ id_{f+g}) \cdot (\eta \odot \beta(f+g)) = \\
 (id_{- \otimes -} \circ (\delta \times \theta) \circ id_{(X(\partition{m+n}{m}), X(\partition{m+n}{n}))} \circ id_{X(f + g)}) \\ \cdot ( id_{- \otimes -} \circ (\eta \times \beta) \circ id_{(X(\partition{k+l}{k}), X(\partition{k+l}{l}))}).
 \end{multline*}
 The above diagram tells us that $(X(\partition{m+n}{m}), X(\partition{m+n}{n})) \circ X(f + g) = (X(f) \times X(g)) \circ (X(\partition{k+l}{k}), X(\partition{k+l}{l}))$.
 Now the interchange law of composition of natural transformations gives the
 following equalities
 \begin{multline*}
 (\theta \odot \delta \circ id_{f+g}) \cdot (\eta \odot \beta(f+g)) = \\
 (id_{- \otimes -} \circ (\delta \times \theta) \circ (id_{X(f)} \times  id_{X(g)}) \circ id_{(X(\partition{k+l}{k}), X(\partition{k+l}{l}))} \\ \cdot ( id_{- \otimes -} \circ (\eta \times \beta) \circ id_{(X(\partition{k+l}{k}), X(\partition{k+l}{l}))}) = \\
 id_{- \otimes -} \circ ((\delta \times \theta) \circ (id_{X(f)} \times id_{X(g)}) \cdot (\eta \times \beta)) \circ id_{(X(\partition{k+l}{k}), X(\partition{k+l}{l}))}.
 \end{multline*}
 \dots
 \end{proof}
 We will refer to the category $\OplaxSec{X}{C}$ as the \emph{category of elements of the exponential from $X$ to $C$}.
 
 \begin{prop}
 \label{cat-oplax-sections-bifunctor}
 The construction of the category of elements of the exponential described above
 defines a bifunctor
 \begin{equation}
 \OplaxExp{-}{-}:\gCAT^{op} \times \PCat \to \PCat.
 \end{equation}
 \end{prop}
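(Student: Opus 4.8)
The plan is to produce the object assignment, the morphism assignment, and then to verify functoriality together with strict symmetric monoidality. For the object assignment, Proposition \ref{sym-mon-B(X;C)} already equips $\OplaxExp{X}{C}$ with a symmetric monoidal structure whose tensor is $(\ud{n}, \phi) \otimes (\ud{m}, \psi) = (\ud{n} + \ud{m}, \phi \odot \psi)$. Since $+$ is strictly associative and unital on $\N$, and since $\otimes$ is strictly associative and unital on the permutative category $C$ (so that the projection maps $\partition{n+m+k}{n}$ etc. compose coherently and the unit $\unit{C}$ acts strictly), this structure is in fact strict; hence $\OplaxExp{X}{C}$ is a permutative category, i.e. an object of $\PCat$. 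Alternatively one identifies $\OplaxExp{X}{C}$ with a category of elements of a lax symmetric monoidal functor on $\N$ and invokes Theorem \ref{perm-cat-elm}.

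For the morphism assignment, consider a morphism $(\theta, H):(X, C) \to (X', C')$ in $\gCATop \times \PCat$; concretely $\theta:X' \to X$ is a map of $\gCats$ and $H:C \to C'$ is a strict symmetric monoidal functor. I would define $\OplaxExp{\theta}{H}:\OplaxExp{X}{C} \to \OplaxExp{X'}{C'}$ on objects by $(\ud{n}, \phi) \mapsto (\ud{n}, H \circ \phi \circ \theta(n^+))$ and on a morphism $(f, \eta):(\ud{n}, \phi) \to (\ud{m}, \psi)$ by $(f, \eta) \mapsto (f, id_H \circ (\eta \circ id_{\theta(n^+)}))$. Here $\eta \circ id_{\theta(n^+)}:\phi \circ \theta(n^+) \Rightarrow \psi \circ X(f) \circ \theta(n^+)$, and the naturality of $\theta$ at the active map $f:n^+ \to m^+$ gives $X(f) \circ \theta(n^+) = \theta(m^+) \circ X'(f)$, so that the codomain is exactly $(H \circ \psi \circ \theta(m^+)) \circ X'(f)$, as required; the resulting natural transformation is an isomorphism because $H$ preserves isomorphisms.

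I would then check that $\OplaxExp{\theta}{H}$ is a functor. Preservation of identities is immediate, and for composition one recalls from the appendix that composition in $\OplaxExp{X}{C}$ is $(g, \beta) \circ (f, \eta) = (g \circ f, \beta \ast \eta)$ with $\beta \ast \eta = (\beta \circ id_{X(f)}) \cdot \eta$; the claim then follows from functoriality of $H$, the interchange law, and the naturality of $\theta$ used above, exactly as in Proposition \ref{assoc-comp-B}. Next I would verify that $\OplaxExp{\theta}{H}$ is strict symmetric monoidal: on objects the tensor is preserved since $H \circ (\phi \odot \psi) \circ \theta((n+m)^+) = (H\phi\theta(n^+)) \odot (H\psi\theta(m^+))$, a diagram chase using strictness of $H$ (so that $H \circ (-\otimes_C-) = (-\otimes_{C'}-) \circ (H \times H)$) together with the naturality of $\theta$ at the projection maps $\partition{n+m}{n}$ and $\partition{n+m}{m}$; the unit $(\ud{0}, \phi(0))$ and the symmetry are preserved for the same reasons, and the verification on morphisms is identical. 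Finally, bifunctoriality, namely that $\OplaxExp{id_X}{id_C}$ is the identity and that the two morphism actions compose correctly while respecting the contravariance in the first variable, reduces once more to functoriality of composition of functors and natural transformations.

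The main obstacle I expect is the verification that $\OplaxExp{\theta}{H}$ is strictly symmetric monoidal, since this is the one place where one must simultaneously use the strict symmetric monoidal structure of $H$ and the full naturality of the map of $\gCats$ $\theta$ against the projection maps that define the operation $\odot$; everything else is bookkeeping with the interchange law already exploited in Propositions \ref{assoc-comp-B} and \ref{sym-mon-B(X;C)}.
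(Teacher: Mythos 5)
Your proposal is correct, but there is no proof in the paper to compare it with: Proposition \ref{cat-oplax-sections-bifunctor} is stated without any argument (as is its companion \ref{cat-sections-bifunctor}), so your write-up supplies details the paper simply omits. Your construction is the natural one and checks out: precomposing with the components $\theta(n^+)$ of a map of $\gCats$ $\theta:X' \to X$, postcomposing with the strict symmetric monoidal functor $H:C \to C'$, and whiskering the natural isomorphisms; functoriality then follows from the interchange law exactly as in Proposition \ref{assoc-comp-B}, and strict monoidality of $\OplaxExp{\theta}{H}$ from the identity $H \circ (\TensPFunc{C}) = (\TensPFunc{C'}) \circ (H \times H)$ together with naturality of $\theta$ at the projections $\partition{n+m}{n}$ and $\partition{n+m}{m}$, which is indeed the only place where strictness of $H$ and full naturality of $\theta$ interact. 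You are also right to flag, and to close, a gap the paper leaves implicit: the stated codomain is $\PCat$, whereas Proposition \ref{sym-mon-B(X;C)} only asserts a symmetric monoidal structure (its proof trails off); your observation that strictness of $+$ on $\N$, strictness of $\otimes$ on $C$, and the compatibility of projections (e.g. $\delta^{n+m}_{n} \circ \delta^{n+m+k}_{n+m} = \delta^{n+m+k}_{n}$, transported through the functor $X$) yield strict associativity and a strict unit is exactly what is needed.

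One caveat concerns your alternative route via Theorem \ref{perm-cat-elm}: it is not a literal application. In that theorem the category of elements has morphisms $(f, \alpha)$ with $\alpha:F(f)(d) \to b$ an arbitrary arrow from the transported source to the target, whereas a morphism of $\OplaxExp{X}{C}$ carries a natural \emph{isomorphism} $\eta:\phi \Rightarrow \psi \circ X(f)$ pointing from the source to the target transported by precomposition; moreover $\ud{n} \mapsto [X(n^+), C]$ with precomposition is contravariant on $\N$. So one must first restrict to the maximal groupoids $J([X(n^+), C])$ and adjust the variance (using invertibility of $\eta$ to flip orientation) before Theorem \ref{perm-cat-elm} applies. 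Since your primary argument is the direct verification, this does not affect the correctness of the proof, but the alternative should not be cited as stated.
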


 The category $\OplaxExp{X}{C}$ has an associated projection functor
 $pr_\N: \OplaxExp{X}{C} \to \N$ which projects the first coordinate. Now we are ready
 to define a $\bike$
 \begin{df}
 \label{oplax-section-def}
 A \emph{oplax symmetric monoidal section of $\OplaxExp{X}{C}$}
 is a unital oplax symmetric monoidal functor  $\Phi:\N \to \OplaxExp{X}{C}$ such that
 $pr_N \circ \Phi = id_{\N}$. A \emph{morphism of oplax symmetric monoidal sections of $\OplaxExp{X}{C}$} is an oplax natural transformation between two oplax symmetric monoidal section of $\OplaxExp{X}{C}$.
 \end{df}
 We will denote the category of all oplax symmetric monoidal section
 of $\OplaxExp{X}{C}$ by $\OplaxSec{X}{C}$. 
  \begin{prop}
 \label{Bikes-as-sections}
 The category of all oplax symmetric monoidal section 
 of $\OplaxExp{X}{C}$ is isomorphic to the category of all bicycles from $X$ to $C$.
 We begin by defining $I$. 
 \end{prop}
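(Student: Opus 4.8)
The plan is to show that the evident ``decoding'' assignment $I$, which reads the bicycle data off a section $\Phi$ together with its oplax structure maps, is an isomorphism of categories. Given a unital oplax symmetric monoidal section $(\Phi,\lambda_\Phi,\epsilon_\Phi)$, the constraint $pr_{\N}\circ\Phi=id_{\N}$ forces each $\Phi(\underline n)$ to be a pair $(\underline n,\phi(\underline n))$ and forces $\Phi(f)$, for $f:\underline n\to\underline m$ in $\N$, to be a morphism $(f,\alpha(f))$ lying over $f$; thus the object and arrow assignments of $\Phi$ are exactly a family of functors $\phi(\underline n):X(n^+)\to C$ together with natural isomorphisms $\alpha(f):\phi(\underline n)\Rightarrow\phi(\underline m)\circ X(f)$. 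Functoriality of $\Phi$ (that $\Phi(id)=id$ and $\Phi(g\circ f)=\Phi(g)\circ\Phi(f)$, computed via the composition law of $\OplaxExp{X}{C}$) then becomes verbatim the two pseudo-cone axioms, so $\L=(\phi,\alpha)$ is a pseudo cone. I would set $I(\Phi):=(\L,\sigma,\tau)$, with $\sigma$ and $\tau$ extracted as in the next step.

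First I would note that, since $pr_{\N}$ is strict symmetric monoidal and $pr_{\N}\circ\Phi=id_{\N}$, applying $pr_{\N}$ to $\lambda_\Phi$ and to $\epsilon_\Phi$ yields the (identity) structure data of $id_{\N}$. Hence each component $\lambda_\Phi(\underline k,\underline l)$ lies over $id_{\underline{k+l}}$ and so has the form $(id,\sigma(\underline k,\underline l))$ with $\sigma(\underline k,\underline l):\phi(\underline{k+l})\Rightarrow\phi(\underline k)\odot\phi(\underline l)$, while $\epsilon_\Phi$ has the form $(id,\tau)$ with $\tau:\phi(\underline 0)\Rightarrow\Delta(\unit{C})$. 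This pins down the remaining bicycle data. The heart of the argument is then to check that the four coherence axioms of a bicycle are precisely the oplax coherence axioms transported across this dictionary: naturality of the transformation $\lambda_\Phi$ is condition \ref{naturality}; the oplax unit axiom \ref{OpL-unit} for $\Phi$ becomes the unit axiom \ref{unit}; the symmetry axiom \ref{OpL-symmetry} becomes \ref{symmetry}; and the associativity axiom \ref{OpL-associativity} becomes \ref{associativity}. To carry these out I would unwind the symmetric monoidal structure of $\OplaxExp{X}{C}$ supplied by \ref{sym-mon-B(X;C)} — its tensor $-\odot-$, its associator, and its symmetry, all induced elementwise from those of $C$ — and confirm that each structural diagram of $\OplaxExp{X}{C}$ restricts to the corresponding diagram of a bicycle.

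For morphisms, an oplax natural transformation $\eta$ between two sections has components $\eta(\underline n)$ lying over $id_{\underline n}$ (again because $pr_{\N}$ collapses them to the identity), so $\eta(\underline n)=(id,F(\underline n))$ with $F(\underline n):\phi(\underline n)\Rightarrow\psi(\underline n)$; ordinary naturality of $\eta$ is the pseudo-cone morphism condition, and the two defining squares of an oplax natural transformation (compatibility with $\lambda$ and with $\epsilon$) are exactly the squares defining a morphism of bicycles. Setting $I(\eta):=F$ yields a functor, and reversing every step above — repackaging a triple $(\L,\sigma,\tau)$ into structure maps $(\Phi,\lambda_\Phi,\epsilon_\Phi)$ — produces a two-sided inverse, whence $I$ is the desired isomorphism $\OplaxSec{X}{C}\cong\Bikes{X}{C}$.

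The main obstacle I anticipate is the associativity matching: axiom \ref{OpL-associativity} is phrased through the associator of $\OplaxExp{X}{C}$, so I must first make that associator explicit — it is the image, under the elementwise construction, of the associator $\alpha^{C}$ of $C$, intertwined with the canonical comparison of iterated splitting functors $X(\delta^{k+l}_k)$ — and then check that it reproduces the large hexagonal diagram defining the natural isomorphism $\alpha_{\phi(\underline k),\phi(\underline l),\phi(\underline m)}$ of \ref{associativity}. A secondary, essentially bookkeeping, point is the unit normalization: the unit object of $\OplaxExp{X}{C}$ is $(\underline 0,\Delta(\unit{C}))$ by \ref{sym-mon-B(X;C)}, and unitality of $\Phi$ (i.e. $\epsilon_\Phi=id$) must be reconciled with the role of $\tau$ in \ref{unit}, so that the unit conventions on the two sides recover one another.
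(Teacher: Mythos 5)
Your proposal is correct and takes essentially the same route as the paper's proof: both set up the dictionary $\phi(\underline{n}) = \Phi(\underline{n})$, $\alpha(f) = \Phi(f)$, $\sigma(\underline{k},\underline{l}) = \lambda_\Phi(\underline{k},\underline{l})$, match the bicycle axioms \ref{unit}, \ref{symmetry}, \ref{associativity}, \ref{naturality} against the oplax coherence data, and conclude by repackaging a bicycle into a section to produce the two-sided inverse. Your write-up is in fact more explicit than the paper's (which simply asserts that the bicycle conditions "follow from the oplax structure on $\Phi$"), in particular in noting that $pr_{\N} \circ \Phi = id_{\N}$ forces every structure cell to lie over an identity of $\N$, and in flagging that unitality $\epsilon_\Phi = id$ normalizes $\tau$ --- a point the paper absorbs silently by calling the resulting cone \emph{normalized}.
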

 \begin{proof}
 We will define a pair of functors $I:\OplaxSec{X}{C} \to \Bikes{X}{C}$ and
 $J:\Bikes{X}{C} \to \OplaxSec{X}{C}$ and show that they are inverses of one another.
 For an
 object $\Phi \in Ob(\OplaxSec{X}{C})$ we define the bicycle $I(\Phi)$ to be the
 pair $(\L_\Phi, \sigma_{\Phi})$ where $\L_\Phi$ is a pair $(\phi, \alpha_\Phi)$ consisting
 of a collection of functors $\phi$ which is composed of a functor $\phi(n):X(n) \to C$, for each
 $n \in Ob(\N)$, which is defined as follows:
 \[
 \phi(n) := \Phi(n).
 \] 
 and a  collection of natural transformations $\alpha_\Phi$ consisting of one natural transformation $\alpha_\Phi(f)$ for each $f \in Mor(\N)$, which is defined as follows:
 \[
 \alpha_\Phi(f) := \Phi(f).
 \]
 Finally $\sigma_\Phi$ is a collection consisting of a natural transformation $\sigma_\Phi(k,l)$, for each pair of objects $(k, l) \in Ob(\N) \times Ob(\N)$ which is defined as follows:
 \[
 \sigma_\Phi(k,l) := \lambda_\Phi(k, l),
 \]
 where $\lambda_\Phi$ is the natural transformation providing the oplax structure
 to the functor $F$. The pair $\L_\phi = (\phi, \alpha_\Phi)$ is a normalized lax cone because $\Phi$ is a functor from $\N$ to $\OplaxExp{X}{C}$. The conditions in the definition of a bicycle, namely \ref{unit},
 \ref{symmetry}, \ref{associativity} follow from the oplax structure on $\Phi$.
 Thus we have defined a bicycle $I(\Phi)$.
 A morphism $F:\Phi \to \Theta$ in $\OplaxSec{X}{C}$ determines a collection
 of natural transformations $C_F$ consisting of a natural transformation $F(n)$
 for each $n \in Ob(\N)$. This collection defines a morphism of bicycles because
 $F$ is an oplax symmetric monoidal functor.
 
 Now we define the functor $J$. Let $\Bike{\Psi}{X}{C}$ be a bicycle from $X$ to $C$
 which is represented by a pair $(\L_\Psi, \sigma_\Psi)$ and whose underlying lax monoidal cone is given by a pair $\L_\Psi = (\psi, \alpha_\Psi)$. We define a oplax symmetric monoidal section of $\OplaxExp{X}{C}$, $\Phi$, as follows:
 \[
 \Phi(n) := \psi(n), \ \ \ \ \ \ \text{and} \ \ \ \ \ \ \Phi(f) := \alpha_\Psi(f).
  \]
  This defines a functor $\Phi$ which is given the oplax symmetric monoidal structure
  by a natural transformation $\lambda_\Phi:\Phi \circ (- \underset{\N} \otimes -) \Rightarrow (- \underset{\OplaxExp{X}{C}} \otimes -) \circ (\Phi \times \Phi)$ which is defines as follows:
  \[
  \lambda_\Phi(k, l) := \sigma_\Psi(k, l).
  \]
 \end{proof}
 
 Along the lines of the symmetric monoidal category $\OplaxExp{X}{C}$,
 we want to define another symmetric monoidal category $\SMExp{X}{C}$ for every
 pair $(X,C) \in Ob(\gop) \times Ob(\PCat)$. The objects of $\SMExp{X}{C}$
 are all pairs $(\vec{n}, \phi(\vec{n}))$, where $\vec{n} \in Ob(\Leins)$
 and $\phi(\vec{n}):\Leins(X)(\vec{n}) \to C$ is a basepoint preserving functor.
 A map from $(\vec{n}, \phi(\vec{n}))$ to $(\vec{m}, \psi(\vec{n}))$
 in $\SMExp{X}{C}$ is a pair $(f, \eta(f))$ where $f:\vec{n} \to \vec{m}$
 is a map in the category
 $\Leins$ and $\eta(f):\phi(\vec{n}) \Rightarrow \psi(\vec{m}) \circ \Leins(X)(f)$ is a
 natural transformation. Let $(g,\beta(g)):(\vec{m}, \psi(\vec{m})) \to (\vec{k}, \alpha(\vec{k}))$
 be another map in $\SMExp{X}{C}$, then we define their composition
 as follows:
 \[
 (g,\beta(g)) \circ (f, \eta(f)) := (g \circ f, \beta(g) \ast \eta(f)),
 \]
 where $\beta(g) \ast \eta(f)$ is the composite natural transformation
 $ (\beta(g) \circ id_{X(f)}) \cdot \eta(f)$ in which $\phi(g) \circ id_{X(f)}$
 is the \emph{horizontal} composition of
 the natural transformations $\beta(g)$ and $id_{X(f)}$ and
 $(\beta(g) \circ X(f)) \cdot \eta(f)$ is the \emph{vertical} composition
 of the two natural transformations. Using the \emph{interchange law}
 and the associativity of compositions, an argument similar to \ref{assoc-comp-B}
 can be written which proves that the composition defined above is associative.
 The category $\SMExp{X}{C}$ is a
 symmetric monoidal category with the symmetric monoidal structure
 being an extension of the symmetric monoidal structure of $\OplaxExp{X}{C}$.
 Let $(\vec{n}, \phi(\vec{n}))$ and $(\vec{m}, \psi(\vec{m}))$
 be two objects of $\SMExp{X}{C}$, we define
 \[
 (\vec{n}, \phi(\vec{n})) \otimes (\vec{m}, \psi(\vec{m})) := (\vec{n} \Box \vec{m}, \phi(\vec{n}) \boxdot \psi(\vec{m})),
 \]
 where the second component on the right is defined as
 the following composite
 \begin{equation}
 \label{boxdot-def}
 \Leins(X)(\vec{n} \Box \vec{m}) \overset{\lambda_{\Leins(X)}(n, m)} \to \Leins(X)(\vec{n}) \times \Leins(X)(\vec{m}) \overset{\phi(\vec{n}) \times \psi(\vec{m})} \to C \times C \overset{-\underset{C}\otimes-} \to C,
 \end{equation}
 where $\lambda_{\Leins(X)}(n, m)$ is the map given by the pseudo-functor structure
 of $\Leins(X)$.
 Let $(f,\eta(f)):(\vec{n}, \phi(\vec{n})) \to (\vec{k}, \delta(k))$ and $(g,\beta(g)):(\vec{m}, \psi(\vec{m})) \to (\vec{l}, \alpha(\vec{l}))$ be two maps in $\SMExp{X}{C}$, then we define
 \[
 (f,\eta(f)) \otimes (g,\beta(g)) := (f \Box g, \eta \boxdot \beta(f \Box g)),
 \]
 where $f \Box g:n \Box m \to k \Box l$ is the map determined by the symmetric
 monoidal structure on $\Leins$ and the natural transformation
 $\eta \boxdot \beta(f \Box g)$ is defined as follows
 \[
 (\eta \boxdot \beta(f \Box g)) := (id_{-\underset{C}\otimes-}) \circ (\eta(f) \times \beta(g)),
 \]
 where $\eta(f) \times \beta(g):\phi(\vec{n}) \times \psi(\vec{m}) \Rightarrow \delta(\vec{k}) \times \alpha(\vec{l})$ is the product of $\eta(f)$ and $\beta(f)$.  An argument similar to
 Proposition \ref{sym-mon-B(X;C)} shows that $\SMExp{X}{C}$ is a symmetric monoidal category
 which is permutative if $C$ is permutative.
 We will refer to the category $\SMSec{X}{C}$ as the \emph{symmetric monoidal completion of the category of elements of the exponential from $X$ to $C$}.
 
 \begin{prop}
 \label{cat-sections-bifunctor}
 The symmetric monoidal completion of the category of elements of the exponential described above
 defines a bifunctor
 \begin{equation}
 \SMExp{-}{-}:\gCAT^{op} \times \PCat \to \PCat.
 \end{equation}
 \end{prop}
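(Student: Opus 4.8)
The plan is to follow the template already used for the oplax exponential in Proposition \ref{cat-oplax-sections-bifunctor}, upgrading each step so that it respects the strict symmetric monoidal structure of $\SMExp{X}{C}$. On objects the assignment $(X,C)\mapsto\SMExp{X}{C}$ is already fixed, and each such value is a permutative category by the argument of Proposition \ref{sym-mon-B(X;C)} (using that $C$ is permutative); what remains is to define the action on morphisms and to verify that it lands in $\PCat$.

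First I would fix the action on morphisms. Given a map $\alpha:X \to X'$ in $\gCAT$ and a strict symmetric monoidal functor $h:C \to D$ in $\PCat$, the functoriality of $\Leins(-)$ recorded in \eqref{SM-Ext-OL} produces a symmetric monoidal natural transformation $\Leins(\alpha):\Leins(X) \Rightarrow \Leins(X')$ with components $\Leins(\alpha)(\vec{n}):\Leins(X)(\vec{n}) \to \Leins(X')(\vec{n})$. For an object $(\vec{n},\phi(\vec{n}))$ of $\SMExp{X'}{C}$, where $\phi(\vec{n}):\Leins(X')(\vec{n})\to C$, I would set
\[
\SMExp{\alpha}{h}\big((\vec{n}, \phi(\vec{n}))\big) := \big(\vec{n},\, h \circ \phi(\vec{n}) \circ \Leins(\alpha)(\vec{n})\big),
\]
an object of $\SMExp{X}{D}$, and on a morphism $(f, \eta(f))$ I would whisker $\eta(f)$ on the right by $\Leins(\alpha)(\vec{n})$ and on the left by $h$. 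The naturality square of $\Leins(\alpha)$, namely $\Leins(X')(f)\circ\Leins(\alpha)(\vec{n}) = \Leins(\alpha)(\vec{m})\circ\Leins(X)(f)$, is exactly what is needed to check that the whiskered $2$-cell has the correct source and target to be a morphism of $\SMExp{X}{D}$.

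The crucial step is to check that $\SMExp{\alpha}{h}$ is \emph{strict} symmetric monoidal, i.e. that it preserves the tensor product $\boxdot$ on the nose. Unwinding \eqref{boxdot-def}, preservation under $h$ reduces to the identity $h\circ(-\underset{C}\otimes-) = (-\underset{D}\otimes-)\circ(h\times h)$, which holds since $h$ is strict symmetric monoidal; preservation under precomposition with $\Leins(\alpha)$ reduces to
\[
\big(\Leins(\alpha)(\vec{n})\times\Leins(\alpha)(\vec{m})\big)\circ\lambda_{\Leins(X)}(n,m) = \lambda_{\Leins(X')}(n,m)\circ\Leins(\alpha)(\vec{n}\Box\vec{m}),
\]
which is precisely the compatibility of $\Leins(\alpha)$ with the structure maps $\lambda$, that is, the assertion that $\Leins(\alpha)$ is a \emph{symmetric monoidal} natural transformation in $\SMHom{\Leins}{\Cat}$. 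Preservation of the symmetry isomorphism follows in the same way, and preservation of the unit object follows from $h(\unit{C})=\unit{D}$ together with $\Leins(\alpha)(())=\mathrm{id}_{\ast}$. This is the main obstacle, though it is not a deep one: the entire difficulty is funneled into the fact that morphisms of $\SMHom{\Leins}{\Cat}$ are symmetric monoidal natural transformations rather than arbitrary ones.

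Finally I would verify functoriality of the two-variable assignment. Identities are immediate, and for composition one uses the interchange law for $2$-cells exactly as in the proof of Proposition \ref{assoc-comp-B} to see that whiskering by a composite equals the composite of whiskerings, separately in each variable; since the left action (by $h$, postcomposition) and the right action (by $\Leins(\alpha)$, precomposition) commute with one another by associativity of functor composition, the assignment $(\alpha, h)\mapsto\SMExp{\alpha}{h}$ is functorial on $\gCATop\times\PCat$. Combined with the fact that each $\SMExp{X}{C}$ is permutative, this exhibits $\SMExp{-}{-}$ as a bifunctor into $\PCat$.
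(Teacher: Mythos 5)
Your proposal is correct, and it is essentially the argument the paper intends: Proposition \ref{cat-sections-bifunctor} is stated without proof, with the surrounding text only indicating that the verification parallels Proposition \ref{sym-mon-B(X;C)} and the whiskering computations of Proposition \ref{assoc-comp-B}, which is exactly the route you take. You correctly identify the one non-trivial point — that strictness of $\SMExp{\alpha}{h}$ on the $\boxdot$-product rests on $h$ being strict symmetric monoidal and on $\Leins(\alpha)$ being a \emph{monoidal} natural transformation (i.e.\ compatible with the structure maps $\lambda_{\Leins(X)}$), which is guaranteed by the functor $\Leins(-)$ of \eqref{SM-Ext-OL} landing in $\SMHom{\Leins}{\Cat}$ — so the proof is complete as written.
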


 The category $\SMExp{X}{C}$ has an associated projection functor
 $pr_\Leins: \SMExp{X}{C} \to \Leins$ which to projects the first coordinate.
 \begin{df}
 \label{SM-section-def}
 A \emph{strict symmetric monoidal section of $\SMExp{X}{C}$}
 is a strict symmetric monoidal functor  $\Phi:\N \to \SMExp{X}{C}$ such that
 $pr_A \circ \Phi = id_{\Leins}$. A \emph{morphism of strict symmetric monoidal sections of $\SMExp{X}{C}$} is a symmetric monoidal natural transformation between two strict symmetric monoidal section of $\OplaxExp{X}{C}$.
 \end{df} 

 We will denote the (pointed) category of all strict symmetric monoidal sections of
 $\SMExp{X}{C}$ by $\SMSec{X}{C}$.
 There is an obvious inclusion
 functor $\I:\OplaxExp{X}{C} \hookrightarrow \SMExp{X}{C}$ which is defined on objects as follows:
 \[
  (\underline{n}, \phi) \mapsto ((n), \phi((n))).
 \]
 \begin{prop}
 \label{oplax-inclusion}
 The inclusion functor $\I:\OplaxExp{X}{C} \hookrightarrow \SMExp{X}{C}$
 is a unital oplax symmetric monoidal functor.
 \end{prop}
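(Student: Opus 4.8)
The plan is to produce the three pieces of oplax data for $\I$ — its action on morphisms, the binary comparison $\lambda_{\I}$, and the unit comparison $\epsilon_{\I}$ — and then to verify the three axioms \ref{OpL-unit}, \ref{OpL-symmetry}, \ref{OpL-associativity}. First I would record how $\I$ acts on arrows: a morphism $(f,\eta)\colon(\underline n,\phi)\to(\underline m,\psi)$ of $\OplaxExp{X}{C}$ is sent to $(i(f),\eta)$, where $i(f)\colon(n)\to(m)$ is the image in $\Leins$ of $f\colon\underline n\to\underline m$ under the oplax inclusion. Because $\Leins(X)$ restricts along $i$ to $X$ — so that $\Leins(X)((n))=X(n^+)$ and $\Leins(X)(i(f))=X(f)$ — the natural isomorphism $\eta\colon\phi\Rightarrow\psi\circ X(f)$ is literally a morphism in $\SMExp{X}{C}$, and functoriality of $\I$ is immediate from functoriality of $i$ together with the composition law of $\SMExp{X}{C}$.

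The heart of the construction is $\lambda_{\I}$. Its component at $((\underline n,\phi),(\underline m,\psi))$ must be a morphism in $\SMExp{X}{C}$ from $\I((\underline n,\phi)\otimes(\underline m,\psi))=((n+m),\phi\odot\psi)$ to $\I(\underline n,\phi)\otimes\I(\underline m,\psi)=((n,m),\phi\boxdot\psi)$, the two objects differing because $(n+m)$ is the length-one sequence while $(n,m)=(n)\Box(m)$ has length two. I would take this component to be the pair $\big((m_2,\mathrm{id}_{\underline{n+m}}),\,\mathrm{id}\big)$, where $(m_2,\mathrm{id})\colon(n+m)\to(n,m)$ is the \emph{departition} arrow of $\Leins$ determined by the multiplication $m_2\colon\underline 2\to\underline 1$. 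The crucial point making the $2$-cell an identity is the equality $\Leins(X)(m_2,\mathrm{id})=(X(\partition{n+m}{n}),X(\partition{n+m}{m}))$, which is exactly the partition functor appearing in $\phi\odot\psi$; combined with the fact that $\lambda_{\Leins(X)}((n),(m))$ is the identity on $X(n^+)\times X(m^+)$, this gives $(\phi\boxdot\psi)\circ\Leins(X)(m_2,\mathrm{id})=\phi\odot\psi$ on the nose, so that the identity natural transformation is a legitimate structure $2$-cell. The unit comparison $\epsilon_{\I}$ is the corresponding arrow induced by the canonical $\Leins$-map $(0)\to()$ on unit objects, and verifying the normalization axiom \ref{OpL-unit} is then a direct check that reduces to the right and left unit laws of the permutative category $C$.

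It remains to check naturality of $\lambda_{\I}$ and the two remaining coherence axioms, and here I would exploit that all of this data is pulled back from the oplax symmetric monoidal inclusion $i\colon\N\hookrightarrow\Leins$ along the symmetric monoidal functor $\Leins(X)$. Naturality of $\lambda_{\I}$ reduces, via the interchange law and functoriality of $\Leins(X)$, to the single square in $\Leins$ relating the departition maps to an arbitrary $(f+g)$, while \ref{OpL-symmetry} reduces to the compatibility of the departition arrow with $\gamma_{\N}$ (using the identity $\partition{n+m}{n}\circ\tau=\partition{m+n}{n}$ already exploited in Lemma \ref{oplax-rest-gcat}). The main obstacle will be the associativity axiom \ref{OpL-associativity}: one must show that the two iterated departition arrows $(n+m+p)\to(n,m,p)$ agree after transport by $\Leins(X)$, i.e. that the large hexagon built from the two $\lambda_{\I}$-comparisons, the associator of $\boxdot$, and the canonical rearrangement isomorphisms~$\mathit{can}$ in the definition of $\Leins(X)$ commutes. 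Since every $2$-cell in sight is forced to be an identity by the on-the-nose equality above, this ultimately amounts to a bookkeeping verification that the two factorizations of the multiplication $\underline 3\to\underline 1$ through $\underline 2$ induce the same partition of $X((n+m+p)^+)$; the care needed is entirely in tracking the~$\mathit{can}$ isomorphisms, which is where I expect the only real work to lie.
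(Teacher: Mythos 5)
Your proposal is correct and follows essentially the same route as the paper's proof: your $\lambda_{\I}$ is exactly the paper's partition map $p_{k,l}=(m_2,\mathrm{id})\colon(k+l)\to(k,l)$ in $\Leins$ equipped with the identity $2$-cell, justified by the same on-the-nose factorization $(\phi\boxdot\psi)\circ\Leins(X)(p_{k,l})=\phi\odot\psi$, and the symmetry (and sketched associativity) verifications likewise reduce to compatibility of the partition arrows with $\gamma_{\N}$ and the canonical coherence isomorphisms, just as in the paper. The one divergence is the unit: the paper asserts $\I(0,\phi(0))=((),\phi(()))$, i.e. strict preservation of the unit (which is precisely what makes $\I$ \emph{unital}, $\epsilon_{\I}=\mathrm{id}$, in the sense of Notation \ref{unital-SM-Func}), whereas you interpose the canonical $\Leins$-map $(0)\to()$ as a non-identity unit comparison — that still gives an oplax structure satisfying \ref{OpL-unit}, but to obtain \emph{unitality} literally you should, as the paper does, take the unit to be preserved on the nose.
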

 \begin{proof}
 Let $(k, \phi(k))$ and $(l, \psi(l))$ be two objects in the category $\OplaxExp{X}{C}$.
 Then
  \[
  \I((k + l, \phi(k) \odot \psi(l))) = ((k+l), \phi(k) \odot \psi(l)).
 \]
 %where $\phi((k)) \odot \psi((l)))$ is the same functor as $\phi(k) \odot \psi(l)$.
 There is a partition map $p_{k,l}:(k+l) \to (k, l)$ in $\Leins$ which makes the following
 diagram commutative:
 \begin{equation*}
 \xymatrix{
 \Leins(X)((k+l)) \ar[rr]^{\phi(k) \odot \psi(l)} \ar[rd]_{\Leins(X)(p_{k,l})} && C \\
 & \Leins(X)((k, l)) \ar[ru]_{\phi((k)) \boxdot \psi((l))}
 }
 \end{equation*}
 This diagram implies that the partition map $p_{k,l}$ defines a map
 \[
 (p_{k,l}, id):((k+l), \phi((k)) \odot \psi((l)))) \to ((k,l), \phi((k)) \boxdot \psi((l))))
 \]
 in $\SMExp{X}{C}$. We denote this map by $\lambda_\I((k, \phi(k)),(l, \psi(l)))$.
 We observe that $\I(0, \phi(0)) = ((),\phi(()))$. Thus $\I$ strictly preserves the unit.
 Now we need to check the unit, symmetry and associativity conditions, we begin
 by checking the symmetry condition. We observe that the following diagram commutes
 \begin{equation*}
\label{extn-bicycle-A}
  \xymatrix{
 (k+l, \phi(k) \odot \psi(l)) \ar[d]_{\lambda_\I((k, \phi(k)),(l, \psi(l)))} \ar[r]^{\I(\gamma)}  & (l+k, \phi(l) \odot \psi(k)) \ar[d]^{\lambda_\I((l, \phi(l)),(k, \psi(k)))}\\
 ((k,l), \phi((k)) \boxdot \psi((l))) \ar[r]_{\gamma} & ((l,k), \phi((l)) \boxdot \psi((k)))
 }
 \end{equation*}
 because $\gamma^{\OplaxExp{X}{C}}_{(k, \phi(k)), (l, \psi(l))} = \gamma^{\SMExp{X}{C}}_{((k), \phi((k))), ((l), \psi((l)))} \circ id_{\Leins(X)(p_{k,l})}$. This equality follows from the following commutative diagram:
 \begin{equation*}
 \xymatrix@C=20mm{
 \Leins(X)((k+l)) \ar[d]_{\Leins(X)(\gamma^{\N}_{k,l})}  \ar[r]^{\Leins(X)(p_{k,l})} &\Leins(X)((k, l))
 \ar[rr]^{\phi((k)) \boxdot \psi((l))} \ar[d]_{\Leins(X)(\gamma^{\Leins}_{(k),(l)})} & \ar@{=>}[d]^{G}& C \ar@{=}[d] \\
 \Leins(X)((l+k)) \ar[r]_{\Leins(X)(p_{l,k})} & \Leins(X)((l,k)) \ar[rr]_{ \ \ \ \ \phi((l)) \boxdot \psi((k))} &&C
 }
 \end{equation*}
 where $(\gamma^{\Leins}_{(k), (l)},G) = \gamma^{\SMExp{X}{C}}_{((k), \phi((k))), ((l), \psi((l)))}$. A similar argument shows that the pair $(\I, \lambda_\I)$ satisfies the
 associativity condition \ref{OpL-associativity}. Thus we have proved that $\I$ is a unital
 oplax symmetric monoidal functor.
 \end{proof}

 \begin{thm}
  \label{oplax-SM-cones-isom}
  The category $\OplaxSec{X}{C}$ is isomorphic to the category
  $\SMSec{X}{C}$ for every pair $(X, C) \in Ob(\gCAT) \times Ob(\PCat)$.
 \end{thm}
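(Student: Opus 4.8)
The plan is to assemble the isomorphism from two ingredients that are already in place: the universal property of the Leinster category $\Leins$ (section \ref{real-funct}) and the oplax inclusion $\I\colon \OplaxExp{X}{C} \hookrightarrow \SMExp{X}{C}$ of Proposition \ref{oplax-inclusion}. Since $C$ is permutative, $\SMExp{X}{C}$ is permutative, so the universal property furnishes an isomorphism of categories between unital oplax symmetric monoidal functors $\N \to \SMExp{X}{C}$ and strict symmetric monoidal functors $\Leins \to \SMExp{X}{C}$, implemented by extension $F \mapsto \Leins(F)$ and restriction $G \mapsto G \circ i$ along $i\colon \N \to \Leins$. I would then show that $\OplaxSec{X}{C}$ and $\SMSec{X}{C}$ sit inside the two sides of this isomorphism as the subcategories cut out by their respective section conditions, and that the isomorphism matches them.

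First I would record the two facts about $\I$ that drive everything: $\I$ is a full embedding whose image is the full subcategory of $\SMExp{X}{C}$ on the length-one objects $((n), -)$, and $pr_{\Leins} \circ \I = i \circ pr_{\N}$. The first holds because a morphism $(n) \to (m)$ in $\Leins$ between length-one objects is exactly a morphism $\underline n \to \underline m$ in $\N$, and $\Leins(X)((n)) = X(n^+)$. Composition with $\I$ then gives $\Phi \mapsto \I \circ \Phi$, landing in the subcategory $\mathcal{S}_i$ of those oplax $F$ with $pr_{\Leins} \circ F = i$, since $pr_{\Leins}\circ \I \circ \Phi = i \circ pr_{\N} \circ \Phi = i$. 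Conversely, any $F \in \mathcal{S}_i$ has each $F(\underline n)$ and $F(f)$ lying over $i$, hence in the image of the full embedding $\I$, so $F = \I \circ \Phi$ for a unique underlying section functor $\Phi$; the oplax structure of $F$ lies over the partition maps $p_{k,l}\colon (k+l)\to(k,l)$ (because $pr_{\Leins}$ is strict and $pr_{\Leins}\circ F = i$), and the identity $(\phi \boxdot \psi)\circ \Leins(X)(p_{k,l}) = \phi \odot \psi$ established in the proof of Proposition \ref{oplax-inclusion} lets me read off from it a natural transformation $\phi(\underline{k+l}) \Rightarrow \phi(\underline k) \odot \phi(\underline l)$, i.e. exactly the oplax structure map of $\Phi$ in $\OplaxExp{X}{C}$, over $id_{\underline{k+l}}$. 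The unit, symmetry, and associativity axioms for $\Phi$ transfer from those for $F$ because $\I$ and $i$ carry matching oplax structures, so $\Phi \mapsto \I \circ \Phi$ is an isomorphism $\OplaxSec{X}{C} \cong \mathcal{S}_i$.

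Finally I would match $\mathcal{S}_i$ with $\SMSec{X}{C}$ under the universal-property isomorphism: if $F \in \mathcal{S}_i$ then $G = \Leins(F)$ satisfies $(pr_{\Leins}\circ G)\circ i = pr_{\Leins}\circ F = i$, whence by uniqueness of strict extensions $pr_{\Leins}\circ G = \Leins(i) = id_{\Leins}$, so $G$ is a strict section; conversely a strict section $G$ gives $F = G\circ i$ with $pr_{\Leins}\circ F = (pr_{\Leins}\circ G)\circ i = i$. Extension and restriction being mutually inverse yields $\mathcal{S}_i \cong \SMSec{X}{C}$, and composing the two isomorphisms proves the theorem. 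The main obstacle is the middle step: the oplax structure maps of an object $F$ of $\mathcal{S}_i$ land in length-two objects outside the image of $\I$, so descending them to a genuine oplax symmetric monoidal section $\Phi$ of $\OplaxExp{X}{C}$ is not automatic from fullness of $\I$. The descent is mediated precisely by the partition-map description of $\lambda_\I$ together with the compatibility $(\phi\boxdot\psi)\circ\Leins(X)(p_{k,l}) = \phi\odot\psi$, so the careful bookkeeping of these structure maps, their naturality, and the coherence conditions is where the real work lies.
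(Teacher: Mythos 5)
Your proposal is correct and takes essentially the same route as the paper's proof: compose an oplax section with the inclusion $\I:\OplaxExp{X}{C} \hookrightarrow \SMExp{X}{C}$, extend uniquely to a strict symmetric monoidal functor along $i:\N \to \Leins$ via the universal property, and use restriction along $i$ as the inverse. If anything, you make explicit two points the paper's proof glosses over, namely that the strict extension really satisfies the section condition (your uniqueness argument applied to $pr_{\Leins} \circ \Leins(F)$) and that the restriction of a strict section descends through $\I$ to an oplax section of $\OplaxExp{X}{C}$ via the partition-map identity $(\phi \boxdot \psi) \circ \Leins(X)(p_{k,l}) = \phi \odot \psi$.
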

 \begin{proof}
  We will define a functor $E:\OplaxSec{X}{C} \to \SMSec{X}{C}$ which is the
  inverse of the functor $i_\N^\ast:\SMSec{X}{C} \to \OplaxSec{X}{C}$. Let $\Phi$
  be a oplax symmetric monoidal section of $\OplaxSec{X}{C}$, then composition with $\I$ gives us a unital oplax symmetric monoidal functor $\I \circ \Phi: \N \to \SMSec{X}{C}$. Now proposition \ref{Ext-gCat} and the isomorphism of categories $\OLSMHom{\N}{\SMSec{X}{C}} \cong [\gop, \SMSec{X}{C}]$ tells us that $\I \circ \Phi$ uniquely extends to a strict symmetric monoidal functor $\Leins(\I \circ \Phi)$ along the inclusion map $i:\N \to \Leins$. Moreover this functor is a strict symmetric monoidal section of $\SMExp{X}{C}$. We define
  \begin{equation*}
 E(\Phi) :=  \Leins(\I \circ \Phi).
 \end{equation*}
 The uniqueness of the extension implies that the object function of the functor $E$ is a bijection. A morphism $F:\Phi \to \Psi$ in 
 $\OplaxSec{X}{C}$ can be seen as an oplax symmetric monoidal functor $F: \N \to [I;\OplaxExp{X}{C}]$, where $I$ is the category having two objects $0$ and $1$ and exactly one non-identity morphism $0 \to 1$, such that the following two diagrams commute
 \begin{equation*}
 \label{Defn-OLSec-Mor}
 \xymatrix{
 \N \ar[r]^{F \ \ \ \ } \ar[rd]_{\Phi} & [I;\OplaxExp{X}{C}] \ar[d]^{[i_0; \OplaxExp{X}{C}]}  & & \N \ar[r]^{F \ \ \ \ } \ar[rd]_{\Phi} & [I;\OplaxExp{X}{C}] \ar[d]^{[i_1; \OplaxExp{X}{C}]}\\
 & \OplaxExp{X}{C} & & & \OplaxExp{X}{C}
 }
 \end{equation*}
 where $i_0:0 \to I$ and $i_1:1 \to I$ are the inclusion functors.
 We recall that the codomain functor category inherits a strict symmetric monoidal (permutative) structure from $\OplaxExp{X}{C}$. We can compose this functor with the oplax symmetric monoidal functor $[I, \I]$ to obtain a composite functor
 \[
 \N \overset{F} \to [I;\OplaxExp{X}{C}] \overset{[I;\I]} \to [I;\SMExp{X}{C}]
 \]
 This composite oplax symmetric monoidal functor extends uniquely to a strict symmetric monoidal functor
 \[
 \Leins([I;\I] \circ F): \Leins \to [I;\SMExp{X}{C}]
 \]
 along the inclusion map $i:\N \to \Leins$. This extended strict symmetric monoidal functor can be seen as a morphism in the category $\SMSec{X}{C}$. We define
 \[
 E(F) :=  \Leins([I;\I] \circ F).
 \]
 One can check that $E(F \circ G) = E(F) \circ E(G)$.
 The uniqueness of the extension of $[I;\I] \circ F$ to
 $\Leins([I;\I] \circ F)$ implies that the functor $E$ is fully faithful.
 Thus we have proved that the functor $E$ is an isomorphism of categories.
 \end{proof}

 \section[The adjunction $\PNat \dashv \Kbar$]{The adjunction $\PNat \dashv \Kbar$}
\label{Sec:AdjunctionPR}
\begin{sloppypar}
In this Appendix we will establish an adjunction $\PNat \dashv \Kbar$, where
$\PNat:\gCAT \to \PCat$ is the realization functor defined in section \ref{thick-Seg-nerve} and
$\Kbar:\PCat \to \gCAT$ is the functor which is also defined in section \ref{thick-Seg-nerve}. We will
establish the desired adjunction in two steps. In the first step we show
that the mapping set $\PCat(\PNat(X), C)$ is isomorphic to the
set of all strict symmetric monoidal sections from $\Leins$
to $\OplaxExp{\Leins(X)}{C}$, namely $Ob(\SMSec{X}{C})$.
 Throughout this section $X$ will denote a $\gCat$ and
$C$ will denote a permutative category. In the second step we show that
the Hom set $\gCAT(X, \Kbar(C))$ is isomorphic to the set of oplax symmetric monoidal sections $Ob(\OplaxSec{X}{C})$.
We begin by constructing a strict symmetric monoidal functor
 $i:\Leins \to \OplaxExp{\Leins(X)}{\PNat(X)}$. For each $\vec{n} \in Ob(\Leins)$ we will define a functor
$i(\vec{n}):\Leins(X)(n^+) \to \PNat(X)$. For an $\vec{x} \in Ob(\Leins(X)(n^+))$, we define
\[
i(\vec{n})(\vec{x}) := (\vec{n},\vec{x}).
\]
\end{sloppypar}
For a morphism $a:\vec{x} \to \vec{y}$ in $\Leins(X)(n^+)$, we define
\[
i(a) := (id_{\vec{n}}, a),
\]
\begin{sloppypar}
where $(id_{\vec{n}}, a):(\vec{n},\vec{x}) \to (\vec{n},\vec{y})$ is a morphism
in $\Leins(X)$. For each morphism $(h, \phi):\vec{n} \to \vec{m}$ in $\Leins$
we will define a natural transformation $i((h, \phi):i(\vec{n}) \Rightarrow i(\vec{m}) \circ \Leins(X)((h, \phi))$.
Let $\vec{x} \in Ob(\Leins(X)(n^+))$, we define
\end{sloppypar}
\[
i((h, \phi)) (\vec{x}) := ((h, \phi), id_{\Leins(X)((h, \phi))(\vec{x})}),
\]
where $((h, \phi), id_{\Leins(X)((h, \phi))(\vec{x})}):(\vec{n},\vec{x}) \to (\vec{m},\Leins(X)((h, \phi)(\vec{x}))$ is a morphism
in $\Leins(X)$. It is easy to see that for any morphism $(h, \phi):\vec{x} \to \vec{y}$
in $\Leins(X)(n^+)$ the following diagram commutes
\[
  \xymatrix@C=16mm{
 (\vec{n},\vec{x})  \ar[r]^{i((h, \phi))(\vec{x}) \ \ \ \ \ \ \ \ \ \ \ } \ar[d]_{(id_{\vec{n}}, a)} & (\vec{m},\Leins(X)((h, \phi))(\vec{x})) \ar[d]^{(id_{\vec{m}}, \Leins(X)((h, \phi))(a))} \\
 (\vec{n},\vec{y}) \ar[r]_{i((h, \phi))(\vec{y}) \ \ \ \ \ \ \ \ \ \ \ } & (\vec{m},\Leins(X)((h, \phi))(\vec{y}))
 }
\]
in the category $\Leins(X)$.
Thus we have defined a natural transformation $i((h, \phi))$.
%For every pair of objects $n,m$ in $\N$ we want to define a
%natural transformation
%\[
% i_{n,m}:i(n+m) \Rightarrow i(n) \circ X(\delta^{n+m}_n) \otimes i(m) \circ X(\delta^{n+m}_m).
% \]
% We recall that there is a (partitioning) map
% \[
% p_{n,m}:(n+m) \to (n,m) 
% \]
% in the category $\A$ whose totatl map $|p_{n,m}|$
% is the identity. For an object $x \in X((n+m)^+)$ we
% define
% \[
% i_{n,m}(x) =  (p_{n,m}, id_{(i(n) \circ X(\delta^{n+m}_n)(x),
% i(m) \circ X(\delta^{n+m}_m)(x))}).
% \]
% It is easy to check that this defines a natural transformation.
\begin{prop}
\label{univ-A-sec-defn}
 The collection of functors $\lbrace i(\vec{n}) \rbrace_{\vec{n} \in \N}$
 glue together to define a strict symmetric monoidal section
 of $\OplaxExp{\Leins(X)}{\PNat(X)}$.
\end{prop}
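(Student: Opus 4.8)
The plan is to verify directly that the functors $i(\vec{n})$ together with the natural transformations $i((h,\phi))$ assemble into a single functor $i:\Leins \to \SMExp{X}{\PNat(X)}$ (the category denoted $\OplaxExp{\Leins(X)}{\PNat(X)}$ in the statement) and that this functor is a strict symmetric monoidal section in the sense of Definition \ref{SM-section-def}. Four things must be checked: (i) $i$ respects identities and composition, so that it is a functor; (ii) $pr_{\Leins}\circ i = id_{\Leins}$, the section condition; (iii) $i$ strictly preserves the tensor product and the unit; and (iv) $i$ is compatible with the symmetry isomorphisms.

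For functoriality (i), the key observation is that every morphism $i((h,\phi))(\vec{x}) = ((h,\phi), id)$ produced by $i$ has an identity second coordinate. Using the composition law of the category of elements $\Lbb(X)$ recalled in Remark \ref{simp-des-cat-el}, the composite $((h_2,\phi_2),id)\circ((h_1,\phi_1),id)$ equals $((h_2,\phi_2)\circ(h_1,\phi_1),\ id\circ \Leins(X)((h_2,\phi_2))(id))$; since $\Leins(X)((h_2,\phi_2))$ preserves identities, the second coordinate is again $id$, so this agrees with the component of $i((h_2,\phi_2)\circ(h_1,\phi_1))$, and identities are preserved for the same reason. Hence the collection $\{i((h,\phi))\}$ glues into a well-defined functor. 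The section condition (ii) is then immediate, because $pr_{\Leins}$ forgets the second coordinate and by construction $i$ acts as $(h,\phi)$ on first coordinates.

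The main work, and the main obstacle, is (iii). On objects one must show $i(\vec{n})\boxdot i(\vec{m}) = i(\vec{n}\Box\vec{m})$ as functors $\Leins(X)(\vec{n}\Box\vec{m})\to\PNat(X)$, where $\boxdot$ is the operation \eqref{boxdot-def} defining the tensor of $\SMExp{X}{\PNat(X)}$. For $\vec{z}\in\Leins(X)(\vec{n}\Box\vec{m})$, writing $(\vec{x},\vec{y}) = \lambda_{\Leins(X)}(\vec{n},\vec{m})(\vec{z})$, the left-hand side evaluates to $(\vec{n},\vec{x})\underset{\PNat(X)}\otimes(\vec{m},\vec{y})$. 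Since the localization projection $p:\Lbb(X)\to\PNat(X)$ is strict symmetric monoidal and identity on objects, this tensor is computed by the formula \eqref{tens-prod-LbbX-ob}, giving $(\vec{n}\Box\vec{m},\ \inv{\lambda_{\Leins(X)}(\vec{n},\vec{m})}(\vec{x},\vec{y}))$; because $\lambda_{\Leins(X)}(\vec{n},\vec{m})$ is an isomorphism, $\inv{\lambda_{\Leins(X)}(\vec{n},\vec{m})}(\vec{x},\vec{y}) = \vec{z}$, so the left-hand side equals $(\vec{n}\Box\vec{m},\vec{z}) = i(\vec{n}\Box\vec{m})(\vec{z})$. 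The analogous identity on morphisms uses \eqref{tens-prod-PX-mor} together with the definition of $\boxdot$ on natural transformations, and once more it is the cancellation of $\lambda_{\Leins(X)}$ against its inverse that makes the two sides coincide — this is precisely the relation built into the permutative structure on $\Lbb(X)$ in Theorem \ref{perm-cat-elm}. Strict preservation of the unit is easier: $i$ sends the empty sequence $()\in\Leins$, for which $\Leins(X)(()) = \ast$, to the unit object $((),\ast)$ of $\Lbb(X)$, which is the unit of $\SMExp{X}{\PNat(X)}$.

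Finally, for (iv), both $\Leins$ and $\Lbb(X)$ are permutative (Proposition \ref{LbbX-perm}), and the symmetry of $\SMExp{X}{\PNat(X)}$ is built from the symmetry of $\Lbb(X)$ and the structure maps of $\Leins(X)$ exactly as in Proposition \ref{sym-mon-B(X;C)}. Because the morphisms in the image of $i$ have identity second coordinates, the required equality $i(\gamma^{\Leins}_{\vec{n},\vec{m}}) = \gamma_{i(\vec{n}),i(\vec{m})}$ reduces to the commuting square relating the symmetry of $\Lbb(X)$ to $\gamma^{\Leins}$ and $\lambda_{\Leins(X)}$, which holds since $i$ is the universal section of the Grothendieck construction and $p$ is strict symmetric monoidal. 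Combining (i)–(iv) shows that $i$ is a strict symmetric monoidal section of $\OplaxExp{\Leins(X)}{\PNat(X)}$, which is the assertion of the proposition.
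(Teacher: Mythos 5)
Your proof is correct and takes essentially the same route as the paper, whose own proof simply asserts the two key facts you verify in detail: the section condition $pr_{\Leins}\circ i = id_{\Leins}$ and the strict monoidality $i(\vec{n}\, \Box\, \vec{m}) = i(\vec{n}) \boxdot i(\vec{m})$. Your extra checks (functoriality via the identity second coordinates of the components $i((h,\phi))(\vec{x})$, cancellation of $\lambda_{\Leins(X)}$ against its inverse in the tensor formula of Theorem \ref{perm-cat-elm}, the unit, and the symmetry) just supply details the paper leaves implicit.
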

 \begin{proof}
 Clearly $pr_\Leins \circ i = id_\Leins$. Further $i(\vec{n} \Box \vec{m})$ =
 $i(\vec{n}) \boxdot i(\vec{m})$.
\end{proof}

A strict symmetric monoidal section of $\OplaxExp{\Leins(X)}{\PNat(X)}$ , $\phi: \Leins \to \OplaxExp{\Leins(X)}{\PNat(X)}$, and a strict monoidal functor $\overline{\phi}:\PNat(X) \to C$ determine another
strict symmetric monoidal section of $\OplaxExp{\Leins(X)}{C}$, namely $\OplaxExp{\Leins(X)}{\overline{\phi}}\circ \phi$.
We want to show that $i$ is a \emph{universal strict symmetric monoidal section} \emph{i.e.} for any strict symmetric monoidal section of
$\phi:\Leins \to \OplaxExp{\Leins(X)}{C}$, there exists a
unique strict symmetric monoidal functor $\overline{\phi}:\PNat(X) \to C$.
 such that $\OplaxExp{\Leins(X)}{\overline{\phi}} \circ i = \phi$.
% and $\overline{\phi}$
%maps every object $(\vec{n},\vec{0_x})$ to the unit object of the
%permutative category $C$, where $\vec{0_x}$ is the basepoint of the (pointed)
%category $AX(\vec{n})$.
% the
%following diagram commutes
%\begin{equation}
%\label{univ-bike-condition}
%  \xymatrix@C=16mm{
% X  \ar@{~>}[rd]^{\phi} \ar@{~>}[d]_{i} \\
% \P X \ar[r]_{F_\phi} & C
% }
%\end{equation}
\begin{lem}
\label{univ-bike}
The section $i$ is a universal strict symmetric monoidal section.
\end{lem}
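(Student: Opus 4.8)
The plan is to reduce the statement to the universal property \eqref{univ-prop} of $\PNat(X)=L_H\int^{\vec{n} \in \Leins}\Leins(X)$ as the localization of the category of elements at the horizontal morphisms. Fix a strict symmetric monoidal section $\phi:\Leins \to \OplaxExp{\Leins(X)}{C}$ in the sense of Definition \ref{SM-section-def}; since $pr_{\Leins}\circ\phi=id_{\Leins}$, this amounts to a family of functors $\phi(\vec{n}):\Leins(X)(\vec{n})\to C$ together with, for each map $(h,\psi):\vec{n}\to\vec{m}$ in $\Leins$, a natural isomorphism $\phi((h,\psi)):\phi(\vec{n})\Rightarrow \phi(\vec{m})\circ\Leins(X)((h,\psi))$, these data being functorial and strictly monoidal. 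First I would assemble them into a single strict symmetric monoidal functor $\Phi:\int^{\vec{n} \in \Leins}\Leins(X)\to C$ on the (un-localized) category of elements, which is permutative by Theorem \ref{perm-cat-elm} and Proposition \ref{LbbX-perm}. On objects set $\Phi(\vec{n},\vec{x}):=\phi(\vec{n})(\vec{x})$, and on a morphism $((h,\psi),F):(\vec{n},\vec{x})\to(\vec{m},\vec{y})$, where $F:\Leins(X)((h,\psi))(\vec{x})\to\vec{y}$ lives in $\Leins(X)(\vec{m})$, set $\Phi((h,\psi),F):=\phi(\vec{m})(F)\circ\phi((h,\psi))_{\vec{x}}$.

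The next step is to check that $\Phi$ is a strict symmetric monoidal functor. Preservation of identities is immediate, since $\phi(id_{\vec{n}})$ is the identity natural transformation. Preservation of composition is the technical heart of the argument: unwinding the composition law of the category of elements against the $\ast$-composition of the section and the naturality squares of the isomorphisms $\phi((h,\psi))$, the required identity follows from the functoriality of $\phi$ together with the associativity recorded in Proposition \ref{assoc-comp-B}. That $\Phi$ strictly preserves the tensor product \eqref{tensP-cat-el-laxF} and the symmetry is then a matching of definitions: because $\phi$ strictly preserves $\boxdot$, one obtains $\Phi((\vec{n},\vec{x})\boxtimes(\vec{m},\vec{y}))=\Phi(\vec{n},\vec{x})\underset{C}\otimes\Phi(\vec{m},\vec{y})$ on both objects and morphisms, and likewise for $\gamma$.

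I would then observe that $\Phi$ inverts the horizontal morphisms. A horizontal map has the form $((h,\psi),id)$, and for such a map $\Phi((h,\psi),id)=\phi((h,\psi))_{\vec{x}}$, which is an isomorphism of $C$ because the second components of morphisms of $\OplaxExp{\Leins(X)}{C}$ are natural isomorphisms. Hence, by the universal property \eqref{univ-prop} of $\PNat(X)$, the functor $\Phi$ factors uniquely through the projection $p:\int^{\vec{n} \in \Leins}\Leins(X)\to\PNat(X)$ as a strict symmetric monoidal functor $\overline{\phi}:=\Phi_{Nat}:\PNat(X)\to C$.

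Finally I would verify that $\overline{\phi}$ has the required property and is unique. Since $\OplaxExp{\Leins(X)}{\overline{\phi}}$ is postcomposition with $\overline{\phi}$ by Proposition \ref{cat-sections-bifunctor}, on objects $\overline{\phi}(i(\vec{n})(\vec{x}))=\overline{\phi}(\vec{n},\vec{x})=\phi(\vec{n})(\vec{x})$, while the structure map $i((h,\psi))$ has component $p((h,\psi),id)$ at $\vec{x}$, so that $\overline{\phi}(i((h,\psi))_{\vec{x}})=\Phi((h,\psi),id)=\phi((h,\psi))_{\vec{x}}$; thus $\OplaxExp{\Leins(X)}{\overline{\phi}}\circ i=\phi$. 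For uniqueness, note that every morphism of the category of elements factors as $(id_{\vec{m}},F)\circ((h,\psi),id)$, so the equation $\OplaxExp{\Leins(X)}{\overline{\phi}}\circ i=\phi$ pins down $\overline{\phi}\circ p$ on all objects and morphisms, whence $\overline{\phi}$ is forced by the uniqueness clause of \eqref{univ-prop}. The main obstacle is the composition check in the second step, a lengthy but mechanical diagram chase; everything else is bookkeeping once the localization universal property \eqref{univ-prop} is in hand.
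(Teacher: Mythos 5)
Your proof is correct and produces the same functor as the paper, but it is organized differently, and the difference is worth recording. The paper's proof of Lemma \ref{univ-bike} defines $\overline{\phi}$ \emph{directly} on $\PNat(X)$ by exactly your formulas --- $\overline{\phi}((\vec{n},\vec{x})) := \phi(\vec{n})(\vec{x})$ and $\overline{\phi}(((h,\psi),a)) := \phi(\vec{m})(a)\circ\phi((h,\psi))(\vec{x})$ --- then checks strict preservation of the tensor product via $\phi(\vec{n}\Box\vec{m}) = \phi(\vec{n})\boxdot\phi(\vec{m})$ and \eqref{boxdot-def}, and proves uniqueness by evaluating a competing functor $G$ on $i(\vec{n})(\vec{x})$; the localization plays no explicit role in its argument. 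You instead build $\Phi$ on the un-localized category of elements $\int^{\vec{n}\in\Leins}\Leins(X)$, observe that $\Phi$ carries the horizontal morphisms $((h,\psi),id)$ to the isomorphisms $\phi((h,\psi))_{\vec{x}}$ of $C$, and only then descend through $p$ using the universal property \eqref{univ-prop}. This two-step factorization supplies a justification the paper omits: since $\PNat(X)$ is a localization, its morphisms are zig-zags involving formal inverses of horizontal maps, so a formula prescribed only on pairs $((h,\psi),a)$ defines a functor on $\PNat(X)$ precisely because those horizontal maps are inverted --- your third step, which has no counterpart in the paper's proof. Your uniqueness argument (factor every morphism of the category of elements as $(id_{\vec{m}},F)\circ((h,\psi),id)$ and invoke the uniqueness clause of \eqref{univ-prop}) likewise makes the paper's terse computation rigorous. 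One caveat: your invertibility step relies on the second components of morphisms in $\OplaxExp{\Leins(X)}{C}$ being natural \emph{isomorphisms}; Appendix \ref{bikes-as-oplax-sections} states this for $\OplaxExp{X}{C}$ but writes only ``natural transformation'' when defining the same category under the name $\SMExp{X}{C}$. The isomorphism reading is the one under which both your proof and the paper's descend to the localization, so this is an inconsistency in the paper's definitions rather than a gap in your argument.
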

\begin{proof}
Let $\phi:\Leins \to \OplaxExp{\Leins(X)}{C}$ be a lax symmetric monoidal section.
 We begin by constructing a strict monoidal functor
 $\overline{\phi}:\PNat(X) \to C$ such that $\OplaxExp{\Leins(X)}{\overline{\phi}} \circ i = \phi$.
 On objects of $\PNat(X)$, the functor $\overline{\phi}$ is defined as follows:
 \[
 \overline{\phi}((\vec{n}, \vec{x})) := \phi(\vec{n})(\vec{x}).
 \]
 The morphism function of $\overline{\phi}$ is defined as follows:
 \[
 \overline{\phi}(((h, \psi),a)) := \phi(\vec{m})(a) \circ \phi((h, \psi))(\vec{x}).
 \]
 One can easily check that $\overline{\phi}$ is a functor.
 
 Let $(\vec{m}, \vec{y})$ be another object in $\PNat(X)$, we consider
 \[
 \overline{\phi}((\vec{n}, \vec{x}) \underset{\PNat(X)}\otimes (\vec{m}, \vec{y}))
 = \overline{\phi}(\vec{n} \Box \vec{m}, \inv{\lambda(\vec{n}, \vec{m})}((\vec{x},\vec{y})))
 = \phi(\vec{n} \Box \vec{m})(\inv{\lambda(\vec{n}, \vec{m})}((\vec{x},\vec{y}))),
 \]
 where $(\vec{x},\vec{y})$ is the concatenation of $\vec{x}$ and
 $\vec{y}$. Since $\phi$ is a symmetric monoidal functor, therefore
 $\phi(\vec{n} \Box \vec{m}) = \phi(\vec{n}) \boxdot \phi(\vec{m})$.
 Now we observe that
 \begin{multline*}
 \phi(\vec{n} \Box \vec{m})(\inv{\lambda(\vec{n}, \vec{m})}((\vec{x},\vec{y})))
 = \phi(\vec{n}) \boxdot \phi(\vec{m})(\inv{\lambda(\vec{n}, \vec{m})}((\vec{x},\vec{y})))
 =  \\
 \phi(\vec{n})(\vec{x}) \underset{C} \otimes \phi(\vec{m})(\vec{y})
 = \overline{\phi}(\vec{n},\vec{x}) \underset{C} \otimes \overline{\phi}(\vec{m},\vec{y}).
 \end{multline*}
 where the second equality follows from \eqref{boxdot-def}.
Thus the functor $\overline{\phi}$ preserves the symmetric monoidal product
strictly. Finally, we would like to show that this
functor is uniquely defined. Let $G:\PNat(X) \to C$ be another a strict monoidal functor
  such that $\OplaxExp{\Leins(X)}{G} \circ i = \phi$. Then for every object $(\vec{n}, \vec{x})$ in $\PNat(X)$
 \[
 G ((\vec{n}, \vec{x})) = G \circ i(\vec{n})(\vec{x}) = \phi(\vec{n})(\vec{x})
 = \overline{\phi}(\vec{n},\vec{x}).
 \]
 A similar argument for morphisms of $\PNat(X)$ shows that G agrees with
 $\overline{\phi}$ on morphisms also. Thus we have proved that $\overline{\phi}$
 is a universal normalized lax symmetric monoidal section.
\end{proof}
\begin{nota}
As seen above we may \emph{compose} a bicycle with a functor to obtain another bicycle. More precisely, let $F:\PNat(X) \to C$ be a strict symmetric monoidal functor, then we will denote by $F \circ i$ the composite strict symmetric monoidal functor
\[
\Leins \overset{i} \to \SMSec{X}{\PNat(X)} \overset{\SMSec{X}{F}} \to \SMSec{X}{C}.
\]
This \emph{composition} defines a functor which we will denote by
$i^\ast:\StrSMHom{\PNat(X)}{C} \to \SMSec{X}{C}$.
\end{nota}
The above lemma and and argument similar to the proof of theorem \ref{oplax-SM-cones-isom} lead us to the following corollary:
\begin{coro}
\label{bike-SSM-isom-cat}
The functor $i^\ast:\StrSMHom{\PNat(X)}{C} \to \SMSec{X}{C}$ is
an isomorphism of categories which is natural in both $X$ and $C$.
\end{coro}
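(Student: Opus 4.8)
The plan is to prove Corollary~\ref{bike-SSM-isom-cat} by exhibiting an explicit inverse to the functor $i^\ast$ and checking naturality, leaning on the universal property already established in Lemma~\ref{univ-bike}. First I would observe that Lemma~\ref{univ-bike} does precisely the object-level work: it says that the canonical strict symmetric monoidal section $i \in Ob(\SMSec{X}{\PNat(X)})$ is \emph{universal}, meaning every strict symmetric monoidal section $\phi:\Leins \to \OplaxExp{\Leins(X)}{C}$ factors uniquely as $\OplaxExp{\Leins(X)}{\overline{\phi}} \circ i$ for a unique strict symmetric monoidal functor $\overline{\phi}:\PNat(X) \to C$. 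In the notation of the corollary, $\OplaxExp{\Leins(X)}{\overline{\phi}} \circ i$ is exactly $i^\ast(\overline{\phi})$. Thus the assignment $\phi \mapsto \overline{\phi}$ is a two-sided inverse to $i^\ast$ on objects: uniqueness gives $\overline{i^\ast(F)} = F$ for every $F$, and the factorization identity gives $i^\ast(\overline{\phi}) = \phi$. This shows $i^\ast$ is a bijection on objects.

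The next step is to upgrade this object-level bijection to a full isomorphism of categories, i.e.\ to handle morphisms. The cleanest route is to mimic the morphism argument in the proof of Theorem~\ref{oplax-SM-cones-isom}: a morphism in $\SMSec{X}{C}$ is a symmetric monoidal natural transformation between two strict symmetric monoidal sections, which can be repackaged as a single strict symmetric monoidal section valued in the arrow category $[I, \OplaxExp{\Leins(X)}{C}]$, where $[I,-]$ inherits a permutative structure. Applying the universal property of Lemma~\ref{univ-bike} with codomain $[I, C]$ in place of $C$ produces a unique extension, and this unique extension is exactly a unital monoidal natural transformation between the corresponding strict symmetric monoidal functors $\PNat(X) \to C$, i.e.\ a morphism in $\StrSMHom{\PNat(X)}{C}$. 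Since $i^\ast$ sends a morphism $\alpha:F \Rightarrow G$ to its whiskering by $i$, and the extension is inverse to this whiskering, we conclude that $i^\ast$ is fully faithful. Combined with bijectivity on objects, $i^\ast$ is an isomorphism of categories.

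Finally I would verify naturality in both variables. Naturality in $C$ follows from the fact that for a strict symmetric monoidal functor $g:C \to C'$, post-composition commutes with the whiskering $i^\ast$, using functoriality of $\SMExp{X}{-}$ (Proposition~\ref{cat-sections-bifunctor}) and of $\StrSMHom{\PNat(X)}{-}$; concretely, $\SMSec{X}{g} \circ i^\ast = i^\ast \circ \StrSMHom{\PNat(X)}{g}$ because both sides send $F$ to $g \circ F \circ i$. Naturality in $X$ requires comparing the universal sections $i_X$ and $i_{X'}$ along a map $X \to X'$, which amounts to checking that $\PNat$ is functorial in the way compatible with the Grothendieck construction (this is the content behind Remark after Definition~\ref{Groth-Cons-gCat} and the functoriality of $\Leins(-)$); the universal characterization of each $i_X$ makes these comparison squares commute automatically.

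The main obstacle I anticipate is the morphism-level bookkeeping in the second paragraph, specifically confirming that the unique extension produced by the universal property of $[I,C]$ really does coincide with a monoidal natural transformation in $\StrSMHom{\PNat(X)}{C}$ and that whiskering by $i$ inverts it. This is not conceptually hard---it is the same maneuver used in Theorem~\ref{oplax-SM-cones-isom}---but it requires carefully tracking the interplay between the arrow-category repackaging of natural transformations and the strict symmetric monoidal structure, and ensuring the interchange-law computations (of the kind appearing in Proposition~\ref{assoc-comp-B}) go through. Once that identification is in place, the rest is formal.
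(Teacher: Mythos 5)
Your proposal is correct and follows essentially the same route as the paper: the paper derives the corollary directly from Lemma~\ref{univ-bike} (the universality of the section $i$, giving the object-level bijection via the unique factorization $\phi \mapsto \overline{\phi}$) together with the same arrow-category maneuver used in the proof of Theorem~\ref{oplax-SM-cones-isom} to handle morphisms, which is precisely your second paragraph. Your naturality verification fills in a step the paper merely asserts, but it proceeds exactly as one would expect from the functoriality of $\SMExp{-}{-}$ and the universal characterization of each $i_X$, so there is no divergence in method.
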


 The above corollary together with theorem \ref{oplax-SM-cones-isom} and proposition \ref{Bikes-as-sections} provide us with the following chain of isomorphisms of categories:
 \begin{multline}
 \label{isom-univ-bike}
 \Bikes{X}{C} \overset{J} \to \OplaxSec{X}{C} \overset{E} \to \\
  \SMSec{X}{C} \overset{i^\ast} \to \StrSMHom{\Pnor(X)}{C}.
 \end{multline}

Now we start the second step involved in establishing the adjunction $\PNat \dashv \Kbar$.
We want to define an oplax symmetric monoidal functor
$\epsilon:\N \to \PsExp{\Kbar(C)}{C}$. In order to do so we will define, for each $\underline{n} \in Ob(\N)$, a functor $\epsilon(\underline{n}):\Kbar(C)(n^+) \to C$. On objects this functor is defined as follows:
\begin{equation*}
\epsilon(\underline{n})(\Phi) := \Phi((id_{n^+}))
\end{equation*}
and on morphisms it is defined as follows:
\begin{equation*}
\epsilon(\underline{n})(F) := F(\underline{n})((id_{n^+})),
\end{equation*}
where $F: \Phi \to \Psi$ is a morphism in $\Kbar(C)$.
It is easy to see that the above definition preserves composition and identity in $\Kbar(C)(n^+)$. We recall that for each map $f:\underline{n} \to \underline{m}$ in $\N$ we get a functor $\PNat(f):\PNat(m^+) \to \PNat(n^+)$ which maps an object $(f_1, f_2, \dots, f_k) \in Ob(\PNat(m))$ to $(f_1 \circ f, f_2 \circ f, \dots, f_k \circ f) \in Ob(\PNat(n))$. The functor $\Kbar(C)(f):\Kbar(C)(n^+) \to \Kbar(m^+)$
is defined by precomposition \emph{i.e.} for each strict symmetric monoidal functor $\Phi:\PNat(n) \to \C$, $\Kbar(C)(f)(\Phi) := \Phi \circ \PNat(f)$.
 For each morphism $f:\underline{n} \to \underline{m}$ we will define a natural transformation $\epsilon(f): \epsilon(\underline{n})   \Rightarrow \epsilon(\underline{m}) \circ \Kbar(f)$. We recall that the identity map of $\underline{n}$ determines a map $can:(id_{n^+}) \to (f)$ in the category $\PNat(n)$ \emph{i.e.} the following diagram commutes
 \begin{equation*}
 \xymatrix{
 \textit{Supp}(id_{n^+}) = \underline{n} \ar[rd]_{id} \ar[rr]^{id} && \underline{n} = \textit{Supp}(f) \ar[ld]^{id} \\
 & \underline{n} 
 }
 \end{equation*}
For an object $\Phi \in \Kbar(n^+)$ we define
\begin{equation*}
\epsilon(f)(\Phi) := \Phi(can).
\end{equation*}
We observe that domain of $\Phi(can)$ is $\epsilon(\underline{n})(\Phi) = \Phi((id_{n^+}))$ and its codomain is $\epsilon(\underline{m})(\Kbar(C)(f)(\Phi)) =
 \Kbar(C)(f)(\Phi)(id_{m^+}) = \Phi((f))$.
 Let $F:\Phi \to \Psi$ be a morphism in $\Kbar(C)(n^+)$, then we have the following commutative diagram
\begin{equation}
\label{couniv-bike-alpha-nat-trans}
  \xymatrix@C=20mm{
 \Phi((id_{n^+}))  \ar[r]^{\epsilon(f)(\Phi)}
 \ar[d]_{F(\underline{n})((id_{n^+}))} & \Phi(f) \ar[d]^{F(\underline{m})(f)} \\
 \Psi((id_{n^+})) \ar[r]_{\epsilon(f)(\Psi)} & \Psi(f)
 }
\end{equation}
where we observe that the map $F(\underline{n})((id_{n^+}))$ is the same as $\epsilon(\underline{n})(F)$ and the map $F(\underline{m})(f)$ is the same as
$(\epsilon(m) \circ \Kbar(C)(f))(F)$.
Thus we have defined a natural transformations $\epsilon(f)$ for all $f \in Mor(\N)$. For another morphism $g:\underline{m} \to \underline{k}$
in the category $\N$ one can check that $\epsilon(g \circ f) = \epsilon(g) \circ \epsilon(f)$.
\begin{prop}
The functor $\epsilon$ defined above is an oplax symmetric monoidal section of $\PsExp{X}{C}$.
\end{prop}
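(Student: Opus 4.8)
The plan is to equip $\epsilon$ with the two pieces of data that make it a unital oplax symmetric monoidal functor into the exponential $\PsExp{\Kbar(C)}{C}$, namely an oplax structure transformation $\lambda_\epsilon$ and a unit comparison, and then to verify the three coherence axioms \ref{OpL-unit}, \ref{OpL-symmetry} and \ref{OpL-associativity}. This construction is the oplax dual of the strict symmetric monoidal section $i$ produced in Proposition \ref{univ-A-sec-defn}. First I would record that the section property $pr_{\N} \circ \epsilon = id_{\N}$ is immediate: by construction $\epsilon$ sends $\underline n$ to an object of $\PsExp{\Kbar(C)}{C}$ whose first coordinate is $\underline n$, and a morphism $f$ to a pair with first coordinate $f$. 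Moreover each $\epsilon(f)$ is a natural \emph{isomorphism}, as required for a morphism of the exponential, because $\epsilon(f)(\Phi)=\Phi(can)$ and $can$ is an isomorphism in the groupoid $\PNat(n)$, so its image under the functor $\Phi$ is invertible in $C$. Hence $\epsilon$ is a well-defined functor into the exponential category, and only its monoidal structure remains.

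Next I would construct $\lambda_\epsilon$. For each pair $\underline k, \underline l \in \N$ the component is a morphism $\lambda_\epsilon(\underline k,\underline l)\colon \epsilon(\underline k + \underline l) \to \epsilon(\underline k)\otimes\epsilon(\underline l)$ in $\PsExp{\Kbar(C)}{C}$. By the description of the tensor product of the exponential (Proposition \ref{sym-mon-B(X;C)}) the target has first coordinate $\underline k + \underline l$ and second coordinate $\epsilon(\underline k)\odot\epsilon(\underline l)$, so $\lambda_\epsilon(\underline k,\underline l)$ is a pair $(id_{\underline k + \underline l},\mu_{k,l})$ with $\mu_{k,l}$ a natural isomorphism $\epsilon(\underline k + \underline l)\Rightarrow \epsilon(\underline k)\odot\epsilon(\underline l)$. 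Unwinding the definitions, for a pseudo Segal bicycle $\Phi\in\Kbar(C)((k+l)^+)$ one has $\epsilon(\underline k + \underline l)(\Phi)=c_{id}$, while $(\epsilon(\underline k)\odot\epsilon(\underline l))(\Phi)=c_{\delta^{k+l}_k}\otimes c_{\delta^{k+l}_l}$, using that $\PNat(\delta^{k+l}_k)(id_{k^+})=\delta^{k+l}_k$; I would therefore set $\mu_{k,l}(\Phi):=\sigma_\Phi(k,l,id_{(k+l)^+})$, the structure isomorphism of the bicycle $\Phi$. Naturality of $\mu_{k,l}$ in $\Phi$ is precisely the third compatibility condition in the definition of a morphism of pseudo Segal bicycles, and naturality of $\lambda_\epsilon$ in the pair $(\underline k,\underline l)$ is the fourth (naturality) axiom in the definition of a pseudo Segal bicycle. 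The unit comparison is supplied dually by the family $u_\Phi$: it is the natural isomorphism $\epsilon(\underline 0)\Rightarrow \Delta(\unit{C})$ sending $\Phi$ to $u_\Phi(id_{0^+})$.

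With this data in hand each oplax axiom for $\epsilon$ reduces, after evaluation at $\Phi$ and at the objects $id$ and $\delta^{k+l}_k$, to a defining axiom of the pseudo Segal bicycle: the unit axiom \ref{OpL-unit} follows from the first (unit) PSB condition together with $u_\Phi$, the symmetry axiom \ref{OpL-symmetry} from the second (symmetry) PSB condition, and the associativity axiom \ref{OpL-associativity} from the third (associativity) PSB condition. In each case the verification amounts to recognizing that the two legs of the oplax diagram, read off in the exponential, coincide with the two composites of the corresponding bicycle diagram once one identifies the $\odot$-products of evaluations with the pointwise $\otimes$-products of the $c_f$. I expect the associativity axiom \ref{OpL-associativity} to be the main obstacle, since its diagram interleaves the associator of $C$ with the comparison between the $\odot$ of functors and the pointwise $\otimes$, so one must carefully track how $(\epsilon(\underline k)\odot\epsilon(\underline l))\odot\epsilon(\underline m)$ and $\epsilon(\underline k)\odot(\epsilon(\underline l)\odot\epsilon(\underline m))$ are evaluated and confirm that the resulting hexagon is exactly the third PSB condition. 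As a more abstract fallback, since $\Kbar(C)(n^+)$ is by Lemma \ref{Bikes-nSegPsBikes-eq} the category $\Bikes{\gn{n}}{C}$ of pseudo bicycles and the passage between bicycles and oplax sections is already established in Proposition \ref{Bikes-as-sections} and Theorem \ref{oplax-SM-cones-isom}, one may alternatively transport that correspondence to conclude the result; I would nonetheless use the explicit $\sigma_\Phi$-description above as the primary, more transparent route and invoke this identification only to streamline the associativity check.
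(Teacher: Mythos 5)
The paper states this proposition without proof --- the text moves directly on to co-universality --- so there is no ``paper's argument'' to compare yours against; your proposal supplies exactly the verification the paper omits, and it is correct. Your decomposition is the natural one: the section property and the invertibility of each $\epsilon(f)(\Phi)$ (using that $\PNat(n)$ is a groupoid, so $\Phi(can)$ is invertible, as the morphisms of $\PsExp{\Kbar(C)}{C}$ require), then the oplax data $\lambda_\epsilon(\ud{k},\ud{l}) = (id_{\ud{k}+\ud{l}},\mu_{k,l})$ and a unit comparison, then the reduction of \ref{OpL-unit}, \ref{OpL-symmetry}, \ref{OpL-associativity} and the two naturality requirements to, respectively, the unit, symmetry, associativity, morphism-of-bicycle and PSB-naturality axioms. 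This mirrors, in oplax-dual form, the check done for the section $i$ in Proposition \ref{univ-A-sec-defn}.

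One point deserves to be made explicit, since you silently mix two models of $\Kbar(C)(n^+)$. In this appendix $\epsilon$ is defined on the model $\StrSMHom{\PNat(n)}{C}$ (objects are strict symmetric monoidal functors $\Phi:\PNat(n)\to C$), whereas your formula $\mu_{k,l}(\Phi):=\sigma_\Phi(k,l,id_{(k+l)^+})$ lives in the pseudo-Segal-bicycle model. In the former model the correct definition is $\mu_{k,l}(\Phi):=\Phi(can)$, where $can:(id_{(k+l)^+})\to(\delta^{k+l}_k,\delta^{k+l}_l)$ is the canonical morphism in the groupoid $\PNat(k+l)$ induced by the identity bijection $\ud{k+l}\cong \ud{k}\sqcup\ud{l}$ of supports; strictness of $\Phi$ then gives $\Phi((\delta^{k+l}_k,\delta^{k+l}_l)) = \Phi((\delta^{k+l}_k))\otimes\Phi((\delta^{k+l}_l))$, which is exactly $(\epsilon(\ud{k})\odot\epsilon(\ud{l}))(\Phi)$ via your (correct) computation $\PNat(\delta^{k+l}_k)((id_{k^+}))=(\delta^{k+l}_k)$. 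Under the isomorphism of Proposition \ref{equiv-PsBikes-StrSMFunc-LBar} this $\Phi(can)$ corresponds to your $\sigma_\Phi(k,l,id)$, so your argument goes through, but you should state the translation rather than leave it implicit. Finally, note that your unit comparison $\Phi\mapsto u_\Phi(id_{0^+})$ is a non-identity isomorphism, while Definition \ref{oplax-section-def} literally asks for a \emph{unital} oplax functor; this looseness is the paper's own (the same convention appears in Proposition \ref{Bikes-as-sections}, where bicycles carry a not-necessarily-identity $\tau$), so it is the intended reading and not a defect of your proof.
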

let $X$ and $Y$ be a $\gCats$ and $C$ be a permutative category.
We will say that an oplax symmetric monoidal section $H:\N \to \PsExp{\PNat(X)}{C}$ is \emph{co-universal} if for any other oplax symmetric monoidal section $M:\N \to \PsExp{\PNat(Y)}{C}$ there exists a unique morphism of $\gCats$ $F:Y \to X$ such that the following diagram commutes
\begin{equation*}
\xymatrix{
\N \ar[r]^{H \ \ \ \ }\ar[rd]_M & \PsExp{\PNat(X)}{C}  \ar[d]^{\PsExp{\PNat(F)}{C}} \\
& \PsExp{\PNat(Y)}{C} 
}
\end{equation*}
 In the above situation we get a bijection
 \begin{equation*}
  Ob(\OLsmSec{Y}{C}) \cong Hom_{\gCAT}(Y, X).
\end{equation*}
 
 \begin{prop}
 The oplax symmetric monoidal functor $\epsilon:\N \to \OplaxExp{\Kbar(C)}{C}$ defined above is co-universal.
 \end{prop}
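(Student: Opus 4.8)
The plan is to establish the co-universality of $\epsilon$ by explicitly constructing, for any oplax symmetric monoidal section $M:\N \to \OplaxExp{\Kbar(C)}{C}$ of the form $M:\N \to \PsExp{\PNat(Y)}{C}$, a unique morphism of $\gCats$ $F:Y \to X$ making the defining triangle commute. By Proposition \ref{Bikes-as-sections} and the chain of isomorphisms \eqref{isom-univ-bike}, giving an oplax symmetric monoidal section $M$ is the same as giving a pseudo bicycle from $Y$ to $C$, and by Lemma \ref{Bikes-nSegPsBikes-eq} this is the same as giving an object of $\Kbar(C)(n^+)$ for each $n$, glued naturally in $n^+ \in \gop$. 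In other words, the data of $M$ is precisely the data of a morphism of $\gCats$ $Y \to \Kbar(C)$. The heart of the argument is to show that such a morphism $Y \to \Kbar(C)$ corresponds bijectively and naturally to a morphism $F:Y \to X = \Kbar(C)$ recovering $M = \epsilon \circ \PsExp{\PNat(F)}{C}$ — in the co-universal setup $X$ is $\Kbar(C)$ itself and the universal $\epsilon$ is the counit-like evaluation section.

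First I would unwind the co-universality diagram degreewise. Fixing $\underline{n}$, the equality $M(\underline{n}) = \epsilon(\underline{n}) \circ \PsExp{\PNat(F)}{C}(\underline{n})$ must be read as an equation of functors $\Kbar(Y')(n^+) \to C$, where the right-hand side is evaluation at $(id_{n^+})$ precomposed with the functor induced by $\PNat(F)$. Since $\epsilon(\underline{n})$ sends a pseudo Segal bicycle $\Phi$ to $\Phi((id_{n^+}))$, the composite forces $F$ to be determined on objects by the rule $F(n^+)(y) := M(\underline{n})(\cdots)$ evaluated at the appropriate representing object; I would read off this formula and check it is well-defined and functorial in $n^+$. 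The naturality transformations $\epsilon(f)$ for $f \in Mor(\N)$, defined via the canonical maps $can:(id_{n^+}) \to (f)$ in $\PNat(n)$, must match the corresponding structure maps of $M$, and this is what pins down $F$ on the active maps of $\gop$; the inert maps are handled by the inert–active factorization of Proposition (the unique factorization in $\gop$).

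The key steps, in order: (1) translate $M$ into a morphism $Y \to \Kbar(C)$ of $\gCats$ using \eqref{isom-univ-bike} and Lemma \ref{Bikes-nSegPsBikes-eq}; (2) define $F$ degreewise by evaluation against the universal section $\epsilon$, using that $\PNat(n)$ represents $\PsBikes{\gn{n}}{-}$ by Proposition \ref{equiv-PsBikes-StrSMFunc-LBar}; (3) verify that the family $\lbrace F(n^+) \rbrace$ assembles into a natural transformation, i.e.\ a morphism of $\gCats$, by checking compatibility with all $f \in Mor(\gop)$ through the active/inert factorization and the definition of $\epsilon(f)$ via the diagram \eqref{couniv-bike-alpha-nat-trans}; and (4) prove uniqueness, which follows because $\PsExp{\PNat(F)}{C}$ precomposed with $\epsilon$ recovers $M$ only if $F$ agrees with the evaluation formula, forcing $F(n^+)$ on objects and morphisms exactly as in the proof of Lemma \ref{univ-bike}.

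The main obstacle I expect is verifying naturality in step (3): one must confirm that the oplax structure maps of $M$ — encoding the $\sigma$, $\alpha$, and $u$ families of the associated pseudo Segal bicycle — are transported correctly under $\PNat(F)$ so that the coherence conditions \ref{unit}, \ref{symmetry}, \ref{associativity}, \ref{naturality} are preserved, and that $\epsilon(f)$ interacts correctly with the induced maps $\Kbar(C)(f)$. This is genuinely the same bookkeeping that appears in Lemma \ref{univ-bike} and Theorem \ref{oplax-SM-cones-isom}, so I would invoke those results to package the calculation rather than redoing it: the co-universality of $\epsilon$ is the ``oplax right adjoint'' mirror of the universality of $i$ proved in Lemma \ref{univ-bike}, and the isomorphism \eqref{isom-univ-bike} together with the natural isomorphism of Proposition \ref{equiv-PsBikes-StrSMFunc-LBar} reduces the claim to the adjunction bijection $Ob(\OLsmSec{Y}{C}) \cong Hom_{\gCAT}(Y, \Kbar(C))$, which is exactly what co-universality asserts.
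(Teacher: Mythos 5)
A point of reference first: the paper states this proposition \emph{without} proof (the appendix moves directly on to the next section), so your proposal can only be measured against the argument the surrounding text plainly intends, namely the co-universal mirror of Lemma \ref{univ-bike}. Against that yardstick your outline has one genuine flaw: as packaged, it is circular. Step (1) claims that the data of $M$ ``is precisely the data of a morphism of $\gCats$ $Y \to \Kbar(C)$,'' and your closing sentence reduces the proposition to the bijection $Ob(\OLsmSec{Y}{C}) \cong Hom_{\gCAT}(Y, \Kbar(C))$, ``which is exactly what co-universality asserts.'' That bijection is the conclusion, not an available ingredient: Lemma \ref{Bikes-nSegPsBikes-eq} identifies $\Kbar(C)(n^+)$ with $\Bikes{\gn{n}}{C}$ only for the representables $\gn{n}$, and the chain \eqref{isom-univ-bike} yields $Ob(\OLsmSec{Y}{C}) \cong \PCat(\PNat(Y), C)$; neither result, nor anything else you cite, produces maps $Y \to \Kbar(C)$ from a general $\gCat$ $Y$. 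A proof that leans on step (1), or on the final ``reduction,'' proves the statement by citing it.

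That said, your steps (2)--(4) contain the correct skeleton, and two repairs make it a proof. First, the triangle $M(\underline{n}) = \epsilon(\underline{n}) \circ F(n^+)$ at level $\underline{n}$ pins down only the $(id_{n^+})$-component of the pseudo Segal bicycle $F(n^+)(y)$; the remaining components are forced by requiring $F$ to be a morphism of $\gCats$: naturality at $f:n^+ \to k^+$ shows that the $(id_{k^+})$-component of $F(k^+)(Y(f)(y))$ is the $(f)$-component of $F(n^+)(y)$, whence $c_f = M(\underline{k})(Y(f)(y))$ for every $f \in A_n$. This combination is where uniqueness actually lives, so your formula ``$F(n^+)(y) := M(\underline{n})(\cdots)$'' must be replaced by this explicit one. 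Second, existence is not bookkeeping that can be delegated: one must take $c_f = M(\underline{k})(Y(f)(y))$ as the definition, equip it with the families $\alpha_\Phi$, $\sigma_\Phi$, $u_\Phi$ induced by the $2$-cells $M(h)$ for $h \in Mor(\N)$, the oplax structure maps of $M$, and its unit constraint, verify the four pseudo Segal bicycle axioms, and check that the functors $F(n^+)$ assemble into a map of $\gCats$ over all of $\gop$ --- your appeal to the inert--active factorization and to the naturality square \eqref{couniv-bike-alpha-nat-trans} is the right mechanism for that last point. None of this follows formally from Lemma \ref{univ-bike} or Theorem \ref{oplax-SM-cones-isom}, which live on the $\PNat$ side of the adjunction; it is precisely the new content of this proposition, and your outline becomes complete once step (1) and the closing reduction are deleted and these verifications are carried out.
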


  \section{On local objects in a model category enriched over categories}
\label{Cat-Local}
\subsection{Introduction}
A model category $E$ is enriched over categories if the category $E$ is enriched(over $\Cat$), tensored and cotensored, and the functor $[- ; -]: E^{op} \times$E$\to \Cat$ is a Quillen functor of two variables, where $\Cat = (\Cat, Eq)$ is the natural model structure for categories. The purpose of this appendix is to introduce the notion of local object with respect to a map in a model category enriched over categories.
\subsection{Preliminaries}
Recall that a Quillen model structure on a category $E$ is determined by its class of cofibrations together with its class of fibrant objects. For examples, the category of simplicial sets $\sSets = [\Delta^{op},Set]$ admits two model structures in which the cofibrations are the monomorphisms: the fibrant objects are the Kan complexes in one, and they are the quasi-categories in the other. We call the former the model structure for Kan complexes and the latter the model structure for quasi-categories. We shall denote them respectively by $(\sSets, Kan)$ and $(\sSets, QCat)$.
In this appendix we consider categories enriched over $\Cat$.  If $E = (E, [- ; -])$ is a category enriched over $\Cat$, then so is the category $\CatFunc{E}{\Cat}$ of enriched functors $E \to \Cat$. An enriched functor $F :$E$\to \Cat$ isomorphic to the enriched functor $[A, -] :E \to \Cat$ is said to be \emph{representable}.  The enriched functor $F$ is said to be represented by  $A$. We say that an enriched (over $\Cat$) category $E = (E,[-,-])$ is \emph{tensored by} $I$, where $I$ is the category with two objects and one non-identity arrow, if the enriched functor
\[
[I, [A, -]] :E \to \Cat
\]
is representable (by an object denoted $I \otimes A$) for every object $A \in E$. 
Dually, we say that an enriched category $E$ is \emph{cotensored by} $I$ if the enriched functor 
\[
[I, [-, X]] : E^{op} \to \Cat
\]
is representable (by an object denoted $X^I$ or $[I, X]$) for every object $X \in E$.
%Recall that a model category $E$ is said to be simplicial if the category $E$ is simplicial, tensored and cotensored by 
%$\Delta$ and the functor $[-, -] :E^{op} \times E \to \sSets$ is a Quillen functor of two variables, where $\sSets = (sSet,Kan)$. The last condition implies that if $A \in E$ is cofibrant and $X \in E$ is fibrant, then the simplicial set $[A,X]$ is a Kan complex. For this reason, we shall say that a simplicial model category is enriched over Kan complexes.
\begin{df}
\label{enrich-QCat}
We shall say that a model category $E$ is \emph{enriched over categories} if the category $E$ is enriched over $\Cat$, tensored and cotensored over $I$ and the functor $[-,-] : E^{op} \times E \to \Cat$ is a Quillen functor of two variables, where $\Cat = (\Cat, Eq)$ \emph{i.e.} $\Cat$ is endowded with the natural model category structure.
\end{df}
\begin{nota}
	We will denote the homotopy mapping spaces or the function complexes of a model category $M$, see \cite{DK3}, \cite{Hovey}, \cite{Hirchhorn}, by $\HMapC{a}{b}{M}$, for each pair of objects $a, b \in M$.
	\end{nota}
\subsection{Function spaces for categories}
If $C$ is a category, we shall denote by $J(C)$ the sub-category of invertible arrows in $C$. The sub-category $J(C)$ is the largest sub-groupoid of $C$. More generally, if $X$ is a quasi-category, we shall denote by $J(X)$ the largest sub- Kan complex of $X$. By construction, we have a pullback square
\begin{equation*}
\xymatrix@C=20mm{
J(X) \ar[r] \ar[d] & X \ar[d]^h \\
NJ(\tau_1(X)) \ar[r] & N\tau_1(X)
}
\end{equation*}
where $\tau_1(X)$ is the fundamental category of $X$ and $h$ is the canonical map. The function space $X^A$ is a quasi-category for any simplicial set $A$. We shall denote by $X^{(A)}$ the full sub-simplicial set of $X^A$ whose vertices are the maps $A \to X$ that factor through the inclusion $J(X) \subseteq X$. The simplicial set $X^{(\Delta[1])}$ is a path-space for $X$. We recall that $\tau_1(X) \cong ho(X)$.

\begin{lem}
\label{J-Nerve-comm}
 If $C$ is a category, then the simplicial object $N(J(C)) \cong J(N(C))$,
 where $N:\Cat \to \sSets$ is the nerve functor. Further this isomorphism is natural in $C$, \emph{i.e.} there is a natural isomorphism between the two composite functors $NJ \cong JN$.
\end{lem}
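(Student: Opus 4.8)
The plan is to prove the natural isomorphism $N(J(C)) \cong J(N(C))$ by a direct simplicial-degree analysis, using the explicit description of both the nerve functor and the functor $J$. First I would recall what objects live in each simplicial degree. On one side, an $n$-simplex of $J(N(C))$ is by definition an $n$-simplex of $N(C)$ all of whose edges are invertible in the quasi-category $N(C)$; since $N(C)$ is the nerve of an ordinary category, an $n$-simplex is just a composable string of $n$ arrows $c_0 \to c_1 \to \cdots \to c_n$ in $C$, and an edge of this simplex being invertible in $N(C)$ is exactly the condition that the corresponding arrow is an isomorphism in $C$. On the other side, an $n$-simplex of $N(J(C))$ is a composable string of $n$ arrows in the subcategory $J(C)$, i.e.\ a string $c_0 \to c_1 \to \cdots \to c_n$ in which every arrow is invertible.

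The key step is then to verify that these two descriptions agree, which amounts to the observation that in the nerve of a category an edge is invertible as a $1$-simplex of the quasicategory precisely when it is an isomorphism in $C$, together with the elementary fact that a composable string of arrows in $C$ lies in $J(C)$ if and only if each of its arrows is an isomorphism. I would spell out the bijection on each simplicial degree $n$: send the string in $N(J(C))_n$ to the same string regarded inside $N(C)_n$, observing that its image lands in $J(N(C))_n$ because each edge is an isomorphism, hence invertible in $N(C)$; conversely every simplex of $J(N(C))_n$ is such a string with invertible edges, hence a simplex of $N(J(C))_n$. Next I would check that this degreewise bijection is compatible with the face and degeneracy maps, which is immediate since both $N(J(C))$ and $J(N(C))$ inherit their simplicial structure from $N(C)$ and the bijection is the identity on underlying strings of arrows.

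Finally, for the naturality claim I would take a functor $F : C \to D$ and observe that $F$ restricts to a functor $J(F) : J(C) \to J(D)$ because functors preserve isomorphisms, and that the induced map $N(J(F))$ corresponds under the degreewise bijections to the restriction of $N(F)$ to the invertible simplices, which is exactly $J(N(F))$. Assembling the degreewise bijections into a single natural transformation and checking the required square commutes is then routine, again because everything is the identity on the level of underlying strings of composable arrows. Thus one obtains the natural isomorphism $NJ \cong JN$ of functors $\Cat \to \sSets$.

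The main obstacle, such as it is, lies not in any difficult construction but in pinning down precisely the characterization of invertible $1$-simplices in $N(C)$: one must argue carefully that an arrow of $C$ is invertible in the quasicategorical sense (admitting a homotopy inverse in $N(C)$) exactly when it is a genuine isomorphism in $C$. This uses the fact that $\tau_1(N(C)) \cong C$ and that homotopy invertibility in the nerve of a $1$-category collapses to ordinary invertibility, so that the pullback-square definition of $J$ for quasicategories specializes correctly. Once this identification of invertible edges is established, the remainder of the argument is a formal bookkeeping of simplicial structure and is entirely straightforward.
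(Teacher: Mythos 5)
Your proof is correct, but it takes a more hands-on route than the paper. The paper's argument is a two-line formal one: it specializes the defining pullback square for $J$ of a quasi-category to $X = N(C)$, observes that the canonical map $N(C) \to N\tau_1(N(C)) \cong N(C)$ is (up to the canonical isomorphism $\tau_1 N \cong \mathrm{id}$) the identity, and concludes that the pullback $J(N(C))$ is simply $NJ(C)$; naturality then falls out of the functoriality of pullbacks, with no degreewise checking at all. You instead unwind both sides simplex-by-simplex: identifying $J(N(C))_n$ with strings in $C$ whose arrows are isomorphisms, matching these with $N(J(C))_n$, and verifying compatibility with faces, degeneracies, and functors $F: C \to D$ by hand. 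What your approach buys is explicitness and robustness to the choice of definition — in particular, if one takes $J(X)$ to be the maximal sub-Kan complex rather than the pullback, your argument still goes through once one invokes the characterization of invertible edges via $\tau_1$, which is precisely the point you flag as the ``main obstacle.'' What the paper's approach buys is brevity: since the paper \emph{defines} $J(X)$ by the pullback square, membership of a simplex in $J(N(C))$ is by definition the condition that its edges become isomorphisms in $\tau_1(N(C)) \cong C$, so no discussion of homotopy inverses or of invertibility collapsing in nerves of $1$-categories is needed — that part of your argument, while sound, is doing work the paper's definition makes automatic. Both proofs are valid; yours is essentially the paper's proof with the pullback evaluated elementwise.
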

\begin{proof}
    We recall that the Kan complex $J(N(C))$ is defined by the following pullback square:
	\begin{equation*}
	\xymatrix@C=20mm{
		J(N(C)) \ar[r] \ar[d] & N(C) \ar@{=}[d] \\
	   NJ(C) \cong N(J(\tau_1(N(C)))) \ar[r] & N(\tau_1(N(C))) \cong N(C)
	}
	\end{equation*}
	Since the above commutative diagram is a pullback diagram in which the right vertical arrow is the identity therefore we have the isomorphism $J(N(C)) \cong NJ(C)$. The second statement follows from the functorality of pullbacks.
\end{proof}

 The category $\Cat$ can be enriched over simplicial sets by defining
 \[
 \MapC{C}{D}{\Cat} := N([C, D]).
 \]
 Thus making $\Cat$ a simplicial category.
The adjunction $\tau_1:\sSets \rightleftharpoons \Cat:N$ makes $\Cat$ a $(\sSets, QCat)$-model category, see \cite[Prop. 6.14]{AJ1}. In other words the functor
$N([-;-]):\Cat^{\textit{op}} \times \Cat \to \sSets$ is a Quillen functor of two variables with respect to the natural model category structure on $\Cat$ and the Joyal model category structure on $\sSets$. We recall from \cite[Appendix B.3]{Sharma4} the the mapping space:
\begin{equation}
\label{map-sp-Cat}
\HMapC{C}{D}{\Cat} = J(\MapC{C}{D}{\Cat}) = J(N([C, D]))
\end{equation}
\subsection{Local objects}
Let $\Sigma$ be a set of maps in a model category $E$. An object $X \in E$ is said to be $\Sigma$-local if the map
\begin{equation*}
\HMapC{u}{X}{E} : \HMapC{A'} {X}{E} \to \HMapC{A}{ X}{E}
\end{equation*}
is a homotopy equivalence for every map $u:A \to A'$ in $\Sigma$. Notice that if an object $X$ is weakly equivalent to a $\Sigma$-local object, then $X$ is $\Sigma$-local. If the model category $E$ is simplicial (=enriched over Kan complexes) and $\Sigma$ is a set of maps between cofibrant objects, then a fibrant object $X \in E$ is $\Sigma$-local iff the map
$\MapC{u}{X}{\sSets} :\MapC{A'}{X}{\sSets} \to \MapC{A}{X}{\sSets}$ is a homotopy equivalence for every map $u : A \to A'$ in $\Sigma$,
where $\MapC{-}{-}{\sSets}:E^{\textit{op}} \times E \to \sSets$ is the mapping space functor providing the simplicial enrichment of the category $E$.

\begin{lem}
\label{char-lo-QCat-en}
 Let $E$ be a model category enriched over categories. If $u : A \to B$ is a map between cofibrant objects, then the following conditions on a fibrant object $X \in E$ are equivalent
 \begin{enumerate}
\item the map $[u, X] : [B, X] \to [A, X]$ is an equivalence of categories;
\item the object $X$ is local with respect to the pair of maps $\lbrace u, I \otimes u \rbrace$, where $I \otimes u : I \otimes A \to I \otimes B$.
\end{enumerate}
\end{lem}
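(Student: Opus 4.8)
The plan is to reduce both conditions to statements about the maximal subgroupoids of the functor categories $[A,X]$ and $[B,X]$ and then to invoke the characterization of equivalences of categories in Lemma \ref{char-eq-cat}. First I would pin down the homotopy function complex. Since $E$ is enriched over categories (Definition \ref{enrich-QCat}), the bifunctor $[-,-]$ is a Quillen functor of two variables; composing with the nerve $N$ and using that the adjunction $\tau_1 \dashv N$ makes $\Cat$ a $(\sSets, QCat)$-model category, one obtains, exactly as in \eqref{map-sp-Cat}, that for the cofibrant object $A$ and the fibrant object $X$ a model for the homotopy function complex is
\[
\HMapC{A}{X}{E} = J(N([A,X])),
\]
and likewise for $B$. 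Under this identification the map $\HMapC{u}{X}{E}$ is $J(N([u,X]))$, where $[u,X]:[B,X] \to [A,X]$ is the precomposition functor.

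Next I would treat the second map $I \otimes u$. Because $E$ is tensored over $I$, representability gives a natural isomorphism of categories $[I \otimes A, X] \cong [I,[A,X]]$, and similarly for $B$; moreover, as $I$ is cofibrant in $\Cat$ and the tensor $I \otimes -$ is a left Quillen bifunctor, $I \otimes A$ and $I \otimes B$ remain cofibrant, so the previous step applies to them. Hence
\[
\HMapC{I \otimes A}{X}{E} = J(N([I,[A,X]])),
\]
and the map $\HMapC{I \otimes u}{X}{E}$ is identified with $J(N([I,[u,X]]))$, where $[I,[u,X]]:[I,[B,X]] \to [I,[A,X]]$ is the functor induced by $[u,X]$ on arrow categories.

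The crucial translation is then from homotopy equivalences of these Kan complexes to equivalences of groupoids. Using Lemma \ref{J-Nerve-comm} I rewrite $J(N([A,X])) \cong N(J([A,X]))$, and since the nerve restricts to a full embedding of groupoids into Kan complexes that carries equivalences of groupoids to, and reflects them from, homotopy equivalences, I would conclude: $X$ is local with respect to $u$ if and only if $J([u,X]):J([B,X]) \to J([A,X])$ is an equivalence of groupoids, and $X$ is local with respect to $I \otimes u$ if and only if $J([I,[u,X]]):J([I,[B,X]]) \to J([I,[A,X]])$ is an equivalence of groupoids. Thus condition $(2)$ holds precisely when both $J([u,X])$ and $J([I,[u,X]])$ are equivalences of groupoids, which by Lemma \ref{char-eq-cat} is exactly the statement that $[u,X]:[B,X] \to [A,X]$ is an equivalence of categories, i.e. condition $(1)$. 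The main obstacle I anticipate lies in the first step: rigorously justifying that $J(N([A,X]))$ computes the derived mapping space under the stated cofibrancy of $A$ and fibrancy of $X$, together with the compatibility of $I \otimes -$ with these hypotheses; once this homotopical bookkeeping is secured, the remainder is the formal two-step translation above followed by a direct application of Lemma \ref{char-eq-cat}.
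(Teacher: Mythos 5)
Your proposal is correct and follows essentially the same route as the paper's proof: identify the homotopy function complexes as $J(N([A,X]))$ and $J(N([I,[A,X]]))$ via the tensoring isomorphism $[I\otimes A, X]\cong [I,[A,X]]$, use Lemma \ref{J-Nerve-comm} to commute $J$ past $N$, and reduce condition $(2)$ to the two groupoid equivalences $J([u,X])$ and $J([I,[u,X]])$, which Lemma \ref{char-eq-cat} shows is equivalent to condition $(1)$. The only cosmetic difference is that you pass between equivalences of groupoids and homotopy equivalences of their nerves directly, whereas the paper routes this step through the Joyal model structure, using that $N$ sends equivalences of categories to categorical equivalences (which between Kan complexes are weak homotopy equivalences) and that the left Quillen functor $\tau_1$ carries categorical equivalences back to equivalences of categories.
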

\begin{proof}
 (1 $\Rightarrow$ 2) The map $[u, X] : [B, X] \to [A, X]$ is an equivalence of categories if and only if the following two maps
 \[
J([u, X]) : J([B, X]) \to J([A, X])
 \]
 and
 \[
 J([I, [u, X]]): J([I, [B, X]]) \to J([I, [A, X]])
 \]
 are equivalences of categories (or groupoids in this case). Since the model category $E$ is enriched over categories, there is a natural isomorphism $c_{B,X }:[I, [B, X] \cong [I \otimes B, X]$, \emph{i.e.} the following diagram is commutative
 \begin{equation*}
 \xymatrix@C=16mm{
 J([I, [B, X]]) \ar[r]^{J([I, [u, X]])} \ar[d]_{J(c_{B, X})} & J([I, [A, X]]) \ar[d]^{J(c_{A, X})} \\
 J([I \otimes B, X]) \ar[r]_{J([I \otimes u, X])} & J([I \otimes A, X])
}
 \end{equation*}
 for each pair of objects $A, B \in Ob(E)$.
 Now the two out of three property of model categories says that the bottom horizontal arrow in the diagram above $J([I \otimes u, X])$ is an equivalence of categories.
  The nerve functor takes an equivalence of categories to a categorical equivalence of simplicial sets (quasi-categories), see \cite[Prop. 6.14]{AJ1}. However a categorical equivalence between Kan complexes is a weak homotopy equivalence. Thus the two maps $N(J([I \otimes u, X]))$ and $N(J([ u, X]))$ are weak homotopy equivalences of simplicial sets. However from applying Lemma \ref{J-Nerve-comm} we deduce that the simplicial maps
 $J(N([I \otimes u, X])) = \HMapC{I \otimes u}{X}{\sSets}$ and $J(N([ u, X])) = \HMapC{u}{X}{\sSets}$ are weak homotopy equivalences of simplicial sets. In other words the object $X$ is local with respect to the pair of maps $\lbrace u, I \otimes u \rbrace$.
 
(1 $\Leftarrow$ 2) By assumption, the maps
\[
 J(N([u, X])):J(N([B, X])) \to J(N([A, X]))
 \]
  and
  \[
   J(N([I \otimes u, X])):J(N([I \otimes B, X])) \to J(N([I \otimes A, X]))
   \] 
   are weak homotopy equivalences of Kan complexes, therefore they are also categorical equivalences of simplicial sets. By lemma \ref{J-Nerve-comm} and the two out of three property of model categories, the simplicial maps $N(J([u, X]))$ and $N(J([I \otimes u, X]))$ are also categorical equivalences of simplicial sets. Since $\tau_1$ is a left Quillen functor and every object in $(\sSets, QCat)$ is cofibrant therefore it takes categorical equivalences to equivalences of categories, hence the maps $\tau_1N(J([u, X])) = J([u, X])$ and $\tau_1N(J([I \otimes u, X])) = J([I \otimes u, X])$ are equivalences of categories. Since for each small category $C$, the functor $[I, [C, -]]:E \to \Cat$ is representable by the object $I \otimes C$ \emph{i.e.} the functor $[I, [C, -]]$ is isomorphic to the functor $[I \otimes C, -]:E \to \Cat$, therefore the functor $J([I,[u, X]])$ is an equivalence of categories because $J([I \otimes u, X])$ is an equivalence of categories.
   Since both $J([u, X])$ and $J([I,[u, X]])$ are equivalences of categories
   therefore by lemma \ref{char-eq-cat}, the functor $[u, X]$ is an equivalence of categories.
\end{proof}

%\begin{lem}
%\label{local-ob-enh-cond}
%Let $E$ be a model category which is enriched over quasicategories and $\Sigma$ be a set of maps in $E$ such that each map $u \in \Sigma$ has a cofibrant domain and codomain, then the following conditions are equivalent on a fibrant object $X$
%\begin{enumerate}
%\item The object $X$ is $\Sigma$-local.
%\item The object $X$ is $\Sigma$-local with respect to the set of maps 
%\begin{equation*}
% \Delta[n] \times \Sigma = \lbrace \Delta[n] \times u : u \in \Sigma \rbrace
%\end{equation*}
%\end{enumerate}
%\end{lem}
%\begin{proof}
%(1 $\Rightarrow$ 2) Let $u:A \to B$ be a map in
% $\Sigma$, since $A$ and $B$ are both cofibrant in $E$ therefore proposition \ref{} and the enrichment of $E$ over quasicategories together give us the following equalities
%\begin{equation*}
%\HMapC{\Delta[n] \times u}{X}{E} = J([\Delta[n] \times u, X]) = J([u, X]^{\Delta[n]})
%\end{equation*}
%Since $X$ is $\Sigma$-local therefore the simplicial map $\HMapC{u}{X}{E} = J([u, X])$
%is a homotopy equivalence.
%\end{proof}

%References
%P. S. Hirschorn Localisation of Model Categories, AMS Mathematical surveys and monographs, Vol 99. M. Hovey Model Categories, AMS Mathematical surveys and monographs, Vol 99.
%A. Joyal and M. Tierney Quasi-categories vs Segal spaces, https://arxiv.org/abs/math/0607820

% \include{PointedMdlCats}
\section[oplax to SM functors]{From oplax to symmetric monoidal functors}
\label{OL-to-SM}
Throughout this paper we have been using a universal characteristic property of the permutative category $\Leins$ namely any oplax symmetric monoidal functor $F:\N \to M$, where $M$ is a permutative category extends uniquely to a symmetric monoidal functor $\Leins(F):\Leins \to M$. The objective of this section is to provide a proof of this universal property. We begin by understanding the Leinster's category $\Leins$ abstractly.

 The forgetful functor $U:\PCat \to \Cat$ has a left adjoint $S:\Cat \to \PCat$ which associates to a category $C$ the permutative category $S(C)$ freely generated by $C$ \cite{}. This
permutative category is defined as the following coproduct:
\begin{equation*}
S(C) := \underset{n \in \NatNum} \bigsqcup S^n(C),
\end{equation*}
where $S^n(C)$ is the \emph{symmetric n-power} of $C$. We observe that the $n$th symmetric group $\Sigma_n$ acts naturally on $C^n$ with the right action defined by
\begin{equation*}
\overline{x} \cdot \sigma := (x_{\sigma(1)}, x_{\sigma(2)}, \dots, x_{\sigma(n)}),
\end{equation*}
for $\overline{x} := (x_1, x_2, \dots, x_n) \in C^n$ and $\sigma \in \Sigma_n$.
If we apply the Grothendieck construction to this right action, we obtain the symmetric $n$-power
of $C$,
\begin{equation*}
S^n(C) := \Sigma_n \int C^n,
\end{equation*}
Explicitly, an object of $S^n(C)$ is a finite sequence $\overline{x} = (x_1, x_2, \dots, x_n)$ of
objects of $C$, and the hom set between $\overline{x}$ and $\overline{y}$ is defined as follows:
\begin{equation*}
S^n(C)[\overline{x}, \overline{y}] := \underset{\sigma \in \Sigma_n} \bigsqcup C(x_1, y_{\sigma(1)}) \times C(x_2, y_{\sigma(2)}) \times \cdots \times C(x_n, y_{\sigma(n)}).
\end{equation*}
The tensor product on $S(C)$ is defined by concatenation and the symmetry natural isomorphism is given by the shuffle permutation.
The unit of $S(C)$ is the empty sequence. There is an inclusion functor
\begin{equation*}
\iota_C:C \to S(C)
\end{equation*}
which takes an object $c \in Ob(C)$ to the one element sequence $(c) \in S(C)$. This functor exhibits $S(C)$ as the \emph{free permutative category} generated by $C$. More precisely, for every permutative category $\A$, the restriction functor
\begin{equation*}
\iota_C^*:\PCat(S(C), A) \to \Cat(C, A)
\end{equation*}
is a bijection. We observe that the object set of the category $\Leins$ is the following:
\begin{equation*}
Ob(\Leins) = Ob(S(\N)).
\end{equation*}

Let $P$ be a permutative category.
For each pair of functors $F_1, F_2:P \to \Sets$ one can define another functor which is denoted by $F_1 \ast F_2:P \to \Sets$ and is called the \emph{convolution product of $F_1$ with $F_2$} as follows:
\begin{equation*}
F_1 \ast F_2(m) := \int^{k \in \N} \int^{l \in P} F_1(k) \times F_2(l) \times P(k \underset{P} \otimes l, m).
\end{equation*}
The above construction defines a bifunctor
\begin{equation*}
- \ast - :[P; \Sets] \times [P; \Sets] \to [P; \Sets].
\end{equation*}
This bifunctor endows the functor category $[P; \Sets]$ with a monoidal structure whose unit object is the functor $P(0, -):P \to \Sets$. See \cite{Day-local}, \cite{Day2}, \cite{IK}.

The adjunction $(S, U)$ dicussed above has a counit map \emph{i.e.} for each permutative category $P$ there is a symmetric monoidal functor $\epsilon_P:S(P) \to P$. This counit functor provides us with the following composite functor
\begin{equation*}
P \overset{Y} \to [{P}^{op}; \Sets] \overset{\epsilon_{P}^\ast }\to [{S(P)}^{op}; \Sets]
\end{equation*}
where $Y$ is the Yoneda's embedding functor. We denote this composite functor by $\rho_P:P \to [{S(P)}^{op}; \Sets]$. For each $m \in Ob(P)$ we get a functor $\rho_P(m):{S(P)}^{op} \to \Sets$ which is defined as follows:
\begin{equation*}
\rho_P(m)((k_1, k_2, \dots, k_r)) := P(\epsilon_P(\vec{k}); m) = P(k_1 \underset{P} \otimes k_2 \underset{P} \otimes \cdots \underset{P} \otimes k_r; m)
\end{equation*}
where $\vec{k} = (k_1, k_2, \dots, k_r)$ is an object of the permutative category $S(P)$.
For another object $\vec{l} = (l_1, l_2, \dots, l_s)$ in $S(P)$ we define:
\begin{equation}
P^e(-, \vec{l}) := \rho_P(l_1) \ast \rho_P(l_2) \ast \cdots \ast \rho_P(l_s) = P(\epsilon_P(-); l_1) \ast P(\epsilon_\N(-); l_2) \ast \cdots \ast P(\epsilon_P(-); l_s).
\end{equation}
%\begin{equation*}
%\Leins(-, \vec{l}) \cong \rho(\ud{l_1}) \ast \rho(\ud{l_2}) \ast \cdots \ast \rho(\ud{l_s}) = \N(\epsilon_\N(-), \ud{l_1}) \ast \N(\epsilon_\N(-), \ud{l_2}) \ast \cdots \ast \N(\epsilon_\N(-), \ud{l_s}).
%\end{equation*}
In other words the mapping set $P^e(\vec{k}, \vec{l})$ is the following coend:
\begin{equation*}
\int^{q_1 \in S(P)} \cdots  \int^{q_s \in S(P)} P(\epsilon_P(\vec{q_1}); l_1) \times \cdots \times  P(\epsilon_P(\vec{q_s}); l_s) \times S(P)(\vec{k}; q_1 \underset{S(P)} \otimes q_2 \underset{S(P)} \otimes \cdots \underset{S(P)} \otimes q_s).
\end{equation*}

%\begin{nota}
%	\label{restriction-tuples}
%	An object $\vec{k} = (k_1, k_2, \dots, k_r)$ of the permutative category $S(P)$ maybe seen as a function $\vec{k}:\ud{r} \to Ob(P)$. For a subset $A \subseteq \ud{r}$, we denote the restriction
%	\[
%	A \hookrightarrow \ud{s} \overset{\vec{k}} \to Ob(P)
% 	\]
% 	by $\vec{k}|_A$.
%	\end{nota}
The above iterated coend has a simple description:
 A map in the mapping set $P^e(\vec{k}, \vec{l})$ can be described as an $(s+1)$-tuple $(f_1, f_2, \dots, f_s; h)$, where $|\vec{l}| = s$, $h:\ud{r} \to \ud{s}$ is a map in $\N$ and $f_i:\underset{j \in \inv{h}({i})} \otimes k_j \to l_i$ is a map in $P$ for all $1 \le i \le s$. 
%\begin{equation*}
%P^e(\vec{k}, \vec{l}) = \lbrace (f_1, f_2, \dots, f_s; h): |\vec{l}| = s, \ \textit{ f is surjective}  \rbrace
%\end{equation*}
\begin{lem}
	The collection of mapping sets $\lbrace P^e(\vec{k}, \vec{l}) : \vec{k}, \vec{l} \in Ob(S(P))  \rbrace$ glue together to define a permutative category $P^e$ whose object set is the same as that of $S(P)$.
\end{lem}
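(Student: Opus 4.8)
The plan is to verify the axioms of a permutative category directly from the explicit description of the mapping sets $P^e(\vec{k}, \vec{l})$. Recall that a morphism in $P^e(\vec{k}, \vec{l})$ is an $(s+1)$-tuple $(f_1, \dots, f_s; h)$ consisting of a map $h \colon \underline{r} \to \underline{s}$ in $\N$ (where $r = \length{k}$, $s = \length{l}$) together with a map $f_i \colon \Otimes{j \in \inv{h}(i)}{} k_j \to l_i$ in $P$ for each $1 \le i \le s$. First I would pin down \emph{composition}: given $(f_1, \dots, f_s; h) \colon \vec{k} \to \vec{l}$ and $(g_1, \dots, g_t; h') \colon \vec{l} \to \vec{m}$, the composite should have underlying map $h' \circ h \colon \underline{r} \to \underline{t}$ in $\N$, and its $p$-th component (for $1 \le p \le t$) should be the composite in $P$
\[
\Otimes{j \in \inv{(h' \circ h)}(p)}{} k_j \cong \Otimes{i \in \inv{h'}(p)}{}\Bigl(\Otimes{j \in \inv{h}(i)}{} k_j\Bigr) \overset{\otimes_i f_i}\to \Otimes{i \in \inv{h'}(p)}{} l_i \overset{g_p} \to m_p,
\]
where the first isomorphism is the canonical reassociation/permutation in $P$ supplied by the symmetry and (strict) associativity of the permutative structure on $P$. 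The identity on $\vec{k}$ is $(id_{k_1}, \dots, id_{k_r}; id_{\underline{r}})$.

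Next I would check that this composition is \emph{associative} and \emph{unital}. The underlying maps compose associatively in $\N$ automatically, so the content is that the componentwise maps in $P$ agree. This reduces to a coherence statement: the various canonical reassociating isomorphisms used when grouping the $k_j$'s according to $h$, then $h'$, then $h''$ must satisfy the pentagon/hexagon-type compatibilities. Since $P$ is permutative — strictly associative and unital, with the symmetry obeying the coherence axioms — these canonical maps are uniquely determined and compatible, which is exactly the content of coherence for symmetric monoidal categories (Mac Lane). I would invoke this coherence rather than grinding through the diagrams. This identification of $P^e$ with a category whose objects are $\Ob(S(P))$ is precisely the claim that the convolution product on $[S(P)^{op}; \Sets]$, restricted along the representables $\rho_P$, assembles into a category; I would lean on the established theory of Day convolution (\cite{Day-local}, \cite{Day2}, \cite{IK}) to guarantee that the coend formula defines an associative, unital composition.

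Then I would define the \emph{permutative structure}. On objects the tensor is concatenation, $\vec{k} \otimes \vec{k}' := (k_1, \dots, k_r, k'_1, \dots, k'_{r'})$, agreeing with the tensor on $S(P)$, and the unit is the empty sequence $()$; strict associativity and strict unitality on objects are immediate since concatenation of finite sequences is strictly associative and unital. On morphisms, the tensor of $(f_1, \dots, f_s; h)$ and $(f'_1, \dots, f'_{s'}; h')$ is $(f_1, \dots, f_s, f'_1, \dots, f'_{s'}; h \sqcup h')$, using that the fibers of $h \sqcup h' \colon \underline{r+r'} \to \underline{s+s'}$ are exactly the fibers of $h$ and of $h'$. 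I would verify this is functorial in both variables (bifunctoriality), which again follows from the strictness of concatenation together with bifunctoriality of $\otimes$ on $P$. The symmetry isomorphism $\gamma_{\vec{k}, \vec{k}'} \colon \vec{k} \otimes \vec{k}' \to \vec{k}' \otimes \vec{k}$ is the shuffle, i.e.\ the morphism whose underlying $\N$-map is the block transposition $\underline{r+r'} \to \underline{r'+r}$ and whose components are identities; naturality of $\gamma$ and the hexagon axiom then reduce to the corresponding facts for the shuffle permutations in $S(P)$.

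The main obstacle will be the associativity-of-composition verification in the second paragraph, specifically keeping careful track of how the canonical reassociating isomorphisms interact under iterated grouping of the tensor factors. The danger is a notational explosion of coends and canonical maps; the way to tame it is to observe that each morphism is determined by its $\N$-data $h$ together with a single map in $P$ out of a \emph{strictly} associated tensor $\Otimes{j}{} k_j$, so that all the ambiguity lives in a uniquely-determined canonical isomorphism whose coherence is guaranteed by Mac Lane's theorem. Thus rather than composing coends by hand I would identify $P^e$ as the category extracted from the Day-convolution monoidal structure via the functors $\rho_P$, and cite the cited references for the associativity and unit laws, reserving explicit checking only for the strict parts (concatenation and the shuffle) that make $P^e$ genuinely \emph{permutative} rather than merely symmetric monoidal.
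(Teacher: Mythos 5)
Your proposal is correct and takes essentially the same approach as the paper: you define the composite of $(f_1,\dots,f_s;h)$ and $(g_1,\dots,g_t;q)$ over the underlying map $q \circ h$ by a canonical coherence isomorphism regrouping the tensor factors, followed by the tensor of the $f_j$'s and then the $g_i$'s, and you settle associativity by appeal to Mac Lane's coherence theorem, with concatenation supplying the permutative structure. If anything you record more detail than the paper's proof does (identities, the tensor of morphisms, the block-shuffle symmetry with identity components), while the paper confines itself to the composition formula, the coherence citation, and a one-line remark that the permutative structure is concatenation.
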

\begin{proof}
	We begin the proof by defining composition in the category $P^e$. Let $(f_1, f_2, \dots, f_s; h): \vec{k} \to \vec{l}$, $(g_1, g_2, \dots, g_t; q): \vec{l} \to \vec{m} = (m_1, \dots, m_t)$ be two maps in $P^e$. We define their composite to be a map $(c_1, c_2, \dots, c_t; q \circ h): \vec{k} \to \vec{m}$ where the arrow $v_i$ is defined as follows:
	\[
	\underset{z \in \inv{q \circ h}(i)} \otimes k_z \overset{\textit {can}} \to \underset{j \in \inv{q}(i)} \otimes \underset{w \in \inv{h}(j)} \otimes k_w \overset{ \underset{j \in \inv{q}(i)} \otimes f_j} \to  \underset{j \in \inv{q}(i)} \otimes l_j \to m_i
	\]
	for $1 \le i \le t$. The isomorphism $\textit{can}$ in the above diagram is the unique isomorphism provided by the coherence theorem for symmetric monoidal categories between \emph{words} of the same length. The associativity of this composition follows from the coherence theorem for symmetric monoidal categories and the associativity of composition in the permutative category $P$.
	The permutative structure is given by concatenation.

	\end{proof}
\begin{rem}
	The permutative category $\Leins$ is isomorphic to the permutative category $((\N^{\textit{op}})^e)^{\textit{op}}$.
	\end{rem}

%The functor $\rho_P$ defined above is a lax monoidal functor where the functor category $[{S(P)}^{op}; \Sets]$ is a symmetric monoidal closed category with respect to the Day convolution product \emph{i.e.} for each $\vec{l} = (l_1, l_2, \dots, l_s)$ in $Ob(S(P))$ we have a natural transformation
%\begin{equation*}
%\lambda_{\rho_P(\vec{l})} : \rho_P(l_1) \ast \rho_P(l_2) \ast \cdots \ast \rho_P(l_s) \Rightarrow \rho_P(l_1 \underset{P} \otimes l_2 \underset{P} \otimes \cdots \underset{P} \otimes l_s) = \rho_P(\epsilon_P(\vec{l})).
%\end{equation*}
%For each $\vec{k} \in Ob(P^e)$ the above natural transformation provides a function
%\begin{equation*}
%\lambda_{\vec{k}, \vec{l}}:P^e(\vec{k}, \vec{l}) \to P^e((\epsilon_\N(\vec{k})), (\epsilon_P(\vec{l}))) = P(k_1 \underset{P} \otimes k_2 \underset{P} \otimes \cdots \underset{P} \otimes k_r, l_1 \underset{P} \otimes l_2 \underset{P} \otimes \cdots \underset{P} \otimes l_s).
%\end{equation*}

As above, let $\vec{k}$ and $\vec{l}$ be two objects of $P^e$ having lengths $|\vec{k}| = r$ and $|\vec{l}| = s$ respectively. We define a function
\[
\lambda_{\vec{k}, \vec{l}}:P^e(\vec{k}, \vec{l}) \to P(\underset{i = 1}{\overset{r} \otimes} k_i; \underset{j = 1}{\overset{s} \otimes} l_j).
\]
as follows:
\[
\lambda_{\vec{k}, \vec{l}}((f_1, f_2, \dots, f_s; h)) := (\underset{i = 1}{\overset{s} \otimes}{\underset{j \in \inv{h}(i)} \otimes f_j}) \circ \textit{can}
\]

\begin{lem}
The collection of functions $\lbrace \lambda_{\vec{k}, \vec{l}} : \vec{k}, \vec{l} \in Ob(P^e)  \rbrace$ glue together to define a strict symmetric monoidal functor
\begin{equation}
\label{section-inc-N-L}
\lambda_P:P^e \to P
\end{equation}
whose objects function maps an object $\vec{k} = (k_1, k_2, \dots, k_r)$ to $k_1 \underset{P} \otimes k_2 \underset{P} \otimes \cdots \underset{P} \otimes k_r$ in $Ob(P)$.
\end{lem}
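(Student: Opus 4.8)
The plan is to verify first that $\lambda_P$ is a well-defined functor and then that it strictly preserves all of the permutative structure, with Mac Lane's coherence theorem doing the heavy lifting throughout: every $\textit{can}$ occurring in the definition is the \emph{unique} coherence isomorphism between two words built from the same letters, so any diagram assembled out of $\textit{can}$'s and their inverses commutes automatically. First I would record the object assignment $\vec{k} = (k_1, \dots, k_r) \mapsto k_1 \underset{P}{\otimes} \cdots \underset{P}{\otimes} k_r$ and check that domain and codomain of $\lambda_{\vec{k}, \vec{l}}((f_1, \dots, f_s; h))$ match the claim: the arrow $\bigotimes_{i=1}^{s} \bigotimes_{j \in h^{-1}(i)} f_j$ runs from $\bigotimes_{i=1}^{s} \bigotimes_{j \in h^{-1}(i)} k_j$ to $\bigotimes_{i=1}^{s} l_i = \lambda_P(\vec{l})$, while $\textit{can}\colon \lambda_P(\vec{k}) \to \bigotimes_{i=1}^{s} \bigotimes_{j \in h^{-1}(i)} k_j$ is the reordering-and-rebracketing isomorphism determined by $h \in \N$.

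Next I would check functoriality. Preservation of identities is immediate: the identity on $\vec{k}$ is $(id_{k_1}, \dots, id_{k_r}; id_{\ud{r}})$, for which $h = id$ forces $\textit{can} = id$ and the tensor of identities is the identity. The essential point is compatibility with composition. Given composable $(f_1, \dots, f_s; h)\colon \vec{k} \to \vec{l}$ and $(g_1, \dots, g_t; q)\colon \vec{l} \to \vec{m}$, whose composite was defined to be $(c_1, \dots, c_t; q \circ h)$ with each $c_i$ built from the $g_i$, the $f_j$ and a $\textit{can}$, I would expand both $\lambda_P((g,q)\circ(f,h))$ and $\lambda_P(g,q)\circ\lambda_P(f,h)$ into strings of coherence isomorphisms tensored with the structure maps. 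The two expressions differ only in the placement of the $\textit{can}$'s, so their equality follows from coherence together with the bifunctoriality of $\underset{P}{\otimes}$, which allows one to slide the $\textit{can}$'s past the tensored structure maps.

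Then I would verify strict symmetric monoidality. On objects, $\lambda_P(\vec{k} \otimes \vec{l})$ is the tensor of the concatenated sequence, which equals $\lambda_P(\vec{k}) \underset{P}{\otimes} \lambda_P(\vec{l})$ because $P$ is strictly associative; the empty sequence goes to the empty tensor $\unit{P}$, so the unit is preserved on the nose and the monoidal comparison is the identity. On morphisms I would show $\lambda_P(\phi \otimes \psi) = \lambda_P(\phi) \underset{P}{\otimes} \lambda_P(\psi)$ again by inserting the coherence isomorphisms and appealing to bifunctoriality. Finally, the symmetry of $P^e$ on $\vec{k}, \vec{l}$ is represented by the morphism with identity component maps whose underlying function in $\N$ is the block transposition; under $\lambda_P$ its image is, by coherence, precisely the permutative symmetry $\gamma$ of $P$ interchanging the two tensor blocks, i.e.\ $\lambda_P(\gamma^{P^e}) = \gamma^P$. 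Since $P$ is permutative there are no associator or unitor conditions to check beyond these equalities, and the hexagon for $\gamma^{P^e}$ maps to the hexagon in $P$.

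The hard part will be the composition step: it is a coherence-bookkeeping argument in which one must line up the single iterated $\textit{can}$ of the composite against the two separate $\textit{can}$'s of the factors and argue that the intermediate tensor of the $f_j$'s can be commuted through. I expect this to be where all the care is needed; once it is settled, strict preservation of the tensor of morphisms and of the symmetry are variations on the same coherence argument, and the remaining verifications are routine.
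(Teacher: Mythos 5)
Your proposal is correct and follows essentially the same route as the paper: the paper's proof also concentrates on preservation of composition, splitting the relevant diagram into a purely canonical rectangle that commutes by the coherence theorem (citing Joyal--Street) and a mixed rectangle that commutes by naturality of the canonical isomorphisms, which is exactly your ``slide the $\textit{can}$'s past the tensored structure maps'' step. The identity, tensor, and symmetry verifications you spell out are the routine parts the paper leaves implicit, so there is no substantive difference in method.
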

\begin{proof}
	We have to check that the function defined above respects compsition in the category $P^e$. Let $(f_1, f_2, \dots, f_s; h):\vec{k} \to \vec{l}$ and $(g_1, g_2, \dots, g_t; q):\vec{l} \to \vec{m} = (m_1, m_2. \dots, m_t)$ be two composable arrows in the category $P^e$. We will show the following equality:
	\begin{multline}
	\lambda_{\vec{l}, \vec{m}}((f_1, f_2, \dots, f_s; h)) \circ \lambda_{\vec{k}, \vec{l}}((g_1, g_2, \dots, g_t; q)) = \\	 
	\lambda_{\vec{k}, \vec{m}}((g_1, g_2, \dots, g_t; q) \circ (f_1, f_2, \dots, f_s; h))
	\end{multline}
	This is equivalent to showing that the following diagram commutes:
	\begin{equation*}
	\xymatrix{
	\underset{i=1} {\overset{t} \otimes } \underset{z \in \inv{q \circ h}(i)} { \otimes} k_z  \ar[d]_{\textit{can}} & \underset{i=1} {\overset{r} \otimes} k_i \ar[d]^{\textit{can}} \ar[l]_{\textit{can}} \\
		\underset{i=1} {\overset{t} \otimes } \underset{j \in \inv{q}(i)} \otimes 
	\underset{w \in \inv{h}(j)} \otimes k_w  \ar[d]_{\underset{i=1} {\overset{t} \otimes } \underset{j \in \inv{q}(i)} \otimes f_j} & \underset{i=1} {\overset{s} \otimes } \underset{j \in \inv{h}(i)} \otimes k_j \ar[d]^{\underset{i=1} {\overset{s} \otimes } \underset{j \in \inv{h}(i)} \otimes f_j}  \ar[l]^{\textit{ \ \  can}} \\
	\underset{i=1} {\overset{t} \otimes } \underset{j \in \inv{q}(i)} \otimes l_j  \ar[d]_{\underset{j \in \inv{q}(i)} \otimes g_i} \ar@{=}[rd]  & \underset{i=1} {\overset{s} \otimes } l_i \ar[d]^{\textit{can}} \ar[l]^{\textit{can}}  \\
	\underset{i=1} {\overset{t} \otimes} m_i  & \underset{i=1} {\overset{t} \otimes } \underset{j \in \inv{q}(i)} \otimes l_j \ar[l]^{\underset{j \in \inv{q}(i)} \otimes g_i}
	 }
 \end{equation*}
 The composite of the top left arrow and the three left vertical arrows is the same as $\lambda_{\vec{l}, \vec{m}}((f_1, f_2, \dots, f_s; h)) \circ \lambda_{\vec{k}, \vec{l}}((g_1, g_2, \dots, g_t; q))$ and the composite of the right three vertical arrows and the bottom horizontal arrows is $\lambda_{\vec{k}, \vec{m}}((g_1, g_2, \dots, g_t; q) \circ (f_1, f_2, \dots, f_s; h))$. The bottom square is obviously commutative because both composite arrows are composites of the same two arrows. The top rectangle is made up of canonical isomorphisms provided by the coherence theorem for symmetric monoidal categories. Now the commutativity of the top rectangle follows from \cite[Corollary 1.6]{JS}
which implies that any diagram made of canonical coherence maps commutes. The commutativity of the middle rectangle follows from the naturality of the canonical isomorphisms provided by the coherence theorem for symmetric monoidal categories.

	\end{proof}
We recall that the bifunctor $- \otimes -:P \times P \to P$ providing the permutative structure of a permutative category $P$ extends uniquely to a functor
\[
- \overset{r} \otimes -:\Prod{1}{r} P \to P,
\]
for all $r \in \Nat$.
Similarly any natural transformation $\eta: (- \otimes -) \circ (F \times F) \Rightarrow F \circ (- \otimes -)$ extends uniquely to a natural transformation $\eta_r$ as shown in the diagram below:
\begin{equation*}
\xymatrix{
\Prod{i}{r} P \ar[d]_{\Prod{i}{r} F} \ar[r]^{- \overset{r} \otimes -} & P \ar[d]^F \ar@{=>}[ld]^{\eta_r} \\
\Prod{i}{r} D \ar[r]_{- \overset{r} \otimes -} & D
}
\end{equation*}
\begin{nota}
	We will refer to the functor $- \overset{r} \otimes -$ as the $r$-fold extension of the permutative structure of $P$ and refer to $\eta_r$ as the $r$-fold extension of $\eta$.
	\end{nota}
Any lax symmetric monoidal functor $(F, \mu, \eta):P \to D$ determines a strict symmetric monoidal functor
\begin{equation}
\label{Lax-to-str}
F^e:P^e \to D^e
\end{equation}
The object function of $F^e$ is defined as follows:
\[
\vec{k} = (k_1, k_2, \dots, k_r) \mapsto (F(k_1), F(k_2), \dots, F(k_r)).
\]
Given another object $\vec{l} = (l_1, \dots, l_s)$ in $P$ we define a map
\[
F_{\vec{k}, \vec{l}}:P^e(\vec{k}, \vec{l}) \to D^e(F(\vec{k}), F(\vec{l}))
\]
as follows:
\[
(f_1, f_2, \dots, f_s; h) \mapsto (g_1, g_2, \dots, g_s; h),
\]
where the map $g_i:\underset{j \in \inv{h}(i)} \otimes F(k_j) \to F(l_i)$ is the following composite:
\[
 \underset{j \in \inv{h}(i)} \otimes F(k_j) \overset{\mu_{r_i}} \to F(\underset{j \in \inv{h}(i)} \otimes k_j) \overset{F(f_i)} \to F(l_i),
\]
where $r_i = \inv{h}(i)$. An application of the coherence theorem for symmetric monoidal categories shows that $F^e$ is a functor and it is easy to see that it preserves the permutative structure.

%natural transformation $\tau_F(l):\rho_P(l) \Rightarrow \rho_D(F(l)) \circ S(F)$ for each $l \in Ob(P)$.
%For each $\vec{k} = (k_1, k_2, \dots, k_r) \in Ob(S(P))$ we have the following composite function:
%\begin{equation*}
%P(\mu_r((k_1, k_2, \dots, k_r)); l):P(k_1 \underset{P} \otimes k_2 \underset{P} \otimes \cdots \underset{P} \otimes k_r; l) \overset{} \to D(F(k_1) \underset{D} \otimes F(k_2) \underset{D} \otimes \cdots \underset{D} \otimes F(k_r); F(l))
%\end{equation*}
%where $\mu_r:(- \underset{D} {\overset{r} \otimes} -) \circ F \Rightarrow F \circ (- \underset{P} {\overset{r} \otimes} -)$ is the unique $r$-fold extension of the natural transformation $\mu$ providing the lax structure of $F$. For each $\vec{l} = (l_1, \dots, l_s) \in Ob(S(P))$ we get a natural transformation
%\begin{equation*}
%\tau_F(\vec{l}):\rho_P(l_1) \ast \rho_P(l_2) \ast \cdots \ast \rho_P(l_s) \Rightarrow \rho_D(F(l_1)) \circ S(F) \ast \rho_D(F(l_2)) \circ S(F) \ast \cdots \ast \rho_D(F(l_s)) \circ S(F).
%\end{equation*}
%We observe that for each $\vec{k} \in Ob(S(P))$ the above natural transformation provides a map $\tau(\vec{l})(\vec{k}):P^e(\vec{k}; \vec{l}) \to D^e(S(F)(\vec{k}); S(F)(\vec{l}))$.
%These maps glue together to define a strict symmetric monoidal functor $F^e:P^e \to D^e$ whose object function is described as follows:
%\begin{equation*}
%F^e((k_1, k_2, \dots, k_r)) := (F(k_1), F(k_2), \dots, F(k_r)) = S(F)(\vec{k}).
%\end{equation*}

\begin{thm}
	The symmetric lax monoidal inclusion functor $\iota:P \to P^e$ is universal: for any symmetric permutative category $D$ and a symmetric lax monoidal functor $\phi:P \to D$ there exists a unique strict symmetric monoidal functor $\psi:P^e \to D$ such that $\psi \circ \iota = \phi$:
	\begin{equation*}
	\xymatrix{
	P \ar[r]^\iota \ar[rd]_{\phi} & P^e \ar[d]^\psi \\
	& D
    }
	\end{equation*}
	\end{thm}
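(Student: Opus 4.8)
The plan is to construct the functor $\psi$ explicitly and verify its uniqueness, following the pattern already established in the excerpt for the map $\lambda_P:P^e \to P$ and the induced functor $F^e$. First I would define $\psi$ on objects by sending a sequence $\vec{k} = (k_1, k_2, \dots, k_r)$ to the iterated tensor product $\phi(k_1) \underset{D} \otimes \phi(k_2) \underset{D} \otimes \cdots \underset{D} \otimes \phi(k_r)$, using the $r$-fold extension $- \overset{r} \otimes -$ of the permutative structure of $D$ recalled just above the statement. The empty sequence (the unit of $P^e$) must go to $\unit{D}$, which forces compatibility with the lax unit map $\eta$ of $\phi$. Since $\iota$ sends $k \in Ob(P)$ to the one-element sequence $(k)$, the condition $\psi \circ \iota = \phi$ already pins down $\psi$ on objects and leaves no freedom, which is the first half of the uniqueness claim.

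Next I would define $\psi$ on morphisms. Recall that a map in $P^e(\vec{k}, \vec{l})$ is an $(s+1)$-tuple $(f_1, f_2, \dots, f_s; h)$ where $h:\ud{r} \to \ud{s}$ is a map in $\N$ and $f_i:\underset{j \in \inv{h}(i)} \otimes k_j \to l_i$ is a map in $P$. I would send this to the morphism in $D$ given by the composite
\[
\underset{i=1}{\overset{r} \otimes} \phi(k_i) \overset{\textit{can}} \to \underset{i=1}{\overset{s} \otimes} \underset{j \in \inv{h}(i)} \otimes \phi(k_j) \overset{\underset{i=1}{\overset{s} \otimes} \mu_{r_i}} \to \underset{i=1}{\overset{s} \otimes} \phi(\underset{j \in \inv{h}(i)} \otimes k_j) \overset{\underset{i=1}{\overset{s} \otimes} \phi(f_i)} \to \underset{i=1}{\overset{s} \otimes} \phi(l_i),
\]
where $r_i = \mid \inv{h}(i) \mid$, $\mu$ is the lax structure map of $\phi$, and $\textit{can}$ is the unique coherence isomorphism of $D$ regrouping the word. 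This is precisely the composite $\lambda_D \circ \phi^e$ applied to $(f_1, \dots, f_s; h)$: I would in fact simply \emph{define} $\psi := \lambda_D \circ \phi^e$, where $\phi^e:P^e \to D^e$ is the strict symmetric monoidal functor induced by the lax functor $\phi$ via \eqref{Lax-to-str} and $\lambda_D:D^e \to D$ is the strict symmetric monoidal functor \eqref{section-inc-N-L}. Since both $\phi^e$ and $\lambda_D$ are already proved to be strict symmetric monoidal functors, their composite $\psi$ is automatically strict symmetric monoidal, so functoriality and preservation of the permutative structure require no further calculation.

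It then remains to check the two defining properties. For $\psi \circ \iota = \phi$ I would trace the one-element sequence $(k)$ through $\phi^e$ (giving $(\phi(k))$) and then through $\lambda_D$ (giving $\phi(k)$), and similarly for morphisms, noting that the single coherence isomorphism and single lax map $\mu_1$ involved are identities or forced, so the composite on $P$ agrees with $\phi$. For uniqueness, I would argue that any strict symmetric monoidal $\psi':P^e \to D$ with $\psi' \circ \iota = \phi$ is forced to agree with $\psi$: every object $\vec{k}$ of $P^e$ is the tensor product $\iota(k_1) \underset{P^e} \otimes \cdots \underset{P^e} \otimes \iota(k_r)$ of objects in the image of $\iota$, and strictness forces $\psi'(\vec{k}) = \phi(k_1) \otimes \cdots \otimes \phi(k_r)$; a parallel decomposition of an arbitrary morphism $(f_1, \dots, f_s; h)$ of $P^e$ into composites of tensor products of $\iota$-images and the structural maps ($\textit{can}$, the multiplication maps $m$) forces the morphism values as well.

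The main obstacle will be the morphism decomposition used in the uniqueness argument: I must show that every arrow of $P^e$ factors canonically as a composite built from (i) images under $\iota$ of arrows of $P$, (ii) the permutation/associativity isomorphisms $\textit{can}$ that a strict symmetric monoidal functor is obliged to preserve, and (iii) the canonical ``multiplication'' maps in $P^e$ (the arrows indexed by surjections $h$ that collapse several factors into one), whose image under any $\psi'$ with $\psi' \circ \iota = \phi$ is determined by the lax structure maps $\mu$. Making this factorization precise and verifying that $\psi$ and $\psi'$ agree on each factor is where the coherence theorem for symmetric monoidal categories (\cite[Corollary 1.6]{JS}) does the real work, exactly as it did in the proof that $\lambda_P$ is well defined; everything else reduces to bookkeeping that the two already-established lemmas have effectively carried out.
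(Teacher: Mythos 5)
Your proposal is correct and coincides with the paper's own proof: the paper likewise defines $\psi := \lambda_D \circ \phi^e$, invoking the two previously established lemmas that $\phi^e$ and $\lambda_D$ are strict symmetric monoidal, and disposes of uniqueness in one line as an ``easy consequence of the symmetric monoidal structure on the permutative category $P^e$ and the strict symmetric monoidal nature of the functor $\psi$.'' Your generation argument --- objects of $P^e$ as concatenations of $\iota$-images, morphisms $(f_1, \dots, f_s; h)$ factored through symmetries, $\iota$-images of arrows of $P$, and the collapsing maps whose $\psi'$-images are pinned down by the lax structure maps $\mu$ of $\phi$ --- is exactly the detail the paper leaves implicit, so you have reconstructed the same proof with the omitted uniqueness step spelled out.
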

\begin{proof}
	We define the functor $\psi:P \to D$ to be the following composite:
	\[
	P^e \overset{\phi^e} \to D^e \overset{\lambda_D} \to D
	\]
  The uniqueness of this functor is an easy consequence of the symmetric monoidal structure on the permutative category $P^e$ and the strict symmetric monoidal nature of the functor $\psi$.

	\end{proof}

An Oplax symmetric monoidal functor $F:P \to D$ uniquely determines a Lax symmetric monoidal functor between the opposite categories namely $F^{op}:P^{op} \to D^{op}$. This duality provides us with the following corollary:
\begin{coro}
	\label{extension-to-strmon}
	Let $P$ be a strict symmetric monoidal category, then there exists another strict symmetric monoidal category $P_e$ which is equipped with an inclusion functor $\iota:P \to P_e$ which is universal: For any strict symmetric monoidal category $D$ and an oplax symmetric monoidal functor $F:P \to D$ there exists a unique strict symmetric monoidal functor $\psi:P_e \to D$ such that $\psi \circ \iota = F$ \emph{i.e.} the following diagram commutes:
	\textbf{\begin{equation*}
		\xymatrix{
			P \ar[r]^\iota \ar[rd]_{F} & P_e \ar[d]^\psi \\
			& D
		}
		\end{equation*}}
	The strict symmetric monoidal category $P_e$ is isomorphic to $((P^{\textit{op}})^e)^{\textit{op}}$.
	\end{coro}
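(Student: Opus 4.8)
The plan is to deduce this corollary from the universal property of $P^e$ established in the preceding theorem by a straightforward duality argument, so that no new coend computations are required. The only genuinely new categorical input is the duality between lax and oplax symmetric monoidal functors, which has already been recorded in the remark immediately above: an oplax symmetric monoidal functor $F:P \to D$ corresponds to a lax symmetric monoidal functor $F^{op}:P^{op} \to D^{op}$, and this correspondence is an involution carrying strict symmetric monoidal functors to strict symmetric monoidal functors. First I would set $P_e := ((P^{op})^e)^{op}$, which is a strict symmetric monoidal category: $P^{op}$ is permutative whenever $P$ is, the construction $(-)^e$ produces a permutative category, and passing to the opposite preserves permutativity. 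The inclusion $\iota:P \to P_e$ is then defined as the opposite of the lax symmetric monoidal inclusion $\iota_{P^{op}}:P^{op} \to (P^{op})^e$; by the duality above, $\iota$ is the desired oplax symmetric monoidal functor.

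To verify the universal property I would start with an arbitrary strict symmetric monoidal category $D$ and an oplax symmetric monoidal functor $F:P \to D$. Dualizing gives a lax symmetric monoidal functor $F^{op}:P^{op} \to D^{op}$ into the permutative category $D^{op}$. Applying the preceding theorem to $P^{op}$, with $D^{op}$ and $F^{op}$ in the roles of $D$ and $\phi$, produces a unique strict symmetric monoidal functor $\psi':(P^{op})^e \to D^{op}$ with $\psi' \circ \iota_{P^{op}} = F^{op}$. Taking opposites and writing $\psi := (\psi')^{op}:P_e \to D$, I obtain a strict symmetric monoidal functor satisfying
\[
\psi \circ \iota = (\psi')^{op} \circ (\iota_{P^{op}})^{op} = (\psi' \circ \iota_{P^{op}})^{op} = (F^{op})^{op} = F.
\]
Uniqueness transfers in the same manner: any two strict symmetric monoidal functors $\psi_1,\psi_2:P_e \to D$ with $\psi_i \circ \iota = F$ dualize to strict symmetric monoidal functors $\psi_i^{op}:(P^{op})^e \to D^{op}$ extending $F^{op}$ along $\iota_{P^{op}}$, so the uniqueness clause of the theorem forces $\psi_1^{op} = \psi_2^{op}$ and hence $\psi_1 = \psi_2$.

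The few points requiring care, rather than any real obstacle, are bookkeeping ones: checking that $(-)^{op}$ is an involution on the relevant data (so that $(F^{op})^{op}=F$ and $(\iota_{P^{op}})^{op}=\iota$), confirming that the opposite of a strict symmetric monoidal functor is again strict symmetric monoidal, and confirming the compatibility of the lax/oplax coherence isomorphisms under dualization, namely that the structure maps $\sigma$, the symmetry $\gamma$, and the unit comparison attached to $\iota$ correspond under $(-)^{op}$ to those attached to $\iota_{P^{op}}$. Since all of these are exactly the assertions summarized in the remark preceding the corollary, the hard part is essentially already done, and the argument reduces to applying the theorem once to the opposite data and then reading the conclusion back through $(-)^{op}$.
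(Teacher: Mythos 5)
Your proposal is correct and takes exactly the route the paper intends: the paper derives the corollary from the preceding theorem via the lax/oplax duality under $(-)^{op}$, setting $P_e = ((P^{\textit{op}})^e)^{\textit{op}}$, which is precisely your argument (the paper merely sketches this in one sentence, while you have filled in the transfer of existence and uniqueness explicitly).
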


 \bibliographystyle{amsalpha}
 \bibliography{EinGammaCat}

\end{document}